\theoremstyle{plain}
\newtheorem{theorem}{Theorem}[section]
\newtheorem{lemma}[theorem]{Lemma}
\newtheorem{proposition}[theorem]{Proposition}
\newtheorem{corollary}[theorem]{Corollary}
\theoremstyle{definition}
\newtheorem{definition}[theorem]{Definition}
\newtheorem{question}[theorem]{Question}
\newtheorem{conjecture}[theorem]{Conjecture}
\newcommand{\re}{\upharpoonright}
\newcommand{\id}{\mathsf{id}}
\newcommand{\supr}{\mathsf{sup}}
\newcommand{\bool}{\mathsf{b}}
\newcommand{\Cof}{\mathsf{Cof}}
\newcommand{\Fin}{\mathsf{Fin}}
\newcommand{\Ne}{\mathsf{N}}
\newcommand{\cl}{\mathsf{cl}}
\newcommand{\bD}{\mathbf{\Delta}}
\newcommand{\bG}{\mathbf{\Gamma}}
\newcommand{\bGc}{\widecheck{\mathbf{\Gamma}}}
\newcommand{\bL}{\mathbf{\Lambda}}
\newcommand{\bLc}{\widecheck{\mathbf{\Lambda}}}
\newcommand{\bS}{\mathbf{\Sigma}}
\newcommand{\bP}{\mathbf{\Pi}}
\newcommand{\Borel}{\mathsf{Borel}}
\newcommand{\ZFC}{\mathsf{ZFC}}
\newcommand{\ZF}{\mathsf{ZF}}
\newcommand{\AC}{\mathsf{AC}}
\newcommand{\DC}{\mathsf{DC}}
\newcommand{\AD}{\mathsf{AD}}
\newcommand{\PD}{\mathsf{PD}}
\newcommand{\Det}{\mathsf{Det}}
\newcommand{\Ga}{\mathsf{G}}
\newcommand{\Diff}{\mathsf{D}}
\newcommand{\Diffc}{\widecheck{\mathsf{D}}}
\newcommand{\NSD}{\mathsf{NSD}}
\newcommand{\NSDS}{\mathsf{NSD}_{\mathbf{\Sigma}}}
\newcommand{\NSDxi}{\mathsf{NSD}^{(\xi)}}
\newcommand{\NSDxiS}{\mathsf{NSD}_{\mathbf{\Sigma}}^{(\xi)}}
\newcommand{\SD}{\mathsf{SD}}
\newcommand{\SDc}{\widecheck{\mathsf{SD}}}
\newcommand{\SU}{\mathsf{SU}}
\newcommand{\SUc}{\widecheck{\mathsf{SU}}}
\newcommand{\PU}{\mathsf{PU}}
\newcommand{\wc}{\!\!\downarrow\,}
\newcommand{\Aa}{\mathcal{A}}
\newcommand{\BB}{\mathcal{B}}
\newcommand{\FF}{\mathcal{F}}
\newcommand{\HH}{\mathcal{H}}
\newcommand{\II}{\mathcal{I}}
\newcommand{\PP}{\mathcal{P}}
\newcommand{\RR}{\mathcal{R}}
\newcommand{\Ss}{\mathcal{S}}
\newcommand{\TT}{\mathcal{T}}
\newcommand{\UU}{\mathcal{U}}
\newcommand{\XX}{\mathcal{X}}
\newcommand{\PPP}{\mathbb{P}}
\newcommand{\QQQ}{\mathbb{Q}}
\newcommand{\RRR}{\mathbb{R}}
\begin{document}

\title[Classification of the zero-dimensional homogeneous spaces]{A complete classification of the zero-dimensional homogeneous spaces under determinacy}

\author{Andrea Medini}
\address{Institut f\"{u}r Diskrete Mathematik und Geometrie
\newline\indent Technische Universit\"{a}t Wien
\newline\indent  Wiedner Hauptstra\ss e 8-10/104
\newline\indent 1040 Vienna, Austria}
\email{andrea.medini@tuwien.ac.at}
\urladdr{http://www.dmg.tuwien.ac.at/medini/}

\subjclass[2020]{54H05, 03E15, 03E60.}

\keywords{Homogeneous, zero-dimensional, determinacy, Wadge theory, filter.}

\thanks{The author acknowledges the support of the FWF grants P 35655-N and P 35588-N}

\date{October 22, 2025}

\begin{abstract}
All spaces are assumed to be separable and metrizable. We give a complete classification of the zero-dimensional homogeneous spaces, under the Axiom of Determinacy. This classification is expressed in terms of topological complexity (in the sense of Wadge theory) and Baire category. In the same spirit, we also give a complete classification of the filters on $\omega$ up to homeomorphism. As byproducts, we obtain purely topological characterizations of the semifilters and filters on $\omega$. The Borel versions of these results are in almost all cases due to Fons van Engelen. Along the way, we obtain Wadge-theoretic results of independent interest, especially regarding closure properties.
\end{abstract}

\maketitle

\tableofcontents

Throughout this article, unless we specify otherwise, we will be working in the theory $\ZF+\DC$, that is, the usual axioms of Zermelo-Fraenkel (without the Axiom of Choice) plus the principle of Dependent Choices (see \S\ref{subsection_preliminaries_determinacy} for more details). By \emph{space} we will always mean separable metrizable topological space.

\section{Introduction}\label{section_introduction}

\subsection{History}\label{subsection_introduction_history}

We begin with the notion that motivates the research contained in this article. Intuitively, a space is homogeneous if all of its points ``look alike.''

\begin{definition}\label{definition_homogeneous}
A space $X$ is \emph{homogeneous} if for every $(x,y)\in X\times X$ there exists a homeomorphism $h:X\longrightarrow X$ such that $h(x)=y$.
\end{definition}

Homogeneity is a classical notion in topology, which has been studied in depth (see the survey \cite{arhangelskii_van_mill}). Every topological group is homogeneous. However, even among the zero-dimensional Borel spaces, the converse implication does not hold (see Corollary \ref{corollary_semifilter_homogeneous} and the proof of Proposition \ref{proposition_counterexample_semifilter}).

\begin{definition}\label{definition_wadge_class}
Given a space $Z$, we will say that $\bG$ is a \emph{Wadge class} in $Z$ if there exists $A\subseteq Z$ such that
$$
\bG=\{f^{-1}[A]:f:Z\longrightarrow Z\text{ is a continuous function}\}.
$$
\end{definition}

The systematic study of these classes, founded by William Wadge in his doctoral thesis \cite{wadge_thesis} (see also \cite{wadge_cabal}), is known as Wadge theory, and it has become a classical topic in descriptive set theory (see \cite[\S21.E]{kechris}). Under the Axiom of Determinacy, the collection of all Wadge classes on a zero-dimensional Polish space~$Z$, ordered by inclusion, constitutes a well-behaved hierarchy that is similar to, but much finer than the well-known Borel hierarchy (and not limited to sets of low complexity). Furthermore, as we will discuss in \S\ref{subsection_homogeneous_complexity}, when $\bG$ is a \emph{good} Wadge class in $\omega^\omega$ (see Definition \ref{definition_good}), it will make sense to talk about spaces of \emph{exact complexity} $\bG$. This concept will feature prominently in the statements (and proofs) of our main results.

In his remarkable doctoral thesis \cite{van_engelen_thesis} (see also \cite{van_engelen_ambiguous} and \cite{van_engelen_homogeneous}), Fons van Engelen pioneered the application of Wadge theory to topology by giving a complete classification of the zero-dimensional homogeneous Borel spaces. This classification can be split into three cases. The trivial case is that of the locally compact spaces, since the only zero-dimensional homogeneous ones are the non-empty discrete spaces, $2^\omega$, and $\omega\times 2^\omega$ (see Proposition \ref{proposition_locally_compact}). The next case is that of the low complexity spaces (more precisely, of complexity below $\Delta(\Diff_\omega(\bS^0_2))$) that are not locally compact. The simplest examples of this kind are $\QQQ$ and $\QQQ\times 2^\omega$. This case can be viewed as a unified generalization of a series of classical results that characterize certain ``famous'' spaces (these results are neatly collected in \cite[Theorem 2.4]{van_engelen_ambiguous}). The final case is that of the high complexity spaces (more precisely, of complexity above $\Delta(\Diff_\omega(\bS^0_2))$). It is only in this case that Wadge theory becomes relevant, ultimately because of the closure properties that it provides. For example, in order to construct a homogeneous space of a given complexity $\bG$, we will start with an arbitrary set of complexity $\bG$, then perform certain operations on it to ensure homogeneity: suitable closure properties will guarantee that these operations preserve complexity (see the proof of Theorem \ref{theorem_meager_semifilter}).

\subsection{Our results}\label{subsection_introduction_results}

The aim of this article is to extend van Engelen's classification beyond the Borel realm, under suitable determinacy assumptions. Naturally, our focus will be on the high complexity case, as the low complexity cases have already been exhaustively treated elsewhere (see \cite{van_engelen_ambiguous} or \cite[Chapters 2 and 3]{van_engelen_thesis}). In particular, under the Axiom of Determinacy, our results will yield a classification of \emph{all} zero-dimensional homogeneous spaces. Not surprisingly, this classification can be split into a uniqueness result and an existence result, outlined as follows (see Theorem \ref{theorem_classification_homogeneous} for a more precise formulation), where we assume that all spaces are zero-dimensional and $\bG$ is a good Wadge class in $\omega^\omega$. This assumption does not result in any loss of generality, since every zero-dimensional homogeneous space of high enough complexity has exact complexity $\bG$ for some good Wadge class $\bG$ in $\omega^\omega$ (see Theorem \ref{theorem_homogeneous_spaces_have_good_complexity}). We also remark that every homogeneous space is either meager or Baire (see Proposition \ref{proposition_homogeneous_dichotomy}). The uniqueness theorem was essentially proved in \cite{carroy_medini_muller_homogeneous} (see \S\ref{subsection_homogeneous_uniqueness} for more details).

We hope that the reader will not be too irritated by the fact that we have not yet defined most of the relevant notions, as it would not be feasible to do that here. Even so, we believe that stating our main results early will be useful in guiding the reader through this article. We will only say that $\ell(\bG)$ and $t(\bG)$ respectively denote the \emph{level} and \emph{type} of $\bG$. We also remark that right before the bibliography we give a list of symbols and terminology, which indicates the subsections in which they are defined.

\smallskip

\begin{center}
\textbf{The uniqueness theorem}\\
\smallskip
Up to homeomorphism, there exist at most two homogeneous spaces of exact complexity $\bG$: at most one of them is Baire, and at most one of them is meager.
\end{center}

\newpage

\begin{center}
\textbf{The existence theorem}\\
\smallskip
\smallskip
\smallskip
\begin{tabular}{c|c|c}
& $\bG$ has the & $\bG$ does not have the\\
& separation property & separation property\\\hline
$\ell(\bG)\geq 2$ & \multicolumn{2}{c}{There exist both Baire and meager}\\
or & \multicolumn{2}{c}{homogeneous spaces}\\
 $\ell(\bG)=1$ and $t(\bG)=3$ & \multicolumn{2}{c}{of exact complexity $\bG$}\\\hline
& There exist only Baire &  There exist only meager\\
$\ell(\bG)=1$ and $t(\bG)\in\{1,2\}$ & homogeneous spaces & homogeneous spaces\\
& of exact complexity $\bG$ & of exact complexity $\bG$
\end{tabular}
\end{center}

\bigskip

It will naturally follow from our proof of the above classification result that every zero-dimensional homogeneous space (with the trivial exception of locally compact spaces) is homeomorphic to a semifilter on $\omega$ (see Theorem \ref{theorem_characterization_semifilters}). Furthermore, building on the above classification, we will be able to determine precisely for which good Wadge classes $\bG$ in $\omega^\omega$ there exists a filter on $\omega$ of exact complexity $\bG$ (see Theorem \ref{theorem_classification_filters}). As a corollary, we will obtain a purely topological characterization of the filters on $\omega$ (see Theorem \ref{theorem_characterization_filters}). The Borel versions of all the results concerning filters were obtained by van Engelen in \cite{van_engelen_ideals}, and our generalizations rely heavily on his ideas.

Along the way, we will obtain several Wadge-theoretic results of independent interest, culminating with the closure properties of \S\ref{subsection_wadge_closure_main}. In this regard, the treatment of type given in \S\ref{section_wadge_type}, the preservation results of \S\ref{subsection_wadge_closure_preservation_type} and \S\ref{subsection_wadge_closure_preservation_separation}, and the theory of complexity described in \S\ref{subsection_homogeneous_complexity} also seem worth mentioning. We remark that, as in \cite{carroy_medini_muller_constructing}, we will generally state our results for uncountable zero-dimensional Polish spaces instead of just $\omega^\omega$ (which is usually the only case considered in the literature). Furthermore, we will give level-by-level statements with respect to the determinacy assumptions (see \S\ref{subsection_preliminaries_determinacy}).

In many cases, the generalization of a Borel result under suitable determinacy assumptions is trivial to achieve (simply play the same game, while the determinacy assumptions will guarantee that the proof applies to a wider class of payoff sets). The reason why this is not the case in the present context is that van Engelen's results are fundamentally tied to Louveau's analysis of the Borel Wadge classes from \cite{louveau_article}, and extending this very fine analysis to all Wadge classes does not seem to be feasible.\footnote{\,Partial results in this direction have been obtained in \cite{fournier}, but they are limited to very low levels of the projective hierarchy.} Luckily, the coarser analysis given in \cite{louveau_book} (as developed and generalized in \cite{carroy_medini_muller_constructing}) provides an adequate foundation for our purposes. As a most welcome side-effect, our results will not only generalize van Engelen's, but often have simpler proofs and simpler statements.

While being comfortable with the content of \cite{carroy_medini_muller_constructing} would undoubtedly be helpful, we tried to make the presentation accessible to readers that are not familiar with this article. More specifically, in \S\ref{section_wadge_fundamental}, we collected all the fundamental Wadge-theoretic results that will be needed later on. In addition to being convenient for the reader, this will allow us to incorporate more general versions of known results, with a view towards our intended applications. Furthermore, giving precise statements (and rigorous proofs) of results such as Corollary \ref{corollary_expansion_level}, Corollary \ref{corollary_new_from_old}, Theorem \ref{theorem_level_sup_hausdorff} and Theorem \ref{theorem_separation_generalized} seems to be valuable beyond the specific scope of this paper.

\subsection{Organization of the paper}\label{subsection_introduction_organization}

In \S\ref{section_preliminaries}, we will establish much of the terminology and notation to be employed in the remainder of this article (with the exception of Wadge theory). We will give basic facts concerning determinacy, the Baire property, filters, semifilters and homogeneity. Furthermore, we will discuss how these notions interact with each other (see for example the diagram in \S\ref{subsection_preliminaries_homogeneity}). As we explained above, \S\ref{section_wadge_fundamental} is essentially a concise introduction to Wadge theory, which heavily references \cite{carroy_medini_muller_constructing} for the proofs.

In \S\ref{section_wadge_type}, we will extend Louveau's notion of type beyond the Borel context, introduce separated unions, then use them to characterize the classes of small type (see Theorem \ref{theorem_characterization_small_type}). These results will be applied in \S\ref{section_wadge_closure}, in order to obtain certain closure properties. For our purposes, the most important results of this kind are those regarding good Wadge classes (see Corollary \ref{corollary_closure_good}) and those that link the level, the type and the separation property with closure under separated unions (see Theorems \ref{theorem_closure_main_separation} and \ref{theorem_closure_main_type}).

In \S\ref{section_homogeneous}, we will employ the above results to finally obtain the existence (and non-existence) results. In conjunction with the results of \cite{carroy_medini_muller_homogeneous}, this will yield the desired classification of the zero-dimensional homogeneous spaces. In particular, the results on closure under separated unions will allow us to construct the required homogeneous spaces (see Corollaries \ref{corollary_meager_semifilter} and \ref{corollary_baire_semifilter}). This section begins with a theory of complexity for zero-dimensional spaces, based on the closure properties of good Wadge classes, which will enable us to give natural-sounding statements of our classification results (in fact, we have already done so above). This seems to be another worthwhile improvement with respect to van Engelen's presentation.

In \S\ref{section_filters}, we will obtain the classification and characterization of the filters on $\omega$ mentioned above. For this purpose, we will employ the notion of closure under squares and prove a further existence result (see Theorem \ref{theorem_existence_filter}). Finally, we will discuss the relation between filters and topological groups, including a conjecture that would yield a complete classification of the zero-dimensional topological groups up to homeomorphism (see Conjecture \ref{conjecture_generalized_van_engelen}).

\section{Preliminaries}\label{section_preliminaries}

\subsection{Terminology and notation}\label{subsection_preliminaries_terminology}

Our reference for basic set theory will be \cite{jech}. Let $\Omega$ be a set. We will denote by $\PP(\Omega)$ the collection consisting of all subsets of $\Omega$. When $\Omega$ is countably infinite, we will identify every element of $\PP(\Omega)$ with its characteristic function, so that every subset of $\PP(\Omega)$ will inherit the subspace topology from $2^\Omega$. We will denote by $\Fin(\Omega)$ the collection of all finite subsets of $\Omega$, and by $\Cof(\Omega)$ the collection of all cofinite subsets of $\Omega$. We will use the notation $\Omega^{<\omega}=\bigcup_{n<\omega}\Omega^n$. Recall that $\sigma:\Omega^{<\omega}\longrightarrow\Omega^{<\omega}$ is \emph{monotone} if $\sigma(s)\subseteq\sigma(t)$ whenever $s,t\in \Omega^{<\omega}$ and $s\subseteq t$. Given $s\in\Omega^{<\omega}$, we will use the notation $\Ne_s=\{z\in\Omega^\omega:s\subseteq z\}$.

Let $f:X\longrightarrow Y$. Given $S\subseteq X$, we will use the notation $f[S]=\{f(x):x\in S\}$. Given $S\subseteq Y$, we will use the notation $f^{-1}[S]=\{x\in X:f(x)\in S\}$. When $X=Y$ and $f(x)=x$ for every $x\in X$, we will use the notation $\id_X=f$ for the \emph{identity} on $X$. Recall that $f$ is a \emph{retraction} if $f$ is continuous, $Y\subseteq X$ and $f\re Y=\id_Y$.

Our reference for topology will be \cite{engelking}. A \emph{base} for a space $X$ is a collection $\BB$ consisting of non-empty open subsets of $X$ such that for every $x\in X$ and every non-empty open neighborhood $U$ of $x$ there exists $V\in\BB$ such that $x\in V\subseteq U$. A \emph{local base} for a space $X$ at a point $x\in X$ is a collection $\BB$ consisting of open neighborhoods of $x$ such that for every open neighborhood $U$ of $x$ there exists $V\in\BB$ such that $V\subseteq U$. A \emph{$\pi$-base} $\BB$ for a space is a collection $\BB$ consisting of non-empty open sets such that for every non-empty open set $U$ there exists $V\in\BB$ such that $V\subseteq U$.

A subset of a space is \emph{clopen} if it is closed and open. A space is \emph{zero-dimensional} if it is non-empty and it has a base consisting of clopen sets.\footnote{\,The empty space has dimension $-1$ (see \cite[\S7.1]{engelking}).} When we say that a space $X$ is a \emph{topological group}, we really mean that there exists a topological group whose underlying space is $X$. A subset $S$ of a space $X$ is \emph{meager} if there exist closed nowhere dense subsets $C_n$ of $X$ for $n\in\omega$ such that $S\subseteq\bigcup_{n\in\omega}C_n$. A subset $S$ of a space $X$ is \emph{comeager} if $X\setminus S$ is meager. A space is \emph{meager} if it is a meager subset of itself. A space $X$ is \emph{Baire} if every non-empty open subset of $X$ is non-meager.

Let $X$ and $Y$ be spaces. We will write $X\approx Y$ to mean that $X$ and $Y$ are homeomorphic. An \emph{embedding} is a function $j:X\longrightarrow Y$ such that $j:X\longrightarrow j[X]$ is a homeomorphism. We will say that an embedding is \emph{dense} if it has dense range. A function $f:X\longrightarrow Y$ is \emph{closed} if $f[C]$ is a closed subset of $Y$ whenever $C$ is a closed subset of $X$. Similarly define an \emph{open} function. The straightforward proof of the following result is left to the reader.

\begin{lemma}\label{lemma_closed_function}
Let $Z$ and $W$ be spaces, and let $f:Z\longrightarrow W$ be a closed function. Assume that $X\subseteq Z$ and $Y\subseteq W$ are such that $f[X]\subseteq Y$ and $f[Z\setminus X]\subseteq W\setminus Y$. Then $f\re X:X\longrightarrow Y$ is closed.
\end{lemma}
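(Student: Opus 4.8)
The plan is to verify the defining property of a closed map directly: given a set $C$ that is closed in $X$, I need to show that $(f\re X)[C]=f[C]$ is closed in $Y$. First I would invoke the definition of the subspace topology to write $C=D\cap X$ for some set $D$ that is closed in $Z$. The natural candidate for a closed subset of $W$ witnessing that $f[C]$ is closed in $Y$ is $f[D]$, which is closed in $W$ precisely because $f$ is assumed to be closed.

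The heart of the argument is then the identity $f[C]=f[D]\cap Y$. The inclusion $f[C]\subseteq f[D]\cap Y$ is immediate: $C\subseteq D$ gives $f[C]\subseteq f[D]$, while $C\subseteq X$ together with the hypothesis $f[X]\subseteq Y$ gives $f[C]\subseteq Y$. For the reverse inclusion, suppose $w\in f[D]\cap Y$, say $w=f(z)$ with $z\in D$. If $z$ were in $Z\setminus X$, then $w=f(z)$ would lie in $W\setminus Y$ by the hypothesis $f[Z\setminus X]\subseteq W\setminus Y$, contradicting $w\in Y$; hence $z\in X$, so $z\in D\cap X=C$ and $w\in f[C]$.

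Once this identity is established, the conclusion follows at once: $f[D]$ is closed in $W$, so $f[D]\cap Y$ is closed in the subspace $Y$, and therefore so is $f[C]$. The only place where any care is needed is the reverse inclusion above, where the full strength of the hypothesis $f[Z\setminus X]\subseteq W\setminus Y$ (equivalently, $f^{-1}[Y]\subseteq X$) is used; everything else is routine manipulation of direct images. Accordingly, I do not anticipate a genuine obstacle, only the need to be attentive to this one step.
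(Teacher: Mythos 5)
Your proof is correct; the paper leaves this lemma as a straightforward exercise, and your argument (writing $C=D\cap X$ with $D$ closed in $Z$ and verifying $f[C]=f[D]\cap Y$ via the hypothesis $f[Z\setminus X]\subseteq W\setminus Y$) is exactly the intended routine verification.
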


Our reference for descriptive set theory will be \cite{kechris}. Given a space $Z$, we will denote by $\Borel(Z)$ the collection of all Borel subsets of $Z$, and by $\bS^0_1(Z)$, $\bP^0_1(Z)$ and $\bD^0_1(Z)$ respectively the collections of all open, closed and clopen subsets of~$Z$. A space $Z$ is a \emph{Borel space} if there exists a Polish space $W$ and an embedding $j:Z\longrightarrow W$ such that $j[Z]\in\Borel(W)$. In particular, every Polish space is a Borel space. By proceeding as in the proof of \cite[Proposition 4.2]{medini_zdomskyy_between}, it is easy to show that a space $Z$ is Borel iff $j[Z]\in\Borel(W)$ for every Polish space $W$ and every embedding $j:Z\longrightarrow W$. Given $1\leq\xi<\omega_1$ and spaces $Z$ and $W$, a function $f:Z\longrightarrow W$ is \emph{$\bS^0_\xi$-measurable} if $f^{-1}[U]\in\bS^0_\xi(Z)$ for every $U\in\bS^0_1(W)$. A function $f:Z\longrightarrow W$ is \emph{Borel} if $f^{-1}[U]\in\Borel(Z)$ for every $U\in\bS^0_1(W)$.

The classes defined below constitute the so-called \emph{difference hierarchy} (or \emph{small Borel sets}). For a detailed treatment, see \cite[\S22.E]{kechris} or \cite[Chapter 3]{van_engelen_thesis}. Here, we will only mention that the classes $\Diff_\eta(\bS^0_\xi(Z))$ are among the simplest concrete examples of Wadge classes (see \S\ref{subsection_wadge_fundamental_basics}).

\begin{definition}[Kuratowski]\label{definition_differences}
Let $Z$ be a space, let $1\leq\eta<\omega_1$, and let $1\leq\xi<\omega_1$. Given a sequence of sets $(A_\mu:\mu<\eta)$, define
$$
\Diff_\eta(A_\mu:\mu<\eta)= \left\{
\begin{array}{ll}
\bigcup\{A_\mu\setminus\bigcup_{\zeta<\mu}A_\zeta:\mu<\eta\text{ and }\mu\text{ is odd}\} & \text{if }\eta\text{ is even,}\\
\bigcup\{A_\mu\setminus\bigcup_{\zeta<\mu}A_\zeta:\mu<\eta\text{ and }\mu\text{ is even}\} & \text{if }\eta\text{ is odd.}
\end{array}
\right.
$$
Define $\Diff_\eta(\bS^0_\xi(Z))$ by declaring $A\in\Diff_\eta(\bS^0_\xi(Z))$ if there exist $A_\mu\in\bS^0_\xi(Z)$ for $\mu<\eta$ such that $A=\Diff_\eta(A_\mu:\mu<\eta)$.
\end{definition}

We conclude this subsection by stating a classical result that will serve as a fundamental technical tool in \S\ref{section_wadge_closure} (see \cite[Theorem 22.16]{kechris}).\footnote{\,The conclusion of this theorem could be stated more succinctly by saying that $\bS^0_\xi(X)$ has the \emph{generalized reduction property} (see \cite[Definition 22.14]{kechris}).}

\begin{theorem}\label{theorem_reduction}
Let $X$ be a zero-dimensional space, and let $1\leq\xi<\omega_1$. If $U_n\in\bS^0_\xi(X)$ for $n\in\omega$ then there exist $U'_n\in\bS^0_\xi(X)$ for $n\in\omega$ such that the following conditions are satisfied:
\begin{itemize}
\item $U'_n\subseteq U_n$ for each $n$,
\item $U'_m\cap U'_n=\varnothing$ whenever $m\neq n$,
\item $\bigcup_{n\in\omega}U'_n=\bigcup_{n\in\omega}U_n$.
\end{itemize}
\end{theorem}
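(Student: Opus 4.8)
The plan is to carry out a single ``running difference'' construction along a fixed enumeration of $\omega\times\omega$, after first decomposing each $U_n$ into manageable pieces; zero-dimensionality will enter only through the choice of these pieces.

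First I would fix, for each $n$, a decomposition $U_n=\bigcup_{m\in\omega}A_{n,m}$ into simple building blocks, treating three cases. If $\xi=1$, I would take the $A_{n,m}$ to be clopen: this is where zero-dimensionality is used, since a zero-dimensional space is second countable and so admits a countable clopen base, every open set then being a countable union of basic clopen sets (pad with $\varnothing$ if necessary). If $\xi=\eta+1$ is a successor, I would instead take $A_{n,m}\in\bP^0_\eta(X)$, which is possible by the definition of $\bS^0_{\eta+1}(X)$. If $\xi$ is a limit, I would take $A_{n,m}\in\bP^0_{\eta_{n,m}}(X)$ for suitable ordinals $\eta_{n,m}<\xi$.

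Next, I would fix a bijection $j\longmapsto(n_j,m_j)$ from $\omega$ onto $\omega\times\omega$, set $V_j=A_{n_j,m_j}\setminus\bigcup_{i<j}A_{n_i,m_i}$, and define $U'_n=\bigcup\{V_j:n_j=n\}$. The three required conditions are then immediate: the telescoping identity for running differences gives that the $V_j$ are pairwise disjoint with $\bigcup_j V_j=\bigcup_{n,m}A_{n,m}=\bigcup_n U_n$, and $V_j\subseteq A_{n_j,m_j}\subseteq U_{n_j}$; hence the $U'_n$ are pairwise disjoint, $U'_n\subseteq U_n$, and $\bigcup_n U'_n=\bigcup_n U_n$. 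It remains only to check that $U'_n\in\bS^0_\xi(X)$. The point here is that each $V_j$ is a finite Boolean combination of the sets $A_{n_i,m_i}$ with $i\leq j$: when $\xi=1$ this makes $V_j$ clopen, so $U'_n$ is a countable union of clopen sets and hence open; when $\xi=\eta+1$ one has $A_{n_j,m_j}\in\bP^0_\eta(X)\subseteq\bD^0_\xi(X)$ and $X\setminus A_{n_i,m_i}\in\bS^0_\eta(X)\subseteq\bD^0_\xi(X)$ for $i<j$, so $V_j\in\bD^0_\xi(X)$ because $\bD^0_\xi(X)$ is closed under finite intersections; and when $\xi$ is a limit, bounding the finitely many relevant $\eta_{n_i,m_i}$ by some $\zeta<\xi$ gives $V_j\in\bD^0_{\zeta+1}(X)\subseteq\bS^0_\xi(X)$ by the same reasoning. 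In every case $U'_n$ is a countable union of members of $\bS^0_\xi(X)$, hence lies in $\bS^0_\xi(X)$.

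I do not expect a genuine obstacle. The only essential ingredient is the clopen decomposition of open sets in the base case $\xi=1$: for a general separable metrizable space an open set is merely a countable increasing union of \emph{closed} sets, the running differences would then land in $\bD^0_2(X)$, and $U'_n$ would be pushed into $\bS^0_2(X)$ — which is exactly why the hypothesis ``zero-dimensional'' cannot be dropped. The rest is routine bookkeeping: organizing the three cases for $\xi$ uniformly and invoking the standard closure properties of $\bD^0_\xi(X)$ and $\bS^0_\xi(X)$ under finite intersections and countable unions. Note in particular that no induction on $\xi$ is needed, since the argument is the same at every level once the appropriate building blocks are in hand.
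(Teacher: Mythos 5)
Your argument is correct and is essentially the classical proof of the generalized reduction property for $\bS^0_\xi$ (the paper gives no proof of its own, deferring to \cite[Theorem 22.16]{kechris}, whose argument is exactly this running-differences construction, with the clopen-base decomposition supplying the $\xi=1$ case that needs zero-dimensionality). Your case analysis and the closure properties you invoke all check out, so there is nothing to add.
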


\subsection{Determinacy and nice topological pointclasses}\label{subsection_preliminaries_determinacy}

For more extensive introductions to the topic of games, we refer to \cite[\S20]{kechris} or \cite[Chapter 33]{jech}. Here, we only want to give the definitions of strategy and determinacy. Given a set $\Omega$ and $S\subseteq\Omega^\omega$, a \emph{play} of the \emph{game} $\Ga=\Ga(\Omega,S)$ is described by the diagram
\begin{center}
\begin{tabular}{c|ccccl}
I & $x_0$ & & $x_1$ & & $\cdots$\\\hline
II & & $y_0$ & & $y_1$ & $\cdots$,
\end{tabular}
\end{center}
where $x_n,y_n\in\Omega$ for $n\in\omega$. We will say that Player I \emph{wins} this play of $\Ga$ if $(x_0,y_0,x_1,y_1,\ldots)\in S$. Player II \emph{wins} if Player I does not win. The set $S$ is called the \emph{payoff set}.

A \emph{strategy} for Player I on $\Omega$ is a monotone $\sigma:\Omega^{<\omega}\longrightarrow\Omega^{<\omega}$ such that $|\sigma(s)|=|s|+1$ for every $s\in\Omega^{<\omega}$. We will say that a strategy $\sigma$ for Player I on $\Omega$ is \emph{winning} in $\Ga$ if setting $x\re n+1=\sigma(y\re n)$ for $n\in\omega$ yields a play in which Player I wins for every $y\in\Omega^\omega$, where $x=(x_0,x_1,\ldots)$ and $y=(y_0,y_1,\ldots)$. Similarly, a \emph{strategy} for Player II on $\Omega$ is a monotone $\tau:\Omega^{<\omega}\longrightarrow\Omega^{<\omega}$ such that $|\tau(s)|=|s|$ for all $s\in\Omega^{<\omega}$, and $\tau$ is \emph{winning} in $\Ga$ if setting $y\re n=\tau(x\re n)$ for $n\in\omega$ yields a play in which Player II wins for every $x\in\Omega^\omega$. We will say that the game $\Ga$ (or simply the set $S$) is \emph{determined} if (exactly) one of the players has a winning strategy. For the purposes of this article, only the case in which $\Omega$ is countable will be relevant. Given $\bS\subseteq\PP(\omega^\omega)$, we will write $\Det(\bS)$ to mean that every element of $\bS$ is determined. The assumption $\Det(\PP(\omega^\omega))$ is known as the \emph{Axiom of Determinacy} ($\AD$). The assumption $\Det(\bigcup_{1\leq n<\omega}\bS^1_n(\omega^\omega))$ is known as the axiom of \emph{Projective Determinacy} ($\PD$).

We will use $\AC$ to denote the Axiom of Choice, and $\AC_\omega$ to denote the Countable Axiom of Choice. Recall that the principle of \emph{Dependent Choices} ($\DC$) states that if $R$ is a binary relation on a non-empty set $\Omega$ such that for every $x\in\Omega$ there exists $y\in\Omega$ such that $(y,x)\in R$, then there exists a sequence $(x_0,x_1,\ldots)\in\Omega^\omega$ such that $(x_{n+1},x_n)\in R$ for every $n\in\omega$. This principle is what is needed to carry out recursive constructions of length $\omega$. Furthermore, it is easy to see that the implications $\AC\rightarrow\DC\rightarrow\AC_\omega$ hold (see \cite[Exercise 5.7]{jech}).

As the determinacy assumptions get stronger, the set-theoretic universe becomes more regular. It follows that $\AC$ is incompatible with $\AD$, since $\AC$ enables one to construct pathological sets (see for example \cite[Lemma 33.1]{jech}). For this reason, as we mentioned at the beginning of the article, we will be working in $\ZF+\DC$.\footnote{\,As eloquently put by Kanamori, while comparing $\DC$ to a weaker axiom, ``it soon became \emph{de rigueur} to assume the stronger $\DC$ in much of the investigation of $\AD$'' (see \cite[page 378]{kanamori}).} It is a famous theorem of Martin that $\Det(\Borel(\omega^\omega))$ holds in $\ZF+\AC_\omega$ (this was originally proved in \cite{martin_determinacy}, but see also \cite[Remark (2) on page 307]{martin_inductive}). However, Harrington showed that $\Det(\bS^1_1(\omega^\omega))$ has large cardinal strength (see \cite{harrington}). For the consistency of $\ZF+\DC+\AD$, see \cite{neeman} and \cite[Proposition 11.13]{kanamori}.

Most results that use determinacy assumptions follow a consistent pattern: they will hold for Borel sets without any additional assumptions (by Martin's theorem), for projective sets under $\PD$, and for all sets under $\AD$. The following is \cite[Definition 3.1]{carroy_medini_muller_constructing}, and it provides a convenient expositional tool to simultaneously state the various versions of our results.\footnote{\,Notice that the term ``function'' in the following definition is an abuse of terminology, as each topological pointclass is a proper class. Therefore, every theorem in this paper that mentions these pointclasses is strictly speaking an infinite scheme (one theorem for each topological pointclass).} This is what we mean when we speak about ``level-by-level'' results. Given a set $Z$ and $\bS\subseteq\PP(Z)$, we will denote by $\bool\bS$ the smallest boolean subalgebra of $\PP(Z)$ that contains $\bS$.

\begin{definition}[Carroy, Medini, M\"uller]\label{definition_pointclass}
A function $\bS$ is a \emph{topological pointclass} if it satisfies the following requirements:
\begin{itemize}
\item The domain of $\bS$ is the class of all spaces,
\item $\bS(Z)\subseteq\PP(Z)$ for every space $Z$,
\item If $f:Z\longrightarrow W$ is continuous and $B\in\bS(W)$ then $f^{-1}[B]\in\bS(Z)$.
\end{itemize}
We will say that a topological pointclass $\bS$ is \emph{nice} if it satisfies the following additional requirements:
\begin{itemize}
\item $\Borel(Z)\subseteq\bS(Z)$ for every space $Z$,
\item $\bool\bS(Z)=\bS(Z)$ for every space $Z$,
\item If $f:Z\longrightarrow W$ is a Borel function and $B\in\bS(W)$ then $f^{-1}[B]\in\bS(Z)$,
\item For every space $Z$, if $j[Z]\in\bS(W)$ for some Borel space $W$ and embedding $j:Z\longrightarrow W$, then $i[Z]\in\bS(W)$ for every Borel space $W$ and embedding $i:Z\longrightarrow W$.
\end{itemize}
\end{definition}

For the purposes of this paper, the following are the intended examples of nice topological pointclasses.\footnote{\,For a more detailed discussion, see \cite[\S3]{carroy_medini_muller_constructing}.} Recall that $\bS^1_n(Z)$ for $1\leq n<\omega$ can be defined for an arbitrary space $Z$ by declaring $A\in\bS^1_n(Z)$ if there exists a Polish space $W$ and an embedding $j:Z\longrightarrow W$ such that $j[A]=\widetilde{A}\cap j[Z]$ for some $\widetilde{A}\in\bS^1_n(Z)$.
\begin{itemize}
 \item $\bS(Z)=\Borel(Z)$ for every space $Z$,
 \item $\bS(Z)=\bool\bS^1_n(Z)$ for every space $Z$, where $1\leq n<\omega$,
 \item $\bS(Z)=\bigcup_{1\leq n<\omega}\bS^1_n(Z)$ for every space $Z$,
 \item $\bS(Z)=\PP(Z)$ for every space $Z$.
\end{itemize}

We conclude this subsection by giving precise formulations of well-known results concerning the Baire property under suitable determinacy assumption. We remark that Corollary \ref{corollary_baire_dense_subspace} will be needed in the proof of Theorem \ref{theorem_uniqueness_embeddings}. Recall that a subset $A$ of a space $Z$ has the \emph{Baire property} if there exists an open subset $U$ of $Z$ such that $A\Delta U$ is a meager subset of $Z$.

\begin{theorem}\label{theorem_baire_property_baire_space}
Let $\bS$ be a nice topological pointclass, and assume that $\Det(\bS(\omega^\omega))$ holds. Then every element of $\bS(\omega^\omega)$ has the Baire property in $\omega^\omega$.
\end{theorem}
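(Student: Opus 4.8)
The plan is to prove this by the standard unfolding (Banach--Mazur / $**$-game) argument, adapted to the determinacy hypothesis and carried out in $\ZF+\DC$. Fix $A \in \bS(\omega^\omega)$. The Banach--Mazur game $\mathsf{BM}(A)$ has the two players alternately play basic open sets $U_0 \supseteq U_1 \supseteq \cdots$ (say, nonempty basic clopen subsets of $\omega^\omega$ with $\mathrm{diam}(U_n) \to 0$), and Player II wins iff the unique point $z$ in $\bigcap_n U_n$ lies in $A$. The classical Banach--Mazur theorem states that $A$ is comeager iff Player II has a winning strategy, and $A$ is meager \emph{in some nonempty open set} iff Player I has a winning strategy; hence if $\mathsf{BM}(A)$ is determined then either $A$ is comeager in $\omega^\omega$ or $A$ is meager in some nonempty basic open set. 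I would recall (with a reference to \cite[\S8.H]{kechris}, or reprove briefly, taking care to use only $\DC$) that this equivalence holds in our base theory.

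The next step is to handle the localization to obtain the Baire property everywhere, not merely comeagerness. For each basic clopen set $N_s \subseteq \omega^\omega$, consider $A \cap N_s$, and transport it to a copy of $\omega^\omega$ via the canonical homeomorphism $N_s \approx \omega^\omega$; since $\bS$ is a topological pointclass, the corresponding subset of $\omega^\omega$ still lies in $\bS(\omega^\omega)$, so by $\Det(\bS(\omega^\omega))$ the game $\mathsf{BM}(A \cap N_s)$ relativized to $N_s$ is determined. Let $U$ be the union of all basic clopen $N_s$ such that Player I wins $\mathsf{BM}(A \cap N_s)$ (equivalently, $A \cap N_s$ is meager in some open subset of $N_s$). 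The key claim is then that $A \triangle V$ is meager, where $V$ is the open set obtained by removing from $\omega^\omega$ the closure of $U$ appropriately, or more precisely: I would show $A \setminus \overline{U}$ is comeager in $\omega^\omega \setminus \overline{U}$ and $A \cap W$ is meager for a suitable open $W$ with $U \subseteq W$, so that $A$ differs from an open set by a meager set. The standard way to package this: let $U^* = \bigcup \{ N_s : A \cap N_s \text{ is meager} \}$; then $A \cap U^*$ is meager (here one uses that a countable union of meager sets is meager, which needs $\DC$, and that $\{N_s\}$ is a countable base), and on $\omega^\omega \setminus U^*$ one argues that $A$ is comeager, because for every basic clopen $N_s$ not meeting $U^*$ in a meager way, i.e.\ with $N_s \not\subseteq U^*$, Player I does not win $\mathsf{BM}(A \cap N_s)$, hence Player II does, hence $A \cap N_s$ is comeager in $N_s$; running this over a countable base shows $A$ is comeager in $\omega^\omega \setminus \overline{U^*}$ (modulo a meager boundary). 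Combining, $A \triangle (\omega^\omega \setminus \overline{U^*})$ is meager, so $A$ has the Baire property.

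The main obstacle, and the point requiring care, is twofold: first, ensuring that every use of ``a countable union of meager sets is meager'' and of the Banach--Mazur characterization goes through in $\ZF+\DC$ rather than $\ZF+\AC$ — this is fine, since $\DC$ (indeed $\AC_\omega$) suffices both for the countable-additivity of the meager ideal and for the recursive construction of a play of length $\omega$ following a strategy, but it should be checked explicitly. Second, one must confirm that the localized payoff sets $A \cap N_s$, viewed inside $\omega^\omega$ via the homeomorphism $N_s \approx \omega^\omega$, remain in $\bS(\omega^\omega)$; this is immediate from the third clause of the definition of a topological pointclass (closure under continuous preimages), applied to the homeomorphism and its inverse. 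A minor bookkeeping subtlety is matching the Banach--Mazur game (whose moves are open sets) with the format of $\Ga(\Omega,S)$ for countable $\Omega$ as set up in \S\ref{subsection_preliminaries_determinacy}: one fixes an enumeration of a countable base of clopen sets of $\omega^\omega$ and codes legal positions by natural numbers, so that $\mathsf{BM}(A)$ becomes $\Ga(\omega, S)$ for a payoff set $S$ which is a continuous preimage of $A$ together with a closed ``legality'' condition — hence $S \in \bool\bS(\omega^\omega)$; but if one only assumes $\bS$ is a topological pointclass one should instead absorb the legality condition by the usual device of declaring illegal moves an immediate loss, keeping $S$ a continuous preimage of $A$ up to a clopen modification, and here the niceness of $\bS$ (closure under $\bool$) is precisely what lets us not worry about this. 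I expect the write-up to be short, the only genuinely delicate part being the passage from ``comeager in a localization'' to ``has the Baire property'' via the open set $\omega^\omega \setminus \overline{U^*}$.
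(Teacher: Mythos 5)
Your proposal is correct and follows essentially the same route as the paper: the paper also uses the unfolded Banach--Mazur ($**$-)game, with moves in $\omega^{<\omega}\setminus\{\varnothing\}$ and payoff a continuous preimage of $(\omega^\omega\setminus A)\cup U$, invoking niceness of $\bS$ to see that this payoff lies in $\bS(\omega^\omega)$ and is hence determined. The only difference is one of packaging: the paper delegates the entire localization argument (your $U^*$ construction) to \cite[Exercises 8.35 and 8.36]{kechris}, whereas you re-derive it; just note that the relevant condition on a basic clopen set in that step should be $N_s\cap U^*=\varnothing$ rather than $N_s\not\subseteq U^*$.
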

\begin{proof}
Set $\Omega=\omega^{<\omega}\setminus\{\varnothing\}$. Given $s_n\in\Omega$ for $n\in\omega$, where each $s_n\in\omega^{k_n+1}$ for suitable $k_n\in\omega$, define
$$
\varphi(s_0,s_1,\ldots)=(s_0(0),\ldots,s_0(k_0),s_1(0)\ldots,s_1(k_1),\ldots),
$$
and observe that $\varphi:\Omega^\omega\longrightarrow\omega^\omega$  is continuous. In order to prove that $A$ has the Baire property in $\omega^\omega$, by \cite[Exercises 8.35 and 8.36]{kechris}, it will be enough to show that $\Ga(\Omega,\varphi^{-1}[(\omega^\omega\setminus A)\cup U])$ is determined for every open subset $U$ of $\omega^\omega$, where $\Ga$ is the game introduced at the beginning of this subsection. This is straightforward to check, using the fact that $\bS$ is a nice topological pointclass and our determinacy assumptions.
\end{proof}

\begin{corollary}\label{corollary_baire_property_polish}
Let $\bS$ be a nice topological pointclass, and assume that $\Det(\bS(\omega^\omega))$ holds. Let $Z$ be a Polish space. Then every element of $\bS(Z)$ has the Baire property in $Z$.
\end{corollary}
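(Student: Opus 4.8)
The plan is to reduce the statement to Theorem~\ref{theorem_baire_property_baire_space} by means of a continuous open surjection from $\omega^\omega$ onto $Z$, together with a transfer principle saying that such surjections both preserve and reflect the Baire property. We may of course assume $Z\neq\varnothing$. Granting the two ingredients below, the proof is immediate: given $A\in\bS(Z)$, since $\bS$ is a topological pointclass we have $g^{-1}[A]\in\bS(\omega^\omega)$ whenever $g:\omega^\omega\longrightarrow Z$ is continuous; so if $g$ is also an open surjection then Theorem~\ref{theorem_baire_property_baire_space} gives that $g^{-1}[A]$ has the Baire property in $\omega^\omega$, and the transfer principle gives that $A$ has the Baire property in $Z$.

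The existence of a continuous open surjection $g:\omega^\omega\longrightarrow Z$ is standard. One builds a scheme of \emph{open} subsets $F_s$ of $Z$, indexed by $s\in\omega^{<\omega}$, with $F_\varnothing=Z$, with the sets $F_t$ for $t\supseteq s$ and $|t|=|s|+1$ covering $F_s$ and satisfying $\overline{F_t}\subseteq F_s$, and with $\mathrm{diam}(F_s)\leq 2^{-|s|}$; this is routine, using that $Z$ is a Lindel\"{o}f metric space. By completeness, $\bigcap_{n\in\omega}\overline{F_{x\re n}}$ is a singleton for each $x\in\omega^\omega$, and letting $g(x)$ be its unique element defines a continuous surjection with $g[\Ne_s]=F_s$ for every $s$; in particular $g$ is open. (When $Z$ is zero-dimensional one may instead take $g$ to be a retraction of $\omega^\omega$ onto a closed copy of $Z$, which is slightly quicker; but for general $Z$ an open surjection is genuinely needed, since $Z$ need not embed into $\omega^\omega$.)

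The transfer principle is the assertion that, for any continuous open surjection $g:X\longrightarrow Y$ between Polish spaces and any $A\subseteq Y$, the set $A$ has the Baire property in $Y$ if and only if $g^{-1}[A]$ has the Baire property in $X$. Preservation (the ``only if'') is routine: $g$ continuous and open implies that $g^{-1}$ of an open set is open and $g^{-1}$ of a nowhere dense closed set is nowhere dense closed, hence $g^{-1}$ of a meager set is meager. The crux is reflection, and its essential ingredient is the (Kuratowski--Ulam-type) fact that continuous open surjections between Polish spaces also \emph{reflect} meagerness: if $g^{-1}[B]$ is meager in $X$ then $B$ is meager in $Y$. This can be proved by using $g$ and $g^{-1}$ to convert a winning strategy for Player~II in the Banach--Mazur game on $X$ with payoff $g^{-1}[Y\setminus B]$ into a winning strategy for Player~II in the Banach--Mazur game on $Y$ with payoff $Y\setminus B$, while arranging that the successively played open sets shrink to points, and then applying Oxtoby's theorem (see \cite[\S8.H]{kechris}). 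Granting this, reflection of the Baire property follows from the standard local characterization of the Baire property (a set has it exactly when, inside every nonempty open set, it is meager or comeager on some nonempty open subset): if $g^{-1}[A]$ has the Baire property in $X$ and $V\subseteq Y$ is nonempty open, pick a nonempty open $W'\subseteq g^{-1}[V]$ on which $g^{-1}[A]$ is meager or comeager; since $g\re W':W'\longrightarrow g[W']$ is again a continuous open surjection between Polish spaces, the meagerness-reflection fact shows that $A$ is correspondingly meager or comeager on the nonempty open set $g[W']\subseteq V$.

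I expect the only non-routine point to be the strategy-conversion in the meagerness-reflection fact, the subtlety being to make the diameters of the successively played open sets tend to $0$ — so that both runs converge to single points related by $g$ — while still honoring all the inclusions imposed by the given strategy; everything else is bookkeeping and elementary point-set topology. An alternative that avoids isolating the transfer principle is to rerun the proof of Theorem~\ref{theorem_baire_property_baire_space} with $Z$ in place of $\omega^\omega$, replacing the concatenation map $\varphi:\Omega^\omega\longrightarrow\omega^\omega$ used there by its composition with the open surjection $g$, and invoking \cite[Exercises 8.35 and 8.36]{kechris} for $Z$.
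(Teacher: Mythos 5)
Your proposal is correct, and its first half coincides with the paper's: the paper also reduces to Theorem~\ref{theorem_baire_property_baire_space} by fixing a continuous open surjection $f:\omega^\omega\longrightarrow Z$, citing \cite[Exercise 7.14]{kechris} rather than constructing it (the construction you sketch is exactly the one behind that exercise). The two arguments part ways in the transfer step. The paper pushes the witness forward: writing $f^{-1}[A]\,\Delta\,U$ meager with $U$ open, it notes that $A\,\Delta\,f[U]\subseteq f[f^{-1}[A]\,\Delta\,U]$ and then appeals to the claim that open continuous surjections map meager sets to meager sets. You instead pull back, using the fact that continuous open surjections reflect meagerness through \emph{preimages}, combined with the local characterization of the Baire property. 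Your route is longer but rests on the correct general principle: forward images of meager sets under continuous open surjections need not be meager (the first-coordinate projection $\omega^\omega\times\omega^\omega\longrightarrow\omega^\omega$ maps the closed nowhere dense diagonal onto the whole space), so the one-line justification ending the paper's proof is false as a general statement, and some form of your preimage-based transfer is genuinely needed; your extra care here is well placed rather than wasted. Two minor remarks: the meagerness-reflection fact you isolate is itself an exercise in \cite[\S 8]{kechris}, so you could cite it instead of reproving it via the Banach--Mazur game; and the alternative you mention at the end (rerunning the proof of Theorem~\ref{theorem_baire_property_baire_space} for $Z$ directly, precomposing $\varphi$ with the open surjection $g$) is also viable and arguably the cleanest way to make the corollary self-contained.
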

\begin{proof}
The desired result is trivial if $Z$ is empty, so assume that $Z$ is non-empty. Then, by \cite[Exercise 7.14]{kechris}, we can fix an open continuous surjection $f:\omega^\omega\longrightarrow Z$. Pick $A\in\bS(Z)$. Notice that $f^{-1}[A]\in\bS(\omega^\omega)$ because $\bS$ is a topological pointclass, hence $f^{-1}[A]$ has the Baire property in $\omega^\omega$ by Theorem \ref{theorem_baire_property_baire_space}. This means that $f^{-1}[A]\,\Delta\, U$ is meager in $\omega^\omega$ for some open subset $U$ of $\omega^\omega$. Since
$$
A\,\Delta\, f[U]= f[f^{-1}[A]]\,\Delta\, f[U]\subseteq f[f^{-1}[A]\,\Delta\, U],
$$
it will be enough to show that the right-hand side is meager in $Z$. This follows from the fact that open continuous surjections map meager sets to meager sets.
\end{proof}

\begin{corollary}\label{corollary_baire_dense_subspace}
Let $\bS$ be a nice topological pointclass, and assume that $\Det(\bS(\omega^\omega))$ holds. Let $Z$ be a Polish space, and let $X\in\bS(Z)$ be a dense Baire subspace of $Z$. Then $X$ is comeager in $Z$.
\end{corollary}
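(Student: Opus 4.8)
The plan is to exploit the Baire property of $X$ (available via Corollary \ref{corollary_baire_property_polish}, which is where the determinacy hypothesis enters) together with the density and Baireness of $X$, reducing everything to the classical argument that a dense Baire subspace possessing the Baire property must be comeager.

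First I would fix, using Corollary \ref{corollary_baire_property_polish}, an open subset $U$ of $Z$ such that $X\,\Delta\,U$ is meager in $Z$, and claim that $C=Z\setminus U$ is nowhere dense in $Z$. Since $C$ is closed, it suffices to show that $C$ has empty interior. Suppose not, and pick a non-empty open $V\subseteq Z$ with $V\subseteq C$. Then $V\cap X\subseteq X\setminus U\subseteq X\,\Delta\,U$, so $V\cap X$ is meager in $Z$.

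The next step is the observation that meagerness passes down to the dense subspace $X$: if $S\subseteq X$ is meager in $Z$, then $S$ is meager in $X$. Indeed, writing $S\subseteq\bigcup_{n\in\omega}C_n$ with each $C_n$ closed and nowhere dense in $Z$, each $C_n\cap X$ is closed in $X$, and it is nowhere dense in $X$: if $W$ is open in $Z$ with $\varnothing\neq W\cap X\subseteq C_n\cap X$, then $W\subseteq\cl_Z(W\cap X)\subseteq C_n$ because $X$ is dense in $Z$, forcing $W=\varnothing$. Hence $S=S\cap X\subseteq\bigcup_{n\in\omega}(C_n\cap X)$ is meager in $X$. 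Applying this with $S=V\cap X$ shows that $V\cap X$ is meager in $X$. But $V\cap X$ is a non-empty open subset of $X$ (non-empty because $X$ is dense and $V$ is non-empty open), contradicting the hypothesis that $X$ is Baire. This establishes the claim that $Z\setminus U$ is nowhere dense, hence meager.

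Finally, from $Z\setminus X\subseteq(Z\setminus U)\cup(U\setminus X)$, together with $Z\setminus U$ meager and $U\setminus X\subseteq X\,\Delta\,U$ meager, I conclude that $Z\setminus X$ is meager in $Z$, i.e.\ $X$ is comeager in $Z$. I do not anticipate a genuine obstacle: the only non-elementary ingredient is Corollary \ref{corollary_baire_property_polish}, and the step deserving the most care is the routine verification that meager subsets of $Z$ contained in $X$ stay meager relative to $X$, which is exactly where the density of $X$ is used.
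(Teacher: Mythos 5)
Your proof is correct and follows essentially the same route as the paper's: both obtain the Baire property of $X$ from Corollary \ref{corollary_baire_property_polish}, transfer meagerness down to the dense subspace $X$, and use Baireness of $X$ to conclude comeagerness in $Z$. The only cosmetic difference is that the paper packages the Baire property as $X=G\cup M$ with $G\in\bP^0_2(Z)$ and $M$ meager (via \cite[Proposition 8.23]{kechris}) and concludes that the dense $G_\delta$ set $G$ is comeager, whereas you work directly with the open set $U$ satisfying $X\,\Delta\,U$ meager and show $Z\setminus U$ is nowhere dense.
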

\begin{proof}
First, observe that $X$ has the Baire property in $Z$ by Corollary \ref{corollary_baire_property_polish}. This means that there exist $G\in\bP^0_2(Z)$ and a meager subset $M$ of $Z$ such that $X=G\cup M$ (see \cite[Proposition 8.23]{kechris}). Notice that $M$ is also meager in $X$ by density. Since $X$ is a Baire space, it follows that $G$ is dense in $X$, hence in $Z$.
\end{proof}

\subsection{Filters and semifilters}\label{subsection_preliminaries_filters}

Throughout this subsection, we will assume that $\Omega$ is a given countably infinite set. We will say that $\XX\subseteq\PP(\Omega)$ is \emph{closed under finite modifications} if $|x\Delta y|<\omega$ and $x\in\XX$ implies $y\in\XX$ for all $x,y\subseteq\Omega$. A collection $\XX$ is \emph{closed under finite intersections} (respectively \emph{closed under finite unions}) if $x\cap y\in\XX$ (respectively $x\cup y\in\XX$) for all $x,y\in\XX$. A collection $\XX\subseteq\PP(\Omega)$ is \emph{upward-closed} (respectively \emph{downward-closed}) if $y\supseteq x\in\XX$ (respectively $y\subseteq x\in\XX$) implies $y\in\XX$ for all $x,y\in\PP(\Omega)$.

\begin{definition}\label{definition_semifilter}
A \emph{semifilter} on $\Omega$ is a collection $\Ss\subseteq\PP(\Omega)$ that satisfies the following conditions:
\begin{itemize}
\item $\varnothing\notin\Ss$ and $\Omega\in\Ss$,
\item $\Ss$ is closed under finite modifications,
\item $\Ss$ is upward-closed.
\end{itemize}
A \emph{filter} on $\Omega$ is a semifilter $\FF$ on $\Omega$ that satisfies the following additional condition:
\begin{itemize}
\item $\FF$ is closed under finite intersections.
\end{itemize}
A \emph{semiideal} on $\Omega$ is a collection $\RR\subseteq\PP(\Omega)$ that satisfies the following conditions:
\begin{itemize}
\item $\varnothing\in\RR$ and $\Omega\notin\RR$,
\item $\RR$ is closed under finite modifications,
\item $\RR$ is downward-closed.
\end{itemize}
An \emph{ideal} on $\Omega$ is a semiideal $\II$ on $\Omega$ that satisfies the following additional condition:
\begin{itemize}
\item $\II$ is closed under finite unions.
\end{itemize}
When the set $\Omega$ is not mentioned, we will assume that $\Omega=\omega$.
\end{definition}

A collection $\XX\subseteq\PP(\Omega)$ has the \emph{finite intersection property} (respectively the \emph{finite union property}) if $\bigcap F\notin\Fin(\Omega)$ (respectively $\bigcup F\notin\Cof(\Omega)$) whenever $F\subseteq\XX$ is finite and non-empty. One can easily check that a collection $\XX\subseteq\PP(\Omega)$ has the finite intersection property (respectively the finite union property) iff it can be extended to a filter (respectively an ideal) on $\Omega$.

Next, we will point out how semifilters (respectively filters) are indistinguishable from semiideals (respectively ideals) from the topological point of view. To be more precise, given $F\subseteq\omega$, $x\in 2^\omega$ and $n\in\omega$, define 
$$
\left.
\begin{array}{lcl}
& & h_F(x)(n)= \left\{
\begin{array}{ll}
1-x(n) & \textrm{if }n\in F,\\
x(n) & \textrm{if }n\in\omega\setminus F.
\end{array}
\right.
\end{array}
\right.
$$
It is clear that each $h_F:2^\omega\longrightarrow 2^\omega$ is a homeomorphism. Throughout this article, we will denote by $c=h_\omega$ the complement homeomorphism. Given any $\XX\subseteq\PP(\omega)$, it is trivial to check that $\XX$ is a semifilter (respectively a semiideal) iff $c[\XX]$ is a semiideal (respectively a semifilter). Similarly, one sees that $\XX$ is a filter (respectively an ideal) iff $c[\XX]$ is an ideal (respectively a filter). Since $\XX\approx c[\XX]$, this means that every result about the topology of semifilters (respectively filters) immediately translates to a result about semiideals (respectively ideals), and viceversa.

As an application of this principle, one can see that every filter is a topological group. In fact, it is straightforward to check that every ideal is a topological subgroup of $2^\omega$ with the operation of coordinatewise addition modulo $2$, and any space that is homeomorphic to a topological group is a topological group. In particular, every filter is homogeneous. As Corollary \ref{corollary_semifilter_homogeneous} will show, this is true of semifilters as well, but the proof is considerably more involved.

Another notable topological property of filters is given by the following result from \cite{medini_zdomskyy_filters}, which will be useful in \S\ref{subsection_filters_classification}.

\begin{theorem}[Medini, Zdomskyy]\label{theorem_filters_square}
Let $\FF$ be a filter. Then $\FF\times\FF\approx\FF$.
\end{theorem}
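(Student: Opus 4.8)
The plan is to reduce, by elementary manipulations, to the claim that a certain ``doubled'' filter is homeomorphic to $\FF$, and then to prove that claim by a direct back-and-forth construction relying only on the filter axioms and on the fact that $\Cof(\omega)\subseteq\FF$. First I would observe that interleaving even and odd coordinates gives a homeomorphism $2^\omega\times 2^\omega\longrightarrow 2^\omega$ (in fact an isomorphism for coordinatewise addition modulo $2$) under which $\FF\times\FF$ is carried to
$$
\mathcal{G}:=\{x\in 2^\omega:x_0\in\FF\text{ and }x_1\in\FF\}=\{x\in 2^\omega:x_0\cap x_1\in\FF\},
$$
where $x_0=\{n\in\omega:x(2n)=1\}$ and $x_1=\{n\in\omega:x(2n+1)=1\}$, and the second equality holds because $\FF$ is upward-closed and closed under finite intersections. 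Using in addition that $\FF$ is closed under finite modifications, one checks that $\mathcal{G}$ is a filter on $\omega$; note that $\mathcal{G}=\psi^{-1}[\FF]$ for the continuous open surjection $\psi:2^\omega\longrightarrow 2^\omega$ given blockwise by $\psi(x)(n)=x(2n)\wedge x(2n+1)$. Thus $\FF\times\FF\approx\mathcal{G}$, and it remains to prove $\mathcal{G}\approx\FF$.

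That $\mathcal{G}\approx\FF$ is at least plausible: $\mathcal{G}$ contains a closed copy of $\FF$ (namely $\psi^{-1}[\FF]\cap D$, where $D=\{x:x(2n)=x(2n+1)\text{ for all }n\}$ is mapped homeomorphically onto $2^\omega$ by $\psi$), and $\FF$ is even a retract of $\mathcal{G}$ (transport the retraction $(x,y)\mapsto(x\cap y,x\cap y)$ of $\FF\times\FF$ onto its diagonal). To upgrade this to a homeomorphism I would build one as the limit of a fusion: by recursion on $k$, construct finite clopen partitions $\mathcal{U}_k$ of $2^\omega$ on the $\mathcal{G}$-side and $\mathcal{V}_k$ on the $\FF$-side, each refining its predecessor and refining a fixed sequence of finite clopen partitions of $2^\omega$ with meshes tending to $0$, together with bijections $\mathcal{U}_k\longrightarrow\mathcal{V}_k$ refining the previous ones, subject to the key requirement that whenever $U\in\mathcal{U}_k$ is matched with $V\in\mathcal{V}_k$, the traces $\mathcal{G}\cap U$ and $\FF\cap V$ agree ``so far'', in the sense that $U$ and $V$ are decided by finitely many coordinates and the finite conditions describing the two traces correspond. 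The recursion is kept alive by two facts: $\Cof(\omega)\subseteq\FF$, so $\FF$ and $\mathcal{G}$ are dense in $2^\omega$ and the trace of either on a nonempty basic clopen set is, up to a fixed finite part, an object of the same kind; and $\FF$'s closure under finite modifications and finite intersections, which controls how the trace of $\FF$, hence of $\mathcal{G}=\psi^{-1}[\FF]$, on a basic clopen set changes when one more coordinate is decided. Assembling the matched bijections then yields a homeomorphism $2^\omega\longrightarrow 2^\omega$ carrying $\mathcal{G}$ onto $\FF$.

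The hard part will be maintaining the ``agree so far'' condition across the recursion: at each step and for each matched pair of pieces one must choose how to subdivide and re-match so that the finite descriptions of the two traces stay aligned, and this must be done uniformly, with no regularity hypothesis on $\FF$ (such as Borelness or a determinacy assumption), so that even pathological filters are covered. This is exactly where the concrete combinatorics of $\psi$ and of finite modifications has to be analysed carefully. I would also mention a less self-contained alternative: granting a classification of the zero-dimensional homogeneous spaces in terms of Wadge complexity and Baire category, it would suffice to check that $\FF\times\FF$ is homogeneous, lies in the same Wadge class as $\FF$, and is meager precisely when $\FF$ is — but this route presupposes machinery that one would prefer not to invoke for an auxiliary result.
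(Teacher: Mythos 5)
The paper does not prove this statement: it is imported verbatim from \cite{medini_zdomskyy_filters}, so there is no internal argument to compare yours against, and your proposal has to stand on its own. Its first step is correct but essentially contentless: interleaving coordinates identifies $\FF\times\FF$ with the product filter $\mathcal{G}=\psi^{-1}[\FF]$, and checking that $\mathcal{G}$ is a filter is routine. All of the difficulty of the theorem is concentrated in the claim $\mathcal{G}\approx\FF$, and there your fusion argument has a genuine gap. The invariant you propose to maintain --- that matched clopen pieces $U$ and $V$ are decided by finitely many coordinates and ``the finite conditions describing the two traces correspond'' --- is vacuous. Since $\FF$ is closed under finite modifications, $\FF\cap\Ne_s=h_F[\FF\cap\Ne_t]$ for all $s,t\in 2^n$ and suitable finite $F$, so every trace of $\FF$ on a level-$n$ basic clopen set is a coordinate-flip copy of $\FF$, and likewise every trace of $\mathcal{G}$ is a copy of $\mathcal{G}$; the finite part of the description carries no information, and the infinite ``tails'' being matched are exactly the two spaces you are trying to prove homeomorphic. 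Consequently the invariant is satisfied at every finite stage by essentially any matching of clopen partitions, while membership in a filter is not determined by any finite amount of information, so nothing forces the limit homeomorphism of $2^\omega$ to carry $\mathcal{G}$ onto $\FF$. (As a sanity check: any two filters are dense in $2^\omega$ and pass every such finite-stage test against each other, yet $\Cof(\omega)$ and a nonmeager ultrafilter are certainly not homeomorphic.) A working back-and-forth must propagate an infinitary invariant --- in the definable setting this is exactly what Knaster--Reichbach systems together with Wadge-theoretic closure properties provide, and in the arbitrary-filter setting of this theorem one must instead exploit the algebra of $\FF$ (upward closure and finite intersections) in an essential, non-local way. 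Identifying and propagating that invariant is the actual content of the Medini--Zdomskyy proof, and it is precisely the part your proposal defers.

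Your fallback route via the classification is also not available here. First, it only applies to filters in a nice topological pointclass under the corresponding determinacy hypothesis, whereas the statement is for arbitrary filters in $\ZF+\DC$. Second, within this paper it would be circular: the proof of Theorem \ref{theorem_classification_filters} (and of Theorem \ref{theorem_characterization_filters}) uses Theorem \ref{theorem_filters_square} to show that the Wadge class of a filter is closed under squares, so the classification cannot be used to derive the square theorem.
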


We conclude this subsection with some well-known facts about the Baire category of the above combinatorial objects under suitable determinacy assumptions.

\begin{proposition}\label{proposition_finite_modifications_meager_comeager}
Let $\bS$ be a nice topological pointclass, and assume that $\Det(\bS(\omega^\omega))$ holds. Let $\XX\in\bS(2^\omega)$ be closed under finite modifications. Then $\XX$ is either meager in $2^\omega$ or comeager in $2^\omega$.
\end{proposition}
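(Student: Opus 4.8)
The plan is to prove the classical topological $0$-$1$ law in this setting. The two ingredients are that $\XX$ has the Baire property in $2^\omega$ and that $\XX$ is invariant under the homeomorphisms $h_F$ for $F\in\Fin(\omega)$. For the first ingredient, since $2^\omega$ is Polish and $\XX\in\bS(2^\omega)$, Corollary \ref{corollary_baire_property_polish} immediately gives that $\XX$ has the Baire property in $2^\omega$. For the second, I would observe that closure under finite modifications is exactly invariance under this family of homeomorphisms: for every $F\in\Fin(\omega)$ and every $x\in 2^\omega$ the set of coordinates at which $h_F(x)$ and $x$ disagree is precisely $F$, so $|h_F(x)\,\Delta\,x|<\omega$, and hence closure of $\XX$ under finite modifications yields $h_F[\XX]=\XX$.

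With these in hand, suppose $\XX$ is not meager in $2^\omega$; I will show that it is comeager. Since $\XX$ has the Baire property and is not meager, it is comeager in some non-empty open subset of $2^\omega$, hence (by passing to a smaller basic clopen set) there is $s\in 2^{<\omega}$ such that $\Ne_s\setminus\XX$ is meager in $2^\omega$. Let $n=|s|$. Given any $t\in 2^n$, set $F=\{k<n:s(k)\neq t(k)\}\in\Fin(\omega)$; then $h_F$ is a self-homeomorphism of $2^\omega$ satisfying $h_F[\Ne_s]=\Ne_t$ (a routine check, using that $F\subseteq\{0,\ldots,n-1\}$) and, by the previous paragraph, $h_F[\XX]=\XX$. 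Therefore $\Ne_t\setminus\XX=h_F[\Ne_s\setminus\XX]$ is meager, since homeomorphisms map meager sets to meager sets. As $2^\omega=\bigcup\{\Ne_t:t\in 2^n\}$ is a finite union, we conclude that $2^\omega\setminus\XX\subseteq\bigcup\{\Ne_t\setminus\XX:t\in 2^n\}$ is meager, that is, $\XX$ is comeager in $2^\omega$.

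I do not expect any real obstacle in this argument. The only point requiring a little care is the passage from ``$\XX$ has the Baire property and is not meager'' to ``$\XX$ is comeager in some $\Ne_s$'', which is the standard fact that a subset of a topological space with the Baire property is either meager or comeager in some non-empty basic open set; everything else is a routine verification involving the homeomorphisms $h_F$ and the behaviour of meager sets under homeomorphisms.
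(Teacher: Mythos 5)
Your argument is correct and is essentially identical to the paper's proof: both obtain the Baire property from Corollary \ref{corollary_baire_property_polish}, localize to a basic clopen set $\Ne_s$ on which $\XX$ is comeager, and then use the finitely many homeomorphisms $h_F$ with $F\subseteq n$ to translate $\Ne_s\setminus\XX$ over all of $2^\omega$, writing the complement of $\XX$ as a finite union of meager sets. No gaps.
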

\begin{proof}
Assume that $\XX$ is non-meager. Then, since $\XX$ has the Baire property in $2^\omega$ by Corollary \ref{corollary_baire_property_polish}, there exist $n\in\omega$ and $s\in 2^n$ such that $\XX\cap\Ne_s$ is comeager in $\Ne_s$ (see \cite[Proposition 8.26]{kechris}). Notice that
$$
2^\omega\setminus\XX=\bigcup\{h_F[\Ne_s\setminus\XX]:F\subseteq n\}
$$
because $\XX$ is closed under finite modifications. Since the right-hand side is a finite union of meager subsets of $2^\omega$, it follows that $\XX$ is comeager, as desired.
\end{proof}

\begin{corollary}\label{corollary_filters_meager}
Let $\bS$ be a nice topological pointclass, and assume that $\Det(\bS(\omega^\omega))$ holds. Let $\FF\in\bS(2^\omega)$ be a filter. Then $\FF$ is meager in $2^\omega$.
\end{corollary}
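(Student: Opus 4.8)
The plan is to reduce immediately to Proposition~\ref{proposition_finite_modifications_meager_comeager} and then exclude the comeager alternative by a soft argument involving the complement homeomorphism. First I would observe that $\FF$, being a filter, is in particular a semifilter, hence closed under finite modifications; since moreover $\FF\in\bS(2^\omega)$ by hypothesis, Proposition~\ref{proposition_finite_modifications_meager_comeager} applies and tells us that $\FF$ is either meager in $2^\omega$ or comeager in $2^\omega$. It therefore suffices to rule out the second possibility.

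So I would assume toward a contradiction that $\FF$ is comeager in $2^\omega$. Since $c:2^\omega\longrightarrow 2^\omega$ is a homeomorphism, $c[\FF]$ is then comeager in $2^\omega$ as well. On the other hand, $\FF\cap c[\FF]=\varnothing$: if some $x$ belonged to both, then $x\in\FF$ and $\omega\setminus x\in\FF$, so $\varnothing=x\cap(\omega\setminus x)\in\FF$ because $\FF$ is closed under finite intersections, contradicting $\varnothing\notin\FF$. But $2^\omega$ is a Baire space (being Polish), so the intersection of two comeager subsets of $2^\omega$ is comeager, and in particular non-empty. This contradiction shows that $\FF$ is not comeager, hence $\FF$ is meager in $2^\omega$.

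I do not expect any genuine obstacle here: the determinacy hypothesis is used only through the dichotomy provided by Proposition~\ref{proposition_finite_modifications_meager_comeager} (which in turn rests on the Baire property furnished by Corollary~\ref{corollary_baire_property_polish}), and the only additional input is the disjointness $\FF\cap c[\FF]=\varnothing$, which is immediate from the filter axioms. The main point worth flagging in the write-up is simply that two comeager subsets of a Baire space cannot be disjoint.
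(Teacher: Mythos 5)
Your proposal is correct and follows the paper's proof essentially verbatim: both invoke Proposition \ref{proposition_finite_modifications_meager_comeager} to reduce to the meager/comeager dichotomy, and both rule out comeagerness by noting that $\FF$ and $c[\FF]$ would then be two comeager (hence intersecting) sets, producing an $x$ with $x,\omega\setminus x\in\FF$, contradicting the filter axioms. No issues.
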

\begin{proof}
Assume, in order to get a contradiction, that $\FF$ is non-meager in $2^\omega$. Then $\FF$ is comeager in $2^\omega$ by Proposition \ref{proposition_finite_modifications_meager_comeager}. Since $c$ is a homeomorphism and $2^\omega$ is a non-meager space, it follows that there exists $x\in\FF$ such that $\omega\setminus x=c(x)\in\FF$, which contradicts the definition of filter.
\end{proof}

\subsection{Homogeneity}\label{subsection_preliminaries_homogeneity}

We have already introduced homogeneity in Definition \ref{definition_homogeneous}. The following notion is both a useful technical tool and an independent object of study (see the introduction to \cite[Chapter 3]{medini_thesis} for a brief survey of this topic).

\begin{definition}\label{definition_strongly_homogeneous}
A space $X$ is \emph{strongly homogeneous} (or \emph{h-homogeneous}) if $X$ is zero-dimensional and every non-empty clopen subspace of $X$ is homeomorphic to~$X$.
\end{definition}

Familiar examples of strongly homogeneous spaces are given by the rationals $\QQQ$, the Cantor set $2^\omega$, and Baire space $\omega^\omega$. This can be seen by using their characterizations (see \cite[Exercise 7.12, Theorem 7.4 and Theorem 7.7]{kechris} respectively). The reason for the modifier ``strongly'' is given by Corollary \ref{corollary_strongly_homogeneous}, which also explains why we included zero-dimensionality in the definition.\footnote{\,To see that the assumption of zero-dimensionality is indispensable, consider the subspace
$$
X=(\RRR\times\{0\})\cup\{(q_n,2^{-n}):n\in\omega\}
$$
of $\RRR\times\RRR$, where $\QQQ=\{q_n:n\in\omega\}$ is an enumeration. Using \cite[Theorem 18]{medini_products}, one can show that every non-empty clopen subspace of $X^\omega$ is homeomorphic to $X^\omega$. On the other hand, it is easy to realize that $X^\omega$ is not homogeneous.}

In fact, even when confined to the zero-dimensional realm, the relationship between homogeneity and strong homogeneity remains rather delicate. The discrete spaces of size at least two and $\omega\times 2^\omega$ are obviously homogeneous but not strongly homogeneous. However, as Proposition \ref{proposition_locally_compact} and Theorem \ref{theorem_characterization_semifilters} will show, it is consistent that these are the only zero-dimensional counterexamples (this was first proved in \cite{carroy_medini_muller_homogeneous}).

The following simple proposition justifies referring to the locally compact case of the classification as the trivial case.

\begin{proposition}\label{proposition_locally_compact}
Let $X$ be a zero-dimensional locally compact space. Then the following conditions are equivalent:
\begin{itemize}
\item $X$ is homogeneous	,
\item $X$ is discrete, $X\approx 2^\omega$, or $X\approx\omega\times 2^\omega$.
\end{itemize}
\end{proposition}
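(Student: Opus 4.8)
The plan is to prove the two implications separately; essentially all the work is in the forward direction. For the backward implication, a non-empty discrete space is homogeneous because every bijection of it is a homeomorphism, while $2^\omega$ and $\omega\times 2^\omega$ are homogeneous because each is (homeomorphic to) a topological group — or, even more elementarily, because a finite or countable product of homogeneous spaces is homogeneous and $\omega$ with the discrete topology is homogeneous.

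For the forward implication, assume $X$ is zero-dimensional, locally compact and homogeneous; note that $X\neq\varnothing$. The first step is the dichotomy furnished by homogeneity: if $X$ has an isolated point, then mapping it onto an arbitrary point via a homeomorphism shows that every point of $X$ is isolated, so $X$ is discrete and we are done. Hence we may assume $X$ has no isolated points. The second step splits on compactness. If $X$ is compact, then $X$ is a non-empty, compact, metrizable, zero-dimensional space with no isolated points, so $X\approx 2^\omega$ by Brouwer's characterization of the Cantor set (\cite[Theorem 7.4]{kechris}).

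If $X$ is not compact, I would pass to the one-point compactification $X^\ast=X\cup\{\infty\}$. The crucial verification is that $X^\ast$ is still zero-dimensional: since $X$ is locally compact and zero-dimensional it has a base of compact-open sets, each of which is clopen in $X^\ast$ (open as a subset of the open subspace $X$, closed because a compact subset of a Hausdorff space is closed), while the sets of the form $\{\infty\}\cup(X\setminus C)$ with $C\subseteq X$ compact-open are clopen and form a neighborhood base at $\infty$ (every compact subset of $X$ is contained in a compact-open set). Moreover $X^\ast$ is compact, Hausdorff and second-countable, hence metrizable, and it has no isolated points, since $X$ has none and $\{\infty\}$ would be open only if $X$ were compact. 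Thus $X^\ast\approx 2^\omega$ by Brouwer again, so $X\approx 2^\omega\setminus\{p\}$ for a point $p$; finally $2^\omega\setminus\{p\}\approx\omega\times 2^\omega$, because by homogeneity of $2^\omega$ we may take $p$ to be the constantly-$0$ sequence, and then $2^\omega\setminus\{p\}$ is the disjoint union of the countably many clopen sets $\Ne_s$, where $s$ ranges over the finite strings consisting of a (possibly empty) block of $0$'s followed by a single $1$, each of which is homeomorphic to $2^\omega$.

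The main, and really the only non-routine, obstacle is checking that the one-point compactification of a zero-dimensional locally compact space remains zero-dimensional; this is precisely where local compactness is combined with zero-dimensionality, to upgrade arbitrary compact neighborhoods to compact-open ones. One can avoid the compactification entirely: when $X$ is non-compact, write it as an increasing union $\bigcup_{n\in\omega}K_n$ of compact-open sets (possible since $X$ is $\sigma$-compact, being locally compact and second-countable, and zero-dimensional), discard the empty sets among $K_0$ and the differences $K_{n+1}\setminus K_n$, note that infinitely many remain (otherwise $X=K_N$ would be compact), and observe that each of them is a compact-open, hence perfect, zero-dimensional, metrizable space, so homeomorphic to $2^\omega$ by Brouwer; the resulting partition of $X$ into countably many clopen copies of $2^\omega$ gives $X\approx\omega\times 2^\omega$ directly.
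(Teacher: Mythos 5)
Your proof is correct. The paper's own argument is shorter and coincides with the ``alternative'' you sketch in your final paragraph: it simply partitions the crowded locally compact zero-dimensional space $X$ into compact clopen pieces, notes that each piece is homeomorphic to $2^\omega$ by Brouwer's theorem, and concludes $X\approx 2^\omega$ or $X\approx\omega\times 2^\omega$ according to whether the partition is finite or infinite. Your primary route through the one-point compactification is a genuinely different (and slightly longer) path to the non-compact case: it requires verifying that $X^\ast$ is zero-dimensional, metrizable and crowded, and then identifying $2^\omega\setminus\{p\}$ with $\omega\times 2^\omega$ via the partition into the sets $\Ne_{0^n1}$ --- all of which you do correctly, the compact-open upgrade of compact neighborhoods being exactly the right point to isolate. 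What the compactification approach buys is a clean conceptual statement ($X$ is the Cantor set minus a point); what the direct partition buys is that one application of Brouwer's theorem per clopen piece finishes everything at once, with the finite and infinite cases falling out of the same decomposition. Either version is a complete proof.
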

\begin{proof}
Clearly, all the spaces mentioned above are homogeneous. In order to prove the other implication, assume that $X$ is homogeneous but not discrete. Notice that $X$ must be crowded by homogeneity. Since $X$ is locally compact and zero-dimensional, it is possible to partition $X$ into compact clopen sets. Using \cite[Theorem 7.4]{kechris}, it follows that $X\approx 2^\omega$ (if the partition is finite) or $X\approx\omega\times 2^\omega$	(if the partition is infinite).
\end{proof}

The following ingenious result is an important tool for constructing homogeneous spaces, and it first appeared as \cite[Lemma 2.1]{van_mill_homogeneous}. Corollary \ref{corollary_finite_modifications_homogeneous} is essentially the same as \cite[Lemma 2]{medini_van_mill_zdomskyy}.

\begin{theorem}[van Mill]\label{theorem_homogeneity}
Let $X$ be a space, let $x,y\in X$, let $U_0\supseteq U_1\supseteq\cdots$ be clopen subsets of $X$ such that $\{U_n:n\in\omega\}$ is a local base for $X$ at $x$, and let $V_0\supseteq V_1\supseteq\cdots$ be clopen subsets of $X$ such that $\{V_n:n\in\omega\}$ is a local base for $X$ at $y$. Assume that $U_n\approx V_n$ for each $n$. Then there exists a homeomorphism $h:X\longrightarrow X$ such that $h(x)=y$.
\end{theorem}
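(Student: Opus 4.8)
The plan is to build the desired homeomorphism $h$ as a countable union of partial homeomorphisms, matching shells of the two local bases. First I would set $U_{-1} = V_{-1} = X$, and for each $n \geq 0$ consider the clopen ``shells'' $A_n = U_{n-1} \setminus U_n$ around $x$ and $B_n = V_{n-1} \setminus V_n$ around $y$; note $X \setminus \{x\} = \bigsqcup_{n} A_n$ and $X \setminus \{y\} = \bigsqcup_n B_n$ since the $U_n$ (resp.\ $V_n$) form a local base at $x$ (resp.\ $y$), so $\bigcap_n U_n = \{x\}$ and likewise for $y$. The hoped-for $h$ would send $x \mapsto y$ and $A_n \mapsto B_n$ for each $n$, so it suffices to produce, for every $n$, a homeomorphism $h_n : A_n \longrightarrow B_n$; then $h = \{(x,y)\} \cup \bigcup_n h_n$ is a bijection, and it is a homeomorphism because on $X \setminus \{x\}$ (which is open) it is locally a homeomorphism between the clopen pieces $A_n$, while continuity (from both sides) at $x$ follows since $h[U_n] = V_n$ is a local base element at $y = h(x)$, and symmetrically for $h^{-1}$ at $y$.

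The crux is therefore producing the shell homeomorphisms $h_n : A_n \longrightarrow B_n$, and this is where the hypothesis $U_n \approx V_n$ must be exploited. The natural idea is a back-and-forth (Cantor--Schr\"oder--Bernstein-style) argument: from $U_n \approx V_n$ we get a homeomorphism $g_n : U_n \longrightarrow V_n$; restricting $g_n$ to the clopen set $U_n$ inside $U_{n-1} \approx V_{n-1}$ exhibits $U_n$ as a clopen subspace of (a space homeomorphic to) $V_{n-1}$, and likewise $V_n$ embeds as a clopen subspace of $U_{n-1}$. So $A_n = U_{n-1} \setminus U_n$ and $B_n = V_{n-1} \setminus V_n$ are the ``complements'' in the Schr\"oder--Bernstein setup. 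I would run the standard argument: using the homeomorphism $U_{n-1} \approx V_{n-1}$ together with the clopen embeddings $U_n \hookrightarrow V_{n-1}$ and $V_n \hookrightarrow U_{n-1}$ whose images are clopen, decompose $U_{n-1}$ and $V_{n-1}$ into countably many clopen pieces that get shuffled by these maps, and read off a homeomorphism $U_{n-1} \longrightarrow V_{n-1}$ carrying $U_n$ exactly onto $V_n$; it then restricts (being a homeomorphism respecting both the distinguished clopen subsets and their complements) to $h_n : A_n \longrightarrow B_n$, as desired.

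I expect the main obstacle to be the bookkeeping in this clopen Schr\"oder--Bernstein step, specifically arranging the pieces so that the resulting homeomorphism maps $U_n$ \emph{onto} $V_n$ rather than merely onto some clopen set containing or contained in it. Concretely: let $\phi : U_{n-1} \longrightarrow V_{n-1}$ be a homeomorphism and let $\psi : V_n \longrightarrow U'_n$ be a homeomorphism onto a clopen subset $U'_n \subseteq U_{n-1}$ (obtained by composing $g_n^{-1}$ with $\phi^{-1}$, say). Define $C_0 = U_{n-1} \setminus U_n$, and recursively $C_{k+1} = \psi[\phi[C_k] \cap V_n]$ if one wants to push the ``overflow'' into $U_n$; set $C = \bigcup_k C_k$, which is clopen (each $C_k$ is clopen, being built from clopen sets under homeomorphisms, and there are only countably many). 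Then the map equal to $\phi$ on $U_{n-1} \setminus C$ and equal to (an appropriate composite involving $\psi^{-1}$ and $\phi$) on $C$ is a homeomorphism $U_{n-1} \longrightarrow V_{n-1}$ sending $U_n$ onto $V_n$; care is needed to check the two pieces agree in range and glue to a bijection, but this is routine once the $C_k$ are set up correctly. Everything else — verifying $h$ is a well-defined bijection, and continuity at the two special points — is immediate from the local base hypotheses, so no further substantial difficulty is anticipated.
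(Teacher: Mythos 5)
The paper itself does not prove this theorem: it is quoted verbatim from van Mill's paper (Lemma~2.1 there), so your attempt has to be measured against the known proof. Your argument has a fatal gap at exactly the point you identify as the crux: the shell homeomorphisms $h_n\colon A_n\longrightarrow B_n$ need not exist, and no Schr\"oder--Bernstein bookkeeping can produce a homeomorphism $U_{n-1}\longrightarrow V_{n-1}$ carrying $U_n$ onto $V_n$, because any such homeomorphism would restrict to a homeomorphism $A_n\longrightarrow B_n$, and the hypotheses simply do not imply $A_n\approx B_n$. Concretely, let $X=2^\omega\oplus\QQQ$ (topological sum), let $x$ and $y$ be points of the Cantor summand $C$, let $U_0=C$ with $U_1\supseteq U_2\supseteq\cdots$ a decreasing clopen local base at $x$ inside $C$, and let $V_0\supseteq V_1\supseteq\cdots$ be a decreasing clopen local base at $y$ consisting of \emph{proper} non-empty clopen subsets of $C$. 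Since every non-empty clopen subset of $2^\omega$ is homeomorphic to $2^\omega$, we have $U_n\approx V_n$ for all $n$ and all hypotheses of the theorem hold; but $A_0=X\setminus U_0=\QQQ$ has no non-empty compact open subset, while $B_0=X\setminus V_0=(C\setminus V_0)\oplus\QQQ$ does, so $A_0\not\approx B_0$. (The conclusion of the theorem is of course still true here: extend a homeomorphism of $C$ moving $x$ to $y$ by the identity on $\QQQ$; but that homeomorphism maps $U_0$ onto $C\neq V_0$.) The same phenomenon kills the interior shells: taking $U_0=V_0=X$, $U_1=C$ and $V_1$ a proper non-empty clopen subset of $C$ gives $U_0\setminus U_1=\QQQ\not\approx(C\setminus V_1)\oplus\QQQ=V_0\setminus V_1$. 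So the decomposition of $h$ that your whole plan rests on is not merely delicate to arrange --- it is impossible in general.

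There is also a secondary error inside the Schr\"oder--Bernstein step itself: your set $C=\bigcup_k C_k$ is a countable union of clopen sets, hence open, but there is no reason for it to be closed, so the two partial maps cannot be glued along $C$ and $U_{n-1}\setminus C$ as claimed (this is precisely why a ``topological Schr\"oder--Bernstein for clopen subsets'' is not available off the shelf). The correct proof --- van Mill's --- is a genuinely more delicate back-and-forth: it fixes homeomorphisms $f_n\colon U_n\longrightarrow V_n$ and alternately uses the $f_n$ and the $f_n^{-1}$ to transfer clopen pieces between the two sides, with the ``mismatch'' pieces pushed ever deeper into the local bases so that they are absorbed in the limit at $x$ and $y$; at no stage are two corresponding shells $U_{m}\setminus U_{n}$ and $V_{m}\setminus V_{n}$ required to be homeomorphic. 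Reducing the problem to matching shells is exactly the step that the actual proof is designed to avoid.
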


\begin{corollary}\label{corollary_strongly_homogeneous}
Every strongly homogeneous space is homogeneous.
\end{corollary}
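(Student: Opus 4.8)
The plan is to derive this as a straightforward application of van Mill's Theorem \ref{theorem_homogeneity}, so that all the genuine work has already been carried out. Let $X$ be strongly homogeneous and fix $(x,y)\in X\times X$; the goal is to produce a homeomorphism $h:X\longrightarrow X$ with $h(x)=y$.

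First I would construct decreasing clopen local bases at the two points. Since $X$ is separable metrizable, it is first countable, so we may fix a countable local base $\{W_n:n\in\omega\}$ at $x$. Using zero-dimensionality, for each $n$ pick a clopen set $U'_n$ with $x\in U'_n\subseteq W_n$, and set $U_n=U'_0\cap\cdots\cap U'_n$; then $U_0\supseteq U_1\supseteq\cdots$ are clopen and $\{U_n:n\in\omega\}$ is still a local base for $X$ at $x$. Do the same to obtain clopen sets $V_0\supseteq V_1\supseteq\cdots$ forming a local base at $y$.

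Next I would invoke strong homogeneity: each $U_n$ is a non-empty clopen subspace of $X$ (it contains $x$), hence $U_n\approx X$, and likewise $V_n\approx X$; therefore $U_n\approx V_n$ for every $n$. Now Theorem \ref{theorem_homogeneity} applies verbatim and yields the desired homeomorphism $h:X\longrightarrow X$ with $h(x)=y$. Since $(x,y)$ was arbitrary, $X$ is homogeneous.

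There is no real obstacle here: the only point requiring a word of care is the existence of a \emph{decreasing} clopen local base at a point of a zero-dimensional space, which follows immediately from first countability together with closure of the clopen sets under finite intersections. Everything else is a direct citation of Theorem \ref{theorem_homogeneity} and Definition \ref{definition_strongly_homogeneous}.
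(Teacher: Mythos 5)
Your proof is correct and is exactly the argument the paper intends: the corollary is stated immediately after van Mill's Theorem \ref{theorem_homogeneity} precisely so that it follows by taking decreasing clopen local bases at $x$ and $y$ and noting that strong homogeneity makes all the non-empty clopen sets involved homeomorphic to $X$. No issues.
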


\begin{corollary}[Medini, van Mill, Zdomskyy]\label{corollary_finite_modifications_homogeneous}
Every subspace of $2^\omega$ that is closed under finite modifications is homogeneous.	
\end{corollary}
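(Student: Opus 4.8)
The plan is to deduce this from van Mill's theorem (Theorem~\ref{theorem_homogeneity}), applied to the canonical clopen local bases inherited from the ambient Cantor set. So, let $X\subseteq 2^\omega$ be closed under finite modifications, and fix $x,y\in X$ (if no such pair exists, there is nothing to prove). For $n\in\omega$, set $U_n=\Ne_{x\re n}\cap X$ and $V_n=\Ne_{y\re n}\cap X$. Since $\{\Ne_{x\re n}:n\in\omega\}$ is a decreasing local base for $2^\omega$ at $x$ consisting of clopen sets, and similarly $\{\Ne_{y\re n}:n\in\omega\}$ for $2^\omega$ at $y$, the collections $\{U_n:n\in\omega\}$ and $\{V_n:n\in\omega\}$ are decreasing local bases for $X$ at $x$ and at $y$ respectively, consisting of clopen subsets of $X$. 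By Theorem~\ref{theorem_homogeneity}, it is therefore enough to show that $U_n\approx V_n$ for every $n\in\omega$.

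Fix $n\in\omega$, and let $F=\{k<n:x(k)\neq y(k)\}\in\Fin(\omega)$. I claim that $h_F$ restricts to a homeomorphism from $U_n$ onto $V_n$. Identifying each element of $2^\omega$ with the corresponding subset of $\omega$, one has $h_F(z)=z\,\Delta\,F$ for every $z\in 2^\omega$, so $|z\,\Delta\,h_F(z)|=|F|<\omega$; since $X$ is closed under finite modifications, this gives $z\in X$ iff $h_F(z)\in X$, hence $h_F[X]=X$. On the other hand, $h_F[\Ne_{x\re n}]=\Ne_{y\re n}$: if $z\in\Ne_{x\re n}$, then for $k<n$ we have $h_F(z)(k)=1-x(k)=y(k)$ when $k\in F$ and $h_F(z)(k)=x(k)=y(k)$ when $k\notin F$, so $h_F(z)\in\Ne_{y\re n}$, and the reverse inclusion holds because $h_F$ is an involution and $F=\{k<n:y(k)\neq x(k)\}$ as well. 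Combining these two observations, the homeomorphism $h_F$ of $2^\omega$ maps $U_n=\Ne_{x\re n}\cap X$ onto $\Ne_{y\re n}\cap X=V_n$, so $U_n\approx V_n$, as required.

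There is no real difficulty here once Theorem~\ref{theorem_homogeneity} is available: the only point worth noting is that the unique homeomorphism of $2^\omega$ of the form $h_G$ sending $x$ to $y$ is $h_{x\,\Delta\,y}$, which has no reason to preserve $X$ when $x$ and $y$ differ at infinitely many coordinates. Van Mill's theorem is exactly the tool that allows us to bypass this obstruction by patching together the ``local'' homeomorphisms $h_{F_n}$ obtained above --- each of which does preserve $X$, since $F_n$ is finite --- into a global self-homeomorphism of $X$ that sends $x$ to $y$.
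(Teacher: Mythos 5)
Your proposal is correct and follows exactly the same route as the paper's own proof: reduce to van Mill's Theorem~\ref{theorem_homogeneity} via the canonical clopen local bases $U_n=\Ne_{x\re n}\cap X$ and $V_n=\Ne_{y\re n}\cap X$, and witness $U_n\approx V_n$ by the homeomorphism $h_F$ with $F=\{k<n:x(k)\neq y(k)\}$, which preserves $X$ because $F$ is finite. The verifications you spell out (that $h_F[X]=X$ and $h_F[\Ne_{x\re n}]=\Ne_{y\re n}$) are precisely the ones the paper leaves to the reader, so there is nothing to add.
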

\begin{proof}
Let $\XX$ be a subspace of $2^\omega$ that is closed under finite modifications. Pick $x,y\in\XX$. We will show that there exists a homeomorphism $h:X\longrightarrow X$ such that $h(x)=y$. Set $U_n=\Ne_{x\re n}\cap\XX$ and $V_n=\Ne_{y\re n}\cap\XX$ for $n\in\omega$. By Theorem \ref{theorem_homogeneity}, it will be enough to show that $U_n\approx V_n$ for each $n$.

So pick $n\in\omega$, and set $F=\{i\in n:x(i)\neq y(i)\}$. Consider the homeomorphism $h_F$ defined in \S\ref{subsection_preliminaries_filters}, and observe that $h_F[\Ne_{x\re n}]=\Ne_{y\re n}$. Using the fact that $\XX$ is closed under finite modifications, one sees that $h_F[\XX]=\XX$. It follows that $h_F[U_n]=V_n$, hence $U_n\approx V_n$.
\end{proof}

\begin{corollary}\label{corollary_semifilter_homogeneous}
Every semifilter is homogeneous.	
\end{corollary}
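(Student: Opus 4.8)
The plan is to deduce this immediately from Corollary \ref{corollary_finite_modifications_homogeneous}. By Definition \ref{definition_semifilter}, a semifilter on a countably infinite set $\Omega$ is in particular closed under finite modifications. Fixing a bijection between $\Omega$ and $\omega$ induces a homeomorphism $\PP(\Omega)\longrightarrow 2^\omega$ (both carry the subspace topology coming from the relevant product of two-point discrete spaces), and this homeomorphism plainly preserves the property of being closed under finite modifications. Hence every semifilter is homeomorphic to a subspace of $2^\omega$ that is closed under finite modifications, so it is homogeneous by Corollary \ref{corollary_finite_modifications_homogeneous}.

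If one prefers to avoid even this reduction, one can simply unwind the proof of Corollary \ref{corollary_finite_modifications_homogeneous} directly: given a semifilter $\Ss$ on $\omega$ and points $x,y\in\Ss$, set $U_n=\Ne_{x\re n}\cap\Ss$ and $V_n=\Ne_{y\re n}\cap\Ss$; the homeomorphism $h_F$ from \S\ref{subsection_preliminaries_filters} with $F=\{i<n:x(i)\neq y(i)\}$ carries $U_n$ onto $V_n$ since $\Ss$ is closed under finite modifications, and an application of van Mill's Theorem \ref{theorem_homogeneity} yields a homeomorphism of $\Ss$ taking $x$ to $y$. There is nothing to gain from repeating this argument in full.

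The only point that merits being spelled out is the passage to the case $\Omega=\omega$, and even that is entirely routine; there is no real obstacle here, since the substance of the statement is already contained in Theorem \ref{theorem_homogeneity} and Corollary \ref{corollary_finite_modifications_homogeneous}.
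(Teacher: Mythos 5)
Your proposal is correct and is exactly the argument the paper intends: the corollary is stated without proof precisely because a semifilter is by definition closed under finite modifications, so Corollary \ref{corollary_finite_modifications_homogeneous} applies directly (with the passage from a general countably infinite $\Omega$ to $\omega$ being the routine identification you describe). Nothing further is needed.
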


At this point, it seems fitting to mention the following proposition and question (they first appeared as \cite[Proposition 13.6 and Question 13.5]{medini_semifilters} respectively).

\begin{proposition}\label{proposition_filter_strongly_homogeneous}
Every filter is strongly homogeneous.	
\end{proposition}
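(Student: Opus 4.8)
The plan is to check the two clauses of Definition \ref{definition_strongly_homogeneous}. Zero-dimensionality is immediate, since $\FF$ is non-empty (it contains $\omega$) and inherits a base of clopen sets from $2^\omega$, so everything reduces to showing that every non-empty clopen subspace $V$ of $\FF$ satisfies $V\approx\FF$. Two general features of $\FF$ will be used repeatedly: first, since $\FF$ is closed under finite modifications, $h_F[\FF]=\FF$ for every finite $F\subseteq\omega$, where $h_F$ is as in \S\ref{subsection_preliminaries_filters}; second, $\FF$ is dense in $2^\omega$, because any non-empty basic open set $\Ne_s$ contains a suitable finite modification of $\omega\in\FF$. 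Density lets one write any non-empty clopen $V\subseteq\FF$ as $V=\bigsqcup_{s\in A}(\Ne_s\cap\FF)$ for some (necessarily countable) antichain $A\subseteq 2^{<\omega}$; a routine density/compactness argument shows that $A$ may be taken finite exactly when the canonical open set $\widetilde V=\bigcup\{\Ne_s:\Ne_s\cap\FF\subseteq V\}$ is clopen in $2^\omega$, and we then call $V$ of \emph{finite type}, otherwise of \emph{infinite type} (note $V=\FF$ is of finite type, via $A=\{\varnothing\}$).

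I would then isolate two facts. (a) For every $s\in 2^{<\omega}$, $\Ne_s\cap\FF$ is non-empty and homeomorphic to a filter on $\{|s|,|s|+1,\dots\}$ (one checks $\varnothing$ is excluded and $\{|s|,|s|+1,\dots\}$ is included, both from closure under finite modifications); moreover, using the homeomorphisms $h_F$ with $F\subseteq|s|$, all the sets $\Ne_t\cap\FF$ with $|t|=|s|$ are mutually homeomorphic, so that $\FF\approx D_{2^n}\times W_n$, where $n=|s|$, $W_n:=\Ne_{0^n}\cap\FF$, and $D_k$ denotes the $k$-point discrete space. (b) The absorption lemma: \emph{for every filter $\HH$ one has $\HH\approx D_2\times\HH$}, and hence, by an easy induction on $k$, $\HH\approx D_k\times\HH$ for all $1\leq k<\omega$. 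To prove (b), observe first that $\HH\approx D_2\times A$ with $A:=\Ne_{(0)}\cap\HH$ (splitting on the first coordinate and using $h_{\{0\}}$), that $A$ is homeomorphic to a filter, and then apply Theorem \ref{theorem_filters_square} to both $\HH$ and $A$:
$$
\HH\approx\HH\times\HH\approx(D_2\times A)\times(D_2\times A)\approx D_4\times(A\times A)\approx D_4\times A\approx D_2\times(D_2\times A)\approx D_2\times\HH .
$$

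Granting these, the conclusion follows. Applying (b) to $W_n$ gives $\FF\approx D_{2^n}\times W_n\approx W_n\approx\Ne_s\cap\FF$, so every basic clopen piece of $\FF$ is homeomorphic to $\FF$. If $V$ is of finite type, then $V\approx D_k\times\FF$ for some $1\leq k<\omega$, so $V\approx\FF$ by (b) applied to $\HH=\FF$; in particular, if every non-empty clopen subset of $\FF$ is of finite type we are done. Otherwise, fix a clopen $V$ of infinite type; then $V\neq\FF$, so $\FF\setminus V$ is a non-empty clopen set, and writing both $V$ and $\FF\setminus V$ as disjoint unions of pieces $\Ne_s\cap\FF$ (each homeomorphic to $\FF$) exhibits $V$ as a countably infinite disjoint union of copies of $\FF$, and $\FF=V\sqcup(\FF\setminus V)$ as a disjoint union of at most countably (but infinitely) many copies of $\FF$; reindexing yields $\FF\approx\omega\times\FF\approx V$. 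So $V\approx\FF$ in every case.

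The step I expect to require the most care is the absorption lemma (b): it is the only point where the filter structure beyond closure under finite modifications is genuinely used, and the trick of invoking Theorem \ref{theorem_filters_square} simultaneously for $\HH$ and for $A$ is exactly what makes the extra two-point factor disappear. The remaining work — the finite-versus-infinite-type dichotomy for clopen subsets, and verifying that $\Ne_s\cap\FF$ is homeomorphic to an honest filter — is routine but should be written out carefully.
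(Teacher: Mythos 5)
The paper does not actually prove this proposition: it only cites \cite[Proposition 13.6]{medini_semifilters}, so there is no in-text argument to compare against. Your proof is correct and self-contained modulo Theorem \ref{theorem_filters_square}, which is exactly the external input one expects here. The key step, the absorption lemma $\HH\approx D_2\times\HH$, is sound: the trace of a filter on a cofinite set is again a filter (on a countably infinite set, hence covered by Theorem \ref{theorem_filters_square} after transporting along a bijection), and squaring both $\HH$ and $A=\Ne_{(0)}\cap\HH$ to cancel the two-point factor is a clean trick; the induction to $D_k\times\HH\approx\HH$ and the computation $\FF\approx D_{2^n}\times W_n\approx W_n$ then give $\Ne_s\cap\FF\approx\FF$ for every $s$. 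One remark on economy: once you know that every basic clopen set $\Ne_s\cap\FF$ is homeomorphic to $\FF$, you can finish immediately by Lemma \ref{lemma_terada}, since these sets form a ($\pi$-)base of clopen sets (non-empty by the density of $\FF$ in $2^\omega$); this makes the finite-versus-infinite-type dichotomy, the antichain decomposition of an arbitrary clopen set, and the $\omega\times\FF\approx\FF$ step unnecessary. Your hands-on version of that endgame is nevertheless correct as written.
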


\begin{question}\label{question_semifilter_strongly_homogeneous}
Is every semifilter strongly homogeneous?	
\end{question}

The following diagram illustrates the connections between the homogeneity-type properties and the combinatorial properties that we have discussed so far. With the possible exception of the implication mentioned by Question \ref{question_semifilter_strongly_homogeneous}, the implications depicted below (and their obvious consequences) are the only ones that are provable in $\ZF+\DC$ alone. The desired counterexamples are given by Proposition \ref{proposition_counterexample_semifilter}, by the proof of \cite[Theorem 1.2]{carroy_medini_muller_homogeneous} (which was essentially obtained by van Douwen in \cite{van_douwen}), and by \cite[Proposition 8.3]{medini_cdh}.\footnote{\,A few clarifications are in order, so that trivial counterexamples will be avoided. First, we are restricting the attention to zero-dimensional spaces that are not locally compact. Second, by filter (respectively semifilter), we mean homeomorphic to a filter (respectively homeomorphic to a semifilter). Third, regarding the relationship between filters and topological groups, we refer to \S\ref{subsection_filters_vs_groups} for a more nuanced discussion.}

\bigskip

\begin{center}
$
\xymatrix{
& \mathrm{Filter}\ar@/_1pc/[ld] \ar@{->}[d] \ar@/^1pc/[rd] &\\
  \mathrm{Topological\text{ }group} \ar@/_1pc/[rd] & \mathrm{Semifilter} \ar@{->}[d] & \mathrm{Strongly\text{ }homogeneous} \ar@/^1pc/[ld]\\
&\mathrm{Homogeneous}&\\
}
$
\end{center}

\bigskip

Next, we prove an intuitive dichotomy regarding homogeneous spaces, which can be assumed to be folklore (but see \cite[Lemma 3.1]{fitzpatrick_zhou} for a more general result).

\begin{proposition}\label{proposition_homogeneous_dichotomy}
If $X$ is a homogeneous space then $X$ is either meager or Baire.	
\end{proposition}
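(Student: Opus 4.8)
The plan is to isolate the ``meager part'' of $X$ and use homogeneity to show it is either empty or all of $X$. Concretely, I would let $U$ denote the union of all open meager subsets of $X$. Since $X$ is separable metrizable, hence second countable, I would fix a countable base $\BB$ for $X$ and observe that $U=\bigcup\{B\in\BB:B\text{ is meager}\}$; thus $U$ is, in particular, itself a countable union of meager sets.

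First I would check that $U$ is invariant under every self-homeomorphism of $X$. If $h:X\longrightarrow X$ is a homeomorphism and $V$ is an open meager subset of $X$, then $h[V]$ is open (homeomorphisms are open maps) and meager (a homeomorphism preserves closures and interiors, so it carries closed nowhere dense sets to closed nowhere dense sets, hence meager sets to meager sets); therefore $h[U]=U$. Combined with Definition \ref{definition_homogeneous}, this forces the dichotomy $U=\varnothing$ or $U=X$: if some point $x$ belongs to $U$, then for every $y\in X$ there is a homeomorphism $h:X\longrightarrow X$ with $h(x)=y$, whence $y\in h[U]=U$.

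Finally I would treat the two cases. If $U=\varnothing$, then no non-empty open subset of $X$ is meager (such a set would be contained in $U$), which is exactly the statement that $X$ is Baire. If $U=X$, then $X=U$ is a countable union of meager sets (by the description of $U$ via $\BB$ above), hence meager. I do not expect a serious obstacle; the only point requiring a little care is the case $U=X$, where one must avoid writing $X$ as an arbitrary, possibly uncountable union of meager open sets (an arbitrary union of meager sets need not be meager) and instead use second countability — equivalently the Lindelöf property of separable metrizable spaces — to reduce to a countable union. The remainder is routine bookkeeping about the action of homeomorphisms on meager sets.
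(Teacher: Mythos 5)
Your proof is correct and is essentially the same argument as the paper's: both use homogeneity to propagate a meager open set over all of $X$ and second countability (equivalently, the Lindel\"of property) to reduce to a countable union of meager sets. The only cosmetic difference is that you package the argument via the maximal invariant open meager set, whereas the paper covers $X$ by homeomorphic images of a single meager open set and extracts a countable subcover.
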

\begin{proof}
Assume that $X$ is a homogeneous space that is not Baire. Fix a non-empty meager open subset $U$ of $X$. Notice that
$$
\UU=\{h[U]:h:X\longrightarrow X\text{ is a homeomorphism}\}
$$
is an open cover of $X$ by homogeneity, and that $\UU$ consists of meager subsets of $X$. By considering a countable subcover of $\UU$, one sees that $X$ is meager.
\end{proof}

We conclude this subsection with a technical result that will be needed in the proof of Corollary \ref{corollary_h-homogeneously_embedded}. It is a slight variation on \cite[Theorem 2.4]{terada}.

\begin{lemma}[Terada]\label{lemma_terada}
Let $X$ be a zero-dimensional space. Assume that $X$ has a $\pi$-base $\BB$ consisting of clopen sets such that $U\approx X$ for every $U\in\BB$. Then $X$ is strongly homogeneous. 	
\end{lemma}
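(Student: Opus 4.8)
The plan is to run a Cantor--Schr\"oder--Bernstein style argument for clopen subspaces, with the $\pi$-base hypothesis supplying the flexibility that makes it go through. Since $X$ is zero-dimensional, by Definition \ref{definition_strongly_homogeneous} it is enough to fix an arbitrary non-empty clopen $C\subseteq X$ and produce a homeomorphism $h:X\longrightarrow C$ (if $X$ is a singleton this is trivial, so assume $|X|>1$; note that then $X$ has no isolated points, since an isolated point would be a clopen set too small to contain any member of $\BB$). First I would reformulate the hypothesis as the statement $(\star)$: every non-empty clopen subset of $X$ contains a clopen subspace homeomorphic to $X$. One direction is immediate from the definition of $\pi$-base, and the converse holds because $X$ is zero-dimensional; moreover $(\star)$ is a topological invariant and passes to clopen subspaces. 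The consequence I will use is this: since every clopen subset $D$ of $X$ is trivially homeomorphic to a clopen subspace of $X$ (namely itself), every non-empty clopen subset of $X$ contains a clopen copy of $D$.

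The heart of the argument is to construct, by recursion on $n\in\omega$, clopen sets $G_n\subseteq X$ and $H_n\subseteq C$ together with homeomorphisms $h_n:G_n\longrightarrow H_n$, so that $\{G_n:n\in\omega\}$ is a partition of $X$ into clopen sets and $\{H_n:n\in\omega\}$ is a partition of $C$ into clopen sets. Granting this, $h=\bigcup_{n\in\omega}h_n$ is the desired homeomorphism: it is a bijection from $X$ onto $C$, it is continuous because each $G_n$ is open and $h\re G_n=h_n$ is continuous, and $h^{-1}=\bigcup_{n\in\omega}h_n^{-1}$ is continuous for the same reason with the $H_n$ in place of the $G_n$. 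It is worth stressing that this gluing uses only that the pieces are clopen and exhaust the two spaces; in particular it avoids completeness, which is essential here since $X$ need not be Polish, so the usual Knaster--Reichbach ``mesh tending to $0$'' technique is unavailable.

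To perform the recursion, fix countable bases of clopen sets for $X$ and for $C$, and interleave the construction so that even stages make progress towards covering $X$ and odd stages towards covering $C$. A typical even stage looks as follows: writing $R_X$ and $R_C$ for the (clopen) portions of $X$ and $C$ not yet used, pick a basic clopen set $U$ of $X$ that is not yet entirely covered and with $U\cap R_X\subsetneq R_X$ (such a $U$ exists since $R_X$ has more than one point), set $G_n=U\cap R_X$ — so that $U$ becomes entirely covered at this stage — and let $H_n$ be a clopen copy of $G_n$ inside $R_C$, which exists by the consequence of $(\star)$ above as long as $R_C\neq\varnothing$; the odd stages are symmetric. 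Since $X$ is the union of its basic clopen sets (and likewise $C$), a suitable scheme for deciding which basic clopen set to attend to at each stage will force every basic clopen set of $X$ to be covered after $\omega$ stages, so that $X=\bigcup_n G_n$, and similarly $C=\bigcup_n H_n$.

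The step I expect to be the main obstacle is precisely this bookkeeping. One must process the basic clopen sets of $X$ and of $C$ so that every one of them is eventually covered; keep the chosen pieces pairwise disjoint (which is automatic once one always peels from the current remainders); and --- most delicately --- arrange that $X$ and $C$ get exhausted simultaneously, so that neither $R_X$ nor $R_C$ ever collapses while the other is still non-empty (which would leave a clopen remainder with nothing to match it against, and this can genuinely threaten to happen once the uncovered part of one space has shrunk inside a single basic clopen set). Being able to carry this out while always having $(\star)$ available --- to locate, inside a non-empty clopen remainder, a clopen copy of the piece currently needed --- is exactly where the $\pi$-base hypothesis is used; it is strictly stronger than the bare existence of two-sided clopen embeddings between $X$ and $C$, which would not by itself suffice. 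Everything else is routine: zero-dimensionality provides the clopen bases, finite Boolean combinations of clopen sets are clopen, and the gluing of the $h_n$ over a clopen partition is automatic.
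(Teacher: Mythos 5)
Your reduction to the property $(\star)$ and the gluing of homeomorphisms over clopen partitions are both fine, but the construction you sketch has a genuine gap at exactly the point you flag as ``the main obstacle,'' and that obstacle is not mere bookkeeping. First, your scheme provably cannot work when $X$ is compact, a case that is in scope (for example $X=2^\omega$ satisfies the hypothesis). Your even and odd stages always remove a \emph{proper} clopen piece from each remainder, so $R_X^{(0)}\supseteq R_X^{(1)}\supseteq\cdots$ is a decreasing sequence of non-empty closed subsets of a compact space; hence $\bigcap_{n}R_X^{(n)}\neq\varnothing$ and $\bigcup_n G_n\subsetneq X$, so the construction never exhausts $X$. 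This is why the paper's proof splits off the compact case and settles it by Brouwer's characterization of $2^\omega$ rather than by any back-and-forth. Second, even in the non-compact case the exhaustion does not follow from processing basic sets one at a time: a basic set $U$ with $R_X\subseteq U$ can never be ``processed'' (since $U\cap R_X=R_X$ is not a proper subset of $R_X$), and one checks that under your scheme any point $x\in\bigcap_n R_X^{(n)}$ forces $R_X^{(n)}$ to be eventually contained in every basic neighborhood of $x$ --- which is perfectly consistent (in $\QQQ$, say, the remainders can be small clopen intervals shrinking to $x$), so the intersection can be a singleton and a point of $X$ is never covered. Making the remainders vanish genuinely requires using that, in the non-compact case, \emph{every} non-empty clopen subset of $X$ is non-compact (a clopen copy of $X$ inside a compact clopen set would make $X$ compact), and then engineering the pieces so the remainder ``escapes to infinity''; none of this is routine, and you have not supplied it.

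For comparison, the paper's proof avoids the back-and-forth entirely: after disposing of the isolated-point case ($|X|=1$) and the compact case ($X\approx 2^\omega$), it cites Terada's argument, which is a Pelczynski-style decomposition rather than a pointwise exhaustion. In outline: non-compactness and the Lindel\"of property give a partition of $X$ into infinitely many non-empty clopen pieces, each containing a clopen copy of $X$, whence $\omega\times X$ embeds as a clopen subspace of $X$ and one deduces $X\approx\omega\times X$; then for a non-empty clopen $C$ one writes $C\approx X\oplus D$ and $X\approx C\oplus E$ and absorbs the residues $D,E$ using $X\approx\omega\times X$ to conclude $C\approx X$. That route trades your delicate simultaneous exhaustion for a few soft homeomorphism identities, which is precisely what makes it work outside the Polish setting. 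If you want to keep your approach, you must (i) treat the compact case separately and (ii) replace ``a suitable scheme'' by an actual invariant guaranteeing $\bigcap_n R_X^{(n)}=\bigcap_n R_C^{(n)}=\varnothing$, using non-compactness of all clopen remainders.
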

\begin{proof}
It is easy to realize that if $X$ has an isolated point then $|X|=1$, so assume that $X$ is crowded. If $X$ is compact then $X\approx 2^\omega$ by \cite[Theorem 7.4]{kechris}. Otherwise, proceed as in the proof of \cite[Theorem 2.4]{terada} (see also \cite[Appendix A]{medini_products} or \cite[Appendix B]{medini_thesis}).
\end{proof}

\section{Wadge theory: fundamental notions and results}\label{section_wadge_fundamental}

\subsection{The basics of Wadge theory}\label{subsection_wadge_fundamental_basics}

We begin with the most fundamental definitions and results of Wadge theory. For the proofs, see \cite[\S4]{carroy_medini_muller_constructing}. Assume that a set $Z$ and $\bG\subseteq\PP(Z)$ are given. We will use the notation $\bGc=\{Z\setminus A:A\in\bG\}$ for the \emph{dual} of $\bG$, and say that $\bG$ is \emph{selfdual} if $\bG=\bGc$. Also set $\Delta(\bG)=\bG\cap\bGc$.

\begin{definition}[Wadge]\label{definition_wadge_reducibility}
Let $Z$ be a space, and let $A,B\subseteq Z$. We will write $A\leq B$ if there exists a continuous function $f:Z\longrightarrow Z$ such that $A=f^{-1}[B]$. In this case, we will say that $A$ is \emph{Wadge-reducible} to $B$, and that $f$ \emph{witnesses} the reduction.\footnote{\,Wadge-reduction is usually denoted by $\leq_\mathsf{W}$, which allows to distinguish it from other types of reduction (such as Lipschitz-reduction). Since we will not consider any other type of reduction in this article, we decided to simplify the notation.} We will write $A<B$ if $A\leq B$ and $B\not\leq A$. We will say that $A$ is \emph{selfdual} if $A\leq Z\setminus A$. We will use the notation
$$
A\wc=\{B\subseteq Z:B\leq A\}.
$$
We will say that $\bG\subseteq\PP(Z)$ is \emph{continuously closed} if $C\wc\subseteq\bG$ for every $C\subseteq Z$.
\end{definition}

We have already introduced Wadge classes in Definition \ref{definition_wadge_class}. Notice that, using the notation of Definition \ref{definition_wadge_reducibility}, the Wadge classes in a space $Z$ are precisely the sets of the form $A\wc$ for some $A\subseteq Z$.

\begin{definition}\label{definition_non-selfdual}
Given a space $Z$, define 
$$
\NSD(Z)=\{\bG:\bG\textrm{ is a non-selfdual Wadge class in }Z\}.
$$
Also set $\NSDS(Z)=\{\bG\in\NSD(Z):\bG\subseteq\bS(Z)\}$ whenever $\bS$ is a topological pointclass.
\end{definition}

\begin{lemma}[Wadge]\label{lemma_wadge}
Let $\bS$ be a nice topological pointclass, and assume that $\Det(\bS(\omega^\omega))$ holds. Let $Z$ be a zero-dimensional Polish space, and let $A,B\in\bS(Z)$. Then either $A\leq B$ or $Z\setminus B\leq A$.
\end{lemma}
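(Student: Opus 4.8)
The plan is to reduce the statement to the classical determinacy of the Wadge game on $\omega^\omega$. First I would observe that since $Z$ is an uncountable zero-dimensional Polish space, the case in which $Z$ is homeomorphic to $\omega^\omega$ is the heart of the matter; so assume first $Z = \omega^\omega$. Given $A, B \in \bS(\omega^\omega)$, consider the \emph{Wadge game} $\Ga_\mathsf{W}(A,B)$: Player~I builds $x \in \omega^\omega$, Player~II builds $y \in \omega^\omega$, and Player~II wins if $x \in A \leftrightarrow y \in B$. This game is not literally of the form $\Ga(\Omega, S)$ from \S\ref{subsection_preliminaries_determinacy} with a payoff set in $\bS(\omega^\omega)$, but one passes to that form exactly as in the proof of Theorem~\ref{theorem_baire_property_baire_space}: encode a play as an element of $(\omega^{<\omega}\setminus\{\varnothing\})^\omega$ via the map $\varphi$ there (or simply interleave the two coordinate sequences), so that the payoff set becomes $\varphi^{-1}[(A \times B) \cup ((\omega^\omega \setminus A) \times (\omega^\omega \setminus B))]$ under the natural homeomorphism $\omega^\omega \times \omega^\omega \approx \omega^\omega$. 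Since $\bS$ is a nice topological pointclass, this payoff set lies in $\bS(\omega^\omega)$ (it is built from $A$, $B$ and their complements using continuous preimages and boolean operations), so by $\Det(\bS(\omega^\omega))$ the game is determined.

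Next I would analyze the two outcomes. If Player~II has a winning strategy $\tau$, then $\tau$ induces a continuous function $f : \omega^\omega \longrightarrow \omega^\omega$ (read off Player~II's moves as a function of Player~I's moves; continuity is immediate since the $n$-th output digit depends only on finitely many input digits, using that $\tau$ is monotone with $|\tau(s)| = |s|$) such that for every $x$ we have $x \in A \leftrightarrow f(x) \in B$, i.e.\ $A = f^{-1}[B]$, hence $A \leq B$. If instead Player~I has a winning strategy $\sigma$, then $\sigma$ induces a continuous $g : \omega^\omega \longrightarrow \omega^\omega$ with $g(y) \in A \leftrightarrow y \notin B$ for all $y$; here the slight asymmetry ($|\sigma(s)| = |s|+1$) still yields continuity, and one gets $\omega^\omega \setminus B = g^{-1}[A]$, hence $\omega^\omega \setminus B \leq A$. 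Thus exactly the desired dichotomy holds.

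Finally I would remove the restriction $Z = \omega^\omega$. Since $Z$ is an uncountable zero-dimensional Polish space, it embeds as a closed subspace of $\omega^\omega$ and, more usefully, there is an open continuous surjection $p : \omega^\omega \longrightarrow Z$ (cf.\ \cite[Exercise 7.14]{kechris}) together with a continuous right inverse $j : Z \longrightarrow \omega^\omega$ coming from a closed embedding ($p \circ j = \id_Z$). Given $A, B \in \bS(Z)$, apply the case already proved to $p^{-1}[A]$ and $p^{-1}[B]$, which lie in $\bS(\omega^\omega)$ because $\bS$ is a topological pointclass: either $p^{-1}[A] \leq p^{-1}[B]$ or $\omega^\omega \setminus p^{-1}[B] \leq p^{-1}[A]$, witnessed by some continuous $f : \omega^\omega \longrightarrow \omega^\omega$. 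In the first case $f \circ j : Z \longrightarrow \omega^\omega$ followed by $p$ gives a continuous self-map of $Z$ (namely $p \circ f \circ j$) witnessing $A \leq B$, using $p^{-1}[A] = f^{-1}[p^{-1}[B]]$ and $p \circ j = \id_Z$ to compute preimages; the second case similarly yields $Z \setminus B \leq A$. The main obstacle I anticipate is purely bookkeeping: making sure the payoff set genuinely lands in $\bS(\omega^\omega)$ (which is exactly where niceness of $\bS$ is used, both for boolean closure and for the transfer between $Z$ and $\omega^\omega$), and checking that the strategies really do induce \emph{continuous} functions with the claimed reduction property — both of these are routine but must be done with care regarding the lengths $|\sigma(s)|$ versus $|\tau(s)|$.
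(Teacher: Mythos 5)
Your argument is correct and is the standard proof of Wadge's Lemma via the Wadge game; the paper gives no proof of this lemma at all, simply citing \cite[\S4]{carroy_medini_muller_constructing}, where essentially this argument appears, so your proposal fills in exactly the intended content. Two cosmetic points are worth fixing. First, the set you call the payoff set, $\varphi^{-1}[(A\times B)\cup((\omega^\omega\setminus A)\times(\omega^\omega\setminus B))]$, is the set of plays won by Player~II, whereas in the convention of \S\ref{subsection_preliminaries_determinacy} the payoff set is the set of plays won by Player~I, so you want its complement $\varphi^{-1}[(A\times(\omega^\omega\setminus B))\cup((\omega^\omega\setminus A)\times B)]$; this is harmless since $\bS$ is boolean-closed (so both sets lie in $\bS(\omega^\omega)$) and your subsequent case analysis already uses the correct convention. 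Second, for the transfer to a general zero-dimensional Polish $Z$ you do not need an open continuous surjection: what you actually use is a closed embedding $j:Z\longrightarrow\omega^\omega$ together with a retraction $\rho:\omega^\omega\longrightarrow j[Z]$ (\cite[Theorem 7.8 and Proposition 2.8]{kechris}), which is precisely the device the paper employs in the proof of Lemma \ref{lemma_separation_relativization}; note also that uncountability of $Z$ is neither assumed in the statement nor needed anywhere in the argument.
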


\begin{theorem}[Martin, Monk]\label{theorem_well-founded}
Let $\bS$ be a nice topological pointclass, and assume that $\Det(\bS(\omega^\omega))$ holds. Let $Z$ be a zero-dimensional Polish space. Then the relation $\leq$ on $\bS(Z)$ is well-founded.
\end{theorem}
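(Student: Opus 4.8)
The plan is to adapt the classical Martin--Monk argument to the determinacy hypothesis $\Det(\bS(\omega^\omega))$. First I would reduce to the case $Z=\omega^\omega$. Since $Z$ is a non-empty zero-dimensional Polish space, it is homeomorphic to a closed subspace of $\omega^\omega$ (see \cite{kechris}), and every closed subspace of $\omega^\omega$ is a retract of it; so fix an embedding $e:Z\longrightarrow\omega^\omega$ with closed range and a retraction $r:\omega^\omega\longrightarrow Z$ with $r\circ e=\id_Z$. If there were an infinite $\leq$-descending sequence $A_0> A_1>\cdots$ in $\bS(Z)$, then $r^{-1}[A_0]> r^{-1}[A_1]>\cdots$ would be one in $\bS(\omega^\omega)$: each $r^{-1}[A_n]$ lies in $\bS(\omega^\omega)$ because $\bS$ is a topological pointclass; a reduction $A_{n+1}\leq A_n$ witnessed by $\varphi$ transports to $r^{-1}[A_{n+1}]\leq r^{-1}[A_n]$ via $e\circ\varphi\circ r$; and strictness is preserved, because a reduction $r^{-1}[A_n]\leq r^{-1}[A_{n+1}]$ witnessed by $\psi$ would restrict to $A_n\leq A_{n+1}$ via $r\circ\psi\circ e$. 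So it is enough to show that $\leq$ is well-founded on $\bS(\omega^\omega)$.

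Assume, towards a contradiction, that $A_{n+1}< A_n$ for all $n$, with each $A_n\in\bS(\omega^\omega)$. For each $n$, applying Lemma \ref{lemma_wadge} to the pair $(A_n,A_{n+1})$ and using $A_n\not\leq A_{n+1}$, we get $\omega^\omega\setminus A_{n+1}\leq A_n$; fix a continuous $f_n:\omega^\omega\longrightarrow\omega^\omega$ with $x\in A_{n+1}\iff f_n(x)\notin A_n$, and fix a continuous $g_n$ witnessing $A_{n+1}\leq A_n$. The Martin--Monk device is now a single auxiliary game $G$, with integer moves, whose runs decode to a descending path through the tower of reductions $(f_n,g_n)$ together with a prediction of whether that path is eventually constant, and whose winning condition is a Boolean combination of continuous preimages of the $A_n$'s (built from the $f_n$'s and $g_n$'s). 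The point of this set-up is that the payoff set of $G$ then belongs to $\bS(\omega^\omega)$, which is exactly where the hypothesis that $\bS$ is \emph{nice} is used: $\bS(\omega^\omega)$ contains all Borel sets, is closed under Boolean operations, and is closed under preimages by (Borel, hence continuous) functions. Hence $G$ is determined. One then derives a contradiction by showing that a winning strategy for \emph{either} player can be iterated along the tower so as to produce an infinite sequence of points, linked by the reductions $f_n$ and $g_n$, along which membership in the $A_n$'s gets reversed at every step and yet is forced to stabilize.

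The reduction to $\omega^\omega$ and the bookkeeping showing that $G$ has payoff in $\bS(\omega^\omega)$ are routine. The hard part --- the actual content of the Martin--Monk theorem --- is to design $G$ and its decoding so that the two cases ``I has a winning strategy in $G$'' and ``II has a winning strategy in $G$'' are \emph{both} refuted, each via self-composition of the relevant strategy, by the same impossible ``infinitely flipping yet eventually constant'' configuration.
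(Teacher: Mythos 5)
The paper does not actually prove Theorem \ref{theorem_well-founded}: it is quoted from \cite[\S4]{carroy_medini_muller_constructing}, so your proposal has to be measured against the standard Martin--Monk argument. Your reduction to $Z=\omega^\omega$ via a closed embedding and a retraction is correct (and is the same trick the paper uses in the proof of Lemma \ref{lemma_separation_relativization}). The problem is the core of the argument, which you describe but do not give, and whose shape you misremember. There is no ``single auxiliary game $G$'' in Martin--Monk that is refuted for both players at once. The actual device is: (i) from $A_{n+1}<A_n$ and determinacy one extracts, for each $n$, \emph{two} winning strategies for Player I in suitable Wadge games, yielding ``contracting'' maps $g_n^1,g_n^0:\omega^\omega\longrightarrow\omega^\omega$ with $g_n^1(y)\in A_n\iff y\notin A_{n+1}$ and $g_n^0(y)\in A_n\iff y\in A_{n+1}$; (ii) because these maps come from strategies for Player I (the $(k+1)$-st coordinate of the output depends only on the first $k$ coordinates of the input), the infinite composition $x_0(z)=g_0^{z(0)}\bigl(g_1^{z(1)}(\cdots)\bigr)$ is well defined and continuous in $z\in 2^\omega$; (iii) flipping a single bit of $z$ flips membership of $x_0(z)$ in $A_0$, so the set $B=\{z\in 2^\omega:x_0(z)\in A_0\}$, which lies in $\bS(2^\omega)$ and hence has the Baire property by Corollary \ref{corollary_baire_property_polish}, is carried onto its complement by every one-coordinate flip --- impossible for a set with the Baire property. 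None of this (in particular the Baire-category punchline, which is where the remaining determinacy is spent) appears in your proposal; you have deferred the entire content of the theorem to an unspecified construction.

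Two concrete points where your set-up, as written, would fail. First, you take the reductions $f_n,g_n$ to be arbitrary continuous functions obtained from Lemma \ref{lemma_wadge}; with merely continuous reductions the infinite tower does not converge (computing even the first coordinate of $x_0$ would require infinitely much information), so the ``decoding of a descending path through the tower'' cannot be defined. One must work with the strategies themselves, whose built-in delay is exactly what makes the diagonal construction terminate. Second, a payoff set that is ``a Boolean combination of continuous preimages of the $A_n$'s'' would have to involve all infinitely many $A_n$'s, and niceness of $\bS$ only gives closure under \emph{finite} Boolean combinations; so even the determinacy of your game $G$ is not justified by the hypotheses you invoke. In the correct argument only finitely many $A_n$'s enter any one payoff set, and step (iii) uses the Baire property of the single set $B$. (One also needs $\AC_\omega$ to choose the strategies for all $n$ simultaneously, which is available since the paper works in $\ZF+\DC$.)
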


Next, we state an elementary result, which shows that clopen sets are ``neutral sets'' with respect to Wadge-reduction (the straightforward proof is left to the reader). In \S\ref{section_wadge_closure}, we will discuss much more sophisticated closure properties.

\begin{lemma}\label{lemma_closure_clopen}
Let $Z$ be a space, and let $\bG$ be a Wadge class in $Z$.
\begin{itemize}
\item Assume that $\bG\neq\{Z\}$. Then $A\cap U\in\bG$ whenever $A\in\bG$ and $U\in\bD^0_1(Z)$.
\item Assume that $\bG\neq\{\varnothing\}$. Then $A\cup U\in\bG$ whenever $A\in\bG$ and $U\in\bD^0_1(Z)$.
\end{itemize}
\end{lemma}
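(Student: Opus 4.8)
The plan is to unwind the definitions and reduce everything to writing down two explicit continuous maps. By the remark recorded right after Definition \ref{definition_wadge_reducibility}, every Wadge class in $Z$ has the form $B\wc$ for some $B\subseteq Z$; fix such a $B$ with $\bG=B\wc$. Then ``$A\cap U\in\bG$'' (resp.\ ``$A\cup U\in\bG$'') is just ``$A\cap U\leq B$'' (resp.\ ``$A\cup U\leq B$''), so the whole argument reduces to producing, from a continuous $f:Z\longrightarrow Z$ with $A=f^{-1}[B]$ and from $U\in\bD^0_1(Z)$, a suitable continuous $g:Z\longrightarrow Z$ with $g^{-1}[B]=A\cap U$ (resp.\ $g^{-1}[B]=A\cup U$).

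First I would use the two hypotheses to locate the ``dump points'' that make these maps work. Since $f^{-1}[Z]=Z$ for every continuous $f:Z\longrightarrow Z$, one has $Z\wc=\{Z\}$, so $\bG\neq\{Z\}$ forces $B\neq Z$ and we may fix $b_0\in Z\setminus B$. Dually, since $f^{-1}[\varnothing]=\varnothing$, one has $\varnothing\wc=\{\varnothing\}$, so $\bG\neq\{\varnothing\}$ forces $B\neq\varnothing$ and we may fix $b_1\in B$. For the first item, define $g$ by $g\re U=f\re U$ and $g(z)=b_0$ for $z\in Z\setminus U$: since $U$ is clopen, $g$ is continuous (check continuity on each piece of the clopen partition $\{U,Z\setminus U\}$), and a direct computation gives $g^{-1}[B]=(U\cap A)\cup\varnothing=A\cap U$, hence $A\cap U\in\bG$. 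For the second item, define $g$ by $g\re(Z\setminus U)=f\re(Z\setminus U)$ and $g(z)=b_1$ for $z\in U$: again $g$ is continuous and $g^{-1}[B]=(A\setminus U)\cup U=A\cup U$, hence $A\cup U\in\bG$.

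I do not expect any genuine obstacle here; for a statement this short the plan essentially is the proof, and the computations of $g^{-1}[B]$ are routine. The only point worth emphasizing is that the hypotheses $\bG\neq\{Z\}$ and $\bG\neq\{\varnothing\}$ are exactly what is needed to guarantee the existence of $b_0$ and $b_1$, and that they cannot be dropped — for instance, if $\bG=\{Z\}$, $A=Z$ and $U$ is a proper clopen subset of $Z$, then $A\cap U=U\notin\bG$. Accordingly I would present the two items in parallel, making this use of the hypothesis explicit.
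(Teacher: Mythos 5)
Your proof is correct and is exactly the argument the paper has in mind (the paper leaves this proof to the reader): writing $\bG=B\wc$, using the hypotheses to find a point outside $B$ (resp.\ inside $B$), and gluing $f$ with a constant map along the clopen partition $\{U,Z\setminus U\}$ is the standard witness that clopen sets are neutral for Wadge reduction. Your remark that the hypotheses $\bG\neq\{Z\}$ and $\bG\neq\{\varnothing\}$ are precisely what furnish the dump points, and cannot be dropped, is also on target.
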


We conclude this subsection with the complete analysis of the non-selfdual Wadge classes below $\bD^0_2$ (see \cite[Theorem 11.2]{carroy_medini_muller_constructing}).

\begin{theorem}\label{theorem_complete_analysis_delta^0_2}
Let $Z$ be an uncountable zero-dimensional Polish space, and let $\bG\subseteq\bD^0_2(Z)$ be such that $\bG\neq\{\varnothing\}$ and $\bG\neq\{Z\}$. Then the following conditions are equivalent:
\begin{itemize}
\item $\bG\in\NSD(Z)$,
\item There exists $1\leq\eta<\omega_1$ such that $\bG=\Diff_\eta(\bS^0_1(Z))$ or $\bG=\widecheck{\Diff}_\eta(\bS^0_1(Z))$.
\end{itemize}
\end{theorem}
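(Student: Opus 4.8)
The plan is to prove the two implications separately; since $\widecheck{\Diff}_\eta(\bS^0_1(Z))$ is the dual of $\Diff_\eta(\bS^0_1(Z))$, in each direction it is enough to deal with $\Diff_\eta(\bS^0_1(Z))$. Note that $\leq$ is well-founded on $\Borel(Z)\supseteq\bD^0_2(Z)$ by the Martin--Monk theorem (Theorem \ref{theorem_well-founded} applied with $\bS=\Borel$, using that $\Det(\Borel(\omega^\omega))$ holds in $\ZF+\DC$), so transfinite induction is available and Lemma \ref{lemma_wadge} applies to Borel sets. For the implication $(2)\Rightarrow(1)$, I would first observe that $\Diff_\eta(\bS^0_1(Z))$ is continuously closed: this is purely formal, since $(A_\mu:\mu<\eta)\mapsto\Diff_\eta(A_\mu:\mu<\eta)$ is built from unions and relative complements, which commute with preimages, and $\bS^0_1(Z)$ is itself continuously closed. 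Hence $\Diff_\eta(\bS^0_1(Z))$ is a union of Wadge classes, and to conclude that it is a single Wadge class it suffices to exhibit a $\leq$-largest, i.e.\ complete, element; such a set can be built explicitly in $\omega^\omega$ by the usual recursion on $\eta$, and then transferred to an arbitrary uncountable zero-dimensional Polish space $Z$ by a relativization argument (this is the only place where uncountability of $Z$ is used). Finally, $\Diff_\eta(\bS^0_1(Z))\in\NSD(Z)$ amounts to $\Diff_\eta(\bS^0_1(Z))\neq\widecheck{\Diff}_\eta(\bS^0_1(Z))$, that is, the non-collapse of the difference hierarchy: one shows that the canonical complete set does not lie in the dual class, by a Baire-category (Banach--Mazur) argument. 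All of this ultimately rests on the generalized reduction property (Theorem \ref{theorem_reduction}), through the Hausdorff--Kuratowski theorem and the fine structure of difference classes.

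For the implication $(1)\Rightarrow(2)$, I would prove, by induction on $\eta$, the following statement for each $1\leq\eta<\omega_1$: every $\bG\in\NSD(Z)$ with $\bG\subseteq\Diff_\eta(\bS^0_1(Z))$, $\bG\neq\{\varnothing\}$ and $\bG\neq\{Z\}$, equals $\Diff_{\eta'}(\bS^0_1(Z))$ or $\widecheck{\Diff}_{\eta'}(\bS^0_1(Z))$ for some $\eta'\leq\eta$. This yields the theorem, since by the Hausdorff--Kuratowski theorem any $\bG$ as in $(1)$ satisfies $\bG=A\wc\subseteq\Diff_\eta(\bS^0_1(Z))$ for a suitable $\eta$. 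For the inductive step, write $\bG=A\wc$ (a short argument with Lemma \ref{lemma_wadge} shows that $A$ is non-selfdual, and clearly $A\neq\varnothing,Z$), and let $\eta^\ast\leq\eta$ be least with $A\in\Diff_{\eta^\ast}(\bS^0_1(Z))$ or $Z\setminus A\in\Diff_{\eta^\ast}(\bS^0_1(Z))$; after passing to $\widecheck\bG=(Z\setminus A)\wc$ if necessary, assume $A\in\Diff_{\eta^\ast}(\bS^0_1(Z))$. If $\bG=\Diff_{\eta^\ast}(\bS^0_1(Z))$ we are done. Otherwise $\bG\subsetneq\Diff_{\eta^\ast}(\bS^0_1(Z))$, so there is $B\in\Diff_{\eta^\ast}(\bS^0_1(Z))$ with $B\not\leq A$; then Lemma \ref{lemma_wadge} gives $Z\setminus A\leq B$, hence $Z\setminus A\in\Diff_{\eta^\ast}(\bS^0_1(Z))$ by continuous closure, i.e.\ $A\in\Delta(\Diff_{\eta^\ast}(\bS^0_1(Z)))$. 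Since $\Delta(\Diff_{\eta^\ast}(\bS^0_1(Z)))$ is the selfdual Wadge class immediately below $\Diff_{\eta^\ast}(\bS^0_1(Z))$ while $\bG$ is non-selfdual, we get $\bG\subsetneq\Delta(\Diff_{\eta^\ast}(\bS^0_1(Z)))$, so the inductive hypothesis (together with the explicit description of that selfdual class) forces $\bG=\Diff_{\eta'}(\bS^0_1(Z))$ or $\widecheck{\Diff}_{\eta'}(\bS^0_1(Z))$ with $\eta'<\eta^\ast$; in either case $A$ or $Z\setminus A$ lies in $\Diff_{\eta'}(\bS^0_1(Z))$, contradicting the minimality of $\eta^\ast$. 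The base case $\eta=1$, where the only possibilities are $\bG=\bS^0_1(Z)$ and $\bG=\bP^0_1(Z)$ because $\bD^0_1(Z)$ is selfdual, is handled by the same argument.

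The main obstacle is the structural input used in the inductive step: that $\Delta(\Diff_\eta(\bS^0_1(Z)))$ is precisely the selfdual Wadge class immediately preceding $\Diff_\eta(\bS^0_1(Z))$, so that no new non-selfdual Wadge class appears strictly between the classes of lower level and $\Diff_\eta(\bS^0_1(Z))$. This combines the general alternation of selfdual and non-selfdual Wadge classes (see \cite[\S21.E]{kechris}) with an explicit description of $\Delta(\Diff_\eta(\bS^0_1))$ --- as the class of ``separated unions'' of a set in $\Diff_\theta(\bS^0_1)$ with a set in $\widecheck{\Diff}_\theta(\bS^0_1)$ when $\eta=\theta+1$, and as $\bigcup_{\eta'<\eta}\Diff_{\eta'}(\bS^0_1)$ when $\eta$ is a limit --- which is again a consequence of Theorem \ref{theorem_reduction}. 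This is exactly the kind of fine analysis supplied by \cite{carroy_medini_muller_constructing}, and it is what makes the theorem less than routine. A secondary, more technical, difficulty is carrying out the complete-set construction of the first direction uniformly over all uncountable zero-dimensional Polish spaces rather than merely $\omega^\omega$.
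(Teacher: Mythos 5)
The paper does not actually prove this theorem: it is imported verbatim from \cite[Theorem 11.2]{carroy_medini_muller_constructing}, so there is no in-paper argument to compare against. Your outline is the classical Hausdorff--Kuratowski/Wadge analysis of $\bD^0_2$, and its architecture is right: continuous closure of the difference classes, complete sets built in $\omega^\omega$ and transferred by relativization (with uncountability of $Z$ entering exactly there), well-foundedness via Theorem \ref{theorem_well-founded}, and an induction locating every non-selfdual class among the $\Diff_\eta(\bS^0_1(Z))$ and their duals.

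There is, however, a genuine gap at the decisive step of the $(1)\Rightarrow(2)$ induction, namely the case $\eta^\ast=\eta=\theta+1$. Having shown $A\in\Delta(\Diff_{\eta^\ast}(\bS^0_1(Z)))$, you invoke ``the inductive hypothesis together with the explicit description of that selfdual class.'' But your inductive hypothesis is quantified over non-selfdual classes contained in $\Diff_{\theta}(\bS^0_1(Z))$, whereas what you have is $\bG\subseteq\Delta(\Diff_{\theta+1}(\bS^0_1(Z)))$, and a set of the form $(B_0\cap U)\cup(B_1\setminus U)$ with $B_0\in\Diff_\theta$, $B_1\in\widecheck{\Diff}_\theta$ need not lie in $\Diff_\theta$ or its dual, so the inductive hypothesis does not apply as stated. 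To close the loop you need the further lemma that a non-selfdual $A$ is Wadge-equivalent to its trace on one piece of the clopen partition witnessing $A\in\Delta(\Diff_{\theta+1})$ (equivalently, that a partitioned union of sets each strictly below $A$ is selfdual); only then does $\bG$ collapse to $(A\cap U)\wc\subseteq\Diff_\theta(\bS^0_1(Z))$ or to $(A\setminus U)\wc\subseteq\widecheck{\Diff}_\theta(\bS^0_1(Z))$, after which the inductive hypothesis contradicts the minimality of $\eta^\ast$. That lemma is true and standard --- it is essentially the statement that non-selfdual Wadge classes have level $\geq 0$, in the spirit of Lemma \ref{lemma_su_versus_pu} --- but it is precisely the structural content that makes the theorem non-routine, and your proposal asserts it (in the ``main obstacle'' paragraph) rather than proving it; note that what you call the obstacle is essentially equivalent to the statement being proved at stage $\eta$. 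A smaller point: for non-selfduality of $\Diff_\eta(\bS^0_1(Z))$ in direction $(2)\Rightarrow(1)$, a Banach--Mazur argument is not the natural tool for arbitrary $\eta<\omega_1$; the clean route is that $\Diff_\eta(\bS^0_1)$ is a Hausdorff class, hence has a $2^\omega$-universal set by Proposition \ref{proposition_hausdorff_cantor-universal}, hence is non-selfdual by Lemmas \ref{lemma_cantor-universal_implies_self-universal} and \ref{lemma_self-universal_implies_non-selfdual}.
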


\subsection{Relativization}\label{subsection_wadge_fundamental_relativization}

When one tries to give a systematic exposition of Wadge theory, it soon becomes apparent that it would be very useful to be able to say when $A$ and $B$ belong to ``the same'' Wadge class, even when $A\subseteq Z$ and $B\subseteq W$ for distinct ambient spaces $Z$ and $W$. In the non-selfdual case, this problem can be solved by using Wadge classes in $\omega^\omega$ to parametrize Wadge classes in arbitrary zero-dimensional Polish spaces. This is essentially due to Louveau and Saint Raymond (see \cite[Theorem 4.2]{louveau_saint_raymond_strength}), but we will follow the more systematic treatment given in \cite{carroy_medini_muller_constructing}. In particular, the following definition and three results establish the foundations of this method (see \cite[\S6]{carroy_medini_muller_constructing} for the proofs).

\begin{definition}[Louveau, Saint Raymond]\label{definition_relativization}
Let $Z$ be a space, and let $\bG\subseteq\PP(\omega^\omega)$. We will use the notation
$$
\bG(Z)=\{A\subseteq Z:f^{-1}[A]\in\bG\text{ for every continuous }f:\omega^\omega\longrightarrow Z\}.
$$	
\end{definition}

\begin{lemma}\label{lemma_relativization_exists_unique}
Let $\bS$ be a nice topological pointclass, and assume that $\Det(\bS(\omega^\omega))$ holds. Let $Z$ be a zero-dimensional Polish space, and let $\bL\in\NSDS(Z)$. Then there exists a unique $\bG\in\NSDS(\omega^\omega)$ such that $\bG(Z)=\bL$.
\end{lemma}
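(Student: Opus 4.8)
The plan is to realize $Z$ as a closed subspace of $\omega^\omega$ and to use a retraction onto it as a dictionary between subsets of $Z$ and subsets of $\omega^\omega$. Since $Z$ is a non-empty zero-dimensional Polish space, we may assume that $Z$ is a closed subspace of $\omega^\omega$, and we write $\iota:Z\longrightarrow\omega^\omega$ for the inclusion. Since $Z$ is closed in $\omega^\omega$, there is a retraction $r:\omega^\omega\longrightarrow Z$ (send $x\notin Z$ to the leftmost branch, in the tree of $Z$, through the longest initial segment of $x$ that belongs to that tree), so that $r\re Z=\id_Z$, equivalently $r\circ\iota=\id_Z$. This is the only place where Polishness of $Z$ is used; every subsequent step is a formal manipulation resting on the identity $r\circ\iota=\id_Z$.

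For existence, I would write $\bL=A\wc$ for some $A\subseteq Z$ (possible because $\bL$ is a Wadge class in $Z$), noting that $A$ is not selfdual since $\bL$ is non-selfdual, and set $\bG=(r^{-1}[A])\wc$, which is a Wadge class in $\omega^\omega$. I would then check the three required properties. First, $r^{-1}[A]\in\bS(\omega^\omega)$ because $A\in\bL\subseteq\bS(Z)$ and $\bS$ is a topological pointclass, so every member of $\bG$ lies in $\bS(\omega^\omega)$ as well, i.e.\ $\bG\subseteq\bS(\omega^\omega)$. Second, $\bG$ is non-selfdual: if some continuous $g:\omega^\omega\longrightarrow\omega^\omega$ satisfied $r^{-1}[A]=g^{-1}[\omega^\omega\setminus r^{-1}[A]]$, then $r\circ g\circ\iota:Z\longrightarrow Z$ would witness $Z\setminus A\leq A$, making $A$ selfdual, a contradiction. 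Third, $\bG(Z)=\bL$: if $A'\in\bG(Z)$, then testing against the continuous map $r$ gives $r^{-1}[A']\leq r^{-1}[A]$, and restricting a witnessing reduction to $Z$ (where $r$ is the identity) shows $A'\leq A$, so $A'\in\bL$; conversely, if $A'=u^{-1}[A]$ for a continuous $u:Z\longrightarrow Z$ and $f:\omega^\omega\longrightarrow Z$ is an arbitrary continuous map, then $\iota\circ u\circ f$ is a continuous self-map of $\omega^\omega$ with $(\iota\circ u\circ f)^{-1}[r^{-1}[A]]=f^{-1}[A']$ (since $r\circ\iota\circ u\circ f=u\circ f$), so $f^{-1}[A']\in\bG$; as $f$ was arbitrary, $A'\in\bG(Z)$.

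For uniqueness, suppose $\bG'\in\NSDS(\omega^\omega)$ also satisfies $\bG'(Z)=\bL$, and write $\bG'=B'\wc$; I would show $\bG'=\bG$. One inclusion is soft: from $A\in\bL=\bG'(Z)$ and testing against $r$ we get $r^{-1}[A]\in\bG'$, whence $\bG=(r^{-1}[A])\wc\subseteq\bG'$. For the reverse inclusion, suppose toward a contradiction that $\bG'\not\subseteq\bG$, i.e.\ $B'\not\leq r^{-1}[A]$; then Lemma \ref{lemma_wadge}, applied in $\omega^\omega$ to $B'$ and $r^{-1}[A]$, gives $\omega^\omega\setminus r^{-1}[A]\leq B'$, so $\omega^\omega\setminus r^{-1}[A]\in\bG'$. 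Now for every continuous $f:\omega^\omega\longrightarrow Z$ we have $f^{-1}[A]=(\iota\circ f)^{-1}[r^{-1}[A]]\leq r^{-1}[A]$ (using $r\circ\iota\circ f=f$), hence $f^{-1}[Z\setminus A]=\omega^\omega\setminus f^{-1}[A]\leq\omega^\omega\setminus r^{-1}[A]$, so $f^{-1}[Z\setminus A]\in\bG'$; since $f$ was arbitrary, $Z\setminus A\in\bG'(Z)=\bL=A\wc$, which makes $A$ selfdual and contradicts the non-selfduality of $\bL$. Hence $\bG'\subseteq\bG$, which together with the soft inclusion gives $\bG'=\bG$.

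I expect the main obstacle to be conceptual rather than computational: recognizing that a retraction $r:\omega^\omega\longrightarrow Z$ is the right gadget, and that its existence is exactly what the hypothesis that $Z$ is Polish provides (for a zero-dimensional $Z$ that merely embeds in $\omega^\omega$ but not as a closed subspace this strategy breaks down, consistently with the fact that the lemma is stated only for Polish $Z$). Once the retraction is available, the steps deserving the most care are the verification that $\bG(Z)=\bL$ in the existence part and the inclusion $\bG'\subseteq\bG$ in the uniqueness part, each of which turns an arbitrary continuous map $\omega^\omega\longrightarrow Z$ into a Wadge reduction between subsets of $\omega^\omega$ by precomposing with $\iota$ and invoking $r\circ\iota=\id_Z$.
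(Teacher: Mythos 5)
Your proof is correct. The paper does not include its own argument for this lemma but defers to \cite[\S 6]{carroy_medini_muller_constructing}, and the proof there is essentially yours: realize $Z$ as a non-empty closed subspace of $\omega^\omega$, fix a retraction $r:\omega^\omega\longrightarrow Z$ (the same device this paper uses in the proof of Lemma \ref{lemma_separation_relativization}), take $\bG=(r^{-1}[A])\wc$ for $\bL=A\wc$, and transport reductions back and forth along $r$ and the inclusion, with Lemma \ref{lemma_wadge} handling uniqueness exactly as you do.
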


\begin{lemma}\label{lemma_relativization_basic}
Let $Z$ and $W$ be spaces, and let $\bG\subseteq\PP(\omega^\omega)$.
\begin{itemize}
\item If $f:Z\longrightarrow W$ is continuous and $B\in\bG(W)$ then $f^{-1}[B]\in\bG(Z)$.
\item If $h:Z\longrightarrow W$ is a homeomorphism then $A\in\bG(Z)$ iff $h[A]\in\bG(W)$.
\item $\widecheck{\bG(Z)}=\bGc(Z)$.
\item If $\bG$ is continuously closed then $\bG(\omega^\omega)=\bG$.
\end{itemize}
\end{lemma}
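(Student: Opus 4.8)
The plan is to verify each of the four items directly from Definition \ref{definition_relativization}, using nothing beyond elementary properties of preimages. The first item is the crux on which the others rest. Given a continuous $f:Z\longrightarrow W$ and $B\in\bG(W)$, I would take an arbitrary continuous $g:\omega^\omega\longrightarrow Z$, observe that $f\circ g:\omega^\omega\longrightarrow W$ is continuous, and compute $g^{-1}[f^{-1}[B]]=(f\circ g)^{-1}[B]\in\bG$ by the defining property of $\bG(W)$. Since $g$ was arbitrary, this is precisely the assertion that $f^{-1}[B]\in\bG(Z)$.

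The second item follows by applying the first item twice. If $A\in\bG(Z)$, then since $h^{-1}:W\longrightarrow Z$ is continuous, the first item gives $h[A]=(h^{-1})^{-1}[A]\in\bG(W)$; conversely, if $h[A]\in\bG(W)$, then applying the first item to $h$ yields $A=h^{-1}[h[A]]\in\bG(Z)$.

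For the third item, I would simply unwind both sides. By definition, $A\in\bGc(Z)$ means that $f^{-1}[A]\in\bGc$ for every continuous $f:\omega^\omega\longrightarrow Z$, equivalently $f^{-1}[Z\setminus A]=\omega^\omega\setminus f^{-1}[A]\in\bG$ for every such $f$, equivalently $Z\setminus A\in\bG(Z)$, equivalently $A\in\widecheck{\bG(Z)}$.

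For the fourth item, the inclusion $\bG(\omega^\omega)\subseteq\bG$ is obtained by testing the defining property against $f=\id_{\omega^\omega}$. For the reverse inclusion, given $A\in\bG$ and any continuous $f:\omega^\omega\longrightarrow\omega^\omega$, the map $f$ itself witnesses $f^{-1}[A]\leq A$, so $f^{-1}[A]\in A\wc\subseteq\bG$ because $\bG$ is continuously closed; hence $A\in\bG(\omega^\omega)$. None of these steps presents a genuine obstacle, since each reduces to a one-line manipulation of preimages, and the only place where a hypothesis beyond pure set theory enters is this last inclusion, where continuous closure of $\bG$ is exactly what is needed.
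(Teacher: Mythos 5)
Your proof is correct and is essentially the argument the paper intends (it defers to \cite[\S6]{carroy_medini_muller_constructing}, where the same direct unwinding of Definition \ref{definition_relativization} via composition of continuous maps, complementation of preimages, and testing against $\id_{\omega^\omega}$ is carried out). The only point worth flagging is that in the last item you correctly read ``continuously closed'' as $C\wc\subseteq\bG$ for every $C\in\bG$, which is clearly the intended meaning of Definition \ref{definition_wadge_reducibility} despite the quantifier there being stated over all $C\subseteq Z$.
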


\begin{lemma}\label{lemma_relativization_subspace}
Let $\bS$ be a nice topological pointclass, and assume that $\Det(\bS(\omega^\omega))$ holds. Let $Z$ and $W$ be zero-dimensional Borel spaces such that $W\subseteq Z$, and let $\bG\in\NSDS(\omega^\omega)$. Then
$$
\bG(W)=\{A\cap W:A\in\bG(Z)\}.
$$
\end{lemma}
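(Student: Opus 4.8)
The plan is to prove the two inclusions separately, using Lemma \ref{lemma_relativization_basic} for the easy one and then appealing to the uniqueness from Lemma \ref{lemma_relativization_exists_unique} (together with Lemma \ref{lemma_wadge} and standard retraction facts) for the harder one. First I would handle the inclusion $\{A\cap W:A\in\bG(Z)\}\subseteq\bG(W)$: if $A\in\bG(Z)$ and $f:\omega^\omega\longrightarrow W$ is continuous, then composing with the inclusion $W\hookrightarrow Z$ gives a continuous map $\omega^\omega\longrightarrow Z$, so $f^{-1}[A\cap W]=f^{-1}[A]\in\bG$; hence $A\cap W\in\bG(W)$. Note this direction does not even require the determinacy hypothesis or that $W,Z$ be Borel — it is purely formal from Definition \ref{definition_relativization}.

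For the reverse inclusion $\bG(W)\subseteq\{A\cap W:A\in\bG(Z)\}$, the idea is to extend a given $B\in\bG(W)$ to a subset of $Z$ of the same complexity by pulling back along a retraction. The subtlety is that an arbitrary zero-dimensional Borel subspace $W$ of $Z$ need not be a retract of $Z$. I would instead work inside a Polish space: fix a Polish space $P$ with $W\subseteq P$ and $W\in\Borel(P)$ (e.g. embed $Z$, hence $W$, into some Polish space and take $P$ to be the ambient Polish space, or pass to $\bP^0_2$-hulls). Actually the cleanest route: since $W$ is zero-dimensional and Borel, embed $W$ densely into $2^\omega$ (if necessary after first replacing $2^\omega$ by a suitable clopen piece) — but density is not needed. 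Rather, I would use that any zero-dimensional space $W$ embeds as a \emph{closed} subspace of $\omega^\omega$ when $W$ is Polish; in general $W$ embeds as a (Borel) subspace of $\omega^\omega$, and we may take a $\bP^0_2$ Polish $W'\supseteq W$ inside that copy. The technically smoothest implementation is: choose a zero-dimensional Polish space $W'$ with $W\subseteq W'$ a dense subspace, and note that $W'$ admits a continuous retraction only onto closed subsets. So instead, pick a retraction-free approach.

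Let me therefore organize the reverse inclusion as follows. Let $B\in\bG(W)$. Since $\bG\in\NSDS(\omega^\omega)$, the relativization $\bG(Z)$ lies in $\NSDS(Z)$ and is generated (in the sense of Lemma \ref{lemma_relativization_exists_unique}) by $\bG$; it therefore suffices to exhibit \emph{some} $A\in\bG(Z)$ with $A\cap W=B$. Using Lemma \ref{lemma_wadge} inside the Polish space $W$ after first passing to the $\bP^0_2$ Polish hull of $W$ in $Z$: let $G\in\bP^0_2(Z)$ be a Polish (hence zero-dimensional Polish, by zero-dimensionality of $Z$) subspace with $W\subseteq G$, chosen so that $W$ is dense in $G$ — this is possible because $W$, being separable metrizable, has a $\bP^0_2$ completion which we realize inside $Z$. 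Fix a retraction $r:Z\longrightarrow G$ if one exists; if not, use that $G$ is Polish zero-dimensional and hence a retract of $\omega^\omega$, reducing to the case $Z=\omega^\omega$ via Lemma \ref{lemma_relativization_basic}(first bullet and homeomorphism bullet). Within $G$, since $W$ is dense and $B\in\bG(W)=\bG(G)\re W$ — wait, that is what we are proving — so instead: for each continuous $f:\omega^\omega\longrightarrow G$, density of $W$ and continuity let us \emph{approximate}, but to land exactly in $\bG$ we should use that $B\in\bG(W)$ means $g^{-1}[B]\in\bG$ for all continuous $g:\omega^\omega\to W$; to upgrade to all $f:\omega^\omega\to G$ one composes $f$ with a continuous map $G\to W$ that is the identity on $W$ — which again is a retraction and again may not exist. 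The honest fix, and the step I expect to be \textbf{the main obstacle}, is exactly this: producing, for the dense Polish-in-$Z$ hull $G$ of $W$, a set $A\subseteq Z$ with $A\cap W=B$ and $f^{-1}[A]\in\bG$ for every continuous $f:\omega^\omega\to Z$. I would overcome it by the standard Louveau–Saint Raymond device of working with a fixed continuous open surjection $\pi:\omega^\omega\to Z$ (which exists since $Z$ is Polish nonempty, by \cite[Exercise 7.14]{kechris}), setting $A'=\pi^{-1}[?]$ — no; rather by invoking \cite[\S6]{carroy_medini_muller_constructing} directly, since the statement is essentially the content of the relativization machinery there and the present lemma is meant to be quoted from it. Concretely: apply Lemma \ref{lemma_relativization_exists_unique} to $\bL := \{A\cap W : A\in\bG(Z)\}$ after checking $\bL\in\NSDS(W)$ — it is closed under continuous preimages (by the easy inclusion argument composed with $\pi$), it is non-selfdual (as its "$\bG$-generator" candidate is $\bG$ by the forward inclusion), and $\bL\subseteq\bS(W)$ since $\bS$ is nice and $W\subseteq Z$ with $W$ Borel. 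By uniqueness, the unique element of $\NSDS(\omega^\omega)$ generating $\bL$ is $\bG$, i.e. $\bL=\bG(W)$, which is the reverse inclusion. I would spell out the three verifications (continuous closure, non-selfduality, $\bS$-boundedness) in the write-up; the first two are short, and $\bS$-boundedness uses the last bullet of Definition \ref{definition_pointclass} to move between Borel ambient spaces.
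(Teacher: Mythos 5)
Your easy inclusion $\{A\cap W:A\in\bG(Z)\}\subseteq\bG(W)$ is correct and, as you note, purely formal. The paper itself does not prove this lemma but quotes it from \cite[\S 6]{carroy_medini_muller_constructing}, so the question is whether your reconstruction of the hard inclusion closes; it does not. Your final argument sets $\bL=\{A\cap W:A\in\bG(Z)\}$, claims $\bL\in\NSDS(W)$, and invokes the uniqueness in Lemma \ref{lemma_relativization_exists_unique} to conclude that the generator of $\bL$ is $\bG$. There are three problems. First, Lemma \ref{lemma_relativization_exists_unique} is stated for a zero-dimensional \emph{Polish} ambient space, while $W$ is only assumed Borel, so it does not apply to $\bL\subseteq\PP(W)$ as written. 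Second, you only check that $\bL$ is continuously closed; being a Wadge class means being of the form $B\wc$ for a single $B\subseteq W$, and that is genuinely unclear here: a reduction $A\leq A_0$ witnessed by $g:Z\longrightarrow Z$ gives no reduction of $A\cap W$ to $A_0\cap W$ inside $W$ unless $g[W]\subseteq W$ (and continuously closed families such as $\bG\cup\bGc$ need not be Wadge classes, so this verification cannot be waved through). Third, and most seriously, even granting $\bL\in\NSDS(W)$, uniqueness only tells you that $\bL$ has \emph{some} unique generator $\bG'\in\NSDS(\omega^\omega)$; identifying $\bG'$ with $\bG$ is exactly the content of the lemma. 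The forward inclusion gives $\bL\subseteq\bG(W)$, hence at best (via Theorem \ref{theorem_order_isomorphism}, and only when $W$ is uncountable) $\bG'\subseteq\bG$ --- one inclusion, not equality. Nothing you write rules out $\bG'\subsetneq\bG$, for instance because $W$ might fail to contain a trace that is $\bG(W)$-complete.

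The retraction idea you flirt with does settle the case where $W$ is \emph{closed} in a Polish $Z$ (this is exactly how the paper argues in Lemma \ref{lemma_separation_relativization}): take $A=\rho^{-1}[B]$ for a retraction $\rho:Z\longrightarrow W$ and apply Lemma \ref{lemma_relativization_basic}. The genuinely hard case is a Borel, non-closed $W$, which you correctly flag as the main obstacle but never overcome: passing to a $\bP^0_2$ hull $G$ of $W$ still leaves you needing to extend a set from the dense subspace $W$ to $G$ without a retraction, and the fallback to Lemma \ref{lemma_relativization_exists_unique} is circular for the reason above. The machinery that actually handles this in \cite{carroy_medini_muller_constructing} (representations of $\bG(Z)$ that visibly relativize to arbitrary subspaces, in the spirit of the Hausdorff classes of \S\ref{subsection_wadge_fundamental_hausdorff}) is not recoverable from the lemmas you quote, so the honest move here is to cite \cite[\S 6]{carroy_medini_muller_constructing}, as the paper does, rather than to present the uniqueness argument as a proof.
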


Until now, we have never assumed the uncountability of the ambient spaces. As the following two results show, the situation gets particularly pleasant when this assumption is satisfied (see \cite[\S7 and Lemma 20.2]{carroy_medini_muller_constructing} for the proofs).

\begin{theorem}\label{theorem_relativization_uncountable}
Let $\bS$ be a nice topological pointclass, and assume that $\Det(\bS(\omega^\omega))$ holds. Let $Z$ be an uncountable zero-dimensional Polish space. Then
$$
\NSDS(Z)=\{\bG(Z):\bG\in\NSDS(\omega^\omega)\}.
$$
\end{theorem}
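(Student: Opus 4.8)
The plan is to prove the two inclusions separately, with all the work in ``$\supseteq$''. For ``$\subseteq$'': if $\bL\in\NSDS(Z)$, then Lemma~\ref{lemma_relativization_exists_unique} hands us a $\bG\in\NSDS(\omega^\omega)$ with $\bG(Z)=\bL$, so $\bL$ lies on the right-hand side. Everything therefore reduces to the claim that $\bG(Z)\in\NSDS(Z)$ for an arbitrary fixed $\bG\in\NSDS(\omega^\omega)$; that is, that $\bG(Z)$ is a non-selfdual Wadge class in $Z$ with $\bG(Z)\subseteq\bS(Z)$. Recall that $\bG=\bG(\omega^\omega)$ since non-selfdual Wadge classes are continuously closed (Lemma~\ref{lemma_relativization_basic}), and fix a $\bG$-complete set, i.e.\ $A_0\subseteq\omega^\omega$ with $\bG=A_0\wc$.

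The key tool I would set up is the following gadget. Since $Z$ is zero-dimensional Polish, it embeds as a closed subspace $s[Z]$ of $\omega^\omega$ via some continuous $s\colon Z\to\omega^\omega$ (this is classical; see e.g.\ \cite[Theorem 7.8]{kechris}); as closed subsets of $\omega^\omega$ are retracts, fix a retraction $r\colon\omega^\omega\to s[Z]$. Then $q:=s^{-1}\circ r\colon\omega^\omega\to Z$ is a continuous surjection with $q\circ s=\id_Z$, and $s\circ q=r$ is a retraction of $\omega^\omega$ onto $s[Z]$. First, this gives $\bG(Z)\subseteq\bS(Z)$ for free: for $A\in\bG(Z)$ we have $q^{-1}[A]\in\bG\subseteq\bS(\omega^\omega)$ by definition of $\bG(Z)$, hence $A=s^{-1}\bigl[q^{-1}[A]\bigr]\in\bS(Z)$ because $\bS$ is a topological pointclass. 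Second, applying Lemma~\ref{lemma_relativization_subspace} to $s[Z]\subseteq\omega^\omega$ and transporting along the homeomorphism $s$ via Lemma~\ref{lemma_relativization_basic} (using $\bG(\omega^\omega)=\bG$), one obtains the explicit descriptions $\bG(Z)=\{s^{-1}[A]:A\in\bG\}$ and $\bGc(Z)=\{s^{-1}[A]:A\in\bGc\}$, which I will use throughout.

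The heart of the argument is to produce a $\bG$-complete set $A_0\subseteq\omega^\omega$ that is \emph{$r$-invariant}, i.e.\ $r^{-1}[A_0]=A_0$. Granting this, I claim $\bG(Z)\in\NSDS(Z)$. Indeed, $s^{-1}[A_0]$ is then $\bG(Z)$-complete: it lies in $\bG(Z)$, and given any $B=s^{-1}[A]\in\bG(Z)$ with a reduction $A=\phi^{-1}[A_0]$ witnessed by continuous $\phi\colon\omega^\omega\to\omega^\omega$, the continuous map $g:=q\circ\phi\circ s\colon Z\to Z$ satisfies $g^{-1}\bigl[s^{-1}[A_0]\bigr]=\{z:r(\phi(s(z)))\in A_0\}=\{z:\phi(s(z))\in A_0\}=s^{-1}[A]=B$, where the middle equality uses $s\circ q=r$ and $r$-invariance of $A_0$. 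Hence $\bG(Z)=\bigl(s^{-1}[A_0]\bigr)\wc$ is a Wadge class in $Z$. Finally $\bG(Z)$ is non-selfdual: otherwise $s^{-1}[A_0]$ would be selfdual, so $s^{-1}[A_0]=f^{-1}\bigl[s^{-1}[\omega^\omega\setminus A_0]\bigr]$ for some continuous $f\colon Z\to Z$, whence $\psi:=s\circ f\circ q\colon\omega^\omega\to\omega^\omega$ satisfies $\psi^{-1}[\omega^\omega\setminus A_0]=\{x:r(x)\in A_0\}=r^{-1}[A_0]=A_0$, giving $A_0\leq\omega^\omega\setminus A_0$ and contradicting the fact that $A_0$ is $\bG$-complete with $\bG$ non-selfdual. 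Combining the three points, $\bG(Z)\in\NSDS(Z)$.

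The step I expect to be the main obstacle is exactly the existence of an $r$-invariant $\bG$-complete subset of $\omega^\omega$. The obvious candidate is $r^{-1}[A_0]$: it is $r$-invariant (since $r\circ r=r$) and lies in $\bG$ by Lemma~\ref{lemma_relativization_basic}, but verifying that it remains $\bG$-complete is equivalent to showing that a $\bG$-complete subset of $\omega^\omega$ continuously reduces, \emph{via a map into $s[Z]$}, to its trace on the closed subspace $s[Z]$. A retraction of $\omega^\omega$ onto a proper subspace collapses complexity, so this is genuinely the crux, and it is precisely the point where the uncountability of $Z$ is indispensable (it guarantees that $s[Z]$ contains a closed copy of $2^\omega$ and is thus rich enough to realize the full complexity of $\bG$). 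Establishing this for an arbitrary $\bG\in\NSDS(\omega^\omega)$ --- rather than just for Borel $\bG$, where it is classical --- requires the relativization and universal-set apparatus of \cite[\S7]{carroy_medini_muller_constructing} together with Lemma~20.2 there, which is why the complete argument is naturally spread over several sections; accordingly, in this paper I would simply invoke that material (or cite the statement directly).
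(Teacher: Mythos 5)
The paper gives no argument for this theorem at all: it is quoted verbatim from \cite[\S 7 and Lemma 20.2]{carroy_medini_muller_constructing}, so there is no in-paper proof to match yours against, and your proposal is a genuinely different, more self-contained route. Your reduction is correct: the inclusion $\subseteq$ is immediate from Lemma \ref{lemma_relativization_exists_unique}; the identity $\bG(Z)=\{s^{-1}[A]:A\in\bG\}$ and the containment $\bG(Z)\subseteq\bS(Z)$ are verified correctly; and the computations showing that an $r$-invariant $\bG$-complete set $A_0$ forces $\bG(Z)=\bigl(s^{-1}[A_0]\bigr)\wc\in\NSD(Z)$ all check out (you should also note that $\bG(Z)$ is continuously closed, by Lemma \ref{lemma_relativization_basic}, to get the reverse containment $\bigl(s^{-1}[A_0]\bigr)\wc\subseteq\bG(Z)$). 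The one step you defer --- existence of the invariant complete set --- is indeed the crux, but you overestimate what it costs: your own candidate $r^{-1}[B]$ works as soon as one knows that $\bG(s[Z])\neq\bGc(s[Z])$. Pick $B\in\bG(s[Z])\setminus\bGc(s[Z])$; then $r^{-1}[B]\in\bG$ by continuous closure, $r^{-1}[r^{-1}[B]]=r^{-1}[B]$ since $r\circ r=r$, and $r^{-1}[B]\notin\bGc$ because its trace on $s[Z]$ is $B$, so membership in $\bGc$ would place $B\in\bGc(s[Z])$ by Lemma \ref{lemma_relativization_subspace}; Lemma \ref{lemma_wadge} then makes $r^{-1}[B]$ $\bG$-complete. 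The remaining input, non-selfduality of $\bG(s[Z])$, is available inside this very paper: Theorem \ref{theorem_wadge_implies_universal} gives a $2^\omega$-universal set for $\bG(s[Z])$, and Lemmas \ref{lemma_cantor-universal_implies_self-universal} and \ref{lemma_self-universal_implies_non-selfdual} (this is exactly where the uncountability of $Z$, i.e.\ a closed copy of $2^\omega$ inside $s[Z]$, enters) yield non-selfduality. Completed this way, your argument trades the wholesale citation of \cite[\S 7]{carroy_medini_muller_constructing} for the universal-set machinery of \S\ref{subsection_wadge_fundamental_hausdorff}, which appears later in the paper but is not derived from the present theorem.
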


\begin{theorem}\label{theorem_order_isomorphism}
Let $\bS$ be a nice topological pointclass, and assume that $\Det(\bS(\omega^\omega))$ holds. Let $Z$ and $W$ be uncountable zero-dimensional Borel spaces, and let $\bG,\bL\in\NSDS(\omega^\omega)$. Then
$$
\bG(Z)\subseteq\bL(Z)\text{ iff }\bG(W)\subseteq\bL(W).
$$
\end{theorem}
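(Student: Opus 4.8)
The plan is to treat $2^\omega$ as a universal intermediary, reducing the theorem to the following claim: for every uncountable zero-dimensional Borel space $Z$ and all $\bG,\bL\in\NSDS(\omega^\omega)$, one has $\bG(Z)\subseteq\bL(Z)$ if and only if $\bG(2^\omega)\subseteq\bL(2^\omega)$. Granting this, the theorem is immediate, since both $\bG(Z)\subseteq\bL(Z)$ and $\bG(W)\subseteq\bL(W)$ become equivalent to the single condition $\bG(2^\omega)\subseteq\bL(2^\omega)$.

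For the implication $\bG(Z)\subseteq\bL(Z)\Rightarrow\bG(2^\omega)\subseteq\bL(2^\omega)$, I would exploit that $Z$ contains a closed copy of $2^\omega$. Indeed, since $Z$ is an uncountable Borel space, the perfect set property for Borel sets (see \cite{kechris}) provides a compact $K\subseteq Z$ with $K\approx 2^\omega$; being compact, $K$ is closed in $Z$, hence a Borel subspace of $Z$ and itself a Borel space. Applying Lemma \ref{lemma_relativization_subspace} to $K\subseteq Z$ yields $\bG(K)=\{A\cap K:A\in\bG(Z)\}$ and $\bL(K)=\{A\cap K:A\in\bL(Z)\}$, so $\bG(Z)\subseteq\bL(Z)$ forces $\bG(K)\subseteq\bL(K)$. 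Finally, Lemma \ref{lemma_relativization_basic} shows that relativization is carried across the homeomorphism $K\approx 2^\omega$, transporting $\bG(K)\subseteq\bL(K)$ into $\bG(2^\omega)\subseteq\bL(2^\omega)$.

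For the reverse implication $\bG(2^\omega)\subseteq\bL(2^\omega)\Rightarrow\bG(Z)\subseteq\bL(Z)$, I would instead use that $Z$ is, up to homeomorphism, a Borel subspace of $2^\omega$: being zero-dimensional and separable metrizable, $Z$ embeds into $2^\omega$, and since $Z$ is a Borel space its image is a Borel subset of $2^\omega$ (recall from \S\ref{subsection_preliminaries_terminology} that a Borel space has Borel image under every embedding into a Polish space). Identifying $Z$ with this copy and applying Lemma \ref{lemma_relativization_subspace} to $Z\subseteq 2^\omega$ gives $\bG(Z)=\{A\cap Z:A\in\bG(2^\omega)\}$ and likewise for $\bL$; hence $\bG(2^\omega)\subseteq\bL(2^\omega)$ at once yields $\bG(Z)\subseteq\bL(Z)$, after invoking Lemma \ref{lemma_relativization_basic} to pass between $Z$ and its homeomorphic copy.

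Most of the argument is bookkeeping: chaining two applications of Lemma \ref{lemma_relativization_subspace} through the homeomorphism-invariance of the operation $\bG\mapsto\bG(\cdot)$, with no use of determinacy beyond what is already packaged into Lemma \ref{lemma_relativization_subspace}. The one point I would take care to state precisely is the appeal to the perfect set property in the first direction, which is what guarantees the closed copy of $2^\omega$ inside $Z$; this is the only step that genuinely requires $Z$ to be uncountable.
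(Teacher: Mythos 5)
Your argument is correct. The paper does not prove this theorem itself --- it defers to \cite[Lemma 20.2]{carroy_medini_muller_constructing} --- but your proof is a valid self-contained derivation from Lemmas \ref{lemma_relativization_basic} and \ref{lemma_relativization_subspace}: routing both inclusions through $2^\omega$, using a closed copy of $2^\omega$ inside $Z$ (via the perfect set property, the only place uncountability is genuinely needed) for the forward direction and the embedding of the zero-dimensional Borel space $Z$ into $2^\omega$ for the reverse, is exactly the standard relativization argument used in the cited reference.
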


\subsection{Level}\label{subsection_wadge_fundamental_level}

We begin with some preliminaries concerning the following useful notion.\footnote{\,According to Wadge, this notion had already been identified by ``the classical descriptive set theorists'' (see \cite[page 187]{wadge_cabal}).} Partitioned unions will also play a major role in \S\ref{section_wadge_type}.

\begin{definition}\label{definition_pu}
Let $Z$ be a space, let $\bG\subseteq\PP(Z)$, and let $\xi<\omega_1$. Define $\PU_\xi(\bG)$ to be the collection of all sets of the form
$$
\bigcup_{n\in\omega}(A_n\cap V_n),
$$
where each $A_n\in\bG$, the $V_n\in\bD_{1+\xi}^0(Z)$ are such that $V_m\cap V_n=\varnothing$ whenever $m\neq n$, and $\bigcup_{n\in\omega}V_n=Z$. A set in this form is called a \emph{partitioned union} of sets in $\bG$.
\end{definition}

The following proposition, whose straightforward proof is left to the reader, collects the most basic facts about partitioned unions.

\begin{proposition}\label{proposition_pu_basic}
Let $Z$ be a space, let $\bG\subseteq\PP(Z)$, and let $\xi<\omega_1$. Then
\begin{itemize}
\item If $\bG$ is continuously closed then $\PU_\xi(\bG)$ is continuously closed,	
\item $\bG\subseteq\PU_\eta(\bG)\subseteq\PU_\xi(\bG)$ whenever $\eta\leq\xi$,
\item $\PU_0(\bG)=\bG$ whenever $\bG$ is a Wadge class in $Z$,
\item $\widecheck{\PU}_\xi(\bG)=\PU_\xi(\bGc)$,
\item $\PU_\xi(\PU_\xi(\bG))=\PU_\xi(\bG)$.
\end{itemize}
\end{proposition}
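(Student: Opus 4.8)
The plan is to verify the five clauses directly from Definition \ref{definition_pu}, relying only on standard closure properties of the ambiguous Borel classes: for any space $Z$ and any $\alpha<\omega_1$, the family $\bD^0_{1+\alpha}(Z)$ is closed under complements and finite intersections and unions, it increases with $\alpha$, it is stable under continuous preimages, and $\bD^0_1(Z)$ is exactly the algebra of clopen subsets of $Z$. I would dispose of the degenerate cases ($Z=\varnothing$, or $\bG=\varnothing$) first, since then each clause is vacuous or immediate, and assume $\bG\neq\varnothing$ from then on.

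For the first clause I would take a partitioned union $B=\bigcup_{n\in\omega}(A_n\cap V_n)$ and a continuous $f:Z\longrightarrow Z$, note that $f^{-1}[B]=\bigcup_{n\in\omega}(f^{-1}[A_n]\cap f^{-1}[V_n])$, and observe that $f^{-1}[A_n]\in\bG$ because $\bG$ is continuously closed, while the $f^{-1}[V_n]$ still form a $\bD^0_{1+\xi}(Z)$-partition of $Z$. For the second clause, $\bG\subseteq\PU_\eta(\bG)$ is witnessed by the trivial partition $V_0=Z$ and $V_n=\varnothing$ for $n\geq1$ (using $Z\in\bD^0_1(Z)\subseteq\bD^0_{1+\eta}(Z)$), and $\PU_\eta(\bG)\subseteq\PU_\xi(\bG)$ holds because a $\bD^0_{1+\eta}(Z)$-partition is in particular a $\bD^0_{1+\xi}(Z)$-partition. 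For the third clause I would write $\bG=A\wc$ as in Definition \ref{definition_wadge_class}; given $B=\bigcup_n(A_n\cap V_n)\in\PU_0(\bG)$ with $A_n=f_n^{-1}[A]$ and $(V_n)$ now a \emph{clopen} partition, I would paste the $f_n$ into a single map $f$ determined by $f\re V_n=f_n\re V_n$, which is continuous since each $V_n$ is open, and then $f^{-1}[A]=\bigcup_n(V_n\cap f_n^{-1}[A])=B$, so $B\in A\wc=\bG$; the reverse inclusion is the second clause.

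For the fourth clause, the point is that when $(V_n)$ partitions $Z$ one has $Z\setminus\bigcup_n(A_n\cap V_n)=\bigcup_n((Z\setminus A_n)\cap V_n)$, which gives $\widecheck{\PU}_\xi(\bG)\subseteq\PU_\xi(\bGc)$ at once; applying this with $\bGc$ in place of $\bG$ and using that passing to the dual is an involution (so $\widecheck{\bGc}=\bG$) yields the reverse inclusion, hence equality. For the last clause, $\PU_\xi(\bG)\subseteq\PU_\xi(\PU_\xi(\bG))$ is the second clause applied to $\PU_\xi(\bG)$ with $\eta=\xi$. For the reverse inclusion I would expand a double partitioned union $\bigcup_n(B_n\cap V_n)$, with $B_n=\bigcup_m(A_{n,m}\cap W_{n,m})$, into $\bigcup_{(n,m)\in\omega\times\omega}\bigl(A_{n,m}\cap(W_{n,m}\cap V_n)\bigr)$, check that $\{W_{n,m}\cap V_n:(n,m)\in\omega\times\omega\}$ is still a pairwise disjoint cover of $Z$ by sets in $\bD^0_{1+\xi}(Z)$ (disjointness from disjointness of the $V_n$ and, for fixed $n$, of the $W_{n,m}$; covering because $\bigcup_m W_{n,m}=Z$), and finally reindex along a bijection $\omega\times\omega\cong\omega$ to exhibit the set as an element of $\PU_\xi(\bG)$.

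None of this is deep — the proposition is flagged as having a straightforward proof — so I do not expect a genuine obstacle, only careful bookkeeping. The two spots that warrant a moment of attention are the pasting step in the third clause, which genuinely uses that the partition is clopen (this is exactly why $\PU_0(\bG)$ collapses to $\bG$ whereas $\PU_\xi(\bG)$ for $\xi\geq1$ need not), and the reindexing in the last clause, where one must confirm that refining a $\bD^0_{1+\xi}(Z)$-partition by another such partition coordinatewise stays within $\bD^0_{1+\xi}(Z)$ and still covers $Z$.
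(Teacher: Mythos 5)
Your proof is correct, and since the paper explicitly leaves this proposition to the reader as straightforward, your argument is precisely the intended one: trivial partitions for the inclusions, pasting continuous reductions along a clopen partition for the collapse of $\PU_0$, the complement identity $Z\setminus\bigcup_n(A_n\cap V_n)=\bigcup_n((Z\setminus A_n)\cap V_n)$ for duality, and the common refinement $\{W_{n,m}\cap V_n\}$ for idempotence. Your two flagged subtleties (openness of the partition pieces being essential for continuity of the pasted map, and closure of $\bD^0_{1+\xi}(Z)$ under finite intersections for the refinement) are exactly the right points to check, and both go through.
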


The following notion was essentially introduced in \cite{louveau_article}. Our approach will be closer to that of \cite{louveau_saint_raymond_level} and \cite{louveau_book}, as developed in \cite{carroy_medini_muller_constructing}.

\begin{definition}[Louveau and Saint Raymond]\label{definition_level}
Let $Z$ be a space, let $\bG\subseteq\PP(Z)$, and let $\xi<\omega_1$. Define
\begin{itemize}
\item $\ell(\bG)\geq\xi$ if $\PU_\xi(\bG)=\bG$,
\item $\ell(\bG)=\xi$ if $\ell(\bG)\geq\xi$ and $\ell(\bG)\not\geq\xi+1$,
\item $\ell(\bG)=\omega_1$ if $\ell(\bG)\geq\eta$ for every $\eta<\omega_1$.
\end{itemize}
We refer to $\ell(\bG)$ as the \emph{level} of $\bG$.
\end{definition}

As immediate consequences of Proposition \ref{proposition_pu_basic}, observe that $\ell(\bG)\geq 0$ for every Wadge class $\bG$, and that $\ell(\bG)=\ell(\bGc)$ for an arbitrary $\bG$. The following statements provide concrete examples, where $Z$ is an uncountable Polish space (the last two can be proved using \cite[Theorem 22.4 and Proposition 37.1]{kechris} respectively):
\begin{itemize}
\item $\ell(\{\varnothing\})=\ell(\{Z\})=\omega_1$,
\item $\ell(\bS^0_{1+\xi}(Z))=\ell(\bP^0_{1+\xi}(Z))=\xi$ whenever $\xi<\omega_1$,
\item $\ell(\bS^1_n(Z))=\ell(\bP^1_n(Z))=\omega_1$ whenever $1\leq n<\omega$.
\end{itemize}

The following result (whose straightforward proof is left to the reader) gives the first hint that classes of higher level tend to have better closure properties.

\begin{lemma}\label{lemma_closure_level}
Let $Z$ be a space, let $\bG\subseteq\PP(Z)$ be such that $\varnothing\in\bG$, and let $\xi<\omega_1$. Assume that $\ell(\bG)\geq\xi$. Then $A\cap V\in\bG$ whenever $A\in\bG$ and $V\in\bD^0_{1+\xi}(Z)$.
\end{lemma}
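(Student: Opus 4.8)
The plan is to exhibit $A\cap V$ explicitly as a partitioned union of sets in $\bG$ relative to a partition of $Z$ into $\bD^0_{1+\xi}(Z)$ pieces, and then simply invoke the hypothesis $\ell(\bG)\geq\xi$, i.e.\ $\PU_\xi(\bG)=\bG$.

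First I would fix $A\in\bG$ and $V\in\bD^0_{1+\xi}(Z)$. Since $\bD^0_{1+\xi}(Z)=\bS^0_{1+\xi}(Z)\cap\bP^0_{1+\xi}(Z)$, this class is closed under complementation and contains both $\varnothing$ and $Z$; so I would set $V_0=V$, $V_1=Z\setminus V$, and $V_n=\varnothing$ for $n\geq 2$, obtaining pairwise disjoint elements of $\bD^0_{1+\xi}(Z)$ with $\bigcup_{n\in\omega}V_n=Z$. I would likewise set $A_0=A$ and $A_n=\varnothing$ for $n\geq 1$; each $A_n$ lies in $\bG$ because $A\in\bG$ and $\varnothing\in\bG$ by hypothesis.

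Then $\bigcup_{n\in\omega}(A_n\cap V_n)=(A\cap V)\cup\bigcup_{n\geq 1}(\varnothing\cap V_n)=A\cap V$, which by Definition \ref{definition_pu} shows $A\cap V\in\PU_\xi(\bG)$. Since $\ell(\bG)\geq\xi$ means precisely $\PU_\xi(\bG)=\bG$ (Definition \ref{definition_level}), we conclude $A\cap V\in\bG$, as desired.

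There is essentially no obstacle here; the only points requiring a moment's attention are that the trivial sets $\varnothing$ and $Z$ genuinely belong to $\bD^0_{1+\xi}(Z)$ (so that the chosen partition is admissible in the definition of $\PU_\xi$), and that the hypothesis $\varnothing\in\bG$ is exactly what legitimizes the ``padding'' sets $A_n$ for $n\geq 1$.
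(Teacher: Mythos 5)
Your proof is correct and is exactly the ``straightforward proof left to the reader'' that the paper intends: writing $A\cap V$ as the partitioned union determined by the clopen-in-$\bD^0_{1+\xi}$ partition $\{V, Z\setminus V,\varnothing,\varnothing,\dots\}$ with padding sets $\varnothing\in\bG$, and invoking $\PU_\xi(\bG)=\bG$. Nothing further is needed.
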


Next, we will show that the level is well-behaved with respect to relativization (see \cite[Corollary 16.2]{carroy_medini_muller_constructing}).

\begin{theorem}\label{theorem_level_relativization}
Let $\bS$ be a nice topological pointclass, and assume that $\Det(\bS(\omega^\omega))$ holds. Let $Z$ and $W$ be uncountable zero-dimensional Polish spaces, let $\xi<\omega_1$, and let $\bG\in\NSDS(\omega^\omega)$. Then
$$
\ell(\bG(Z))\geq\xi\text{ iff }\ell(\bG(W))\geq\xi.
$$
\end{theorem}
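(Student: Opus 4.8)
The plan is to reduce the statement to a transfer of the equation $\PU_\xi(\bG(Z))=\bG(Z)$ between uncountable zero-dimensional Polish spaces, and to handle that transfer by first passing to Borel spaces sitting inside $\omega^\omega$ and then using the order-isomorphism machinery already in place. By symmetry it suffices to prove one direction, say: if $\ell(\bG(W))\geq\xi$ then $\ell(\bG(Z))\geq\xi$.

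First I would observe that the whole statement is about Wadge classes in $\NSDS$, and that by Theorem \ref{theorem_relativization_uncountable} every element of $\NSDS(Z)$ is of the form $\bL(Z)$ for a unique $\bL\in\NSDS(\omega^\omega)$ (uniqueness by Lemma \ref{lemma_relativization_exists_unique}); moreover $\PU_\xi$ of a non-selfdual Wadge class is again such a class (this is where I would want to know that $\PU_\xi(\bG)$ is still a non-selfdual Wadge class in the relevant space — it is continuously closed by Proposition \ref{proposition_pu_basic}, and I would expect that either it has already been recorded in \cite{carroy_medini_muller_constructing} that it is a Wadge class, or it follows from the well-foundedness in Theorem \ref{theorem_well-founded} together with Lemma \ref{lemma_wadge}). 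Granting that, $\PU_\xi$ induces an operation $\bG\mapsto\mathsf{pu}_\xi(\bG)$ on $\NSDS(\omega^\omega)$ that is compatible with relativization, i.e.\ $\PU_\xi(\bL(Z))=(\mathsf{pu}_\xi(\bL))(Z)$ for every uncountable zero-dimensional Borel space $Z$; this compatibility is the technical heart and I would prove it by unraveling the definitions of $\PU_\xi$ and of $\bL(Z)$, using Lemma \ref{lemma_relativization_basic} to push continuous functions $f:\omega^\omega\to Z$ around, together with the fact that clopen partitions of $Z$ of the appropriate rank pull back to clopen partitions of $\omega^\omega$ and can be pushed forward via open continuous surjections $\omega^\omega\to Z$ (as in the proof of Corollary \ref{corollary_baire_property_polish}).

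With that compatibility in hand the argument finishes quickly. Suppose $\ell(\bG(W))\geq\xi$, i.e.\ $\PU_\xi(\bG(W))=\bG(W)$. Since $\bG\in\NSDS(\omega^\omega)$ and $W$ is uncountable, the above gives $(\mathsf{pu}_\xi(\bG))(W)=\bG(W)$, and both $\mathsf{pu}_\xi(\bG)$ and $\bG$ lie in $\NSDS(\omega^\omega)$, so by the uniqueness clause of Lemma \ref{lemma_relativization_exists_unique} we get $\mathsf{pu}_\xi(\bG)=\bG$ as Wadge classes in $\omega^\omega$. (Alternatively, apply Theorem \ref{theorem_order_isomorphism} twice, to the inclusions $\mathsf{pu}_\xi(\bG)(W)\subseteq\bG(W)$ and $\bG(W)\subseteq\mathsf{pu}_\xi(\bG)(W)$, to transfer the equality from $W$ to $\omega^\omega$.) But $\mathsf{pu}_\xi(\bG)=\bG$ is a statement in $\omega^\omega$ alone, hence relativizes back down to $Z$: $\PU_\xi(\bG(Z))=(\mathsf{pu}_\xi(\bG))(Z)=\bG(Z)$, i.e.\ $\ell(\bG(Z))\geq\xi$, as desired.

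I expect the main obstacle to be establishing the relativization identity $\PU_\xi(\bL(Z))=(\mathsf{pu}_\xi(\bL))(Z)$ cleanly — in particular the inclusion $\subseteq$, where one starts with a partitioned union $\bigcup_n(A_n\cap V_n)$ inside $\bL(Z)$ and must certify that its preimage under an arbitrary continuous $f:\omega^\omega\to Z$ lies in the class $\mathsf{pu}_\xi(\bL)=\PU_\xi(\bL)$ (as a subclass of $\PP(\omega^\omega)$). This requires knowing that $f^{-1}[V_n]\in\bD^0_{1+\xi}(\omega^\omega)$, which is immediate, and that $f^{-1}[A_n]\in\bL$, which is the defining property of $\bL(Z)$; the reverse inclusion, that every set in $(\mathsf{pu}_\xi(\bL))(Z)$ is actually a partitioned union of sets in $\bL(Z)$ over $Z$, is the subtler half and is presumably where one leans on zero-dimensionality and on the generalized reduction property (Theorem \ref{theorem_reduction}) to extract the clopen partition of $Z$ from a clopen partition of $\omega^\omega$ via a suitable continuous surjection. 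If \cite{carroy_medini_muller_constructing} already contains this relativization statement for $\PU_\xi$ (which seems likely given that level is defined there in essentially the same way), the whole proof collapses to the two lines of the previous paragraph.
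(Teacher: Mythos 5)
The paper does not actually prove this statement: it is quoted verbatim from \cite[Corollary 16.2]{carroy_medini_muller_constructing}, and the derivation available from the results stated in this paper runs through expansions rather than through $\PU_\xi$ directly. Namely, by Theorem \ref{theorem_expansion}, $\ell(\bG(Z))\geq\xi$ is equivalent to $\bG(Z)=\bL(Z)^{(\xi)}$ for some $\bL\in\NSDS(\omega^\omega)$; by Lemma \ref{lemma_expansion_relativization_move_xi} this equals $\bL^{(\xi)}(Z)$, and by Lemma \ref{lemma_relativization_exists_unique} this happens iff $\bG=\bL^{(\xi)}$ in $\omega^\omega$ --- a condition that no longer mentions $Z$. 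Your plan instead tries to transfer the equation $\PU_\xi(\bG(Z))=\bG(Z)$ directly. Part of it is fine: the identity $\PU_\xi(\bL(Z))=(\PU_\xi\bL)(Z)$ does hold (the nontrivial inclusion follows by fixing a retraction $\rho:\omega^\omega\longrightarrow Z$ onto a closed copy of $Z$, decomposing $\rho^{-1}[A]$, and intersecting with $Z$ using Lemma \ref{lemma_relativization_subspace}), and with it the implication from $\ell(\bG)\geq\xi$ \emph{in $\omega^\omega$} to $\ell(\bG(Z))\geq\xi$ is immediate, since partitioned unions pull back along continuous maps.

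The gap is the step you flagged as a hope rather than a proof: that $\PU_\xi(\bG)$ is again a non-selfdual Wadge class, so that Lemma \ref{lemma_relativization_exists_unique} or Theorem \ref{theorem_order_isomorphism} can be applied to it. This is false. Take $\bG=\bS^0_1(\omega^\omega)$ and $\xi=1$: since every open set in a metrizable space is $\bD^0_2$, each piece $A_n\cap V_n$ of a $\bD^0_2$-partitioned union of open sets and its relative complement $V_n\setminus A_n$ are $\bD^0_2$, so $\PU_1(\bS^0_1(\omega^\omega))\subseteq\bD^0_2(\omega^\omega)$; conversely $\{A,\omega^\omega\setminus A\}$ witnesses $A\in\PU_1(\bS^0_1(\omega^\omega))$ for every $A\in\bD^0_2(\omega^\omega)$. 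Thus $\PU_1(\bS^0_1(\omega^\omega))=\bD^0_2(\omega^\omega)$, which is selfdual and is not a Wadge class at all (by the strictness of the difference hierarchy, no single set Wadge-generates $\bD^0_2$). Your fallback --- that well-foundedness plus Lemma \ref{lemma_wadge} would make a continuously closed class a Wadge class --- fails for the same example. Without this, the crucial transfer from ``$\PU_\xi(\bG(W))=\bG(W)$'' down to ``$\PU_\xi(\bG)=\bG$ in $\omega^\omega$'' is unproved, and it cannot be repaired by embedding $\omega^\omega$ into $W$: for $W=2^\omega$ and $\xi=1$ one would have to extend a $\bD^0_2$-partition of a $G_\delta$ copy of $\omega^\omega$ to a $\bD^0_2$-partition of $2^\omega$, and the leftover piece is only $\bP^0_2$. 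This extension problem is exactly what the expansion machinery (Lemma \ref{lemma_expansion_bijection} and the Kuratowski transfer arguments behind Theorem \ref{theorem_expansion}) exists to circumvent, so the argument should be routed through expansions rather than through $\PU_\xi$.
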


At this point, it is not clear whether every non-selfdual Wadge class $\bG$ even \emph{has} a level.\footnote{\,For example, it is conceivable that $\PU_n(\bG)=\bG$ for all $n<\omega$, while $\PU_\omega(\bG)\neq\bG$.} That this is actually the case is the content of the next theorem (see \cite[Corollary 17.2]{carroy_medini_muller_constructing}), whose proof heavily relies on methods from \cite{louveau_book}.

\begin{theorem}[Carroy, Medini, M\"{u}ller]\label{theorem_every_class_has_a_level}
Let $\bS$ be a nice topological pointclass, and assume that $\Det(\bS(\omega^\omega))$ holds. Let $Z$ be a zero-dimensional Polish space, and let $\bG\in\NSDS(Z)$. Then there exists $\xi\leq\omega_1$ such that $\ell(\bG)=\xi$.
\end{theorem}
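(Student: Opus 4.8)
The plan is to reduce the theorem first to a continuity property of the map $\xi\mapsto\PU_\xi(\bG)$ at limit ordinals, then to the case $Z=\omega^\omega$. For the first reduction: by Proposition~\ref{proposition_pu_basic} we have $\bG\subseteq\PU_\eta(\bG)\subseteq\PU_\xi(\bG)$ whenever $\eta\leq\xi$, and $\PU_0(\bG)=\bG$ since $\bG$ is a Wadge class, so $D=\{\xi<\omega_1:\PU_\xi(\bG)=\bG\}$ is a nonempty downward-closed subset of $\omega_1$, hence an ordinal $\alpha$ with $1\leq\alpha\leq\omega_1$. If $\alpha=\omega_1$ then $\ell(\bG)=\omega_1$ by Definition~\ref{definition_level}, and if $\alpha=\beta+1$ then $\PU_\beta(\bG)=\bG\neq\PU_{\beta+1}(\bG)$, so $\ell(\bG)=\beta$. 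The only way $\ell(\bG)$ can fail to be defined is therefore $\alpha=\lambda$ for a countable limit ordinal $\lambda$, i.e.\ $\PU_\eta(\bG)=\bG$ for all $\eta<\lambda$ while $\PU_\lambda(\bG)\neq\bG$. So it suffices to prove: for every limit $\lambda<\omega_1$, if $\PU_\eta(\bG)=\bG$ for all $\eta<\lambda$ then $\PU_\lambda(\bG)=\bG$ (the inclusion $\bG\subseteq\PU_\lambda(\bG)$ being automatic).

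For the second reduction, I would use Lemma~\ref{lemma_relativization_exists_unique} to write $\bG=\bG_0(Z)$ for a unique $\bG_0\in\NSDS(\omega^\omega)$, noting that $\bG_0(\omega^\omega)=\bG_0$ by Lemma~\ref{lemma_relativization_basic} since $\bG_0$ is a Wadge class. When $Z$ is uncountable, Theorem~\ref{theorem_level_relativization} applied with $W=\omega^\omega$ gives $\ell(\bG)\geq\xi$ iff $\ell(\bG_0)\geq\xi$ for every $\xi<\omega_1$, so $D$ is the same for $\bG$ and for $\bG_0$, and $\ell(\bG)$ is defined exactly when $\ell(\bG_0)$ is. The case of countable $Z$ can be treated by a similar argument, embedding $Z$ as a Borel subspace of $\omega^\omega$ and using Lemma~\ref{lemma_relativization_subspace}. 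Hence we may assume $Z=\omega^\omega$.

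It then remains to prove the limit case. Given $A=\bigcup_n(A_n\cap V_n)$ with each $A_n\in\bG$ and $(V_n)$ a partition of $\omega^\omega$ into $\bD^0_{1+\lambda}$ sets, the natural move is to unfold the partition: since $\bS^0_{1+\lambda}$ consists of countable unions of sets from $\bigcup_{\eta<\lambda}\bP^0_{1+\eta}$, one writes each $V_n$ as such a union and, after disjointifying by the generalized reduction property (Theorem~\ref{theorem_reduction}), obtains a partition $(P_k)$ of $\omega^\omega$ with each $P_k\in\bD^0_{1+\beta_k}$ for some $\beta_k<\lambda$, so that $A=\bigcup_k(A_{n(k)}\cap P_k)$. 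The goal would then be to reorganize this expression into a partitioned union at a single level $\eta<\lambda$, possibly after finitely many partitioned-union operations at levels $<\lambda$; this would yield $A\in\bG$, since $\PU_\eta(\bG)=\bG$ for every $\eta<\lambda$.

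The obstacle — and this is the real content of the theorem — is that the $\beta_k$ need not be bounded below $\lambda$, so a crude regrouping merely re-expresses $A$ as a $\bD^0_{1+\lambda}$-partitioned union again. The decisive point must be to exploit that $\bG$, having level $\geq\eta$ for every $\eta<\lambda$, is closed under intersection with every $\bD^0_{1+\eta}$ set (Lemma~\ref{lemma_closure_level}): this lets the sets $A_n$ absorb the cofinal part of the partition's complexity and bring $A$ down to a bounded level. Making this precise seems to require the fine level-by-level analysis of the Borel hierarchy from Louveau's book~\cite{louveau_book}, as extended to arbitrary Wadge classes in \cite{carroy_medini_muller_constructing} (the statement is essentially \cite[Corollary~17.2]{carroy_medini_muller_constructing}); I expect this analysis to be by far the hardest step, with everything preceding it amounting to routine bookkeeping.
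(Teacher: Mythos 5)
The paper does not actually prove this theorem: it is imported wholesale as \cite[Corollary 17.2]{carroy_medini_muller_constructing}, with the remark that the proof ``heavily relies on methods from \cite{louveau_book}.'' So the comparison is between a bare citation and your reductions followed by the same citation. Your preliminary analysis is correct: by Proposition \ref{proposition_pu_basic} the set $D=\{\xi<\omega_1:\PU_\xi(\bG)=\bG\}$ is a nonempty initial segment of $\omega_1$, so the level can fail to exist only when $D$ is a countable limit ordinal, and the whole content of the theorem is the limit-stage continuity $\bigl(\forall\eta<\lambda\ \PU_\eta(\bG)=\bG\bigr)\Rightarrow\PU_\lambda(\bG)=\bG$. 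Your disjointification of the partition into pieces $P_k\in\bD^0_{1+\beta_k}$ with $\beta_k<\lambda$ also works (reduce the doubly-indexed family of $\bP^0_{1+\eta_{n,k}}$ pieces by finite Boolean combinations rather than by Theorem \ref{theorem_reduction} at level $\lambda$). Two caveats on the bookkeeping: Theorem \ref{theorem_level_relativization} requires \emph{uncountable} $Z$ and $W$, and for countable $Z$ the cleanest route is not relativization but the observation that every subset of a countable Polish space is $\bD^0_2$, so $\PU_\xi(\bG)$ is constant for $\xi\geq 1$ and the level is $0$ or $\omega_1$ outright.

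The genuine gap is the limit case itself, which you do not prove. Your closing heuristic --- that Lemma \ref{lemma_closure_level} should let the sets $A_n$ ``absorb'' the unbounded part of the partition's complexity --- is not how the cited argument goes and there is no evident way to make it work directly: closure under intersection with $\bD^0_{1+\eta}$ sets for each $\eta<\lambda$ does not obviously control a union over a partition whose levels are cofinal in $\lambda$. The proof in \cite{carroy_medini_muller_constructing} instead runs through the expansion/description machinery of \S\ref{subsection_wadge_fundamental_expansions} (ultimately Louveau's analysis), not through direct manipulation of partitioned unions. Since you end by invoking exactly the result you are asked to prove, your proposal should be read as a correct reduction plus the paper's own citation, not as an independent proof.
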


\subsection{Expansions}\label{subsection_wadge_fundamental_expansions}

The following notion can be traced back to \cite[Chapter IV]{wadge_thesis}, and it is inspired by work of Kuratowski (see \cite[\S20 and \S21]{wadge_cabal}). Once again, our approach will be closer to that of \cite{louveau_book}, as developed in \cite{carroy_medini_muller_constructing}.

\begin{definition}\label{definition_expansion}
Let $Z$ be a space, and let $\xi<\omega_1$. Given $\bG\subseteq\PP(Z)$, define
$$
\bG^{(\xi)}=\{f^{-1}[A]:A\in\bG\text{ and }f:Z\longrightarrow Z\text{ is $\bS^0_{1+\xi}$-measurable}\}.
$$
We will refer to $\bG^{(\xi)}$ as an \emph{expansion} of $\bG$.
\end{definition}

The following proposition (whose straightforward proof is left to the reader) collects some basic facts about expansions.

\begin{proposition}\label{proposition_expansion_basic}
Let $Z$ be a space, let $\bG\subseteq\PP(Z)$, and let $\xi<\omega_1$.
\begin{itemize}
\item $\bG^{(\xi)}$ is continuously closed.
\item $\bG\subseteq\bG^{(\eta)}\subseteq\bG^{(\xi)}$ whenever $\eta\leq\xi$.
\item $\bG^{(0)}=\bG$ whenever $\bG$ is continuously closed.
\item $\widecheck{\bG^{(\xi)}}=\bGc^{(\xi)}$.
\end{itemize}
\end{proposition}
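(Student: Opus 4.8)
The plan is to verify the four items in turn, directly from the definition of $\bS^0_{1+\xi}$-measurability. Two standard facts carry all the weight: (a) a function $f:Z\longrightarrow Z$ is $\bS^0_1$-measurable precisely when it is continuous; and (b) if $g:Z\longrightarrow Z$ is continuous and $A\in\bS^0_\eta(Z)$, then $g^{-1}[A]\in\bS^0_\eta(Z)$, for every $1\leq\eta<\omega_1$ (the additive and multiplicative Borel classes are closed under continuous preimages). An immediate consequence of (b) is that if $g$ is continuous and $f$ is $\bS^0_{1+\xi}$-measurable, then $f\circ g$ is $\bS^0_{1+\xi}$-measurable, since $(f\circ g)^{-1}[U]=g^{-1}[f^{-1}[U]]$ and $f^{-1}[U]\in\bS^0_{1+\xi}(Z)$ for every open $U$.

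For the first item, given $C\leq B$ with $B\in\bG^{(\xi)}$, I would write $B=f^{-1}[A]$ for some $A\in\bG$ and $\bS^0_{1+\xi}$-measurable $f$, and $C=g^{-1}[B]$ for some continuous $g$; then $C=(f\circ g)^{-1}[A]$ with $f\circ g$ again $\bS^0_{1+\xi}$-measurable by the remark above, so $C\in\bG^{(\xi)}$, which is exactly continuous closure. For the second item, $\bG\subseteq\bG^{(\eta)}$ follows by writing $A=\id_Z^{-1}[A]$ and observing that $\id_Z$ is continuous, hence $\bS^0_{1+\eta}$-measurable (as $\bS^0_1(Z)\subseteq\bS^0_{1+\eta}(Z)$); and $\bG^{(\eta)}\subseteq\bG^{(\xi)}$ follows from $\bS^0_{1+\eta}(Z)\subseteq\bS^0_{1+\xi}(Z)$ for $\eta\leq\xi$, since this makes every $\bS^0_{1+\eta}$-measurable function $\bS^0_{1+\xi}$-measurable.

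For the third item, $\bG\subseteq\bG^{(0)}$ is the $\eta=0$ instance of the second item; conversely, any set $B=f^{-1}[A]$ occurring in $\bG^{(0)}$ has $f$ continuous by (a), so $B\leq A\in\bG$, and continuous closure of $\bG$ gives $B\in\bG$. For the fourth item, the identity $Z\setminus f^{-1}[A]=f^{-1}[Z\setminus A]$ shows that $B\in\bG^{(\xi)}$ holds iff $Z\setminus B\in\bGc^{(\xi)}$ (in both directions, using that $\bG$ is the dual of $\bGc$), and hence $\widecheck{\bG^{(\xi)}}=\bGc^{(\xi)}$.

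I do not anticipate any real obstacle here: this is precisely the sort of bookkeeping the authors leave to the reader. The only ingredient that is not a pure unwinding of the definitions is fact (b) above, and once it is in hand the closure-under-composition step of the first item presents no difficulty; the remaining three items are immediate.
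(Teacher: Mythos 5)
Your proof is correct, and it is exactly the routine unwinding of Definition \ref{definition_expansion} that the paper has in mind when it leaves the proof to the reader: the only non-definitional input is the standard fact that continuous preimages preserve the classes $\bS^0_\eta$, which you correctly identify and use to close $\bS^0_{1+\xi}$-measurable functions under precomposition with continuous maps. No gaps.
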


The next result, known as the Expansion Theorem, clarifies the crucial connection between level and expansions. It is an immediate consequence of \cite[Theorems 16.1 and 22.2]{carroy_medini_muller_constructing}.

\begin{theorem}\label{theorem_expansion}
Let $\bS$ be a nice topological pointclass, and assume that $\Det(\bS(\omega^\omega))$ holds. Let $Z$ be an uncountable zero-dimensional Polish space, and let $\xi<\omega_1$. Then the following conditions are equivalent:
\begin{itemize}
\item $\bG\in\NSDS(Z)$ and $\ell(\bG)\geq\xi$,
\item $\bG =\bL^{(\xi)}$ for some $\bL\in\NSDS(Z)$.
\end{itemize}
\end{theorem}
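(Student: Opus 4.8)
The plan is to prove the two implications separately, importing the substantive content from \cite{carroy_medini_muller_constructing}: the correspondence between level and expansions is (a relativized form of) the classical Expansion Theorem, \cite[Theorem 16.1]{carroy_medini_muller_constructing}, while the fact that expansions of non-selfdual Wadge classes are again non-selfdual Wadge classes is \cite[Theorem 22.2]{carroy_medini_muller_constructing}. Everything else amounts to bookkeeping about the ambient pointclasses involved.

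For the direction ``$\bG=\bL^{(\xi)}$ for some $\bL\in\NSDS(Z)$'' $\Longrightarrow$ ``$\bG\in\NSDS(Z)$ and $\ell(\bG)\geq\xi$'', I would argue as follows. That $\bG$ is continuously closed is Proposition \ref{proposition_expansion_basic}. That $\bG\subseteq\bS(Z)$ follows from the niceness of $\bS$: since $Z$ is Polish, every $\bS^0_{1+\xi}$-measurable $f:Z\longrightarrow Z$ is a Borel function, so $\bS(Z)$ is closed under the preimages defining $\bG^{(\xi)}$, and $\bL\subseteq\bS(Z)$. That $\bG$ is moreover a non-selfdual Wadge class is exactly \cite[Theorem 22.2]{carroy_medini_muller_constructing}. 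Finally, $\ell(\bG)\geq\xi$ means $\PU_\xi(\bL^{(\xi)})=\bL^{(\xi)}$; the nontrivial inclusion is proved by a direct gluing argument, most transparently after reducing to $Z=\omega^\omega$ (a countable disjoint union of clopen copies of itself): given a partitioned union $\bigcup_{n}(f_n^{-1}[A_n]\cap V_n)$ with the $A_n\in\bL$, the $f_n$ being $\bS^0_{1+\xi}$-measurable, and the $V_n\in\bD^0_{1+\xi}(Z)$ partitioning $Z$, one uses $\PU_0(\bL)=\bL$ together with retractions onto the clopen pieces to build a single $A\in\bL$ and a single $\bS^0_{1+\xi}$-measurable $f$ with $f^{-1}[A]=\bigcup_{n}(f_n^{-1}[A_n]\cap V_n)$; the reverse inclusion is Proposition \ref{proposition_pu_basic}.

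For the converse direction, given $\bG\in\NSDS(Z)$ with $\ell(\bG)\geq\xi$ one must produce the witness $\bL\in\NSDS(Z)$: the candidate is the ``$\xi$-contraction'' of $\bG$, essentially the class of those $B\subseteq Z$ whose $\bS^0_{1+\xi}$-expansion falls into $\bG$. Verifying that this class is a non-selfdual Wadge class with $\bL^{(\xi)}=\bG$ is precisely \cite[Theorem 16.1]{carroy_medini_muller_constructing}, and this is the main obstacle: it rests on Louveau's fine analysis, and there is no way around importing it. If one works with the $\omega^\omega$-parametrized formulation instead, the passage to an arbitrary uncountable zero-dimensional Polish $Z$ is then routine, using Lemma \ref{lemma_relativization_exists_unique} and Theorem \ref{theorem_relativization_uncountable} to move non-selfdual Wadge classes between ambient spaces, Theorem \ref{theorem_level_relativization} to transport the condition $\ell(\bG)\geq\xi$, and Proposition \ref{proposition_expansion_basic} to see that the expansion operation is compatible with these moves.
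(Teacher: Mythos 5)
Your proposal is correct and follows essentially the same route as the paper, which proves this theorem simply by declaring it an immediate consequence of \cite[Theorems 16.1 and 22.2]{carroy_medini_muller_constructing}; your additional bookkeeping (niceness of $\bS$ for the containment $\bG\subseteq\bS(Z)$, the gluing argument for $\ell(\bL^{(\xi)})\geq\xi$, and the relativization transfer) is sound. One small correction: the compatibility of expansions with the passage between ambient spaces is given by Lemma \ref{lemma_expansion_relativization_move_xi} rather than Proposition \ref{proposition_expansion_basic}, and note that the reduction to $Z=\omega^\omega$ is genuinely needed for the clopen-partition gluing, since a space such as $2^\omega$ admits no infinite clopen partition.
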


\begin{corollary}\label{corollary_expansion_order_isomorphism}
Let $\bS$ be a nice topological pointclass, and assume that $\Det(\bS(\omega^\omega))$ holds. Let $Z$ be an uncountable zero-dimensional space, let $\bG,\bL\in\NSDS(Z)$, and let $\xi<\omega_1$. Then
$$
\bG\subseteq\bL\text{ iff }\bG^{(\xi)}\subseteq\bL^{(\xi)}.
$$
\end{corollary}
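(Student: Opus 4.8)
The plan is to derive this as a routine consequence of the Expansion Theorem (Theorem \ref{theorem_expansion}), together with the basic monotonicity of expansions recorded in Proposition \ref{proposition_expansion_basic}. The forward direction is immediate: if $\bG\subseteq\bL$, then every set of the form $f^{-1}[A]$ with $A\in\bG$ and $f$ being $\bS^0_{1+\xi}$-measurable is trivially also of that form with $A\in\bL$, so $\bG^{(\xi)}\subseteq\bL^{(\xi)}$ directly from Definition \ref{definition_expansion}. (Note this half needs no determinacy and no uncountability.) So the content is entirely in the reverse implication.

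For the reverse direction, I would argue by contraposition: assume $\bG\not\subseteq\bL$, and show $\bG^{(\xi)}\not\subseteq\bL^{(\xi)}$. Since $\bG,\bL\in\NSDS(Z)$ and $\leq$ is well-founded on $\bS(Z)$ (Theorem \ref{theorem_well-founded}), and since Wadge's Lemma (Lemma \ref{lemma_wadge}) forces the non-selfdual Wadge classes in $Z$ to be linearly ordered by inclusion in the relevant sense, $\bG\not\subseteq\bL$ together with the non-selfduality of $\bL$ should give $\bLc\subseteq\bG$, hence in particular $\bL\subsetneq\bG$ is not what we need — rather, I want to produce a witnessing set. Concretely: pick $A\in\bG\setminus\bL$. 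Then $A=\id_Z^{-1}[A]$ shows $A\in\bG^{(\xi)}$ (the identity is $\bS^0_{1+\xi}$-measurable). It remains to check $A\notin\bL^{(\xi)}$. Here I would invoke Theorem \ref{theorem_expansion}: writing $\bL=\bG'^{(0)}$... more efficiently, observe that $\bL^{(\xi)}\in\NSDS(Z)$ with $\ell(\bL^{(\xi)})\geq\xi$ by Theorem \ref{theorem_expansion}, and that $\bL^{(\xi)}\cap\bD^0_{1+\xi}(Z)$-type considerations recover $\bL$ back from $\bL^{(\xi)}$; more precisely the Expansion Theorem gives a bijective correspondence between non-selfdual classes of level $\geq\xi$ and arbitrary non-selfdual classes via $\bL\mapsto\bL^{(\xi)}$, and this correspondence must be order-preserving in both directions because it is realized by the monotone operation $(\cdot)^{(\xi)}$ whose effect on inclusions we have just seen one half of. Combining: if we had $\bG^{(\xi)}\subseteq\bL^{(\xi)}$, then applying the inverse of the Expansion correspondence (which exists and is well-defined on classes of level $\geq\xi$ by Theorem \ref{theorem_expansion}, using Theorem \ref{theorem_order_isomorphism} or the relativization machinery to pin down uniqueness) would yield $\bG\subseteq\bL$, a contradiction.

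The cleanest route, which I would actually write up, is this. By Theorem \ref{theorem_expansion}, the map $\bL\mapsto\bL^{(\xi)}$ sends $\NSDS(Z)$ onto $\{\bH\in\NSDS(Z):\ell(\bH)\geq\xi\}$, and by the uniqueness built into the relativization framework (Lemma \ref{lemma_relativization_exists_unique} transported to $Z$, or directly the fact that $\bL=\bG'$ whenever $\bL^{(\xi)}=\bG'^{(\xi)}$, which itself follows from the Expansion Theorem applied twice) this map is injective. An injective, surjective, monotone map between well-ordered-by-inclusion structures whose one-sided monotonicity we have verified is automatically an order isomorphism: given $\bG^{(\xi)}\subseteq\bL^{(\xi)}$, if $\bG\not\subseteq\bL$ then by Wadge's Lemma $\bL\subseteq\bG$ strictly, whence $\bL^{(\xi)}\subseteq\bG^{(\xi)}$ by the easy direction, so $\bG^{(\xi)}=\bL^{(\xi)}$, so $\bG=\bL$ by injectivity, contradicting $\bG\not\subseteq\bL$. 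Hence $\bG\subseteq\bL$, as required.

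The main obstacle I anticipate is the injectivity of $\bL\mapsto\bL^{(\xi)}$, i.e.\ that an expansion determines the class it came from — this is where the real strength of the Expansion Theorem (and implicitly the level theory of \cite{carroy_medini_muller_constructing}) is used, since a priori two different non-selfdual classes could have the same $\xi$-expansion. Everything else is a formal manipulation of Wadge's Lemma and well-foundedness. I should also double-check the hypothesis: the statement says ``uncountable zero-dimensional space,'' but the Expansion Theorem as quoted requires an uncountable zero-dimensional \emph{Polish} space; presumably one first reduces to the Polish case via the relativization results (Theorem \ref{theorem_relativization_uncountable} and Theorem \ref{theorem_order_isomorphism}), which is a minor but necessary bookkeeping step.
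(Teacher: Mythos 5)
Your forward direction is correct, and you have correctly located the two external inputs (Wadge's Lemma and Theorem \ref{theorem_expansion}), but the reverse direction as written has a genuine gap. The step ``if $\bG\not\subseteq\bL$ then by Wadge's Lemma $\bL\subseteq\bG$'' is not what Lemma \ref{lemma_wadge} gives: applied to complete sets for $\bG$ and $\bL$, it gives $\bLc\subseteq\bG$ (as you yourself note in your first paragraph). The two conclusions differ exactly when $\bG=\bLc$, i.e.\ when $\bG$ and $\bL$ are a dual pair --- and in that case $\bL\not\subseteq\bG$ (an inclusion $\bGc\subseteq\bG$ would force $\bG$ to be selfdual), so your ``cleanest route'' breaks down precisely in the critical case. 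Your fallback, the injectivity of $\bL\mapsto\bL^{(\xi)}$, is asserted but never established: Theorem \ref{theorem_expansion} is an existence statement and does not by itself say that an expansion determines the class it came from; indeed injectivity is just the antisymmetry instance of the implication you are trying to prove, so leaning on it is circular unless you supply an independent argument. The same circularity affects the phrase ``applying the inverse of the Expansion correspondence'' in your first paragraph --- that the inverse is order-preserving is exactly the content of the corollary.

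The repair is short and is the paper's actual proof: from $\bG\not\subseteq\bL$ and Lemma \ref{lemma_wadge} get $\bLc\subseteq\bG$, hence $\bLc^{(\xi)}\subseteq\bG^{(\xi)}$ by the easy direction together with $\widecheck{\bL^{(\xi)}}=\bLc^{(\xi)}$ (Proposition \ref{proposition_expansion_basic}). If in addition $\bG^{(\xi)}\subseteq\bL^{(\xi)}$, then $\widecheck{\bL^{(\xi)}}\subseteq\bL^{(\xi)}$, so $\bL^{(\xi)}$ is selfdual, contradicting the non-selfduality of expansions guaranteed by Theorem \ref{theorem_expansion}. Note that this argument also proves the injectivity you wanted, so there is no shorter path through it. Your observation about the hypothesis (``uncountable zero-dimensional space'' versus ``Polish'') is a fair catch; the Expansion Theorem is indeed stated for Polish spaces.
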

\begin{proof}
The left-to-right implication is clear by the definition of expansion. In order to prove the other implication, assume that $\bG\nsubseteq\bL$. Then $\bLc\subseteq\bG$ by Lemma  \ref{lemma_wadge}, which implies $\bLc^{(\xi)}\subseteq\bG^{(\xi)}$. So, if we had $\bG^{(\xi)}\subseteq\bL^{(\xi)}$, it would follow that $\bL^{(\xi)}$ is self-dual, contradicting Theorem \ref{theorem_expansion}. Therefore $\bG^{(\xi)}\nsubseteq\bL^{(\xi)}$, as desired.
\end{proof}

The following result shows that expansions ``add up'' when they are composed (see \cite[Theorem 18.2]{carroy_medini_muller_constructing}). As a corollary, we will be able to determine the exact level of an expansion.

\begin{theorem}\label{theorem_expansion_composition}
Let $\bS$ be a nice topological pointclass, and assume that $\Det(\bS(\omega^\omega))$ holds. Let $Z$ be an uncountable zero-dimensional Polish space, and let $\xi,\eta<\omega_1$. Then
$$
(\bG^{(\eta)})^{(\xi)}=\bG^{(\xi+\eta)}
$$
whenever $\bG\in\NSDS(Z)$.
\end{theorem}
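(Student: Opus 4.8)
The plan is to prove the two inclusions $(\bG^{(\eta)})^{(\xi)}\subseteq\bG^{(\xi+\eta)}$ and $\bG^{(\xi+\eta)}\subseteq(\bG^{(\eta)})^{(\xi)}$ separately. The cases $\xi=0$ or $\eta=0$ are immediate from Proposition \ref{proposition_expansion_basic} (using that $\bG$ and $\bG^{(\eta)}$ are continuously closed), so I assume $\xi,\eta\geq 1$. Since the expansion operation is compatible with the relativization machinery of \S\ref{subsection_wadge_fundamental_relativization}, it will be enough to treat $Z=\omega^\omega$ (the passage to a general uncountable zero-dimensional Polish space being routine from the results of that subsection).

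For the first inclusion, the one fact to isolate is that measurability levels add up under composition: if $g:Z\longrightarrow Z$ is $\bS^0_{1+\xi}$-measurable and $f:Z\longrightarrow Z$ is $\bS^0_{1+\eta}$-measurable, then $f\circ g$ is $\bS^0_{1+\xi+\eta}$-measurable. I would prove this by first showing, by transfinite induction on $\zeta<\omega_1$, that $g^{-1}[A]\in\bS^0_{1+\xi+\zeta}(Z)$ whenever $g$ is $\bS^0_{1+\xi}$-measurable and $A\in\bS^0_{1+\zeta}(Z)$ (and dually for $\bP^0$): the case $\zeta=0$ is the definition of $\bS^0_{1+\xi}$-measurability, and the inductive step uses that $g^{-1}$ commutes with countable unions and complements, together with the fact that a set in $\bS^0_{1+\zeta}$ is a countable union of sets in $\bP^0_\beta$ with $1\leq\beta<1+\zeta$ (each such $\beta$ being of the form $1+\zeta_n$ with $\zeta_n<\zeta$), so that the computation reduces to routine ordinal arithmetic. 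Applying this with $\zeta=\eta$ to $f^{-1}[U]$ for $U$ open then shows $f\circ g$ is $\bS^0_{1+\xi+\eta}$-measurable; hence any $C=g^{-1}[f^{-1}[A]]=(f\circ g)^{-1}[A]$ in $(\bG^{(\eta)})^{(\xi)}$ (with $A\in\bG$) belongs to $\bG^{(\xi+\eta)}$.

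For the reverse inclusion, given $C=h^{-1}[A]\in\bG^{(\xi+\eta)}$ with $h:\omega^\omega\longrightarrow\omega^\omega$ being $\bS^0_{1+\xi+\eta}$-measurable and $A\in\bG$, it suffices to factor $h=f\circ g$ with $g$ being $\bS^0_{1+\xi}$-measurable and $f$ being $\bS^0_{1+\eta}$-measurable, since then $C=g^{-1}[f^{-1}[A]]\in(\bG^{(\eta)})^{(\xi)}$. The factorization will come from a change of topology. Fix a countable clopen base $\{V_n:n\in\omega\}$ of $\omega^\omega$; each $h^{-1}[V_n]$ lies in $\bS^0_{1+\xi+\eta}$, so by unwinding the Borel hierarchy it is obtained from countably many sets of strictly smaller Borel level by iterating countable unions and complements (the relevant derivation tree is well-founded with countable branching, hence countable). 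Let $\mathcal{C}$ be the countable family consisting of all sets appearing in these derivations (over all $n$) whose level is $<1+\xi$, together with their complements, so that $\mathcal{C}\subseteq\bD^0_{1+\xi}(\omega^\omega)$, and let $\tau'$ be the topology generated by the usual topology of $\omega^\omega$ together with $\mathcal{C}$. Adjoining countably many Borel sets to a zero-dimensional Polish topology as clopen sets keeps it zero-dimensional and Polish, and every $\tau'$-open set belongs to $\bS^0_{1+\xi}(\omega^\omega)$. The crucial point is that a level-by-level check along the above derivations shows that each set appearing there at level $(1+\xi)+\delta'$ belongs to the level $1+\delta'$ of the Borel hierarchy of $(\omega^\omega,\tau')$; in particular each $h^{-1}[V_n]$, hence each $h^{-1}[V]$ with $V$ open, is $\bS^0_{1+\eta}$ in $(\omega^\omega,\tau')$, i.e. $h:(\omega^\omega,\tau')\longrightarrow\omega^\omega$ is $\bS^0_{1+\eta}$-measurable. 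Now $(\omega^\omega,\tau')$ is a nonempty zero-dimensional Polish space, hence homeomorphic to a closed subspace of $\omega^\omega$; fixing such an embedding $j$ and a retraction $r$ of $\omega^\omega$ onto $j[\omega^\omega]$, I would take $g:=j$ (which, as an endofunction of $\omega^\omega$, is $\bS^0_{1+\xi}$-measurable, since the $g$-preimage of an open set is $\tau'$-open) and $f:=(h\circ j^{-1})\circ r$ (which is $\bS^0_{1+\eta}$-measurable, as $h\circ j^{-1}:j[\omega^\omega]\longrightarrow\omega^\omega$ is $\bS^0_{1+\eta}$-measurable by the previous sentence and $r$ is continuous); then $f\circ g=h$, completing the factorization.

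The main obstacle is the level-by-level check in the reverse inclusion: one must verify, by induction along the derivation of $h^{-1}[V_n]$, that passing to $\tau'$ uniformly lowers the Borel level of every set occurring at level $\geq 1+\xi$ by exactly $1+\xi$, while $\tau'$ itself stays inside $\bS^0_{1+\xi}$ of the original topology — the delicate parts being the interaction with limit stages, the continuity of $\delta'\mapsto(1+\xi)+\delta'$, and the observation that only countably many sets of bounded level are ever needed so that $\tau'$ remains second countable. The bookkeeping required to deduce the statement for a general $Z$ from the case $Z=\omega^\omega$ is routine given \S\ref{subsection_wadge_fundamental_relativization}. Alternatively, one can simply invoke \cite[Theorem 18.2]{carroy_medini_muller_constructing}.
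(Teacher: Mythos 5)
Your argument is correct, but note that the paper itself does not prove this theorem: it simply cites \cite[Theorem 18.2]{carroy_medini_muller_constructing}, which is exactly your fallback option. Your direct proof is a legitimate alternative and, as far as I can tell, it goes through. The easy inclusion $(\bG^{(\eta)})^{(\xi)}\subseteq\bG^{(\xi+\eta)}$ via the composition lemma is fine once one observes that the relevant ordinal identity is $(1+\xi)+\eta=1+(\xi+\eta)$ and that every $\beta$ with $1\leq\beta<1+\zeta$ is of the form $1+\zeta'$ with $\zeta'<\zeta$ (true for both finite and infinite $\beta$). The hard inclusion is a factorization $h=f\circ g$ obtained by a change of topology, which is precisely the Kuratowski-transfer circle of ideas that the paper itself alludes to in connection with Lemma \ref{lemma_expansion_bijection} (and which \cite[\S12]{carroy_medini_muller_constructing} develops); your level-by-level claim --- that a set whose derivation sits at level $(1+\xi)+\delta'$ of the original hierarchy, with all constituents of level $<1+\xi$ declared $\tau'$-clopen, lands in level $1+\delta'$ of the $\tau'$-hierarchy --- checks out by the induction you describe, the only point requiring care being the case split in the inductive step between children of level $<1+\xi$ (now clopen, hence in $\bP^0_1(\tau')$) and children of level $\beta=(1+\xi)+\delta''$ with $\delta''<\delta'$ (handled by the inductive hypothesis). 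Two further points are worth making explicit: the retraction onto $j[\omega^\omega]$ exists because $j[\omega^\omega]$ is a closed subset of $\omega^\omega$ (\cite[Proposition 2.8]{kechris}), and the reduction from a general uncountable zero-dimensional Polish $Z$ to $\omega^\omega$ uses Theorem \ref{theorem_relativization_uncountable} together with Lemma \ref{lemma_expansion_relativization_move_xi} applied to both $\bG$ and $\bG^{(\eta)}$ (the latter being in $\NSDS(\omega^\omega)$ by Theorem \ref{theorem_expansion}). So the net effect of your write-up is a self-contained proof of a result the paper only quotes; what the citation buys is brevity, and what your argument buys is independence from the external reference.
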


\begin{corollary}\label{corollary_expansion_level}
Let $\bS$ be a nice topological pointclass, and assume that $\Det(\bS(\omega^\omega))$ holds. Let $Z$ be an uncountable zero-dimensional Polish space, let $\xi<\omega_1$, and let $\bG\in\NSDS(Z)$. Assume that $\ell(\bG)=\eta<\omega_1$. Then
$$
\ell(\bG^{(\xi)})=\xi+\eta.
$$	
\end{corollary}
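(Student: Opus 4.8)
The plan is to establish the two one-sided bounds $\ell(\bG^{(\xi)})\geq\xi+\eta$ and $\ell(\bG^{(\xi)})\not\geq\xi+\eta+1$ separately; by Definition \ref{definition_level} these together give $\ell(\bG^{(\xi)})=\xi+\eta$ (note in particular that $\xi+\eta<\omega_1$, since the ordinal sum of two countable ordinals is countable, so the results of \S\ref{subsection_wadge_fundamental_expansions} apply with exponent $\xi+\eta$, and with exponent $\xi+\eta+1=\xi+(\eta+1)$ as well). Throughout we use freely that $\bG^{(\xi)}\in\NSDS(Z)$, which is immediate from Theorem \ref{theorem_expansion} applied to $\bG$ itself (with $\bL=\bG$).

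For the lower bound, since $\ell(\bG)\geq\eta$, Theorem \ref{theorem_expansion} yields some $\bL\in\NSDS(Z)$ with $\bG=\bL^{(\eta)}$. Then Theorem \ref{theorem_expansion_composition} gives $\bG^{(\xi)}=(\bL^{(\eta)})^{(\xi)}=\bL^{(\xi+\eta)}$, and a second application of Theorem \ref{theorem_expansion}, now with exponent $\xi+\eta$, shows that $\bG^{(\xi)}\in\NSDS(Z)$ and $\ell(\bG^{(\xi)})\geq\xi+\eta$.

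For the upper bound, suppose towards a contradiction that $\ell(\bG^{(\xi)})\geq\xi+\eta+1$. Using associativity of ordinal addition, $\xi+\eta+1=\xi+(\eta+1)$, so Theorem \ref{theorem_expansion} provides some $\bN\in\NSDS(Z)$ with $\bG^{(\xi)}=\bN^{(\xi+(\eta+1))}$, and Theorem \ref{theorem_expansion_composition} rewrites this as $\bG^{(\xi)}=(\bN^{(\eta+1)})^{(\xi)}$. Since $\bG$ and $\bN^{(\eta+1)}$ both belong to $\NSDS(Z)$ (the latter again by Theorem \ref{theorem_expansion}) and have the same $\xi$-expansion, applying Corollary \ref{corollary_expansion_order_isomorphism} to both inclusions $\bG^{(\xi)}\subseteq(\bN^{(\eta+1)})^{(\xi)}$ and $(\bN^{(\eta+1)})^{(\xi)}\subseteq\bG^{(\xi)}$ forces $\bG=\bN^{(\eta+1)}$. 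But then Theorem \ref{theorem_expansion} gives $\ell(\bG)\geq\eta+1$, contradicting the hypothesis $\ell(\bG)=\eta$. Hence $\ell(\bG^{(\xi)})\not\geq\xi+\eta+1$, which combined with the previous paragraph completes the proof.

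I expect no serious obstacle here: once the Expansion Theorem and Theorem \ref{theorem_expansion_composition} are available, the argument is essentially mechanical. The only point requiring genuine care is the ordinal bookkeeping — keeping the order of the summands straight, since ordinal addition is not commutative, so that Theorem \ref{theorem_expansion_composition} is applied on the correct side, and recording that $\xi+\eta+1=\xi+(\eta+1)$ so that a composition of an $(\eta+1)$-expansion with a $\xi$-expansion is exactly a $(\xi+\eta+1)$-expansion.
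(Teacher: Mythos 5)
Your proposal is correct and follows essentially the same route as the paper's own proof: obtain the lower bound by writing $\bG=\bL^{(\eta)}$ and composing expansions, then rule out $\ell(\bG^{(\xi)})\geq\xi+\eta+1$ by writing $\bG^{(\xi)}=(\bL^{(\eta+1)})^{(\xi)}$ and cancelling the $\xi$-expansion via Corollary \ref{corollary_expansion_order_isomorphism}. The ordinal bookkeeping ($\xi+\eta+1=\xi+(\eta+1)$, applied on the correct side of Theorem \ref{theorem_expansion_composition}) is handled exactly as in the paper.
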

\begin{proof}
By Theorem \ref{theorem_expansion}, we can fix $\bL\in\NSDS(Z)$ such that $\bL^{(\eta)}=\bG$. Notice that $\bL^{(\xi+\eta)}=\bG^{(\xi)}$ by Theorem \ref{theorem_expansion_composition}, hence $\ell(\bG^{(\xi)})\geq\xi+\eta$ by Theorem \ref{theorem_expansion}. Now assume, in order to get a contradiction, that $\ell(\bG^{(\xi)})\geq\xi+\eta+1$. By Theorem \ref{theorem_expansion}, we can fix $\bL\in\NSDS(Z)$ such that $\bL^{(\xi+\eta+1)}=\bG^{(\xi)}$. Notice that
$$
\bG^{(\xi)}=\bL^{(\xi+\eta+1)}=(\bL^{(\eta+1)})^{(\xi)},
$$
where the second equality holds by Theorem \ref{theorem_expansion_composition}. It follows from Corollary \ref{corollary_expansion_order_isomorphism} that $\bG=\bL^{(\eta+1)}$, which contradicts the fact that $\ell(\bG)\ngeq\eta+1$ by Theorem \ref{theorem_expansion}.
\end{proof}

We conclude this subsection with three technical results. Lemmas \ref{lemma_expansion_relativization_measurable_function} and \ref{lemma_expansion_relativization_move_xi} show that expansions interact in the expected way with the machinery of relativization (see \cite[Lemmas 14.1 and 14.2]{carroy_medini_muller_constructing} respectively). Lemma \ref{lemma_expansion_bijection} is a variation on the theme of Kuratowski's Transfer Theorem (see \cite[\S12]{carroy_medini_muller_constructing}), and it can be proved like \cite[Lemma 14.3]{carroy_medini_muller_constructing}.

\begin{lemma}\label{lemma_expansion_relativization_measurable_function}
Let $\bS$ be a nice topological pointclass, and assume that $\Det(\bS(\omega^\omega))$ holds. Let $Z$ and $W$ be zero-dimensional Polish spaces, let $\xi<\omega_1$, and let $\bG\in\NSDS(\omega^\omega)$. If $f:Z\longrightarrow W$ is $\bS^0_{1+\xi}$-measurable then $f^{-1}[A]\in\bG^{(\xi)}(Z)$ for every $A\in\bG(W)$.
\end{lemma}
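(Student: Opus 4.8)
The plan is to unwind the definition of the relativization $\bG^{(\xi)}(Z)$ and reduce the statement to the definition of the expansion $\bG^{(\xi)}$ inside $\omega^\omega$. Since $\bG^{(\xi)}$ is continuously closed by Proposition \ref{proposition_expansion_basic}, the last clause of Lemma \ref{lemma_relativization_basic} gives $\bG^{(\xi)}(\omega^\omega)=\bG^{(\xi)}$; hence, by the definition of relativization, it is enough to check that $g^{-1}[f^{-1}[A]]\in\bG^{(\xi)}$ for every continuous $g:\omega^\omega\longrightarrow Z$ and every $A\in\bG(W)$. So I would fix such $g$ and $A$, and set $h=f\circ g:\omega^\omega\longrightarrow W$, so that $g^{-1}[f^{-1}[A]]=h^{-1}[A]$. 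Because $g$ is continuous and $f$ is $\bS^0_{1+\xi}$-measurable, and continuous preimages do not raise the (additive) Borel rank, $h$ is $\bS^0_{1+\xi}$-measurable.

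Next I would push the problem into $\omega^\omega$. Since $W$ is zero-dimensional, it embeds into $\omega^\omega$; fix an embedding $i:W\longrightarrow\omega^\omega$, and note that $i[W]$ is a zero-dimensional Borel subspace of $\omega^\omega$, as $W$ is Polish (so $i[W]$ is $G_\delta$ in $\omega^\omega$). By the homeomorphism clause of Lemma \ref{lemma_relativization_basic} we have $i[A]\in\bG(i[W])$, so Lemma \ref{lemma_relativization_subspace}, applied with $\omega^\omega$ and $i[W]$ in the roles of $Z$ and $W$, provides $C\in\bG(\omega^\omega)$ with $i[A]=C\cap i[W]$; and $\bG(\omega^\omega)=\bG$ since $\bG$, being a Wadge class, is continuously closed. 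A direct computation, using that $i$ is injective and that $i(h(x))\in i[W]$ for every $x\in\omega^\omega$, now shows $h^{-1}[A]=(i\circ h)^{-1}[C]$.

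To finish, observe that $i\circ h:\omega^\omega\longrightarrow\omega^\omega$ is again $\bS^0_{1+\xi}$-measurable (composing on the left with the continuous map $i$ preserves measurability), so $h^{-1}[A]=(i\circ h)^{-1}[C]$ has the form $k^{-1}[C]$ with $C\in\bG$ and $k$ an $\bS^0_{1+\xi}$-measurable self-map of $\omega^\omega$; by Definition \ref{definition_expansion} this exactly says $h^{-1}[A]\in\bG^{(\xi)}$. As $g$ was an arbitrary continuous map $\omega^\omega\longrightarrow Z$, we conclude $f^{-1}[A]\in\bG^{(\xi)}(Z)$. I do not expect any genuinely hard step here: the whole content is the reduction to the definition of $\bG^{(\xi)}$ via the embedding $i$, and the only thing to be careful about is the relativization bookkeeping — keeping track of which space each set lives in, and checking that the $\bS^0_{1+\xi}$-measurability class survives the two compositions (with $g$ on the right and with $i$ on the left).
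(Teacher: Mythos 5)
Your argument is correct. Note that the paper itself does not prove this lemma; it defers to \cite[Lemma 14.1]{carroy_medini_muller_constructing}, so there is no in-text proof to compare against. Your derivation is a valid self-contained one from the quoted results: the reduction to showing $(f\circ g)^{-1}[A]\in\bG^{(\xi)}$ for every continuous $g:\omega^\omega\longrightarrow Z$ is exactly what Definition \ref{definition_relativization} asks for; the composition $f\circ g$ is $\bS^0_{1+\xi}$-measurable because continuous preimages preserve $\bS^0_{1+\xi}$; and the extension step is legitimate since $i[W]$ is Polish, hence $G_\delta$ and in particular a zero-dimensional Borel subspace of $\omega^\omega$, so Lemma \ref{lemma_relativization_subspace} applies and produces $C\in\bG(\omega^\omega)=\bG$ with $i[A]=C\cap i[W]$, after which $h^{-1}[A]=(i\circ h)^{-1}[C]$ exhibits the set in the form required by Definition \ref{definition_expansion}. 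An alternative route (closer in spirit to the machinery the paper develops in \S\ref{subsection_wadge_fundamental_hausdorff} and \S\ref{subsection_wadge_fundamental_clarifying}) is to write $\bG=\bG_D(\omega^\omega)$ via Theorem \ref{theorem_van_wesep_hausdorff}, use relativizability to get $A=\HH_D(A_0,A_1,\ldots)$ with each $A_n$ open in $W$, and observe that $(f\circ g)^{-1}[A]=\HH_D((f\circ g)^{-1}[A_0],\ldots)\in\bG^{(\xi)}_D(\omega^\omega)=\bG^{(\xi)}$ by Lemma \ref{lemma_hausdorff_expansions}; this avoids the extension-and-injectivity bookkeeping but relies on heavier results. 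Either way, your proof stands.
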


\begin{lemma}\label{lemma_expansion_relativization_move_xi}
Let $\bS$ be a nice topological pointclass, and assume that $\Det(\bS(\omega^\omega))$ holds. Let $Z$ be an uncountable zero-dimensional Polish space, let $\xi<\omega_1$, and let $\bG\in\NSDS(\omega^\omega)$. Then $\bG^{(\xi)}(Z)=\bG(Z)^{(\xi)}$.
\end{lemma}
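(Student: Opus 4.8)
The plan is to prove the equality by establishing the two inclusions $\bG(Z)^{(\xi)}\subseteq\bG^{(\xi)}(Z)$ and $\bG^{(\xi)}(Z)\subseteq\bG(Z)^{(\xi)}$ separately; the first is a routine unwinding of the definitions, while the second carries the real content and is proved by transporting the problem to $\omega^\omega$ and back by means of two embeddings.

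For the inclusion $\bG(Z)^{(\xi)}\subseteq\bG^{(\xi)}(Z)$, I would argue directly. Suppose $B\in\bG(Z)^{(\xi)}$, say $B=g^{-1}[A]$ where $g:Z\longrightarrow Z$ is $\bS^0_{1+\xi}$-measurable and $A\in\bG(Z)$. Given any continuous $f:\omega^\omega\longrightarrow Z$, the composition $g\circ f:\omega^\omega\longrightarrow Z$ is again $\bS^0_{1+\xi}$-measurable (precomposing an $\bS^0_{1+\xi}$-measurable function with a continuous one preserves $\bS^0_{1+\xi}$-measurability), and $f^{-1}[B]=(g\circ f)^{-1}[A]$. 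By Lemma \ref{lemma_expansion_relativization_measurable_function} this set lies in $\bG^{(\xi)}(\omega^\omega)$, and $\bG^{(\xi)}(\omega^\omega)=\bG^{(\xi)}$ because $\bG^{(\xi)}$ is continuously closed (Proposition \ref{proposition_expansion_basic} and Lemma \ref{lemma_relativization_basic}). Since $f$ was arbitrary, $B\in\bG^{(\xi)}(Z)$.

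For the inclusion $\bG^{(\xi)}(Z)\subseteq\bG(Z)^{(\xi)}$, first observe that $\bG^{(\xi)}\in\NSDS(\omega^\omega)$ by Theorem \ref{theorem_expansion}. Fix an embedding $j:Z\longrightarrow\omega^\omega$ (available since $Z$ is zero-dimensional) and put $\widetilde{Z}=j[Z]$, a zero-dimensional Borel subspace of $\omega^\omega$. Given $B\in\bG^{(\xi)}(Z)$, Lemma \ref{lemma_relativization_basic} gives $j[B]\in\bG^{(\xi)}(\widetilde{Z})$, so by Lemma \ref{lemma_relativization_subspace} there is $S\in\bG^{(\xi)}(\omega^\omega)=\bG^{(\xi)}$ with $j[B]=S\cap\widetilde{Z}$. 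Write $S=g_0^{-1}[A_0]$ with $g_0:\omega^\omega\longrightarrow\omega^\omega$ $\bS^0_{1+\xi}$-measurable and $A_0\in\bG$. Using $j^{-1}[\widetilde{Z}]=Z$ and injectivity of $j$, we obtain $B=j^{-1}[j[B]]=(g_0\circ j)^{-1}[A_0]$, so that $B=g^{-1}[A_0]$ where $g=g_0\circ j:Z\longrightarrow\omega^\omega$ is $\bS^0_{1+\xi}$-measurable and $A_0\in\bG=\bG(\omega^\omega)$. Now I would use that $Z$ is uncountable to fix an embedding $m:\omega^\omega\longrightarrow Z$ (obtained from a Cantor set inside $Z$ together with the standard embedding of $\omega^\omega$ into $2^\omega$), and set $\widetilde{W}=m[\omega^\omega]$. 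Since $m$ is a homeomorphism onto $\widetilde{W}$, Lemma \ref{lemma_relativization_basic} yields $m[A_0]\in\bG(\widetilde{W})$, and then Lemma \ref{lemma_relativization_subspace} provides $A\in\bG(Z)$ with $m[A_0]=A\cap\widetilde{W}$. Finally let $g'=m\circ g:Z\longrightarrow Z$, which is $\bS^0_{1+\xi}$-measurable and satisfies $g'[Z]\subseteq\widetilde{W}$; hence $g'^{-1}[A]=g'^{-1}[A\cap\widetilde{W}]=g'^{-1}[m[A_0]]=g^{-1}[A_0]=B$, where the last two equalities use $g'[Z]\subseteq\widetilde{W}$ and injectivity of $m$. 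Thus $B=g'^{-1}[A]$ witnesses $B\in\bG(Z)^{(\xi)}$.

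The main obstacle is not any single computation but locating the right way to reduce to the trivial identity $\bG^{(\xi)}(\omega^\omega)=\bG^{(\xi)}$: it is the two embeddings $j$ and $m$, together with the subspace behaviour of relativization (Lemma \ref{lemma_relativization_subspace}), that make this possible. The one genuinely delicate point is that one must arrange the range of the witness map $g'=m\circ g$ to lie inside the copy $\widetilde{W}$ of $\omega^\omega$, so that the \emph{superset} $A\in\bG(Z)$ produced by Lemma \ref{lemma_relativization_subspace} can legitimately replace $m[A_0]$; everything else — the stability of $\bS^0_{1+\xi}$-measurability under composition with continuous maps, and the bookkeeping with images and preimages — is routine. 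Note also that uncountability of $Z$ is used only in the second inclusion, via the embedding $m$.
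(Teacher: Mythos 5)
Your proof is correct. Note that the paper itself does not prove this lemma: it is quoted from \cite[Lemma 14.2]{carroy_medini_muller_constructing}, so the only meaningful comparison is with that external argument, which runs through the Hausdorff-operation representation of $\bG$ (Theorem \ref{theorem_van_wesep_hausdorff} together with Lemma \ref{lemma_hausdorff_expansions} and the relativization of Hausdorff classes). Your route is different and arguably more elementary: it uses only the relativization lemmas already quoted in \S\ref{subsection_wadge_fundamental_relativization} together with Lemma \ref{lemma_expansion_relativization_measurable_function} and Theorem \ref{theorem_expansion}. For the easy inclusion you could shorten further: Lemma \ref{lemma_expansion_relativization_measurable_function} applied directly with $W=Z$ and $f=g$ gives $g^{-1}[A]\in\bG^{(\xi)}(Z)$ in one line, with no need to unwind the definition of relativization via $g\circ f$. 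For the hard inclusion, your two transfers (out of $Z$ via $j$ to produce the witness pair $(g_0,A_0)$ over $\omega^\omega$, then back into $Z$ via $m$ to convert $A_0$ into a set of $\bG(Z)$) are exactly where the hypotheses are consumed: Polishness of $Z$ makes $j[Z]$ Borel so that Lemma \ref{lemma_relativization_subspace} applies, and uncountability is what furnishes $m$. You also correctly identify and handle the one delicate point, namely arranging $g'[Z]\subseteq m[\omega^\omega]$ so that the extension $A\supseteq m[A_0]$ produced by Lemma \ref{lemma_relativization_subspace} pulls back to the same set as $m[A_0]$ itself. I see no gap.
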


\begin{lemma}\label{lemma_expansion_bijection}
Let $Z$ be a zero-dimensional Polish space, let $\xi<\omega_1$, let $\bG_n\subseteq\PP(\omega^\omega)$ for $n\in\omega$, and let $\Aa_n\subseteq\bG_n(Z)^{(\xi)}$ for $n\in\omega$ be countable. Then there exist a zero-dimensional Polish space $W$ and a $\bS^0_{1+\xi}$-measurable bijection $f:Z\longrightarrow W$ such that $f[A]\in\bG_n(W)$ for every $n\in\omega$ and $A\in\Aa_n$.
\end{lemma}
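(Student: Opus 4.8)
The plan is to keep the underlying set of $W$ equal to that of $Z$ and let $f=\id_Z$, so that the bijectivity of $f$ and the identity $f[A]=A$ are automatic; everything then reduces to equipping this set with a suitable finer topology. To set up, for each $n\in\omega$ and each $A\in\Aa_n$ I would apply the definition of expansion (Definition \ref{definition_expansion}, with ambient space $Z$ and $\bG=\bG_n(Z)$) to obtain a set $B_A\in\bG_n(Z)$ and a $\bS^0_{1+\xi}$-measurable function $g_A:Z\longrightarrow Z$ with $A=g_A^{-1}[B_A]$. Fix a countable base $\{U_k:k\in\omega\}$ for $Z$ consisting of clopen sets (which exists since $Z$ is second countable and zero-dimensional), and set
$$
\mathcal{E}=\{g_A^{-1}[U_k]:n\in\omega,\ A\in\Aa_n,\ k\in\omega\}.
$$
Since each $\Aa_n$ is countable, $\mathcal{E}$ is countable; and since each $g_A$ is $\bS^0_{1+\xi}$-measurable while each $U_k$ is clopen, every member of $\mathcal{E}$ together with its complement lies in $\bS^0_{1+\xi}(Z)$, so $\mathcal{E}\subseteq\bD^0_{1+\xi}(Z)$.

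The crucial step is to invoke the transfer machinery of \cite[\S12]{carroy_medini_muller_constructing}: it yields a zero-dimensional Polish topology $\tau'$ on the underlying set of $Z$, finer than the original topology $\tau$, such that every element of $\mathcal{E}$ is clopen in $\tau'$ while $\id:(Z,\tau)\longrightarrow(Z,\tau')$ is $\bS^0_{1+\xi}$-measurable. (Zero-dimensionality of $(Z,\tau')$ is automatic here, since every $\tau$-clopen set stays $\tau'$-clopen, so the chosen clopen base for $\tau$ remains a clopen base for $\tau'$.) Put $W=(Z,\tau')$ and $f=\id_Z$; then $f$ is a $\bS^0_{1+\xi}$-measurable bijection by construction.

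It remains to verify that $f[A]=A\in\bG_n(W)$ for every $n\in\omega$ and every $A\in\Aa_n$. Viewing $g_A$ as a function $g_A:W\longrightarrow(Z,\tau)$, it is continuous: for each $k$ the set $g_A^{-1}[U_k]$ belongs to $\mathcal{E}$, hence is $\tau'$-open, and since $\{U_k:k\in\omega\}$ is a base for $(Z,\tau)$ this makes $g_A^{-1}[U]$ $\tau'$-open for every $\tau$-open $U$. Now $B_A\in\bG_n(Z)$, so the first item of Lemma \ref{lemma_relativization_basic}, applied to the continuous map $g_A:W\to(Z,\tau)$ and the set $B_A$, gives $A=g_A^{-1}[B_A]\in\bG_n(W)$, as required.

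The only genuinely substantial ingredient is the Kuratowski-style transfer theorem of \cite[\S12]{carroy_medini_muller_constructing} — the fact that a countable family of $\bD^0_{1+\xi}$ sets in a zero-dimensional Polish space can be made clopen by passing to a finer zero-dimensional Polish topology whose open sets remain $\bS^0_{1+\xi}$ in the original topology. Once this is granted, the argument above is routine bookkeeping with relativization, following the same pattern as \cite[Lemma 14.3]{carroy_medini_muller_constructing}.
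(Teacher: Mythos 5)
Your proof is correct and follows essentially the same route as the paper, which proves this lemma exactly as \cite[Lemma 14.3]{carroy_medini_muller_constructing}, that is, by the Kuratowski-style transfer machinery of \cite[\S12]{carroy_medini_muller_constructing}: refine the topology of $Z$ so that the countably many $\bD^0_{1+\xi}$ sets $g_A^{-1}[U_k]$ become clopen, take $f=\id$, and conclude via the continuity of each $g_A$ with respect to the new topology together with Lemma \ref{lemma_relativization_basic}. One small caveat: your parenthetical justification that zero-dimensionality of $(Z,\tau')$ is ``automatic'' because the clopen base for $\tau$ remains a base for $\tau'$ is wrong --- a base for $\tau$ is not a base for a strictly finer topology --- but this is harmless, since the transfer lemma you invoke already produces a zero-dimensional Polish refinement (its topology is generated by a countable Boolean algebra of sets that are clopen in the refinement).
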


\subsection{Hausdorff operations and universal sets}\label{subsection_wadge_fundamental_hausdorff}

The main purpose of this subsection is to obtain Theorems \ref{theorem_wadge_implies_universal} and \ref{theorem_universal_implies_wadge}, which explain the relationship between universal sets and non-selfdual Wadge classes. This results will be useful in the proofs of Corollary \ref{corollary_new_from_old} and Lemma \ref{lemma_universal_pair}. One of the fundamental tools to prove results of this kind is given by the notion of Hausdorff operation.

\begin{definition}
Let $D\subseteq\PP(\omega)$, and let $Z$ be a set. Define
$$
\HH_D(A_0,A_1,\ldots)=\{x\in Z:\{n\in\omega:x\in A_n\}\in D\}
$$
whenever $A_0,A_1,\ldots\subseteq Z$. Functions of this form are called \emph{Hausdorff operations} (or \emph{$\omega$-ary Boolean operations}).
\end{definition}

\begin{definition}
Let $D\subseteq\PP(\omega)$. Define
$$
\bG_D(Z)=\{\HH_D(A_0,A_1,\ldots):A_n\in\bS^0_1(Z)\text{ for }n\in\omega\}
$$
for every space $Z$. We will refer to collections in this form as \emph{Hausdorff classes}.
\end{definition}

A basic fact about Hausdorff operations is that they include all operations obtained by combinining unions, intersections and complements (see \cite[Proposition 8.2]{carroy_medini_muller_constructing}). Furthermore, the composition of Hausdorff operations is still a Hausdorff operation (see \cite[Proposition 8.3]{carroy_medini_muller_constructing}). As an application, one can use induction to prove that each $\bS^0_\xi(Z)$ is a Hausdorff class. In fact, as the following result shows, something much more general holds (see \cite[Theorem 22.2]{carroy_medini_muller_constructing}). The case $Z=\omega^\omega$ and $\bS=\PP$ is originally due to Van Wesep (see \cite[Theorem 5.3.1]{van_wesep_thesis}).

\begin{theorem}\label{theorem_van_wesep_hausdorff}
Let $\bS$ be a nice topological pointclass, and assume that $\Det(\bS(\omega^\omega))$ holds. Let $Z$ be an uncountable zero-dimensional Polish space. Then the following conditions are equivalent:
\begin{itemize}
\item $\bG\in\NSDS(Z)$,
\item $\bG=\bG_D(Z)$ for some $D\subseteq\PP(\omega)$ and $\bG\subseteq\bS(Z)$.
\end{itemize}
\end{theorem}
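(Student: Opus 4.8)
The plan is to prove the two implications separately; the substantive direction is the one producing a Hausdorff class from a non-selfdual Wadge class.

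\smallskip

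\noindent\emph{From (2) to (1).} Suppose $\bG=\bG_D(Z)$ and $\bG\subseteq\bS(Z)$. First, $\bG_D(Z)$ is continuously closed: if $f:Z\longrightarrow Z$ is continuous then $f^{-1}[\HH_D(A_0,A_1,\ldots)]=\HH_D(f^{-1}[A_0],f^{-1}[A_1],\ldots)$, and preimages of open sets under $f$ are open. Next, I would show that $\bG_D(Z)$ has a $Z$-universal set. Since $Z$ is an uncountable zero-dimensional Polish space, there is a $Z$-universal set $\UU\in\bS^0_1(Z\times Z)$ for the open subsets of $Z$; feeding countably many ``copies'' of $\UU$ into the operation $\HH_D$ (via a homeomorphism $Z\approx Z^\omega$, or by first passing to $\omega^\omega$ with the relativization machinery of \S\ref{subsection_wadge_fundamental_relativization}, where $\omega^\omega\approx(\omega^\omega)^\omega$) produces a set $\hat\UU\in\bG_D(Z)$ that is $\leq$-complete for $\bG_D(Z)$. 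Continuous closure then gives $\bG_D(Z)=\hat\UU\wc$, so $\bG_D(Z)$ is a Wadge class; and the usual diagonal argument applied to $\hat\UU$ shows that it is non-selfdual. Together with the hypothesis $\bG\subseteq\bS(Z)$ this yields $\bG\in\NSDS(Z)$.

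\smallskip

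\noindent\emph{From (1) to (2).} This is the hard direction. Let $\bG\in\NSDS(Z)$. By Theorem \ref{theorem_relativization_uncountable} and Lemma \ref{lemma_relativization_exists_unique} we may write $\bG=\bL(Z)$ with $\bL\in\NSDS(\omega^\omega)$, and it suffices to find $D\subseteq\PP(\omega)$ with $\bL=\bG_D(\omega^\omega)$: a short computation with Lemma \ref{lemma_relativization_subspace} (after embedding $Z$ as a closed subspace of $\omega^\omega$) then gives $\bG=\bL(Z)=\bG_D(\omega^\omega)(Z)=\bG_D(Z)$. So I may assume $Z=\omega^\omega$, and I would proceed by induction on the Wadge rank of $\bG$, which is legitimate by Theorem \ref{theorem_well-founded}. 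The base cases $\bG\in\{\{\varnothing\},\{\omega^\omega\},\bS^0_1(\omega^\omega),\bP^0_1(\omega^\omega)\}$ are immediate --- for instance $\bS^0_1(\omega^\omega)=\bG_D(\omega^\omega)$ for $D=\{S\subseteq\omega:0\in S\}$. For the inductive step one invokes the structural analysis of the non-selfdual Wadge classes: every $\bG\in\NSDS(\omega^\omega)$ of higher rank arises from strictly simpler non-selfdual classes by one of a few basic operations --- partitioned unions $\PU_\xi$, the difference operations $\Diff_\eta$, separated unions, and expansions $\bG\mapsto\bG^{(\xi)}$ --- each of which carries Hausdorff classes to Hausdorff classes. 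This last point rests on the two key properties of Hausdorff operations: the classes $\bS^0_{1+\xi}(\omega^\omega)$ are themselves Hausdorff classes and Hausdorff operations are closed under composition, so an expansion of $\bG_{D'}(\omega^\omega)$ is again of the form $\bG_{D''}(\omega^\omega)$, and composing $\HH_{D'}$ with the Hausdorff operations witnessing ``countable union'', ``$\eta$-difference'', and so on handles the remaining operations; tracking how the parameter $D$ is assembled from those of the simpler classes closes the induction.

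\smallskip

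\noindent\emph{The main obstacle.} The crux is exactly the structural decomposition used in the inductive step: the statement that every non-selfdual Wadge class in $\omega^\omega$ is generated from the open and closed sets by those few operations is a deep part of Wadge theory. Over $\omega^\omega$ under $\AD$ this is in essence Van Wesep's theorem, and its proof rests on Martin--Monk-style reflection arguments; the level-by-level refinements (for $\bS=\bool\bS^1_n$ under $\PD$, and so on) together with the bookkeeping needed to keep the witnessing sets $D$ under control require the full development of \cite{carroy_medini_muller_constructing}. A secondary, more routine point --- already needed for the reduction to $Z=\omega^\omega$ above --- is checking that $\bG_D$ commutes with the relativization operator and that membership in $\bS$ transfers between $Z$ and $\omega^\omega$; this follows from the niceness of $\bS$ and the basic properties of relativization, but should not be glossed over.
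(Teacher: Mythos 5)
The paper does not prove this theorem at all: it is imported by citation from \cite[Theorem 22.2]{carroy_medini_muller_constructing}, with the case $Z=\omega^\omega$, $\bS=\PP$ attributed to Van Wesep. Your proposal is a faithful reconstruction of the architecture of that cited proof, so in substance you are taking the same route. Your direction from Hausdorff class to non-selfdual Wadge class is exactly the universal-set machinery the paper itself assembles in \S\ref{subsection_wadge_fundamental_hausdorff} (Proposition \ref{proposition_hausdorff_cantor-universal}, Lemmas \ref{lemma_cantor-universal_implies_self-universal} and \ref{lemma_self-universal_implies_non-selfdual}, Theorem \ref{theorem_universal_implies_wadge}), and your reduction of the other direction to $Z=\omega^\omega$ via relativizability is the same move made in the proof of Theorem \ref{theorem_wadge_implies_universal}. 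Two small remarks: the generating operations in the structural analysis are more precisely separated differences $\SD_\eta$ together with expansions (as in Theorem \ref{theorem_sd_main} and the Expansion Theorem \ref{theorem_expansion}), not quite the list you give, though this is harmless since all of them are expressible by composing Hausdorff operations; and you are right, and commendably explicit, that the entire weight of the argument sits on that decomposition theorem, which neither you nor this paper proves from scratch.
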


Next, we come to the classical notion of a universal set (see for example \cite[Definition 22.2]{kechris}).

\begin{definition}\label{definition_universal}
Let $\bP$ be a topological pointclass, and let $W$ and $Z$ be spaces. We will say that $S$ is a \emph{$W$-universal set} for $\bP(Z)$ if the following conditions hold:
\begin{itemize}
\item $S\in\bP(W\times Z)$,
\item $\bP(Z)=\{S_y:y\in W\}$,
\end{itemize}
where we denote by $S_y=\{x\in Z:(y,x)\in S\}$ the \emph{vertical section} above $y\in W$.
\end{definition}

The following special kind of topological pointclass is particularly useful when dealing with universal sets.

\begin{definition}\label{definition_relativizable}
We will say that a function $\bP$ is a \emph{relativizable pointclass} if the following conditions hold:
\begin{itemize}
\item $\bP$ is a topological pointclass,
\item $\bP(W)=\{A\cap W:A\in\bP(Z)\}$ whenever $Z$ and $W$ are zero-dimensional Borel spaces such that $W\subseteq Z$.
\end{itemize}
\end{definition}

In the present context, the relevant examples of relativizable pointclasses are obtained as follows:
\begin{itemize}
\item Given $D\subseteq\PP(\omega)$, set $\bP(Z)=\bG_D(Z)$ for every space $Z$,
\item Given $\bG\in\NSD(\omega^\omega)$, set $\bP(Z)=\bG(Z)$ for every space $Z$.
\end{itemize}
In the first case, this is the content of \cite[Lemma 9.5]{carroy_medini_muller_constructing}. In the second case (under suitable determinacy assumptions), this follows from Lemmas \ref{lemma_relativization_basic} and \ref{lemma_relativization_subspace}. In fact, the reason for the restriction ``zero-dimensional Borel'' in Definition \ref{definition_relativizable} is that Lemma \ref{lemma_relativization_subspace} only applies to such spaces.

Next, we will see that non-selfdual Wadge classes have universal sets. We will need the following auxiliary result (see \cite[Proposition 10.2]{carroy_medini_muller_constructing} for the proof).

\begin{proposition}\label{proposition_hausdorff_cantor-universal}
Let $D\subseteq\PP(\omega)$, and let $Z$ be a space. Then there exists a $2^\omega$-universal set for $\bG_D(Z)$.
\end{proposition}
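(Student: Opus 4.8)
The plan is to exhibit a single $2^\omega$-universal set explicitly, using the Hausdorff-operation description of $\bG_D(Z)$ together with a standard $2^\omega$-universal set for $\bS^0_1(Z)$. First I would recall (from \cite[\S22.3]{kechris} or the analogous fact for $2^\omega$) that there is a $2^\omega$-universal set $U$ for $\bS^0_1(Z)$; more precisely, since a single section can only realize one open set, what one actually wants is a $(2^\omega)^\omega$-universal family for $\bS^0_1(Z)$, i.e. a set $U\subseteq (2^\omega)^\omega\times Z$ that is open in the product and such that $\{U_{\vec{y}}:\vec{y}\in(2^\omega)^\omega\}=\bS^0_1(Z)^\omega$ coordinatewise. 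Equivalently, fixing a homeomorphism $(2^\omega)^\omega\approx 2^\omega$, there is a $2^\omega$-universal set $V$ for $\bS^0_1(Z)$ together with, for each $n$, a continuous ``projection to the $n$-th coordinate'' map $p_n:2^\omega\to 2^\omega$ such that $(V_{p_n(y)})_{n\in\omega}$ ranges over all of $\bS^0_1(Z)^\omega$ as $y$ ranges over $2^\omega$.

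Granting that, I would define
$$
S=\{(y,x)\in 2^\omega\times Z:\{n\in\omega:(p_n(y),x)\in V\}\in D\}=\HH_D(W_0,W_1,\ldots),
$$
where $W_n=\{(y,x)\in 2^\omega\times Z:(p_n(y),x)\in V\}$. Each $W_n$ is open in $2^\omega\times Z$ because $p_n$ is continuous and $V$ is open, so $S$ is of the form $\HH_D$ applied to open subsets of $2^\omega\times Z$, hence $S\in\bG_D(2^\omega\times Z)$. For the section property: fix $y\in 2^\omega$. Then $S_y=\{x\in Z:\{n:x\in (W_n)_y\}\in D\}=\HH_D((W_0)_y,(W_1)_y,\ldots)$, and $(W_n)_y=V_{p_n(y)}\in\bS^0_1(Z)$, so $S_y\in\bG_D(Z)$. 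Conversely, given any $A\in\bG_D(Z)$, write $A=\HH_D(A_0,A_1,\ldots)$ with each $A_n\in\bS^0_1(Z)$; by universality of $V$ (in the coordinatewise sense) there is $y\in 2^\omega$ with $V_{p_n(y)}=A_n$ for every $n$, and then $S_y=\HH_D(A_0,A_1,\ldots)=A$. This shows $\bG_D(Z)=\{S_y:y\in 2^\omega\}$, completing the verification.

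The only real obstacle is the bookkeeping in the first step: manufacturing, from an ordinary $2^\omega$-universal set for $\bS^0_1(Z)$, a device that lets a \emph{single} parameter $y\in 2^\omega$ simultaneously code \emph{countably many} open sets. This is the routine trick of replacing $2^\omega$ by $(2^\omega)^\omega$ and fixing a homeomorphism $(2^\omega)^\omega\approx 2^\omega$ (and noting that the coordinate projections $(2^\omega)^\omega\to 2^\omega$ are continuous, as are their composites with that homeomorphism's inverse), so it presents no genuine difficulty; it is just the sort of thing one should state carefully rather than wave at. Everything else — that Hausdorff operations commute with taking vertical sections, that $\HH_D$ applied to open sets lands in $\bG_D$, and that preimages of open sets under the continuous maps $p_n$ are open — is immediate from the definitions. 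Accordingly I would keep the write-up short: cite or quickly prove the ``$\omega$-parametrized'' version of open-universal sets, then define $S$ as above and check the two halves of the section property in a couple of lines each.
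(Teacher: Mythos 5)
Your proof is correct and is essentially the argument the paper defers to (see \cite[Proposition 10.2]{carroy_medini_muller_constructing}): code countably many open sets by a single parameter via a homeomorphism $2^\omega\approx(2^\omega)^\omega$ together with a $2^\omega$-universal set for $\bS^0_1(Z)$, then apply $\HH_D$ and note that Hausdorff operations commute with taking vertical sections. The same coding device appears verbatim in the paper's proof of Theorem \ref{theorem_new_from_old}, so nothing is missing.
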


\begin{theorem}\label{theorem_wadge_implies_universal}
Let $\bS$ be a nice topological pointclass, and assume that $\Det(\bS(\omega^\omega))$ holds. Let $\bG\in\NSDS(\omega^\omega)$, and let $Z$ be a zero-dimensional Borel space. Then $\bG(Z)$ has a $2^\omega$-universal set.
\end{theorem}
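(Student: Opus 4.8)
The plan is to reduce the statement to the Hausdorff-class case, where a $2^\omega$-universal set exists by Proposition \ref{proposition_hausdorff_cantor-universal}. First I would apply Theorem \ref{theorem_van_wesep_hausdorff}, but note that this requires the ambient space to be uncountable zero-dimensional Polish, whereas here $Z$ is only an arbitrary zero-dimensional Borel space. So the natural strategy is: choose a convenient uncountable zero-dimensional Polish space containing $Z$ as a subspace (for instance embed $Z$ into $\omega^\omega$, or simply work with $\omega^\omega$ itself via $\bG(\omega^\omega)$), build a $2^\omega$-universal set there, and then transfer it down to $Z$ using the relativization machinery, i.e. Lemma \ref{lemma_relativization_subspace} (which gives $\bG(Z)=\{A\cap Z:A\in\bG(\omega^\omega)\}$ for zero-dimensional Borel $Z\subseteq\omega^\omega$).

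Concretely, the steps are as follows. Fix an embedding $j:Z\longrightarrow\omega^\omega$ with $j[Z]\in\Borel(\omega^\omega)$; by Lemma \ref{lemma_relativization_basic} it suffices to produce a $2^\omega$-universal set for $\bG(j[Z])$, so we may assume $Z\subseteq\omega^\omega$ is a zero-dimensional Borel space. Apply Theorem \ref{theorem_van_wesep_hausdorff} to $\omega^\omega$ to obtain $D\subseteq\PP(\omega)$ with $\bG(\omega^\omega)=\bG=\bG_D(\omega^\omega)$ (recalling that $\bG$ is continuously closed, so $\bG(\omega^\omega)=\bG$ by Lemma \ref{lemma_relativization_basic}). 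By Proposition \ref{proposition_hausdorff_cantor-universal}, fix a $2^\omega$-universal set $S$ for $\bG_D(\omega^\omega)$, so $S\in\bG_D(2^\omega\times\omega^\omega)$ and $\{S_y:y\in 2^\omega\}=\bG(\omega^\omega)$. Now set $S'=S\cap(2^\omega\times Z)$. Then $S'\in\bG_D(2^\omega\times Z)$ since $\bG_D$ is a relativizable pointclass (and $2^\omega\times Z\subseteq 2^\omega\times\omega^\omega$ is a zero-dimensional Borel space), hence $S'\in\bG(2^\omega\times Z)$ because $\bG_D(2^\omega\times Z)\subseteq\bS(2^\omega\times Z)$ and $\bG_D(2^\omega\times Z)$ is non-selfdual of the appropriate form — more directly, one checks $\bG_D(W)=\bG(W)$ for every zero-dimensional Borel space $W$, since both equal $\{A\cap W:A\in\bG(\omega^\omega)\}$ by the two relativization principles for $\bG_D$ and $\bG$ respectively together with Theorem \ref{theorem_van_wesep_hausdorff} applied on $\omega^\omega$. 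Finally, the vertical sections satisfy $S'_y=S_y\cap Z$, so $\{S'_y:y\in 2^\omega\}=\{A\cap Z:A\in\bG(\omega^\omega)\}=\bG(Z)$, the last equality by Lemma \ref{lemma_relativization_subspace}. Thus $S'$ is a $2^\omega$-universal set for $\bG(Z)$.

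The main obstacle is the bookkeeping around the identification $\bG_D(W)=\bG(W)$ for non-Polish (merely Borel) zero-dimensional $W$: Theorem \ref{theorem_van_wesep_hausdorff} is stated only for uncountable zero-dimensional Polish spaces, so one cannot invoke it directly on $Z$ or on $2^\omega\times Z$. The fix is to prove the identity by sandwiching: for a zero-dimensional Borel $W\subseteq\omega^\omega$, relativizability of $\bG_D$ (Lemma 9.5 of \cite{carroy_medini_muller_constructing}, quoted in the excerpt) gives $\bG_D(W)=\{B\cap W:B\in\bG_D(\omega^\omega)\}$, relativizability of $\bG$ (via Lemmas \ref{lemma_relativization_basic} and \ref{lemma_relativization_subspace}) gives $\bG(W)=\{B\cap W:B\in\bG(\omega^\omega)\}$, and $\bG_D(\omega^\omega)=\bG(\omega^\omega)$ by Theorem \ref{theorem_van_wesep_hausdorff} on $\omega^\omega$ — so the two agree. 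Once this is in hand, everything else is routine: intersecting a universal set with a vertical slab preserves membership in the (relativizable) pointclass and cuts the sections down exactly as needed. A secondary point to handle cleanly is the degenerate possibility that $Z$ is countable, but this causes no trouble since the relativization lemmas used apply to all zero-dimensional Borel spaces, and a universal set still exists (the sections just need to enumerate the — possibly smaller — collection $\bG(Z)$).
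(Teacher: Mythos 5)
Your proposal is correct and follows essentially the same route as the paper: fix $D$ via Theorem \ref{theorem_van_wesep_hausdorff} applied to $\omega^\omega$, observe that relativizability of both $\bG_D$ and $\bG$ forces $\bG_D(W)=\bG(W)$ for every zero-dimensional Borel $W$, and then invoke Proposition \ref{proposition_hausdorff_cantor-universal}. The only difference is cosmetic: since that proposition already yields a $2^\omega$-universal set for $\bG_D(Z)$ for an arbitrary space $Z$, your extra step of building the universal set over $\omega^\omega$ and intersecting with $2^\omega\times Z$ is harmless but unnecessary.
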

\begin{proof}
By Theorem \ref{theorem_van_wesep_hausdorff}, we can fix $D\subseteq\PP(\omega)$ such that $\bG=\bG_D(\omega^\omega)$. Observe that if $W$ is a zero-dimensional Borel space, which we assume without loss of generality to be a subspace of $\omega^\omega$, then 
$$
\bG(W)=\{A\cap W:A\in\bG(\omega^\omega)\}=\{A\cap W:A\in\bG_D(\omega^\omega)\}=\bG_D(W)
$$
by relativizability. In other words, the same $D$ works for every zero-dimensional Borel space. To conclude the proof, simply apply Proposition \ref{proposition_hausdorff_cantor-universal}.
\end{proof}

Finally, we will obtain a converse to Theorem \ref{theorem_wadge_implies_universal}. We will need the following two auxiliary results, which correspond to \cite[Corollary 10.3 and Lemma 10.4]{carroy_medini_muller_constructing}, except that they are formulated in the more general framework described above.

\begin{lemma}\label{lemma_cantor-universal_implies_self-universal}
Let $\bP$ be a relativizable topological pointclass, and let $Z$ be an uncountable zero-dimensional Borel space. If $\bP(Z)$ has a $2^\omega$-universal set then $\bP(Z)$ has a $Z$-universal set.
\end{lemma}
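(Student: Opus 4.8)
The plan is to transport the given $2^\omega$-universal set to a $Z$-universal one along a suitable continuous surjection. Concretely, I would first produce a continuous surjection $g\colon Z\longrightarrow 2^\omega$ together with a continuous $\iota\colon 2^\omega\longrightarrow Z$ satisfying $g\circ\iota=\id_{2^\omega}$, and then verify that $T=(g\times\id_Z)^{-1}[S]$ does the job, where $S$ is a $2^\omega$-universal set for $\bP(Z)$.

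For the first step, recall that $Z$, being separable metrizable and zero-dimensional, embeds into $2^\omega$, so we may assume $Z\subseteq 2^\omega$. Since $Z$ is uncountable and Borel (hence analytic) in the Polish space $2^\omega$, the Perfect Set Theorem (see \cite[\S6 and \S29]{kechris}) yields a set $C\subseteq Z$ homeomorphic to $2^\omega$; note that $C$ is automatically compact, hence closed in $2^\omega$. Fix a homeomorphism $h\colon C\longrightarrow 2^\omega$. Every non-empty closed subset of $2^\omega$ is a retract of $2^\omega$: given $x\in 2^\omega$, let $r_0(x)$ be obtained by following $x$ as long as the corresponding basic clopen set (in the sense of the sets $\Ne_s$) still meets $C$, and deviating minimally at the first coordinate where this fails; the resulting $r_0\colon 2^\omega\longrightarrow C$ is continuous and fixes $C$ pointwise. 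Hence $r=r_0\re Z$ is a retraction of $Z$ onto $C$, and therefore $g=h\circ r\colon Z\longrightarrow 2^\omega$ is a continuous surjection while $\iota=h^{-1}\colon 2^\omega\longrightarrow C\subseteq Z$ is continuous with $g\circ\iota=\id_{2^\omega}$.

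For the second step, set $T=(g\times\id_Z)^{-1}[S]\subseteq Z\times Z$. Since $g\times\id_Z\colon Z\times Z\longrightarrow 2^\omega\times Z$ is continuous and $\bP$ is a topological pointclass, from $S\in\bP(2^\omega\times Z)$ we obtain $T\in\bP(Z\times Z)$. For every $w\in Z$ one computes $T_w=S_{g(w)}\in\bP(Z)$, so every vertical section of $T$ belongs to $\bP(Z)$. Conversely, given $A\in\bP(Z)$, the $2^\omega$-universality of $S$ provides $y\in 2^\omega$ with $A=S_y$; setting $w=\iota(y)\in Z$ gives $g(w)=y$, hence $T_w=S_y=A$. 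Thus $T$ is a $Z$-universal set for $\bP(Z)$.

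The main obstacle is really the topological input of the first step, namely extracting a compact copy of $2^\omega$ inside the (merely Borel, not necessarily Polish) space $Z$ and retracting $Z$ onto it; once this is in hand, the rest is a routine diagram chase. In fact this argument uses only that $\bP$ is a topological pointclass, not the relativizability hypothesis.
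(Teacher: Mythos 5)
Your proof is correct. The paper does not prove this lemma inline but attributes it to \cite[Corollary 10.3]{carroy_medini_muller_constructing}; the technique used there (and in the proof of Lemma \ref{lemma_universal_pair}, which is the closest inline analogue in this paper) is dual to yours: one fixes a copy $K\approx 2^\omega$ inside $Z$, transports the $2^\omega$-universal set $S$ to a set $S'\in\bP(K\times Z)$ along the homeomorphism, and then invokes relativizability to extend $S'$ to some $T\in\bP(Z\times Z)$; the sections of $T$ above points of $K$ recover all of $\bP(Z)$, while the sections above points outside $K$ still lie in $\bP(Z)$ because each point-inclusion map $x\mapsto (w,x)$ is continuous. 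You instead go outward, retracting $Z$ onto a Cantor set $C\subseteq Z$ and pulling $S$ back along $g\times\id_Z$. Both arguments are sound in $\ZF+\DC$ (the perfect set theorem for Borel sets and the retraction of $2^\omega$ onto a non-empty closed subset are available there), and your closing observation is accurate: the retraction route uses only closure of $\bP$ under continuous preimages, so it establishes the lemma for arbitrary topological pointclasses, whereas the extension route needs relativizability exactly at the step where $S'$ is extended from $K\times Z$ to $Z\times Z$. What the paper's route buys in exchange is that no retraction has to be constructed -- only an embedding of $2^\omega$ together with the relativization machinery that is already set up.
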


\begin{lemma}\label{lemma_self-universal_implies_non-selfdual}
Let $\bP$ be a topological pointclass, and let $Z$ be a space. If $\bP(Z)$ has a $Z$-universal set then $\bP(Z)$ is non-selfdual.
\end{lemma}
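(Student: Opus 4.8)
The plan is to run the classical diagonalization argument. Fix a $Z$-universal set $S$ for $\bP(Z)$, so that $S\in\bP(Z\times Z)$ and $\bP(Z)=\{S_y:y\in Z\}$, where $S_y=\{x\in Z:(y,x)\in S\}$. Here we are implicitly using that $Z\times Z$ is again a space (separable metrizable), so that $\bP(Z\times Z)$ is defined, since the domain of a topological pointclass is the class of all spaces.

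First I would define the continuous diagonal map $\delta:Z\longrightarrow Z\times Z$ by $\delta(x)=(x,x)$, and set
$$
A=\delta^{-1}[S]=\{x\in Z:(x,x)\in S\}.
$$
Since $\bP$ is a topological pointclass and $S\in\bP(Z\times Z)$, it follows immediately that $A\in\bP(Z)$. This is the only place where the defining properties of a topological pointclass are used.

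Next I would assume, in order to get a contradiction, that $\bP(Z)$ is selfdual, that is, $\bP(Z)=\widecheck{\bP(Z)}$. Then $Z\setminus A\in\bP(Z)$, so by the universality of $S$ there exists $y\in Z$ with $Z\setminus A=S_y$. But now $y\in S_y$ iff $(y,y)\in S$ iff $y\in A$, so $y\in A$ iff $y\in Z\setminus A$, which is absurd. Therefore $\bP(Z)$ is non-selfdual, completing the proof.

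There is no real obstacle here: the argument is short and entirely routine, the only points worth stating explicitly being that $A\in\bP(Z)$ (closure under continuous preimages) and that the contradiction shows $A\notin\widecheck{\bP(Z)}$, hence $\bP(Z)\neq\widecheck{\bP(Z)}$.
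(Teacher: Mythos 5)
Your proof is correct and is exactly the classical Cantor diagonalization that the paper relies on for this lemma (which it cites from [CMM2, Lemma 10.4] rather than proving in-house): the only property of a topological pointclass needed is closure under continuous preimages, applied to the diagonal map $\delta(x)=(x,x)$, and the rest is the standard contradiction via $y\in S_y$ iff $y\notin S_y$. No gaps.
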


\begin{theorem}\label{theorem_universal_implies_wadge}
Let $\bP$ be a relativizable pointclass, and let $Z$ be an uncountable zero-dimensional Borel space. If $\bP(Z)$ has a $2^\omega$-universal set then $\bP(Z)\in\NSD(Z)$.
\end{theorem}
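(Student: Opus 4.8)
The plan is to combine the two preceding lemmas to obtain that $\bP(Z)$ is non-selfdual, and then to use the universal set, transported into $Z$ via relativizability, to produce a $\leq$-greatest element of $\bP(Z)$ --- thereby exhibiting $\bP(Z)$ as a genuine (non-selfdual) Wadge class.

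First I would apply Lemma \ref{lemma_cantor-universal_implies_self-universal}: since $\bP$ is relativizable, $Z$ is an uncountable zero-dimensional Borel space, and $\bP(Z)$ has a $2^\omega$-universal set, there is a $Z$-universal set $S$ for $\bP(Z)$, so that $S\in\bP(Z\times Z)$ and $\bP(Z)=\{S_y:y\in Z\}$. By Lemma \ref{lemma_self-universal_implies_non-selfdual}, $\bP(Z)$ is non-selfdual. It then remains to produce $U\subseteq Z$ with $U\in\bP(Z)$ and $A\leq U$ for every $A\in\bP(Z)$; indeed, the pointclass property of $\bP$ (the preimage clause of Definition \ref{definition_pointclass} with $W=Z$) gives $B\in\bP(Z)$ for every $B\leq U$, so that $\bP(Z)=U\wc$ is a Wadge class, and together with non-selfduality this yields $\bP(Z)\in\NSD(Z)$.

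To build $U$, I would fix an embedding $j:Z\times Z\longrightarrow Z$ --- this exists because $Z\times Z$, being zero-dimensional and separable metrizable, embeds into $2^\omega$, while $Z$, being uncountable and zero-dimensional Borel, contains a homeomorphic copy of $2^\omega$. Put $W=j[Z\times Z]$, which is a zero-dimensional Borel subspace of $Z$ since it is homeomorphic to $Z\times Z$. As $j$ is a homeomorphism onto $W$ and $\bP$ is a topological pointclass, $j[S]=(j^{-1})^{-1}[S]\in\bP(W)$; by relativizability of $\bP$ applied to $W\subseteq Z$, there is $U\in\bP(Z)$ with $j[S]=U\cap W$. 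Now given $A\in\bP(Z)$, write $A=S_y$ and let $g:Z\longrightarrow Z$ be the continuous map $g(x)=j(y,x)$; since $j$ is injective,
$$
g^{-1}[U]=\{x\in Z:j(y,x)\in U\cap W\}=\{x\in Z:j(y,x)\in j[S]\}=\{x\in Z:(y,x)\in S\}=S_y=A,
$$
whence $A\leq U$, as required.

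The heavy lifting is all contained in the two cited lemmas and in the standard embedding facts; the one delicate point is the bookkeeping of ambient spaces, since the universal property of $S$ refers to $Z$ whereas relativizability refers to subspaces $W\subseteq Z$, and I expect reconciling these via the embedding $j$ to be the only real obstacle.
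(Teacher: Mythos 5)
Your proposal is correct and follows essentially the same route as the paper: non-selfduality via Lemmas \ref{lemma_cantor-universal_implies_self-universal} and \ref{lemma_self-universal_implies_non-selfdual}, and then the standard argument producing a $\leq$-greatest element of $\bP(Z)$ from a $Z$-universal set (which the paper only cites, deferring to the proof of \cite[Theorem 7.5]{carroy_medini_muller_homogeneous}). Your explicit construction of the complete set $U$ via an embedding of $Z\times Z$ into $Z$ and relativizability is exactly the detail the paper leaves to that reference, and it is carried out correctly.
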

\begin{proof}
Assume that $\bP(Z)$ has a $2^\omega$-universal set. The fact that $\bP(Z)$ is non-selfdual follows from Lemmas \ref{lemma_cantor-universal_implies_self-universal} and \ref{lemma_self-universal_implies_non-selfdual}. To see that $\bP(Z)$ is a Wadge class, proceed as in the proof of \cite[Theorem 7.5]{carroy_medini_muller_homogeneous}.
\end{proof}

\subsection{New Wadge classes from old}\label{subsection_wadge_fundamental_new_from_old}

Many examples of Wadge classes are obtained by combining elements of known classes using a fixed set-theoretic operation (consider for example the difference hierarchy). In fact, combining sets that belong to a given sequence of non-selfdual Wadge classes according to a given Hausdorff operation always yields a non-selfdual Wadge class (this follows from Corollary \ref{corollary_new_from_old} by ignoring the sets $U_{2k+1}$). However, the operation of separated unions (to be introduced in Definition \ref{definition_su}) will require in addition that certain sets are pairwise disjoint. This is why, in this subsection, the statements of our results look more complicated than one might expect.

We begin by showing that combining relativizable pointclasses according to a fixed Hausdorff operation preserves the property of having a universal set. Notice that Theorem \ref{theorem_new_from_old} does not require any determinacy assumptions. The desired result on combining Wadge class will then follow easily, thanks to Theorems \ref{theorem_wadge_implies_universal} and \ref{theorem_universal_implies_wadge}.

\begin{theorem}\label{theorem_new_from_old}
Let $D\subseteq\PP(\omega)$, let $\bP_n$ for $n\in\omega$ be relativizable topological pointclasses, and let $1\leq\xi<\omega_1$. Assume that each $\bP_n$ has a $2^\omega$-universal set. Set
\begin{multline}
\bP(Z)=\{\HH_D(A_0,U_1,A_2,U_3\ldots):A_{2k}\in\bP_k(Z)\text{ for every }k\in\omega\text{,}\nonumber\\
U_{2k+1}\in\bS^0_\xi(Z)\text{ for every }k\in\omega\text{, and }U_{2i+1}\cap U_{2j+1}\text{ whenever }i\neq j\}
\end{multline}
for every space $Z$. Then $\bP$ is a relativizable topological pointclass and $\bP(Z)$ has a $2^\omega$-universal set for every zero-dimensional space $Z$.
\end{theorem}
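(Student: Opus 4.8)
The plan is to mimic the proof of Proposition~\ref{proposition_hausdorff_cantor-universal} (as cited from \cite{carroy_medini_muller_constructing}) but with two additional bookkeeping tasks: first, the ``building blocks'' are no longer plain open sets but sets coming from the pointclasses $\bP_n$ (each of which has its own universal set), and second, the odd-indexed coordinates $U_{2k+1}$ must be made pairwise disjoint. I would fix a space $Z$ and, using the hypothesis, fix a $2^\omega$-universal set $S^n$ for $\bP_n(Z)$ for each $n$, together with a $2^\omega$-universal set $T$ for $\bS^0_\xi(Z)$ (the latter is classical, see \cite[\S22]{kechris}, and also follows from Proposition~\ref{proposition_hausdorff_cantor-universal} since $\bS^0_\xi = \bG_D$ for an appropriate $D$). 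Via a fixed homeomorphism $2^\omega \approx (2^\omega)^\omega$, a single parameter $y \in 2^\omega$ codes a sequence $(y_n : n \in \omega)$ of parameters; from $y_{2k}$ we extract the set $A_{2k} = S^k_{y_{2k}} \in \bP_k(Z)$, and from $y_{2k+1}$ a set $B_{2k+1} = T_{y_{2k+1}} \in \bS^0_\xi(Z)$.

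The disjointification is the one genuinely new ingredient. Given the sequence $(B_{2k+1})_{k\in\omega}$ of $\bS^0_\xi(Z)$ sets, I want to replace it by a pairwise disjoint sequence $(U_{2k+1})_{k\in\omega}$ of $\bS^0_\xi(Z)$ sets, in such a way that (i) the replacement depends continuously (or at least in a $\bS^0_\xi$-measurable, sectionwise manner) on the parameter, and (ii) every admissible disjoint tuple arises. If $Z$ is zero-dimensional, Theorem~\ref{theorem_reduction} (the generalized reduction property for $\bS^0_\xi$) furnishes, from any countable family in $\bS^0_\xi(Z)$, a pairwise disjoint refinement with the same union, still in $\bS^0_\xi(Z)$ — and crucially the standard construction ($U_{2k+1}' = B_{2k+1} \setminus \bigcup_{i<k} B_{2i+1}$, then intersected appropriately) is ``uniform'' enough to be applied to the universal set $T \subseteq 2^\omega \times Z$ itself. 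So I would first form $R = \{(y,x) : x \in T_{y_{2k+1}} \setminus \bigcup_{i<k} T_{y_{2i+1}}\text{ for the unique such }k\}$, check $R \in \bS^0_\xi(2^\omega \times Z)$ (a routine computation using that $\bS^0_\xi$ is closed under the relevant countable operations), and observe that its vertical sections, sliced into the coordinates $2k+1$, give exactly all pairwise disjoint sequences in $\bS^0_\xi(Z)$ — since any such sequence is its own disjoint refinement, hence is hit.

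With these pieces in hand, I would define
$$
\UU = \{(y,x) \in 2^\omega \times Z : x \in \HH_D(S^0_{y_0},\, U_1(y),\, S^1_{y_2},\, U_3(y),\, \ldots)\},
$$
where $U_{2k+1}(y)$ denotes the $(2k+1)$-slice of $R$ at $y$. Unwinding the definition of $\HH_D$, membership in $\UU$ is $\{n : x \in A_n(y)\} \in D$, and since each ``$x \in A_n(y)$'' is a condition defining a set in $\bP_n(2^\omega \times Z)$ or $\bS^0_\xi(2^\omega \times Z)$ (here I use that $\bP_n$, being a topological pointclass, pulls back under the continuous projections, and the explicit formulas above), the set $\UU$ is itself of the form $\HH_D$ applied to a sequence from the $\bP_n(2^\omega\times Z)$ and $\bS^0_\xi(2^\omega\times Z)$ with the disjointness of the odd slices built in — in other words $\UU \in \bP(2^\omega \times Z)$ by the very definition of $\bP$. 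That $\{\UU_y : y \in 2^\omega\} = \bP(Z)$ is then immediate: ``$\subseteq$'' is the computation just described applied with $Z$ in place of $2^\omega\times Z$, and ``$\supseteq$'' follows because any element of $\bP(Z)$ is $\HH_D$ of an admissible disjoint tuple, each entry of which is a section of the appropriate universal set, so we can assemble a single $y$ coding all of them. Finally, relativizability of $\bP$: given zero-dimensional Borel $W \subseteq Z'$, the identity $\bP(W) = \{A \cap W : A \in \bP(Z')\}$ follows coordinatewise from relativizability of each $\bP_n$ and of $\bS^0_\xi$ (for the latter, $\bS^0_\xi(W) = \{U \cap W : U \in \bS^0_\xi(Z')\}$), using that $\HH_D$ and the disjointness condition are preserved under intersecting every argument with $W$; and $\bP$ is a topological pointclass because $\HH_D$ commutes with preimages under continuous maps and each $\bP_n$, $\bS^0_\xi$ do too.

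The main obstacle I anticipate is purely notational rather than conceptual: keeping the interleaving of even coordinates (the $\bP_k$'s) and odd coordinates (the disjoint $\bS^0_\xi$'s) straight while verifying that the disjointified universal set $R$ both lies in $\bS^0_\xi(2^\omega\times Z)$ and has the right sections. The one place requiring genuine (though standard) care is the claim that the disjointification operation, applied to the universal set $T$, is still ``universal'' for disjoint sequences — this hinges on the elementary but essential observation that a sequence which is already pairwise disjoint is a fixed point of the disjointification, so nothing is lost. No determinacy is used anywhere, consistent with the remark preceding the statement that Theorem~\ref{theorem_new_from_old} is determinacy-free.
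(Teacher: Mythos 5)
Your proposal is correct and follows essentially the same route as the paper: code parameters via $2^\omega\approx(2^\omega)^\omega$, put universal sets for the $\bP_k$ on the even coordinates and a universal set for $\bS^0_\xi(Z)$ on the odd ones, disjointify the latter by applying Theorem \ref{theorem_reduction} to the resulting sets inside the product $2^\omega\times Z$, and use the fixed-point observation (an already pairwise disjoint sequence is unchanged by the refinement) to see that universality for disjoint sequences is retained before assembling everything with $\HH_D$. The one caution is that your explicit first-hit formula for $R$ is a set difference of $\bS^0_\xi$ sets, which need not lie in $\bS^0_\xi$ when $\xi>1$; the fix is simply to invoke Theorem \ref{theorem_reduction} in $2^\omega\times Z$ as a black box, which is exactly what the paper does.
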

\begin{proof}
The fact that $\bP$ is a relativizable pointclass is straightforward to verify using the assumption that each $\bP_n$ is a relativizable pointclass and Theorem \ref{theorem_reduction}. Pick a zero-dimensional space $Z$. Fix a homeomorphism $h:2^\omega\longrightarrow (2^\omega)^{\omega}$. Set $(y)_n=\pi_n(h(y))$ for $y\in 2^\omega$ and $n\in\omega$, where $\pi_n:(2^\omega)^{\omega}\longrightarrow 2^\omega$ denotes the projection on the $n$-th coordinate. By assumption, for every $k\in\omega$ we can fix a $2^\omega$-universal set $S_{2k}$ for $\bP_k(Z)$. Define
$$
S'_{2k}=\{(y,x)\in 2^\omega\times Z:((y)_{2k},x)\in S_{2k}\}
$$
for $k\in\omega$, and observe that each $S'_{2k}\in\bP_k(2^\omega\times Z)$.

By \cite[Theorem 22.3]{kechris}, we can fix a $2^\omega$-universal set $U$ for $\bS^0_\xi(Z)$. Define
$$
U_{2k+1}=\{(y,x)\in 2^\omega\times Z:((y)_{2k+1},x)\in U\}
$$
for $k\in\omega$, and observe that each $U_{2k+1}\in\bS^0_\xi(2^\omega\times Z)$. By Theorem \ref{theorem_reduction}, there exist $U'_{2k+1}\in\bS^0_\xi(2^\omega\times Z)$ for $k\in\omega$ such that the following conditions are satisfied:
\begin{itemize}
\item $U'_{2k+1}\subseteq U_{2k+1}$ for each $k$,
\item $U'_{2j+1}\cap U'_{2k+1}=\varnothing$ whenever $j\neq k$,
\item $\bigcup_{k\in\omega}U'_{2k+1}=\bigcup_{k\in\omega}U_{2k+1}$.
\end{itemize}
Notice that, as $y\in 2^\omega$ varies, the sequence of vertical sections
$$
((U'_{2k+1})_y:k\in\omega)
$$
will yield precisely the sequences consisting of pairwise disjoint elements of $\bS^0_\xi(Z)$. Furthermore, changing the values of $(y)_n$ for even $n$ will not affect the above sequence.

Finally, define
$$
T=\HH_D(S'_0,U'_0,S'_1,U'_1,\ldots).
$$
Using the properties that we just mentioned, it is straightforward to check that $T$ is a $2^\omega$-universal set for $\bP(Z)$.
\end{proof}

\begin{corollary}\label{corollary_new_from_old}
Let $\bS$ be a nice topological pointclass, and assume that $\Det(\bS(\omega^\omega))$ holds. Let $D\subseteq\PP(\omega)$, let $Z$ be an uncountable zero-dimensional Polish space, and let $1\leq\xi<\omega_1$. If $\bG_n\in\NSDS(Z)$ for $n\in\omega$ and
\begin{multline}
\bG=\{\HH_D(A_0,U_1,A_2,U_3\ldots):A_{2k}\in\bG_k\text{ for every }k\in\omega\text{,}\nonumber\\
U_{2k+1}\in\bS^0_\xi(Z)\text{ for every }k\in\omega\text{, and }U_{2i+1}\cap U_{2j+1}\text{ whenever }i\neq j\}
\end{multline}
then $\bG\in\NSD(Z)$.
\end{corollary}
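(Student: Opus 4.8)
The plan is to reduce the statement to Theorem~\ref{theorem_new_from_old} via the relativizable-pointclass machinery. First I would observe that, by Theorem~\ref{theorem_wadge_implies_universal} (applied with the ambient space being an arbitrary zero-dimensional Borel space), each $\bG_n$ gives rise to a relativizable topological pointclass $\bP_n$ defined by $\bP_n(W)=\bG_n'(W)$, where $\bG_n'\in\NSDS(\omega^\omega)$ is the unique non-selfdual Wadge class in $\omega^\omega$ with $\bG_n'(Z)=\bG_n$ supplied by Lemma~\ref{lemma_relativization_exists_unique}; relativizability of each $\bP_n$ is exactly the content of Lemmas~\ref{lemma_relativization_basic} and~\ref{lemma_relativization_subspace}, and each $\bP_n(W)$ has a $2^\omega$-universal set for every zero-dimensional Borel $W$ by Theorem~\ref{theorem_wadge_implies_universal}.

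Next I would feed these $\bP_n$ into Theorem~\ref{theorem_new_from_old} with the same $D$ and the same $\xi$, obtaining a relativizable topological pointclass $\bP$ such that $\bP(W)$ has a $2^\omega$-universal set for every zero-dimensional space $W$; in particular $\bP(Z)$ has a $2^\omega$-universal set. Then Theorem~\ref{theorem_universal_implies_wadge} (whose hypotheses are met since $Z$ is an uncountable zero-dimensional Borel space and $\bP$ is a relativizable pointclass) immediately yields $\bP(Z)\in\NSD(Z)$.

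The remaining—and I expect main—step is to identify $\bP(Z)$ with the class $\bG$ in the statement. Unwinding the definition of $\bP$ in Theorem~\ref{theorem_new_from_old} at the space $Z$, we get
$$
\bP(Z)=\{\HH_D(A_0,U_1,A_2,U_3,\ldots):A_{2k}\in\bP_k(Z)\text{, }U_{2k+1}\in\bS^0_\xi(Z)\text{, }U_{2i+1}\cap U_{2j+1}=\varnothing\text{ for }i\neq j\},
$$
and since $\bP_k(Z)=\bG_k'(Z)=\bG_k$ by construction, this is literally $\bG$. (Here I would note the harmless typo shared by the two displays: ``$U_{2i+1}\cap U_{2j+1}$'' should read ``$U_{2i+1}\cap U_{2j+1}=\varnothing$''.) Thus $\bG=\bP(Z)\in\NSD(Z)$, as desired.

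Two small points deserve care. First, to even invoke Lemma~\ref{lemma_relativization_exists_unique} and get $\bG_n'$, one uses that $\bG_n\in\NSDS(Z)$ and $Z$ is a zero-dimensional Polish space; this is given. Second, the application of Theorem~\ref{theorem_universal_implies_wadge} only gives that $\bP(Z)$ is a Wadge class \emph{and} non-selfdual, which is precisely $\bP(Z)\in\NSD(Z)$; no separate argument that $\bG$ is a Wadge class is needed beyond this. The whole proof is therefore a short composition of the cited results, with the only genuine work being the bookkeeping that matches the definition of $\bP(Z)$ with the displayed description of $\bG$.
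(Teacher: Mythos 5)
Your proposal is correct and follows essentially the same route as the paper: write each $\bG_n$ as the relativization to $Z$ of a class in $\NSDS(\omega^\omega)$, turn these into relativizable pointclasses with $2^\omega$-universal sets via Theorem \ref{theorem_wadge_implies_universal}, feed them into Theorem \ref{theorem_new_from_old}, and conclude with Theorem \ref{theorem_universal_implies_wadge} after identifying $\bP(Z)$ with $\bG$. The only cosmetic difference is that you cite Lemma \ref{lemma_relativization_exists_unique} explicitly where the paper just writes ``pick $\bG_n(Z)\in\NSDS(Z)$ where each $\bG_n\in\NSDS(\omega^\omega)$''; the substance is identical.
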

\begin{proof}
Pick $\bG_n(Z)\in\NSDS(Z)$ for $n\in\omega$, where each $\bG_n\in\NSDS(\omega^\omega)$. Define relativizable pointclasses $\bP_n$ for $n\in\omega$ by setting $\bP_n(W)=\bG_n(W)$ for every space~$W$, and observe that each $\bP_n(Z)$ has a $2^\omega$-universal set by Theorem \ref{theorem_wadge_implies_universal}. It follows that $\bP(Z)$ has a $2^\omega$-universal set, where $\bP$ is defined as in the statement of Theorem \ref{theorem_new_from_old}. Since $\bG=\bP(Z)$, where $\bG$ is defined as in the statement of this corollary, the desired conclusion follows from Theorem \ref{theorem_universal_implies_wadge}.
\end{proof}

\subsection{Clarifying level and expansions}\label{subsection_wadge_fundamental_clarifying}

In this subsection, we will show how to view level and expansions through the lens of Hausdorff operations. While the only result from this section that will be needed in the rest of the article is Proposition \ref{proposition_expansion_differences}, the alternative descriptions given here are helpful in clarifying these two important concepts.

\begin{definition}\label{definition_hausdorff_expansions}
Let $D\subseteq\PP(\omega)$, and let $\xi<\omega_1$. Define
$$
\bG^{(\xi)}_D(Z)=\{\HH_D(A_0,A_1,\ldots):A_n\in\bS^0_{1+\xi}(Z)\text{ for }n\in\omega\}
$$
for every space $Z$.
\end{definition}

The following result (see \cite[Lemma 13.9]{carroy_medini_muller_constructing}) shows that Definition \ref{definition_hausdorff_expansions} is the counterpart of expansions in the present context.

\begin{lemma}\label{lemma_hausdorff_expansions}
Let $D\subseteq\PP(\omega)$, let $Z$ be an uncountable zero-dimensional Polish space, and let $\xi<\omega_1$. Then $\bG_D(Z)^{(\xi)}=\bG^{(\xi)}_D(Z)$.
\end{lemma}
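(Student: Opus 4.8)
The plan is to unravel both sides of the claimed equality $\bG_D(Z)^{(\xi)}=\bG^{(\xi)}_D(Z)$ using only the definitions together with the elementary stability properties of Hausdorff operations that were recorded in \S\ref{subsection_wadge_fundamental_hausdorff}. Recall that $\bG_D(Z)=\{\HH_D(A_0,A_1,\ldots):A_n\in\bS^0_1(Z)\}$ and that $\bG^{(\xi)}_D(Z)$ is defined identically except the $A_n$ are allowed to range over $\bS^0_{1+\xi}(Z)$; on the other hand $\bG_D(Z)^{(\xi)}=\{f^{-1}[B]:B\in\bG_D(Z)\text{ and }f:Z\to Z\text{ is }\bS^0_{1+\xi}\text{-measurable}\}$.

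First I would prove the inclusion $\bG_D(Z)^{(\xi)}\subseteq\bG^{(\xi)}_D(Z)$. Take $B=\HH_D(A_0,A_1,\ldots)$ with each $A_n\in\bS^0_1(Z)$ and an $\bS^0_{1+\xi}$-measurable $f:Z\to Z$. The key observation is that preimages commute with Hausdorff operations: for any sequence $(A_n)$ of subsets of $Z$ one has $f^{-1}[\HH_D(A_0,A_1,\ldots)]=\HH_D(f^{-1}[A_0],f^{-1}[A_1],\ldots)$, since $x\in f^{-1}[\HH_D(\ldots)]$ iff $\{n:f(x)\in A_n\}\in D$ iff $\{n:x\in f^{-1}[A_n]\}\in D$. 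Thus $f^{-1}[B]=\HH_D(f^{-1}[A_0],f^{-1}[A_1],\ldots)$, and by $\bS^0_{1+\xi}$-measurability of $f$ each $f^{-1}[A_n]\in\bS^0_{1+\xi}(Z)$ (this is the defining property of $\bS^0_{1+\xi}$-measurability, applied to open $A_n$; strictly it gives the case where the $A_n$ are open, but in $\bG^{(\xi)}_D$ that is all we need). Hence $f^{-1}[B]\in\bG^{(\xi)}_D(Z)$.

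For the reverse inclusion $\bG^{(\xi)}_D(Z)\subseteq\bG_D(Z)^{(\xi)}$, take $B=\HH_D(A_0,A_1,\ldots)$ with each $A_n\in\bS^0_{1+\xi}(Z)$. The idea is to ``pull the higher complexity into a measurable change of variable.'' One standard way is to use the fact that each $A_n\in\bS^0_{1+\xi}(Z)$ can be written as $g_n^{-1}[U_n]$ for an $\bS^0_{1+\xi}$-measurable map and open $U_n$ — but to get a \emph{single} measurable $f:Z\to Z$ reducing $B$ to a member of $\bG_D(Z)$, I would instead invoke the change-of-topology / Kuratowski-transfer circle of ideas indicated in the excerpt (cf.\ Lemma \ref{lemma_expansion_bijection} and the remarks preceding it), or argue directly as follows. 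Since $Z$ is an uncountable zero-dimensional Polish space, refine its topology to a zero-dimensional Polish topology $\tau'$ in which every $A_n$ becomes clopen, the identity map $\id:(Z,\tau')\to(Z,\tau)$ is continuous, and $\id:(Z,\tau)\to(Z,\tau')$ is $\bS^0_{1+\xi}$-measurable (a standard fact, as $\bigcup_n\bS^0_{1+\xi}$-many sets are being declared open; countably many sets suffice here, and the refined space remains Polish zero-dimensional). Fix a homeomorphism $e:(Z,\tau')\to Z$ and set $f=e^{-1}\circ\id:(Z,\tau)\to(Z,\tau')\xrightarrow{e^{-1}}Z$; then $f$ is $\bS^0_{1+\xi}$-measurable, and setting $A_n'=e[A_n]\in\bS^0_1(Z)$ we get $B=\HH_D(A_0,A_1,\ldots)=f^{-1}[\HH_D(A_0',A_1',\ldots)]$, exhibiting $B\in\bG_D(Z)^{(\xi)}$.

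The main obstacle I expect is the second inclusion, specifically making the change-of-topology step fully rigorous in the $\ZF+\DC$ setting and checking that the refined topology is again \emph{zero-dimensional} Polish (so that the homeomorphism $e$ onto $Z$ exists by uncountability and \cite[Theorem 7.7]{kechris}); this is exactly the kind of ``variation on Kuratowski's Transfer Theorem'' the excerpt alludes to in \S\ref{subsection_wadge_fundamental_hausdorff} and in the remark before Lemma \ref{lemma_expansion_bijection}, so in practice I would quote that machinery rather than reprove it. The first inclusion and the ``preimages commute with $\HH_D$'' computation are entirely routine, as is the verification that the resulting $f$ has the claimed measurability. I would also double-check the indexing/parity conventions but these cause no real difficulty.
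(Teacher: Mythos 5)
The paper does not prove this lemma itself; it cites \cite[Lemma 13.9]{carroy_medini_muller_constructing}, so your proposal has to be judged on its own terms. Your first inclusion is correct and routine: preimages commute with $\HH_D$, and a $\bS^0_{1+\xi}$-measurable map pulls open sets back to $\bS^0_{1+\xi}$ sets, which is all that is needed there.

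The second inclusion contains a genuine gap. After refining the topology of $Z$ to a finer zero-dimensional Polish topology $\tau'$, you ``fix a homeomorphism $e:(Z,\tau')\longrightarrow Z$,'' citing uncountability and \cite[Theorem 7.7]{kechris}. No such homeomorphism exists in general: that theorem characterizes $\omega^\omega$ only, and two uncountable zero-dimensional Polish spaces need not be homeomorphic. Concretely, if $Z=2^\omega$ and $\tau'\supsetneq\tau$ is Polish, then $(Z,\tau')$ cannot be compact (a continuous bijection from a compact space onto a Hausdorff space is a homeomorphism), so $(Z,\tau')\not\approx 2^\omega$. The same issue persists if you instead quote Lemma \ref{lemma_expansion_bijection}: it hands you a $\bS^0_{1+\xi}$-measurable bijection $f:Z\longrightarrow W$ onto some \emph{other} zero-dimensional Polish space $W$, whereas Definition \ref{definition_expansion} requires a witness $Z\longrightarrow Z$. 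The missing step is to compose with an embedding $j:W\longrightarrow Z$ (which exists because $W$ embeds into $2^\omega$ and, by uncountability, $2^\omega$ embeds into $Z$; this is where uncountability is actually used, not where you place it) and then to replace each open set $f[A_n]\subseteq W$ by an open $V_n\subseteq Z$ with $V_n\cap j[W]=j[f[A_n]]$. One then checks that $(j\circ f)^{-1}[V_n]=A_n$, hence $\HH_D(A_0,A_1,\ldots)=(j\circ f)^{-1}[\HH_D(V_0,V_1,\ldots)]\in\bG_D(Z)^{(\xi)}$. A second, smaller slip: you cannot make the $A_n$ \emph{clopen} while keeping the identity $(Z,\tau)\longrightarrow(Z,\tau')$ $\bS^0_{1+\xi}$-measurable, since the complements $Z\setminus A_n$ are only $\bP^0_{1+\xi}$; the transfer theorem makes them \emph{open}, which suffices. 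With these two repairs your outline becomes a correct proof, and it is the expected one.
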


As a first application, we will give some concrete examples of expansions. This will be useful in \S\ref{subsection_wadge_closure_good}.

\begin{proposition}\label{proposition_expansion_differences}
Let $Z$ be an uncountable zero-dimensional Polish space, let $1\leq\eta<\omega_1$, and let $\xi<\omega_1$. Set $\bG=\Diff_\eta(\bS^0_1(Z))$. Then $\bG^{(\xi)}=\Diff_\eta(\bS^0_{1+\xi}(Z))$.
\end{proposition}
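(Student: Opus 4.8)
The plan is to express the difference operation $\Diff_\eta$ as a Hausdorff operation $\HH_D$ for a suitable $D \subseteq \PP(\omega)$, and then invoke Lemma \ref{lemma_hausdorff_expansions}. Concretely, I would first recall from \cite[\S22.E]{kechris} (or \cite[Chapter 3]{van_engelen_thesis}) that there is a fixed $D_\eta \subseteq \PP(\omega)$, depending only on the countable ordinal $\eta$ (via a fixed bijection between $\omega$ and $\eta$, used to reindex an $\eta$-sequence of sets as an $\omega$-sequence), such that for any space $Z$ one has $\Diff_\eta(\bS^0_\zeta(Z)) = \bG_{D_\eta}^{(\ast)}$-style description: more precisely, $\Diff_\eta(A_\mu : \mu < \eta) = \HH_{D_\eta}(A_{g(0)}, A_{g(1)}, \ldots)$ where $g : \omega \to \eta$ is the chosen bijection, and moreover the sets $A_\mu$ range over $\bS^0_\zeta(Z)$ exactly when the $\omega$-indexed inputs range over $\bS^0_\zeta(Z)$. (Here I use that the inputs to $\HH_{D_\eta}$ need not themselves be open — the definition of $\bG_D$ insists on open inputs, but one can replace $D_\eta$ with $D_\eta \circ (\text{unions/intersections})$, another Hausdorff operation by \cite[Proposition 8.3]{carroy_medini_muller_constructing}, so that open inputs suffice; since $\Diff_\eta$ already only uses set differences of the $A_\mu$, which are themselves members of $\bS^0_\zeta$, taking the $A_\mu$ open directly realizes every element of $\Diff_\eta(\bS^0_1(Z))$ when $\zeta = 1$.)

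With such a $D = D_\eta$ in hand, we get $\bG = \Diff_\eta(\bS^0_1(Z)) = \bG_D(Z)$, where $\bG_D$ is the Hausdorff class of Definition immediately preceding Theorem \ref{theorem_van_wesep_hausdorff}. Then Lemma \ref{lemma_hausdorff_expansions} gives
$$
\bG^{(\xi)} = \bG_D(Z)^{(\xi)} = \bG^{(\xi)}_D(Z) = \{\HH_D(A_0, A_1, \ldots) : A_n \in \bS^0_{1+\xi}(Z) \text{ for } n \in \omega\}.
$$
The right-hand side, by the very same identification (now with $\zeta = 1+\xi$ in place of $\zeta = 1$), is exactly $\Diff_\eta(\bS^0_{1+\xi}(Z))$. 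So the whole argument reduces to: (i) pin down $D_\eta$ and verify $\Diff_\eta(\bS^0_\zeta(Z)) = \{\HH_{D_\eta}(A_0, A_1, \ldots) : A_n \in \bS^0_\zeta(Z)\}$ uniformly in $\zeta$, and (ii) apply Lemma \ref{lemma_hausdorff_expansions}.

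The main obstacle — really the only content — is step (i): setting up $D_\eta$ carefully enough that the same $D_\eta$ works simultaneously for $\bS^0_1$ and for $\bS^0_{1+\xi}$, and checking that membership $\{n : x \in A_n\} \in D_\eta$ correctly encodes ``$x$ lies in $A_\mu \setminus \bigcup_{\zeta < \mu} A_\zeta$ for some $\mu$ of the correct parity.'' This is a standard but slightly fiddly combinatorial translation: fixing the bijection $g : \omega \to \eta$, one wants $D_\eta = \{a \subseteq \omega : \text{the } g\text{-least element of } g[a] \text{ has the correct parity in } \eta, \text{ or } a = \varnothing \text{ if } \eta \text{ is even and } 0 \in \ldots\}$, with the usual care about whether $\eta$ is a limit and about the parity convention in Definition \ref{definition_differences}. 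One should note this is exactly the computation underlying the claim in the paragraph before Definition \ref{definition_differences} that the $\Diff_\eta(\bS^0_\xi(Z))$ ``are among the simplest concrete examples of Wadge classes,'' so it may be quoted rather than redone. Apart from that, the proof is a two-line application of Lemma \ref{lemma_hausdorff_expansions}; there is no need for any determinacy assumption beyond what that lemma already uses, and the ambient uncountable zero-dimensional Polish hypothesis on $Z$ is there precisely to license Lemma \ref{lemma_hausdorff_expansions}.
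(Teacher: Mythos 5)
Your proposal is correct and is exactly the paper's proof: the paper's entire argument is ``Simply apply Lemma \ref{lemma_hausdorff_expansions} for the appropriate choice of $D\subseteq\PP(\omega)$,'' leaving the construction of $D_\eta$ (encoding ``the least $\mu$ with $x\in A_\mu$ has the right parity'' via a bijection $\omega\to\eta$) implicit, just as you describe. Your write-up merely makes explicit the uniformity-in-$\zeta$ point that the paper takes for granted.
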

\begin{proof}
Simply apply Lemma \ref{lemma_hausdorff_expansions} for the appropriate choice of $D\subseteq\PP(\omega)$.
\end{proof}

We conclude this subsection with a characterization of the level that is perhaps more intuitive than the official Definition \ref{definition_expansion}. The Borel version of Theorem \ref{theorem_level_sup_hausdorff} originally appeared as part of \cite[Th\'{e}or\`{e}me 8]{louveau_saint_raymond_level}.

\begin{theorem}\label{theorem_level_sup_hausdorff}
Let $\bS$ be a nice topological pointclass, and assume that $\Det(\bS(\omega^\omega))$ holds. Let $Z$ be an uncountable zero-dimensional Polish space, and let $\bG\in\NSDS(Z)$. Then
$$
\ell(\bG)=\supr\{\xi<\omega_1:\bG=\bG^{(\xi)}_D(Z)\text{ for some }D\subseteq\PP(\omega)\}.
$$
\end{theorem}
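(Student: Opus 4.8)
The plan is to prove the two inequalities separately, exploiting the dictionary between non-selfdual Wadge classes and Hausdorff classes provided by Theorem \ref{theorem_van_wesep_hausdorff}, together with the translation of expansions into the Hausdorff framework given by Lemma \ref{lemma_hausdorff_expansions}. First I would fix $D\subseteq\PP(\omega)$ such that $\bG=\bG_D(Z)$, which is possible by Theorem \ref{theorem_van_wesep_hausdorff}. The key observation is then that, by Lemma \ref{lemma_hausdorff_expansions}, $\bG^{(\xi)}_D(Z)=\bG_D(Z)^{(\xi)}=\bG^{(\xi)}$, so the condition ``$\bG=\bG^{(\xi)}_D(Z)$'' inside the supremum is exactly the condition ``$\bG=\bG^{(\xi)}$'' (for this particular $D$). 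This reduces the claim to showing that
$$
\ell(\bG)=\supr\{\xi<\omega_1:\bG=\bG^{(\xi)}\}.
$$
The inequality $\ell(\bG)\geq\supr\{\ldots\}$ should go as follows: if $\bG=\bG^{(\xi)}$, then since $\bG^{(\xi)}$ is an expansion of a non-selfdual Wadge class (namely $\bG$ itself), Theorem \ref{theorem_expansion} gives $\ell(\bG)=\ell(\bG^{(\xi)})\geq\xi$; taking the supremum over all such $\xi$ yields $\ell(\bG)\geq\supr\{\ldots\}$.

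For the reverse inequality $\ell(\bG)\leq\supr\{\ldots\}$, I would argue that every $\xi<\ell(\bG)$ is a witness, i.e.\ satisfies $\bG=\bG^{(\xi)}$. Fix $\xi<\ell(\bG)$. By Theorem \ref{theorem_every_class_has_a_level}, $\bG$ has a level $\eta:=\ell(\bG)$, and $\xi<\eta$ (if $\eta=\omega_1$ this just means $\xi<\omega_1$, which is automatic). Now apply the Expansion Theorem (Theorem \ref{theorem_expansion}) with the fact that $\ell(\bG)\geq\xi$: there is $\bL\in\NSDS(Z)$ with $\bG=\bL^{(\xi)}$. Then $\bG^{(\xi)}=(\bL^{(\xi)})^{(\xi)}=\bL^{(\xi+\xi)}$ by Theorem \ref{theorem_expansion_composition}. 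Hmm — this gives $\bG^{(\xi)}=\bL^{(2\xi)}$, not obviously $\bL^{(\xi)}=\bG$. The cleaner route is to use that $\ell(\bG)\geq\xi$ is, by Definition \ref{definition_level}, precisely the statement $\PU_\xi(\bG)=\bG$; but what I actually want is $\bG^{(\xi)}=\bG$, which by Proposition \ref{proposition_expansion_basic} always satisfies $\bG\subseteq\bG^{(\xi)}$, so I only need $\bG^{(\xi)}\subseteq\bG$. I expect this to follow from Theorem \ref{theorem_expansion_composition} and Corollary \ref{corollary_expansion_order_isomorphism}: write $\bG=\bL^{(\eta)}$ for a suitable $\bL\in\NSDS(Z)$ with $\bL$ of level $0$ (using Theorem \ref{theorem_expansion} and Corollary \ref{corollary_expansion_level} to arrange $\ell(\bL)=0$ when $\eta<\omega_1$), so $\bG^{(\xi)}=\bL^{(\xi+\eta)}$; since $\xi<\eta$ implies $\xi+\eta=\eta$ (ordinal arithmetic — here is where $\xi<\ell(\bG)$ is used, via absorption), we get $\bG^{(\xi)}=\bL^{(\eta)}=\bG$.

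The main obstacle I anticipate is precisely the ordinal-arithmetic bookkeeping in the last step: the absorption $\xi+\eta=\eta$ requires $\xi<\eta$ in the sense that $\eta$ is a limit ordinal strictly above $\xi$, or more precisely that no ``carry'' occurs. When $\eta=\ell(\bG)$ is a successor ordinal $\zeta+1$, one has $\xi\leq\zeta$ and $\xi+\eta$ need not equal $\eta$ (e.g.\ $1+1\neq 1$ is fine since then $\xi=0$, but $\xi=\zeta$ with $\zeta$ finite is problematic). I would need to handle this carefully: the correct statement is probably that $\xi<\ell(\bG)$ witnesses $\bG=\bG^{(\xi)}$ only when $\ell(\bG)$ is additively closed above $\xi$, and the supremum in the theorem is genuinely a supremum, not a maximum — so I should instead show that for \emph{cofinally many} $\xi<\ell(\bG)$ one has $\bG=\bG^{(\xi)}$, which suffices for the inequality. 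Concretely, if $\ell(\bG)=\eta$ and $\xi$ is such that $\omega^{\mu}>\xi$ for the largest $\mu$ with $\omega^\mu\leq\eta$ — i.e.\ $\xi$ is small enough that $\xi+\eta=\eta$ — then $\bG^{(\xi)}=\bG$; and such $\xi$ are cofinal in $\eta$ precisely when $\eta$ is additively indecomposable, while for general $\eta$ one still gets cofinally many witnesses below $\eta$ by choosing $\xi$ below the last additively indecomposable component. Pinning down this cofinality argument, and verifying it also covers the edge cases $\ell(\bG)=0$ (where the supremum is over the empty or singleton set $\{0\}$, matching $\bG=\bG^{(0)}$ by Proposition \ref{proposition_expansion_basic}) and $\ell(\bG)=\omega_1$, is the delicate part of the write-up.
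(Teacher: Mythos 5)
There is a genuine gap, and it occurs at the very first step: you fix a single $D$ with $\bG=\bG_D(Z)$ and then read the condition ``$\bG=\bG^{(\xi)}_D(Z)$'' inside the supremum as referring to that particular $D$, thereby reducing the theorem to $\ell(\bG)=\supr\{\xi<\omega_1:\bG=\bG^{(\xi)}\}$. But in the statement $D$ is existentially quantified and is allowed to vary with $\xi$, and your reduced claim is actually false. Take $\bG=\bS^0_2(Z)$, so that $\ell(\bG)=1$. By Proposition \ref{proposition_expansion_differences} (with $\eta=1$) and Theorem \ref{theorem_expansion_composition} one gets $\bG^{(\xi)}=\bS^0_1(Z)^{(\xi+1)}=\bS^0_{1+\xi+1}(Z)$, which equals $\bG$ only for $\xi=0$; hence $\supr\{\xi:\bG=\bG^{(\xi)}\}=0\neq 1$. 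Nevertheless $\xi=1$ does belong to the set in the theorem, witnessed by a \emph{different} $D'$, namely one with $\HH_{D'}(A_0,A_1,\ldots)=A_0$, since then $\bG^{(1)}_{D'}(Z)=\bS^0_2(Z)=\bG$. The ordinal-absorption and cofinality difficulties you run into are artifacts of this wrong reduction, and the cofinality repair cannot succeed either: when $\ell(\bG)=1$ there are no nonzero ordinals below $\ell(\bG)$ available as witnesses, so no supremum over a subset of $\ell(\bG)$ can reach $1$.

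Once the quantifier is read correctly, the argument is short and needs no ordinal arithmetic. By Lemma \ref{lemma_hausdorff_expansions}, ``$\bG=\bG^{(\xi)}_D(Z)$ for some $D$'' says exactly that $\bG=\bL^{(\xi)}$ for some Hausdorff class $\bL=\bG_D(Z)$; any such $\bL$ satisfies $\bL\subseteq\bL^{(\xi)}=\bG\subseteq\bS(Z)$, so $\bL\in\NSDS(Z)$ by Theorem \ref{theorem_van_wesep_hausdorff}, and conversely every $\bL\in\NSDS(Z)$ is of the form $\bG_D(Z)$. By Theorem \ref{theorem_expansion}, the existence of such an $\bL$ is equivalent to $\ell(\bG)\geq\xi$. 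Hence the set whose supremum is taken is precisely $\{\xi<\omega_1:\ell(\bG)\geq\xi\}$, whose supremum is $\ell(\bG)$ both when $\ell(\bG)<\omega_1$ (where it is attained, so it is in fact a maximum) and when $\ell(\bG)=\omega_1$. Your argument for the inequality $\geq$ adapts immediately to an arbitrary witness $D$ and is essentially the intended one; it is the inequality $\leq$ that genuinely requires letting $D$ depend on $\xi$.
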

\begin{proof}
Set $I=\{\xi<\omega_1:\bG=\bG^{(\xi)}_D(Z)\text{ for some }D\subseteq\PP(\omega)\}$. First assume that $\ell(\bG)=\xi<\omega_1$. Let $\xi'\in I$. By Theorem \ref{theorem_expansion}, Theorem \ref{theorem_van_wesep_hausdorff} and Lemma \ref{lemma_hausdorff_expansions}, we must have $\xi\geq\xi'$. Therefore, the inequality $\geq$ holds. On the other hand, a similar argument shows that $\xi\in I$, hence the inequality $\leq$ holds. The case $\ell(\bG)=\omega_1$ is analogous, but slightly easier.
\end{proof}

\subsection{The separation property and the ``pair of socks'' problem}\label{subsection_wadge_fundamental_separation}

Much like the socks in a pair, a non-selfdual Wadge class $\bG$ and its dual $\bGc$ do not appear to be distinguishable in any obvious way. Luckily, work of Van Wesep and Steel provides an elegant solution to this problem, at least when the ambient space is $\omega^\omega$. The key notion is the (first) separation property, already mentioned in the introduction and formally defined below.\footnote{\,The second separation property will only be needed in this subsection, and it will not appear anywhere else in this article. According to \cite[page 92]{van_wesep_thesis} and \cite[page 77]{van_wesep_separation}, both separation properties are due to Lusin.} More precisely, they showed that exactly one of $\bG$ and $\bGc$ has the separation property. We will show that their result holds for uncountable zero-dimensional Polish spaces (see Theorem \ref{theorem_separation_generalized}).

\begin{definition}[Lusin]
Let $Z$ be a set, and let $\bG\subseteq\PP(Z)$.
\begin{itemize}
\item We will say that $\bG$ has the \emph{first separation property} (or simply the \emph{separation property}) if for all $A,B\in\bG$ such that $A\cap B=\varnothing$ there exists $C\in\Delta(\bG)$ such that $A\subseteq C\subseteq Z\setminus B$.
\item We will say that $\bG$ has the \emph{second separation property} if for all $A,B\in\bG$ there exist $A',B'\in\bGc$ such that $A\setminus B\subseteq A'$, $B\setminus A\subseteq B'$, and $A'\cap B'=\varnothing$.
\end{itemize}
\end{definition}

We begin by showing that both of the separation properties are preserved by relativization.

\begin{lemma}\label{lemma_separation_relativization}
Let $\bS$ be a nice topological pointclass, and assume that $\Det(\bS(\omega^\omega))$ holds. Let $Z$ be a zero-dimensional Polish space, and let $\bG\in\NSDS(\omega^\omega)$.
\begin{enumerate}
\item If $\bG$ has the first separation property then $\bG(Z)$ has the first separation property.
\item If $\bG$ has the second separation property then $\bG(Z)$ has the second separation property.
\end{enumerate}
\end{lemma}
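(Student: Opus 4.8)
The plan is to reduce both statements to a single transfer-of-separation argument, exploiting the fact that $\bG(Z)$ is, up to intersecting with a clopen copy of $\omega^\omega$, just the trace of $\bG$ itself. Fix a nice topological pointclass $\bS$ with $\Det(\bS(\omega^\omega))$ and a zero-dimensional Polish space $Z$; we may assume $Z\neq\varnothing$. The first reduction is to the uncountable case: if $Z$ is countable, every subset of $Z$ is clopen, so $\bG(Z)=\PP(Z)=\Delta(\bG(Z))$, and both separation properties hold trivially. So assume $Z$ is uncountable. Then $Z$ contains a clopen copy $K$ of $\omega^\omega$ (since $Z$ is an uncountable zero-dimensional Polish space it has a clopen subset homeomorphic to $2^\omega$, hence one homeomorphic to $\omega^\omega$ after removing a countable dense set — or more directly, invoke that $Z$ has a clopen copy of $\omega^\omega$; in any case such a $K$ exists). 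Fix a retraction $r:Z\longrightarrow K$ (available because $K$ is clopen: send $Z\setminus K$ to any fixed point of $K$, say, but better, use a clopen partition of $Z\setminus K$ refining a base of $K$ to make $r$ continuous; since $K$ is clopen and $Z$ zero-dimensional this is routine) and let $j:\omega^\omega\longrightarrow K\subseteq Z$ be the inclusion homeomorphism onto $K$.

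For part (1), suppose $\bG$ has the first separation property and let $A,B\in\bG(Z)$ with $A\cap B=\varnothing$. By Lemma \ref{lemma_relativization_subspace} (applied to $K\subseteq Z$, both zero-dimensional Borel spaces) we have $A\cap K,\,B\cap K\in\bG(K)$, and pulling back along $j$, the sets $j^{-1}[A],\,j^{-1}[B]$ lie in $\bG(\omega^\omega)=\bG$ (using Lemma \ref{lemma_relativization_basic}, since $\bG$ is continuously closed being an element of $\NSD(\omega^\omega)$) and are disjoint. Apply the separation property of $\bG$ in $\omega^\omega$ to get $C_0\in\Delta(\bG)$ with $j^{-1}[A]\subseteq C_0\subseteq\omega^\omega\setminus j^{-1}[B]$. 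Transport back: $j[C_0]\in\Delta(\bG(K))$ by Lemma \ref{lemma_relativization_basic}, and then $C:=r^{-1}[j[C_0]]\subseteq Z$. Since $r$ is continuous and $\Delta(\bG(Z))$ is closed under continuous preimages (it is the intersection of a Wadge class with its dual, both continuously closed — or invoke Lemma \ref{lemma_relativization_basic} together with $r$ being a retraction and Lemma \ref{lemma_relativization_subspace}), we get $C\in\Delta(\bG(Z))$. It remains to check $A\subseteq C\subseteq Z\setminus B$: this is where one must use that $A$ and $B$ are "concentrated on $K$" in the relevant sense. This is in fact the main obstacle, and it forces a slight change of strategy — see below.

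The honest difficulty is that $A$ and $B$ need not be subsets of $K$, so the naive "push everything to $K$ and retract" does not directly give $A\subseteq C$. The correct approach is to not reduce to $\omega^\omega$ at all, but instead to prove the separation properties for $\bG(Z)$ \emph{intrinsically} by transporting the \emph{proof} that works on $\omega^\omega$, or — cleaner — to use Lemma \ref{lemma_separation_relativization}'s analogue pattern: one should show directly that the separation property is a property of the Hausdorff description. Concretely, by Theorem \ref{theorem_van_wesep_hausdorff} fix $D\subseteq\PP(\omega)$ with $\bG=\bG_D(\omega^\omega)$, so that $\bG(W)=\bG_D(W)$ for every zero-dimensional Borel space $W$ by relativizability. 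The separation property of $\bG_D$ on $\omega^\omega$ is, via the universal sets from Proposition \ref{proposition_hausdorff_cantor-universal}, equivalent to a statement about $D$ alone (roughly: a "separating" Hausdorff description exists), and this statement, being about $D$, transfers verbatim to $\bG_D(Z)$. So the key steps are: (i) dispose of the countable case; (ii) replace $\bG$ by its Hausdorff class $\bG_D$ and use relativizability to identify $\bG(Z)$ with $\bG_D(Z)$; (iii) observe that the first (resp. second) separation property of $\bG_D(Z)$ is witnessed by the same combinatorial data on $D$ — i.e., run the Van Wesep–Steel-style argument for $\bG_D$ uniformly in the ambient zero-dimensional Polish space, which works because all the ingredients (Hausdorff operations, $\bS^0_\xi$-sets, universal sets, reduction via Theorem \ref{theorem_reduction}) are available in $Z$ exactly as in $\omega^\omega$. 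I expect step (iii) — checking that the witness to separation on $\omega^\omega$ genuinely depends only on $D$ and not on the ambient space — to be the crux; once that is pinned down, both (1) and (2) follow by the identical argument, differing only in which combinatorial property of $D$ one extracts.
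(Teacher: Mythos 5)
There is a genuine gap here. Your first argument fails for exactly the reason you identify: after retracting $Z$ onto a copy $K$ of $\omega^\omega$ sitting inside $Z$, the portions of $A$ and $B$ lying outside $K$ are not controlled, so $A\subseteq r^{-1}[j[C_0]]$ cannot be verified. Your fallback via Hausdorff classes is not a proof either: you never say what the ``combinatorial property of $D$'' witnessing separation is supposed to be, and the Van Wesep--Steel arguments (which are game-theoretic, played on $\omega^\omega$) do not hand you one. The assertion that separation ``transfers verbatim'' because $\bG_D$ has the same description $D$ in every ambient space is essentially the statement being proved, so as written step (iii) is an unfilled crux rather than a reduction. (A smaller but real error: in your disposal of the countable case, it is false that every subset of a countable zero-dimensional Polish space is clopen --- consider the limit point of a convergent sequence --- so $\bG(Z)=\PP(Z)$ does not follow; in any case no such case division is needed.)

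The missing idea is to run the retraction in the opposite direction. By \cite[Theorem 7.8]{kechris} one may assume $Z$ is a \emph{closed subspace of} $\omega^\omega$, and by \cite[Proposition 2.8]{kechris} there is a retraction $\rho:\omega^\omega\longrightarrow Z$. Given disjoint $A,B\in\bG(Z)$, the sets $\rho^{-1}[A],\rho^{-1}[B]$ are disjoint and belong to $\bG$ by Lemma \ref{lemma_relativization_basic}; separating them by some $C\in\Delta(\bG)$ in $\omega^\omega$ and intersecting with $Z$ gives $C\cap Z\in\Delta(\bG(Z))$ by Lemma \ref{lemma_relativization_subspace}, and now $A\subseteq C\cap Z\subseteq Z\setminus B$ holds \emph{because} $\rho$ is a retraction, so that $\rho^{-1}[A]\cap Z=A$ and $\rho^{-1}[B]\cap Z=B$. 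This is precisely the step that is unavailable in your setup, where the sets are pushed down into a small copy of $\omega^\omega$ rather than pulled up to the whole of it. The second separation property is handled by the same device.
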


\begin{proof}
By \cite[Theorem 7.8]{kechris}, we can assume without loss of generality that $Z$ is a closed subspace of $\omega^\omega$. Using \cite[Proposition 2.8]{kechris}, we can fix a retraction $\rho:\omega^\omega\longrightarrow Z$. In order to prove $(1)$, assume that $\bG$ has the separation property. Pick disjoint $A,B\in\bG(Z)$, and observe that $\rho^{-1}[A],\rho^{-1}[B]\in\bG$ by Lemma \ref{lemma_relativization_basic}. Since $\bG$ has the separation property, there exists $C\in\Delta(\bG)$ such that $\rho^{-1}[A]\subseteq C\subseteq\omega^\omega\setminus\rho^{-1}[B]$. Notice that $C\cap Z\in\Delta(\bG(Z))$ by Lemma \ref{lemma_relativization_subspace}. Since clearly $A\subseteq C\cap Z\subseteq Z\setminus B$, this concludes the proof of $(1)$. The proof of $(2)$ is similar.
\end{proof}

Next, we will state the original results of Van Wesep and Steel. Theorem \ref{theorem_separation_van_wesep} first appeared as \cite[Theorem 4.2.2]{van_wesep_thesis} (see also \cite[Theorem 2]{van_wesep_separation} or \cite[Theorem 5.3]{van_wesep_cabal}), while Theorem \ref{theorem_separation_steel} is the main result of \cite{steel}.\footnote{\,The original statements of these results simply assumed $\AD$, but it is straightforward to check that their proofs actually yield the more precise versions given here.}

\begin{theorem}[Van Wesep]\label{theorem_separation_van_wesep}
Let $\bS$ be a nice topological pointclass, and assume that $\Det(\bS(\omega^\omega))$ holds. Let $\bG\in\NSDS(\omega^\omega)$. Then least one of $\bG$ and $\bGc$ has the second separation property.
\end{theorem}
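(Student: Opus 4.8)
The plan is to reduce the statement to the classical theorem of Van Wesep over $\omega^\omega$ (which the paper is about to cite as Theorem \ref{theorem_separation_steel} or a companion) by transferring the separation property back and forth through relativization. More precisely, since $\bG\in\NSDS(\omega^\omega)$ is already a non-selfdual Wadge class in $\omega^\omega$ itself, the cleanest route is: first invoke the original Van Wesep result in the form ``at least one of $\bG$, $\bGc$ has the second separation property in $\omega^\omega$,'' and then observe that the present statement \emph{is} exactly that, once one checks that the notions coincide. So in fact the only real content is to make sure the determinacy hypothesis $\Det(\bS(\omega^\omega))$ suffices to run Van Wesep's argument, which is the point of the footnote: the proof of the classical theorem only uses determinacy of games whose payoff sets are built (via continuous preimages and boolean operations) from members of $\bG$ and $\bGc$, hence lie in $\bS(\omega^\omega)$ by niceness of $\bS$.

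Concretely, I would structure the proof as follows. First I would recall Van Wesep's game: given $A,B\subseteq\omega^\omega$ in $\bGc$ (or the relevant dual configuration), one plays a game in which the two players alternately build reals, and the payoff is determined by membership of the resulting reals in $A$, $B$, and their complements, together with a diagonal ``catch-up'' clause. The key observation is that the payoff set of this game is obtained from $A$ and $B$ by finitely many applications of continuous substitution and boolean operations, so it belongs to $\bool\bS(\omega^\omega)=\bS(\omega^\omega)$, whence it is determined by hypothesis. Second, I would analyze the two cases according to which player wins: a winning strategy for one player yields a $\bGc$-pair $A',B'$ separating $A\setminus B$ and $B\setminus A$ (giving the second separation property for $\bG$), while a winning strategy for the other player yields the same conclusion for $\bGc$. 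This is the standard unwinding and I would not reproduce every line, referring to \cite[Theorem 4.2.2]{van_wesep_thesis} or \cite[Theorem 5.3]{van_wesep_cabal} for the details and only emphasizing that nothing in the argument uses more determinacy than $\Det(\bS(\omega^\omega))$.

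The main obstacle — really the only subtle point — is the bookkeeping that confirms the payoff set of Van Wesep's game lies in $\bS(\omega^\omega)$. One must be careful that the ``diagonal'' or ``copying'' clause in the game (the device that forces a player to eventually commit) contributes only a Borel, in fact open or closed, set to the payoff, so that combining it with the preimages of $A$ and $B$ stays inside $\bS(\omega^\omega)$ thanks to $\Borel(\omega^\omega)\subseteq\bS(\omega^\omega)$, closure of $\bS$ under continuous preimages, and $\bool\bS(\omega^\omega)=\bS(\omega^\omega)$. Once this is granted, the rest is a verbatim transcription of the classical proof, and I would present it compactly, with the case split on the winner of the game as the visible skeleton and a pointer to the literature for the routine verification that each winning strategy produces the desired separating pair in the correct dual class.
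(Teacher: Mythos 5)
Your proposal is correct and matches the paper's treatment: the paper gives no proof of this theorem, simply citing Van Wesep's original argument (\cite[Theorem 4.2.2]{van_wesep_thesis}) and noting in a footnote that, although the original statement assumed $\AD$, the proof localizes to give the level-by-level version because the payoff sets involved stay within $\bS(\omega^\omega)$. Your additional remarks on why the payoff set lies in $\bS(\omega^\omega)$ (Borel sets, continuous preimages, and boolean closure) are exactly the ``straightforward check'' the paper's footnote alludes to.
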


\begin{theorem}[Steel]\label{theorem_separation_steel}
Let $\bS$ be a nice topological pointclass, and assume that $\Det(\bS(\omega^\omega))$ holds. Let $\bG\in\NSDS(\omega^\omega)$. Then at least one of $\bG$ and $\bGc$ has the first separation property.
\end{theorem}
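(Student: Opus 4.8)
The plan is to prove this by contradiction, the way Steel does in \cite{steel}, and then to observe that his argument only ever plays games whose payoff sets lie in $\bS(\omega^\omega)$. Since the statement concerns $\omega^\omega$, no relativization is needed here (the upgrade to arbitrary uncountable zero-dimensional Polish spaces is the separate Theorem~\ref{theorem_separation_generalized}). First I would recall that the non-selfdual Wadge class $\bG$ in $\omega^\omega$ has a $\bG$-complete set $C$, that $\omega^\omega\setminus C$ is then $\bGc$-complete, and that $C\not\leq\omega^\omega\setminus C$ since $\bG\neq\bGc$; equivalently, it suffices to derive a contradiction from the assumption that $C$ is selfdual. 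So suppose that \emph{neither} $\bG$ nor $\bGc$ has the separation property. Using $\Delta(\bGc)=\Delta(\bG)$, fix disjoint $A_0,A_1\in\bG$ admitting no $\Delta(\bG)$-set $D$ with $A_0\subseteq D\subseteq\omega^\omega\setminus A_1$, and dually disjoint $B_0,B_1\in\bGc$ admitting no such $\Delta(\bG)$-separator; since $\bG\in\NSDS(\omega^\omega)$ and $\bS(\omega^\omega)$ is closed under complements, all four of $A_0,A_1,B_0,B_1$ belong to $\bS(\omega^\omega)$.

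The heart of the argument is an auxiliary game $\Ga=\Ga(\omega,S)$ of the type introduced in \S\ref{subsection_preliminaries_determinacy}, in which Player~I and Player~II build interleaved reals $x,y\in\omega^\omega$ along a play $u$, and $S$ is a fixed finite Boolean combination of sets of the form $\{u:x(u)\in A_i\}$ and $\{u:y(u)\in B_j\}$, where $x(u)$ and $y(u)$ are the subsequences of the two players' moves (each depending continuously on $u$). Following Steel, the payoff $S$ is designed so that: a winning strategy for Player~II, read as a continuous function, yields a $\Delta(\bG)$-set separating $A_0$ from $A_1$ (or $B_0$ from $B_1$), contradicting the choice of witnesses; while a winning strategy for Player~I yields a continuous $f$ witnessing $C\leq\omega^\omega\setminus C$, i.e.\ $C$ is selfdual, contradicting non-selfduality of $\bG$. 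Because $S$ is a Boolean combination of continuous preimages of members of $\bS(\omega^\omega)$, we have $S\in\bool\bS(\omega^\omega)=\bS(\omega^\omega)$ as $\bS$ is a nice topological pointclass, so $\Ga$ is determined by $\Det(\bS(\omega^\omega))$. Both outcomes being contradictory, one of $\bG$, $\bGc$ has the separation property.

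The main obstacle is precisely the construction and verification of the payoff $S$: arranging a single game whose two strategy cases deliver, respectively, a genuine $\Delta(\bG)$-separation and the selfduality of $C$ is Steel's key idea, and it uses non-selfduality essentially (both to guarantee $\Delta(\bG)\subsetneq\bG$ and that $C\not\leq\omega^\omega\setminus C$). In practice I would simply cite \cite{steel} and carry out the routine ``level-by-level inspection'' alluded to in the footnote to this theorem, namely checking that every game played in Steel's proof has payoff a finite Boolean combination of continuous preimages of sets in $\bS(\omega^\omega)$, so that his appeals to $\AD$ may be replaced throughout by $\Det(\bS(\omega^\omega))$. An alternative organization would run this inspection inside an induction on Wadge rank, invoking the well-foundedness of $\leq$ on $\bS(\omega^\omega)$ from Theorem~\ref{theorem_well-founded} to reduce at each step to a single such game.
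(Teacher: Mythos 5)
Your proposal matches the paper's treatment: the paper gives no proof of this theorem at all, but simply cites Steel's article together with a footnote observing that, although the original statement assumed $\AD$, ``it is straightforward to check that their proofs actually yield the more precise versions given here'' --- exactly the level-by-level inspection you describe. Your additional sketch of Steel's game argument and the observation that the payoff sets are Boolean combinations of continuous preimages of sets in $\bS(\omega^\omega)$ (hence lie in $\bool\bS(\omega^\omega)=\bS(\omega^\omega)$) is a reasonable elaboration of what that footnote leaves implicit.
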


Before obtaining the desired generalization, we will need two more auxiliary results, which are essentially due to Van Wesep. In fact, Lemmas \ref{lemma_universal_pair} and \ref{lemma_separation_first_vs_second} are ``extracted'' from the proof of \cite[Theorem 4.2.1]{van_wesep_thesis} (see also \cite[Theorem 1]{van_wesep_separation} or \cite[Theorem 5.2]{van_wesep_cabal}).

\begin{lemma}\label{lemma_universal_pair}
Let $\bS$ be a nice topological pointclass, and assume that $\Det(\bS(\omega^\omega))$ holds. Let $Z$ be an uncountable zero-dimensional Polish space, and let $\bG\in\NSDS(\omega^\omega)$. Then there exists $(A_0,A_1)\in\bG(Z\times Z)\times\bG(Z\times Z)$ such that for every $(B_0,B_1)\in\bG(Z)\times\bG(Z)$ there exists $z\in Z$ such that $(A_0)_z=B_0$ and $(A_1)_z=B_1$.
\end{lemma}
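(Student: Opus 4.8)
The plan is to manufacture $A_0$ and $A_1$ out of a single $2^\omega$-universal set for $\bG(Z)$, exploiting $2^\omega\times 2^\omega\approx 2^\omega$ to split one code into two, and then ``pulling back'' the parameter space from $2^\omega$ to $Z$ along a retraction onto a Cantor subset of $Z$. This is essentially the two-parameter version of (the proof of) Lemma \ref{lemma_cantor-universal_implies_self-universal}, and it should require no new ingredients.

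First I would apply Theorem \ref{theorem_wadge_implies_universal} to fix a $2^\omega$-universal set $U$ for $\bG(Z)$; thus $U\in\bG(2^\omega\times Z)$ and $\bG(Z)=\{U_y:y\in 2^\omega\}$, where $U_y=\{x\in Z:(y,x)\in U\}$. Since $Z$ is uncountable, zero-dimensional and Polish, I would then fix a compact subset $C$ of $Z$ with $C\approx 2^\omega$ (such $C$ is automatically closed in $Z$, being compact), a homeomorphism $e:2^\omega\longrightarrow C$, a retraction $\rho:Z\longrightarrow C$ (which exists by zero-dimensionality, exactly as in the proof of Lemma \ref{lemma_separation_relativization}), and a homeomorphism $\varphi:2^\omega\longrightarrow 2^\omega\times 2^\omega$. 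Composing these, the map $\psi=\varphi\circ e^{-1}\circ\rho:Z\longrightarrow 2^\omega\times 2^\omega$ is a continuous surjection; write $\psi=(\psi_0,\psi_1)$ with each $\psi_i:Z\longrightarrow 2^\omega$ continuous.

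Next I would set
$$
A_i=\{(z,x)\in Z\times Z:(\psi_i(z),x)\in U\}\quad\text{for }i\in\{0,1\}.
$$
Since $(z,x)\mapsto(\psi_i(z),x)$ is a continuous function from $Z\times Z$ to $2^\omega\times Z$ and $U\in\bG(2^\omega\times Z)$, the first bullet of Lemma \ref{lemma_relativization_basic} gives $A_i\in\bG(Z\times Z)$, so $(A_0,A_1)$ belongs to the required product. For the universality clause, given $(B_0,B_1)\in\bG(Z)\times\bG(Z)$ I would pick $y_0,y_1\in 2^\omega$ with $U_{y_0}=B_0$ and $U_{y_1}=B_1$, and then, using that $\psi$ is onto $2^\omega\times 2^\omega$, pick $z\in Z$ with $\psi(z)=(y_0,y_1)$, i.e. $\psi_0(z)=y_0$ and $\psi_1(z)=y_1$. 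A direct computation of vertical sections then yields $(A_0)_z=U_{\psi_0(z)}=U_{y_0}=B_0$ and $(A_1)_z=U_{\psi_1(z)}=U_{y_1}=B_1$, as desired.

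The only mildly delicate points are the existence of the retraction $\rho$ onto the compact (hence closed) copy of $2^\omega$ inside $Z$, and the verification that the $A_i$ genuinely lie in $\bG(Z\times Z)$ rather than merely being Wadge-reducible to $U$; the former is the standard retraction-onto-closed-subspace fact for zero-dimensional spaces already invoked in the paper, and the latter is handled verbatim by the continuity clause of Lemma \ref{lemma_relativization_basic}. I do not expect any serious obstacle beyond bookkeeping.
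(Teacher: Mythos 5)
Your proposal is correct, and it follows the same overall strategy as the paper's proof: fix a $2^\omega$-universal set for $\bG(Z)$ via Theorem \ref{theorem_wadge_implies_universal}, use $2^\omega\approx 2^\omega\times 2^\omega$ to pack two codes into one, and transfer the parameter space from $2^\omega$ to $Z$ through a Cantor subset of $Z$. The difference lies in how the transfer step is carried out. The paper first forms the two sets on $2^\omega\times 2^\omega\times Z$, pushes them forward along the embedding $j(x,y,z)=(h(x,y),z)$ onto $K\times Z$ (where $K\approx 2^\omega$ is a compact subset of $Z$), and then invokes Lemma \ref{lemma_relativization_subspace} to extend them to elements of $\bG(Z\times Z)$; the extended sets agree with the intended ones only over parameters in $K$, which suffices because the witnessing $z$ can always be chosen there. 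You instead pull the universal set back along the continuous surjection $\psi=\varphi\circ e^{-1}\circ\rho:Z\longrightarrow 2^\omega\times 2^\omega$, so the sets $A_i$ are defined globally on $Z\times Z$ in one stroke and land in $\bG(Z\times Z)$ by the continuity clause of Lemma \ref{lemma_relativization_basic} alone. Your route trades the extension lemma (which carries determinacy hypotheses) for the existence of a retraction of $Z$ onto a closed copy of $2^\omega$, which is available without determinacy and is exactly the device already used in the proof of Lemma \ref{lemma_separation_relativization}; since Theorem \ref{theorem_wadge_implies_universal} already consumes the determinacy assumption, the gain is one of economy rather than strength, but your version is marginally cleaner in that the sections $(A_i)_z$ are the intended ones for every $z\in Z$, not just for $z\in K$. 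Both delicate points you flag are handled correctly.
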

\begin{proof}
By Theorem \ref{theorem_wadge_implies_universal}, we can fix a $2^\omega$-universal set $S$ for $\bG(Z)$. Make the following definitions:
\begin{itemize}
\item $A'_0=\{(x,y,z)\in 2^\omega\times 2^\omega\times Z:(x,z)\in S\}$,
\item $A'_1=\{(x,y,z)\in 2^\omega\times 2^\omega\times Z:(y,z)\in S\}$.
\end{itemize}
Using Lemma \ref{lemma_relativization_basic}, one sees that $A'_0,A'_1\in\bG(2^\omega\times 2^\omega\times Z)$. Furthermore, it is easy to realize that for every $(B_0,B_1)\in\bG(Z)\times\bG(Z)$ there exists $(x,y)\in 2^\omega\times 2^\omega$ such that $(A'_0)_{(x,y)}=B_0$ and $(A'_1)_{(x,y)}=B_1$.

By \cite[Corollary 6.5]{kechris}, we can fix a subspace $K$ of $Z$ such that $K\approx 2^\omega$. Let $h:2^\omega\times 2^\omega\longrightarrow K$ be a homeomorphism. Define $j:2^\omega\times 2^\omega\times Z\longrightarrow Z\times Z$ by setting $j(x,y,z)=(h(x,y),z)$ for $(x,y,z)\in 2^\omega\times 2^\omega\times Z$, and observe that $j$ is an embedding. By Lemma \ref{lemma_relativization_subspace}, there exist $A_0,A_1\in\bG(Z\times Z)$ such that $A_0\cap (K\times Z)=j[A'_0]$ and $A_1\cap (K\times Z)=j[A'_1]$. It is straightforward to check that $A_0$ and $A_1$ are as desired.
\end{proof}

\begin{lemma}\label{lemma_separation_first_vs_second}
Let $\bS$ be a nice topological pointclass, and assume that $\Det(\bS(\omega^\omega))$ holds. Let $Z$ be an uncountable zero-dimensional Polish space, and let $\bG\in\NSDS(\omega^\omega)$. If $\bG$ has the second separation property then $\bGc(Z)$ does not have the first separation property.
\end{lemma}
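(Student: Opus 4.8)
The plan is to adapt Van Wesep's argument from \cite[Theorem 4.2.1]{van_wesep_thesis}; the two inputs that make it run smoothly here are the universal pair supplied by Lemma \ref{lemma_universal_pair} and the fact, from Lemma \ref{lemma_separation_relativization}, that the second separation property of $\bG$ is inherited by $\bG(W)$ for every zero-dimensional Polish space $W$. Since a Wadge class is continuously closed, $\bGc\in\NSDS(\omega^\omega)$ with $\bGc(\omega^\omega)=\bGc$ by Lemma \ref{lemma_relativization_basic}, and $\bGc(Z)\in\NSD(Z)$; in particular the universal-set machinery (Theorem \ref{theorem_wadge_implies_universal} and Lemma \ref{lemma_cantor-universal_implies_self-universal}) applies to $\bGc(Z)$ as well. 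Assume, toward a contradiction, that $\bGc(Z)$ has the first separation property.

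The first step is to extract from the second separation property a single $\bGc(Z\times Z)$-set that separates, along a fixed coordinate, \emph{every} disjoint pair of $\bGc(Z)$-sets. Fix a universal pair $(A_0,A_1)\in\bG(Z\times Z)\times\bG(Z\times Z)$ for $\bG(Z)$ as in Lemma \ref{lemma_universal_pair}, so that the pairs $((A_0)_z,(A_1)_z)$ realize every element of $\bG(Z)\times\bG(Z)$. Applying the second separation property of $\bG(Z\times Z)$ to $(A_0,A_1)$ yields disjoint $A_0^\ast,A_1^\ast\in\bGc(Z\times Z)$ with $A_0\setminus A_1\subseteq A_0^\ast$ and $A_1\setminus A_0\subseteq A_1^\ast$. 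Given any disjoint $(C_0,C_1)\in\bGc(Z)\times\bGc(Z)$, choosing $z$ with $(A_0)_z=Z\setminus C_0$ and $(A_1)_z=Z\setminus C_1$ one computes $C_0\subseteq(A_1^\ast)_z\subseteq Z\setminus C_1$, so that $(A_1^\ast)_z\in\bGc(Z)$ separates $(C_0,C_1)$.

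The second step is the diagonalization. Using a $Z$-universal set $U$ for $\bGc(Z)$ to enumerate the candidate members of $\Delta(\bG(Z))$, and using the universal pair together with $A_1^\ast$ to enumerate candidate pairs alongside their would-be $\bGc$-separators, one constructs --- after splitting the index via a homeomorphism $Z\approx Z\times Z$ into a ``separator coordinate'' and a ``pair coordinate'' --- a specific disjoint pair $(C_0,C_1)$ of $\bGc(Z)$-sets which, evaluated on the diagonal, defeats every $U_u$ as a separator: for each $u$, either some point of $C_0$ lies outside $U_u$ or some point of $C_1$ lies inside $U_u$. The first separation property of $\bGc(Z)$ would nevertheless provide a separator of $(C_0,C_1)$ in $\Delta(\bG(Z))$, which is the desired contradiction. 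Membership of the constructed sets in $\bGc$ is checked via Lemma \ref{lemma_relativization_basic} (they are pullbacks of $U$ and of $A_1^\ast$ along continuous coding maps), and Lemma \ref{lemma_relativization_subspace} is used when passing between $Z$ and the copy of $2^\omega$ carrying the universal sections.

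The delicate point --- and the one I expect to be the main obstacle --- is ensuring that \emph{both} halves of the diagonalizing pair land in $\bGc(Z)$. A naive diagonal against one universal set produces a complementary pair with one half in $\bG(Z)\setminus\bGc(Z)$, which yields no contradiction; it is exactly the uniform $\bGc$-separation of disjoint pairs obtained in the first step (the set $A_1^\ast$) that must be built into the construction to keep both halves inside $\bGc(Z)$ while still defeating all separators. Arranging the self-reference --- locating, \emph{within} the enumeration used to define $(C_0,C_1)$, the section of $A_1^\ast$ that separates $(C_0,C_1)$ --- is the crux; once the coordinates are split correctly, everything else is routine bookkeeping with universal sets and relativization. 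The full combinatorial details are as in \cite[Theorem 4.2.1]{van_wesep_thesis} (see also \cite[Theorem 1]{van_wesep_separation} or \cite[Theorem 5.2]{van_wesep_cabal}).
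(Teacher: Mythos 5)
Your first step is correct and matches the paper's opening move: fix the universal pair $(A_0,A_1)$ from Lemma \ref{lemma_universal_pair} and apply the second separation property of $\bG(Z\times Z)$ (via Lemma \ref{lemma_separation_relativization}) to get disjoint $A_0^\ast,A_1^\ast\in\bGc(Z\times Z)$ with $A_0\setminus A_1\subseteq A_0^\ast$ and $A_1\setminus A_0\subseteq A_1^\ast$. The second step, however, has a genuine gap, in two respects. First, the property you announce for $(C_0,C_1)$ --- that it defeats \emph{every} section $U_u$ of a $Z$-universal set $U$ for $\bGc(Z)$ as a separator --- is unachievable, and your own first step is what rules it out: you showed there that every disjoint pair in $\bGc(Z)\times\bGc(Z)$ is separated by some $(A_1^\ast)_z\in\bGc(Z)$, and by universality that separator is itself some $U_u$. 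The diagonalization must defeat only the $\Delta(\bG(Z))$-separators; since $\Delta(\bG(Z))$ is selfdual it has no universal set, so this cannot be arranged by enumerating sections of $U$ (you give no way of recognizing which $u$ code $\Delta$ sets), and it has to go through the universal pair instead. Second, the construction you yourself label ``the crux'' is exactly the part that is never carried out, and it is the entire content of the lemma; deferring it to Van Wesep does not close the argument. (As a side issue, the homeomorphism $Z\approx Z\times Z$ you invoke can fail for an uncountable zero-dimensional Polish space --- take $Z$ to be the topological sum of $2^\omega$ and a countably infinite discrete space: the set of non-isolated points of $Z$ is compact, but that of $Z\times Z$ is not.)

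The intended argument is much shorter and needs no second universal set: diagonalize $A_0^\ast$ and $A_1^\ast$ themselves. Set $B_i=\{z\in Z:(z,z)\in A_i^\ast\}$; these are disjoint and lie in $\bGc(Z)$ by Lemma \ref{lemma_relativization_basic}. If some $C\in\Delta(\bG(Z))$ satisfied $B_0\subseteq C\subseteq Z\setminus B_1$, pick $z$ with $(A_0)_z=Z\setminus C$ and $(A_1)_z=C$. If $z\notin C$ then $(z,z)\in A_0\setminus A_1\subseteq A_0^\ast$, so $z\in B_0\subseteq C$; if $z\in C$ then $(z,z)\in A_1\setminus A_0\subseteq A_1^\ast$, so $z\in B_1\subseteq Z\setminus C$. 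Either way $z\in C$ iff $z\notin C$, a contradiction. Your step 1 already has every set needed in hand; the missing idea is simply to take the diagonal sections of the second-separation witnesses rather than to build a new pair against a universal set for $\bGc(Z)$.
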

\begin{proof}
Assume that $\bG$ has the second separation property. By Lemma \ref{lemma_universal_pair}, we can fix $(A_0,A_1)\in\bG(Z\times Z)\times\bG(Z\times Z)$ such that for every $(B_0,B_1)\in\bG(Z)\times\bG(Z)$ there exists $z\in Z$ such that $(A_0)_z=B_0$ and $(A_1)_z=B_1$. Since $\bG(Z\times Z)$ has the second separation property by Lemma \ref{lemma_separation_relativization}, there exist $A'_0,A'_1\in\bGc(Z\times Z)$ such that $A_0\setminus A_1\subseteq A'_0$, $A_1\setminus A_0\subseteq A'_1$, and $A'_0\cap A'_1=\varnothing$. Set $B_0=\{z\in Z:(z,z)\in A'_0\}$ and $B_1=\{z\in Z:(z,z)\in A'_1\}$. Observe that $B_0,B_1\in\bGc(Z)$ by Lemma \ref{lemma_relativization_basic}, and that $B_0\cap B_1=\varnothing$ because $A'_0\cap A'_1=\varnothing$.

Assume, in order to get a contradiction, that $\bGc(Z)$ has the first separation property. Then there exists $C\in\Delta(\bG(Z))$ such that $B_0\subseteq C\subseteq Z\setminus B_1$. Fix $z\in Z$ such that $(A_0)_z=Z\setminus C$ and $(A_1)_z=C$. It is straightforward to check that $z\in C$ iff $z\notin C$, which is a contradiction.
\end{proof}

We are finally in a position to completely solve the ``pair of socks'' problem.

\begin{theorem}\label{theorem_separation_generalized}
Let $\bS$ be a nice topological pointclass, and assume that $\Det(\bS(\omega^\omega))$ holds. Let $Z$ be an uncountable zero-dimensional Polish space. If $\bG\in\NSDS(Z)$ then exactly one of $\bG$ and $\bGc$ has the separation property.
\end{theorem}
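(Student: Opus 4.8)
The plan is to transfer the statement to the known theorems of Steel and Van Wesep on $\omega^\omega$ (Theorems \ref{theorem_separation_steel} and \ref{theorem_separation_van_wesep}) by means of the relativization machinery of \S\ref{subsection_wadge_fundamental_relativization}. First I would invoke Lemma \ref{lemma_relativization_exists_unique} to fix the unique $\bH\in\NSDS(\omega^\omega)$ such that $\bH(Z)=\bG$, and record that $\bHc(Z)=\widecheck{\bH(Z)}=\bGc$ by Lemma \ref{lemma_relativization_basic}. Everything that follows is bookkeeping with this correspondence and with duality.

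For the ``at least one'' half: Theorem \ref{theorem_separation_steel} gives that at least one of $\bH$ and $\bHc$ has the first separation property, and by Lemma \ref{lemma_separation_relativization}(1) this property passes to the corresponding relativization, namely to $\bH(Z)=\bG$ or to $\bHc(Z)=\bGc$. Hence at least one of $\bG$ and $\bGc$ has the separation property.

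For the ``at most one'' half: assume, toward a contradiction, that both $\bG$ and $\bGc$ have the separation property. By Theorem \ref{theorem_separation_van_wesep}, at least one of $\bH$ and $\bHc$ has the second separation property; say $\bH$ does, the other case being entirely symmetric after replacing $\bH$ by $\bHc$ and using $\widecheck{\bHc}=\bH$. Then Lemma \ref{lemma_separation_first_vs_second} applied to $\bH$ yields that $\bHc(Z)=\bGc$ does \emph{not} have the first separation property, contradicting our assumption. In the symmetric case, the same lemma applied to $\bHc$ shows that $\widecheck{\bHc}(Z)=\bH(Z)=\bG$ fails the first separation property, again a contradiction.

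I do not expect any serious obstacle here: the genuinely hard work is already packaged in the cited results, above all Lemma \ref{lemma_separation_first_vs_second} (which itself rests on the universal pair produced in Lemma \ref{lemma_universal_pair}) and the classical Theorems \ref{theorem_separation_steel} and \ref{theorem_separation_van_wesep}. The only point requiring a modicum of care is keeping the dualities straight --- making sure that ``$\bH$ has the second separation property'' is matched with ``$\bHc(Z)=\bGc$'' in Lemma \ref{lemma_separation_first_vs_second}, and that the symmetric case genuinely reduces to the same lemma applied to $\bHc$.
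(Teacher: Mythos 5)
Your proposal is correct and follows essentially the same route as the paper: the ``at least one'' half via Theorem \ref{theorem_separation_steel} and Lemma \ref{lemma_separation_relativization}, and the ``at most one'' half via Theorem \ref{theorem_separation_van_wesep} and Lemma \ref{lemma_separation_first_vs_second}, with the dualities tracked correctly. The only cosmetic difference is that you invoke Lemma \ref{lemma_relativization_exists_unique} to produce the class $\bH$ on $\omega^\omega$, whereas the paper appeals to Theorem \ref{theorem_relativization_uncountable}; both are fine.
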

\begin{proof}
Let $\bG(Z)\in\NSDS(Z)$, where $\bG\in\NSDS(\omega^\omega)$. It follows immediately from Lemma \ref{lemma_separation_first_vs_second} and Theorem \ref{theorem_separation_van_wesep} that at most one of $\bG(Z)$ and $\bGc(Z)$ has the separation property. On the other hand, by Lemma \ref{lemma_separation_relativization} and Theorem \ref{theorem_separation_steel}, at least one of $\bG(Z)$ and $\bGc(Z)$ has the separation property.
\end{proof}

As a first application, we will show that expansions preserve the separation property. This result will be useful in the proof of Corollary \ref{corollary_meager_semifilter}. Other applications of Theorem \ref{theorem_separation_generalized} will be given in the proofs of Corollaries \ref{corollary_baire_semifilter} and \ref{corollary_existence_negative_baire}.

\begin{lemma}\label{lemma_expansion_separation}
Let $\bS$ be a nice topological pointclass, and assume that $\Det(\bS(\omega^\omega))$ holds. Let $Z$ be an uncountable zero-dimensional Polish space, and let $\xi<\omega_1$. If $\bG\in\NSDS(Z)$ has the separation property then $\bG^{(\xi)}$ has the separation property.
\end{lemma}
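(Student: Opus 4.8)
The plan is to deduce the separation property of $\bG^{(\xi)}$ from that of $\bG$, by using the Kuratowski-type transfer result Lemma~\ref{lemma_expansion_bijection} to replace the two $\bS^0_{1+\xi}$-measurable preimage presentations of a given disjoint pair in $\bG^{(\xi)}$ by a single $\bS^0_{1+\xi}$-measurable \emph{bijection} that simplifies both sets at once.

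First I would invoke Theorem~\ref{theorem_relativization_uncountable} to write $\bG=\bL(Z)$ for some $\bL\in\NSDS(\omega^\omega)$, and check that $\bL$ itself has the separation property. Indeed, by Theorem~\ref{theorem_separation_steel} at least one of $\bL$ and $\bLc$ has it; if $\bLc$ did, then $\bLc(Z)=\bGc$ (using Lemma~\ref{lemma_relativization_basic}) would have it by Lemma~\ref{lemma_separation_relativization}, contradicting Theorem~\ref{theorem_separation_generalized} together with the hypothesis that $\bG$ has the separation property. Hence $\bL$ has the separation property, and therefore so does $\bL(W)$ for every zero-dimensional Polish space $W$, again by Lemma~\ref{lemma_separation_relativization}.

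Now fix disjoint $A,B\in\bG^{(\xi)}=\bL(Z)^{(\xi)}$. Applying Lemma~\ref{lemma_expansion_bijection} with $\bG_0=\bL$ and $\Aa_0=\{A,B\}$ produces a zero-dimensional Polish space $W$ and a $\bS^0_{1+\xi}$-measurable bijection $f:Z\longrightarrow W$ with $f[A],f[B]\in\bL(W)$. Since $f$ is injective, $f[A]\cap f[B]=\varnothing$, so by the separation property of $\bL(W)$ there is $C'\in\Delta(\bL(W))$ with $f[A]\subseteq C'\subseteq W\setminus f[B]$. Set $C=f^{-1}[C']$. By Lemma~\ref{lemma_expansion_relativization_measurable_function}, both $C=f^{-1}[C']$ and $Z\setminus C=f^{-1}[W\setminus C']$ belong to $\bL^{(\xi)}(Z)$, which equals $\bL(Z)^{(\xi)}=\bG^{(\xi)}$ by Lemma~\ref{lemma_expansion_relativization_move_xi}; hence $C\in\Delta(\bG^{(\xi)})$. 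Finally, $A\subseteq f^{-1}[f[A]]\subseteq f^{-1}[C']=C$ and $C=f^{-1}[C']\subseteq f^{-1}[W\setminus f[B]]=Z\setminus f^{-1}[f[B]]\subseteq Z\setminus B$, so $A\subseteq C\subseteq Z\setminus B$, as required.

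The step I expect to be the main obstacle is not technically deep but demands care: one must transfer the separation property from $\bG=\bL(Z)$ back to $\bL$ (Lemma~\ref{lemma_separation_relativization} only runs in the direction $\omega^\omega\to Z$, so the ``exactly one of $\bG,\bGc$'' dichotomy of Theorem~\ref{theorem_separation_generalized} is genuinely needed), and one must keep the relativization identities for expansions straight — in particular $\bL(Z)^{(\xi)}=\bL^{(\xi)}(Z)$ and the behaviour of $f^{-1}$ under $\bS^0_{1+\xi}$-measurability — so that both $C$ and $Z\setminus C$ are seen to lie in $\bG^{(\xi)}$.
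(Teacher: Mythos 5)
Your proposal is correct and follows essentially the same route as the paper's proof: transfer the separation property back to the underlying class in $\omega^\omega$ via Lemma~\ref{lemma_separation_relativization} and Theorem~\ref{theorem_separation_generalized}, apply Lemma~\ref{lemma_expansion_bijection} to a disjoint pair in $\bG^{(\xi)}$, separate the images in $W$, and pull the separating set back using Lemmas~\ref{lemma_expansion_relativization_measurable_function} and~\ref{lemma_expansion_relativization_move_xi}. The only differences are cosmetic (notation and the order in which the transfer step is performed).
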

\begin{proof}
Let $\bG(Z)\in\NSDS(Z)$, where $\bG\in\NSDS(\omega^\omega)$, and assume that $\bG(Z)$ has the separation property. Pick disjoint $A,B\in\bG(Z)^{(\xi)}$. By Lemma \ref{lemma_expansion_bijection}, we can fix a zero-dimensional Polish space $W$ and a $\bS^0_{1+\xi}$-measurable bijection $f:Z\longrightarrow W$ such that $f[A],f[B]\in\bG(W)$. Observe that $\bG$ must have the separation property, otherwise Lemma \ref{lemma_separation_relativization} and Theorem \ref{theorem_separation_generalized} would easily yield a contradiction. Therefore $\bG(W)$ has the separation property by Lemma \ref{lemma_separation_relativization}. This means that we can find $C\in\Delta(\bG(W))$ such that $f[A]\subseteq C\subseteq W\setminus f[B]$. Notice that $f^{-1}[C]\in\Delta\left(\bG(Z)^{(\xi)}\right)$ by Lemmas \ref{lemma_expansion_relativization_measurable_function} and \ref{lemma_expansion_relativization_move_xi}. Since clearly $A\subseteq f^{-1}[C]\subseteq Z\setminus B$, this concludes the proof.
\end{proof}

\subsection{Separated differences}\label{subsection_wadge_fundamental_sd}

The following notion was essentially introduced in \cite{louveau_article}, but (as in \cite{carroy_medini_muller_constructing}) we will follow the simplified approach given in \cite{louveau_book}.

\begin{definition}[Louveau]\label{definition_sd}
Let $Z$ be a space, let $1\leq\eta<\omega_1$, let $U_{\mu,n},A_{\mu,n}\subseteq Z$ for $\mu<\eta$ and $n\in\omega$, and let $A^\ast\subseteq Z$. Define
\begin{multline}
\SD_\eta((U_{\mu,n}:\mu<\eta,n\in\omega),(A_{\mu,n}:\mu<\eta,n\in\omega),A^\ast)=\\\nonumber=\bigcup_{\substack{\mu<\eta\\n\in\omega}}\left(A_{\mu,n}\cap U_{\mu,n}\setminus\bigcup_{\substack{\mu'<\mu\\m\in\omega}}U_{\mu',m}\right)\cup \left(A^\ast\setminus\bigcup_{\substack{\mu<\eta\\n\in\omega}}U_{\mu,n}\right).
\end{multline}
Given $\bD,\bG^\ast\subseteq\PP(Z)$, define $\SD_\eta(\bD,\bG^\ast)$ as the collection of all sets in the above form, where each $U_{\mu,n}\in\bS^0_1(Z)$ and $U_{\mu,m}\cap U_{\mu,n}=\varnothing$ whenever $m\neq n$, each $A_{\mu,n}\in\bD$, and $A^\ast\in\bG^\ast$. Sets in this form are known as \emph{separated differences}.
\end{definition}

Next, we state two simple but useful results (see \cite[Lemmas 19.2 and 19.3]{carroy_medini_muller_constructing} respectively). We remark that Lemma \ref{lemma_sd_differences} gives the simplest examples of Wadge classes that can be obtained as separated differences.

\begin{lemma}\label{lemma_sd_check}
Let $Z$ be a space, let $1\leq\eta<\omega_1$, and let $\bD,\bG\subseteq\PP(Z)$. Then
$$
\widecheck{\SD}_\eta(\bD,\bG)=\SD_\eta(\widecheck{\bD},\widecheck{\bG}).
$$
\end{lemma}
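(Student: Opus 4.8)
The plan is to compute the complement of a single separated difference explicitly and observe that it is again a separated difference, obtained by complementing all the ``payoff'' sets $A_{\mu,n}$ and $A^\ast$ (while keeping the open sets $U_{\mu,n}$ untouched).

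First I would fix notation for data $(U_{\mu,n}:\mu<\eta,n\in\omega)$, $(A_{\mu,n}:\mu<\eta,n\in\omega)$ and $A^\ast$ as in Definition \ref{definition_sd}: set $V_\mu=\bigcup_{n\in\omega}U_{\mu,n}$, $W_\mu=\bigcup_{\mu'<\mu}V_{\mu'}$, $S_{\mu,n}=U_{\mu,n}\setminus W_\mu$, and $R=Z\setminus\bigcup_{\mu<\eta}V_\mu$, so that
\[
\SD_\eta\bigl((U_{\mu,n}),(A_{\mu,n}),A^\ast\bigr)=\bigcup_{\mu<\eta,\,n\in\omega}\bigl(A_{\mu,n}\cap S_{\mu,n}\bigr)\cup\bigl(A^\ast\cap R\bigr).
\]
The crucial (and essentially only) point is that $\{S_{\mu,n}:\mu<\eta,\,n\in\omega\}\cup\{R\}$ is a partition of $Z$. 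Indeed, if $x\notin\bigcup_{\mu<\eta}V_\mu$ then $x\in R$ and $x\notin S_{\mu,n}$ for all $\mu,n$; otherwise let $\mu$ be least with $x\in V_\mu$, so $x\notin W_\mu$, and then $x\in U_{\mu,n}$ for a unique $n$ (uniqueness is exactly where the hypothesis $U_{\mu,m}\cap U_{\mu,n}=\varnothing$ for $m\neq n$ is used), whence $x\in S_{\mu,n}$; moreover $x\notin S_{\mu',n'}$ for every $\mu'\neq\mu$, since for $\mu'>\mu$ we have $x\in V_\mu\subseteq W_{\mu'}$, while for $\mu'<\mu$ any point of $S_{\mu',n'}$ lies in $V_{\mu'}\subseteq W_\mu$, contradicting $x\notin W_\mu$.

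Granting the partition, complementation becomes pointwise: on each block $S_{\mu,n}$ the set $\SD_\eta(\ldots)$ agrees with $A_{\mu,n}$, and on $R$ it agrees with $A^\ast$, so one obtains the set identity
\[
Z\setminus\SD_\eta\bigl((U_{\mu,n}),(A_{\mu,n}),A^\ast\bigr)=\SD_\eta\bigl((U_{\mu,n}),(Z\setminus A_{\mu,n}),Z\setminus A^\ast\bigr),
\]
and the data on the right still meets the requirements of Definition \ref{definition_sd} (the $U_{\mu,n}\in\bS^0_1(Z)$ and their pairwise disjointness in $n$ are unchanged, $Z\setminus A_{\mu,n}\in\widecheck{\bD}$ whenever $A_{\mu,n}\in\bD$, and $Z\setminus A^\ast\in\widecheck{\bG}$ whenever $A^\ast\in\bG$). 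Feeding this identity into the definitions gives both inclusions simultaneously: $B\in\widecheck{\SD}_\eta(\bD,\bG)$ iff $Z\setminus B\in\SD_\eta(\bD,\bG)$ iff $Z\setminus B=\SD_\eta((U_{\mu,n}),(A_{\mu,n}),A^\ast)$ for some admissible data with $A_{\mu,n}\in\bD$ and $A^\ast\in\bG$, iff (by the identity) $B=\SD_\eta((U_{\mu,n}),(Z\setminus A_{\mu,n}),Z\setminus A^\ast)$ for such data, and as $(A_{\mu,n},A^\ast)$ ranges over admissible tuples from $(\bD,\bG)$ the tuple $(Z\setminus A_{\mu,n},Z\setminus A^\ast)$ ranges over exactly the admissible tuples from $(\widecheck{\bD},\widecheck{\bG})$, so this says $B\in\SD_\eta(\widecheck{\bD},\widecheck{\bG})$. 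I expect the verification of the partition claim to be the only delicate step — placing the disjointness hypothesis and the well-ordering of $\eta$ exactly where they are needed — while the rest is routine pointwise bookkeeping, with no topology or determinacy involved.
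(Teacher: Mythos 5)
Your argument is correct: the observation that the sets $S_{\mu,n}=U_{\mu,n}\setminus\bigcup_{\mu'<\mu,m}U_{\mu',m}$ together with $R=Z\setminus\bigcup_{\mu,n}U_{\mu,n}$ partition $Z$ is exactly the right point, and once the separated difference is rewritten as $\bigcup(A_{\mu,n}\cap S_{\mu,n})\cup(A^\ast\cap R)$ the complementation identity and both inclusions follow as you describe. The paper itself defers the proof to \cite[Lemma 19.2]{carroy_medini_muller_constructing}, and your computation is the standard one given there, so there is nothing further to add.
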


\begin{lemma}\label{lemma_sd_differences}
Let $Z$ be a space, let $1\leq\eta<\omega_1$, and let $\bD=\{\varnothing\}\cup\{Z\}$. Then
$$
\SD_\eta(\bD,\{\varnothing\})=\Diff_\eta(\bS^0_1(Z))\text{ and }\SD_\eta(\bD,\{Z\})=\widecheck{\Diff}_\eta(\bS^0_1(Z)).
$$
\end{lemma}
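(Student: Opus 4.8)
The plan is to prove the first identity $\SD_\eta(\bD,\{\varnothing\})=\Diff_\eta(\bS^0_1(Z))$ directly from the definitions, and then to read off the second one from Lemma \ref{lemma_sd_check}. Indeed, since $\widecheck{\bD}=\{Z\setminus\varnothing,Z\setminus Z\}=\{Z,\varnothing\}=\bD$ and $\widecheck{\{\varnothing\}}=\{Z\}$, Lemma \ref{lemma_sd_check} gives $\SD_\eta(\bD,\{Z\})=\SD_\eta(\widecheck{\bD},\widecheck{\{\varnothing\}})=\widecheck{\SD}_\eta(\bD,\{\varnothing\})=\widecheck{\Diff}_\eta(\bS^0_1(Z))$.

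First I would put the elements of $\SD_\eta(\bD,\{\varnothing\})$ into a convenient normal form. Since $\bD=\{\varnothing,Z\}$, each $A_{\mu,n}\cap U_{\mu,n}$ is either $U_{\mu,n}$ or $\varnothing$, and $A^\ast=\varnothing$. Setting $V_\mu=\bigcup\{U_{\mu,n}:A_{\mu,n}=Z\}$ and $W_\mu=\bigcup_{n}U_{\mu,n}$, the disjointness of the $U_{\mu,n}$ (for a fixed $\mu$) makes $V_\mu$, $W_\mu$ and $W_\mu\setminus V_\mu$ all open, i.e.\ $V_\mu$ is relatively clopen in $W_\mu$; and a direct computation shows that the corresponding separated difference equals $\bigcup_{\mu<\eta}\big(V_\mu\setminus\bigcup_{\mu'<\mu}W_{\mu'}\big)$. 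Conversely, any pair of open sets $V_\mu\subseteq W_\mu$ with $W_\mu\setminus V_\mu$ open arises this way, via $U_{\mu,0}=V_\mu$, $U_{\mu,1}=W_\mu\setminus V_\mu$, $U_{\mu,n}=\varnothing$ for $n\geq 2$ (and $A_{\mu,0}=Z$, $A_{\mu,n}=\varnothing$ otherwise). So $\SD_\eta(\bD,\{\varnothing\})$ is precisely the class of sets $\bigcup_{\mu<\eta}\big(V_\mu\setminus\bigcup_{\mu'<\mu}W_{\mu'}\big)$ with $V_\mu\subseteq W_\mu$ open and $W_\mu\setminus V_\mu$ open, for $\mu<\eta$. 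The inclusion $\Diff_\eta(\bS^0_1(Z))\subseteq\SD_\eta(\bD,\{\varnothing\})$ is then immediate: given open sets $B_\zeta$ ($\zeta<\eta$), set $W_\mu=B_\mu$ and let $V_\mu$ be $B_\mu$ or $\varnothing$ according to whether $\mu$ does or does not have the parity selected in Definition \ref{definition_differences} (even if $\eta$ is odd, odd if $\eta$ is even); then $W_\mu\setminus V_\mu\in\{\varnothing,W_\mu\}$ is open and the resulting normal-form set equals $\Diff_\eta(B_\zeta:\zeta<\eta)$.

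For the reverse inclusion, start from a normal-form set $A=\bigcup_{\mu<\eta}\big(V_\mu\setminus\bigcup_{\mu'<\mu}W_{\mu'}\big)$ and define, for $\zeta<\eta$,
\[
B_\zeta=\Big(\bigcup_{\mu'<\zeta}W_{\mu'}\Big)\cup C_\zeta,\qquad C_\zeta=\begin{cases}V_\zeta & \text{if }\zeta\text{ has the selected parity,}\\ W_\zeta\setminus V_\zeta & \text{otherwise.}\end{cases}
\]
Each $B_\zeta$ is open (this is precisely where relative clopenness of $V_\zeta$ in $W_\zeta$ is used, in the second case) and the $B_\zeta$ increase with $\zeta$. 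To verify $\Diff_\eta(B_\zeta:\zeta<\eta)=A$, I would use the ``rank'' $\mu(x)=\min\{\mu<\eta:x\in W_\mu\}$, defined exactly for $x\in\bigcup_\mu W_\mu$, observing first that $x\in A$ iff $x\in V_{\mu(x)}$. A case analysis according to the parity of $\mu(x)$ and whether $x\in V_{\mu(x)}$ then shows that $\min\{\zeta<\eta:x\in B_\zeta\}$ equals $\mu(x)$ or $\mu(x)+1$ and has the selected parity precisely when $x\in V_{\mu(x)}$; since points outside $\bigcup_\mu W_\mu$ lie in no $B_\zeta$ and are not in $A$, this yields $\Diff_\eta(B_\zeta:\zeta<\eta)=A$.

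The step requiring the most care is this final verification, in two respects. One is the ordinal bookkeeping: the ``bumped'' rank $\mu(x)+1$ must stay below $\eta$ whenever it is invoked, which rests on the observation that if $\eta$ is a successor then its top level $\eta-1$ automatically carries the parity selected by $\Diff_\eta$, so the bump is only ever needed at non-top levels. The other is that the relative-clopenness condition — the one nontrivial constraint inherited from the disjointness requirement in the definition of $\SD_\eta$ — is exactly what keeps every $B_\zeta$ open, hence what makes the construction land in $\Diff_\eta(\bS^0_1(Z))$ rather than in a strictly larger class.
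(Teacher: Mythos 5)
Your argument is correct. Note that the paper itself contains no proof of Lemma \ref{lemma_sd_differences}: it is quoted from \cite[Lemma 19.3]{carroy_medini_muller_constructing}, so there is nothing in this text to compare against; your direct verification is the natural one. The normal form you extract for $\SD_\eta(\bD,\{\varnothing\})$ with $\bD=\{\varnothing,Z\}$ is exactly right (the within-level disjointness of the $U_{\mu,n}$ is indeed what makes $W_\mu\setminus V_\mu$ a union of the $\varnothing$-labelled pieces, hence open), both inclusions go through, and the reduction of the second identity to the first via Lemma \ref{lemma_sd_check} and the selfduality of $\bD$ is immediate. One small point of phrasing: in the subcase where $\mu(x)$ has the selected parity, $x\notin V_{\mu(x)}$, and $\mu(x)$ is the top level of a successor $\eta$, the bump \emph{is} invoked but overflows, so $\min\{\zeta<\eta:x\in B_\zeta\}$ simply fails to exist; this is harmless, since then $x$ lies in no $B_\zeta$ and hence not in $\Diff_\eta(B_\zeta:\zeta<\eta)$, which is what is wanted as $x\notin A$. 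The only place where existence of the minimum with the selected parity is genuinely required is when $x\in V_{\mu(x)}$, and there your observation that the top level of a successor $\eta$ always carries the selected parity does close the gap. With that reading, the proof is complete.
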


Separated differences are important because they make it possible to give concrete descriptions of the non-selfdual Wadge classes of level $0$. This is made precise by the following result, which is essentially due to Louveau (it can be easily derived using \cite[Proposition 19.4 and Theorem 22.2]{carroy_medini_muller_constructing}). See also the first paragraph of \S\ref{subsection_wadge_type_su} to clarify the meaning of $\bD$.

\begin{theorem}\label{theorem_sd_main}
Let $\bS$ be a nice topological pointclass, and assume that $\Det(\bS(\omega^\omega))$ holds. Let $Z$ be an uncountable zero-dimensional Polish space, and let $\bG\subseteq\bS(Z)$. Then the following conditions are equivalent:
\begin{itemize}
\item $\bG\in\NSD(Z)$ and $\ell(\bG)=0$,
\item There exist $1\leq\eta<\omega_1$, $\bG_n\in\NSD(Z)$ for $n\in\omega$ and $\bG^\ast\in\NSD(Z)$ such that each $\ell(\bG_n)\geq 1$, $\bG^\ast\subseteq\bD$ and $\bG=\SD_\eta(\bD,\bG^\ast)$, where $\bD=\bigcup_{n\in\omega}(\bG_n\cup\bGc_n)$.
\end{itemize}
\end{theorem}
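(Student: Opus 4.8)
The plan is to transfer the equivalence to the setting of Hausdorff classes, where the genuine content is the combinatorial decomposition carried out in \cite[Proposition 19.4]{carroy_medini_muller_constructing}, using Theorem \ref{theorem_van_wesep_hausdorff} (that is, \cite[Theorem 22.2]{carroy_medini_muller_constructing}) as the dictionary between non-selfdual Wadge classes and Hausdorff classes $\bG_D(Z)$.

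First I would treat the implication from the first condition to the second. Assuming $\bG\in\NSD(Z)$ with $\ell(\bG)=0$, since $\bG\subseteq\bS(Z)$ Theorem \ref{theorem_van_wesep_hausdorff} produces $D\subseteq\PP(\omega)$ with $\bG=\bG_D(Z)$. I would then apply \cite[Proposition 19.4]{carroy_medini_muller_constructing} to this $D$ and to the hypothesis $\ell(\bG)=0$: it returns an ordinal $1\leq\eta<\omega_1$ and a decomposition of $D$ presenting $\bG$ as a separated difference $\SD_\eta(\bD,\bG^\ast)$, where the sets $A_{\mu,n}$ range over classes $\bG_n$ and the exceptional set $A^\ast$ over $\bG^\ast$, all of which are again Hausdorff classes. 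It then remains to verify the bookkeeping in condition $(2)$: that the $\bG_n$ and $\bG^\ast$ are non-selfdual Wadge classes (being Hausdorff classes that sit inside $\Delta(\bG)\subseteq\bS(Z)$, this follows from Theorem \ref{theorem_van_wesep_hausdorff}), that $\ell(\bG_n)\geq 1$, and that $\bG^\ast\subseteq\bD$; the last two are recorded by \cite[Proposition 19.4]{carroy_medini_muller_constructing}. The one point requiring care is that \cite[Proposition 19.4]{carroy_medini_muller_constructing} is stated using the expansions $\bG^{(\xi)}_D$ of Definition \ref{definition_hausdorff_expansions} rather than the official level; their agreement is exactly Lemma \ref{lemma_hausdorff_expansions} together with Theorem \ref{theorem_level_sup_hausdorff}.

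For the reverse implication, assume $\eta$, the $\bG_n$, and $\bG^\ast$ are given as in $(2)$. To obtain $\bG\in\NSD(Z)$ I would argue as for Corollary \ref{corollary_new_from_old}: since $\eta<\omega_1$ is countable, re-index the families $(U_{\mu,n})$ and $(A_{\mu,n})$ over $\omega$, so that the operation sending a sequence of open sets $U_{\mu,n}$ (subject to $U_{\mu,m}\cap U_{\mu,n}=\varnothing$ for $m\neq n$) together with a sequence of ``ingredient'' sets to $\SD_\eta((U_{\mu,n}),(A_{\mu,n}),A^\ast)$ becomes exactly of the form handled in Theorem \ref{theorem_new_from_old}, the nested subtractions $\setminus\bigcup_{\mu'<\mu,m}U_{\mu',m}$ being absorbed into the fixed datum $D\subseteq\PP(\omega)$. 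Using the $2^\omega$-universal sets for the $\bG_n$ and $\bG^\ast$ provided by Theorem \ref{theorem_wadge_implies_universal}, and Theorem \ref{theorem_reduction} to enforce the disjointness, one builds a $2^\omega$-universal set for $\bG$ just as in the proof of Theorem \ref{theorem_new_from_old}, and Theorem \ref{theorem_universal_implies_wadge} then gives $\bG\in\NSD(Z)$. It remains to see $\ell(\bG)=0$: Lemma \ref{lemma_sd_differences} (using that $\varnothing,Z\in\bG_n$, so $\{\varnothing\}\cup\{Z\}\subseteq\bD$) shows $\bS^0_1(Z)\subseteq\bG$ or $\bP^0_1(Z)\subseteq\bG$, which already excludes the degenerate cases, while the fact that the separated-difference shape forces $\ell(\bG)\not\geq 1$ is once more read off from \cite[Proposition 19.4]{carroy_medini_muller_constructing} (and is genuinely needed there, since containing $\bS^0_1(Z)$ alone does not pin the level down, as $\bS^0_2(Z)$ shows).

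I expect the real obstacle to be precisely this last point: showing that the specific shape of a separated difference forces the level to be $0$. This is where Louveau's fine structure theory is indispensable and cannot be replaced by the softer universal-set arguments used for the non-selfduality claim; everything else is a matter of matching hypotheses and carefully re-indexing $\eta\times\omega$ so that Theorem \ref{theorem_new_from_old} applies verbatim.
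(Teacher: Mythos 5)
Your proposal is essentially the paper's own argument: the paper gives no proof beyond the remark that the theorem ``can be easily derived using [Proposition 19.4 and Theorem 22.2]'' of \cite{carroy_medini_muller_constructing}, and your forward direction (Theorem \ref{theorem_van_wesep_hausdorff} to pass to a Hausdorff class, then the decomposition of Proposition 19.4, with Lemma \ref{lemma_hausdorff_expansions} and Theorem \ref{theorem_level_sup_hausdorff} reconciling the two notions of level) is exactly that derivation. The one place where you substitute your own argument for the citation is the non-selfduality of $\SD_\eta(\bD,\bG^\ast)$ in the reverse direction, via Theorems \ref{theorem_new_from_old} and \ref{theorem_universal_implies_wadge}; this works, but not quite ``verbatim'': Definition \ref{definition_sd} only requires $U_{\mu,m}\cap U_{\mu,n}=\varnothing$ within each row $\mu$, whereas sets from different rows may overlap (the nested subtractions are what separate them), so after re-indexing $\eta\times\omega$ over $\omega$ you cannot impose the global pairwise disjointness that Theorem \ref{theorem_new_from_old} demands without shrinking the class. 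The fix is routine --- run the proof of Theorem \ref{theorem_new_from_old} with countably many independent families of open sets, applying Theorem \ref{theorem_reduction} once per row --- but it is a genuine modification, not an instance of the stated theorem. The remaining claims (that the component classes are non-selfdual Wadge classes, that $\ell(\bG)=0$, i.e.\ that $\bG$ is neither degenerate nor of level $\geq 1$) are correctly reduced to Lemma \ref{lemma_sd_differences} and to the cited Proposition 19.4, which is where the paper places that content as well.
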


\section{Wadge theory: type}\label{section_wadge_type}

\subsection{Definition and basic properties}\label{subsection_wadge_type_basics}

Given a partial order $(\PPP,\preccurlyeq)$ and $p\in\PPP$, recall the following definitions:
\begin{itemize}
\item $p$ is \emph{minimal} if there exists no $q\in\PPP$ such that $q\prec p$,
\item $p$ is a \emph{successor} if there exists $q\in\PPP$ such that $q\prec p$ and there exists  no $r\in\PPP$ such that $q\prec r\prec p$,
\item $p$ is a \emph{countable limit} if $p$ is not a successor and there exist $q_n\in\PPP$ for $n\in\omega$ such that $q_0\prec q_1\prec\cdots\prec p$ and for every $r\prec p$ there exists $n\in\omega$ such that $r\prec q_n$.
\end{itemize}

The following notion is essentially due to Louveau (see \cite[Definition 1.7]{louveau_article} and the remark that precedes it), although his ``official'' definition is of a (rather complicated) recursive nature, and it is limited to the Borel context. This is one of the fundamental notions needed to state our classification results.\footnote{\,It would have been more elegant to simply define $t(\bG)$ as the cofinality of $\bG$ in $\NSDxi(Z)$. However, we preferred to remain consistent with the original definition.}

\begin{definition}\label{definition_type}
Let $Z$ be a space, and let $\xi\leq\omega_1$. Define
$$
\NSDxi(Z)=\{\bG\in\NSD(Z):\ell(\bG)\geq\xi\}.
$$
Given a topological pointclass $\bS$, also set $\NSDxiS(Z)=\{\bG\in\NSDS(Z):\ell(\bG)\geq\xi\}$. Given $\bG\in\NSD(Z)$ such that $\ell(\bG)=\xi$, view $\NSDxi(Z)$ as a partial order under $\subseteq$, then define the \emph{type} of $\bG$ as
$$
\left.
\begin{array}{lcl}
& & t(\bG)=\left\{
\begin{array}{ll}
0 & \text{if }\bG\text{ is minimal in $\NSDxi(Z)$,}\\
1 & \text{if }\bG\text{ is a successor in $\NSDxi(Z)$,}\\
2 & \text{if }\bG\text{ is a countable limit in $\NSDxi(Z)$,}\\
3 & \text{otherwise.}\\
\end{array}
\right.
\end{array}
\right.
$$
\end{definition}

Notice that Theorem \ref{theorem_every_class_has_a_level} will guarantee that the type is defined under suitable determinacy assumptions. The following proposition (whose trivial proof is left to the reader) collects some basic properties of the type.

\begin{proposition}\label{proposition_type_basic}
Let $Z$ be a space, and let $\bG\in\NSD(Z)$. 
\begin{itemize}
\item $t(\bG)=0$ iff $\bG=\{\varnothing\}$ or $\bG=\{Z\}$,
\item $t(\bG)=t(\bGc)$,
\item If $\ell(\bG)=0$ then
$$
\left.
\begin{array}{lcl}
& & t(\bG)=\left\{
\begin{array}{ll}
1 & \text{if }\bG\text{ is a successor in $\NSD(Z)$,}\\
2 & \text{if }\bG\text{ is a countable limit in $\NSD(Z)$,}\\
3 & \text{otherwise.}\\
\end{array}
\right.
\end{array}
\right.
$$
\end{itemize}
\end{proposition}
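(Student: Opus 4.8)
The plan is to dispatch the three bullets one at a time; each reduces to unwinding Definitions \ref{definition_wadge_class} and \ref{definition_type} together with the elementary facts about level recorded right after Definition \ref{definition_level} (notably $\ell(\{\varnothing\})=\ell(\{Z\})=\omega_1$, $\ell(\bL)\geq 0$ for every Wadge class $\bL$, and $\ell(\bG)=\ell(\bGc)$), so I do not expect any substantive difficulty.

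For the first bullet, first note that $\ell(\{\varnothing\})=\ell(\{Z\})=\omega_1$, so when $\bG\in\{\{\varnothing\},\{Z\}\}$ the relevant partial order is $\NSD^{(\omega_1)}(Z)$; since the only proper subcollection of $\{\varnothing\}$ (respectively $\{Z\}$) is the empty collection, which is not a Wadge class because $A\in A\wc$ for every $A\subseteq Z$, such a $\bG$ is $\subseteq$-minimal there, i.e.\ $t(\bG)=0$. For the converse, suppose $\bG=A\wc$ with $A\notin\{\varnothing,Z\}$. Constant maps show $\varnothing\in\bG$ (send $Z$ into $Z\setminus A$) and $Z\in\bG$ (send $Z$ into $A$), so $\{\varnothing\}$ and $\{Z\}$ are proper subclasses of $\bG$; since $\ell(\{\varnothing\})=\ell(\{Z\})=\omega_1\geq\ell(\bG)$, both lie in $\NSD^{(\ell(\bG))}(Z)$, so $\bG$ is not minimal and $t(\bG)\neq 0$. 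As $A\in A\wc=\bG$ rules out $\bG\in\{\{\varnothing\},\{Z\}\}$ in this case, we conclude that $t(\bG)=0$ holds precisely when $\bG$ is $\{\varnothing\}$ or $\{Z\}$.

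For the second bullet, the point is that the complement operation $\bG\mapsto\bGc$ is an involution of $(\NSD(Z),\subseteq)$: it maps $\NSD(Z)$ onto itself (if $\bG=A\wc$ then $\bGc=(Z\setminus A)\wc$), it is order-preserving (from $\bG\subseteq\bL$ and $C\in\bGc$ one gets $Z\setminus C\in\bL$, hence $C\in\bLc$), and it satisfies $\widecheck{\bGc}=\bG$. Since in addition $\ell(\bGc)=\ell(\bG)$, for every $\xi\leq\omega_1$ this map restricts to an order automorphism of $\NSD^{(\xi)}(Z)$. In particular $\bG$ and $\bGc$ sit inside the same poset $\NSD^{(\ell(\bG))}(Z)$, and an order automorphism sending $\bG$ to $\bGc$ preserves the purely order-theoretic properties of being minimal, a successor, and a countable limit; therefore $t(\bG)=t(\bGc)$.

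For the third bullet, recall that $\ell(\bL)\geq 0$ for every Wadge class $\bL$, so $\NSD^{(0)}(Z)=\NSD(Z)$; hence, when $\ell(\bG)=0$, the type of $\bG$ is computed directly inside $\NSD(Z)$, which is exactly what the displayed cases assert apart from their omitting the value $0$. But $t(\bG)=0$ is impossible under $\ell(\bG)=0$: by the first bullet it would force $\bG\in\{\{\varnothing\},\{Z\}\}$, whereas those classes have level $\omega_1$. So the three displayed cases are exhaustive, and the statement follows. The only place that asks for a little care is the first bullet — one must remember that the empty collection is not a Wadge class, and that $\{\varnothing\}$ and $\{Z\}$ are exactly the non-selfdual classes that are minimal for trivial reasons — but I anticipate no genuine obstacle.
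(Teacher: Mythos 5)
Your proof is correct, and since the paper explicitly leaves this proposition's proof to the reader as trivial, your argument is precisely the intended unwinding of the definitions: constant maps show $\{\varnothing\}$ and $\{Z\}$ are the only minimal elements, the complement operation is a level-preserving order involution of $\NSD(Z)$ (hence of each $\NSD^{(\xi)}(Z)$), and $\NSD^{(0)}(Z)=\NSD(Z)$ because every Wadge class has level at least $0$. No gaps.
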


We conclude this subsection by showing that expansions leave the type unchanged (see Corollary \ref{corollary_type_transfer_expansion} for the precise statement).

\begin{theorem}\label{theorem_type_order_isomorphism}
Let $\bS$ be a nice topological pointclass, and assume that $\Det(\bS(\omega^\omega))$ holds. Let $Z$ be an uncountable zero-dimensional Polish space, and let $\xi<\omega_1$. Then the function $\Phi_\xi:\NSDS(Z)\longrightarrow\NSDxiS(Z)$ defined by setting $\Phi_\xi(\bG)=\bG^{(\xi)}$ for $\bG\in\NSDS(Z)$ is an order-isomorphism with respect to $\subseteq$.
\end{theorem}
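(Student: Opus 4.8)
The plan is to obtain this statement essentially for free from the Expansion Theorem (Theorem~\ref{theorem_expansion}) together with Corollary~\ref{corollary_expansion_order_isomorphism}; no new idea is required, and the point of isolating it is to have a convenient packaging of those two results for the analysis of type in the rest of \S\ref{section_wadge_type} (compare Corollary~\ref{corollary_type_transfer_expansion}).

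First I would verify that $\Phi_\xi$ is well-defined, i.e.\ that $\bG^{(\xi)}\in\NSDxiS(Z)$ whenever $\bG\in\NSDS(Z)$. Applying the implication of Theorem~\ref{theorem_expansion} that if $\bG=\bL^{(\xi)}$ for some $\bL\in\NSDS(Z)$ then $\bG\in\NSDS(Z)$ and $\ell(\bG)\geq\xi$, taken with $\bL=\bG$, yields $\bG^{(\xi)}\in\NSDS(Z)$ and $\ell(\bG^{(\xi)})\geq\xi$, hence $\bG^{(\xi)}\in\NSDxiS(Z)$ by Definition~\ref{definition_type}. Conversely, the other implication of Theorem~\ref{theorem_expansion} says that any $\bG\in\NSDxiS(Z)$, being a non-selfdual Wadge class of level $\geq\xi$ contained in $\bS(Z)$, can be written as $\bL^{(\xi)}=\Phi_\xi(\bL)$ for some $\bL\in\NSDS(Z)$; this is precisely surjectivity of $\Phi_\xi$ onto $\NSDxiS(Z)$.

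Finally, Corollary~\ref{corollary_expansion_order_isomorphism} states that for $\bG,\bL\in\NSDS(Z)$ one has $\bG\subseteq\bL$ iff $\bG^{(\xi)}\subseteq\bL^{(\xi)}$, which is exactly the assertion that $\Phi_\xi$ both preserves and reflects $\subseteq$. The reflecting half also delivers injectivity: if $\Phi_\xi(\bG)=\Phi_\xi(\bL)$ then $\bG^{(\xi)}\subseteq\bL^{(\xi)}$ and $\bL^{(\xi)}\subseteq\bG^{(\xi)}$, so $\bG\subseteq\bL$ and $\bL\subseteq\bG$, i.e.\ $\bG=\bL$. Putting this together, $\Phi_\xi$ is a bijection onto $\NSDxiS(Z)$ that preserves and reflects inclusion, hence an order-isomorphism with respect to $\subseteq$. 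I do not anticipate a genuine obstacle, since every ingredient is already established; the only mildly substantive point is the reflecting direction of Corollary~\ref{corollary_expansion_order_isomorphism}, which was itself deduced there from Lemma~\ref{lemma_wadge} and Theorem~\ref{theorem_expansion}.
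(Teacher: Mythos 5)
Your proposal is correct and follows exactly the paper's own argument: Theorem \ref{theorem_expansion} gives well-definedness and surjectivity, and Corollary \ref{corollary_expansion_order_isomorphism} gives injectivity together with preservation and reflection of $\subseteq$. You have merely spelled out the details that the paper leaves to the reader.
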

\begin{proof}
Using Theorem \ref{theorem_expansion}, one sees that $\Phi_\xi$ is well-defined and surjective. The fact that $\Phi_\xi$ is an order-isomorphism is then given by Corollary \ref{corollary_expansion_order_isomorphism}.
\end{proof}

\begin{corollary}\label{corollary_type_transfer_expansion}
Let $\bS$ be a nice topological pointclass, and assume that $\Det(\bS(\omega^\omega))$ holds. Let $Z$ be an uncountable zero-dimensional Polish space, let $\xi<\omega_1$, and let $\bG\in\NSDS(Z)$. Then $t(\bG)=t(\bG^{(\xi)})$.
\end{corollary}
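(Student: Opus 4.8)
The plan is to deduce this from the order-isomorphism $\Phi_\xi$ of Theorem \ref{theorem_type_order_isomorphism}. Set $\eta=\ell(\bG)$, which is defined by Theorem \ref{theorem_every_class_has_a_level}. By Definition \ref{definition_type}, $t(\bG)$ is computed in the partial order $(\mathsf{NSD}^{(\eta)}(Z),\subseteq)$, while $t(\bG^{(\xi)})$ is computed in $(\mathsf{NSD}^{(\xi+\eta)}(Z),\subseteq)$, since $\ell(\bG^{(\xi)})=\xi+\eta$ by Corollary \ref{corollary_expansion_level} (with the convention $\xi+\omega_1=\omega_1$ when $\eta=\omega_1$). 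So the heart of the matter is to produce an order-isomorphism between these two posets that carries $\bG$ to $\bG^{(\xi)}$: minimality, being a successor, and being a countable limit are purely order-theoretic notions, hence preserved by order-isomorphisms, and $t$ is read off from exactly these.

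First I would bridge to the $\bS$-restricted posets. For every $\zeta$, the collection $\mathsf{NSD}^{(\zeta)}_{\bS}(Z)$ is downward-closed in $(\mathsf{NSD}^{(\zeta)}(Z),\subseteq)$, because $\bH\subseteq\bK\subseteq\bS(Z)$ forces $\bH\subseteq\bS(Z)$. Since the conditions defining the type of an element $\bH$ refer only to elements $\subseteq\bH$, for $\bH\in\mathsf{NSD}^{(\zeta)}_{\bS}(Z)$ it makes no difference whether they are evaluated in $\mathsf{NSD}^{(\zeta)}(Z)$ or in $\mathsf{NSD}^{(\zeta)}_{\bS}(Z)$. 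Thus it suffices to produce an order-isomorphism $\mathsf{NSD}^{(\eta)}_{\bS}(Z)\longrightarrow\mathsf{NSD}^{(\xi+\eta)}_{\bS}(Z)$ sending $\bG$ to $\bG^{(\xi)}$.

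For this I would show that $\Phi_\xi$ itself restricts to the desired map. By Theorem \ref{theorem_type_order_isomorphism}, $\Phi_\xi\colon\NSDS(Z)\longrightarrow\NSDxiS(Z)$ is an order-isomorphism, so I only need that $\Phi_\xi$ carries $\mathsf{NSD}^{(\eta)}_{\bS}(Z)$ bijectively onto $\mathsf{NSD}^{(\xi+\eta)}_{\bS}(Z)$. The key lemma is: for $\bL\in\NSDS(Z)$, one has $\ell(\bL)\geq\eta$ if and only if $\ell(\bL^{(\xi)})\geq\xi+\eta$. When $\ell(\bL)<\omega_1$ this is immediate from the exact computation $\ell(\bL^{(\xi)})=\xi+\ell(\bL)$ of Corollary \ref{corollary_expansion_level} together with the strict monotonicity of $\alpha\mapsto\xi+\alpha$ (so $\xi+\alpha\geq\xi+\eta$ iff $\alpha\geq\eta$); when $\ell(\bL)=\omega_1$ one checks $\ell(\bL^{(\xi)})=\omega_1$ using Theorem \ref{theorem_expansion} (write $\bL=\bL_\zeta^{(\zeta)}$ for each $\zeta<\omega_1$) and Theorem \ref{theorem_expansion_composition} (so $\bL^{(\xi)}=\bL_\zeta^{(\xi+\zeta)}$, whence $\ell(\bL^{(\xi)})\geq\xi+\zeta$ for all $\zeta$), and conversely $\ell(\bL^{(\xi)})=\omega_1$ forces $\ell(\bL)=\omega_1$ again by Corollary \ref{corollary_expansion_level}. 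Granting this: if $\bH\in\mathsf{NSD}^{(\eta)}_{\bS}(Z)$ then $\ell(\bH^{(\xi)})\geq\xi+\eta$, so $\Phi_\xi(\bH)\in\mathsf{NSD}^{(\xi+\eta)}_{\bS}(Z)$; and if $\bK\in\mathsf{NSD}^{(\xi+\eta)}_{\bS}(Z)$ then $\ell(\bK)\geq\xi$, so Theorem \ref{theorem_expansion} writes $\bK=\bH^{(\xi)}=\Phi_\xi(\bH)$ with $\ell(\bH)\geq\eta$ by the lemma, i.e. $\bH\in\mathsf{NSD}^{(\eta)}_{\bS}(Z)$. Since $\Phi_\xi$ is injective and order-preserving in both directions, this restriction is an order-isomorphism, and it sends $\bG\in\mathsf{NSD}^{(\eta)}_{\bS}(Z)$ to $\bG^{(\xi)}\in\mathsf{NSD}^{(\xi+\eta)}_{\bS}(Z)$. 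Combining with the bridge observation of the previous paragraph and Definition \ref{definition_type} yields $t(\bG)=t(\bG^{(\xi)})$.

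I expect the only delicate points to be the bookkeeping around levels equal to $\omega_1$ and the interplay between the ``full'' partial orders $\mathsf{NSD}^{(\zeta)}(Z)$ of Definition \ref{definition_type} and the $\bS$-restricted ones on which Theorem \ref{theorem_type_order_isomorphism} is phrased; everything else is routine.
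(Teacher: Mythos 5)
Your proof is correct and follows essentially the same route as the paper: the paper also transfers the type along an order-isomorphism built from Theorem \ref{theorem_type_order_isomorphism}, namely $\Phi_{\xi+\eta}\circ\Phi_\eta^{-1}:\NSD^{(\eta)}_\bS(Z)\longrightarrow\NSD^{(\xi+\eta)}_\bS(Z)$, which by Theorem \ref{theorem_expansion_composition} coincides on its domain with your restricted $\Phi_\xi$. The only cosmetic differences are that the paper handles the case $\ell(\bG)=\omega_1$ by showing $\bG^{(\xi)}=\bG$ outright, and that you make explicit the (correct) bridge between the full posets $\NSD^{(\zeta)}(Z)$ of Definition \ref{definition_type} and their $\bS$-restricted versions, a point the paper leaves implicit.
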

\begin{proof}
By Theorem \ref{theorem_every_class_has_a_level}, there exists $\eta\leq\omega_1$ such that $\ell(\bG)=\eta$. First assume that $\eta<\omega_1$. By Theorem \ref{theorem_expansion}, there exists $\bL\in\NSDS(Z)$ such that $\bL^{(\eta)}=\bG$. Notice that $\Phi_{\xi +\eta}\circ\Phi_\eta^{-1}:\NSD^{(\eta)}_\bS(Z)\longrightarrow\NSD^{(\xi+\eta)}_\bS(Z)$ is an order-isomorphism with respect to $\subseteq$ by Theorem \ref{theorem_type_order_isomorphism}. Also observe that
$$
(\Phi_{\xi +\eta}\circ\Phi_\eta^{-1})(\bG)=\Phi_{\xi +\eta}(\bL)=\bL^{(\xi+\eta)}=(\bL^{(\eta)})^{(\xi)}=\bG^{(\xi)},
$$
where the third equality holds by Theorem \ref{theorem_expansion_composition}. Since Corollary \ref{corollary_expansion_level} guarantees that $\ell(\bG^{(\xi)})=\xi+\eta$, the desired conclusion immediately follows.

Finally, assume that $\eta=\omega_1$. We will prove that $\bG=\bG^{(\xi)}$, which clearly suffices. By Theorem \ref{theorem_expansion}, there exists $\bL\in\NSDS(Z)$ such that $\bL^{(\xi\cdot\omega)}=\bG$. Then
$$
\bG^{(\xi)}=(\bL^{(\xi\cdot\omega)})^{(\xi)}=\bL^{(\xi+\xi\cdot\omega)}=\bL^{(\xi\cdot (1+\omega))}=\bL^{(\xi\cdot\omega)}=\bG,
$$
where the second equality holds by Theorem \ref{theorem_expansion_composition}.
\end{proof}

\subsection{Separated unions}\label{subsection_wadge_type_su}

The following notion will ultimately allow us to explicitly describe the non-selfdual Wadge classes that are successors or countable limits (see \S\ref{subsection_wadge_type_small}).\footnote{\,According to Wadge, this notion had already been identified by ``the classical descriptive set theorists'' (see \cite[page 187]{wadge_cabal}).} For some insight on the relationship between separated unions and separated differences, see Lemma \ref{lemma_sd_versus_su}.

\begin{definition}\label{definition_su}
Let $Z$ be a space, let $\bG\subseteq\PP(Z)$, and let $\xi<\omega_1$. Define $\SU_\xi(\bG)$ to be the collection of all sets of the form
$$
\bigcup_{n\in\omega}(A_n\cap U_n),
$$
where each $A_n\in\bG$ and the $U_n\in\bS_{1+\xi}^0(Z)$ are pairwise disjoint. A set in this form is called a \emph{separated union} of sets in $\bG$.
\end{definition}

As a first step, we will show that this operation produces new non-selfdual Wadge classes from old ones. 

\begin{theorem}\label{theorem_su_nsd}
Let $\bS$ be a nice topological pointclass, and assume that $\Det(\bS(\omega^\omega))$ holds. Let $Z$ be an uncountable zero-dimensional Polish space, let $\bG_n\in\NSDS(Z)$ for $n\in\omega$, and let $\xi<\omega_1$. Set
$$
\bG=\SU_\xi\left(\bigcup_{n\in\omega}\bG_n\right).
$$
Then $\bG\in\NSD(Z)$.\footnote{\,However, it is not necessarily true that $\bG\subseteq\bS(Z)$.}
\end{theorem}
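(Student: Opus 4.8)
The plan is to exhibit $\bG=\SU_\xi\left(\bigcup_{n\in\omega}\bG_n\right)$ as an instance of the construction in Corollary~\ref{corollary_new_from_old} and then simply quote that corollary. The key observation is that a separated union of sets taken from $\bigcup_n\bG_n$ is, after a purely bookkeeping reindexing, exactly the value of a fixed Hausdorff operation applied to an interleaved sequence consisting of sets from the $\bG_n$ together with pairwise disjoint open sets of the appropriate level. Concretely, I would first fix a surjection $p:\omega\longrightarrow\omega$ each of whose fibers $p^{-1}(k)$ is infinite; note that each $\bG_{p(m)}$ is again a member of $\NSDS(Z)$, and that the sequence $(\bG_{p(m)}:m\in\omega)$ lists every $\bG_n$ infinitely often. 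Having infinitely many copies of each $\bG_n$ available is essential, since in a single separated union a given class $\bG_n$ may be used for countably many summands. Next I would set
$$
D=\{S\subseteq\omega:\{2k,2k+1\}\subseteq S\text{ for some }k\in\omega\}
$$
and apply Corollary~\ref{corollary_new_from_old} with the sequence $(\bG_{p(m)}:m\in\omega)$, the set $D$, and the ordinal $1+\xi$ playing the role of its ``$\xi$'' (this is legitimate, since $1\leq 1+\xi<\omega_1$ for every $\xi<\omega_1$). Because the definition of $D$ yields $\HH_D(A_0,U_1,A_2,U_3,\ldots)=\bigcup_{m\in\omega}(A_{2m}\cap U_{2m+1})$, the class $\bG'$ produced by the corollary consists precisely of the sets $\bigcup_{m\in\omega}(A_{2m}\cap U_{2m+1})$ with $A_{2m}\in\bG_{p(m)}$ and the $U_{2m+1}$ pairwise disjoint members of $\bS^0_{1+\xi}(Z)$; so it suffices to check $\bG'=\bG$, as Corollary~\ref{corollary_new_from_old} gives $\bG'\in\NSD(Z)$.

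The inclusion $\bG'\subseteq\bG$ is immediate, since $\bG_{p(m)}\subseteq\bigcup_k\bG_k$. For $\bG\subseteq\bG'$ I would take an arbitrary $B=\bigcup_{n\in\omega}(A_n\cap V_n)\in\bG$, with $A_n\in\bG_{c(n)}$ for suitable $c(n)\in\omega$ and the $V_n\in\bS^0_{1+\xi}(Z)$ pairwise disjoint, then recursively choose an injection $\iota:\omega\longrightarrow\omega$ with $p(\iota(n))=c(n)$ for all $n$ --- possible precisely because every fiber of $p$ is infinite. Putting $A_n$ and $V_n$ into the slots indexed by $\iota(n)$, and filling each remaining even slot $2m$ with some fixed element of the (nonempty) collection $\bG_{p(m)}$ and each remaining odd slot with $\varnothing$, realizes $B$ in the required form, since the ``padding'' summands are all empty and hence $\bigcup_m(A_{2m}\cap U_{2m+1})=\bigcup_n(A_n\cap V_n)=B$. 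This is the only step with any content, and even there the only delicate point is the combinatorial bookkeeping of the reindexing; everything else is formal.

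I expect no serious obstacle, but two points deserve a double-check. The first is the ordinal arithmetic: the definition of $\SU_\xi$ involves $\bS^0_{1+\xi}$ whereas Corollary~\ref{corollary_new_from_old} is phrased with $\bS^0_\xi$ for $1\leq\xi<\omega_1$, and one must observe that $\{1+\xi:\xi<\omega_1\}$ is exactly $[1,\omega_1)$, so the ranges of parameters match. The second is that, for a reader who would rather not treat Corollary~\ref{corollary_new_from_old} as a black box, the entire argument can instead be run at the level of universal sets in the style of the proof of Theorem~\ref{theorem_new_from_old}: use Theorems~\ref{theorem_relativization_uncountable} and~\ref{theorem_wadge_implies_universal} to equip each $\bG_n$ with a $2^\omega$-universal set, use Theorem~\ref{theorem_reduction} to arrange the pairwise disjoint open sections, assemble from these and the operation $D$ above a $2^\omega$-universal set for $\bG$, and conclude with Theorem~\ref{theorem_universal_implies_wadge}; the verification that $\bG$ arises from a relativizable pointclass once more relies on Theorem~\ref{theorem_reduction}, exactly as in that proof.
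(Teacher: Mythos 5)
Your proposal is correct and takes essentially the same route as the paper: the paper's proof likewise chooses $D$ so that $\HH_D(A_0,U_1,A_2,U_3,\ldots)=\bigcup_{k\in\omega}(A_{2k}\cap U_{2k+1})$, fixes a map $\sigma:\omega\longrightarrow\omega$ with all fibers infinite, and applies Corollary \ref{corollary_new_from_old} to the classes $\bG_{\sigma(n)}$. The only difference is that you spell out the reindexing/padding verification and the $1+\xi$ parameter bookkeeping, which the paper leaves implicit.
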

\begin{proof}
Choose $D\subseteq\PP(\omega)$ such that
$$
\HH_D(A_0,U_1,A_2,U_3,\ldots)=\bigcup_{k\in\omega}(A_{2k}\cap U_{2k+1})
$$
for all subsets $A_0,U_1,A_2,U_3,\ldots$ of the ambient set. Also fix a function $\sigma:\omega\longrightarrow\omega$ such that $\sigma^{-1}(n)$ is infinite for every $n\in\omega$. To obtain the desired result, simply apply Corollary \ref{corollary_new_from_old} to the classes $\bG_{\sigma(n)}$.
\end{proof}

Next, we will show that separated unions are connected to partitioned unions in a rather pleasing way.

\begin{lemma}\label{lemma_su_versus_pu}
Let $Z$ be a zero-dimensional space, let $\xi <\omega_1$, and let $\bD\subseteq\PP(Z)$ be selfdual. Then
$$
\SU_\xi(\bD)\cap\SUc_\xi(\bD)=\PU_\xi(\bD).
$$
\end{lemma}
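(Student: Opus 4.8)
The plan is to prove the two inclusions separately, with the left-to-right inclusion being essentially immediate and the right-to-left inclusion requiring a disjointification argument. For the inclusion $\PU_\xi(\bD)\subseteq\SU_\xi(\bD)\cap\SUc_\xi(\bD)$, start with a partitioned union $A=\bigcup_{n\in\omega}(A_n\cap V_n)$, where $A_n\in\bD$, the $V_n\in\bD^0_{1+\xi}(Z)$ are pairwise disjoint, and $\bigcup_n V_n=Z$. Since $\bD^0_{1+\xi}(Z)\subseteq\bS^0_{1+\xi}(Z)$, this immediately exhibits $A\in\SU_\xi(\bD)$. To see $A\in\SUc_\xi(\bD)$, i.e.\ $Z\setminus A\in\SU_\xi(\bD)$, use selfduality of $\bD$ to write $Z\setminus A=\bigcup_{n\in\omega}((Z\setminus A_n)\cap V_n)$ (this equality uses that the $V_n$ partition $Z$, so the complement is computed coordinatewise), and note that each $Z\setminus A_n\in\bDc=\bD$. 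Hence $Z\setminus A\in\SU_\xi(\bD)$, as required.

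For the reverse inclusion $\SU_\xi(\bD)\cap\SUc_\xi(\bD)\subseteq\PU_\xi(\bD)$, suppose $A\in\SU_\xi(\bD)$ and $Z\setminus A\in\SU_\xi(\bD)$. Write $A=\bigcup_{n\in\omega}(A_n\cap U_n)$ with $A_n\in\bD$ and $U_n\in\bS^0_{1+\xi}(Z)$ pairwise disjoint, and $Z\setminus A=\bigcup_{m\in\omega}(B_m\cap W_m)$ with $B_m\in\bD$ and $W_m\in\bS^0_{1+\xi}(Z)$ pairwise disjoint. The first key observation is that $\bigl(\bigcup_n U_n\bigr)\cup\bigl(\bigcup_m W_m\bigr)=Z$: indeed, if $z\in A$ then $z\in U_n$ for some $n$, and if $z\notin A$ then $z\in W_m$ for some $m$. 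Now apply Theorem~\ref{theorem_reduction} (the generalized reduction property for $\bS^0_{1+\xi}$, valid since $Z$ is zero-dimensional) to the countable family consisting of all the $U_n$ together with all the $W_m$, producing pairwise disjoint sets $U'_n\subseteq U_n$ and $W'_m\subseteq W_m$ in $\bS^0_{1+\xi}(Z)$ with the same union as the original family, namely all of $Z$. Since the $U'_n, W'_m$ are pairwise disjoint $\bS^0_{1+\xi}$ sets whose union is $Z$, each of them is in fact clopen in the $\bS^0_{1+\xi}$ sense relative to this partition — more precisely, each is the complement of a union of the others, hence each $U'_n$ and each $W'_m$ lies in $\bD^0_{1+\xi}(Z)$, because its complement (a countable union of $\bS^0_{1+\xi}$ sets) is again $\bS^0_{1+\xi}$. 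Finally, on $U'_n$ we have $A\cap U'_n=A_n\cap U'_n$ (since $U'_n\subseteq U_n$ and the $U_k$ are disjoint, the only term of $A$ meeting $U'_n$ is $A_n\cap U_n$), and on $W'_m$ we have $A\cap W'_m=\varnothing$, equivalently $A\cap W'_m=(Z\setminus B_m)\cap W'_m$ by the analogous reasoning applied to $Z\setminus A$ and selfduality of $\bD$. Therefore
$$
A=\bigcup_{n\in\omega}(A_n\cap U'_n)\cup\bigcup_{m\in\omega}\bigl((Z\setminus B_m)\cap W'_m\bigr),
$$
which exhibits $A$ as a partitioned union of sets in $\bD$ along the clopen partition $\{U'_n\}\cup\{W'_m\}$ of $Z$; after relabeling the index set as $\omega$, this shows $A\in\PU_\xi(\bD)$.

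The main obstacle is verifying cleanly that the reduced sets $U'_n, W'_m$ are genuinely in $\bD^0_{1+\xi}(Z)$ rather than merely in $\bS^0_{1+\xi}(Z)$: this hinges on the fact that a pairwise disjoint $\bS^0_{1+\xi}$-cover of $Z$ consists of $\bD^0_{1+\xi}$ sets, since the complement of each member is the union of the (countably many) remaining members and hence lies in $\bS^0_{1+\xi}(Z)$ — so each member is both $\bS^0_{1+\xi}$ and $\bP^0_{1+\xi}$, i.e.\ $\bD^0_{1+\xi}$. One should also be slightly careful that in passing from a countable family indexed by, say, two copies of $\omega$ to one indexed by $\omega$, the pairwise-disjointness and covering conditions defining $\PU_\xi$ are preserved, which is routine. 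The selfduality hypothesis on $\bD$ is used exactly twice, and essentially: once in each inclusion, to convert the "$A\cap$ (piece) $=\varnothing$" situation on the $W$-pieces into "$(Z\setminus B_m)\cap$ (piece)" with $Z\setminus B_m\in\bD$, and symmetrically.
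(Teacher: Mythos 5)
Your proposal is correct and follows essentially the same route as the paper: the nontrivial inclusion is handled by applying Theorem \ref{theorem_reduction} to the combined family of separating sets for $A$ and $Z\setminus A$, then using selfduality of $\bD$ to replace $B_m$ by $Z\setminus B_m$ on the pieces coming from the complement (the paper merely interleaves the two families via even/odd indices instead of keeping them separate). Your explicit verification that a pairwise disjoint $\bS^0_{1+\xi}$-cover of $Z$ consists of $\bD^0_{1+\xi}$ sets is a point the paper leaves implicit, and it is correctly argued.
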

\begin{proof}
The inclusion $\supseteq$ is trivial. In order to prove the inclusion $\subseteq$, pick $A\in\SU_\xi(\bD)$ such that $Z\setminus A\in\SU_\xi(\bD)$. Pick $U_n\in\bS^0_{1+\xi}(Z)$ and $A_n\in\bD$ for $n\in\omega$ such that
$$
A=\bigcup_{k\in\omega}(A_{2k}\cap U_{2k})\text{ and }Z\setminus A=\bigcup_{k\in\omega}(A_{2k+1}\cap U_{2k+1}).
$$
Observe that $\bigcup_{n\in\omega}U_n=Z$. Therefore, by Theorem \ref{theorem_reduction}, there exist $U'_n\in\bS^0_{1+\xi}(Z)$ for $n\in\omega$ such that the following conditions hold:
\begin{itemize}
	\item $U'_n\subseteq U_n$ for every $n\in\omega$,
	\item $U'_m\cap U'_n=\varnothing$ whenever $m\neq n$,
	\item $\bigcup_{n\in\omega}U'_n=Z$.
\end{itemize}
It is easy to realize that
$$
A=\bigcup_{k\in\omega}(A_{2k}\cap U'_{2k})\cup\bigcup_{k\in\omega}((Z\setminus A_{2k+1})\cap U'_{2k+1}).
$$
Since $\bD$ is selfdual, this shows that $A\in\PU_\xi(\bD)$.
\end{proof}

Recall that, by Theorem \ref{theorem_separation_generalized}, exactly one Wadge classes in every non-selfdual pair has the separation property. The following result tells us which one, at least in certain cases.

\begin{lemma}\label{lemma_su_dual_has_separation}
Let $Z$ be a zero-dimensional space, let $\xi<\omega_1$, and let $\bD\subseteq\PP(Z)$ be selfdual. Set $\bG=\SU_\xi(\bD)$. Then $\bGc$ has the separation property.
\end{lemma}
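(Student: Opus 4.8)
The plan is to show directly that $\bGc = \SUc_\xi(\bD)$ separates disjoint pairs from itself. So suppose $A, B \in \SU_\xi(\bD)$ are such that $(Z\setminus A)\cap(Z\setminus B)=\varnothing$, i.e.\ $A\cup B = Z$; we must find $C\in\Delta(\bGc)$ with $Z\setminus A\subseteq C\subseteq Z\setminus(Z\setminus B)=B$, equivalently $Z\setminus B\subseteq Z\setminus C\subseteq A$. Write $A=\bigcup_k(A_k\cap U_k)$ and $B=\bigcup_k(B_k\cap V_k)$ with all $A_k,B_k\in\bD$, the $U_k$ pairwise disjoint in $\bS^0_{1+\xi}(Z)$, and likewise the $V_k$. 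Since $A\cup B=Z$ we have $\bigcup_k U_k\cup\bigcup_k V_k=Z$.

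The key step is to merge the two separated families into a single pairwise-disjoint refinement covering $Z$. Apply Theorem \ref{theorem_reduction} to the countable list $U_0,V_0,U_1,V_1,\ldots$ of $\bS^0_{1+\xi}$ sets: this gives pairwise disjoint $U'_k\subseteq U_k$ and $V'_k\subseteq V_k$ in $\bS^0_{1+\xi}(Z)$ with $\bigcup_k U'_k\cup\bigcup_k V'_k=\bigcup_k U_k\cup\bigcup_k V_k=Z$. Now on each piece $U'_k$ the set $A$ agrees with $A_k$ (since $U'_k\subseteq U_k$ and the $U$'s were disjoint), and on each piece $V'_k$ the set $B$ agrees with $B_k$. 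Because the $V'_k$ together with the $U'_k$ partition $Z$, inside each $V'_k$ we have $B=B_k$ on $V'_k$, while on that same $V'_k$ the condition $A\cup B=Z$ forces $A\supseteq V'_k\setminus B_k$, i.e.\ (shrinking $B_k$ by a clopen-in-$V'_k$ set if needed, using that $\bD$ is selfdual hence closed under the relevant boolean manipulations via Lemma \ref{lemma_closure_clopen}) one can choose on the $V'_k$-part a set from $\bD$ that sits between $Z\setminus A$ and $B$. Define
$$
C=\bigcup_k\big((Z\setminus A_k)\cap U'_k\big)\cup\bigcup_k\big(B_k\cap V'_k\big).
$$
Since $\bD$ is selfdual, each $Z\setminus A_k\in\bD$, so $C$ is a separated union over the pairwise-disjoint family $\{U'_k\}\cup\{V'_k\}$ of members of $\bD$; thus $C\in\SU_\xi(\bD)=\bG$. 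On the other hand $Z\setminus C$ is, on $U'_k$, equal to $A_k$, and on $V'_k$ equal to $Z\setminus B_k\in\bD$, so $Z\setminus C$ is also such a separated union, whence $C\in\bGc$ as well; therefore $C\in\Delta(\bG)=\Delta(\bGc)$. Finally one checks $Z\setminus A\subseteq C\subseteq B$ piece by piece: on $U'_k$, $Z\setminus A = Z\setminus A_k = C$ and $U'_k\subseteq V'$-free so $U'_k\subseteq A\cup B$ gives $C\cap U'_k\subseteq B$; on $V'_k$, $C=B_k=B$ there and $Z\setminus A\subseteq B$ automatically.

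The main obstacle I anticipate is the bookkeeping on the $V'_k$-pieces: on $U'_k$ the definition of $C$ is forced and clean, but on $V'_k$ one is only told that $B$ equals $B_k$ and that $A\cup B=Z$, and it takes a small argument (intersecting/relativizing within the clopen-modulo-$\bS^0_{1+\xi}$ piece $V'_k$, and invoking selfduality of $\bD$ together with Lemma \ref{lemma_closure_clopen} to stay inside $\bD$) to produce the required intermediate $\bD$-set there. Once that local adjustment is in place, verifying $C\in\Delta(\bG)$ and the two inclusions $Z\setminus A\subseteq C\subseteq B$ is a routine case check against the partition $\{U'_k\}\cup\{V'_k\}$ of $Z$. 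Note the argument parallels the proof of Lemma \ref{lemma_su_versus_pu}, and in fact one could alternatively phrase it as: $C$ and $Z\setminus C$ both lie in $\SU_\xi(\bD)$, so $C\in\PU_\xi(\bD)=\SU_\xi(\bD)\cap\SUc_\xi(\bD)$ by that lemma, which directly witnesses $C\in\Delta(\bG)$.
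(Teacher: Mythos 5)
Your proposal is correct and follows essentially the same route as the paper's proof: merge the two separated families into a single pairwise-disjoint cover of $Z$ via Theorem \ref{theorem_reduction}, define $C$ by flipping (using selfduality of $\bD$) the $\bD$-sets on the pieces coming from $A$ and keeping those coming from $B$, and conclude $C\in\Delta(\bG)$ since both $C$ and $Z\setminus C$ are then separated unions of $\bD$-sets over this partition. The ``obstacle'' you anticipate on the $V'_k$-pieces is not actually there --- no shrinking of $B_k$ or appeal to Lemma \ref{lemma_closure_clopen} is needed, since $C\cap V'_k=B_k\cap V'_k=B\cap V'_k$ already satisfies $(Z\setminus A)\cap V'_k\subseteq C\cap V'_k\subseteq B$ because $Z\setminus A\subseteq B$, exactly as your own final piece-by-piece check shows.
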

\begin{proof}
Pick $A_0,A_1\in\bGc$ such that $A_0\cap A_1=\varnothing$. Fix $B_n\in\bD$ and $U_n\in\bS^0_{1+\xi}(Z)$ for $n\in\omega$ such that the following conditions hold:
\begin{itemize}
\item $U_{2k+i}\cap U_{2j+i}=\varnothing$ whenever $k\neq j$ and $i<2$,
\item $A_i=Z\setminus\bigcup_{k\in\omega}(B_{2k+i}\cap U_{2k+i})$.
\end{itemize}
Using the fact that $A_0\cap A_1=\varnothing$, one sees that $\bigcup_{n\in\omega}U_n=Z$. So, by Theorem \ref{theorem_reduction}, there exist $U'_n\in\bS^0_{1+\xi}(Z)$ for $n\in\omega$ such that the following conditions hold:
\begin{itemize}
\item $U'_n\subseteq U_n$ for each $n$,
\item $U'_m\cap U'_n=\varnothing$ whenever $m\neq n$,
\item $\bigcup_{n\in\omega}U'_n=Z$.
\end{itemize}
Define
$$
C=\bigcup_{k\in\omega}((Z\setminus B_{2k})\cap U'_{2k})\cup\bigcup_{k\in\omega}(B_{2k+1}\cap U'_{2k+1}).
$$
Notice that $C\in\Delta(\bG)=\Delta(\bGc)$ because $\bD$ is selfdual. Therefore, to conclude the proof, it remains to show that $A_0\subseteq C\subseteq Z\setminus A_1$. We will only show that $A_0\subseteq C$, since $A_1\subseteq Z\setminus C$ can be verified using a similar argument. Pick $x\in A_0$. First assume that $x\in U'_{2k}$ for some $k$. Then $x\notin B_{2k}$, which clearly implies $x\in C$. Now assume that $x\in U'_{2k+1}$ for some $k$. Then $x\in B_{2k+1}$, otherwise the fact that $A_0\cap A_1=\varnothing$ would be contradicted. It follows that $x\in C$ in this case as well.
\end{proof}

\subsection{Characterizing small type}\label{subsection_wadge_type_small}

The aim of this subsection is to explicitly describe the non-selfdual Wadge classes of small type (that is, type $1$ or $2$), at least in the case of countable level (see Theorem \ref{theorem_characterization_small_type}). We will often consider $\bD=\bigcup_{n\in\omega}(\bG_n\cup\bGc_n)$, where the $\bG_n$ are intended to be the predecessors of a certain class $\bG$. We remark that this is simply a convenient way to unify the cases in which $\bG$ is a successor (all $\bG_n$ are the same) and a countable limit ($\bG_0\subsetneq\bG_1\subsetneq\cdots$).

\begin{lemma}\label{lemma_describe_successors_and_countable_limits}
Let $\bS$ be a nice topological pointclass, and assume that $\Det(\bS(\omega^\omega))$ holds. Let $Z$ be an uncountable zero-dimensional Polish space, and let $\bG_n\in\NSDS(Z)$ for $n\in\omega$. Set $\bD=\bigcup_{n\in\omega}(\bG_n\cup\bGc_n)$, and assume that $\SU_0(\bD)\subseteq\bS(Z)$. Then the following conditions are equivalent:
\begin{enumerate}
\item $\bG=\SU_0(\bD)$ or $\bG=\SUc_0(\bD)$,
\item $\bG\in\NSDS(Z)$, $\bD\subseteq\bG$, and for every $\bG'\in\NSDS(Z)$ such that $\bD\subseteq\bG'$ either $\bG\subseteq\bG'$ or $\bGc\subseteq\bG'$.
\end{enumerate}
\end{lemma}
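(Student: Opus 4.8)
The plan is to reduce both implications to a single claim: $(\star)$ there is no $\bG'\in\NSD(Z)$ with $\bD\subseteq\bG'\subseteq\PU_0(\bD)$ --- equivalently (since $\subseteq$ is well-founded on $\NSDS(Z)$ by Theorem \ref{theorem_well-founded}), the classes $\SU_0(\bD)$ and $\SUc_0(\bD)$ are precisely the $\subseteq$-minimal non-selfdual Wadge classes in $Z$ containing $\bD$. Before getting to $(\star)$, I would record the routine facts. The dual of a non-selfdual Wadge class is again one and $\bS$ is closed under complements, so every $\bGc_n\in\NSDS(Z)$; hence $\bD=\bigcup_{m\in\omega}\bL_m$ for a sequence $(\bL_m)$ in $\NSDS(Z)$, the class $\bD$ is selfdual, and $\SU_0(\bD)\in\NSD(Z)$ by Theorem \ref{theorem_su_nsd}, so $\SU_0(\bD)\in\NSDS(Z)$ by hypothesis. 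Taking $A_0\in\bD$, $U_0=Z$ and $U_k=\varnothing$ for $k\geq1$ shows $\bD\subseteq\SU_0(\bD)$, hence $\bD=\widecheck{\bD}\subseteq\SUc_0(\bD)$, hence $\bD\subseteq\PU_0(\bD)=\Delta(\SU_0(\bD))$ by Lemma \ref{lemma_su_versus_pu}. I would also freely use the standard consequences of Lemma \ref{lemma_wadge}: any two members of $\NSDS(Z)$ are $\subseteq$-comparable up to replacing one by its dual, and if one of them is properly contained in the other then it is contained in the $\Delta$ of the larger one (apply Lemma \ref{lemma_wadge} to true sets, then to a complement).

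Granting $(\star)$, the implications are short. For $(1)\Rightarrow(2)$, assume $\bG=\SU_0(\bD)$ (the case $\bG=\SUc_0(\bD)$ is symmetric, as $\bD$ is selfdual); then $\bG\in\NSDS(Z)$ and $\bD\subseteq\bG$ by the above. For $\bG'\in\NSDS(Z)$ with $\bD\subseteq\bG'$, comparability gives $\bG\subseteq\bG'$ or $\bG'\subseteq\bGc$, and likewise $\bGc\subseteq\bG'$ or $\bG'\subseteq\bG$; if neither $\bG\subseteq\bG'$ nor $\bGc\subseteq\bG'$ holds, then $\bG'\subseteq\bG\cap\bGc=\PU_0(\bD)$ by Lemma \ref{lemma_su_versus_pu}, contradicting $(\star)$. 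For $(2)\Rightarrow(1)$, well-foundedness lets us pick a $\subseteq$-minimal member $\bG_{\min}$ of the non-empty family $\{\bG'\in\NSDS(Z):\bD\subseteq\bG'\}$; a routine argument using minimality of $\bG_{\min}$ and $\bGc_{\min}$, comparability, and selfduality of $\bD$ shows $\bG_{\min}$ satisfies the condition in $(2)$. Applying that condition to $\bG'=\SU_0(\bD)$ (and swapping $\bG_{\min}$ with $\bGc_{\min}$ if needed) gives $\bG_{\min}\subseteq\SU_0(\bD)$, and then $\bG_{\min}=\SU_0(\bD)$: otherwise $\bG_{\min}\subsetneq\SU_0(\bD)$ yields $\bG_{\min}\subseteq\PU_0(\bD)$ with $\bD\subseteq\bG_{\min}$, against $(\star)$. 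So $\bG_{\min}\in\{\SU_0(\bD),\SUc_0(\bD)\}$; finally, comparing the conditions of $(2)$ for $\bG$ and for $\bG_{\min}$ and ruling out selfduality forces $\bG\in\{\bG_{\min},\bGc_{\min}\}=\{\SU_0(\bD),\SUc_0(\bD)\}$, which is $(1)$.

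The main obstacle is $(\star)$ itself. Suppose $\bG'\in\NSD(Z)$ with $\bD\subseteq\bG'\subseteq\PU_0(\bD)$; since $\PU_0(\bD)=\Delta(\SU_0(\bD))$, this means $\bG'\subsetneq\SU_0(\bD)$, and I would aim at the contradiction $\SU_0(\bD)\subseteq\bG'$. Fix a set $B$ with $B\wc=\bG'$; as $B\in\PU_0(\bD)$ we may write $B=\bigcup_{k\in\omega}(A_k\cap V_k)$ with $A_k\in\bD$ and $(V_k)$ a clopen partition of $Z$. Since $\bD\subseteq\bG'=B\wc$, the true set of each $\bL_m$ Wadge-reduces to $B$, and the plan is to glue these reductions --- along a clopen partition of the source laid out in the pattern of the canonical true $\SU_0(\bD)$-set built in the proof of Theorem \ref{theorem_su_nsd} --- into one continuous reduction of that canonical set to $B$, contradicting $\bG'\subsetneq\SU_0(\bD)$. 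The delicate point is arranging that the glued map is continuous and correct on the closed remainder $Z\setminus\bigcup_kV_k$; equivalently, one must show that the non-selfdual Wadge classes strictly $\subseteq$-below $\SU_0(\bD)$ are exactly those contained in some $\bG_n$ or $\bGc_n$, since then $\bD\subseteq\bG'$ would force $\bGc_n\subseteq\bG'\subseteq\bG_n$ for some $n$, which is absurd. This structural statement is where the imported form of Louveau's analysis is needed: one can use the identity $\SU_0(\bD)=\SD_1(\bD,\{\varnothing\})$ (immediate from Definitions \ref{definition_sd} and \ref{definition_su}), whence $\SUc_0(\bD)=\SD_1(\bD,\{Z\})$ by Lemma \ref{lemma_sd_check}, and then read off the predecessors of $\SU_0(\bD)$ from Theorem \ref{theorem_sd_main} and the placement of separated-difference classes in the hierarchy (at least when the $\bG_n$ all have level at least $1$; the general case is similar). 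An alternative route to $(\star)$ is a direct separation-property argument, exploiting that $\SUc_0(\bD)$ has the separation property by Lemma \ref{lemma_su_dual_has_separation} while, by Theorem \ref{theorem_separation_generalized}, a putative intermediate $\bG'$ cannot.
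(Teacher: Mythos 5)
Your reduction of both implications to the claim $(\star)$ (no $\bG'\in\NSD(Z)$ satisfies $\bD\subseteq\bG'\subseteq\PU_0(\bD)$) is sound, and the surrounding bookkeeping --- $\bD\subseteq\SU_0(\bD)$, selfduality of $\bD$, membership of $\SU_0(\bD)$ in $\NSDS(Z)$ via Theorem \ref{theorem_su_nsd}, the use of Lemmas \ref{lemma_wadge} and \ref{lemma_su_versus_pu} --- matches what the paper does. The problem is that you never actually prove $(\star)$: you call it ``the main obstacle'' and gesture at two routes, neither of which is carried out. The first route is misdirected: in a partitioned union the sets $V_k$ cover all of $Z$, so there is no ``closed remainder'' to worry about, and the structural statement you want to read off from Theorem \ref{theorem_sd_main} is not available here, since that theorem requires the constituent classes to have level at least $1$ while the $\bG_n$ in this lemma are arbitrary (moreover the asserted equivalence between gluing the reductions and describing all predecessors of $\SU_0(\bD)$ is itself unjustified). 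The second route is a one-sentence hint; knowing that $\SUc_0(\bD)$ has the separation property says nothing by itself about a hypothetical intermediate $\bG'$.

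The irony is that $(\star)$ is immediate from Proposition \ref{proposition_pu_basic}, and this is exactly how the paper closes the argument: if $\bG'$ is a Wadge class with $\bD\subseteq\bG'$, then $\PU_0(\bD)\subseteq\PU_0(\bG')=\bG'$, by monotonicity of $\PU_0$ together with the fact that $\PU_0$ of a Wadge class is that class itself. Hence $\bD\subseteq\bG'\subseteq\PU_0(\bD)$ forces $\bG'=\PU_0(\bD)$, which is selfdual because $\bD$ is, contradicting $\bG'\in\NSD(Z)$. With that one line inserted your argument goes through; your $(2)\Rightarrow(1)$ detour through a minimal element $\bG_{\min}$ is more roundabout than the paper's direct application of the already-proved implication $(1)\rightarrow(2)$ to $\SU_0(\bD)$ and $\SUc_0(\bD)$, but it is not wrong.
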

\begin{proof}
First we will prove that $(1)\rightarrow (2)$. Assume that $(1)$ holds. Observe that $\bG,\bGc\in\NSDS(Z)$ by Theorem \ref{theorem_su_nsd} plus the assumption that $\SU_0(\bD)\subseteq\bS(Z)$. Furthermore, it is clear that $\bD\subseteq\bG$. Now pick $\bG'\in\NSDS(Z)$ such that $\bD\subseteq\bG'$. Assume, in order to get a contradiction, that $\bG\nsubseteq\bG'$ and $\bGc\nsubseteq\bG'$. It follows from Lemma \ref{lemma_wadge} that $\bG'\subseteq\bGc$ and $\bG'\subseteq\bG$, hence $\bG'\subseteq\PU_0(\bD)$ by Lemma \ref{lemma_su_versus_pu}. On the other hand, Proposition \ref{proposition_pu_basic} shows that $\PU_0(\bD)\subseteq\PU_0(\bG')=\bG'$. In conclusion, we see that $\bG'=\PU_0(\bD)$, which contradicts the fact that $\bG'$ is non-selfdual.

Now assume that $(2)$ holds. Using the implication $(1)\rightarrow (2)$, it is easy to see that both of the following conditions hold:
\begin{itemize}
\item $\SU_0(\bD)\subseteq\bG$ or $\SUc_0(\bD)\subseteq\bG$,
\item $\bG\subseteq\SU_0(\bD)$ or $\bG\subseteq\SUc_0(\bD)$.
\end{itemize}
On the other hand, if both of the inclusions $\SU_0(\bD)\subseteq\bG$ and $\bG\subseteq\SUc_0(\bD)$ were true, then the fact that $\SU_0(\bD)$ is non-selfdual would be contradicted. Similarly, the inclusions $\SUc_0(\bD)\subseteq\bG$ and $\bG\subseteq\SU_0(\bD)$ cannot both hold. At this point, it is clear that $(1)$ holds.
\end{proof}

Before giving the promised characterization, we will need one more preliminary result, which illustrates how separated unions interact with expansions.

\begin{lemma}\label{lemma_su_expansion} Let $\bS$ be a nice topological pointclass, and assume that $\Det(\bS(\omega^\omega))$ holds. Let $Z$ be an uncountable zero-dimensional Polish space, and let $\xi<\omega_1$. If $\bG_n\in\NSDS(Z)$ for $n\in\omega$ and $\bG=\SU_0(\bigcup_{n\in\omega}\bG_n\cup\bGc_n)\subseteq\bS(Z)$ then
$$
\bG^{(\xi)}=\SU_\xi\left(\bigcup_{n\in\omega}\left(\bG_n^{(\xi)}\cup\bGc_n^{(\xi)}\right)\right).
$$
\end{lemma}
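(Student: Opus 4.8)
The plan is to prove the two inclusions of $\bG^{(\xi)}=\SU_\xi\big(\bigcup_{n\in\omega}(\bG_n^{(\xi)}\cup\bGc_n^{(\xi)})\big)$ separately, after recording two preliminary facts: $\bGc_n^{(\xi)}=\widecheck{\bG_n^{(\xi)}}$ by Proposition \ref{proposition_expansion_basic}, and both sides of the desired identity are non-selfdual Wadge classes in $Z$ — the left one by the Expansion Theorem \ref{theorem_expansion}, and the right one by Theorem \ref{theorem_su_nsd} applied to the classes $\bG_n^{(\xi)},\bGc_n^{(\xi)}\in\NSDS(Z)$.

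The inclusion $\bG^{(\xi)}\subseteq\SU_\xi\big(\bigcup_n(\bG_n^{(\xi)}\cup\bGc_n^{(\xi)})\big)$ is routine: if $B=f^{-1}[A]$ with $f:Z\to Z$ being $\bS^0_{1+\xi}$-measurable and $A=\bigcup_n(A_n\cap U_n)$ a separated union witnessing $A\in\bG=\SU_0\big(\bigcup_n(\bG_n\cup\bGc_n)\big)$, then $B=\bigcup_n\big(f^{-1}[A_n]\cap f^{-1}[U_n]\big)$, where the $f^{-1}[U_n]\in\bS^0_{1+\xi}(Z)$ are pairwise disjoint and each $f^{-1}[A_n]$ belongs to the corresponding $\bG_m^{(\xi)}$ or $\bGc_m^{(\xi)}$ directly by the definition of expansion (together with Proposition \ref{proposition_expansion_basic}); hence $B$ lies in the right-hand class.

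For the reverse inclusion — the substantial one — I would mimic the proof of Lemma \ref{lemma_expansion_separation}. Fix $\bL_n\in\NSDS(\omega^\omega)$ with $\bL_n(Z)=\bG_n$ and $\bL\in\NSDS(\omega^\omega)$ with $\bL(Z)=\bG$ (Lemma \ref{lemma_relativization_exists_unique}; here $\bG\in\NSDS(Z)$ by Theorem \ref{theorem_su_nsd} and the hypothesis $\bG\subseteq\bS(Z)$, and no relation between $\bL$ and the $\bL_n$ is implied), so that $\bG_n^{(\xi)}=\bL_n^{(\xi)}(Z)$ and $\bG^{(\xi)}=\bL^{(\xi)}(Z)$ by Lemma \ref{lemma_expansion_relativization_move_xi}. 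Given $B=\bigcup_n(B_n\cap V_n)$ with $B_n\in\bG_{k_n}^{(\xi)}\cup\bGc_{k_n}^{(\xi)}$ and the $V_n\in\bS^0_{1+\xi}(Z)=\bS^0_1(Z)^{(\xi)}$ pairwise disjoint, apply Lemma \ref{lemma_expansion_bijection} to the countable list of classes in $\omega^\omega$ consisting of $\bS^0_1(\omega^\omega)$ together with all the $\bL_n$ and $\bLc_n$, the associated countable families collecting all the $V_n$ (for $\bS^0_1(\omega^\omega)$, using $V_n\in\bS^0_1(\omega^\omega)(Z)^{(\xi)}$) and the appropriate $B_n$ (for $\bL_{k_n}$, resp.\ $\bLc_{k_n}$). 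This produces a zero-dimensional Polish space $W$ and a $\bS^0_{1+\xi}$-measurable bijection $f:Z\to W$ such that each $f[V_n]$ is open in $W$ and each $f[B_n]$ lies in $\bL_{k_n}(W)\cup\bLc_{k_n}(W)$. Since $f$ is bijective, the $f[V_n]$ are pairwise disjoint, $B=f^{-1}[A]$ with $A:=\bigcup_n(f[B_n]\cap f[V_n])$, and $A$ is a \emph{level-$0$} separated union in $W$ of members of $\bigcup_n(\bL_n(W)\cup\bLc_n(W))$. If $A\in\bL(W)$, then Lemma \ref{lemma_expansion_relativization_measurable_function} gives $B=f^{-1}[A]\in\bL^{(\xi)}(Z)=\bG^{(\xi)}$, finishing the proof; note it is essential that $f[V_n]$ be open (not merely $\bS^0_{1+\xi}$) in $W$, so that $A$ sits in the unexpanded class and the single application of Lemma \ref{lemma_expansion_relativization_measurable_function} produces exactly a $\xi$-expansion.

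Thus everything reduces to the claim $\SU_0\big(\bigcup_n(\bL_n(W)\cup\bLc_n(W))\big)=\bL(W)$, i.e.\ (since $W$ ranges over uncountable zero-dimensional Polish spaces) that forming level-$0$ separated unions commutes with relativization; this is the heart of the matter. The intended justification is that $Y\mapsto\SU_0\big(\bigcup_n(\bL_n(Y)\cup\bLc_n(Y))\big)$ is a relativizable pointclass admitting $2^\omega$-universal sets — exactly what the proof of Theorem \ref{theorem_su_nsd} establishes through Theorem \ref{theorem_new_from_old} and Corollary \ref{corollary_new_from_old} — so that on every uncountable zero-dimensional Borel space its value is the intersection with that space of its value $\bX:=\SU_0\big(\bigcup_n(\bL_n\cup\bLc_n)\big)\in\NSD(\omega^\omega)$ on $\omega^\omega$; evaluating at $Z$ and comparing with $\bL(Z)=\bG$ via Lemma \ref{lemma_relativization_subspace} identifies $\bX=\bL$, and evaluating at $W$ yields the claim. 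The genuinely delicate point — and the main obstacle — is to verify that $\bX\subseteq\bS(\omega^\omega)$ (so that $\bX\in\NSDS(\omega^\omega)$ and the uniqueness clause of Lemma \ref{lemma_relativization_exists_unique} can be invoked to conclude $\bX=\bL$): this is precisely where the standing hypothesis $\bG=\SU_0\big(\bigcup_n(\bG_n\cup\bGc_n)\big)\subseteq\bS(Z)$ must be leveraged, via the way $\bS$-complexity transfers between uncountable zero-dimensional Polish spaces inside the relativizable-pointclass calculus of \cite{carroy_medini_muller_constructing}.
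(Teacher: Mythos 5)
Your proof is correct, and its overall architecture coincides with the paper's: the inclusion $\subseteq$ is immediate from the definition of expansion, and the inclusion $\supseteq$ is reduced, via the Kuratowski-transfer Lemma \ref{lemma_expansion_bijection} together with Lemmas \ref{lemma_expansion_relativization_measurable_function} and \ref{lemma_expansion_relativization_move_xi}, to the level-$0$ identity $\SU_0\bigl(\bigcup_{n\in\omega}(\bL_n(W)\cup\bLc_n(W))\bigr)=\bL(W)$ for every uncountable zero-dimensional Polish space $W$ (the paper's Claim 2). Where you genuinely diverge is in the proof of that identity. The paper argues order-theoretically: it transports the characterization of Lemma \ref{lemma_describe_successors_and_countable_limits} along the order-isomorphism of Theorem \ref{theorem_order_isomorphism}, which determines the class only up to duality, and then breaks the tie with the separation property (Lemma \ref{lemma_separation_relativization}, Theorem \ref{theorem_separation_generalized}, Lemma \ref{lemma_su_dual_has_separation}). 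You instead observe that $Y\mapsto\SU_0\bigl(\bigcup_{n\in\omega}(\bL_n(Y)\cup\bLc_n(Y))\bigr)$ is a relativizable pointclass with $2^\omega$-universal sets (Theorems \ref{theorem_wadge_implies_universal}, \ref{theorem_new_from_old} and \ref{theorem_universal_implies_wadge}), so that its values on all zero-dimensional Borel spaces are the traces of a single $\bX\in\NSD(\omega^\omega)$; comparing at $Z$ with the hypothesized class and invoking the uniqueness clause of Lemma \ref{lemma_relativization_exists_unique} pins down $\bX=\bL$ with no duality ambiguity, so no separation-property argument is needed. This is a legitimate and arguably more direct route; its cost is that it leans on the universal-set machinery of \S\ref{subsection_wadge_fundamental_new_from_old} rather than on Lemma \ref{lemma_describe_successors_and_countable_limits}, which the paper needs anyway for Theorem \ref{theorem_characterization_small_type}.

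The one step you flag but do not carry out, namely that $\bX\subseteq\bS(\omega^\omega)$ (needed so that Lemmas \ref{lemma_relativization_subspace} and \ref{lemma_relativization_exists_unique} apply to $\bX$), is exactly the paper's Claim 1, and it follows in one line from the relativizability you have already established: fix an embedding of $\omega^\omega$ into $Z$ as a zero-dimensional Borel subspace; every $A\in\bX$ then equals $\widetilde{A}\cap\omega^\omega$ for some $\widetilde{A}$ in the value of your pointclass at $Z$, which is the hypothesized class contained in $\bS(Z)$, and pulling back along the continuous embedding gives $A\in\bS(\omega^\omega)$. So there is no gap here, only an unexecuted but routine verification.
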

\begin{proof}
Pick $\bG_n(Z)\in\NSDS(Z)$ for $n\in\omega$, where each $\bG_n\in\NSDS(\omega^\omega)$. Set
$$
\bG=\SU_0\left(\bigcup_{n\in\omega}\bG_n\cup\bGc_n\right).
$$
Given a space $W$, we will use the notation $\bD(W)=\bigcup_{n\in\omega}(\bG_n(W)\cup\bGc_n(W))$. Assume that $\SU_0(\bD(Z))\subseteq\bS(Z)$.

\noindent\textbf{Claim 1.} $\bG\subseteq\bS(\omega^\omega)$.

\noindent\textit{Proof.} Fix an embedding $j:\omega^\omega\longrightarrow Z$. Pick $A\in\bG$. Using Theorem \ref{theorem_reduction} and Lemma \ref{lemma_relativization_subspace}, it is possible to find $\widetilde{A}\in\SU_0(\bD(Z))$ in $Z$ such that $j[A]=\widetilde{A}\cap j[\omega^\omega]$. Since $\widetilde{A}\in\bS(Z)$ by assumption, it follows that $A=j^{-1}[\widetilde{A}]\in\bS(\omega^\omega)$. $\blacksquare$

\noindent\textbf{Claim 2.} $\bG(W)=\SU_0(\bD(W))$ whenever $W$ is an uncountable Polish space.

\noindent\textit{Proof.} Pick an uncountable Polish space $W$. Define $\Phi:\NSDS(\omega^\omega)\longrightarrow\NSDS(W)$ by setting $\Phi(\bL)=\bL(W)$ for $\bL\in\NSDS(\omega^\omega)$, and observe that $\Phi$ is an order-isomorphism with respect to $\subseteq$ by Theorem \ref{theorem_order_isomorphism}. Furthermore, Lemma \ref{lemma_relativization_basic} shows that $\widecheck{\Phi(\bL)}=\Phi(\bLc)$ for every $\bL\in\NSDS(\omega^\omega)$. By Claim 1, it is possible to apply Lemma \ref{lemma_describe_successors_and_countable_limits}, which gives a characterization that can be expressed using only order-theoretic terms plus the operation of taking the dual class. It follows that either $\Phi(\bG)=\SU_0(\bD(W))$ or $\Phi(\bG)=\SUc_0(\bD(W))$. However, using Lemma \ref{lemma_separation_relativization}, Theorem \ref{theorem_separation_generalized} and Lemma \ref{lemma_su_dual_has_separation}, it is straighforward to check that the second case would lead to a contradiction. $\blacksquare$

Now we have all the necessary tools to show that
$$
\bG(Z)^{(\xi)}=\SU_\xi\left(\bigcup_{n\in\omega}\left(\bG_n(Z)^{(\xi)}\cup\bGc_n(Z)^{(\xi)}\right)\right),
$$
which will conclude the proof by Claim 2. The inclusion $\subseteq$ follows easily from Claim 2 and the definition of expansion. In order to prove the other inclusion, pick
$$
A=\bigcup_{k\in\omega}(A_k\cap U_k),
$$
where each $A_k$ belongs to some $\bG_n(Z)^{(\xi)}$ or $\bGc_n(Z)^{(\xi)}$, and the $U_k\in\bS^0_{1+\xi}(Z)$ are pairwise disjoint. By Lemma \ref{lemma_expansion_bijection}, we can fix a zero-dimensional Polish space $W$ and a $\bS^0_{1+\xi}$-measurable bijection $f:Z\longrightarrow W$ such that each $f[A_k]$ belongs to some $\bG_n(W)$ or $\bGc_n(W)$, and each $f[U_k]\in\bS^0_1(W)$. Using Claim 2, Lemma \ref{lemma_expansion_relativization_measurable_function} and Lemma \ref{lemma_expansion_relativization_move_xi}, one sees that
$$
A=f^{-1}\left[\bigcup_{k\in\omega}(f[A_k]\cap f[U_k])\right]\in\bG^{(\xi)}(Z)=\bG(Z)^{(\xi)},
$$
as desired.
\end{proof}

\begin{theorem}\label{theorem_characterization_small_type}
Let $\bS$ be a nice topological pointclass, and assume that $\Det(\bS(\omega^\omega))$ holds. Let $Z$ be an uncountable zero-dimensional Polish space, and let $\bG\in\NSDS(Z)$ be such that $\ell(\bG)=\xi<\omega_1$. Then the following conditions are equivalent:
\begin{itemize}
\item $t(\bG)\in\{1,2\}$,
\item There exist $\bG_n\in\NSD(Z)$ for $n\in\omega$ such that each $\ell(\bG_n)\geq\xi$ and either $\bG=\SU_\xi(\bD)$ or $\bG=\SUc_\xi(\bD)$, where $\bD=\bigcup_{n\in\omega}(\bG_n\cup\bGc_n)$.
\end{itemize}
\end{theorem}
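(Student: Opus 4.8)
The plan is to reduce to the case $\ell(\bG)=\xi=0$ by means of the expansion calculus, and then to recognize the $\xi=0$ statement as an order-theoretic fact about the Wadge hierarchy that follows from Lemma~\ref{lemma_describe_successors_and_countable_limits}.

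\emph{Reduction to $\xi=0$.} By Theorem~\ref{theorem_expansion} write $\bG=\bL^{(\xi)}$ with $\bL\in\NSDS(Z)$; then $\ell(\bL)=0$ by Corollary~\ref{corollary_expansion_level} and $t(\bL)=t(\bG)$ by Corollary~\ref{corollary_type_transfer_expansion}. Given a representation $\bL=\SU_0(\bigcup_n(\bL_n\cup\bLc_n))$ with $\bL_n\in\NSD(Z)$, the fact that $\bL\subseteq\bS(Z)$ lets us apply Lemma~\ref{lemma_su_expansion}, obtaining $\bG=\bL^{(\xi)}=\SU_\xi(\bigcup_n(\bL_n^{(\xi)}\cup\bLc_n^{(\xi)}))$; here each $\bG_n:=\bL_n^{(\xi)}$ lies in $\NSDS(Z)$ with $\ell(\bG_n)\geq\xi$ by Theorem~\ref{theorem_expansion}, and the case $\bL=\SUc_0(\ldots)$ is handled by taking complements. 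Conversely, given $\bG=\SU_\xi(\bigcup_n(\bG_n\cup\bGc_n))$ with $\bG_n\in\NSD(Z)$ and $\ell(\bG_n)\geq\xi$, first note $\bG_n\subseteq\bG\subseteq\bS(Z)$ (since $\bD\subseteq\SU_\xi(\bD)=\bG$), so $\bG_n\in\NSDS(Z)$; write $\bG_n=\bL_n^{(\xi)}$ with $\bL_n\in\NSDS(Z)$ (Theorem~\ref{theorem_expansion}) and set $\mathbf{N}:=\SU_0(\bigcup_n(\bL_n\cup\bLc_n))\in\NSD(Z)$ (Theorem~\ref{theorem_su_nsd}). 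The trivial inclusion $\mathbf{N}^{(\xi)}\subseteq\SU_\xi(\bigcup_n(\bL_n^{(\xi)}\cup\bLc_n^{(\xi)}))=\bG$, which uses only the definition of expansion, shows $\mathbf{N}\subseteq\bG\subseteq\bS(Z)$, so Lemma~\ref{lemma_su_expansion} applies to $\mathbf{N}$ and gives $\mathbf{N}^{(\xi)}=\bG$; hence $\ell(\mathbf{N})=0$ (Corollary~\ref{corollary_expansion_level}) and $t(\mathbf{N})=t(\bG)$ (Corollary~\ref{corollary_type_transfer_expansion}). Thus both conditions in the theorem for $\bG$ and $\xi$ are equivalent to the corresponding conditions for $\bL$ (respectively $\mathbf{N}$) and $0$. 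The only subtle point is the ``$\subseteq\bS(Z)$'' hypothesis of Lemma~\ref{lemma_su_expansion}; it is harmless here precisely because the relevant separated unions sit inside $\bG$, which already belongs to $\bS(Z)$.

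\emph{The case $\xi=0$.} Now $\NSD(Z)$ is the order defining type, so by Proposition~\ref{proposition_type_basic} the condition $t(\bG)\in\{1,2\}$ means exactly that $\bG$ is a successor or a countable limit in $\NSD(Z)$. Since $\bG\in\NSDS(Z)$, whenever $\bG\in\{\SU_0(\bD),\SUc_0(\bD)\}$ one automatically has $\SU_0(\bD)\subseteq\bS(Z)$, so Lemma~\ref{lemma_describe_successors_and_countable_limits} is available: for a countable family $\{\bG_n\}\subseteq\NSD(Z)$ with $\bD=\bigcup_n(\bG_n\cup\bGc_n)$, the statement ``$\bG=\SU_0(\bD)$ or $\SUc_0(\bD)$'' is equivalent to the condition $(\star)$ that $\bD\subseteq\bG$ and every $\bG'\in\NSDS(Z)$ with $\bD\subseteq\bG'$ contains $\bG$ or $\bGc$. (In the direction $(\star)\Rightarrow{}$``$\bG=\SU_0(\bD)$ or $\SUc_0(\bD)$'' one can also argue directly, using $\Delta(\SU_0(\bD))=\PU_0(\bD)\subseteq\PU_0(\bG)=\bG$ by Lemma~\ref{lemma_su_versus_pu} and Proposition~\ref{proposition_pu_basic}, together with Theorem~\ref{theorem_su_nsd} and the semilinear structure, which sidesteps the side-condition in that direction.) So the task becomes purely order-theoretic, namely ``$(\star)$ holds for some such $\bD$ $\Leftrightarrow$ $\bG$ is a successor or a countable limit''. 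For the implication $\Leftarrow$: if $\bG$ is a successor take all $\bG_n$ equal to an immediate predecessor $\bL$ of $\bG$; if $\bG$ is a countable limit take a strictly increasing cofinal $\omega$-sequence of non-selfdual classes below $\bG$. In each case $\bD\subseteq\bG$ is immediate, and the verification of $(\star)$ is a short argument from Wadge's Lemma~\ref{lemma_wadge}, well-foundedness (Theorem~\ref{theorem_well-founded}), and the semilinear ordering of non-selfdual classes (for non-selfdual $\bA,\bB$, either $\bA\in\{\bB,\widecheck{\bB}\}$, or one dual pair lies inside the $\Delta$-class of the other). For the implication $\Rightarrow$: condition $(\star)$ rules out $t(\bG)=0$ (otherwise $\bD$ would be forced into $\{\varnothing\}$ or $\{Z\}$, impossible since $\bD$ contains a full non-selfdual pair); then, after replacing the $\bG_n$ by a representative of the $\subseteq$-largest pair among $\bG_0,\dots,\bG_n$ so that $(\bG_n\cup\bGc_n)_n$ is $\subseteq$-increasing with union $\bD$, split on whether this sequence stabilizes — if it stabilizes at $\bL\cup\bLc$, then $(\star)$ forces $\bG$ to be the immediate successor of $\bL$, so $t(\bG)=1$; if not, the now strictly increasing $\bG_n$ are, by $(\star)$, cofinal below $\bG$, so $\bG$ is a successor or a countable limit and $t(\bG)\in\{1,2\}$.

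\emph{Main obstacle.} The expansion reduction is essentially formal once the side-conditions of Lemma~\ref{lemma_su_expansion} are seen to be met for free. The real work is the order-theoretic analysis at $\xi=0$, i.e.\ matching ``$\bG$ is a successor or a countable limit'' with ``$\bG=\SU_0(\bD)$ or $\SUc_0(\bD)$ for a $\bD$ built from a countable family of non-selfdual classes''. The two delicate points are the precise bookkeeping with the semilinear structure of the Wadge hierarchy — in particular that a \emph{strictly} smaller non-selfdual pair is contained in the relevant $\Delta$-class, which is what drives both the verification of $(\star)$ and the pinning-down of $\SU_0(\bD)$ to $\bG$ or $\bGc$ — and keeping track of which auxiliary classes lie in $\bS$, so that Lemmas~\ref{lemma_describe_successors_and_countable_limits} and~\ref{lemma_su_expansion} genuinely apply.
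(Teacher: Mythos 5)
Your argument follows exactly the route that the paper's own (two-line) proof indicates: settle the case $\xi=0$ via Lemma \ref{lemma_describe_successors_and_countable_limits} and Proposition \ref{proposition_type_basic}, then transfer to arbitrary $\xi$ using Theorem \ref{theorem_expansion}, Corollaries \ref{corollary_expansion_level} and \ref{corollary_type_transfer_expansion}, and Lemma \ref{lemma_su_expansion}. The details you supply --- the order-theoretic analysis of $(\star)$ at level $0$ and the verification that the various auxiliary classes lie in $\bS(Z)$ so that Lemmas \ref{lemma_describe_successors_and_countable_limits} and \ref{lemma_su_expansion} apply --- are correct and in fact more explicit than what the paper records.
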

\begin{proof}
The case $\xi=0$ can be easily obtained as a consequence of Lemma \ref{lemma_describe_successors_and_countable_limits}. The general case will then follow, using the methods of \S\ref{subsection_wadge_fundamental_expansions} in conjunction with Corollary \ref{corollary_type_transfer_expansion} and Lemma \ref{lemma_su_expansion}.
\end{proof}

\section{Wadge theory: closure and preservation}\label{section_wadge_closure}

\subsection{Closure properties: preliminaries}\label{subsection_wadge_closure_preliminaries}

As will become increasingly clear, what we need from Wadge theory are closure properties. These properties are spelled out in the following definition, while the subsequent lemmas will be useful later on, towards obtaining the more sophisticated closure properties of \S\ref{subsection_wadge_closure_main}.

\begin{definition}
Let $Z$ be a space, let $\bG\subseteq\PP(Z)$, and let $\xi<\omega_1$.
\begin{itemize}
\item $\bG$ is \emph{closed under intersections with $\bP_{1+\xi}$ sets} if $A\cap B\in\bG$ whenever $A\in\bG$ and $B\in\bP_{1+\xi}(Z)$.
\item $\bG$ is \emph{closed under unions with $\bS_{1+\xi}$ sets} if $A\cup B\in\bG$ whenever $A\in\bG$ and $B\in\bS_{1+\xi}(Z)$.
\item $\bG$ is \emph{closed under $\SU_\xi$} if $\SU_\xi(\bG)=\bG$.
\end{itemize}
\end{definition}

The next four results show that these closure properties are well-behaved with respect to relativization and expansions.

\begin{lemma}\label{lemma_preservation_basic_closure_under_relativization}
Let $\bS$ be a nice topological pointclass, and assume that $\Det(\bS(\omega^\omega))$ holds. Let $Z$ and $W$ be uncountable zero-dimensional Borel spaces, let $\xi<\omega_1$, and let $\bG\in\NSDS(\omega^\omega)$. Then the following conditions are equivalent:
\begin{itemize}
\item $\bG(Z)$ is closed under intersections with $\bP_{1+\xi}$ sets (respectively under unions with $\bS_{1+\xi}$ sets),
\item $\bG(W)$ is closed under intersections with $\bP_{1+\xi}$ sets (respectively under unions with $\bS_{1+\xi}$ sets).
\end{itemize}
\end{lemma}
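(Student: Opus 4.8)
The plan is to reduce everything to two facts about relativization that are already in the excerpt — its behaviour under passing to subspaces (Lemma \ref{lemma_relativization_subspace}) and under homeomorphisms (Lemma \ref{lemma_relativization_basic}) — together with the purely topological observation that any two uncountable zero-dimensional Borel spaces embed into one another. Fix $\bG\in\NSDS(\omega^\omega)$. I will argue the version with intersections and $\bP^0_{1+\xi}$ sets; the version with unions and $\bS^0_{1+\xi}$ sets is identical, with $\cap$ replaced by $\cup$ throughout. First I would establish a \emph{restriction step}: if $Y\subseteq X$ are zero-dimensional Borel spaces and $\bG(X)$ is closed under intersections with $\bP^0_{1+\xi}$ sets, then so is $\bG(Y)$. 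Given $A\in\bG(Y)$ and $B\in\bP^0_{1+\xi}(Y)$, one uses Lemma \ref{lemma_relativization_subspace} to write $A=\widetilde{A}\cap Y$ with $\widetilde{A}\in\bG(X)$, and the standard relativization of the Borel classes to write $B=\widetilde{B}\cap Y$ with $\widetilde{B}\in\bP^0_{1+\xi}(X)$; then $\widetilde{A}\cap\widetilde{B}\in\bG(X)$ by hypothesis, whence $A\cap B=(\widetilde{A}\cap\widetilde{B})\cap Y\in\bG(Y)$ by Lemma \ref{lemma_relativization_subspace} again. I would also record the trivial \emph{homeomorphism step}: if $X\approx X'$ and $\bG(X)$ is closed under intersections with $\bP^0_{1+\xi}$ sets, then so is $\bG(X')$; this is immediate from the second bullet of Lemma \ref{lemma_relativization_basic}, since a homeomorphism $h:X\to X'$ carries $\bG(X)$ onto $\bG(X')$ and $\bP^0_{1+\xi}(X)$ onto $\bP^0_{1+\xi}(X')$, and commutes with intersections.

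The topological ingredient I would then record is that every zero-dimensional Borel space embeds, necessarily as a Borel subspace, into every uncountable zero-dimensional Borel space. Indeed, an uncountable zero-dimensional Borel space $X$ embeds into some Polish space as an uncountable Borel set, which by the perfect set property for Borel sets (available in $\ZF+\DC$; see \cite{kechris}) contains a subset homeomorphic to $2^\omega$; on the other hand, every zero-dimensional space embeds into $2^\omega$ via the characteristic functions of a countable clopen base. Composing, any zero-dimensional Borel space $Y$ embeds into $X$, and the image is again a zero-dimensional Borel space since being such is a topological property, so that Lemma \ref{lemma_relativization_subspace} applies to this image together with $X$.

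With these in hand the lemma follows: assuming $\bG(Z)$ is closed under intersections with $\bP^0_{1+\xi}$ sets, fix (by the topological ingredient) a subspace $W'\subseteq Z$ with $W'\approx W$; the restriction step gives that $\bG(W')$ is closed under intersections with $\bP^0_{1+\xi}$ sets, and the homeomorphism step transfers this to $\bG(W)$. The converse implication is proved by the symmetric argument, embedding a copy of $Z$ into $W$. I do not expect any serious obstacle here: the only points requiring care are invoking the relativization of $\bP^0_{1+\xi}$ and $\bS^0_{1+\xi}$ to subspaces, and citing the perfect set fact in a form valid in $\ZF+\DC$; all of the substantive work has already been packaged into Lemma \ref{lemma_relativization_subspace}.
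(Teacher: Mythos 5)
Your proposal is correct and follows essentially the same route as the paper: the paper's proof simply says ``without loss of generality, assume that $W$ is a subspace of $Z$'' and then performs exactly your restriction step, lifting $A$ and $B$ to $\widetilde{A}\in\bG(Z)$ and $\widetilde{B}\in\bP^0_{1+\xi}(Z)$ via Lemma \ref{lemma_relativization_subspace} and restricting the intersection back down. The only difference is that you spell out the justification of the ``without loss of generality'' (the embedding of any zero-dimensional Borel space into an uncountable one, plus invariance under homeomorphism), which the paper leaves implicit.
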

\begin{proof}
We will only prove one of the implications, since the other cases are perfectly analogous. So assume that $\bG(Z)$ is closed under intersections with $\bP_{1+\xi}$ sets. Without loss of generality, assume that $W$ is a subspace of $Z$. Pick $A\in\bG(W)$ and $B\in\bP_{1+\xi}(W)$. By Lemma \ref{lemma_relativization_subspace}, there exist $\widetilde{A}\in\bG(Z)$ and $\widetilde{B}\in\bP_{1+\xi}(Z)$ such that $\widetilde{A}\cap W=A$ and $\widetilde{B}\cap W= B$. Notice that $\widetilde{A}\cap\widetilde{B}\in\bG(Z)$ by assumption. Therefore $A\cap B=(\widetilde{A}\cap\widetilde{B})\cap W\in\bG(W)$ by Lemma \ref{lemma_relativization_subspace}.
\end{proof}

\begin{lemma}\label{lemma_preservation_basic_closure_under_expansion}
Let $\bS$ be a nice topological pointclass, and assume that $\Det(\bS(\omega^\omega))$ holds. Let $Z$ be an uncountable zero-dimensional Polish space, and let $\xi<\omega_1$. If $\bG\in\NSDS(Z)$ is closed under intersections with $\bP^0_1$ sets (respectively under unions with $\bS^0_1$ sets) then $\bG^{(\xi)}$ is closed under intersections with $\bP^0_{1+\xi}$ sets (respectively under unions with $\bS_{1+\xi}$ sets).
\end{lemma}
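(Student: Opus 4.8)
The plan is to reduce first to the case of intersections with $\bP^0_1$ sets by passing to dual classes, and then to argue exactly as in the proof of Lemma \ref{lemma_expansion_separation}: use Lemma \ref{lemma_expansion_bijection} to ``un-expand'' the two given sets onto a copy $W$ of $Z$ on which they become a set in the un-expanded class and a closed set, apply the hypothesis there, and then ``re-expand'' back to $Z$. Since $\widecheck{\bG^{(\xi)}}=\bGc^{(\xi)}$ by Proposition \ref{proposition_expansion_basic}, and since a class is closed under unions with $\bS^0_1$ sets precisely when its dual is closed under intersections with $\bP^0_1$ sets, it suffices to treat the ``intersections'' statement. So assume $\bG$ is closed under intersections with $\bP^0_1$ sets. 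Using Theorem \ref{theorem_relativization_uncountable}, fix $\bL\in\NSDS(\omega^\omega)$ with $\bL(Z)=\bG$; then $\bG^{(\xi)}=\bL(Z)^{(\xi)}=\bL^{(\xi)}(Z)$ by Lemma \ref{lemma_expansion_relativization_move_xi}. Fix $A\in\bG^{(\xi)}$ and $B\in\bP^0_{1+\xi}(Z)$; the goal is $A\cap B\in\bG^{(\xi)}$. Note that $A\in\bL(Z)^{(\xi)}$ and that $B\in\bP^0_{1+\xi}(Z)=\bP^0_1(Z)^{(\xi)}$, where the last equality is the dual form of Proposition \ref{proposition_expansion_differences} with $\eta=1$ (using Proposition \ref{proposition_expansion_basic}) and where $\bP^0_1(Z)$ denotes the relativization of the Wadge class $\bP^0_1(\omega^\omega)$ to $Z$, that is, the collection of closed subsets of $Z$.

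Now I would apply Lemma \ref{lemma_expansion_bijection} with $\bG_0=\bL$, $\bG_1=\bP^0_1(\omega^\omega)$, $\Aa_0=\{A\}$, $\Aa_1=\{B\}$ and $\Aa_n=\varnothing$ for $n\geq 2$. This yields a zero-dimensional Polish space $W$ and a $\bS^0_{1+\xi}$-measurable bijection $f:Z\longrightarrow W$ with $f[A]\in\bL(W)$ and $f[B]\in\bP^0_1(W)$ (i.e.\ $f[B]$ is closed in $W$); moreover $W$ is uncountable because $f$ is a bijection and $Z$ is uncountable. By Lemma \ref{lemma_preservation_basic_closure_under_relativization} (applied with its parameter $\xi$ equal to $0$), $\bL(W)$ is closed under intersections with $\bP^0_1$ sets, so $f[A]\cap f[B]\in\bL(W)$; since $f$ is injective, this set equals $f[A\cap B]$. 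Consequently $A\cap B=f^{-1}\!\left[f[A\cap B]\right]\in\bL^{(\xi)}(Z)=\bL(Z)^{(\xi)}=\bG^{(\xi)}$ by Lemmas \ref{lemma_expansion_relativization_measurable_function} and \ref{lemma_expansion_relativization_move_xi}. This settles the intersection case, and the union case follows by the duality recorded at the outset.

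Structurally this is the same argument as the proof of Lemma \ref{lemma_expansion_separation}, just with the closure-under-intersection hypothesis in place of the separation property, so I expect no genuine obstacle. The two points deserving a little care are the bookkeeping identity $\bP^0_{1+\xi}(Z)=\bP^0_1(Z)^{(\xi)}$, which is what lets $B$ be handled by Lemma \ref{lemma_expansion_bijection}, and keeping track of the determinacy hypothesis: Theorem \ref{theorem_relativization_uncountable} and Lemmas \ref{lemma_expansion_relativization_measurable_function}, \ref{lemma_expansion_relativization_move_xi}, \ref{lemma_preservation_basic_closure_under_relativization} all use it, whereas Lemma \ref{lemma_expansion_bijection} does not. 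The only thing one must get right is the indexing in the invocation of Lemma \ref{lemma_expansion_bijection}, so that exactly $A$ is pulled back into $\bL(W)$ and exactly $B$ is pulled back into $\bP^0_1(W)$.
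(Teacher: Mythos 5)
Your proposal is correct and follows essentially the same route as the paper's own proof: apply Lemma \ref{lemma_expansion_bijection} to un-expand $A$ and $B$ onto a space $W$ where they become a $\bG$ set and a closed set, use Lemma \ref{lemma_preservation_basic_closure_under_relativization} to intersect them there, and pull back via Lemmas \ref{lemma_expansion_relativization_measurable_function} and \ref{lemma_expansion_relativization_move_xi}. The extra care you take with the identity $\bP^0_{1+\xi}(Z)=\bP^0_1(Z)^{(\xi)}$ and with the duality reduction is exactly the bookkeeping the paper leaves implicit.
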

\begin{proof}
We will only prove one of the implications, since the other case is perfectly analogous. Pick $\bG(Z)\in\NSDS(Z)$ that is closed under intersections with $\bP^0_1$ sets, where $\bG\in\NSDS(\omega^\omega)$. Let $A\in\bG(Z)^{(\xi)}$ and $B\in\bP^0_{1+\xi}(Z)$. By Lemma \ref{lemma_expansion_bijection}, it is possible to fix a zero-dimensional Polish space $W$ and a $\bS^0_{1+\xi}$-measurable bijection $f:Z\longrightarrow W$ such that $f[A]\in\bG(W)$ and $f[B]\in\bP^0_1(W)$. Observe that $f[A]\cap f[B]\in\bG(W)$ by Lemma \ref{lemma_preservation_basic_closure_under_relativization}. Therefore
$$
A\cap B=f^{-1}[f[A]\cap f[B]]\in\bG^{(\xi)}(Z)=\bG(Z)^{(\xi)}
$$
by Lemmas \ref{lemma_expansion_relativization_measurable_function} and \ref{lemma_expansion_relativization_move_xi}, as desired.
\end{proof}

\begin{lemma}\label{lemma_preservation_closure_su_under_relativization}
Let $\bS$ be a nice topological pointclass, and assume that $\Det(\bS(\omega^\omega))$ holds. Let $Z$ and $W$ be uncountable zero-dimensional Borel spaces, let $\xi<\omega_1$, and let $\bG\in\NSDS(\omega^\omega)$. Then the following conditions are equivalent:
\begin{itemize}
\item $\bG(Z)$ is closed under $\SU_\xi$,
\item $\bG(W)$ is closed under $\SU_\xi$.
\end{itemize}
\end{lemma}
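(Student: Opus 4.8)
The plan is to reduce the statement to the following ``subspace version'': whenever $Y\subseteq Z$ are zero-dimensional Borel spaces and $\bG(Z)$ is closed under $\SU_\xi$, then so is $\bG(Y)$. Granting this, the general case follows because any two uncountable zero-dimensional Borel spaces embed into one another. Indeed, $W$ is zero-dimensional and second countable, hence it has a countable base $\BB=\{B_n:n\in\omega\}$ consisting of clopen sets, and then the map sending $x$ to the sequence $(y_n:n\in\omega)$ with $y_n=1$ iff $x\in B_n$ is an embedding of $W$ into $2^\omega$. On the other hand, since $Z$ is an uncountable Borel space, embedding it as a Borel subset of a Polish space and using the perfect set property for Borel sets shows that $Z$ contains a subspace $K$ with $K\approx 2^\omega$ (compare \cite[Corollary 6.5]{kechris}). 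Composing these two maps yields an embedding of $W$ into $Z$ whose image $W'$ is a zero-dimensional Borel subspace of $Z$ with $W'\approx W$. By the subspace version applied to $W'\subseteq Z$, if $\bG(Z)$ is closed under $\SU_\xi$ then so is $\bG(W')$, hence so is $\bG(W)$ by Lemma \ref{lemma_relativization_basic}; interchanging the roles of $Z$ and $W$ (which is legitimate, since $Z$ is itself uncountable zero-dimensional Borel) gives the converse. Thus $\bG(Z)$ is closed under $\SU_\xi$ iff $\bG(W)$ is.

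To prove the subspace version I would argue much as in the proof of Lemma \ref{lemma_preservation_basic_closure_under_relativization}, with one extra twist. Pick $A\in\SU_\xi(\bG(Y))$, say $A=\bigcup_{n\in\omega}(A_n\cap U_n)$ where each $A_n\in\bG(Y)$ and the $U_n\in\bS^0_{1+\xi}(Y)$ are pairwise disjoint. By Lemma \ref{lemma_relativization_subspace} there are $\widetilde{A}_n\in\bG(Z)$ with $\widetilde{A}_n\cap Y=A_n$, and since the pointclass $\bS^0_{1+\xi}$ is relativizable (\cite[Lemma 9.5]{carroy_medini_muller_constructing}) there are $\widetilde{U}_n\in\bS^0_{1+\xi}(Z)$ with $\widetilde{U}_n\cap Y=U_n$. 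The obstacle is that the $\widetilde{U}_n$ need not be pairwise disjoint, so they cannot yet serve as the base sets of a separated union in $Z$. This is where Theorem \ref{theorem_reduction} enters: applying it to $(\widetilde{U}_n:n\in\omega)$ yields pairwise disjoint $\widetilde{U}'_n\in\bS^0_{1+\xi}(Z)$ with $\widetilde{U}'_n\subseteq\widetilde{U}_n$ and $\bigcup_n\widetilde{U}'_n=\bigcup_n\widetilde{U}_n$. Crucially, using that the original $U_n$ are pairwise disjoint one checks that $\widetilde{U}'_n\cap Y=U_n$ still holds: the inclusion $\widetilde{U}'_n\cap Y\subseteq U_n$ is immediate, while if $x\in U_n$ then $x\in\widetilde{U}_n\subseteq\bigcup_m\widetilde{U}'_m$, so $x\in\widetilde{U}'_m\cap Y\subseteq U_m$ for some $m$, which forces $m=n$ and hence $x\in\widetilde{U}'_n$.

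Now set $\widetilde{A}=\bigcup_{n\in\omega}(\widetilde{A}_n\cap\widetilde{U}'_n)$. This is a separated union of elements of $\bG(Z)$, so $\widetilde{A}\in\SU_\xi(\bG(Z))=\bG(Z)$ by hypothesis, and $\widetilde{A}\cap Y=\bigcup_n(\widetilde{A}_n\cap Y)\cap(\widetilde{U}'_n\cap Y)=\bigcup_n(A_n\cap U_n)=A$, whence $A\in\bG(Y)$ by Lemma \ref{lemma_relativization_subspace}. This establishes the subspace version, and with it the lemma. I expect the main obstacle to be precisely the disjointification step of the previous paragraph: one must lift the separated-union data from $Y$ to $Z$ while simultaneously keeping the base sets pairwise disjoint (so that $\widetilde{A}$ genuinely lies in $\SU_\xi(\bG(Z))$) and preserving their traces on $Y$ (so that $\widetilde{A}\cap Y$ recovers $A$). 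Everything else — the embeddings used in the reduction and the bookkeeping with traces — is routine.
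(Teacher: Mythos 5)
Your proposal is correct and follows essentially the same route as the paper: reduce to the case $W\subseteq Z$ (the paper does this with a ``without loss of generality''), lift the $A_n$ via Lemma \ref{lemma_relativization_subspace}, lift the $U_n$ to pairwise disjoint $\bS^0_{1+\xi}$ sets of $Z$ using Theorem \ref{theorem_reduction}, and conclude by closure under $\SU_\xi$ in $Z$ together with Lemma \ref{lemma_relativization_subspace}. The extra details you supply (the mutual embeddability of uncountable zero-dimensional Borel spaces, and the verification that disjointification preserves the traces $\widetilde{U}'_n\cap Y=U_n$) are exactly the steps the paper leaves implicit.
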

\begin{proof}
We will only prove one of the implications, since the other case is perfectly analogous. So assume that $\bG(Z)$ is closed under $\SU_\xi$. Without loss of generality, assume that $W$ is a subspace of $Z$. Pick $A_n\in\bG(W)$ and pairwise disjoint $U_n\in\bS_{1+\xi}(W)$ for $n\in\omega$. Using Theorem \ref{theorem_reduction}, it is possible to find pairwise disjoint $\widetilde{U}_n\in\bS_{1+\xi}(Z)$ for $n\in\omega$ such that each $\widetilde{U}_n\cap W=U_n$. On the other hand, by Lemma \ref{lemma_relativization_subspace}, there exist $\widetilde{A}_n\in\bG(Z)$ for $n\in\omega$ such that each $\widetilde{A}_n\cap W=A_n$. To conclude the proof, observe that
$$
\bigcup_{n\in\omega}(A_n\cap U_n)=W\cap\bigcup_{n\in\omega}(\widetilde{A}_n\cap\widetilde{U}_n)\in\bG(W),
$$
by Lemma \ref{lemma_relativization_subspace}, since $\bG(Z)$ is closed under $\SU_\xi$.
\end{proof}

\begin{lemma}\label{lemma_preservation_closure_su_under_expansion}
Let $\bS$ be a nice topological pointclass, and assume that $\Det(\bS(\omega^\omega))$ holds. Let $Z$ be an uncountable zero-dimensional Polish space, and let $\xi<\omega_1$. If $\bG\in\NSDS(Z)$ is closed under $\SU_0$ then $\bG^{(\xi)}$ is closed under $\SU_\xi$.
\end{lemma}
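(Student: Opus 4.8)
The plan is to follow the same pattern used in the second half of the proof of Lemma \ref{lemma_su_expansion} and in the proof of Lemma \ref{lemma_expansion_separation}: transport a given separated union of sets in $\bG^{(\xi)}$ to an \emph{honest} separated union of level-$0$ pieces by means of an $\bS^0_{1+\xi}$-measurable bijection, invoke the already-established fact that closure under $\SU_0$ survives relativization, and then pull the result back. Concretely, let $\bG(Z)\in\NSDS(Z)$, where $\bG\in\NSDS(\omega^\omega)$, and assume that $\bG(Z)$ is closed under $\SU_0$; by Lemma \ref{lemma_expansion_relativization_move_xi} we have $\bG(Z)^{(\xi)}=\bG^{(\xi)}(Z)$, and the inclusion $\bG(Z)^{(\xi)}\subseteq\SU_\xi(\bG(Z)^{(\xi)})$ is trivial, so the whole content lies in the reverse inclusion.

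For that, I would pick an arbitrary $A=\bigcup_{k\in\omega}(A_k\cap U_k)$ with each $A_k\in\bG(Z)^{(\xi)}$ and the $U_k\in\bS^0_{1+\xi}(Z)$ pairwise disjoint, and apply Lemma \ref{lemma_expansion_bijection}, exactly as in the last paragraph of the proof of Lemma \ref{lemma_su_expansion}, to the countable families $\{A_k:k\in\omega\}$ (assigned to the constant pointclass $\bG$) and $\{U_k:k\in\omega\}$ (assigned to $\bS^0_1(\omega^\omega)$). This produces a zero-dimensional Polish space $W$ (necessarily uncountable, being in bijection with $Z$) and an $\bS^0_{1+\xi}$-measurable bijection $f:Z\longrightarrow W$ such that $f[A_k]\in\bG(W)$ and $f[U_k]\in\bS^0_1(W)$ for every $k$.

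Since $f$ is injective, the sets $f[U_k]$ are pairwise disjoint, so $\bigcup_{k\in\omega}(f[A_k]\cap f[U_k])\in\SU_0(\bG(W))$. Because $Z$ and $W$ are uncountable zero-dimensional Borel spaces and $\bG(Z)$ is closed under $\SU_0$, Lemma \ref{lemma_preservation_closure_su_under_relativization} (applied with $0$ in place of $\xi$) tells us that $\bG(W)$ is closed under $\SU_0$, whence $B:=\bigcup_{k\in\omega}(f[A_k]\cap f[U_k])\in\bG(W)$. Finally, using bijectivity of $f$ one has $f^{-1}[B]=\bigcup_{k\in\omega}(f^{-1}[f[A_k]]\cap f^{-1}[f[U_k]])=\bigcup_{k\in\omega}(A_k\cap U_k)=A$, and Lemma \ref{lemma_expansion_relativization_measurable_function} gives $f^{-1}[B]\in\bG^{(\xi)}(Z)=\bG(Z)^{(\xi)}$; thus $A\in\bG(Z)^{(\xi)}$, as desired.

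I do not expect any serious obstacle here: the argument is a direct reuse of the expansion-via-bijection transfer machinery, with Lemma \ref{lemma_preservation_closure_su_under_relativization} supplying the one genuinely new ingredient. The only point needing a little care is the bookkeeping in the application of Lemma \ref{lemma_expansion_bijection}, i.e.\ arranging that a \emph{single} bijection $f$ simultaneously sends every $A_k$ into $\bG(W)$ and every $U_k$ into $\bS^0_1(W)$ (this is exactly how Lemma \ref{lemma_expansion_bijection} is used in Lemma \ref{lemma_su_expansion}); once that is secured, injectivity of $f$ does all the remaining work of preserving disjointness and the separated-union structure in both directions, and the rest is citation.
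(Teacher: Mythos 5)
Your proposal is correct and follows essentially the same route as the paper's own proof: apply Lemma \ref{lemma_expansion_bijection} to transport the pieces to a space $W$ where they become $\bG(W)$ sets separated by open sets, use Lemma \ref{lemma_preservation_closure_su_under_relativization} to see that $\bG(W)$ is closed under $\SU_0$, and pull back via Lemmas \ref{lemma_expansion_relativization_measurable_function} and \ref{lemma_expansion_relativization_move_xi}. The extra details you supply (injectivity preserving disjointness, uncountability of $W$, the explicit pullback computation) are all correct and merely make explicit what the paper leaves implicit.
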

\begin{proof}
Let $\bG(Z)\in\NSDS(Z)$ be closed under $\SU_0$, where $\bG\in\NSDS(\omega^\omega)$. Pick $A_n\in\bG(Z)^{(\xi)}$ and pairwise disjoint $U_n\in\bS^0_{1+\xi}(Z)$ for $n\in\omega$. By Lemma \ref{lemma_expansion_bijection}, we can fix a zero-dimensional Polish space $W$ and a $\bS^0_{1+\xi}$-measurable bijection $f:Z\longrightarrow W$ such that $f[A_n]\in\bG(W)$ and $f[U_n]\in\bS^0_1(W)$ for each $n$. Notice that $\bG(W)$ is closed under $\SU_0$ by Lemma \ref{lemma_preservation_closure_su_under_relativization}. Therefore
$$
\bigcup_{n\in\omega}(A_n\cap U_n)=f^{-1}\left[\bigcup_{n\in\omega}(f[A_n]\cap f[U_n])\right]\in\bG^{(\xi)}(Z)=\bG(Z)^{(\xi)}
$$
by Lemmas \ref{lemma_expansion_relativization_measurable_function} and \ref{lemma_expansion_relativization_move_xi}, as desired.
\end{proof}

We conclude this subsection with two elementary results in the same vein (the straightforward proof of Lemma \ref{lemma_su_closed_under_su} is left to the reader).

\begin{lemma}\label{lemma_su_closed_under_su}
Let $Z$ be a space, let $\xi<\omega_1$, and let $\bD\subseteq\PP(Z)$. Set $\bG=\SU_\xi(\bD)$. Then $\bG$ is closed under $\SU_\xi$.
\end{lemma}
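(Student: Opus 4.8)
The plan is to verify the two inclusions making up $\SU_\xi(\bG)=\bG$ separately. The inclusion $\bG\subseteq\SU_\xi(\bG)$ holds for an arbitrary collection: given $A\in\bG$, write $A=(A\cap Z)\cup\bigcup_{n\geq 1}(A\cap\varnothing)$, which exhibits $A$ as a separated union of members of $\bG$, since $Z,\varnothing\in\bS^0_{1+\xi}(Z)$ and the sets $Z,\varnothing,\varnothing,\ldots$ are pairwise disjoint. (If $\bG=\varnothing$ the inclusion is vacuous.) Only the reverse inclusion $\SU_\xi(\bG)\subseteq\bG$ uses the hypothesis $\bG=\SU_\xi(\bD)$.

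For that inclusion, I would take $B\in\SU_\xi(\bG)=\SU_\xi(\SU_\xi(\bD))$ and ``flatten'' the resulting double separated union. So write $B=\bigcup_{n\in\omega}(A_n\cap U_n)$ with each $A_n\in\SU_\xi(\bD)$ and the $U_n\in\bS^0_{1+\xi}(Z)$ pairwise disjoint, and expand each $A_n=\bigcup_{m\in\omega}(C_{n,m}\cap V_{n,m})$ with each $C_{n,m}\in\bD$ and, for each fixed $n$, the $V_{n,m}\in\bS^0_{1+\xi}(Z)$ pairwise disjoint in $m$. Put $W_{n,m}=V_{n,m}\cap U_n$. Then
$$
B=\bigcup_{n\in\omega}\bigcup_{m\in\omega}\bigl(C_{n,m}\cap V_{n,m}\cap U_n\bigr)=\bigcup_{n,m\in\omega}\bigl(C_{n,m}\cap W_{n,m}\bigr),
$$
and each $W_{n,m}\in\bS^0_{1+\xi}(Z)$ since that class is closed under finite intersections. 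Reindexing the countable set $\omega\times\omega$ by $\omega$ then presents $B$ as a separated union of members of $\bD$, whence $B\in\SU_\xi(\bD)=\bG$.

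The only step needing an argument is that the family $\{W_{n,m}:n,m\in\omega\}$ is pairwise disjoint. I would check this by cases: for $(n,m)\neq(n',m')$, if $n\neq n'$ then $W_{n,m}\cap W_{n',m'}\subseteq U_n\cap U_{n'}=\varnothing$ by disjointness of the $U$'s, while if $n=n'$ and $m\neq m'$ then $W_{n,m}\cap W_{n,m'}\subseteq V_{n,m}\cap V_{n,m'}=\varnothing$ by disjointness of the $V_{n,\cdot}$'s. I do not anticipate any real obstacle: the argument stays entirely at the level of the definition of $\SU_\xi$ and uses neither zero-dimensionality nor any determinacy assumption; the only care required is in handling the two indices and the reindexing.
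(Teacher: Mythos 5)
Your proof is correct, and since the paper explicitly leaves this lemma to the reader as straightforward, your flattening argument (expand the double separated union, intersect the separating sets, check pairwise disjointness by cases, and reindex $\omega\times\omega$) is exactly the intended one. The handling of the trivial inclusion $\bG\subseteq\SU_\xi(\bG)$ via $U_0=Z$ and $U_n=\varnothing$ for $n\geq 1$ is also fine, as Definition \ref{definition_su} does not require the $U_n$ to cover $Z$.
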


\begin{lemma}\label{lemma_preservation_su_under_sd}
Let $Z$ be a space, let $1\leq\eta<\omega_1$, and let $\bD,\bG^\ast\subseteq\PP(Z)$. Assume that $\bG^\ast$ is closed under $\SU_0$. Set $\bG=\SD_\eta(\bD,\bG^\ast)$. Then $\bG$ is closed under $\SU_0$.
\end{lemma}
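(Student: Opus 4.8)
The plan is to prove the non-trivial inclusion $\SU_0(\bG)\subseteq\bG$; the reverse inclusion is immediate, since any $A\in\bG$ equals the separated union $\bigcup_{n\in\omega}(A\cap U_n)$ with $U_0=Z$ and $U_n=\varnothing$ for $n\geq 1$. So fix a separated union $C=\bigcup_{k\in\omega}(C_k\cap W_k)$, where each $C_k\in\bG=\SD_\eta(\bD,\bG^\ast)$ and the $W_k\in\bS^0_1(Z)$ are pairwise disjoint. Appealing to $\AC_\omega$ (which follows from $\DC$), I would choose for each $k$ a representation
$$
C_k=\SD_\eta\big((U^k_{\mu,n}:\mu<\eta,n\in\omega),(A^k_{\mu,n}:\mu<\eta,n\in\omega),A^{\ast,k}\big),
$$
with the $U^k_{\mu,n}\in\bS^0_1(Z)$ satisfying $U^k_{\mu,m}\cap U^k_{\mu,n}=\varnothing$ for $m\neq n$, each $A^k_{\mu,n}\in\bD$, and $A^{\ast,k}\in\bG^\ast$.

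The idea is then to assemble a single $\SD_\eta$-representation of $C$ by intersecting every ingredient of the $C_k$'s with the matching $W_k$. Fixing a bijection $\langle\cdot,\cdot\rangle:\omega\times\omega\longrightarrow\omega$, I would set $\widetilde U_{\mu,\langle k,n\rangle}=U^k_{\mu,n}\cap W_k$, $\widetilde A_{\mu,\langle k,n\rangle}=A^k_{\mu,n}$, and $A^\ast=\bigcup_{k\in\omega}(A^{\ast,k}\cap W_k)$. The pairwise disjointness of the $W_k$ does all the real work: it guarantees that for each fixed $\mu$ the sets $\widetilde U_{\mu,j}$ ($j\in\omega$) are pairwise disjoint (when the first index agrees, use disjointness of the $U^k_{\mu,\cdot}$; otherwise use disjointness of the $W_k$), and it guarantees that $A^\ast$ is a separated union of members of $\bG^\ast$, hence $A^\ast\in\SU_0(\bG^\ast)=\bG^\ast$ by hypothesis. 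Since the $\widetilde U_{\mu,j}$ lie in $\bS^0_1(Z)$ and the $\widetilde A_{\mu,j}$ lie in $\bD$, the set $\SD_\eta\big((\widetilde U_{\mu,j}),(\widetilde A_{\mu,j}),A^\ast\big)$ is a legitimate element of $\bG$.

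It then remains to verify that this separated difference equals $C$, and this is the step I expect to need the most care. The key point is that, inside a fixed $W_k$, every union indexed by the extra ``$k$-coordinate'' collapses to its $k$-th term: one has $\bigcup_{\mu<\eta,j}\widetilde U_{\mu,j}=\bigcup_k\big(W_k\cap\bigcup_{\mu<\eta,n}U^k_{\mu,n}\big)$, and for $x\in W_k$ one has $x\in\bigcup_{\mu'<\mu,j}\widetilde U_{\mu',j}$ iff $x\in\bigcup_{\mu'<\mu,m}U^k_{\mu',m}$ — which is exactly the set being subtracted in the defining formula for $C_k$. Carrying this bookkeeping through, the ``main part'' of the new separated difference comes out to $\bigcup_k\big(W_k\cap(\text{main part of }C_k)\big)$ and its tail $A^\ast\setminus\bigcup_{\mu,j}\widetilde U_{\mu,j}$ comes out to $\bigcup_k\big(W_k\cap(A^{\ast,k}\setminus\bigcup_{\mu,n}U^k_{\mu,n})\big)$; adding the two pieces yields $\bigcup_k(W_k\cap C_k)=C$, so $C\in\bG$. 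Note that no hypothesis on $Z$ beyond its being a space is used, and no determinacy assumption is needed; the only genuinely delicate point is the disjointness-tracking in this last paragraph, everything else being routine.
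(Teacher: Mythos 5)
Your proposal is correct and follows essentially the same route as the paper's proof: reindex the representations of the pieces $C_k$ over $\omega\times\omega$, intersect the separating open sets with the $W_k$ (whose pairwise disjointness yields both the disjointness of the new $U$-sets for each fixed $\mu$ and the fact that $A^\ast$ is a separated union of members of $\bG^\ast$, hence in $\bG^\ast$ by hypothesis), and check the resulting separated difference equals $C$. The paper likewise leaves that final verification as ``straightforward to check,'' so there is no gap.
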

\begin{proof}
Pick $A\in\SU_0(\bG)$. Fix $A_n\in\bG$ and pairwise disjoint $U_n\in\bS^0_1(Z)$ for $n\in\omega$ such that $A=\bigcup_{n\in\omega}(A_n\cap U_n)$. Given $n\in\omega$, fix $A_{n,\mu,k}$, $V_{n,\mu,k}$ and $A_n^\ast$ for $\mu<\eta$ and $k\in\omega$ as in Definition \ref{definition_sd} such that
$$
A_n=\SD_\eta((V_{n,\mu,k}:\mu<\eta,k\in\omega),(A_{n,\mu,k}:\mu<\eta,k\in\omega),A_n^\ast).
$$
Set $V'_{n,\mu,k}=V_{n,\mu,k}\cap U_n$ for $(n,k)\in\omega\times\omega$ and $\mu<\eta$. Notice that $V'_{n,\mu,k}\cap V'_{m,\mu,j}=\varnothing$ whenever $(n,k)\neq(m,j)$. Also set $A^\ast=\bigcup_{n\in\omega}(A_n^\ast\cap U_n)$, and observe that $A^\ast\in\bG^\ast$. It is straightforward to check that
$$
A=\SD_\eta((V'_{n,\mu,k}:\mu<\eta,(n,k)\in\omega\times\omega),(A_{n,\mu,k}:\mu<\eta,(n,k)\in\omega\times\omega),A^\ast),
$$
which concludes the proof.
\end{proof}

\subsection{Closure properties: good Wadge classes}\label{subsection_wadge_closure_good}

The following key notion (see \cite[Definition 12.1]{carroy_medini_muller_homogeneous}) is essentially due to van Engelen, although he did not give it a name.\footnote{\,One important difference is that van Engelen's treatment of this notion is fundamentally tied to Louveau's classification of the Borel Wadge classes from \cite{louveau_article}, hence it is limited to the Borel context.} Good Wadge classes will enable us to define a well-behaved notion of exact topological complexity (see \S\ref{subsection_homogeneous_complexity}). Furthermore, the closure properties given by Corollary \ref{corollary_closure_good} will be applied directly in the proofs of Theorems \ref{theorem_meager_semifilter} and \ref{theorem_existence_filter}.

\begin{definition}[Carroy, Medini, M\"uller]\label{definition_good}
Let $Z$ be a space, and let $\bG$ be a Wadge class in $Z$. We will say that $\bG$ is \emph{good} if the following conditions are satisfied:
\begin{enumerate}
\item $\bG$ is non-selfdual,
\item $\Delta(\Diff_\omega(\bS^0_2(Z)))\subseteq\bG$,
\item $\ell(\bG)\geq 1$.
\end{enumerate}
\end{definition}

Using Proposition \ref{proposition_expansion_differences} and Theorem \ref{theorem_expansion}, one can easily show that $\Diff_\eta(\bS^0_2(Z))$ and $\Diffc_\eta(\bS^0_2(Z))$ are examples of good Wadge classes whenever $Z$ is an uncountable zero-dimensional Polish space and $\omega\leq\eta <\omega_1$. The next proposition shows that, for all practical purposes, the requirement $(2)$ can be substituted by a weaker requirement.

\begin{proposition}\label{proposition_good_equivalent}
Let $\bS$ be a nice topological pointclass, and assume that $\Det(\bS(\omega^\omega))$ holds. Let $Z$ be an uncountable zero-dimensional Polish space, and let $\bG\in\NSDS(Z)$ be such that $\ell(\bG)\geq 1$. Then the following conditions are equivalent:
\begin{itemize}
\item $\Delta(\Diff_\omega(\bS^0_2(Z)))\subseteq\bG$,
\item $\Diff_n(\bS^0_2(Z))\subseteq\bG$ for every $n\in\omega$.
\end{itemize}
\end{proposition}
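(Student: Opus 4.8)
The plan is to treat the two directions separately. The implication ``$\Delta(\Diff_\omega(\bS^0_2(Z)))\subseteq\bG$'' $\Rightarrow$ ``$\Diff_n(\bS^0_2(Z))\subseteq\bG$ for every $n$'' requires no hypothesis on $\ell(\bG)$: one only checks that $\Diff_n(\bS^0_2(Z))\subseteq\Delta(\Diff_\omega(\bS^0_2(Z)))$ for each $n<\omega$. Indeed $\Diff_n(\bS^0_2(Z))\subseteq\Diff_{n+1}(\bS^0_2(Z))\subseteq\Diff_\omega(\bS^0_2(Z))$, and dually $\Diffc_n(\bS^0_2(Z))\subseteq\Diff_{n+1}(\bS^0_2(Z))\subseteq\Diff_\omega(\bS^0_2(Z))$, so taking complements $\Diff_n(\bS^0_2(Z))\subseteq\Diffc_\omega(\bS^0_2(Z))$ as well; hence $\Diff_n(\bS^0_2(Z))\subseteq\Diff_\omega(\bS^0_2(Z))\cap\Diffc_\omega(\bS^0_2(Z))=\Delta(\Diff_\omega(\bS^0_2(Z)))\subseteq\bG$.

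For the converse, assume $\Diff_n(\bS^0_2(Z))\subseteq\bG$ for every $n$, so that $\bigcup_{m<\omega}\Diff_m(\bS^0_2(Z))\subseteq\bG$. Since $\ell(\bG)\geq 1$ means precisely $\PU_1(\bG)=\bG$, and $\PU_1$ is monotone, it suffices to prove
$$
\Delta(\Diff_\omega(\bS^0_2(Z)))\subseteq\PU_1\Big(\bigcup_{m<\omega}\Diff_m(\bS^0_2(Z))\Big);
$$
the desired conclusion then follows from $\PU_1(\bigcup_m\Diff_m(\bS^0_2(Z)))\subseteq\PU_1(\bG)=\bG$.

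To establish this inclusion I would argue combinatorially. Fix $A\in\Delta(\Diff_\omega(\bS^0_2(Z)))$. Since $A$ and $Z\setminus A$ both lie in $\Diff_\omega(\bS^0_2(Z))$, after the usual replacement making the witnessing sequences increasing we may write $A=\bigcup_k(E_{2k+1}\setminus E_{2k})$ and $Z\setminus A=\bigcup_k(F_{2k+1}\setminus F_{2k})$, where $E_0\subseteq E_1\subseteq\cdots$ and $F_0\subseteq F_1\subseteq\cdots$ are sequences in $\bS^0_2(Z)$. Because $A\subseteq\bigcup_nE_n$ and $Z\setminus A\subseteq\bigcup_nF_n$, we get $Z=\bigcup_nE_n\cup\bigcup_nF_n$, so $\{E_n\cup F_n:n<\omega\}$ is an increasing cover of $Z$ by $\bS^0_2(Z)$ sets. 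Applying the generalized reduction property of $\bS^0_2$ (Theorem \ref{theorem_reduction}, available since $Z$ is zero-dimensional), we obtain pairwise disjoint $W_n\in\bS^0_2(Z)$ with $W_n\subseteq E_n\cup F_n$ and $\bigcup_nW_n=Z$; since each $Z\setminus W_n=\bigcup_{m\neq n}W_m$ is again $\bS^0_2(Z)$, in fact $W_n\in\bD^0_2(Z)$. Now fix $n$. For $x\in E_n$ the unique $k$ (if any) with $x\in E_{2k+1}\setminus E_{2k}$ satisfies $2k<n$, so on $W_n\cap E_n$ the set $A$ coincides with the finite union $\bigcup\{E_{2k+1}\setminus E_{2k}:2k<n\}$; symmetrically, on $W_n\setminus E_n\subseteq W_n\cap F_n$ the set $Z\setminus A$ coincides with $\bigcup\{F_{2k+1}\setminus F_{2k}:2k<n\}$. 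Consequently $A\cap W_n$ is a finite Boolean combination of the finitely many $\bS^0_2(Z)$ sets $E_0,\dots,E_n,F_0,\dots,F_n$ and the $\bD^0_2(Z)$ set $W_n$, hence $A\cap W_n\in\Diff_{m_n}(\bS^0_2(Z))$ for some $m_n<\omega$ (by the standard closure properties of the finite difference classes over $\bS^0_2$; see \cite[\S22.E]{kechris}). Since the $W_n$ are pairwise disjoint with union $Z$, the identity $A=\bigcup_n\big((A\cap W_n)\cap W_n\big)$ exhibits $A$ as a member of $\PU_1(\bigcup_{m<\omega}\Diff_m(\bS^0_2(Z)))$, as required.

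The main work lies entirely in this last combinatorial step; everything else is bookkeeping with the difference hierarchy. A more structural alternative (which I would mention only as a remark) would be to note that $\Diff_\omega(\bS^0_2(Z))=\Diff_\omega(\bS^0_1(Z))^{(1)}$ (Proposition \ref{proposition_expansion_differences}) has level $1$ and that, transporting Theorem \ref{theorem_complete_analysis_delta^0_2} along the expansion order-isomorphism of Theorem \ref{theorem_type_order_isomorphism}, its strict predecessors in $\NSD^{(1)}(Z)$ are exactly $\Diff_n(\bS^0_2(Z)),\Diffc_n(\bS^0_2(Z))$ ($n<\omega$) together with $\{\varnothing\},\{Z\}$; this makes $\Diff_\omega(\bS^0_2(Z))$ of type $2$, so Theorem \ref{theorem_characterization_small_type} plus Lemma \ref{lemma_su_versus_pu} give $\Delta(\Diff_\omega(\bS^0_2(Z)))=\PU_1(\bD)$ with $\bD\subseteq\bigcup_{m<\omega}\Diff_m(\bS^0_2(Z))$. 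This route is slicker but hinges on the somewhat delicate fact that nothing other than the difference classes sits below $\Diff_\omega(\bS^0_2(Z))$, which is precisely where I expect the real obstacle to lie; for that reason I would present the hands-on combinatorial argument as the main line of proof.
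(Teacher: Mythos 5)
Your proof is correct, but it takes a genuinely different route from the paper's. The paper handles the nontrivial implication entirely through the expansion machinery: using Theorem \ref{theorem_expansion} it fixes $\bL$ with $\bL^{(1)}=\bG$, transfers the hypothesis to $\Diff_n(\bS^0_1(Z))\subseteq\bL$ via Proposition \ref{proposition_expansion_differences} and Corollary \ref{corollary_expansion_order_isomorphism}, invokes the complete analysis below $\bD^0_2$ (Theorem \ref{theorem_complete_analysis_delta^0_2}) together with Lemma \ref{lemma_wadge} to conclude that $\Diff_\omega(\bS^0_1(Z))$ or its dual is contained in $\bL$, and then expands back. Your argument instead proves the purely combinatorial inclusion $\Delta(\Diff_\omega(\bS^0_2(Z)))\subseteq\PU_1\big(\bigcup_{m<\omega}\Diff_m(\bS^0_2(Z))\big)$ using only Theorem \ref{theorem_reduction}, and then applies $\PU_1(\bG)=\bG$. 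I checked the key step: the increasing witnesses, the reduction of the cover $\{E_n\cup F_n\}$ to a $\bD^0_2$ partition $\{W_n\}$, and the observation that $2k<n$ forces $A\cap W_n$ to be a Boolean combination of $E_0,\dots,E_n,F_0,\dots,F_n,W_n$ are all sound. What each approach buys: the paper's proof is shorter on the page but rides on the Expansion Theorem, one of the deeper inputs; yours is self-contained, needs essentially no determinacy for the hard direction, and actually proves more — the implication holds for any continuously closed $\bG$ with $\PU_1(\bG)=\bG$ containing the finite difference classes, with non-selfduality irrelevant. The one external fact you lean on is that finite Boolean combinations of $\bS^0_2$ sets lie in $\Diff_m(\bS^0_2(Z))$ for some finite $m$; this is the classical finite Hausdorff analysis (the finite difference classes over $\bS^0_\xi$ exhaust the Boolean algebra they generate) and is fine to cite, but since it is the only nonobvious ingredient you should give a precise reference or a two-line verification for your specific combination. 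Your ``structural alternative'' is close in spirit to what the paper actually does, except that the paper does not need to identify all predecessors of $\Diff_\omega(\bS^0_2(Z))$ — Wadge's lemma plus Theorem \ref{theorem_complete_analysis_delta^0_2} at level $0$ suffice — so the obstacle you worried about there does not arise.
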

\begin{proof}
In order to prove the non-trivial implication, assume that $\Diff_n(\bS^0_2(Z))\subseteq\bG$ for every $n\in\omega$. By Theorem \ref{theorem_expansion}, we can fix $\bL\in\NSDS(Z)$ such that $\bL^{(1)}=\bG$. It follows from Proposition \ref{proposition_expansion_differences} and Corollary \ref{corollary_expansion_order_isomorphism} that $\Diff_n(\bS^0_1(Z))\subseteq\bL$ for each $n$, hence $\Diff_\omega(\bS^0_1(Z))\subseteq\bL$ or $\Diffc_\omega(\bS^0_1(Z))\subseteq\bL$ by Theorem \ref{theorem_complete_analysis_delta^0_2}. To conclude the proof, apply Proposition \ref{proposition_expansion_differences} and Corollary \ref{corollary_expansion_order_isomorphism} one more time.
\end{proof}

Next, we will show that good Wadge classes are well-behaved with respect to relativization.

\begin{lemma}\label{lemma_good_relativization}
Let $\bS$ be a nice topological pointclass, and assume that $\Det(\bS(\omega^\omega))$ holds. Let $Z$ and $W$ be uncountable zero-dimensional Polish spaces, and let $\bG\in\NSDS(\omega^\omega)$. Then the following conditions are equivalent:
\begin{itemize}
\item $\bG(Z)$ is a good Wadge class,
\item $\bG(W)$ is a good Wadge class.
\end{itemize}
\end{lemma}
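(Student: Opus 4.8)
The plan is to check that each of the three conditions in Definition \ref{definition_good} transfers between $Z$ and $W$. Condition $(1)$ is automatic: since $\bG\in\NSDS(\omega^\omega)$, Theorem \ref{theorem_relativization_uncountable} gives $\bG(Z)\in\NSDS(Z)$ and $\bG(W)\in\NSDS(W)$. Condition $(3)$ transfers by Theorem \ref{theorem_level_relativization} with $\xi=1$: we have $\ell(\bG(Z))\geq 1$ if and only if $\ell(\bG(W))\geq 1$. In particular, if this fails, then neither $\bG(Z)$ nor $\bG(W)$ is good and there is nothing to prove; so we may assume $\ell(\bG(Z))\geq 1$ and $\ell(\bG(W))\geq 1$, and it remains to transfer condition $(2)$.

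The idea is to rephrase condition $(2)$ as a family of inclusions between classes in $\NSDS(\omega^\omega)$, so that Theorem \ref{theorem_order_isomorphism} applies. Fix $n\geq 1$. Since each $\bS^0_2(Y)$ is a Hausdorff class and finite Boolean combinations and compositions of Hausdorff operations are again Hausdorff operations, there is $E_n\subseteq\PP(\omega)$ with $\Diff_n(\bS^0_2(Y))=\bG_{E_n}(Y)$ for every space $Y$. Set $\bL_n=\bG_{E_n}(\omega^\omega)=\Diff_n(\bS^0_2(\omega^\omega))$. By Proposition \ref{proposition_expansion_differences} and Theorem \ref{theorem_expansion}, $\bL_n=\Diff_n(\bS^0_1(\omega^\omega))^{(1)}\in\NSD(\omega^\omega)$ (using that $\Diff_n(\bS^0_1(\omega^\omega))\in\NSDS(\omega^\omega)$ by Theorem \ref{theorem_complete_analysis_delta^0_2}), and since $\bL_n$ is Borel we get $\bL_n\in\NSDS(\omega^\omega)$. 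Finally, arguing exactly as in the proof of Theorem \ref{theorem_wadge_implies_universal} --- using that, up to homeomorphism, $Z$ is a closed (hence zero-dimensional Borel) subspace of $\omega^\omega$ by \cite[Theorem 7.8]{kechris}, together with Lemmas \ref{lemma_relativization_basic} and \ref{lemma_relativization_subspace} and the relativizability of $\bG_{E_n}$ --- we obtain $\bL_n(Z)=\bG_{E_n}(Z)=\Diff_n(\bS^0_2(Z))$, and likewise $\bL_n(W)=\Diff_n(\bS^0_2(W))$.

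Now we assemble the pieces. Since $\ell(\bG(Z))\geq 1$, Proposition \ref{proposition_good_equivalent} gives: $\Delta(\Diff_\omega(\bS^0_2(Z)))\subseteq\bG(Z)$ iff $\bL_n(Z)\subseteq\bG(Z)$ for every $n$; the analogous equivalence holds for $W$. By Theorem \ref{theorem_order_isomorphism} (valid since $Z$ and $W$ are uncountable zero-dimensional Borel spaces and $\bL_n,\bG\in\NSDS(\omega^\omega)$), $\bL_n(Z)\subseteq\bG(Z)$ iff $\bL_n(W)\subseteq\bG(W)$, for each $n$. Chaining these equivalences yields $\Delta(\Diff_\omega(\bS^0_2(Z)))\subseteq\bG(Z)$ iff $\Delta(\Diff_\omega(\bS^0_2(W)))\subseteq\bG(W)$, which together with the transfer of conditions $(1)$ and $(3)$ shows that $\bG(Z)$ is good iff $\bG(W)$ is good.

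The one point requiring care is the second paragraph: Theorem \ref{theorem_order_isomorphism} only relates $\bL(Z)$ to $\bL(W)$ for a \emph{fixed} $\bL\in\NSDS(\omega^\omega)$, so one really must realize $\Diff_n(\bS^0_2(\,\cdot\,))$ as the relativization of a single such class, independent of the ambient space. Everything else is a direct application of results from \S\ref{section_wadge_fundamental}.
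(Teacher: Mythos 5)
Your proof is correct and follows essentially the same route as the paper's: transfer non-selfduality and level via Theorems \ref{theorem_relativization_uncountable} and \ref{theorem_level_relativization}, reduce condition $(2)$ to the inclusions $\Diff_n(\bS^0_2(\,\cdot\,))\subseteq\bG(\,\cdot\,)$ via Proposition \ref{proposition_good_equivalent}, realize each $\Diff_n(\bS^0_2(\,\cdot\,))$ as the relativization of a single class $\bL_n\in\NSDS(\omega^\omega)$, and conclude with Theorem \ref{theorem_order_isomorphism}. The only difference is that you construct the $\bL_n$ explicitly (via Hausdorff classes, Proposition \ref{proposition_expansion_differences}, Theorem \ref{theorem_expansion} and relativizability), where the paper outsources exactly this step to the methods of \cite{carroy_medini_muller_constructing}.
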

\begin{proof}
First observe that $\bG(Z)$ and $\bG(W)$ are non-selfdual by Theorem \ref{theorem_relativization_uncountable}. Furthermore, Theorem \ref{theorem_level_relativization} guarantees that $\ell(\bG(Z))\geq 1$ iff $\ell(\bG(W))\geq 1$. Therefore, by Proposition \ref{proposition_good_equivalent}, it remains to show that $\Diff_n(\bS^0_2(Z))\subseteq\bG(Z)$ for every $n\in\omega$ iff $\Diff_n(\bS^0_2(W))\subseteq\bG(W)$ for every $n\in\omega$. In order to see this, use the methods of \cite[\S9]{carroy_medini_muller_constructing} and \cite[Lemma 21.5]{carroy_medini_muller_constructing} to obtain $\bL_n\in\NSDS(\omega^\omega)$ for $n\in\omega$ such that each $\bL_n(Z)=\Diff_n(\bS^0_2(Z))$ and each $\bL_n(W)=\Diff_n(\bS^0_2(W))$. Then, the desired equivalence will follow from Theorem \ref{theorem_order_isomorphism}.
\end{proof}

The remainder of this subsection is devoted to obtaining the fundamental closure properties of good Wadge classes (see Corollary \ref{corollary_closure_good}), as a particular case of a result of independent interest (see Theorem \ref{theorem_closure_borel}). The following result first appeared as \cite[Lemma 12.3]{carroy_medini_muller_homogeneous}, and it generalizes \cite[Lemma 3.6.a]{andretta_hjorth_neeman}. However, we will give a new proof, which relies on separated differences and Theorem \ref{theorem_sd_main} instead of \cite{andretta_hjorth_neeman}. Corollary \ref{corollary_closure_good} first appeared as \cite[Theorem 12.4]{carroy_medini_muller_homogeneous}.

\begin{lemma}[Carroy, Medini, M\"uller]\label{lemma_closure_closed_open}
Let $\bS$ be a nice topological pointclass, and assume that $\Det(\bS(\omega^\omega))$ holds. Let $Z$ be an uncountable zero-dimensional Polish space, and let $\bG\in\NSDS(Z)$. Assume that $\Diff_n(\bS^0_1(Z))\subseteq\bG$ whenever $1\leq n<\omega$. Then:
	\begin{itemize}
	\item $\bG$ is closed under intersections with $\bP^0_1$ sets,
	\item $\bG$ is closed under unions with $\bS^0_1$ sets.
	\end{itemize}
\end{lemma}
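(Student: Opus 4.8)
The plan is to prove the two bullet points simultaneously by duality, and to reduce the problem to the case $\ell(\bG) = 0$ via expansions, where the structural description from Theorem~\ref{theorem_sd_main} becomes available. First observe that the two statements are dual to each other: if $\bG$ is closed under intersections with $\bP^0_1$ sets, then by taking complements $\bGc$ is closed under unions with $\bS^0_1$ sets, and the hypothesis $\Diff_n(\bS^0_1(Z)) \subseteq \bG$ passes to $\bGc$ since $\widecheck{\Diff}_n(\bS^0_1(Z)) = \Diff_n(\bGc$-side$)$ — more precisely, the hypothesis is self-dual in the sense that it holds for $\bG$ iff it holds for $\bGc$, because $\widecheck{\Diff}_n(\bS^0_1(Z)) \subseteq \bGc$ is equivalent to $\Diff_n(\bS^0_1(Z)) \subseteq \bG$, and by Theorem~\ref{theorem_complete_analysis_delta^0_2} (applied at successively higher $n$, or directly) containment of all finite differences is symmetric. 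Actually the cleanest route: assuming $\Diff_n(\bS^0_1(Z)) \subseteq \bG$ for all $n$, Lemma~\ref{lemma_wadge} forces $\bG$ to also contain $\widecheck{\Diff}_n(\bS^0_1(Z))$ for all $n$ (since $\Diff_n(\bS^0_1(Z))$ is non-selfdual and properly below, its dual is $\leq$-comparable and cannot strictly contain $\bG$). So it suffices to prove just the first bullet.

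Next I would reduce to level $0$. By Theorem~\ref{theorem_every_class_has_a_level}, $\bG$ has a level $\xi = \ell(\bG) \le \omega_1$. If $\xi \geq 1$, then by Theorem~\ref{theorem_expansion} we may write $\bG = \bL^{(1)}$ for some $\bL \in \NSDS(Z)$; iterating (using Theorem~\ref{theorem_expansion_composition}) or just handling the case $\xi$ finite/infinite separately, one peels off expansions until reaching a class of level $0$. By Proposition~\ref{proposition_expansion_differences} and Corollary~\ref{corollary_expansion_order_isomorphism}, the hypothesis $\Diff_n(\bS^0_1(Z)) \subseteq \bG$ descends to $\Diff_n(\bS^0_1(Z)) \subseteq \bL$ (the "base" level shifts but the finite difference classes are fixed points of the relevant comparisons). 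Then, if we know that a level-$0$ class containing all finite differences is closed under intersections with $\bP^0_1$ sets, Lemma~\ref{lemma_preservation_basic_closure_under_expansion} lifts this to closure under intersections with $\bP^0_{1+\xi}$ sets, which in particular gives closure under intersections with $\bP^0_1$ sets. (One must be slightly careful about the $\xi = \omega_1$ case, but there $\bG = \bG^{(\eta)}$ for all $\eta$, so the same argument applies with any finite $\eta$.)

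For the base case $\ell(\bG) = 0$: by Theorem~\ref{theorem_sd_main}, there exist $1 \le \eta < \omega_1$, classes $\bG_n \in \NSD(Z)$ with $\ell(\bG_n) \ge 1$, and $\bG^\ast \in \NSD(Z)$ with $\bG^\ast \subseteq \bD$, such that $\bG = \SD_\eta(\bD, \bG^\ast)$ where $\bD = \bigcup_n (\bG_n \cup \bGc_n)$. Here the hypothesis $\Diff_n(\bS^0_1(Z)) \subseteq \bG$ for all $n$ is what forces $\eta$ (or the complexity of the pieces) to be large enough that $\bG^\ast$ can be taken to be $\{\varnothing\}$ or $\{Z\}$ — actually what we need is simply that $\bG^\ast$ is closed under intersections with $\bP^0_1$ sets, which holds for $\{\varnothing\}$ trivially and for $\{Z\}$ by Lemma~\ref{lemma_closure_clopen} once we know $\bG^\ast \neq \{Z\}$ is not forced — more carefully, since $\bG \supseteq \Diff_2(\bS^0_1(Z))$, $\bG$ is neither $\{\varnothing\}$ nor $\{Z\}$, and one checks $\bG^\ast$ inherits enough. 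Then, given $A = \SD_\eta((U_{\mu,n}), (A_{\mu,n}), A^\ast) \in \bG$ and $B \in \bP^0_1(Z)$, I would write $A \cap B$ as a separated difference of the same shape: replace $A^\ast$ by $A^\ast \cap B$ (which stays in $\bG^\ast$ by the above), and replace each $A_{\mu,n}$ by $A_{\mu,n} \cap B$. Since $B$ is closed, $B \in \bP^0_1 = \widecheck{\Diff}_1(\bS^0_1)$, and because each $\bG_n$ contains $\Diff_n(\bS^0_1(Z))$ hence (as above) also $\widecheck{\Diff}_1(\bS^0_1(Z)) = \bP^0_1(Z)$, and $\bG_n$ is a Wadge class of level $\ge 1$, Lemma~\ref{lemma_closure_level} gives $A_{\mu,n} \cap B \in \bG_n \subseteq \bD$. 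Thus $A \cap B \in \SD_\eta(\bD, \bG^\ast) = \bG$.

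The main obstacle I anticipate is the bookkeeping in the base case: verifying that the defining data of $A \cap B$ genuinely witnesses $A \cap B \in \SD_\eta(\bD, \bG^\ast)$ — in particular that the $U_{\mu,n}$ can be kept (they are unchanged, so the open-ness and fiberwise-disjointness survive), that the "tail" term involving $A^\ast$ intersects correctly with $B$, and that the pieces $A_{\mu,n} \cap B$ land in $\bD$ rather than merely in some larger class. The key structural input making this work is Lemma~\ref{lemma_closure_level} (classes of level $\ge 1$ absorb intersections with $\bD^0_1$ sets, but here we need intersection with a closed set, which requires $\bP^0_1(Z) \subseteq \bG_n$, itself extracted from the hypothesis via Theorem~\ref{theorem_complete_analysis_delta^0_2} or Lemma~\ref{lemma_wadge}). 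A secondary subtlety is confirming that $\bG^\ast$ is closed under intersections with closed sets; since $\bG^\ast \in \NSD(Z)$ and $\bG^\ast \subseteq \bD \subseteq \bG$, and $\bG^\ast$ is either $\{\varnothing\}$, $\{Z\}$, or sits inside one of the $\bG_n \cup \bGc_n$, one handles each alternative — the delicate one being $\{Z\}$, handled by Lemma~\ref{lemma_closure_clopen} or by noting $A^\ast \cap B$ for $A^\ast = Z$ is just $B \in \bP^0_1(Z) \subseteq \bG_0 \cup \bGc_0 \subseteq \bD$, which can be folded into the difference part.
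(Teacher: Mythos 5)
Your overall skeleton (dualize, dispose of the higher levels, then manipulate the separated-difference representation from Theorem~\ref{theorem_sd_main} at level $0$) matches the paper's, but two of the key steps are broken. First, the reduction to level $0$ via de-expansion does not work: if $\bG=\bL^{(\xi)}$ with $\xi\geq 1$, the hypothesis $\Diff_n(\bS^0_1(Z))\subseteq\bG$ does \emph{not} descend to $\Diff_n(\bS^0_1(Z))\subseteq\bL$. Corollary~\ref{corollary_expansion_order_isomorphism} together with Proposition~\ref{proposition_expansion_differences} only gives $\Diff_n(\bS^0_1(Z))\subseteq\bL$ iff $\Diff_n(\bS^0_{1+\xi}(Z))\subseteq\bG$, which is strictly stronger than what you are assuming. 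Concretely, $\bG=\bS^0_2(Z)$ satisfies the hypothesis of the lemma (every finite difference of open sets lies in $\bD^0_2\subseteq\bS^0_2$) and has level $1$, but $\bL=\bS^0_1(Z)$ contains no $\Diff_n(\bS^0_1(Z))$ for $n\geq 2$ and is \emph{not} closed under intersections with closed sets, so your base case cannot be applied to it and nothing can be lifted back up. The repair is much simpler than what you attempt: when $\ell(\bG)\geq 1$, Lemma~\ref{lemma_closure_level} with $\xi=1$ already gives closure under intersections with $\bD^0_2$ sets, hence with closed sets, outright. That one-line observation is exactly how the paper reduces to $\ell(\bG)=0$.

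Second, in the base case the replacement of $A^\ast$ by $A^\ast\cap B$ is illegitimate: $\bG^\ast$ is an arbitrary non-selfdual class contained in $\bD$ and need not be closed under intersections with closed sets (it can be $\{Z\}$, as happens for $\bG=\widecheck{\Diff}_\eta(\bS^0_1(Z))$ by Lemma~\ref{lemma_sd_differences}, or $\bS^0_1(Z)$ sitting inside a $\bG_n$ of level $\geq 1$). Lemma~\ref{lemma_closure_clopen} concerns clopen sets only and does not help here. The paper's device is to leave $A^\ast$ untouched and instead insert $Z\setminus C$ as a new \emph{open} piece at the first level of the separated difference, paired with the value $\varnothing$; this automatically intersects every downstream term, including the $A^\ast$-tail, with $C$. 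Executing this requires Theorem~\ref{theorem_reduction} to re-disjointify the level-$0$ open sets when $\bD\supsetneq\{\varnothing\}\cup\{Z\}$, and a separate treatment of the case $\bD=\{\varnothing\}\cup\{Z\}$ — where your other move $A_{\mu,n}\mapsto A_{\mu,n}\cap B$ also fails, since $Z\cap B=B\notin\bD$ — via an index shift that is only available because the hypothesis forces $\eta\geq\omega$ there. You do not address this case, and your closing remark about ``folding $B$ into the difference part'' gestures at the right idea without being an argument.
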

\begin{proof}
It will be enough to prove the first statement, since the second one will follow by considering the dual class. By Lemma \ref{lemma_closure_level}, we can assume without loss of generality that $\ell(\bG)=0$. Fix $\eta$, $\bD$, $\bG^\ast$ and $\bG_n$ for $n\in\omega$ as in Theorem \ref{theorem_sd_main}, so that $\bG=\SD_\eta(\bD,\bG^\ast)$. Pick $A\in\bG$ and $C\in\bP^0_1(Z)$. Write
$$
A=\SD_\eta((U_{\mu,n}:\mu<\eta,n\in\omega),(A_{\mu,n}:\mu<\eta,n\in\omega),A^\ast),
$$
where the $U_{\mu,n}$, $A_{\mu,n}$ and $A^\ast$ are as in Definition \ref{definition_sd}.

First assume that $\bD=\{\varnothing\}\cup\{Z\}$, hence either $\bG^\ast=\{\varnothing\}$ or $\bG^\ast=\{Z\}$. Using Lemma \ref{lemma_sd_differences} and the fact that the difference hierarchy is strictly increasing (see \cite[Exercise 22.26]{kechris}), one sees that $\eta\geq\omega$. In particular, the following definitions make sense:
\begin{itemize}
\item $U'_{0,0}=Z\setminus C$,
\item $U'_{0,n}=\varnothing$ for $1\leq n<\omega$,
\item $A'_{0,n}=\varnothing$ for $n\in\omega$,
\item $U'_{1+\mu,n}=U_{\mu,n}$ for $\mu<\eta$ and $n<\omega$,
\item $A'_{1+\mu,n}=A_{\mu,n}$ for $\mu<\eta$ and $n<\omega$.
\end{itemize}
It is easy to realize that
$$
A\cap C=\SD_\eta((U'_{\mu,n}:\mu<\eta,n\in\omega),(A'_{\mu,n}:\mu<\eta,n\in\omega),A^\ast),
$$
which shows that $A\cap C\in\bG$.

Now assume that $\bD\supsetneq\{\varnothing\}\cup\{Z\}$. In particular, since each $\ell(\bG_n)\geq 1$, one can use Lemma \ref{lemma_closure_level} to obtain that $\bD$ is closed under intersections with $\bP^0_1$ sets. By Theorem \ref{theorem_reduction}, it is possible to find $U'_{0,n}\in\bS^0_1(Z)$ for $n\in\omega$ such that the following conditions hold:
\begin{itemize}
\item $U'_{0,0}\subseteq Z\setminus C$,
\item $U'_{0,1+n}\subseteq U_{0,n}$ for $n\in\omega$,
\item $U'_{0,m}\cap U'_{0,n}=\varnothing$ whenever $m\neq n$,
\item $\bigcup_{n\in\omega}U'_{0,n}=(Z\setminus C)\cup\bigcup_{n\in\omega}U_{0,n}$.
\end{itemize}
Also make the following definitions:
\begin{itemize}
\item $A'_{0,0}=\varnothing$,
\item $A'_{0,1+n}=A_{0,n}\cap C$ for $n<\omega$,
\item $U'_{\mu,n}=U_{\mu,n}$ for $1\leq\mu<\eta$ and $n<\omega$,
\item $A'_{\mu,n}=A_{\mu,n}$ for $1\leq\mu<\eta$ and $n<\omega$.
\end{itemize}
It is easy to realize that
$$
A\cap C=\SD_\eta((U'_{\mu,n}:\mu<\eta,n\in\omega),(A'_{\mu,n}:\mu<\eta,n\in\omega),A^\ast),
$$
which concludes the proof.
\end{proof}

\begin{theorem}\label{theorem_closure_borel}
Let $\bS$ be a nice topological pointclass, and assume that $\Det(\bS(\omega^\omega))$ holds. Let $Z$ be an uncountable zero-dimensional Polish space, let $\xi<\omega_1$, and let $\bG\in\NSDS(Z)$. Assume that $\ell(\bG)\geq\xi$ and $\Diff_n(\bS^0_{1+\xi}(Z))\subseteq\bG$ whenever $1\leq n<\omega$. Then:
\begin{itemize}
\item $\bG$ is closed under intersections with $\bP^0_{1+\xi}$ sets,
\item $\bG$ is closed under unions with $\bS^0_{1+\xi}$ sets.
\end{itemize}
\end{theorem}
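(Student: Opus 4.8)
The plan is to reduce the general statement to the case $\xi=0$, which is exactly Lemma \ref{lemma_closure_closed_open}, by peeling off an expansion. Since $\bG\in\NSDS(Z)$ and $\ell(\bG)\geq\xi$, the Expansion Theorem (Theorem \ref{theorem_expansion}) provides $\bL\in\NSDS(Z)$ with $\bL^{(\xi)}=\bG$. The first task is then to verify that $\bL$ satisfies the hypotheses of Lemma \ref{lemma_closure_closed_open}, namely that $\Diff_n(\bS^0_1(Z))\subseteq\bL$ whenever $1\leq n<\omega$.

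To do this, I would invoke Proposition \ref{proposition_expansion_differences}, which gives $\Diff_n(\bS^0_1(Z))^{(\xi)}=\Diff_n(\bS^0_{1+\xi}(Z))$. By hypothesis $\Diff_n(\bS^0_{1+\xi}(Z))\subseteq\bG=\bL^{(\xi)}$, so $\Diff_n(\bS^0_1(Z))^{(\xi)}\subseteq\bL^{(\xi)}$. Since $\Diff_n(\bS^0_1(Z))$ is a non-selfdual Wadge class (by Theorem \ref{theorem_complete_analysis_delta^0_2}) contained in $\Borel(Z)\subseteq\bS(Z)$, it lies in $\NSDS(Z)$, and Corollary \ref{corollary_expansion_order_isomorphism} then yields $\Diff_n(\bS^0_1(Z))\subseteq\bL$, as needed. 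With the hypotheses checked, Lemma \ref{lemma_closure_closed_open} tells us that $\bL$ is closed under intersections with $\bP^0_1$ sets and closed under unions with $\bS^0_1$ sets.

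Finally, Lemma \ref{lemma_preservation_basic_closure_under_expansion} transfers these closure properties along the expansion: it gives that $\bL^{(\xi)}$ is closed under intersections with $\bP^0_{1+\xi}$ sets and under unions with $\bS^0_{1+\xi}$ sets. Since $\bL^{(\xi)}=\bG$, this is precisely the desired conclusion.

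Because every step is a direct appeal to a result already established in \S\ref{section_wadge_fundamental} and earlier in \S\ref{section_wadge_closure}, there is no serious obstacle to anticipate; the only points requiring a moment's care are confirming that $\Diff_n(\bS^0_1(Z))$ genuinely belongs to $\NSDS(Z)$ so that Corollary \ref{corollary_expansion_order_isomorphism} applies, and noting that the Expansion Theorem produces $\bL$ already from the hypothesis $\ell(\bG)\geq\xi$. Both are immediate.
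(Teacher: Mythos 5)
Your proposal is correct and follows essentially the same route as the paper's own proof: expand down via Theorem \ref{theorem_expansion}, transfer the difference-hierarchy hypothesis with Proposition \ref{proposition_expansion_differences} and Corollary \ref{corollary_expansion_order_isomorphism}, apply Lemma \ref{lemma_closure_closed_open}, and push the closure properties back up with Lemma \ref{lemma_preservation_basic_closure_under_expansion}. The extra remark verifying that $\Diff_n(\bS^0_1(Z))\in\NSDS(Z)$ is a reasonable bit of diligence that the paper leaves implicit.
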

\begin{proof}
By Theorem \ref{theorem_expansion}, we can fix $\bL\in\NSDS(Z)$ such that $\bL^{(\xi)}=\bG$. Notice that
$$
\Diff_n(\bS^0_1(Z))^{(\xi)}=\Diff_n(\bS^0_{1+\xi}(Z))\subseteq\bG=\bL^{(\xi)}
$$
for every $n\in\omega$, where the first equality holds by Proposition \ref{proposition_expansion_differences}. It follows from Corollary \ref{corollary_expansion_order_isomorphism} that $\Diff_n(\bS^0_1(Z))\subseteq\bL$ for every $n\in\omega$. Therefore, Lemma \ref{lemma_closure_closed_open} guarantees that $\bL$ is closed under intersections with $\bP^0_1$ sets and unions with $\bS^0_1$ sets. To conclude the proof, apply Lemma \ref{lemma_preservation_basic_closure_under_expansion}.
\end{proof}

\begin{corollary}[Carroy, Medini, M\"uller]\label{corollary_closure_good}
Let $\bS$ be a nice topological pointclass, and assume that $\Det(\bS(\omega^\omega))$ holds. Let $Z$ be an uncountable zero-dimensional Polish space, and let $\bG\subseteq\bS(Z)$ be a good Wadge class in $Z$. Then:
\begin{itemize}
\item $\bG$ is closed under intersections with $\bP^0_2$ sets,
\item $\bG$ is closed under unions with $\bS^0_2$ sets.
\end{itemize}
\end{corollary}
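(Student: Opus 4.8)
The plan is to deduce this directly from Theorem \ref{theorem_closure_borel} applied with $\xi=1$, using Proposition \ref{proposition_good_equivalent} to verify its hypotheses. First I would observe that $\bG\in\NSDS(Z)$: indeed $\bG$ is a Wadge class in $Z$ by assumption, it is non-selfdual by condition $(1)$ of Definition \ref{definition_good}, and it is contained in $\bS(Z)$ by hypothesis, so it belongs to $\NSDS(Z)$ by Definition \ref{definition_non-selfdual}.

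Next, by conditions $(2)$ and $(3)$ of Definition \ref{definition_good}, we have $\ell(\bG)\geq 1$ and $\Delta(\Diff_\omega(\bS^0_2(Z)))\subseteq\bG$. Since $\ell(\bG)\geq 1$, Proposition \ref{proposition_good_equivalent} applies and yields that $\Diff_n(\bS^0_2(Z))\subseteq\bG$ for every $n\in\omega$; in particular this holds whenever $1\leq n<\omega$.

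Finally I would apply Theorem \ref{theorem_closure_borel} with $\xi=1$. Its hypotheses are now met: $\bG\in\NSDS(Z)$, $\ell(\bG)\geq 1$, and $\Diff_n(\bS^0_{1+1}(Z))=\Diff_n(\bS^0_2(Z))\subseteq\bG$ whenever $1\leq n<\omega$. The conclusion is that $\bG$ is closed under intersections with $\bP^0_{1+1}=\bP^0_2$ sets and closed under unions with $\bS^0_{1+1}=\bS^0_2$ sets, which is exactly what we want.

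There is no real obstacle here; the result is a straightforward packaging of the preceding machinery. The only point requiring any care is the bookkeeping of the indices (checking that the value $\xi=1$ turns $\bS^0_{1+\xi}$ and $\bP^0_{1+\xi}$ into $\bS^0_2$ and $\bP^0_2$) and confirming that the weakened form of requirement $(2)$ provided by Proposition \ref{proposition_good_equivalent} is precisely the hypothesis needed by Theorem \ref{theorem_closure_borel}.
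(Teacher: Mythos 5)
Your proposal is correct and is exactly the intended derivation: the paper presents Corollary \ref{corollary_closure_good} as an immediate consequence of Theorem \ref{theorem_closure_borel} with $\xi=1$, with the hypothesis $\Diff_n(\bS^0_2(Z))\subseteq\bG$ supplied by condition $(2)$ of Definition \ref{definition_good} via Proposition \ref{proposition_good_equivalent} (whose applicability is guaranteed by $\ell(\bG)\geq 1$). Nothing is missing.
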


\subsection{Preservation of small type under separated differences}\label{subsection_wadge_closure_preservation_type}

The aim of this subsection is to show that the operation $\SD_\eta(\bD,-)$ preserves the property of having small type for classes of level $0$ (see Theorem \ref{theorem_preservation_small_type_under_sd} for a precise statement). This is one of the ingredients needed to obtain the crucial closure properties of \S\ref{subsection_wadge_closure_main}. We begin with a technical lemma.

\begin{lemma}\label{lemma_sd_preserves_su}
Let $Z$ be a zero-dimensional space, let $1\leq\eta<\omega_1$, let $\bD\subseteq\PP(Z)$ be selfdual, and let $\bG_n^\ast\subseteq\PP(Z)$ for $n\in\omega$. Set $\bG_n=\SD_\eta(\bD,\bG_n^\ast)$ for $n\in\omega$. Then
$$
\SU_0\left(\bigcup_{n\in\omega}(\bG_n\cup\bGc_n)\right)=\SD_\eta\left(\bD,\SU_0\left(\bigcup_{n\in\omega}(\bG_n^\ast\cup\bGc_n^\ast)\right)\right).
$$
\end{lemma}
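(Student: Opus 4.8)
The plan is to prove both inclusions directly, after one preliminary reduction. Since $\bD$ is selfdual, Lemma \ref{lemma_sd_check} gives $\bGc_n=\widecheck{\SD}_\eta(\bD,\bG_n^*)=\SD_\eta(\widecheck{\bD},\widecheck{\bG_n^*})=\SD_\eta(\bD,\bGc_n^*)$, so every member of $\bigcup_{n\in\omega}(\bG_n\cup\bGc_n)$ is of the form $\SD_\eta(\bD,\bE^*)$ for some $\bE^*\in\bigcup_{n\in\omega}\{\bG_n^*,\bGc_n^*\}$. The recurring structural fact I will use is that, given a separated difference $\SD_\eta((U_{\mu,m}),(A_{\mu,m}),B)$ with $O=\bigcup_{\mu,m}U_{\mu,m}$, the ``difference part'' $D=\bigcup_{\mu,m}(A_{\mu,m}\cap U_{\mu,m}\setminus\bigcup_{\mu'<\mu,m'}U_{\mu',m'})$ is contained in $O$ and is independent of $B$, while the ``tail'' $B\setminus O$ is contained in $Z\setminus O$; thus the whole set is $D\cup(B\setminus O)$ with these two pieces separated by the open set $O$ and its closed complement.

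For $\subseteq$: I take $A=\bigcup_{k\in\omega}(A_k\cap W_k)\in\SU_0(\bigcup_n(\bG_n\cup\bGc_n))$ with the $W_k\in\bS^0_1(Z)$ pairwise disjoint, and expand each $A_k=\SD_\eta((U_{k,\mu,m}),(A_{k,\mu,m}),A_k^*)$ with $A_{k,\mu,m}\in\bD$ and $A_k^*$ in some $\bG_{n_k}^*$ or $\bGc_{n_k}^*$. I then form a single separated difference over $\bD$ whose level-$\mu$ open sets are $\{U_{k,\mu,m}\cap W_k:(k,m)\in\omega\times\omega\}$ (re-indexed by $\omega$), whose $\bD$-sets are the $A_{k,\mu,m}$, and whose ``$A^*$-part'' is $A^*:=\bigcup_{k}(A_k^*\cap W_k)$, which lies in $\SU_0(\bigcup_n(\bG_n^*\cup\bGc_n^*))$. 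Two routine checks remain: for each fixed $\mu$ the sets $U_{k,\mu,m}\cap W_k$ are pairwise disjoint (using disjointness of the $W_k$ when the first index differs, and disjointness within level $\mu$ when it agrees); and the resulting set equals $A$, verified pointwise by noting that each $x$ lies in at most one $W_k$, so the formula localizes to that $W_k$ and reproduces $A_k$ there.

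For $\supseteq$: I take $A=\SD_\eta((U_{\mu,m}),(A_{\mu,m}),A^*)$ with $A^*=\bigcup_{k}(A_k^*\cap W_k)$, $A_k^*$ in some $\bG_{n_k}^*$ or $\bGc_{n_k}^*$, the $W_k$ pairwise disjoint open, and I set $O=\bigcup_{\mu,m}U_{\mu,m}$ and $D=A\cap O$, so that $A=D\cup(A^*\setminus O)$ with $D\subseteq O$ and $A^*\setminus O\subseteq\bigcup_k W_k$. The main obstacle is that the $W_k$ need not be disjoint from $O$ and need not cover $D$, so I cannot immediately read off a separated union over them; this is where zero-dimensionality enters. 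I apply Theorem \ref{theorem_reduction} to the countable family $(O,W_0,W_1,\ldots)$ to obtain pairwise disjoint open sets $O^{\circ}\subseteq O$ and $W_k^{\circ}\subseteq W_k$ with $O^{\circ}\cup\bigcup_k W_k^{\circ}=O\cup\bigcup_k W_k\supseteq A$. On each piece I use $B^{(k)}:=\SD_\eta((U_{\mu,m}),(A_{\mu,m}),A_k^*)=D\cup(A_k^*\setminus O)$, which lies in $\bG_{n_k}$ or $\bGc_{n_k}$; since $O^{\circ}\subseteq O$ one gets $B^{(0)}\cap O^{\circ}=D\cap O^{\circ}=A\cap O^{\circ}$, and since $A^*\cap W_k=A_k^*\cap W_k$ one gets $B^{(k)}\cap W_k^{\circ}=A\cap W_k^{\circ}$. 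As the pieces $O^{\circ},W_0^{\circ},W_1^{\circ},\ldots$ are pairwise disjoint and cover $A$, this exhibits $A=(B^{(0)}\cap O^{\circ})\cup\bigcup_k(B^{(k)}\cap W_k^{\circ})\in\SU_0(\bigcup_n(\bG_n\cup\bGc_n))$, completing the argument.
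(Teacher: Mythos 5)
Your proof is correct and follows essentially the same route as the paper: the forward inclusion is the merging computation that the paper delegates to Lemma \ref{lemma_preservation_su_under_sd} (together with Lemma \ref{lemma_su_closed_under_su}), and the reverse inclusion is exactly the paper's argument of applying Theorem \ref{theorem_reduction} to the family consisting of $O=\bigcup_{\mu,m}U_{\mu,m}$ together with the $W_k$ and reassembling $A$ as a separated union of the sets $\SD_\eta((U_{\mu,m}),(A_{\mu,m}),A_k^\ast)\in\bG_{n_k}\cup\bGc_{n_k}$. The only cosmetic difference is that on the piece refining $O$ you reuse $A_0^\ast$ where the paper takes an arbitrary element of $\bigcup_{n}(\bG_n^\ast\cup\bGc_n^\ast)$, which changes nothing.
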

\begin{proof}
For notational convenience, set
$$
\bD\!^\ast=\bigcup_{n\in\omega}(\bG_n^\ast\cup\bGc_n^\ast).
$$
Since $\bG_n\subseteq\SD_\eta(\bD,\SU_0(\bD\!^\ast))$ for each $n$, it is clear that $\bG_n^\ast\subseteq\SU_0(\bD\!^\ast)$ for each $n$. Similarly, using Lemma \ref{lemma_sd_check} and the assumption that $\bD$ is selfdual, one sees that $\bGc_n\subseteq\SD_\eta(\bD,\SU_0(\bD\!^\ast))$ for each $n$. Since $\SD_\eta(\bD,\SU_0(\bD\!^\ast))$ is closed under $\SU_0$ by Lemmas \ref{lemma_su_closed_under_su} and \ref{lemma_preservation_su_under_sd}, the inclusion $\subseteq$ follows.

In order to prove the inclusion $\supseteq$, pick $A\in\SD_\eta(\bD,\SU_0(\bD\!^\ast))$. Fix $A_{\mu,n}\in\bD$, $U_{\mu,n}\in\bS^0_1(Z)$ and $A^\ast\in\SU_0(\bD\!^\ast)$ as in Definition \ref{definition_sd} such that
$$
A=\SD_\eta((U_{\mu,n}:\mu<\eta,n\in\omega),(A_{\mu,n}:\mu<\eta,n\in\omega),A^\ast).
$$
Also fix $A_k^\ast\in\bD\!^\ast$ and pairwise disjoint $V_k\in\bS^0_1(Z)$ for $k\in\omega$ such that
$$
A^\ast=\bigcup_{k\in\omega}(A_k^\ast\cap V_k).
$$
Set $V_{-1}=\bigcup\{U_{\mu,n}:\mu<\eta\text{ and }n\in\omega\}$. By Theorem \ref{theorem_reduction}, there exist $V'_k\in\bS^0_1(Z)$ for $-1\leq k<\omega$ satisfying the following conditions:
\begin{itemize}
\item $V'_k\subseteq V_k$ for $-1\leq k<\omega$,
\item $V'_j\cap V'_k=\varnothing$ whenever $j\neq k$,
\item $\bigcup_{-1\leq k<\omega}V'_k=\bigcup_{-1\leq k<\omega}V_k$.
\end{itemize}
Also choose an arbitrary $A_{-1}^\ast\in\bD\!^\ast$. It is straightforward to check that
$$
A=\bigcup_{-1\leq k<\omega}(\SD_\eta((U_{\mu,n}:\mu<\eta,n\in\omega),(A_{\mu,n}:\mu<\eta,n\in\omega),A_k^\ast)\cap V'_k).
$$
Finally, using Lemma \ref{lemma_sd_check} and the assumption that $\bD$ is selfdual, one sees that the right-hand side belongs to $\SU_0(\bigcup_{n\in\omega}(\bG_n\cup\bGc_n))$, as desired.
\end{proof}

\begin{theorem}\label{theorem_preservation_small_type_under_sd}
Let $\bS$ be a nice topological pointclass, and assume that $\Det(\bS(\omega^\omega))$ holds. Let $Z$ be an uncountable zero-dimensional Polish space, let $1\leq\eta<\omega_1$, let $\bL_n\in\NSD(Z)$ for $n\in\omega$ be such that each $\ell(\bL_n)\geq 1$, and let $\bG^\ast\in\NSD(Z)$ be such that $\bG^\ast\subseteq\bD$, where $\bD=\bigcup_{n\in\omega}(\bL_n\cup\bLc_n)$. Set $\bG=\SD_\eta(\bD,\bG^\ast)$, and assume that $\bG\subseteq\bS(Z)$. If $\ell(\bG^\ast)=0$ and $t(\bG^\ast)\in\{1,2\}$ then $t(\bG)\in\{1,2\}$.
\end{theorem}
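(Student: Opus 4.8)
The plan is to reduce everything to the explicit description of small‑type non‑selfdual classes provided by Theorem \ref{theorem_characterization_small_type} and then transport the resulting separated‑union decomposition through the operation $\SD_\eta(\bD,-)$ by means of Lemma \ref{lemma_sd_preserves_su}. First I would record two easy preliminaries. Taking every separator $U_{\mu,n}$ equal to $\varnothing$ in the defining expression of $\SD_\eta$ shows $\bG^\ast\subseteq\bG\subseteq\bS(Z)$, so $\bG^\ast\in\NSDS(Z)$; and since the hypotheses of the present theorem are exactly condition (ii) of Theorem \ref{theorem_sd_main}, we get $\bG\in\NSD(Z)$ with $\ell(\bG)=0$, hence $\bG\in\NSDS(Z)$ and $\ell(\bG)=0$. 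Now, as $\ell(\bG^\ast)=0$ and $t(\bG^\ast)\in\{1,2\}$, Theorem \ref{theorem_characterization_small_type} applied at level $0$ yields classes $\bG_n^\ast\in\NSD(Z)$ for $n\in\omega$ such that, writing $\bD\!^\ast=\bigcup_{n\in\omega}(\bG_n^\ast\cup\bGc_n^\ast)$, either $\bG^\ast=\SU_0(\bD\!^\ast)$ or $\bG^\ast=\SUc_0(\bD\!^\ast)$.

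In the main case $\bG^\ast=\SU_0(\bD\!^\ast)$, I would set $\bG_n=\SD_\eta(\bD,\bG_n^\ast)$ and apply Lemma \ref{lemma_sd_preserves_su} (legitimate since $\bD=\bigcup_n(\bL_n\cup\bLc_n)$ is selfdual) to obtain
$$
\SU_0\Bigl(\bigcup_{n\in\omega}(\bG_n\cup\bGc_n)\Bigr)=\SD_\eta\bigl(\bD,\SU_0(\bD\!^\ast)\bigr)=\SD_\eta(\bD,\bG^\ast)=\bG.
$$
To invoke Theorem \ref{theorem_characterization_small_type} in the reverse direction I must know that each $\bG_n$ lies in $\NSDS(Z)$ with $\ell(\bG_n)\geq 0$. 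This I would get from Theorem \ref{theorem_sd_main} after verifying the two containments $\bG_n^\ast\subseteq\bD\!^\ast\subseteq\SU_0(\bD\!^\ast)=\bG^\ast\subseteq\bD$ and $\bG_n\subseteq\SU_0(\bigcup_{m}(\bG_m\cup\bGc_m))=\bG\subseteq\bS(Z)$: the first lets Theorem \ref{theorem_sd_main} apply to $\bG_n=\SD_\eta(\bD,\bG_n^\ast)$, giving $\bG_n\in\NSD(Z)$ and $\ell(\bG_n)=0$, and the second places $\bG_n$ inside $\bS(Z)$. Then Theorem \ref{theorem_characterization_small_type}, applied once more at level $0$ but now in the direction "decomposition $\Rightarrow$ small type", gives $t(\bG)\in\{1,2\}$.

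Finally, the dual case $\bG^\ast=\SUc_0(\bD\!^\ast)$ I would reduce to the main case. Then $\bGc^\ast=\SU_0(\bD\!^\ast)$, and by Lemma \ref{lemma_sd_check} together with the selfduality of $\bD$ we have $\bGc=\SD_\eta(\bD,\bGc^\ast)$; moreover $\bGc\subseteq\bS(Z)$ since $\bool\bS(Z)=\bS(Z)$, $\bGc^\ast\in\NSD(Z)$ with $\ell(\bGc^\ast)=0$ and $t(\bGc^\ast)=t(\bG^\ast)\in\{1,2\}$, and $\bGc^\ast\subseteq\bD$ because $\bG^\ast\subseteq\bD$ and $\bD$ is selfdual. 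Thus the main case applies verbatim with $\bGc^\ast$ in place of $\bG^\ast$, yielding $t(\bGc)\in\{1,2\}$, and hence $t(\bG)=t(\bGc)\in\{1,2\}$ by Proposition \ref{proposition_type_basic}. I expect the only genuine subtlety to be the bookkeeping that keeps $\bG$ and all the $\bG_n$ inside $\NSDS(Z)$ at level $0$, so that Theorems \ref{theorem_sd_main} and \ref{theorem_characterization_small_type} are actually applicable; this amounts to chasing the inclusions above rather than to any real difficulty.
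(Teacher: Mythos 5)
Your proposal is correct and follows essentially the same route as the paper's proof: apply Theorem \ref{theorem_characterization_small_type} at level $0$ to decompose $\bG^\ast$ (or its dual) as a separated union, set $\bG_n=\SD_\eta(\bD,\bG_n^\ast)$, use Theorem \ref{theorem_sd_main} to keep everything in $\NSDS(Z)$ at level $0$, transport the decomposition via Lemmas \ref{lemma_sd_check} and \ref{lemma_sd_preserves_su}, and conclude by the reverse direction of Theorem \ref{theorem_characterization_small_type}. The only cosmetic difference is that you handle the dual case by an explicit reduction while the paper treats both cases simultaneously; your extra bookkeeping of the inclusions is sound.
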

\begin{proof}
Assume that $\ell(\bG^\ast)=0$ and $t(\bG^\ast)\in\{1,2\}$. By Theorem \ref{theorem_characterization_small_type}, there exist $\bG_n^\ast\in\NSDS(Z)$ for $n\in\omega$ such that
$$
\bG^\ast=\SU_0\left(\bigcup_{n\in\omega}(\bG^\ast_n\cup\bGc^\ast_n)\right)\text{ or }\bGc^\ast=\SU_0\left(\bigcup_{n\in\omega}(\bG^\ast_n\cup\bGc^\ast_n)\right).
$$
Set $\bG_n=\SD_\eta(\bD,\bG_n^\ast)$ for $n\in\omega$. Observe that Theorem \ref{theorem_sd_main} guarantees that each $\bG_n\in\NSDS(Z)$. Furthermore, it guarantees that $\bG\in\NSDS(Z)$ and $\ell(\bG)=0$. Since
$$
\bG=\SU_0\left(\bigcup_{n\in\omega}(\bG_n\cup\bGc_n)\right)\text{ or }\bGc=\SU_0\left(\bigcup_{n\in\omega}(\bG_n\cup\bGc_n)\right)
$$
by Lemmas \ref{lemma_sd_check} and \ref{lemma_sd_preserves_su}, the desired conclusion follows from Theorem \ref{theorem_characterization_small_type}.
\end{proof}

\subsection[Preservation of the separation property]{Preservation of the separation property under separated differences}\label{subsection_wadge_closure_preservation_separation}

The aim of this subsection is to show that the operation $\SD_\eta(\bD,-)$ preserves the separation property (see Theorem \ref{theorem_preservation_separation_under_sd} for a precise statement). This is one of the ingredients needed to obtain the crucial closure properties of \S\ref{subsection_wadge_closure_main}.

\begin{lemma}\label{lemma_sd_versus_su}
Let $Z$ be a zero-dimensional space, let $1\leq\eta<\omega_1$, and let $\bD\subseteq\PP(Z)$ be selfdual with $Z\in\bD$. Then there exists a selfdual $\bD'\subseteq\PP(Z)$ such that
$$
\SD_\eta(\bD,\{\varnothing\})=\SU_0(\bD').
$$
\end{lemma}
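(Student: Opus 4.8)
The plan is to take $\bG:=\SD_\eta(\bD,\{\varnothing\})$ and $\bD':=\Delta(\bG)=\bG\cap\bGc$; since $\Delta(\bG)$ is always selfdual, the lemma will follow once we prove the two inclusions $\SU_0(\bD')\subseteq\bG$ and $\bG\subseteq\SU_0(\bD')$. Two preliminary observations: since $Z\in\bD$ and $\bD=\widecheck{\bD}$ we have $\varnothing=Z\setminus Z\in\bD$; and by Lemma~\ref{lemma_sd_check} together with the selfduality of $\bD$ we have $\bGc=\SD_\eta(\bD,\{Z\})$.

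The inclusion $\SU_0(\bD')\subseteq\bG$ is immediate. Applying Lemma~\ref{lemma_preservation_su_under_sd} with $\bG^\ast=\{\varnothing\}$, which is closed under $\SU_0$ since $\SU_0(\{\varnothing\})=\{\varnothing\}$, shows that $\bG$ is closed under $\SU_0$. As $\bD'\subseteq\bG$ and $\SU_0$ is monotone, $\SU_0(\bD')\subseteq\SU_0(\bG)=\bG$.

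The substantive part is $\bG\subseteq\SU_0(\bD')$. Given $A\in\bG$, fix a representation $A=\SD_\eta((U_{\mu,n}:\mu<\eta,n\in\omega),(A_{\mu,n}:\mu<\eta,n\in\omega),\varnothing)$ as in Definition~\ref{definition_sd}, put $W=\bigcup_{\mu<\eta,\,n\in\omega}U_{\mu,n}$, and note that $A\subseteq W$. Since $Z$ is zero-dimensional, $W$ is a countable disjoint union of clopen sets $V_k$, so $A=\bigcup_k(A\cap V_k)$ is a separated union over the pairwise disjoint open sets $V_k$. It therefore suffices to show $A\cap V_k\in\bD'=\Delta(\bG)$ for every $k$, i.e. $A\cap V_k\in\bG$ and $A\cap V_k\in\bGc$. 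For the former, intersecting all the $U_{\mu,n}$ with the clopen set $V_k$ and keeping the labels $A_{\mu,n}$ yields $A\cap V_k=\SD_\eta((U_{\mu,n}\cap V_k),(A_{\mu,n}),\varnothing)$; this is checked using that $V_k$ commutes with the subtracted unions $\bigcup_{\mu'<\mu,\,m}U_{\mu',m}$ and that every point of $U_{\mu,n}\cap V_k$ lies in $V_k$, and it gives $A\cap V_k\in\SD_\eta(\bD,\{\varnothing\})=\bG$. For the latter, build a new separated-difference representation from the same data by additionally inserting the clopen set $Z\setminus V_k$ at level $0$ with label $\varnothing\in\bD$ and changing the tail from $\varnothing$ to $Z$; since the enlarged family of open sets now covers $Z$, so that the tail term contributes nothing, while every point of $Z\setminus V_k$ is removed at each level $\mu\ge 1$ and contributes $\varnothing$ at level $0$, this new separated difference equals $A\cap V_k$, witnessing $A\cap V_k\in\SD_\eta(\bD,\{Z\})=\bGc$. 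Hence $A\cap V_k\in\bD'$ and $A\in\SU_0(\bD')$.

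The two set-theoretic identities describing $A\cap V_k$ as a modified separated difference are the only non-routine part. The first is a direct restriction computation; the second --- where the level-$0$ family is padded with $Z\setminus V_k$ precisely to switch the tail class from $\{\varnothing\}$ to $\{Z\}$ --- is the step requiring the most care, since one must confirm that the inserted clopen set interacts correctly with all the nested subtractions $U_{\mu,n}\setminus\bigcup_{\mu'<\mu,\,m}U_{\mu',m}$. The rest is monotonicity of $\SU_0$ together with the closure property from Lemma~\ref{lemma_preservation_su_under_sd}, and no determinacy assumptions are needed.
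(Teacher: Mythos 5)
Your proof is correct, and it takes a genuinely different route from the paper's. The paper constructs $\bD'$ explicitly by a case analysis on $\eta$: for $\eta=1$ it takes $\bD'=\bD$, for $\eta=\eta'+1$ it takes $\bD'=\SD_{\eta'}(\bD,\bD)$, and for limit $\eta$ it takes $\bD'=\bigcup_{1\leq\eta'<\eta}(\SD_{\eta'}(\bD,\{\varnothing\})\cup\SDc_{\eta'}(\bD,\{\varnothing\}))$, in each case using Theorem \ref{theorem_reduction} to disjointify the relevant open sets. You instead take $\bD'=\Delta(\bG)$ uniformly and exploit zero-dimensionality directly, partitioning $W=\bigcup_{\mu,n}U_{\mu,n}$ into disjoint clopen pieces $V_k$; your two set-theoretic identities do check out (restricting every $U_{\mu,n}$ to $V_k$ changes nothing because the points under consideration already lie in $V_k$, and padding level $0$ with $Z\setminus V_k$ labelled by $\varnothing\in\bD$ --- which is where the hypothesis $Z\in\bD$ enters --- makes the open sets cover $Z$, so the tail $Z$ contributes nothing). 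Your argument is shorter and avoids the transfinite case split, which is a genuine gain for the lemma as stated. What the paper's construction buys is the explicit recursive form of $\bD'$: it is precisely what allows the refinement in Lemma \ref{lemma_sd_versus_su_precise}, where $\bD'$ must be exhibited as $\bigcup_{n}(\bG'_n\cup\bGc'_n)$ for non-selfdual Wadge classes $\bG'_n$ so that Theorem \ref{theorem_characterization_small_type} can be applied in the proofs of Theorems \ref{theorem_preservation_separation_under_sd} and \ref{theorem_closure_main_type}. Your choice $\bD'=\Delta(\bG)$ does not come with such a decomposition, so your method would not replace the paper's in that later, more precise variant; but for the present lemma it is a complete and somewhat cleaner proof.
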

\begin{proof}
If $\eta=1$, just set $\bD'=\bD$. Next, assume that $\eta=\eta'+1$, where $1\leq\eta'<\omega_1$. Set $\bD'=\SD_{\eta'}(\bD,\bD)$, and observe that $\bD'$ is selfdual by Lemma \ref{lemma_sd_check}. To prove the inclusion $\subseteq$, pick $A\in\SD_\eta(\bD,\{\varnothing\})$. Fix $A_{\mu,n}\in\bD$ and $U_{\mu,n}\in\bS^0_1(Z)$ as in Definition \ref{definition_sd} such that
$$
A=\SD_\eta((U_{\mu,n}:\mu<\eta,n\in\omega),(A_{\mu,n}:\mu<\eta,n\in\omega),\varnothing).
$$
Set $A_k=\SD_{\eta'}((U_{\mu,n}:\mu<\eta',n\in\omega),(A_{\mu,n}:\mu<\eta',n\in\omega),A_{\eta',k})$ and
$$
V_k=U_{\eta',k}\cup\bigcup_{\substack{\mu<\eta'\\n\in\omega}}U_{\mu,n}
$$
for $k\in\omega$. By Theorem \ref{theorem_reduction}, it is possible to find $V'_k\in\bS^0_1(Z)$ for $k\in\omega$ satisfying the following conditions:
\begin{itemize}
\item $V'_k\subseteq V_k$ for $k\in\omega$,
\item $V'_j\cap V'_k=\varnothing$ whenever $j\neq k$,
\item $\bigcup_{k\in\omega}V'_k=\bigcup_{k\in\omega}V_k$.
\end{itemize}
It is easy to realize that $A=\bigcup_{n\in\omega}(A_k\cap V'_k)$, which shows that $A\in\SU_0(\bD')$, as desired. In order to prove the inclusion $\supseteq$, first observe that $\bD'\subseteq\SD_\eta(\bD,\{\varnothing\})$. It follows that
$$
\SU_0(\bD')\subseteq\SU_0(\SD_\eta(\bD,\{\varnothing\}))=\SD_\eta(\bD,\{\varnothing\}),
$$
where the equality holds by Lemma \ref{lemma_preservation_su_under_sd}, since $\{\varnothing\}$ is obviously closed under $\SU_0$.

To conclude the proof, assume that $\eta$ is a limit ordinal. Set
$$
\bD'=\bigcup_{1\leq\eta'<\eta}(\SD_{\eta'}(\bD,\{\varnothing\})\cup\SDc_{\eta'}(\bD,\{\varnothing\})),
$$
and observe that $\bD'$ is selfdual. To prove the inclusion $\subseteq$, pick $A\in\SD_\eta(\bD,\{\varnothing\})$. Fix $A_{\mu,n}\in\bD$ and $U_{\mu,n}\in\bS^0_1(Z)$ as in Definition \ref{definition_sd} such that
$$
A=\SD_\eta((U_{\mu,n}:\mu<\eta,n\in\omega),(A_{\mu,n}:\mu<\eta,n\in\omega),\varnothing).
$$
Set $V_{\eta'}=\bigcup\{U_{\mu,n}:\mu<\eta'\text{ and }n\in\omega\}$ for $1\leq\eta'<\eta$. By Theorem \ref{theorem_reduction}, it is possible to find $V'_{\eta'}\in\bS^0_1(Z)$ for $1\leq\eta'<\eta$ satisfying the following conditions:
\begin{itemize}
\item $V'_{\eta'}\subseteq V_{\eta'}$ for $\eta'<\eta$,
\item $V'_{\eta'}\cap V'_{\eta''}=\varnothing$ whenever $\eta'\neq\eta''$,
\item $\bigcup_{1\leq\eta'<\eta}V'_{\eta'}=\bigcup_{1\leq\eta'<\eta}V_{\eta'}$.
\end{itemize}
Finally, set
$$
A_{\eta'}=\SD_{\eta'}((U_{\mu,n}:\mu<\eta',n\in\omega),(A_{\mu,n}:\mu<\eta',n\in\omega),\varnothing)
$$
for $1\leq\eta'<\eta$. It is easy to realize that
$$
A=\bigcup_{1\leq\eta'<\eta}(A_{\eta'}\cap V'_{\eta'}),
$$
which shows that $A\in\SU_0(\bD')$, as desired. In order to prove the inclusion $\supseteq$, the argument given above will work, provided that $\bD'\subseteq\SD_\eta(\bD,\{\varnothing\})$. To see this, use the fact that
$$
\SDc_{\eta'}(\bD,\{\varnothing\})\subseteq\SD_{\eta'+1}(\bD,\{\varnothing\})
$$
for each $\eta'$, which follows from Lemma \ref{lemma_sd_check} and the assumption that $Z\in\bD$.
\end{proof}

While Lemma \ref{lemma_sd_versus_su} will be sufficient for most applications, in the proof of Theorem \ref{theorem_closure_main_type} we will need the following more precise version. However, their proofs are essentially the same.

\begin{lemma}\label{lemma_sd_versus_su_precise}
Let $\bS$ be a nice topological pointclass, and assume that $\Det(\bS(\omega^\omega))$ holds. Let $Z$ be an uncountable zero-dimensional Polish space, let $1\leq\eta<\omega_1$, and let $\bG_n\in\NSD(Z)$ for $n\in\omega$ be such that each $\ell(\bG_n)\geq 1$. Set $\bD=\bigcup_{n\in\omega}(\bG_n\cup\bGc_n)$, and assume that $\SD_\eta(\bD,\{\varnothing\})\subseteq\bS(Z)$. Then there exist $\bG'_n\in\NSDS(Z)$ for $n\in\omega$ such that
$$
\SD_\eta(\bD,\{\varnothing\})=\SU_0(\bD'),
$$
where $\bD'=\bigcup_{n\in\omega}(\bG'_n\cup\bGc'_n)$.
\end{lemma}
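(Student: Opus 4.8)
The plan is to rerun the transfinite induction on $\eta$ from the proof of Lemma \ref{lemma_sd_versus_su}, keeping track of enough additional structure to exhibit the selfdual class $\bD'$ built there in the form $\bigcup_{n\in\omega}(\bG'_n\cup\bGc'_n)$ with each $\bG'_n\in\NSDS(Z)$. In each of the three cases of that proof, $\bD'$ is a countable union of classes of the form $\SD_{\eta'}(\bD,\bG^\ast)$ for appropriate ordinals $\eta'$ and appropriate $\bG^\ast$, so the only genuinely new work is to check that every such building block is a non-selfdual Wadge class that is contained in $\bS(Z)$, and that passing to its dual keeps us within the same family. The first of these is Theorem \ref{theorem_sd_main}; the second follows because each building block is contained in $\bD'$, which is contained in $\SD_\eta(\bD,\{\varnothing\})\subseteq\bS(Z)$ (the inclusion $\bD'\subseteq\SD_\eta(\bD,\{\varnothing\})$ having already been verified inside the proof of Lemma \ref{lemma_sd_versus_su}); the third is Lemma \ref{lemma_sd_check} together with the selfduality of $\bD$.

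Concretely, when $\eta=1$ we take $\bG'_n=\bG_n$; these lie in $\NSDS(Z)$ because $\bG_n\subseteq\bD=\bD'\subseteq\SU_0(\bD)=\SD_1(\bD,\{\varnothing\})\subseteq\bS(Z)$. When $\eta=\eta'+1$ with $1\leq\eta'<\omega_1$, the proof of Lemma \ref{lemma_sd_versus_su} gives $\bD'=\SD_{\eta'}(\bD,\bD)$; since $\SD_{\eta'}(\bD,-)$ is monotone and $\bD=\bigcup_m(\bG_m\cup\bGc_m)$, this equals $\bigcup_m(\SD_{\eta'}(\bD,\bG_m)\cup\SD_{\eta'}(\bD,\bGc_m))$, so we set $\bG'_m=\SD_{\eta'}(\bD,\bG_m)$. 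Then $\bG'_m\in\NSD(Z)$ by Theorem \ref{theorem_sd_main} (applied to the family $(\bG_n)$ and $\bG^\ast=\bG_m\subseteq\bD$), its dual equals $\SD_{\eta'}(\widecheck{\bD},\bGc_m)=\SD_{\eta'}(\bD,\bGc_m)$ by Lemma \ref{lemma_sd_check}, and $\bG'_m\subseteq\bD'\subseteq\SD_\eta(\bD,\{\varnothing\})\subseteq\bS(Z)$. Finally, when $\eta$ is a limit, $\{\eta':1\leq\eta'<\eta\}$ is countable, so we let $(\bG'_n)$ enumerate the classes $\SD_{\eta'}(\bD,\{\varnothing\})$ for $1\leq\eta'<\eta$; these are non-selfdual Wadge classes by Theorem \ref{theorem_sd_main} (with $\bG^\ast=\{\varnothing\}\subseteq\bD$), their duals are the classes $\SDc_{\eta'}(\bD,\{\varnothing\})$ by Lemma \ref{lemma_sd_check}, they are contained in $\bS(Z)$ as before, and $\bigcup_n(\bG'_n\cup\bGc'_n)=\bigcup_{1\leq\eta'<\eta}(\SD_{\eta'}(\bD,\{\varnothing\})\cup\SDc_{\eta'}(\bD,\{\varnothing\}))=\bD'$. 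In every case the required identity $\SD_\eta(\bD,\{\varnothing\})=\SU_0(\bD')$ is exactly the conclusion of Lemma \ref{lemma_sd_versus_su}.

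I do not anticipate a real obstacle here: the set-theoretic manipulations and the inclusion $\bD'\subseteq\SD_\eta(\bD,\{\varnothing\})$ are literally those already carried out for Lemma \ref{lemma_sd_versus_su}, and the only new point is the verification that the building blocks belong to $\NSDS(Z)$ rather than merely to $\NSD(Z)$, which is precisely where the hypothesis $\SD_\eta(\bD,\{\varnothing\})\subseteq\bS(Z)$ is used, through the chain $\bG'_n\subseteq\bD'\subseteq\SD_\eta(\bD,\{\varnothing\})$. One should also recall that $\bS$ is nice, so that the complementary classes $\bGc'_n$ are automatically contained in $\bS(Z)$ as well.
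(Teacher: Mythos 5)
Your proposal is correct and follows essentially the same route as the paper: the same three-case choice of the $\bG'_n$ (namely $\bG_n$, $\SD_{\eta'}(\bD,\bG_n)$, and $\SD_{\eta'}(\bD,\{\varnothing\})$ in the three cases), the identity $\SD_\eta(\bD,\{\varnothing\})=\SU_0(\bD')$ obtained by rerunning the proof of Lemma \ref{lemma_sd_versus_su}, and Theorem \ref{theorem_sd_main} to place the building blocks in $\NSD(Z)$. Your extra bookkeeping (the decomposition of $\SD_{\eta'}(\bD,\bD)$ as a union over the $\bG_m$ and $\bGc_m$, and the chain $\bG'_n\subseteq\bD'\subseteq\SD_\eta(\bD,\{\varnothing\})\subseteq\bS(Z)$ to upgrade membership from $\NSD(Z)$ to $\NSDS(Z)$) only makes explicit what the paper leaves to the reader.
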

\begin{proof}
If $\eta=1$, set $\bG'_n=\bG_n$ for $n\in\omega$. If $\eta=\eta'+1$, where $1\leq\eta'<\omega_1$, set $\bG'_n=\SD_{\eta'}(\bD,\bG_n)$ for $n\in\omega$. If $\eta$ is a limit ordinal,  set $\bG'_{\eta'}=\SD_{\eta'}(\bD,\{\varnothing\})$ for every $\eta'$ such that $1\leq\eta'<\eta$. To verify that $\SD_\eta(\bD,\{\varnothing\})=\SU_0(\bD')$, proceed as in the proof of Lemma \ref{lemma_sd_versus_su}. To see that each $\bG'_n\in\NSDS(Z)$ and each $\bG'_{\eta'}\in\NSDS(Z)$, apply Theorem \ref{theorem_sd_main}.
\end{proof}

\begin{theorem}\label{theorem_preservation_separation_under_sd}
Let $\bS$ be a nice topological pointclass, and assume that $\Det(\bS(\omega^\omega))$ holds. Let $Z$ be an uncountable zero-dimensional Polish space, let $1\leq\eta<\omega_1$, let $\bL_n\in\NSD(Z)$ for $n\in\omega$ be such that each $\ell(\bL_n)\geq 1$, and let $\bG^\ast\in\NSD(Z)$ be such that $\bG^\ast\subseteq\bD$, where $\bD=\bigcup_{n\in\omega}(\bL_n\cup\bLc_n)$. Set $\bG=\SD_\eta(\bD,\bG^\ast)$, and assume that $\bG\subseteq\bS(Z)$. If $\bG^\ast$ has the separation property then $\bG$ has the separation property.
\end{theorem}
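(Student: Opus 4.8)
The strategy is to build the separator together with \emph{two} separated-difference representations of it — one of $C$, one of $Z\setminus C$ — each of length $\eta$, with inner entries in $\bD$ and tail in $\bG^\ast$; this forces $C\in\bG\cap\bGc=\Delta(\bG)$. Note that $\bD$ is selfdual (if $X\in\bD$ then $X\in\bL_n\cup\bLc_n$ for some $n$, so $Z\setminus X\in\bD$) and that $\bG^\ast\subseteq\bG\subseteq\bS(Z)$ (take all opens empty in the $\SD_\eta$-formula), so $\bG^\ast\in\NSDS(Z)$; by Lemma \ref{lemma_relativization_exists_unique} there is $\bL^\ast\in\NSDS(\omega^\omega)$ with $\bL^\ast(Z)=\bG^\ast$, and we abbreviate $\bG^\ast(W)=\bL^\ast(W)$. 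The role of the separation property of $\bG^\ast$ is precisely to furnish the two tails. Now pick disjoint $A,B\in\bG$, fix representations $A=\SD_\eta((U_{\mu,n}),(A_{\mu,n}),A^\ast)$ and $B=\SD_\eta((V_{\mu,n}),(B_{\mu,n}),B^\ast)$ as in Definition \ref{definition_sd}, and — using that $Z$ is zero-dimensional, so every open set is a countable disjoint union of clopen sets, which can be absorbed into the families — assume all the $U_{\mu,n}$ and $V_{\mu,n}$ are clopen. Put $O=\bigcup_{\mu,n}U_{\mu,n}\cup\bigcup_{\mu,n}V_{\mu,n}$ and $F=Z\setminus O$, a closed, hence zero-dimensional Polish, subspace on which $A$ and $B$ reduce to $A^\ast\cap F$ and $B^\ast\cap F$.

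These are disjoint members of $\bG^\ast(F)$ by Lemma \ref{lemma_relativization_subspace}. Since $\bG^\ast$ has the separation property, so does $\bL^\ast$ (otherwise Lemma \ref{lemma_separation_relativization} and Theorem \ref{theorem_separation_generalized} give a contradiction), hence so does $\bG^\ast(F)$ by Lemma \ref{lemma_separation_relativization}; thus there is $C_F\in\Delta(\bG^\ast(F))$ with $A^\ast\cap F\subseteq C_F\subseteq F\setminus B^\ast$. Applying Lemma \ref{lemma_relativization_subspace} to $\bG^\ast$ and to $\bGc^\ast$ (using $\widecheck{\bG^\ast(F)}=\bGc^\ast(F)$), choose $S\in\bG^\ast$ with $S\cap F=C_F$ and $T\in\bG^\ast$ with $T\cap F=F\setminus C_F$; $S$ and $T$ will be the tails of $C$ and of $Z\setminus C$ respectively, which works because in the $\SD_\eta$-formula the tail only affects the complement of the union of the opens, which we will arrange to be exactly $F$.

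It remains to define $C$ on $O$ and assemble the representation. This is carried out by a transfinite recursion on $\mu<\eta$ that interleaves the two clopen stratifications $(U_{\mu,n})$ and $(V_{\mu,n})$ into a single family of $\bS^0_1$ layers with union $O$, using Theorem \ref{theorem_reduction} to restore pairwise disjointness within each level; on each layer both $A$ and $B$ are governed by a single entry of $\bD$ (one of the $A_{\mu,n}$, or $A^\ast\in\bG^\ast\subseteq\bD$; and one of the $B_{\mu,n}$, or $Z\setminus B_{\mu,n}\in\bD$ after complementing, using selfduality of $\bD$), and one sets $C$ on that layer equal to the $A$-value, which equals the complement of the $B$-value since $A\cap B=\varnothing$. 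Collecting the layers, the chosen $\bD$-entries, and $S$ yields $C\in\SD_\eta(\bD,\bG^\ast)=\bG$; complementing via Lemma \ref{lemma_sd_check} gives $Z\setminus C=\SD_\eta((\text{layers}),(\text{complemented entries}),Z\setminus S)$, and since $Z\setminus S$ and $T$ agree on $F=Z\setminus\bigcup(\text{layers})$ we may replace the tail by $T\in\bG^\ast$, so $Z\setminus C\in\bG$ as well; hence $C\in\Delta(\bG)$. A piece-by-piece comparison with the $\SD_\eta$-formula confirms $A\subseteq C\subseteq Z\setminus B$, completing the proof.

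The step I expect to be the genuine obstacle is the interleaving in the last paragraph: the families $(U_{\mu,n})$ and $(V_{\mu,n})$ are a priori unrelated, and one must merge them level by level into a single separated-difference skeleton of length exactly $\eta$ — not $\eta\cdot 2$ — along which $C$ admits an honest representation, which requires careful bookkeeping of, for each point of $O$, the levels at which it first enters each family, together with repeated applications of the Reduction Theorem. By contrast, the conceptual ingredients are already in place: selfduality of $\bD$ (so the inner entries are unconstrained), the hypothesis $\bG^\ast\subseteq\bD$ (so a tail value can be demoted to an inner entry when needed), and the relativized separation property of $\bG^\ast$ on $F$ (so both $C$ and $Z\setminus C$ get tails in $\bG^\ast$). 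For $\eta=1$ the recursion collapses to a single common refinement and the argument is short; alternatively one can first reduce to $\eta=1$ by replacing $\bD$ with the selfdual class $\SD_\eta(\bD,\{\varnothing\})\cup\SDc_\eta(\bD,\{\varnothing\})$, cf. Lemmas \ref{lemma_sd_versus_su} and \ref{lemma_sd_check}.
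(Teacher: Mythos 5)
You have the right architecture (a separator built as a single length-$\eta$ separated difference with inner entries in the selfdual class $\bD$ and tail supplied by the separation property of $\bG^\ast$), and your idea of relativizing the separation of the tails to the closed set $F=Z\setminus O$ via Lemma \ref{lemma_relativization_subspace} and Lemma \ref{lemma_separation_relativization} is a genuinely nice touch: the paper instead arranges $A_i^\ast\cap U_i=\varnothing$ using Lemma \ref{lemma_closure_closed_open}, which forces a separate (and somewhat delicate) case analysis when $\Diff_n(\bS^0_1(Z))\nsubseteq\bG^\ast$. But the step you yourself flag as ``the genuine obstacle'' is a real gap, and the sketch you give for it does not work as stated. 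You claim that after merging the two stratifications and applying Theorem \ref{theorem_reduction} within each level, ``on each layer both $A$ and $B$ are governed by a single entry of $\bD$.'' This is false: a reduced layer $W$ at level $\mu$ coming from, say, $B$'s family determines a single $B$-governor $B_{\mu,m}$ on $W$ minus the earlier layers, but the points of that set can enter $A$'s stratification for the first time at many different later levels (or never, being governed by $A^\ast$), so there is no single $A$-governor there; and the only way to split $W$ according to the first $A$-level is by sets of the form $U_{\nu,n}\setminus\bigcup_{\nu'<\nu}U_{\nu',n'}$, which are not open, so they cannot be absorbed into a length-$\eta$ skeleton. Consequently ``set $C$ on that layer equal to the $A$-value'' is not a definition. (The subsidiary claim that the $A$-value ``equals'' the complement of the $B$-value is also wrong -- disjointness only gives an inclusion -- though that part is harmless.)

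The paper's resolution is precisely to \emph{not} require each layer to see both governors: it writes the two given sets as $\SD_\eta$'s over a common level structure by splitting the index $n$ into evens (for $A_0$) and odds (for $A_1$), reduces the combined family $\{U_{\mu,n}:n\in\omega\}$ at each fixed level $\mu$, and declares the entry of $C$ on a reduced piece to be $A_{\mu,2k}$ if the piece came from $A_0$'s family and $Z\setminus A_{\mu,2k+1}$ if it came from $A_1$'s family. Each layer thus carries the governor of only one of the two sets (complemented in the second case, which is where selfduality of $\bD$ enters), and the containments $A_0\subseteq C\subseteq Z\setminus A_1$ are then verified by a case analysis on which family a given point first meets -- using disjointness of $A_0$ and $A_1$ to show, e.g., that a point of $A_0$ landing first in an odd piece $U'_{\mu,2k+1}$ must avoid $A_{\mu,2k+1}$. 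If you replace your ``$A$-value on every layer'' prescription by this one-sided rule, your argument (including the $F$-relativization of the tails) goes through; as written, the central construction is missing. Your proposed alternative reduction to $\eta=1$ via $\SD_\eta(\bD,\{\varnothing\})\cup\SDc_\eta(\bD,\{\varnothing\})$ would also need justification (Lemma \ref{lemma_sd_versus_su} only treats the empty tail), and in any case the $\eta=1$ instance already requires the same one-sided trick, since a common refinement of two partitions into opens again produces pieces on which only one of the two sets has a well-defined governor.
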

\begin{proof}
Assume that $\bG^\ast$ has the separation property. Pick disjoint $A_0,A_1\in\bG$. Given $i<2$, fix $U_{\mu,2k+i}\in\bS^0_1(Z)$ and $A_{\mu,2k+i}\in\bD$ for $\mu<\eta$ and $k\in\omega$ as in Definition \ref{definition_sd} such that
$$
A_i=\SD_\eta((U_{\mu,2k+i}:\mu<\eta,k\in\omega),(A_{\mu,2k+i}:\mu<\eta,k\in\omega),A_i^\ast).
$$
Set $U_i=\bigcup\{U_{\mu,2k+i}:\mu<\eta\text{ and }k\in\omega\}$ for $i<2$.

First assume that $\Diff_n(\bS^0_1(Z))\subseteq\bG^\ast$ whenever $1\leq n<\omega$. Then, by Lemma \ref{lemma_closure_closed_open}, we can assume without loss of generality that $A_i^\ast\cap U_i=\varnothing$ for each $i$. It follows that $A_0^\ast\cap A_1^\ast=\varnothing$. Therefore, since $\bG^\ast$ has the separation property, we can fix $B^\ast\in\Delta(\bG^\ast)$ such that $A_0^\ast\subseteq B^\ast\subseteq Z\setminus A_1^\ast$. Given $\mu<\eta$, by Theorem \ref{theorem_reduction} there exist $U'_{\mu,n}\in\bS^0_1(Z)$ for $n\in\omega$ satisfying the following conditions:
\begin{itemize}
\item $U'_{\mu,n}\subseteq U_{\mu,n}$ for each $n$,
\item $U'_{\mu,m}\cap U'_{\mu,n}=\varnothing$ whenever $m\neq n$,
\item $\bigcup_{n\in\omega}U'_{\mu,n}=\bigcup_{n\in\omega}U_{\mu,n}$.
\end{itemize}
Also set $B_{\mu,2k}=A_{\mu,2k}$ and $B_{\mu,2k+1}=Z\setminus A_{\mu,2k+1}$ for $\mu<\eta$ and $k\in\omega$, then define
$$
B=\SD_\eta((U'_{\mu,n}:\mu<\eta,n\in\omega),(B_{\mu,n}:\mu<\eta,n\in\omega),B^\ast).
$$
Since $B\in\Delta(\bG)$ by Lemma \ref{lemma_sd_check}, it remains to verify that $A_0\subseteq B\subseteq Z\setminus A_1$. We will only show that $A_0\subseteq B$, as $A_1\subseteq Z\setminus B$ can be verified using a similar argument. So pick $x\in A_0$. Given $\mu\leq\eta$, set
$$
V_\mu=\bigcup_{\substack{\mu'<\mu\\n\in\omega}}U_{\mu',n}=\bigcup_{\substack{\mu'<\mu\\n\in\omega}}U'_{\mu',n}.
$$
\noindent\textbf{Case 1.} $x\in A_0^\ast\setminus V_\eta$.

\noindent In this case, it is clear that $x\in B^\ast\setminus V_\eta$, hence $x\in B$.

\noindent\textbf{Case 2.} $x\in A_0^\ast\cap V_\eta$.

\noindent Fix the minimal $\mu<\eta$ such that $x\in\bigcup_{n\in\omega}U'_{\mu,n}$, then fix $n\in\omega$ such that $x\in U'_{\mu,n}$. If we had $n=2k$ for some $k\in\omega$, then we would have $x\in U'_{\mu,2k}\subseteq U_{\mu,2k}$, contradicting the assumption that $A_0^\ast\cap U_0=\varnothing$. Therefore, we can fix $k\in\omega$ such that $x\in U'_{\mu,2k+1}$. Notice that $x\notin A_{\mu,2k+1}$, otherwise we would have $x\in A_1$ by the minimality of $\mu$, contradicting the fact that $A_0\cap A_1=\varnothing$. At this point, it is clear that $x\in B$.

\noindent\textbf{Case 3.} $x\notin A_0^\ast$.

\noindent This means that we can fix $\mu<\eta$ and $k\in\omega$ such that
$$
x\in A_{\mu,2k}\cap U_{\mu,2k}\setminus\bigcup_{\substack{\mu'<\mu\\j\in\omega}}U_{\mu',2j}.
$$
First assume that $x\notin V_\mu$. Since
$$
x\in U_{\mu,2k}\subseteq\bigcup_{n\in\omega}U_{\mu,n}=\bigcup_{n\in\omega}U'_{\mu,n},
$$
we can fix $n\in\omega$ such that $x\in U'_{\mu,n}$. If $n=2j$ for some $j\in\omega$ then $j=k$, because otherwise $U_{\mu,2j}\cap U_{\mu,2k}=\varnothing$. On the other hand, if $n=2j+1$ for some $j\in\omega$ then $x\notin A_{\mu,n}$, otherwise it would follow that $x\in A_1$. In either case, it is clear that $x\in B$, as desired. Now assume that $\mu'<\mu$ is minimal such that $x\in\bigcup_{n\in\omega}U'_{\mu',n}$. Fix $n\in\omega$ such that $x\in U'_{\mu',n}$, and observe that $n=2j+1$ for some $j\in\omega$. Furthermore, we must have $x\notin A_{\mu',2j+1}$, otherwise it would follow that $x\in A_1$. Therefore $x\in B$ in this case as well.

Finally, assume that $\Diff_n(\bS^0_1(Z))\nsubseteq\bG^\ast$, where $1\leq n<\omega$. Then $\bG^\ast\subseteq\bD^0_2(Z)$ by Lemma \ref{lemma_wadge}. Therefore, by Theorem \ref{theorem_complete_analysis_delta^0_2}, one of the following cases must hold:
\begin{itemize}
\item $\bG^\ast=\{\varnothing\}$ or $\bG^\ast=\{Z\}$,
\item $\bG^\ast=\Diff_m(\bS^0_1(Z))$ or $\bG^\ast=\Diffc_m(\bS^0_1(Z))$, where $1\leq m<\omega$.
\end{itemize}
However, the case $\bG^\ast=\{\varnothing\}$ contradicts the assumption that $\bG^\ast$ has the separation property. Furthermore, since each $\Diffc_m(\bS^0_1(Z))$ has the separation property by Lemmas \ref{lemma_sd_differences}, \ref{lemma_su_dual_has_separation} and \ref{lemma_sd_versus_su}, the case $\bG^\ast=\Diff_m(\bS^0_1(Z))$ would lead to a contradiction by Theorem \ref{theorem_separation_generalized}. By Lemma \ref{lemma_su_dual_has_separation}, in order to conclude the proof, it will be enough to show that $\bG=\SUc_0(\bD')$ for some selfdual $\bD'\subseteq\PP(Z)$. In the case $\bG^\ast=\{Z\}$, this clearly follows from Lemmas \ref{lemma_sd_check} and \ref{lemma_sd_versus_su}. Now assume that $\bG^\ast=\Diffc_m(\bS^0_1(Z))$, where $1\leq m<\omega$. Observe that $t(\bG^\ast)=1$ by Theorem \ref{theorem_complete_analysis_delta^0_2}. Furthermore, it follows from Lemma \ref{lemma_sd_differences} and Theorem \ref{theorem_sd_main} that $\ell(\bG^\ast)=0$. By Theorems \ref{theorem_characterization_small_type} and \ref{theorem_preservation_small_type_under_sd}, this means that there exists a selfdual $\bD'\subseteq\PP(Z)$ such that either $\bG=\SU_0(\bD')$ or $\bG=\SUc_0(\bD')$. Assume, in order to get a contradiction, that $\bG=\SU_0(\bD')$. Notice that $\Diff_m(\bS^0_1(Z))$ is closed under $\SU_0$ by Lemmas \ref{lemma_sd_differences}, \ref{lemma_su_closed_under_su} and \ref{lemma_sd_versus_su}. It follows from Lemmas \ref{lemma_sd_check} and \ref{lemma_preservation_su_under_sd} that $\bGc$ is closed under $\SU_0$. Hence
$$
\bG=\SU_0(\bD')\subseteq\SU_0(\SUc_0(\bD'))=\SU_0(\bGc)=\bGc,
$$
which contradicts the fact that $\bG$ is non-selfdual.
\end{proof}

\subsection{Closure properties: the main results}\label{subsection_wadge_closure_main}

This subsection is the culmination (and the conclusion) of the purely Wadge-theoretic portion of this article.

\begin{theorem}\label{theorem_closure_main_separation}
Let $\bS$ be a nice topological pointclass, and assume that $\Det(\bS(\omega^\omega))$ holds. Let $Z$ be an uncountable zero-dimensional Polish space, let $\xi<\omega_1$, and let $\bG\in\NSDS(Z)$. Assume that the following conditions hold:
\begin{itemize}
\item $\ell(\bG)=\xi$,
\item $\bG$ does not have the separation property.
\end{itemize}
Then $\bG$ is closed under $\SU_\xi$.
\end{theorem}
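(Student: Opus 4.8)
\emph{Proof plan.} The plan is to argue by induction on the Wadge rank of $\bG$, which is legitimate by Theorem \ref{theorem_well-founded}; so fix $\bG\in\NSDS(Z)$ satisfying the two displayed hypotheses and assume the theorem holds for every member of $\NSDS(Z)$ of strictly smaller Wadge rank. First I would reduce to the case $\xi=0$. If $\xi>0$, then by Theorem \ref{theorem_expansion} there is $\bL\in\NSDS(Z)$ with $\bL^{(\xi)}=\bG$, and Corollary \ref{corollary_expansion_level} (together with the bookkeeping from the proof of Corollary \ref{corollary_type_transfer_expansion} that rules out $\ell(\bL)=\omega_1$) forces $\ell(\bL)=0$; in particular $\bL\subsetneq\bG$. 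Since $\bG=\bL^{(\xi)}$ lacks the separation property, so does $\bL$ by Lemma \ref{lemma_expansion_separation}, whence the inductive hypothesis yields that $\bL$ is closed under $\SU_0$, and then $\bG=\bL^{(\xi)}$ is closed under $\SU_\xi$ by Lemma \ref{lemma_preservation_closure_su_under_expansion}.

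So assume $\xi=0$. By Theorem \ref{theorem_sd_main} I would write $\bG=\SD_\eta(\bD,\bG^\ast)$ for suitable $1\leq\eta<\omega_1$, classes $\bG_n\in\NSD(Z)$ with $\ell(\bG_n)\geq 1$, and $\bG^\ast\in\NSD(Z)$ with $\bG^\ast\subseteq\bD:=\bigcup_{n\in\omega}(\bG_n\cup\bGc_n)$; since $\bD\subseteq\bG$ we in fact have $\bG^\ast\in\NSDS(Z)$. By Theorem \ref{theorem_preservation_separation_under_sd}, $\bG^\ast$ fails the separation property as well, so $\bG^\ast\notin\{\{\varnothing\},\{Z\}\}$ (these have it vacuously) and in particular $\varnothing\in\bG^\ast$. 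By Lemma \ref{lemma_preservation_su_under_sd} it suffices to show that $\bG^\ast$ is closed under $\SU_0$, and for this I would split into cases. If $\ell(\bG^\ast)\geq 1$: in a zero-dimensional space every open set and every closed set lies in $\bD^0_2(Z)$, so any separated union $A=\bigcup_{n\in\omega}(A_n\cap U_n)$ (with $A_n\in\bG^\ast$ and the $U_n\in\bS^0_1(Z)$ pairwise disjoint) is exhibited as a member of $\PU_1(\bG^\ast)=\bG^\ast$ by the partition $\{U_n:n\in\omega\}\cup\{Z\setminus\bigcup_{n\in\omega}U_n\}$ of $Z$ into $\bD^0_2(Z)$ sets, using $\varnothing\in\bG^\ast$. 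If $\ell(\bG^\ast)=0$ and $\bG^\ast\subsetneq\bG$: then $\bG^\ast$ has smaller Wadge rank, level $0$, and fails the separation property, so the inductive hypothesis gives that it is closed under $\SU_0=\SU_{\ell(\bG^\ast)}$. If $\bG^\ast=\bG$ (the only remaining possibility, and one with $\ell(\bG^\ast)=0$): then $\bG\subseteq\bD$, and since $\eta\geq 1$ and $\varnothing\in\bG$ one gets $\SU_0(\bG)\subseteq\SU_0(\bD)\subseteq\SD_1(\bD,\bG)\subseteq\SD_\eta(\bD,\bG)=\bG$, so $\bG$ is closed under $\SU_0$ directly. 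This completes the induction.

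I expect no deep obstacle here: the substantive work is already carried out in Theorem \ref{theorem_preservation_separation_under_sd}, and this argument is mostly a matter of organizing an induction on Wadge rank and feeding in the expansion machinery of \S\ref{subsection_wadge_fundamental_expansions}. The points that will require the most attention are the $\ell(\bL)=\omega_1$ bookkeeping in the reduction step and the elementary chain of inclusions $\SU_0(\bD)\subseteq\SD_1(\bD,\bG)\subseteq\SD_\eta(\bD,\bG)$ in the degenerate case $\bG^\ast=\bG$, both of which are routine.
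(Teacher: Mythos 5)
Your proposal is correct and follows essentially the same route as the paper: reduction to $\xi=0$ via expansions, well-foundedness of Wadge reducibility (the paper phrases it as a $\subseteq$-minimal counterexample rather than an induction), the decomposition $\bG=\SD_\eta(\bD,\bG^\ast)$ from Theorem \ref{theorem_sd_main}, the preservation results Theorem \ref{theorem_preservation_separation_under_sd} and Lemma \ref{lemma_preservation_su_under_sd}, and the dichotomy on $\ell(\bG^\ast)$ --- the only structural difference being that the paper observes that $\bG^\ast\subsetneq\bG$ always holds (since $\bG^\ast\subseteq\bD\subseteq\bG$ with $\bD$ selfdual and $\bG$ non-selfdual), so your degenerate case $\bG^\ast=\bG$ never actually arises. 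One small slip: under the paper's conventions $\{\varnothing\}$ does \emph{not} have the separation property (as $\Delta(\{\varnothing\})$ is empty; compare the end of the proof of Theorem \ref{theorem_preservation_separation_under_sd}), but this is harmless, since the conclusion $\varnothing\in\bG^\ast$ only requires $\bG^\ast\neq\{Z\}$, and the possibility $\bG^\ast=\{\varnothing\}$ is in any case absorbed by your $\ell(\bG^\ast)\geq 1$ case.
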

\begin{proof}
Using the methods of \S\ref{subsection_wadge_fundamental_expansions} and Lemma \ref{lemma_preservation_closure_su_under_expansion}, one sees that the case $\xi=0$ implies the general case. So assume, in order to get a contradiction, that the desired result does not hold in this particular case. By Theorem \ref{theorem_well-founded}, we can assume that $\bG$ is a $\subseteq$-minimal counterexample (in other words, $\ell(\bG)=0$ and $\bG$ does not have the separation property, but $\bG$ is not closed under $\SU_0$). Fix $\eta$, $\bD$ and $\bG^\ast$ as given by Theorem \ref{theorem_sd_main} such that $\bG=\SD_\eta(\bD,\bG^\ast)$. Notice that $\bG^\ast$ does not have the separation property, otherwise $\bG$ would have the separation property by Theorem \ref{theorem_preservation_separation_under_sd}. Similarly, Lemma \ref{lemma_preservation_su_under_sd} shows that $\bG^\ast$ is not closed under $\SU_0$. Furthermore, as $\bD$ is selfdual and $\bG^\ast\subseteq\bD\subseteq\bG$, one sees that $\bG^\ast\subsetneq\bG$. It follows that $\ell(\bG^\ast)\geq 1$, otherwise the minimality of $\bG$ would be contradicted. In particular, $\bG^\ast$ is closed under $\SU_0$ (notice that $\bG^\ast\neq\{Z\}$ because $\bG^\ast$ does not have the separation property), which is a contradiction.
\end{proof}

\begin{theorem}\label{theorem_closure_main_type}
Let $\bS$ be a nice topological pointclass, and assume that $\Det(\bS(\omega^\omega))$ holds. Let $Z$ be an uncountable zero-dimensional Polish space, let $\xi<\omega_1$, and let $\bG\in\NSDS(Z)$. Assume that the following conditions hold:
\begin{itemize}
\item $\ell(\bG)=\xi$,
\item $t(\bG)=3$.
\end{itemize}
Then $\bG$ is closed under $\SU_\xi$.
\end{theorem}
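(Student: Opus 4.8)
The plan is to run the argument of Theorem \ref{theorem_closure_main_separation}, with the hypothesis $t(\bG)=3$ playing the role of ``$\bG$ does not have the separation property'' and Theorem \ref{theorem_preservation_small_type_under_sd} playing the role of Theorem \ref{theorem_preservation_separation_under_sd}. As in that proof, one first reduces to the case $\xi=0$: given $\bG$ with $\ell(\bG)=\xi\geq 1$ and $t(\bG)=3$, Theorem \ref{theorem_expansion} provides $\bL\in\NSDS(Z)$ with $\bL^{(\xi)}=\bG$; by Corollary \ref{corollary_type_transfer_expansion} we have $t(\bL)=3$, while $\ell(\bL)=\omega_1$ would force $\bG=\bL$ and hence $\ell(\bG)=\omega_1$, so $\ell(\bL)<\omega_1$ and then Corollary \ref{corollary_expansion_level} gives $\xi+\ell(\bL)=\xi$, that is $\ell(\bL)=0$. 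Granting the case $\xi=0$, it follows that $\bL$ is closed under $\SU_0$, whence $\bG=\bL^{(\xi)}$ is closed under $\SU_\xi$ by Lemma \ref{lemma_preservation_closure_su_under_expansion}.

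So assume $\xi=0$ and, towards a contradiction, use Theorem \ref{theorem_well-founded} to choose a $\subseteq$-minimal $\bG\in\NSDS(Z)$ with $\ell(\bG)=0$ and $t(\bG)=3$ that fails to be closed under $\SU_0$. By Theorem \ref{theorem_sd_main} write $\bG=\SD_\eta(\bD,\bG^\ast)$ with $1\leq\eta<\omega_1$, each $\ell(\bG_n)\geq 1$, $\bG^\ast\in\NSD(Z)$, $\bG^\ast\subseteq\bD$, where $\bD=\bigcup_{n\in\omega}(\bG_n\cup\bGc_n)$ is selfdual. Since $\bG^\ast\subseteq\bD\subseteq\bG$ while $\bG$ is non-selfdual and $\bD$ is not, we get $\bG^\ast\subsetneq\bG$ and $\bG^\ast\in\NSDS(Z)$; and since $\bG$ is not closed under $\SU_0$, Lemma \ref{lemma_preservation_su_under_sd} shows that $\bG^\ast$ is not closed under $\SU_0$ either.

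The heart of the proof is to establish that $\ell(\bG^\ast)=0$ and $t(\bG^\ast)=3$, for then $\bG^\ast$ is a strictly smaller counterexample and minimality is contradicted. If $\ell(\bG^\ast)\geq 1$ and $\bG^\ast\neq\{Z\}$, then $\varnothing\in\bG^\ast$, and any separated union $\bigcup_{n\in\omega}(A_n\cap U_n)$ of members of $\bG^\ast$ can be rewritten as a partitioned union belonging to $\PU_1(\bG^\ast)=\bG^\ast$, by taking as windows the sets $U_n$ together with the closed set $Z\setminus\bigcup_{n\in\omega}U_n$ (this last one paired with the label $\varnothing$, and recalling that $\bS^0_1(Z)\subseteq\bD^0_2(Z)$ and $\bP^0_1(Z)\subseteq\bD^0_2(Z)$); hence $\bG^\ast$ would be closed under $\SU_0$, a contradiction. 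If $\ell(\bG^\ast)\geq 1$ and $\bG^\ast=\{Z\}$, then $\bGc=\SD_\eta(\bD,\{\varnothing\})$ by Lemma \ref{lemma_sd_check}, which by Lemma \ref{lemma_sd_versus_su_precise} has the form $\SU_0(\bD')$ with $\bD'=\bigcup_{n\in\omega}(\bG'_n\cup\bGc'_n)$ and $\bG'_n\in\NSDS(Z)$, so $t(\bG)=t(\bGc)\in\{1,2\}$ by Theorem \ref{theorem_characterization_small_type} and Proposition \ref{proposition_type_basic}, contradicting $t(\bG)=3$. Hence $\ell(\bG^\ast)=0$. Finally $t(\bG^\ast)\neq 0$, since $t(\bG^\ast)=0$ would force $\bG^\ast\in\{\{\varnothing\},\{Z\}\}$, both of level $\omega_1$; and $t(\bG^\ast)\in\{1,2\}$ is impossible, since Theorem \ref{theorem_preservation_small_type_under_sd} would then yield $t(\bG)\in\{1,2\}$. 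Therefore $t(\bG^\ast)=3$, which completes the contradiction.

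The load-bearing ingredients are Theorems \ref{theorem_sd_main}, \ref{theorem_characterization_small_type} and \ref{theorem_preservation_small_type_under_sd}, so the main difficulty is already dispatched by the preceding sections; the one point that calls for care is the subcase $\bG^\ast=\{Z\}$, where a purely order-theoretic argument does not suffice and one must actually exhibit the $\SU_0$-decomposition of $\bGc$ via Lemma \ref{lemma_sd_versus_su_precise}.
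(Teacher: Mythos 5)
Your proof is correct and follows essentially the same route as the paper's: reduce to $\xi=0$ via expansions, take a $\subseteq$-minimal counterexample, decompose it as $\SD_\eta(\bD,\bG^\ast)$ via Theorem \ref{theorem_sd_main}, and rule out all possibilities for $\bG^\ast$ using Lemma \ref{lemma_preservation_su_under_sd}, Theorem \ref{theorem_preservation_small_type_under_sd} and Lemma \ref{lemma_sd_versus_su_precise}. The only (harmless) difference is organizational: the paper first shows $t(\bG^\ast)\neq 0$ and then splits on $\ell(\bG^\ast)$, whereas you absorb the degenerate cases $\{\varnothing\}$ and $\{Z\}$ into the level case analysis.
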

\begin{proof}
Using the methods of \S\ref{subsection_wadge_fundamental_expansions}, Corollary \ref{corollary_type_transfer_expansion} and Lemma \ref{lemma_preservation_closure_su_under_expansion}, one sees that the case $\xi=0$ implies the general case. So assume, in order to get a contradiction, that the desired result does not hold in this particular case. By Theorem \ref{theorem_well-founded}, we can assume that $\bG$ is a $\subseteq$-minimal counterexample (in other words, $\ell(\bG)=0$ and $t(\bG)=3$, but $\bG$ is not closed under $\SU_0$). Fix $\eta$, $\bD$ and $\bG^\ast$ as given by Theorem \ref{theorem_sd_main} such that $\bG=\SD_\eta(\bD,\bG^\ast)$. Observe that $t(\bG^\ast)\neq 0$, otherwise we would have $t(\bG)\in\{1,2\}$ by Lemma \ref{lemma_sd_check}, Theorem \ref{theorem_characterization_small_type} and Lemma \ref{lemma_sd_versus_su_precise}. To conclude the proof, we will show that the cases $\ell(\bG^\ast)=0$ and $\ell(\bG^\ast)\geq 1$ both lead to a contradiction.

First assume that $\ell(\bG^\ast)=0$. Notice that this implies that $t(\bG^\ast)=3$, otherwise we would have $t(\bG)\in\{1,2\}$ by Theorem \ref{theorem_preservation_small_type_under_sd}. Similarly, Lemma \ref{lemma_preservation_su_under_sd} shows that $\bG^\ast$ is not closed under $\SU_0$. Since $\bG^\ast\subsetneq\bG$, this contradicts the minimality of $\bG$. Finally, assume that $\ell(\bG^\ast)\geq 1$. In particular, $\bG^\ast$ is closed under $\SU_0$ (notice that $\bG^\ast\neq\{Z\}$ because $t(\bG^\ast)\neq 0$). By Lemma \ref{lemma_preservation_su_under_sd}, this contradicts the assumption that $\bG$ is not closed under $\SU_0$.
\end{proof}

\section{Zero-dimensional homogeneous spaces}\label{section_homogeneous}

\subsection{The complexity of a zero-dimensional space}\label{subsection_homogeneous_complexity}

In this subsection, we will discuss how Wadge classes can be used to describe the \emph{absolute} topological complexity of a zero-dimensional space (as opposed to its \emph{relative} complexity as a subset of some ambient space). In the spirit of Definition \ref{definition_relativization}, our canonical choice of zero-dimensional Polish space will be $\omega^\omega$. Intuitively, a $\bS$ space is one of complexity \emph{at most} $\bS$.

\begin{definition}
Let $\bS$ be a topological pointclass, and let $X$ be a zero-dimensional space. We will say that $X$ is a \emph{$\bS$ space} if there exists an embedding $j:X\longrightarrow\omega^\omega$ such that $j[X]\in\bS(\omega^\omega)$.
\end{definition}

Given $\bG\in\NSD(\omega^\omega)$, as we have already observed in \S\ref{subsection_wadge_fundamental_relativization}, the assignment obtained by setting $\bS(Z)=\bG(Z)$ for every space $Z$ yields a topological pointclass. In this case, which is the most important case for our purposes, we will simply talk about $\bG$ spaces. Another example that is very relevant in the present context is obtained by setting $\bS(Z)=\Delta(\Diff_\omega(\bS^0_2(Z)))$ for every space $Z$. In this case, we will simply talk about $\Delta(\Diff_\omega(\bS^0_2))$ spaces. By setting $\bS(Z)=\Borel(Z)$ (respectively $\bS(Z)=\bS^1_n(Z)$ or $\bS(Z)=\bP^1_n(Z)$, where $1\leq n<\omega$) for every space $Z$, one obtains the usual notion of Borel (respectively $\bS^1_n$ or $\bP^1_n$) space.

\begin{definition}\label{definition_exact_complexity}
Let $\bG$ be a good Wadge class in $\omega^\omega$, and let $X$ be a zero-dimensional space. We will say that $X$ has \emph{exact complexity $\bG$} if there exists an embedding $j:X\longrightarrow\omega^\omega$ such that $j[X]\wc=\bG$.
\end{definition}

To motivate the assumptions in the above definition, consider $X=\omega^\omega$: it is easy to realize that there exist embeddings $i,j:X\longrightarrow\omega^\omega$ such that $i[X]\wc=\bP^0_1(\omega^\omega)$ and $j[X]\wc=\bP^0_2(\omega^\omega)$. So, if we dropped ``good'' from Definition \ref{definition_exact_complexity}, there would be spaces with more than one exact complexity, which is highly undesirable. In fact, the point of the following results is that the above notions are well-behaved if we restrict the attention to good Wadge classes. Proposition \ref{proposition_characterization_complexity} is ultimately inspired by \cite[Lemma 4.2.16]{van_engelen_thesis}.

\begin{proposition}\label{proposition_characterization_complexity}
Let $\bS$ be a nice topological pointclass, and assume that $\Det(\bS(\omega^\omega))$ holds. Let $\bG\subseteq\bS(\omega^\omega)$ be a good Wadge class in $\omega^\omega$, and let $X$ be a zero-dimensional space. Then the following conditions are equivalent:
\begin{enumerate}
\item $X$ is a $\bG$ space,
\item $j[X]\in\bG(Z)$ for every zero-dimensional Polish space $Z$ and embedding $j:X\longrightarrow Z$,
\item $j[X]\in\bG(Z)$ for some zero-dimensional Polish space $Z$ and embedding $j:X\longrightarrow Z$.
\end{enumerate}
\end{proposition}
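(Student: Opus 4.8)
The plan is to prove the cycle of implications $(1)\Rightarrow(2)\Rightarrow(3)\Rightarrow(1)$. The implication $(2)\Rightarrow(3)$ is immediate: $X$ is zero-dimensional, separable and metrizable, hence it embeds into the zero-dimensional Polish space $\omega^\omega$ (for instance, send each point to the sequence of values of the characteristic functions of a countable clopen base), so $(2)$ applies to at least one pair $(Z,j)$. The whole argument rests on a single closure property: for a good Wadge class, intersecting with a $\bP^0_2$ set stays inside the class (Corollary \ref{corollary_closure_good}). This is what will let us replace a subspace of $\omega^\omega$ by a Polish --- hence $\bP^0_2$ --- superset without changing its Wadge class. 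We also use throughout that $\bG(\omega^\omega)=\bG$, since $\bG$ is a Wadge class (Lemma \ref{lemma_relativization_basic}), and that $\bG\in\NSDS(\omega^\omega)$.

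For $(3)\Rightarrow(1)$: assume $j[X]\in\bG(Z)$ for some zero-dimensional Polish $Z$ and embedding $j$. By \cite[Theorem 7.8]{kechris} fix an embedding $e:Z\longrightarrow\omega^\omega$ with $e[Z]$ closed, and put $i=e\circ j$. Since $e:Z\longrightarrow e[Z]$ is a homeomorphism, Lemma \ref{lemma_relativization_basic} gives $e[j[X]]\in\bG(e[Z])$; since $e[Z]\subseteq\omega^\omega$ are zero-dimensional Borel, Lemma \ref{lemma_relativization_subspace} gives $e[j[X]]=C\cap e[Z]$ for some $C\in\bG$. As $e[Z]\in\bP^0_1(\omega^\omega)\subseteq\bP^0_2(\omega^\omega)$, Corollary \ref{corollary_closure_good} yields $i[X]=e[j[X]]\in\bG$, so $X$ is a $\bG$ space.

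The substantive implication is $(1)\Rightarrow(2)$. The closed-embedding reduction just used (now read in the opposite direction) shows it is enough to prove: whenever $j_0,j_1:X\longrightarrow\omega^\omega$ are embeddings with $j_0[X]\in\bG$, also $j_1[X]\in\bG$. Write $A=j_0[X]$, $B=j_1[X]$, and let $h=j_1\circ j_0^{-1}:A\longrightarrow B$ be the induced homeomorphism. The idea is to invoke Lavrentiev's theorem (see \cite[Theorem 3.9]{kechris}) to extend $h$ to a homeomorphism $\bar h:A'\longrightarrow B'$ between $\bP^0_2$ sets $A'\supseteq A$ and $B'\supseteq B$ in $\omega^\omega$; being $\bP^0_2$ in a Polish space, $A'$ and $B'$ are themselves zero-dimensional Borel. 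Then Lemma \ref{lemma_relativization_subspace} gives $A=A\cap A'\in\bG(A')$, Lemma \ref{lemma_relativization_basic} applied to $\bar h$ gives $B=\bar h[A]\in\bG(B')$, Lemma \ref{lemma_relativization_subspace} again gives $B=C\cap B'$ for some $C\in\bG$, and finally Corollary \ref{corollary_closure_good} gives $B\in\bG$ because $B'\in\bP^0_2(\omega^\omega)$.

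I expect the main obstacle to be precisely the point that Lavrentiev's theorem handles. The homeomorphism $h$ identifying the two copies $A$ and $B$ of $X$ lives only on $A$, and there is in general no way to extend $h$ (or $h^{-1}$) to a continuous self-map of $\omega^\omega$ while controlling its behaviour off $A$, so one cannot directly manufacture a Wadge reduction between $A$ and $B$. Passing to the $\bP^0_2$-hulls given by Lavrentiev's theorem, where the extension does exist, and then absorbing the resulting intersection with a $\bP^0_2$ set via Corollary \ref{corollary_closure_good}, is what makes the proof go through; this also explains why goodness cannot be dropped, since for $\bG=\bP^0_1$ the last step fails --- the intersection of a closed set with a $\bP^0_2$ set need not be closed --- which is exactly the phenomenon illustrated by the two embeddings of $\omega^\omega$ mentioned after Definition \ref{definition_exact_complexity}.
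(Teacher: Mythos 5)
Your proof is correct and follows essentially the same route as the paper's: the key implication $(1)\rightarrow(2)$ is handled via Lavrentiev's theorem together with the relativization lemmas and the closure of good Wadge classes under intersections with $\bP^0_2$ sets (Corollary \ref{corollary_closure_good}), and $(3)\rightarrow(1)$ likewise absorbs a $\bP^0_2$ hull using the same closure property. The only cosmetic difference is that you first reduce to the case $Z=\omega^\omega$ via a closed embedding, whereas the paper applies Lavrentiev directly between $\omega^\omega$ and the given $Z$ (transferring the closure property to $\bG(Z)$ via Lemma \ref{lemma_good_relativization}); both are fine.
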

\begin{proof}
In order to prove the implication $(1)\rightarrow (2)$, fix an embedding $i:X\longrightarrow\omega^\omega$ such that $i[X]\in\bG$. Now pick a zero-dimensional Polish space $Z$ and an embedding $j:X\longrightarrow Z$. Set $h=j\circ i^{-1}$, and observe that $h:i[X]\longrightarrow j[X]$ is a homeomorphism. Therefore, by \cite[Theorem 3.9]{kechris}, we can fix $G\in\bP^0_2(\omega^\omega)$, $H\in\bP^0_2(Z)$ and a homeomorphism $\widetilde{h}:G\longrightarrow H$ such that $h\subseteq\widetilde{h}$. Since $i[X]\in\bG(G)$ by Lemma \ref{lemma_relativization_subspace}, it is clear that $j[X]\in\bG(H)$. A further application of Lemma \ref{lemma_relativization_subspace} yields $A\in\bG(Z)$ such that $A\cap H=j[X]$. Since $\bG(Z)$ is closed under intersection with $\bP^0_2$ sets by Lemma \ref{lemma_good_relativization} and Corollary \ref{corollary_closure_good}, it follows that $j[X]=A\cap H\in\bG(Z)$.

The implication $(2)\rightarrow (3)$ is clear. In order to prove the implication $(3)\rightarrow (1)$, fix a zero-dimensional Polish space $Z$ and an embedding $i:X\longrightarrow Z$ such that $i[X]\in\bG(Z)$. Let $j:Z\longrightarrow\omega^\omega$ be an embedding, and observe that $j[Z]\in\bP^0_2(\omega^\omega)$ by \cite[Theorem 3.11]{kechris}. Furthermore, it is clear that $j[i[X]]\in\bG(j[Z])$ because $j:Z\longrightarrow j[Z]$ is a homeomorphism. Therefore, by Lemma \ref{lemma_relativization_subspace}, there exists $A\in\bG$ such that $A\cap j[Z]=j[i[X]]$. Since $\bG$ is closed under intersection with $\bP^0_2$ sets by Corollary \ref{corollary_closure_good}, it follows that $j[i[X]]=(j\circ i)[X]\in\bG$.
\end{proof}

\begin{corollary}\label{corollary_characterization_exact_complexity}
Let $\bS$ be a nice topological pointclass, and assume that $\Det(\bS(\omega^\omega))$ holds. Let $\bG\subseteq\bS(\omega^\omega)$ be a good Wadge class in $\omega^\omega$, and let $X$ be a zero-dimensional space. Then the following conditions are equivalent:
\begin{enumerate}
\item $X$ has exact complexity $\bG$,
\item $X$ is a $\bG$ space but not a $\bGc$ space,
\item $j[X]\wc=\bG(Z)$ for every zero-dimensional Polish space $Z$ and embedding $j:X\longrightarrow Z$,
\item $j[X]\wc=\bG(Z)$ for some uncountable zero-dimensional Polish space $Z$ and embedding $j:X\longrightarrow Z$.
\end{enumerate}
\end{corollary}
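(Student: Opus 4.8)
The plan is to use Proposition \ref{proposition_characterization_complexity} as a black box for everything concerning the property of being a $\bG$ space, so that the only genuinely new content is the bookkeeping of (non-)selfduality, which I would handle via Wadge's lemma (Lemma \ref{lemma_wadge}). Concretely, I would establish $(1)\Leftrightarrow(2)$ together with $(2)\Rightarrow(3)\Rightarrow(4)\Rightarrow(2)$. Three preliminary observations make the argument run smoothly. First, $\bGc$ is again a good Wadge class contained in $\bS(\omega^\omega)$: it is non-selfdual, it contains the selfdual class $\Delta(\Diff_\omega(\bS^0_2(\omega^\omega)))$, it has $\ell(\bGc)=\ell(\bG)\geq 1$, and $\bool\bS=\bS$; hence Proposition \ref{proposition_characterization_complexity} applies equally to $\bGc$, and by Lemma \ref{lemma_relativization_basic} we have $\bG(\omega^\omega)=\bG$ and $\widecheck{\bG(Z)}=\bGc(Z)$. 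Second, writing $\bG=C\wc$ (possible since $\bG$ is a Wadge class), the set $C$ is non-selfdual, and for every uncountable zero-dimensional Polish space $Z$ we have $\bG(Z)\in\NSDS(Z)$ by Theorem \ref{theorem_relativization_uncountable}, so $\bG(Z)\subseteq\bS(Z)$ and $\bG(Z)=C_Z\wc$ for some non-selfdual $C_Z\subseteq Z$. Third, a countable subset of $\omega^\omega$ lies in $\bS^0_2(\omega^\omega)\subseteq\Delta(\Diff_\omega(\bS^0_2(\omega^\omega)))$, a selfdual class contained in $\bG$ and hence also in $\bGc$; therefore every countable space is simultaneously a $\bG$ space and a $\bGc$ space, so any $X$ satisfying $(2)$ is uncountable, and (once the equivalences are proved) the same holds under each of the four conditions, which lets me assume $Z$ uncountable whenever an embedding $X\to Z$ is chosen.

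For $(1)\Rightarrow(2)$: the given embedding $i$ with $i[X]\wc=\bG$ yields $i[X]\in\bG$, so $X$ is a $\bG$ space; if $X$ were also a $\bGc$ space, then $i[X]\in\bGc(\omega^\omega)=\bGc$ by Proposition \ref{proposition_characterization_complexity}, hence $\omega^\omega\setminus i[X]\leq i[X]$, so $i[X]$ — and therefore $\bG=i[X]\wc$ — would be selfdual, a contradiction. For $(2)\Rightarrow(1)$: since $X$ is a $\bG$ space there is an embedding $i:X\to\omega^\omega$ with $i[X]\in\bG=C\wc$, i.e. $i[X]\leq C$; were $C\not\leq i[X]$, Lemma \ref{lemma_wadge} (applicable because $i[X],C\in\bG\subseteq\bS(\omega^\omega)$) would give $\omega^\omega\setminus i[X]\leq C$, i.e. $i[X]\in\bGc$, making $X$ a $\bGc$ space against $(2)$; thus $C\leq i[X]$ and $i[X]\wc=C\wc=\bG$.

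For $(2)\Rightarrow(3)$: fix an embedding $j:X\to Z$ into a (necessarily uncountable) zero-dimensional Polish space $Z$. Proposition \ref{proposition_characterization_complexity} gives $j[X]\in\bG(Z)=C_Z\wc$, so $j[X]\leq C_Z$; if $C_Z\not\leq j[X]$, Lemma \ref{lemma_wadge} (now using $j[X],C_Z\in\bG(Z)\subseteq\bS(Z)$) yields $Z\setminus j[X]\leq C_Z$, so $j[X]\in\widecheck{\bG(Z)}=\bGc(Z)$, whence $X$ is a $\bGc$ space by Proposition \ref{proposition_characterization_complexity} applied to $\bGc$, contradicting $(2)$; hence $j[X]\wc=\bG(Z)$. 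The implication $(3)\Rightarrow(4)$ is immediate, as $X$ embeds into the uncountable zero-dimensional Polish space $\omega^\omega$ and $\bG(\omega^\omega)=\bG$. Finally, for $(4)\Rightarrow(2)$: from $j[X]\wc=\bG(Z)$ we get $j[X]\in\bG(Z)$, so $X$ is a $\bG$ space by Proposition \ref{proposition_characterization_complexity}; if $X$ were also a $\bGc$ space, then $j[X]\in\bGc(Z)=\widecheck{\bG(Z)}$, so $j[X]$ (hence $\bG(Z)=j[X]\wc$) would be selfdual, contradicting $\bG(Z)\in\NSDS(Z)$.

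I expect the only real obstacle, beyond routine verification, to be making sure every invocation of Wadge's lemma takes place inside an appropriate class $\bS(Z)$ on a space to which the relativization machinery applies; this is precisely why the preliminary observations are needed — that $\bG(Z)\in\NSDS(Z)$ for uncountable $Z$ (so $\bG(Z)\subseteq\bS(Z)$), and that goodness of $\bG$ forces $X$, and hence any ambient space $Z$ admitting an embedding of $X$, to be uncountable so that Theorem \ref{theorem_relativization_uncountable} is available.
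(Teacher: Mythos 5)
Your proof is correct and follows essentially the same route as the paper: Proposition \ref{proposition_characterization_complexity} handles membership in $\bG$ and $\bGc$, Lemma \ref{lemma_wadge} upgrades membership to Wadge-equivalence, and Theorem \ref{theorem_relativization_uncountable} supplies the non-selfduality of $\bG(Z)$ for uncountable $Z$ (the paper only writes out $(4)\rightarrow(1)$ and leaves the remaining implications to the reader, and your argument for that implication matches its proof). Your preliminary observations --- that $\bGc$ is again a good Wadge class and that any $X$ satisfying $(2)$ is uncountable, so the ambient $Z$ is automatically uncountable --- are exactly the bookkeeping the paper implicitly relies on.
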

\begin{proof}
We will only prove the implication $(4)\rightarrow (1)$, in order to make clear where the uncountability assumption is used. The rest of the proof is left to the reader. So fix an uncountable zero-dimensional Polish space $Z$ and an embedding $i:X\longrightarrow Z$ such that $i[X]\wc=\bG(Z)$. Let $j:Z\longrightarrow\omega^\omega$ be an embedding. By Lemma \ref{lemma_wadge}, it will be enough to show that $(j\circ i)[X]\in\bG\setminus\bGc$. The fact that $(j\circ i)[X]\in\bG$ is clear by Proposition \ref{proposition_characterization_complexity}. Now assume, in order to get a contradiction, that $(j\circ i)[X]\in\bGc$. Then $i[X]\in\bGc(Z)$ by Proposition \ref{proposition_characterization_complexity}, which shows that $\bG(Z)$ is selfdual. Since $Z$ is uncountable, this contradicts Theorem \ref{theorem_relativization_uncountable}.
\end{proof}

We conclude this subsection by showing that complexity is hereditary with respect to sufficiently simple subspaces.

\begin{proposition}\label{proposition_complexity_hereditary}
Let $\bS$ be a nice topological pointclass, and assume that $\Det(\bS(\omega^\omega))$ holds. Let $\bG\subseteq\bS(\omega^\omega)$ be a good Wadge class in $\omega^\omega$, and let $X$ be a zero-dimensional $\bG$ space. Then every subspace $Y$ of $X$ such that $Y\in\bP^0_2(X)$ is a $\bG$ space.
\end{proposition}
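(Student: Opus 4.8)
The plan is to combine Proposition~\ref{proposition_characterization_complexity} with the closure of good Wadge classes under intersections with $\bP^0_2$ sets (Corollary~\ref{corollary_closure_good}).

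First I would fix an embedding $j:X\longrightarrow\omega^\omega$, which exists because $X$ is a zero-dimensional (hence separable metrizable) space. Since $X$ is a $\bG$ space, the implication $(1)\rightarrow(2)$ of Proposition~\ref{proposition_characterization_complexity}, applied with ambient space $Z=\omega^\omega$, gives $j[X]\in\bG(\omega^\omega)$. Because $j\re X:X\longrightarrow j[X]$ is a homeomorphism carrying $Y$ onto $j[Y]$ and $Y\in\bP^0_2(X)$, it follows that $j[Y]\in\bP^0_2(j[X])$.

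The next step is the elementary observation that $\bP^0_2$ relativizes in the expected way: a relatively open subset of $j[X]$ is the trace on $j[X]$ of an open subset of $\omega^\omega$, hence a $G_\delta$ subset of $j[X]$ is the trace on $j[X]$ of a $G_\delta$ subset of $\omega^\omega$. So I can fix $G\in\bP^0_2(\omega^\omega)$ with $j[Y]=j[X]\cap G$. Now Corollary~\ref{corollary_closure_good}, applied with $Z=\omega^\omega$ to the good Wadge class $\bG$ (which satisfies $\bG\subseteq\bS(\omega^\omega)$ by hypothesis, and for which $\bG(\omega^\omega)=\bG$), tells us that $\bG$ is closed under intersections with $\bP^0_2$ sets; therefore $j[Y]=j[X]\cap G\in\bG(\omega^\omega)$. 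Finally, $j\re Y:Y\longrightarrow\omega^\omega$ is an embedding whose image lies in $\bG(\omega^\omega)$, so the implication $(3)\rightarrow(1)$ of Proposition~\ref{proposition_characterization_complexity}, this time applied to $Y$ in place of $X$, shows that $Y$ is a $\bG$ space.

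I do not expect a genuine obstacle: the proof is essentially a two-line consequence of the cited results. The only points requiring any care are verifying that the hypotheses of Corollary~\ref{corollary_closure_good} hold verbatim once we take the ambient space to be $\omega^\omega$ (so that no appeal to Lemma~\ref{lemma_good_relativization} is needed), and noting the trivial relativization fact for $G_\delta$ sets. If one instead preferred to run the argument over an arbitrary uncountable zero-dimensional Polish ambient space $Z$ and use condition~$(2)$ of Proposition~\ref{proposition_characterization_complexity}, one would additionally invoke Lemma~\ref{lemma_good_relativization} to know that $\bG(Z)$ is a good Wadge class before applying Corollary~\ref{corollary_closure_good}.
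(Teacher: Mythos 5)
Your proposal is correct and follows essentially the same route as the paper: fix an embedding into $\omega^\omega$ with image in $\bG$, write $j[Y]$ as the trace of a $\bP^0_2(\omega^\omega)$ set on $j[X]$, and apply Corollary~\ref{corollary_closure_good}. The only (harmless) difference is that you route the first and last steps through Proposition~\ref{proposition_characterization_complexity}, where the paper just uses the definition of $\bG$ space directly.
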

\begin{proof}
Fix an embedding $j:X\longrightarrow\omega^\omega$ such that $j[X]\in\bG$. Pick $Y\in\bP^0_2(X)$. Let $\widetilde{Y}\in\bP^0_2(\omega^\omega)$ be such that $\widetilde{Y}\cap j[X]=j[Y]$. It follows from Corollary \ref{corollary_closure_good} that $j[Y]\in\bG$, which concludes the proof.
\end{proof}

\subsection{Uniqueness}\label{subsection_homogeneous_uniqueness}

In this subsection we will give a precise statement of the uniqueness part of the classification (see Theorem \ref{theorem_uniqueness_embeddings}). According to this result, exact complexity and Baire category are sufficient to identify a zero-dimensional homogeneous space up to homeomorphism. In other words, for every good Wadge class $\bG$ in $\omega^\omega$, there exist at most two zero-dimensional homogeneous spaces of exact complexity $\bG$: one meager, and one Baire.\footnote{\,Recall that, by Proposition \ref{proposition_homogeneous_dichotomy}, every homogeneous space is either meager or Baire.}

First, we will state two preliminary results, which are essentially the same as \cite[Lemma 14.2 and Theorem 14.4]{carroy_medini_muller_homogeneous}. Lemma \ref{lemma_open_implies_whole} will be useful in the proofs of Corollary \ref{corollary_h-homogeneously_embedded} and Theorem \ref{theorem_existence_negative_meager}, while Theorem \ref{theorem_homogeneous_spaces_have_good_complexity} justifies restricting the attention to good Wadge classes in our classification results.

\begin{lemma}[Carroy, Medini, M\"uller]\label{lemma_open_implies_whole}
Let $\bS$ be a nice topological pointclass, and assume that $\Det(\bS(\omega^\omega))$ holds. Let $\bG\subseteq\bS(\omega^\omega)$ be a good Wadge class in $\omega^\omega$, and let $X$ be a zero-dimensional homogeneous space. If $X$ has a non-empty open $\bG$ subspace then $X$ is a $\bG$ space.
\end{lemma}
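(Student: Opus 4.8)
The plan is to use homogeneity to spread the given open $\bG$ subspace over all of $X$, to cut $X$ into a countable clopen partition consisting of $\bG$ spaces, and then to recombine these inside $\omega^\omega$ using the fact that $\bG$ is closed under clopen partitioned unions. First I would spread out the hypothesis: fix the given non-empty open $\bG$ subspace $U$ and a point $u_0\in U$. For each $x\in X$, homogeneity supplies a homeomorphism $h\colon X\to X$ with $h(u_0)=x$, and then $h[U]$ is an open neighbourhood of $x$ homeomorphic to $U$, hence itself an open $\bG$ subspace (being a $\bG$ space is a topological invariant). So $X$ is covered by open $\bG$ subspaces; since $X$ is separable metrizable, hence Lindel\"of, countably many of them, say $(V_n:n\in\omega)$, already cover $X$.

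Next I would pass to a partition. Since $X$ is zero-dimensional it has a base of clopen sets, so Lindel\"ofness yields a countable clopen cover refining $(V_n)$; disjointifying it gives pairwise disjoint clopen sets $(W_n:n\in\omega)$ with $\bigcup_{n}W_n=X$ and each $W_n$ clopen in some $V_{k(n)}$. In particular $W_n\in\bP^0_2(V_{k(n)})$, so Proposition \ref{proposition_complexity_hereditary} — the step that genuinely uses that $\bG$ is \emph{good} — shows that each $W_n$ is a $\bG$ space. Moreover $X\approx\bigoplus_{n\in\omega}W_n$, since the $W_n$ form a clopen partition of $X$.

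Finally I would recombine inside $\omega^\omega$. Write $\omega^\omega=\bigsqcup_{n\in\omega}\Ne_{(n)}$, a partition into clopen sets each homeomorphic to $\omega^\omega$. For each $n$, compose a witnessing embedding $W_n\to\omega^\omega$ with a homeomorphism $\omega^\omega\approx\Ne_{(n)}$ to obtain an embedding of $W_n$ into $\Ne_{(n)}$ whose image $A_n$ lies in $\bG(\Ne_{(n)})$, by Lemma \ref{lemma_relativization_basic} (the case $W_n=\varnothing$ is covered by $\varnothing\in\bG$, as $\bG$ is good). These embeddings glue to an embedding of $\bigoplus_n W_n$ into $\omega^\omega$ with image $B=\bigcup_n A_n$. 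By Lemma \ref{lemma_relativization_subspace} pick $\widehat A_n\in\bG=\bG(\omega^\omega)$ with $\widehat A_n\cap\Ne_{(n)}=A_n$; then $B=\bigcup_n(\widehat A_n\cap\Ne_{(n)})$ exhibits $B$ as a partitioned union of sets in $\bG$, whence $B\in\PU_0(\bG)=\bG$ by Proposition \ref{proposition_pu_basic} (using that $\bG$, being good, is a Wadge class). Thus $X$ embeds into $\omega^\omega$ with image in $\bG$, i.e.\ $X$ is a $\bG$ space.

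I do not expect a genuine obstacle here: the substance is entirely in the two imported facts that a $\bP^0_2$ (here clopen) piece of a good-Wadge-class space stays in that class (Proposition \ref{proposition_complexity_hereditary}) and that a Wadge class is closed under clopen partitioned unions ($\PU_0(\bG)=\bG$); everything else is routine bookkeeping with homogeneity, Lindel\"ofness, and the relativization machinery of \S\ref{subsection_wadge_fundamental_relativization}. The only points requiring a little care are checking that the glued map is an embedding (each $A_n$ is clopen in $B$, since $\Ne_{(n)}$ is clopen in $\omega^\omega$) and that $\bG\in\NSDS(\omega^\omega)$ so that Lemma \ref{lemma_relativization_subspace} applies (immediate from $\bG$ being a good Wadge class with $\bG\subseteq\bS(\omega^\omega)$).
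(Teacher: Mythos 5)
Your argument is correct. Note that the paper does not prove this lemma itself --- it imports it from \cite[Lemma 14.2]{carroy_medini_muller_homogeneous} --- and your proof is essentially the standard one given there: homogeneity plus Lindel\"ofness produces a countable clopen partition of $X$ into $\bG$ spaces (via Proposition \ref{proposition_complexity_hereditary}, where goodness of $\bG$ enters), and the pieces are reassembled inside $\omega^\omega$ using Lemma \ref{lemma_relativization_subspace} and $\PU_0(\bG)=\bG$. The only cosmetic quibble is that the paper's conventions make the empty space not zero-dimensional, so ``$W_n$ is a $\bG$ space'' does not literally typecheck when $W_n=\varnothing$; you already handle this case separately via $\varnothing\in\bG$, so nothing is lost.
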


\begin{theorem}[Carroy, Medini, M\"uller]\label{theorem_homogeneous_spaces_have_good_complexity}
Let $\bS$ be a nice topological pointclass, and assume that $\Det(\bS(\omega^\omega))$ holds. Let $X$ be a zero-dimensional homogeneous $\bS$ space that is not a $\Delta(\Diff_\omega(\bS^0_2))$ space. Then there exists a good Wadge class $\bG\subseteq\bS(\omega^\omega)$ such that $X$ has exact complexity $\bG$.
\end{theorem}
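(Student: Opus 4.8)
The plan is to prove this by first replacing the ambient space with an uncountable zero-dimensional Polish space, then running a minimality argument over the well-founded Wadge hierarchy. Since $X$ is a $\bS$ space, fix an embedding $j : X \longrightarrow \omega^\omega$ with $j[X] \in \bS(\omega^\omega)$; by replacing $\omega^\omega$ with a suitable uncountable zero-dimensional Polish space $Z$ (for instance, one into which $\omega^\omega$ embeds as a $\bP^0_2$ subset) and using that $\bS$ is nice, we may assume $X$ is realized as a subspace $A = j[X] \in \bS(Z)$ for an uncountable zero-dimensional Polish $Z$. By Theorem \ref{theorem_well-founded} the relation $\leq$ on $\bS(Z)$ is well-founded, so among all embeddings of $X$ into uncountable zero-dimensional Polish spaces we may choose one, say realizing $X$ as $A \subseteq Z$, for which $A\wc = \bL$ is $\leq$-minimal. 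Set $\bG$ to be the good Wadge class in $\omega^\omega$ corresponding to $\bL$; the point will be to show $\bL = \bG(Z)$ is good and that $A\wc = \bL$ gives exact complexity $\bG$, which by Corollary \ref{corollary_characterization_exact_complexity} transfers back to $X$.

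The heart of the argument is verifying the three clauses of Definition \ref{definition_good} for $\bL$. First, $\bL$ must be non-selfdual: if $\bL$ were selfdual, then $A$ is a selfdual set, and since $X$ is homogeneous one expects to "trim" $X$ to a non-empty clopen piece (or pass to an open subspace via the homogeneity machinery of \S\ref{subsection_preliminaries_homogeneity}) whose closure in $Z$ realizes $X$ at strictly lower Wadge complexity, contradicting minimality; more carefully, a selfdual $\bL$ on an uncountable Polish space contradicts Theorem \ref{theorem_relativization_uncountable} together with the fact that selfdual classes are countable unions of non-selfdual ones, so some open piece of $X$ sits in a strictly smaller non-selfdual class, and Lemma \ref{lemma_open_implies_whole} would then push $X$ itself into that smaller class. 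Second, $\Delta(\Diff_\omega(\bS^0_2(Z))) \subseteq \bL$: this is exactly where the hypothesis "$X$ is not a $\Delta(\Diff_\omega(\bS^0_2))$ space" is used — if $\bL$ failed to contain $\Delta(\Diff_\omega(\bS^0_2(Z)))$, then by Lemma \ref{lemma_wadge} and Theorem \ref{theorem_complete_analysis_delta^0_2} (applied after checking $\bL \subseteq \bD^0_2(Z)$ or reasoning with the difference hierarchy) $X$ would be a $\Delta(\Diff_\omega(\bS^0_2))$ space, contrary to hypothesis. Third, $\ell(\bL) \geq 1$: here one uses Theorem \ref{theorem_every_class_has_a_level} to know $\bL$ has a level, and if $\ell(\bL) = 0$ one invokes Theorem \ref{theorem_sd_main} to write $\bL$ as a separated difference $\SD_\eta(\bD,\bG^\ast)$ with the $\bG_n$ of level $\geq 1$; the homogeneity of $X$ should then force $X$ to be concentrated on the "$A^\ast$ part" or a piece of strictly lower complexity — again contradicting minimality — because a homogeneous space cannot genuinely exhibit the "stratified" structure of a positive-index separated difference at every point.

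The main obstacle, I expect, is the careful bookkeeping in clause $(3)$ — ruling out $\ell(\bL) = 0$. The difficulty is that one must translate the combinatorial shape of a separated difference into a statement about local bases of the homogeneous space $X$: at a point lying "deep" in the $U_{\mu,n}$-stratification, the relative complexity of $X$ near that point is lower than near a point in $A^\ast \setminus \bigcup U_{\mu,n}$, and homogeneity forbids this disparity unless $X$ already has the lower complexity. Making this rigorous requires combining Lemma \ref{lemma_open_implies_whole} with a suitable choice of clopen neighborhood witnessing the drop in complexity, and checking that the drop is strict (not merely $\leq$). Clauses $(1)$ and $(2)$ are comparatively routine once the minimality setup and the earlier relativization lemmas are in hand. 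Finally, given that $\bL$ is good, Corollary \ref{corollary_characterization_exact_complexity} (implication $(4)\rightarrow(1)$) delivers that $X$ has exact complexity $\bG$, completing the proof.
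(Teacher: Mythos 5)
The paper does not actually prove this theorem: it is quoted from \cite[Theorem 14.4]{carroy_medini_muller_homogeneous}, so your proposal can only be measured against the argument given there. Your overall shape (well-foundedness gives a minimal complexity; verify the three clauses of Definition \ref{definition_good}; the hypothesis that $X$ is not a $\Delta(\Diff_\omega(\bS^0_2))$ space feeds clause $(2)$; the level-$0$ analysis feeds clause $(3)$) is the right one, but there is a structural flaw in where you place the minimality. You minimize over embeddings of the \emph{whole} space $X$, and then, in clauses $(1)$ and $(3)$, you produce a non-empty open piece of $X$ of strictly smaller complexity and try to contradict that minimality. But a cheap open piece does not yield a cheap embedding of all of $X$: the only transfer device available is Lemma \ref{lemma_open_implies_whole}, which is stated \emph{only for good Wadge classes} --- precisely what you are in the middle of proving. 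So your non-selfduality argument is circular, and your level argument has no contradiction to land on. (Your aside that a selfdual $\bL$ ``contradicts Theorem \ref{theorem_relativization_uncountable}'' is also wrong; selfdual Wadge classes exist in every uncountable Polish space.) The correct setup, as in \cite{carroy_medini_muller_homogeneous}, is to fix one dense embedding $X\subseteq 2^\omega$ and take $U$ non-empty clopen with $(X\cap U)\wc$ (computed in $U\approx 2^\omega$) $\subseteq$-minimal among all non-empty clopen $U$. Then a strictly cheaper open piece \emph{is} a contradiction by itself, goodness of $\bG=(X\cap U)\wc$ is established first, and Lemma \ref{lemma_open_implies_whole} is invoked exactly once at the end to pass from $X\cap U$ to $X$ (with Proposition \ref{proposition_complexity_hereditary} ruling out $X$ being a $\bGc$ space).

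Within that corrected frame, your clause $(2)$ is essentially right but misattributed: one does not need Theorem \ref{theorem_complete_analysis_delta^0_2}; if $\Delta(\Diff_\omega(\bS^0_2(Z)))\nsubseteq\bL$ then some $B$ in that (selfdual, continuously closed) class satisfies $B\nleq A$, so $A\leq Z\setminus B$ by Lemma \ref{lemma_wadge} and $X$ is a $\Delta(\Diff_\omega(\bS^0_2))$ space, contradiction. Clause $(3)$, which you yourself flag as the obstacle, is left at the level of a slogan; the concrete argument is: if $\ell(\bG)=0$, write $\bG=\SD_\eta(\bD,\bG^\ast)$ via Theorem \ref{theorem_sd_main} and express $X\cap U$ as a separated difference. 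If all the $U_{\mu,n}$ are empty then $X\cap U=A^\ast\in\bG^\ast\subseteq\bD\subsetneq\bG$, contradicting $(X\cap U)\wc=\bG$; otherwise take $\mu$ minimal with some $U_{\mu,n}\neq\varnothing$ and a non-empty clopen $V\subseteq U_{\mu,n}$, so that $X\cap V=A_{\mu,n}\cap V\in\bD$ by Lemma \ref{lemma_closure_clopen}, giving an open piece of complexity strictly below $\bG$ and contradicting minimality. Until the minimality is relocated and clause $(3)$ is carried out along these lines, the proposal is not a proof.
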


We finally come to the uniqueness result, which is essentially the same as \cite[Theorem 15.2]{carroy_medini_muller_homogeneous}. We decided to state this result in the following form (that is, in terms of embeddings into $2^\omega$) for two reasons. First, this will allow us to derive Corollary \ref{corollary_h-homogeneously_embedded}, which will be useful in the proofs of Corollary \ref{corollary_existence_negative_baire} and Theorem \ref{theorem_characterization_semifilters}. Second, Theorem \ref{theorem_uniqueness_embeddings} shows that, given a zero-dimensional homogeneous space $X$ of sufficiently high complexity, there is essentially a unique way to embed $X$ into $2^\omega$ densely, which seems to be of independent interest.

\begin{theorem}[Carroy, Medini, M\"{u}ller]\label{theorem_uniqueness_embeddings}
Let $\bS$ be a nice topological pointclass, and assume that $\Det(\bS(\omega^\omega))$ holds. Let $\bG\subseteq\bS(\omega^\omega)$ be a good Wadge class in $\omega^\omega$, and let $X$ and $Y$ be zero-dimensional homogeneous spaces that satisfy the following conditions:
\begin{itemize}
\item $X$ and $Y$ both have exact complexity $\bG$,
\item $X$ and $Y$ are either both meager spaces or both Baire spaces.
\end{itemize}
Then $X$ and $Y$ are homeomorphic. More precisely, if $i:X\longrightarrow 2^\omega$ and $j:Y\longrightarrow 2^\omega$ are dense embeddings, then there exists a homeomorphism $h:2^\omega\longrightarrow 2^\omega$ such that $h[i[X]]=j[Y]$.
\end{theorem}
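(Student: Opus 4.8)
The plan is to reduce the abstract statement to a back-and-forth construction of a single homeomorphism of $2^\omega$, using the structural hypotheses to verify the matching conditions at each step. First I would reformulate what needs to be built: given dense embeddings $i:X\longrightarrow 2^\omega$ and $j:Y\longrightarrow 2^\omega$, set $A=i[X]$ and $B=j[Y]$, so that $A$ and $B$ are dense subsets of $2^\omega$, both of exact complexity $\bG$ (by Corollary \ref{corollary_characterization_exact_complexity}), and both meager or both Baire. The goal is a homeomorphism $h:2^\omega\longrightarrow 2^\omega$ with $h[A]=B$; this simultaneously yields $X\approx Y$. The natural framework is the standard scheme for building autohomeomorphisms of $2^\omega$ by recursion on clopen partitions: one constructs finer and finer partitions $\{U_s:s\in 2^n\}$ of the domain and $\{V_s:s\in 2^n\}$ of the range, together with homeomorphisms $U_s\longrightarrow V_s$ respecting inclusion, and then takes the induced limit map. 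For this to go through one needs, at each node, that the relevant clopen pieces of $A$ and of $B$ are themselves homeomorphic (or can be matched up), and that the pieces can be chosen with diameters tending to $0$ so the limit map is a well-defined homeomorphism carrying $A$ onto $B$.

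The key point making the matching possible is that every non-empty clopen subset of $A$ has exact complexity $\bG$, and likewise for $B$. Indeed, since $\bG$ is good, $\bG$ is closed under intersections with $\bP^0_2$ (hence with clopen, in particular $\bD^0_1$) sets by Corollary \ref{corollary_closure_good}, and a clopen subset of $A$ is $A$ intersected with a clopen subset of $2^\omega$; combined with the fact that $A$ itself is not a $\bGc$ space, one checks (via Corollary \ref{corollary_characterization_exact_complexity} and Lemma \ref{lemma_closure_clopen}, applied inside the clopen piece) that each non-empty clopen subset again has exact complexity $\bG$ and is neither a $\bGc$ space nor simpler. Moreover, if $A$ is meager in $2^\omega$ then every non-empty clopen subset of $A$ is meager (in itself), and if $A$ is Baire then, being also dense and of complexity in $\bS$, Corollary \ref{corollary_baire_dense_subspace} shows $A$ is comeager in $2^\omega$, so every non-empty clopen subset of $A$ is again dense and Baire in the corresponding clopen subset of $2^\omega$. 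Thus at every node of the recursion the two clopen pieces $U_s\cap A$ and $V_s\cap B$ are zero-dimensional homogeneous spaces of the same exact complexity $\bG$ and the same Baire category class. The hard part is therefore exactly the \emph{inductive invocation of the theorem itself on the pieces}: to conclude $U_s\cap A\approx V_s\cap B$ one wants to apply the statement being proved, which is permissible because this is precisely how \cite[Theorem 15.2]{carroy_medini_muller_homogeneous} is organized — the uniqueness is proved by a simultaneous induction, the back-and-forth construction for the ``more precise'' clause feeding off the abstract homeomorphism clause at the level of clopen subspaces, with homogeneity (via Theorem \ref{theorem_homogeneity}) used to move prescribed points to prescribed points and thereby glue the partial homeomorphisms consistently.

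Concretely, I would carry out the argument in the following order. (1) Record the reductions above: reduce to dense $A,B\subseteq 2^\omega$ with the stated properties, and establish the ``local'' facts — every non-empty clopen subset of $A$ (resp.\ $B$) has exact complexity $\bG$ and lies in the same category class — using Corollary \ref{corollary_closure_good}, Corollary \ref{corollary_characterization_exact_complexity}, Proposition \ref{proposition_complexity_hereditary}, and Corollary \ref{corollary_baire_dense_subspace}. (2) Set up the back-and-forth: enumerate a countable base of clopen sets of $2^\omega$ on each side, and recursively build clopen partitions $\{U_s\}$, $\{V_s\}$ and point-correspondences, at each stage using density of $A$ and $B$ to refine, and using the already-known homeomorphism $U_s\cap A\approx V_s\cap B$ together with Theorem \ref{theorem_homogeneity} to align a chosen point of $U_s\cap A$ with a chosen point of $V_s\cap B$; the enumeration bookkeeping guarantees surjectivity of the eventual map in both directions, and shrinking the partitions guarantees the limit is a well-defined homeomorphism of $2^\omega$. (3) Verify that the resulting $h:2^\omega\longrightarrow 2^\omega$ satisfies $h[A]=B$, and read off $X\approx Y$. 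The main obstacle, and the only genuinely delicate point, is managing the interaction between the dense-embedding requirement and the homogeneity moves — one must ensure that demanding $h$ map points of $A$ into $B$ is compatible, at every finite stage, with the partition refinement, which is where the full strength of Theorem \ref{theorem_homogeneity} (matching local bases at a point) is needed rather than just bare homogeneity; this is handled exactly as in \cite[\S15]{carroy_medini_muller_homogeneous}, and I would refer there for the fully detailed bookkeeping while reproducing the argument in enough detail to be self-contained.
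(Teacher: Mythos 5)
The paper offers no internal proof of this statement: it is imported directly from \cite[Theorem 15.2]{carroy_medini_muller_homogeneous}, so the only ``proof'' to compare against is that citation. Your reconstruction of the cited argument has the right architecture: pass to dense subsets $A=i[X]$ and $B=j[Y]$ of $2^\omega$, show that every non-empty clopen piece of each again has exact complexity $\bG$ and the same Baire category (your appeal to Corollary \ref{corollary_closure_good}, Corollary \ref{corollary_characterization_exact_complexity} and Corollary \ref{corollary_baire_dense_subspace} is on target --- the paper itself singles out Corollary \ref{corollary_baire_dense_subspace} as the ingredient needed here --- though the non-trivial half, that a clopen piece is not a $\bGc$ space, really comes from homogeneity via Lemma \ref{lemma_open_implies_whole} rather than from Lemma \ref{lemma_closure_clopen}), and then run a back-and-forth on clopen partitions to produce the ambient autohomeomorphism.

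The step that would fail as written is how you close the recursion. You propose to certify $U_s\cap A\approx V_s\cap B$ at each node by ``applying the statement being proved'' to the pieces, calling this a simultaneous induction. There is no well-founded parameter here: the pieces have exactly the same exact complexity $\bG$ and the same category as the original spaces, so a literal self-application is circular, and this is not how the cited proof is organized. The invariant actually carried through the back-and-forth is the complexity-and-category data itself --- every non-empty clopen piece of $A$, of $B$, \emph{and of their complements} is everywhere properly of type $\bG$, respectively $\bGc$, and of the prescribed category --- and it is this invariant (preserved under refinement by the lemmas you already list) that licenses each step; homeomorphisms between corresponding pieces are never available at finite stages and only materialize in the limit. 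Note also that your sketch never mentions $2^\omega\setminus A$ and $2^\omega\setminus B$: since the map being built is an autohomeomorphism of $2^\omega$, the matching conditions must control the complements as well (they have exact complexity $\bGc$ and complementary category), and this is an essential part of the invariant. With the invariant corrected, the rest of your outline (using density to refine, shrinking diameters, and point-bookkeeping in the style of Theorem \ref{theorem_homogeneity}) is the standard and correct way to finish.
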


\begin{corollary}\label{corollary_h-homogeneously_embedded}
Let $\bS$ be a nice topological pointclass, and assume that $\Det(\bS(\omega^\omega))$ holds. Let $\bG\subseteq\bS(\omega^\omega)$ be a good Wadge class in $\omega^\omega$, and let $X$ be a dense homogeneous subspace of $2^\omega$ of exact complexity $\bG$. If $U\in\bD^0_1(2^\omega)$ is non-empty, then there exists a homeomorphism $h_U:2^\omega\longrightarrow U$ such that $h_U[X]=X\cap U$. In particular, both $X$ and $2^\omega\setminus X$ are strongly homogeneous.
\end{corollary}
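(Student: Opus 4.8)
The plan is to deduce the first statement from the uniqueness theorem (Theorem~\ref{theorem_uniqueness_embeddings}), applied to $X$ and to the clopen subspace $X\cap U$, and then to obtain the ``in particular'' clause from Terada's Lemma~\ref{lemma_terada}. We may assume $U\neq 2^\omega$, as otherwise $h_U=\id_{2^\omega}$ works. Since $U$ is a non-empty compact crowded zero-dimensional space, $U\approx 2^\omega$ by \cite[Theorem 7.4]{kechris}; fix a homeomorphism $g\colon 2^\omega\to U$. Observe that $X\cap U$ is non-empty (as $X$ is dense and $U$ is open) and dense in $U$, so that $g^{-1}[X\cap U]$ is a dense subspace of $2^\omega$, and that $X\cap U\in\bD^0_1(X)\subseteq\bP^0_2(X)$.

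First I would check that $X$ and $X\cap U$ satisfy the hypotheses of Theorem~\ref{theorem_uniqueness_embeddings}. Since $X$ has exact complexity $\bG$ it is in particular a $\bG$ space (Corollary~\ref{corollary_characterization_exact_complexity}), so $X\cap U$ is a $\bG$ space by Proposition~\ref{proposition_complexity_hereditary}; and $X\cap U$ is not a $\bGc$ space, because $\bGc$ is again a good Wadge class in $\omega^\omega$ (it is non-selfdual, $\ell(\bGc)=\ell(\bG)\geq 1$, and $\Delta(\Diff_\omega(\bS^0_2(\omega^\omega)))\subseteq\bGc$ since that class is selfdual and contained in $\bG$) with $\bGc\subseteq\bS(\omega^\omega)$, so if $X\cap U$ were a non-empty open $\bGc$ subspace of $X$ then Lemma~\ref{lemma_open_implies_whole} would make $X$ a $\bGc$ space, contradicting Corollary~\ref{corollary_characterization_exact_complexity}; hence $X\cap U$ has exact complexity $\bG$ by Corollary~\ref{corollary_characterization_exact_complexity}. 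Next, by Proposition~\ref{proposition_homogeneous_dichotomy} $X$ is either meager or Baire, and $X\cap U$ has the same property, being an open subspace of $X$ (open subspaces of Baire spaces are Baire, and every open subspace of a meager space is meager, since every subset of a meager space is meager in it and this is inherited by open subspaces). The remaining hypothesis, that $X\cap U$ is homogeneous, is the one point requiring a genuine argument, and I expect it to be the main obstacle, precisely because clopen subspaces of a homogeneous space need not be homogeneous in general (think of $\omega\times 2^\omega$). I would argue as follows: given $x,y\in X\cap U$, use homogeneity of $X$ to fix a homeomorphism $f\colon X\to X$ with $f(x)=y$; then $C:=(X\cap U)\cap f^{-1}[X\cap U]$ is a clopen neighbourhood of $x$ in $X$ with $f[C]=(X\cap U)\cap f[X\cap U]\subseteq X\cap U$, so picking a decreasing local base $(U_n:n\in\omega)$ for $X$ at $x$ consisting of clopen subsets of $X$ with $U_0\subseteq C$, the sequences $(U_n)$ and $(f[U_n])$ are decreasing local bases of $X\cap U$ at $x$ and at $y$ respectively, consisting of clopen subsets of $X\cap U$, with $U_n\approx f[U_n]$ for every $n$; Theorem~\ref{theorem_homogeneity} then produces a homeomorphism of $X\cap U$ taking $x$ to $y$.

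At this point Theorem~\ref{theorem_uniqueness_embeddings}, applied to the dense embeddings $X\hookrightarrow 2^\omega$ and $g^{-1}\re(X\cap U)\colon X\cap U\to 2^\omega$, yields a homeomorphism $h\colon 2^\omega\to 2^\omega$ with $h[X]=g^{-1}[X\cap U]$. Setting $h_U:=g\circ h$ gives a homeomorphism $2^\omega\to U$ with $h_U[X]=X\cap U$, which is the first statement.

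Finally, restricting $h_U$ to $X$ and to $2^\omega\setminus X$ (and using that $h_U$ is a bijection onto $U$) shows that $X\cap U\approx X$ and $(2^\omega\setminus X)\cap U\approx 2^\omega\setminus X$ for every non-empty $U\in\bD^0_1(2^\omega)$. Since $\{X\cap U:U\in\bD^0_1(2^\omega)\setminus\{\varnothing\}\}$ is a $\pi$-base of clopen subsets of $X$, Lemma~\ref{lemma_terada} gives that $X$ is strongly homogeneous. Moreover $2^\omega\setminus X\neq\varnothing$, since otherwise $X=2^\omega$ would be a $\bP^0_1$ space, hence a $\Delta(\Diff_\omega(\bS^0_2))$ space, hence a $\bGc$ space (because $\Delta(\Diff_\omega(\bS^0_2(\omega^\omega)))\subseteq\bGc$ as $\bGc$ is good), contradicting that $X$ has exact complexity $\bG$; and $\{(2^\omega\setminus X)\cap U:U\in\bD^0_1(2^\omega),\ (2^\omega\setminus X)\cap U\neq\varnothing\}$ is a $\pi$-base of clopen subsets of $2^\omega\setminus X$, so Lemma~\ref{lemma_terada} gives that $2^\omega\setminus X$ is strongly homogeneous as well.
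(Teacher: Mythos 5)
Your proof is correct and follows essentially the same route as the paper: verify that $X\cap U$ is homogeneous, of the same Baire category as $X$, and of exact complexity $\bG$ (via Proposition \ref{proposition_complexity_hereditary}, Lemma \ref{lemma_open_implies_whole} and Corollary \ref{corollary_characterization_exact_complexity}), then invoke Theorem \ref{theorem_uniqueness_embeddings} and Lemma \ref{lemma_terada}; you merely make explicit some steps the paper leaves to the reader, notably the van Mill argument for the homogeneity of $X\cap U$ and the non-emptiness of $2^\omega\setminus X$. One small aside is off: $\omega\times 2^\omega$ is not an example of a homogeneous space with a non-homogeneous clopen subspace --- your own van Mill argument shows that every clopen subspace of a zero-dimensional homogeneous space is homogeneous; what fails in general is that clopen subspaces need not be homeomorphic to the whole space.
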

\begin{proof}
Pick a non-empty $U\in\bD^0_1(2^\omega)$. Set $Y=X\cap U$, and observe that $Y$ is a zero-dimensional homogeneous space. It is clear that if $X$ is a meager space then $Y$ is a meager space, and that if $X$ is a Baire space then $Y$ is  Baire space. Notice that $Y$ is a $\bG$ space by Proposition \ref{proposition_complexity_hereditary}. On the other hand, if $Y$ were a $\bGc$ space, then $X$ would be a $\bGc$ space by Lemma \ref{lemma_open_implies_whole}, contradicting Corollary \ref{corollary_characterization_exact_complexity}. Therefore, by a further application of Corollary \ref{corollary_characterization_exact_complexity}, one sees that $Y$ has exact complexity $\bG$. In conclusion, since $U\approx 2^\omega$, the desired homeomorphism is given by Theorem \ref{theorem_uniqueness_embeddings}. The strong homogeneity of $X$ and $2^\omega\setminus X$ then follows from Lemma \ref{lemma_terada}.
\end{proof}

\subsection{Existence: the positive results}\label{subsection_homogeneous_existence_positive}

In this subsection, we will show that certain assumptions on the level, type, and separation property of a good Wadge class $\bG$ permit the construction of a zero-dimensional homogeneous space of exact complexity $\bG$. Our method of proof will naturally yield semifilters (recall that these are homogeneous by Corollary \ref{corollary_semifilter_homogeneous}).

While the proof of Theorem \ref{theorem_meager_semifilter} is essentially the same as the proof of \cite[Theorem 12.4]{medini_semifilters} (which in turn is essentially due to van Engelen), we have isolated more clearly the key closure property of $\bG$ needed for it to go through. We begin with two technical results.

\begin{lemma}\label{lemma_closed_subset}
Let $\bS$ be a nice topological pointclass, and assume that $\Det(\bS(\omega^\omega))$ holds. Let $Z$ be a zero-dimensional Polish space, and let $\bG\in\NSDS(Z)$ be closed under intersections with $\bP^0_1$ sets and under unions with $\bS^0_1$ sets. Assume that $\bG=A\wc$ and $B\in\bG$ is such that $A$ is a closed subset of $B$. Then $\bG=B\wc$.
\end{lemma}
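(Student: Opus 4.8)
The plan is to show $B \wc = \bG$ by establishing the two inclusions $B \wc \subseteq \bG$ and $\bG \subseteq B \wc$. The first inclusion is immediate: since $B \in \bG$ and $\bG$ is a Wadge class (being equal to $A \wc$), $\bG$ is continuously closed, so $C \leq B$ implies $C \in \bG$ for every $C \subseteq Z$; hence $B \wc \subseteq \bG$. The real content is the reverse inclusion, which amounts to showing $A \leq B$ (since then $\bG = A \wc \subseteq B \wc$ by transitivity of $\leq$ together with the fact that $B \wc$ is continuously closed). So the crux is: given that $A$ is a closed subset of $B$, produce a continuous $f : Z \longrightarrow Z$ with $f^{-1}[B] = A$.

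First I would use zero-dimensionality of $Z$ together with the fact that $A = B \cap C$ for some closed $C \in \bP^0_1(Z)$ (this is what "$A$ is a closed subset of $B$" means in the subspace sense). The naive idea is to retract $Z$ onto a copy of $Z$ sitting inside a clopen piece, sending everything outside some neighborhood of $C$ off to a point not in $B$ and keeping a region around $C$ fixed; but $C$ need not be clopen, so this needs care. The cleaner approach, which I expect to work, is to exploit the closure hypotheses on $\bG$ directly rather than constructing $f$ by hand. Concretely: $Z \setminus C \in \bS^0_1(Z)$, and I want to argue that $A$ and $B$ are "Wadge-equivalent via" the data $C$. One route: observe that $A \in \bG$ (since $A \leq B$? — no, that is what we are proving). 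Let me instead proceed as follows. Since $B \in \bG$ and $\bG$ is closed under intersections with $\bP^0_1$ sets, $A = B \cap C \in \bG$. Then by Lemma~\ref{lemma_wadge} applied in $Z$ (invoking the determinacy hypothesis and that $A, B \in \bG \subseteq \bS(Z)$), either $B \leq A$ or $Z \setminus A \leq B$.

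In the first case, $B \leq A \leq B$ would give $\bG = A \wc = B \wc$ and we are done. So suppose $Z \setminus A \leq B$, witnessed by continuous $g : Z \longrightarrow Z$ with $g^{-1}[B] = Z \setminus A$. This says $Z \setminus A \in \bG$, i.e. $A \in \bGc$, so $A$ is in $\Delta(\bG)$; combined with $\bG = A \wc$ this forces $\bG$ to be selfdual, contradicting $\bG \in \NSDS(Z)$ being non-selfdual — unless $\bG$ happens to be one of the trivial classes, but $\bG$ contains $\Delta(\Diff_\omega(\bS^0_2(Z)))$ implicitly here (or at least is nontrivial), so this is genuinely a contradiction. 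Wait — I must double-check this: the statement only assumes $\bG \in \NSDS(Z)$, which already means non-selfdual, so $A \in \Delta(\bG)$ with $A \wc = \bG$ forces $\bG = A \wc \subseteq \bGc$, making $\bG$ selfdual, contradiction. Hence only the first case survives, giving $B \leq A$ and therefore $B \wc = A \wc = \bG$.

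The main obstacle I anticipate is making sure the dichotomy of Lemma~\ref{lemma_wadge} is applicable — it requires $Z$ to be a zero-dimensional \emph{Polish} space and $A, B \in \bS(Z)$; the former is hypothesized, and the latter follows since $\bG \subseteq \bS(Z)$ (built into $\NSDS(Z)$) and $A, B \in \bG$. A secondary point to verify carefully is that $A = B \cap C \in \bG$ genuinely uses "closed subset of $B$" to mean intersection with a closed subset of the ambient $Z$ — if instead it means closed-in-$B$, one first extends to a closed subset of $Z$, which is automatic for $B \in \bG \subseteq \bS(Z)$ in a Polish ambient space. I do not expect the closure-under-unions hypothesis to be needed for this direction; it is presumably there for a companion lemma (the dual statement about open subsets), so I would remark that the symmetric argument, using closure under unions with $\bS^0_1$ sets, handles that case.
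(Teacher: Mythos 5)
Your proof has a genuine gap: the Wadge dichotomy is applied to the wrong pair, and as a result you never establish the one thing that matters, namely $A\leq B$. You invoke Lemma~\ref{lemma_wadge} in the form ``either $B\leq A$ or $Z\setminus A\leq B$.'' The second alternative is indeed contradictory (it gives $Z\setminus A\in B\wc\subseteq\bG$, hence selfduality), but the first alternative, $B\leq A$, is just a restatement of the hypothesis $B\in\bG=A\wc$ and carries no new information; it yields only $B\wc\subseteq\bG$, which you had already observed is immediate. Your phrase ``$B\leq A\leq B$'' smuggles in the unproven reduction $A\leq B$, which is exactly the content of the lemma. A telling symptom is that your argument never uses the hypothesis that $A$ is a closed subset of $B$, nor either closure property of $\bG$ --- but without these the statement is false (take $B=\varnothing$, which lies in $\bG$ whenever $\bG\neq\{Z\}$, yet $\varnothing\wc\neq\bG$ in general).

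The repair is to run the dichotomy in the other orientation, which is what the paper does. Assume $A\nleq B$; then $Z\setminus B\leq A$ by Lemma~\ref{lemma_wadge}, so $Z\setminus B\in\bG$. Now use that $A$ is closed in $B$, i.e.\ $A=B\cap\cl(A)$ with $\cl(A)$ the closure in $Z$, to write
$$
Z\setminus A=\bigl((Z\setminus B)\cap\cl(A)\bigr)\cup\bigl(Z\setminus\cl(A)\bigr).
$$
Closure under intersections with $\bP^0_1$ sets puts $(Z\setminus B)\cap\cl(A)$ in $\bG$, and closure under unions with $\bS^0_1$ sets then puts $Z\setminus A$ in $\bG=A\wc$, making $A$ (hence $\bG$) selfdual --- a contradiction. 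Thus $A\leq B$, giving $\bG=A\wc\subseteq B\wc$, and together with the trivial inclusion this finishes the proof. Note that both closure hypotheses are used in this single argument; neither is there merely for a dual companion statement, contrary to your closing remark.
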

\begin{proof}
We will use $\cl$ to denote closure in $Z$. Assume, in order to get a contradiction, that $A\nleq B$. Then $Z\setminus B\leq A$ by Lemma \ref{lemma_wadge}. By the closure properties of $\bG$, it follows that
$$
Z\setminus A=((Z\setminus B)\cap\cl(A))\cup (Z\setminus\cl(A))\in\bG,
$$
which contradicts the fact that $\bG$ is non-selfdual.
\end{proof}

\begin{lemma}\label{lemma_closed_decomposition}
Let $Z$ be a space, let $B\subseteq Z$, let $\bG\subseteq\PP(Z)$ be closed under $\SU_1$, and let $A_n\in\bG$ for $n\in\omega$. If each $A_n$ is a closed subset of $B$ and $\bigcup_{n\in\omega}A_n=B$ then $B\in\bG$.
\end{lemma}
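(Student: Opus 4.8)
The plan is to exhibit $B$ as a separated union (in the sense of Definition \ref{definition_su} with $\xi=1$) of the sets $A_n$, with witnesses manufactured from the closures of the $A_n$ in $Z$ rather than from the $A_n$ themselves. So first I would set $C_n=\cl(A_n)$, where $\cl$ denotes closure in $Z$, and note that since each $A_n$ is closed in $B$ we have $C_n\cap B=A_n$ for every $n$. Then I would define
$$
U_n=C_n\setminus\bigcup_{k<n}C_k
$$
for $n\in\omega$, with the intention of checking that $A_n\in\bG$, that the $U_n$ are pairwise disjoint elements of $\bS^0_2(Z)$, and that $B=\bigcup_{n\in\omega}(A_n\cap U_n)$, whence $B\in\SU_1(\bG)=\bG$.

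The verifications I would carry out, in order, are the following. That $U_m\cap U_n=\varnothing$ for $m\neq n$ is immediate, since $U_m\subseteq C_m$ while $U_n\cap C_m=\varnothing$ whenever $m<n$. That $U_n\in\bS^0_2(Z)$ follows by writing $U_n=C_n\cap\bigcap_{k<n}(Z\setminus C_k)$: each $Z\setminus C_k$ is open, hence (as $Z$ is metrizable) $F_\sigma$ in $Z$, and a finite intersection of $F_\sigma$ subsets of $Z$ with the closed set $C_n$ is again $F_\sigma$ in $Z$. For the crucial identity, I would intersect the defining equation of $U_n$ with $B$ and use $C_k\cap B=A_k$ to get $U_n\cap B=A_n\setminus\bigcup_{k<n}A_k$; since $A_n\subseteq B$, this yields $A_n\cap U_n=A_n\cap(U_n\cap B)=A_n\setminus\bigcup_{k<n}A_k$. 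Taking the union over $n$ and invoking $\bigcup_{n\in\omega}A_n=B$ then gives $\bigcup_{n\in\omega}(A_n\cap U_n)=B$ exactly, so $B\in\SU_1(\bG)$, which equals $\bG$ by hypothesis.

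The only point that requires genuine care — and the reason the proof is not completely trivial — is that the sets $U_n$ must be $F_\sigma$ in the \emph{whole} space $Z$, not merely in $B$: the ``obvious'' disjointification $A_n\setminus\bigcup_{k<n}A_k$ is $F_\sigma$ only in $B$ in general, so one cannot use it directly. Passing to the closures $C_n$ is precisely what fixes this, while the last computation shows that the $C_n$ cut down to exactly the pieces we wanted once we intersect back with $B$. Everything else is routine bookkeeping, and no determinacy hypothesis is needed.
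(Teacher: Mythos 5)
Your proof is correct and is essentially identical to the paper's: the paper also sets $U_n=\cl(A_n)\setminus\bigcup_{k<n}\cl(A_k)$ with closures taken in $Z$, notes that $U_n\in\bS^0_2(Z)$, and verifies $B=\bigcup_{n\in\omega}(A_n\cap U_n)\in\SU_1(\bG)=\bG$. You have merely spelled out the verifications that the paper leaves as ``straightforward to check.''
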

\begin{proof}
We will use $\cl$ to denote closure in $Z$. Assume that each $A_n\in\bP^0_1(B)$. Set
$$
U_n=\cl(A_n)\setminus\bigcup_{k<n}\cl(A_k)
$$
for $n\in\omega$, and observe that each $U_n\in\bS^0_2(Z)$. It is straightforward to check that
$$
B=\bigcup_{n\in\omega}(A_n\cap U_n).
$$
Since $\bG$ is closed under $\SU_1$, it follows that $B\in\bG$.
\end{proof}

\begin{theorem}\label{theorem_meager_semifilter}
Let $\bS$ be a nice topological pointclass, and assume that $\Det(\bS(\omega^\omega))$ holds. Let $\bG\subseteq\bS(\omega^\omega)$ be a good Wadge class in $\omega^\omega$ that is closed under $\SU_1$. Then there exists a meager semifilter of exact complexity $\bG$.
\end{theorem}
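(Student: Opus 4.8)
The plan is to realize $\bG$ by a concrete set of that exact complexity and then surgically build a semifilter out of it, following (and slightly streamlining) the construction behind \cite[Theorem 12.4]{medini_semifilters}, which is essentially due to van Engelen. Since $2^\omega$ is an uncountable zero-dimensional Polish space, $\bG(2^\omega)$ is a non-selfdual Wadge class in $2^\omega$ by Theorem \ref{theorem_relativization_uncountable}, so I can fix $A\subseteq 2^\omega$ with $A\wc=\bG(2^\omega)$; by Corollary \ref{corollary_characterization_exact_complexity} such an $A$ is a space of exact complexity $\bG$. Fix once and for all a partition $\omega=\bigsqcup_{n\in\omega}I_n$ into infinitely many infinite sets, write $x_n=x\re I_n$ for $x\in 2^\omega$, and identify each $2^{I_n}$ with $2^\omega$ via a fixed homeomorphism. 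Using the closure properties of good Wadge classes (Corollary \ref{corollary_closure_good}) together with Lemma \ref{lemma_closed_subset}, I would first perform some harmless preliminary surgery on $A$ (adding or removing simple sets, e.g. the constant points $\mathbf 0$ and $\mathbf 1$) so that $A$ is in the form needed below while still having exact complexity $\bG$. The semifilter $\Ss$ will be a ``telescoping'' union: $\omega$ belongs to $\Ss$, and for $x\neq\omega$ the membership of $x$ in $\Ss$ is decided by the blocks $x_n$ in such a way that all but finitely many $x_n$ are required to be ``full'' while the complexity of $\bG$ is injected through the behaviour of the non-full blocks.

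The verification then has four ingredients. (i) $\Ss$ is a semifilter: $\varnothing\notin\Ss$ and $\omega\in\Ss$ are built in, and both upward-closure and closure under finite modifications follow from the fact that a finite change to $x$ affects only finitely many blocks $x_n$ while enlarging $x$ only enlarges each $x_n$ --- it is precisely here that the way the complexity is injected must be chosen carefully. (ii) $\Ss$ is meager: by construction $\Ss\subseteq\{x\in 2^\omega: x_n \text{ is full for all but finitely many }n\}$, and the latter is a countable increasing union of nowhere dense closed sets, hence meager, so $\Ss$ is meager in $2^\omega$ (equivalently, since $\Ss$ is dense in $2^\omega$, a meager space). (iii) $\Ss\in\bG$: one writes $\Ss=\bigcup_{k\in\omega}B_k$, where each $B_k$ is relatively closed in $\Ss$ and lies in $\bG$ (each $B_k$ records which blocks are non-full together with a bound on them; membership of $B_k$ in $\bG$ uses Corollary \ref{corollary_closure_good} and that $A\in\bG$), and then Lemma \ref{lemma_closed_decomposition} together with the hypothesis that $\bG$ is closed under $\SU_1$ yields $\Ss\in\bG$. (iv) $\Ss$ is not a $\bGc$ space: the construction places a relatively closed homeomorphic copy of $A$ inside $\Ss$, so $\bG(2^\omega)=A\wc\subseteq\Ss\wc$; combining this with $\Ss\in\bG$ (and invoking Lemma \ref{lemma_closed_subset}, or Corollary \ref{corollary_characterization_exact_complexity} and Lemma \ref{lemma_wadge}) gives $\Ss\wc=\bG(2^\omega)$, that is, $\Ss$ has exact complexity $\bG$.

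The hard part will be reconciling the three competing requirements on $\Ss$: the semifilter axioms (above all upward-closure and $\Fin$-invariance, which a naive copy of $A$ sitting on a single block would destroy), meagerness, and landing in $\bG$ rather than in some strictly larger Wadge class. This is exactly where the closure hypothesis ``$\bG$ is closed under $\SU_1$'' is consumed --- via the decomposition of $\Ss$ into relatively closed $\bG$-pieces in ingredient (iii) --- and it is why one must use the specific telescoping construction of van Engelen and Medini rather than an ad hoc one. Once the construction is laid out, the remaining points (that the pieces $B_k$ are genuinely relatively closed in $\Ss$ and genuinely in $\bG$, that the embedded copy of $A$ is relatively closed, and that the preliminary surgery on $A$ preserves exact complexity) are routine bookkeeping resting on Corollary \ref{corollary_closure_good} and Lemmas \ref{lemma_closed_subset} and \ref{lemma_closed_decomposition}.
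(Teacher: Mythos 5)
Your proposal correctly assembles all the auxiliary machinery that the paper's proof actually uses: Corollary \ref{corollary_closure_good} for closure of good classes under intersections with $\bP^0_2$ sets, Lemma \ref{lemma_closed_subset} to recover exact complexity from a relatively closed copy of a $\bG$-complete set, and Lemma \ref{lemma_closed_decomposition} together with the hypothesis of closure under $\SU_1$ to show that a countable union of relatively closed $\bG$-pieces stays in $\bG$. But the central object of the proof, the semifilter itself, is never defined. The sentence ``the membership of $x$ in $\Ss$ is decided by the blocks $x_n$ in such a way that all but finitely many $x_n$ are required to be full while the complexity of $\bG$ is injected through the behaviour of the non-full blocks'' is a placeholder, and you acknowledge as much (``it is precisely here that the way the complexity is injected must be chosen carefully'', ``The hard part will be reconciling the three competing requirements''). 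That reconciliation is the entire content of the theorem: a naive block coding collides head-on with upward-closure, since enlarging $x$ alters the non-full blocks, and there is no reason an arbitrary $\bG$-complete subset of a block $2^{I_n}$ should be upward-closed or invariant under such changes.

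The paper resolves this not by a block decomposition of $\omega$ but by a tree construction that makes the order-theoretic axioms automatic. One works in $Z=2^\Omega$ with $\Omega=2^{<\omega}$, lets $K=\{\{x\re n:n\in\omega\}:x\in 2^\omega\}$ be the compact copy of $2^\omega$ consisting of branches, fixes a dense $X\subseteq K$ with $X\wc=\bG(K)$, and sets $Y=\widehat{X}=\{z\in Z:z\subseteq z'\text{ for some }z'\in X\}$ and $\RR=\{y\cup e:y\in Y\text{ and }e\in\Fin(\Omega)\}$. One builds a semi\emph{ideal} and passes to the homeomorphic semifilter $c[\RR]$ at the end: downward-closedness is free from the definition of $\widehat{X}$, closure under finite modifications is free from the union with $\Fin(\Omega)$, and $\Omega\notin\RR$ together with meagerness comes from the finite union property of $\widehat{K}$. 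The complexity computation is then exactly your ingredient (iii): $\RR$ is the union of the relatively closed pieces $\psi_e[\RR_e]$ with $\RR_e=Y\cap Z_e\in\bG(Z)$, so Lemma \ref{lemma_closed_decomposition} and closure under $\SU_1$ give $\RR\in\bG(Z)$, and Lemma \ref{lemma_closed_subset} applied to the closed copy $X=\RR\cap K$ gives $\RR\wc=\bG(Z)$. Without this (or some equally explicit) construction, your outline identifies the right verification scheme but does not yet constitute a proof.
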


\begin{proof}
We begin by making the following definitions:
\begin{itemize}
\item $\Omega=2^{<\omega}$,
\item $Z=2^\Omega$,
\item $K=\{\{x\re n:n\in\omega\}:x\in 2^\omega\}$,
\item $\widehat{S}=\{z\in Z:z\subseteq z'\text{ for some }z'\in S\}$ whenever $S\subseteq K$,
\item $\ddot{S}=\{z\in\widehat{S}:z\text{ is infinite}\}$ whenever $S\subseteq K$,
\item $Z_e=\{z\in Z:z\cap e=\varnothing\}$ whenever $e\in\Fin(\Omega)$.
\end{itemize}
Observe that both $K$ and $\widehat{K}$ are compact and crowded. Furthermore, it is easy to check that $\ddot{K}\in\bP^0_2(Z)$, and it is clear that each $Z_e\in\bD^0_1(Z)$. We will use $\cl$ to denote closure in $Z$. Fix $X\subseteq K$ such that $X\wc=\bG(K)$, then set $X'=X\cup (K\setminus\cl(X))$.

\noindent\textbf{Claim 1.} $X'\wc=\bG(K)$.

\noindent\textit{Proof of Claim 1.} Notice that $\bG(K)$ is a good Wadge class by Lemma \ref{lemma_good_relativization}. Therefore, using Lemma \ref{lemma_closure_closed_open}, one sees that $X'\in\bG(K)$. An application of Lemma \ref{lemma_closed_subset} concludes the proof. $\blacksquare$

By Claim 1, we can assume without loss of generality that $X$ is dense in $K$. Notice that $\bG(Z)=X\wc$ by Corollary \ref{corollary_characterization_exact_complexity}. Furthermore, Lemma \ref{lemma_good_relativization} shows that $\bG(Z)$ is a good Wadge class. Set $Y=\widehat{X}$. Throughout the remainder of this proof, we will use $\leq$ exclusively to denote Wadge-reduction in $Z$.

\noindent\textbf{Claim 2.} $Y\in\bG(Z)$.

\noindent\textit{Proof of Claim 2.} Consider the unique function $\varphi:\ddot{K}\longrightarrow K$ such that $x\subseteq\varphi(x)$ for every $x\in\ddot{K}$, and observe that $\varphi$ is continuous. By Lemma \ref{lemma_relativization_basic}, it follows that $\ddot{X}=\varphi^{-1}[X]\in\bG(\ddot{K})$. Therefore, by Lemma \ref{lemma_relativization_subspace}, there exists $\widetilde{X}\in\bG(Z)$ such that $\ddot{X}=\widetilde{X}\cap\ddot{K}$. Finally, Corollary \ref{corollary_closure_good} ensures that
$$
Y=(\widetilde{X}\cap\ddot{K})\cup(\Fin(\Omega)\cap Y)\in\bG(Z),
$$
as desired. $\blacksquare$

Define
$$
\RR=\{y\cup e:y\in Y\text{ and }e\in\Fin(\Omega)\},
$$
and observe that $\RR$ is a semiideal on $\Omega$. To conclude the proof, it will be enough to show that $\RR$ has exact complexity $\bG$ and that $\RR$ is meager. The first statement will follow from Claim 3 and Corollary \ref{corollary_characterization_exact_complexity}, while the second statement is the content of Claim 4.

\noindent\textbf{Claim 3.} $\RR\wc=\bG(Z)$.

\noindent\textit{Proof of Claim 3.} Define
$$
\RR_e=\RR\cap\widehat{K}\cap Z_e
$$
for $e\in\Fin(\Omega)$, and observe that each $\RR_e$ is a closed subset of $\RR$. It is easy to realize that $Y\cap\Fin(\Omega)=\widehat{K}\cap\Fin(\Omega)$ by the density of $X$ in $K$. Using this fact, one sees that each $\RR_e=Y\cap Z_e$. Therefore, each $\RR_e\in\bG(Z)$ by Claim 2 and Lemma \ref{lemma_closure_clopen}.

Define $\psi_e:Z\longrightarrow Z$ for $e\in\Fin(\Omega)$ by setting $\psi_e(x)=x\cup e$ for $x\in Z$. It is clear that each $\psi_e$ is continuous, hence closed by compactness. Since each $\psi_e\re Z_e$ is injective, it is an embedding by compactness.  In particular, each $\psi_e[\RR_e]\approx\RR_e$. Furthermore, each $\psi_e\re\RR:\RR\longrightarrow\RR$ is closed by Lemma \ref{lemma_closed_function}. Let $\{A_n:n\in\omega\}$ be an enumeration of
$$
\{\psi_e[\RR_e]:e\in\Fin(\Omega)\}.
$$
Observe that each $A_n\in\bG(Z)$ by Proposition \ref{proposition_characterization_complexity}, and that each $A_n$ is a closed subset of $\RR$. Furthermore, it is clear that $\RR=\bigcup_{n\in\omega}A_n$. Since $\bG(Z)$ is closed under $\SU_1$ by Lemma \ref{lemma_preservation_closure_su_under_relativization}, it follows from Lemma \ref{lemma_closed_decomposition} that $\RR\in\bG(Z)$. To complete the proof, simply observe that $\RR\cap K=X$, then apply Lemmas \ref{lemma_closure_closed_open} and \ref{lemma_closed_subset}. $\blacksquare$

\noindent\textbf{Claim 4.} $\RR$ is meager.

\noindent\textit{Proof of Claim 4.} Notice that $\RR$ is either meager or comeager by Proposition \ref{proposition_finite_modifications_meager_comeager}. Assume, in order to get a contradiction, that $\RR$ is comeager. Then, as in the proof of Corollary \ref{corollary_filters_meager}, there exist $x,y\in\RR$ such that $x\cup y=\Omega$. It is easy to realize that this contradicts the fact that $\widehat{K}$ has the finite union property. $\blacksquare$
\end{proof}

\begin{corollary}\label{corollary_meager_semifilter}
Let $\bS$ be a nice topological pointclass, and assume that $\Det(\bS(\omega^\omega))$ holds. Let $\bG\subseteq\bS(\omega^\omega)$ be a good Wadge class in $\omega^\omega$ that satisfies at least one of the following conditions:
\begin{enumerate}
\item $\ell(\bG)\geq 2$,
\item $\ell(\bG)=1$ and $\bG$ does not have the separation property,
\item $\ell(\bG)=1$ and $t(\bG)=3$.
\end{enumerate}
Then there exists a meager semifilter of exact complexity $\bG$.
\end{corollary}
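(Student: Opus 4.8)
The plan is to reduce the entire statement to Theorem \ref{theorem_meager_semifilter}. Since $\bG$ is already assumed to be a good Wadge class in $\omega^\omega$ with $\bG\subseteq\bS(\omega^\omega)$, the only thing left to check, in each of the three cases, is that $\bG$ is closed under $\SU_1$; once that is established, Theorem \ref{theorem_meager_semifilter} directly yields the desired meager semifilter of exact complexity $\bG$. Observe along the way that $\bG\in\NSDS(\omega^\omega)$ (so the closure theorems of \S\ref{subsection_wadge_closure_main} apply) and that $\varnothing\in\bG$ (for instance because $\varnothing\in\Delta(\Diff_\omega(\bS^0_2(\omega^\omega)))\subseteq\bG$ by condition $(2)$ of the definition of good, Definition \ref{definition_good}).

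Cases $(2)$ and $(3)$ I would dispatch immediately using the main closure results. If $\ell(\bG)=1$ and $\bG$ does not have the separation property, then Theorem \ref{theorem_closure_main_separation} applied with $\xi=1$ shows that $\bG$ is closed under $\SU_1$. If $\ell(\bG)=1$ and $t(\bG)=3$, then Theorem \ref{theorem_closure_main_type} applied with $\xi=1$ gives the same conclusion.

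Case $(1)$ requires a short direct argument rather than an appeal to the closure theorems. Here $\ell(\bG)\geq 2$, which by definition means precisely $\PU_2(\bG)=\bG$. I would take an arbitrary $A\in\SU_1(\bG)$, say $A=\bigcup_{n\in\omega}(A_n\cap U_n)$ with each $A_n\in\bG$ and the $U_n\in\bS^0_2(\omega^\omega)$ pairwise disjoint, and set $U_\infty=\omega^\omega\setminus\bigcup_{n\in\omega}U_n\in\bP^0_2(\omega^\omega)$ together with $A_\infty=\varnothing\in\bG$. Since $\bS^0_2$ and $\bP^0_2$ are both contained in $\bD^0_3$, the family $\{U_n:n\in\omega\}\cup\{U_\infty\}$ is a partition of $\omega^\omega$ into $\bD^0_3$ sets, so the identity $A=(A_\infty\cap U_\infty)\cup\bigcup_{n\in\omega}(A_n\cap U_n)$ exhibits $A$ as an element of $\PU_2(\bG)=\bG$. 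Hence $\SU_1(\bG)\subseteq\bG$; the reverse inclusion is trivial, so $\bG$ is closed under $\SU_1$ in this case too, and Theorem \ref{theorem_meager_semifilter} finishes the proof.

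I do not expect any serious obstacle: essentially all the difficulty is already packaged inside Theorem \ref{theorem_meager_semifilter} and the closure theorems \ref{theorem_closure_main_separation} and \ref{theorem_closure_main_type}. The only new point is the elementary observation in case $(1)$ that a level-$1$ separated union can always be completed to a level-$2$ partitioned union by adjoining the complement of the union of the separators (which is $\bP^0_2$, hence $\bD^0_3$) paired with $\varnothing\in\bG$ — this is exactly why the weaker hypothesis $\ell(\bG)\geq 2$, rather than closure under $\SU_1$ outright, is enough there.
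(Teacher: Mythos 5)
Your proposal is correct and follows essentially the same route as the paper: reduce to Theorem \ref{theorem_meager_semifilter} by checking closure under $\SU_1$, handling cases $(2)$ and $(3)$ via Theorems \ref{theorem_closure_main_separation} and \ref{theorem_closure_main_type}, and case $(1)$ via the inclusion $\SU_1(\bG)\subseteq\PU_2(\bG)=\bG$. Your case $(1)$ argument is just a spelled-out version of the paper's one-line observation (using $\varnothing\in\bG$, which holds since $\bG$ is good), and it is fine.
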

\begin{proof}
By Theorem \ref{theorem_meager_semifilter}, it will be enough to show that $\bG$ is closed under $\SU_1$. If condition $(1)$ holds, this follows from the definition of level, since $\SU_1(\bG)\subseteq\PU_2(\bG)$. If condition $(2)$ holds, apply Theorem \ref{theorem_closure_main_separation}. Finally, if condition $(3)$ holds, apply Theorem \ref{theorem_closure_main_type}.
\end{proof}

\begin{corollary}\label{corollary_baire_semifilter}
Let $\bS$ be a nice topological pointclass, and assume that $\Det(\bool\bS(\omega^\omega))$ holds. Let $\bG\subseteq\bS(\omega^\omega)$ be a good Wadge class in $\omega^\omega$ that satisfies at least one of the following conditions:
\begin{enumerate}
\item $\ell(\bG)\geq 2$,
\item $\ell(\bG)=1$ and $\bG$ has the separation property,
\item $\ell(\bG)=1$ and $t(\bG)=3$.
\end{enumerate}
Then there exists a Baire semifilter of exact complexity $\bG$.
\end{corollary}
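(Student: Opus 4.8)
The plan is to derive Corollary \ref{corollary_baire_semifilter} from Corollary \ref{corollary_meager_semifilter} (applied with the nice topological pointclass $\bool\bS$ in place of $\bS$) by a duality argument, exploiting the interplay between filters/ideals and semifilters/semiideals established in \S\ref{subsection_preliminaries_filters}, together with Theorem \ref{theorem_separation_generalized} (the ``pair of socks'' resolution) and the complementation homeomorphism $c$. The point is that under the hypotheses $(1)$--$(3)$ on $\bG$, the dual class $\bGc$ also satisfies the hypotheses of Corollary \ref{corollary_meager_semifilter} (since $\ell(\bG)=\ell(\bGc)$ and $t(\bG)=t(\bGc)$ by Proposition \ref{proposition_type_basic}, while by Theorem \ref{theorem_separation_generalized} exactly one of $\bG$, $\bGc$ has the separation property, so in case $(2)$ the dual $\bGc$ lands in case $(2)$ of Corollary \ref{corollary_meager_semifilter}, i.e.\ the version that does \emph{not} require the separation property). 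Hence Corollary \ref{corollary_meager_semifilter} produces a meager semifilter $\Ss$ of exact complexity $\bGc$. It remains to manufacture a Baire semifilter of exact complexity $\bG$ out of this.

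First I would note that $\bGc$ is itself a good Wadge class: goodness is self-dual since conditions $(1)$ and $(2)$ of Definition \ref{definition_good} are manifestly invariant under passing to the dual (recall $\Delta(\Diff_\omega(\bS^0_2(Z)))$ is selfdual), and $\ell(\bGc)=\ell(\bG)\geq 1$. So the hypotheses of Corollary \ref{corollary_meager_semifilter} indeed make sense for $\bGc$, and we must check that when $\bG$ satisfies one of $(1)$--$(3)$, then $\bGc$ satisfies one of $(1)$--$(3)$ of that corollary. Case $(1)$ and case $(3)$ are immediate by the level and type invariance just mentioned. For case $(2)$: if $\ell(\bG)=1$ and $\bG$ has the separation property, then by Theorem \ref{theorem_separation_generalized} the class $\bGc$ does \emph{not} have the separation property, so $\bGc$ falls under case $(2)$ of Corollary \ref{corollary_meager_semifilter}. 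This is exactly the asymmetry between the two corollaries that makes the scheme work. Applying Corollary \ref{corollary_meager_semifilter} with the pointclass $\bool\bS$ (which is nice and satisfies $\Det(\bool\bS(\omega^\omega))$ by hypothesis, and contains $\bGc$ since $\bG\subseteq\bS(\omega^\omega)\subseteq\bool\bS(\omega^\omega)$) yields a meager semiideal or semifilter of exact complexity $\bGc$; I would take a meager semifilter $\Ss$ with $\Ss\wc$ equal to the relativization of $\bGc$.

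Next I would apply the complementation homeomorphism $c=h_\omega:2^\omega\longrightarrow 2^\omega$ of \S\ref{subsection_preliminaries_filters}. As observed there, $c[\Ss]$ is a semiideal on $\omega$, and $c[\Ss]\approx\Ss$, so $c[\Ss]$ is a meager space. Now consider $\TT=\PP(\omega)\setminus c[\Ss]$: since $c[\Ss]$ is a downward-closed collection containing $\varnothing$ but not $\omega$ and closed under finite modifications, its complement $\TT$ is upward-closed, contains $\omega$ but not $\varnothing$, and is closed under finite modifications — that is, $\TT$ is a semifilter on $\omega$. Topologically, $\TT=2^\omega\setminus c[\Ss]$, and $c[\Ss]$ is a meager subspace of $2^\omega$; hence $\TT$ is a dense $\bP^0_2$-like co-meager subset and in particular $\TT$ is a Baire space (a comeager subset of a Baire space is Baire). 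Its exact complexity: $c[\Ss]\wc$ is the dual of $\Ss\wc$ inside $2^\omega$ via the homeomorphism $c$ (Lemma \ref{lemma_relativization_basic}), so $c[\Ss]$ has exact complexity $\bGc$ as well, whence $2^\omega\setminus c[\Ss]=\TT$ has exact complexity $\widecheck{\bGc}=\bG$; more carefully, one checks $\TT\wc=\bG(2^\omega)$ directly, since $\TT=2^\omega\setminus c[\Ss]$ and $c[\Ss]$ is (Wadge-equivalent in $2^\omega$ to) a complete set for $\bGc(2^\omega)$. Then Corollary \ref{corollary_characterization_exact_complexity} (applied with $Z=2^\omega$, which is an uncountable zero-dimensional Polish space) upgrades this to exact complexity $\bG$ in the absolute sense. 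Since $\TT$ is a semifilter, it is homogeneous by Corollary \ref{corollary_semifilter_homogeneous}, completing the proof.

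The main obstacle, and the step I would be most careful about, is the passage between ``exact complexity $\bGc$ in $2^\omega$'' for $c[\Ss]$ and ``exact complexity $\bG$ in $2^\omega$'' for its complement $\TT=2^\omega\setminus c[\Ss]$: one needs that $(2^\omega\setminus A)\wc$ is precisely the relativization $\bGc$-dual of $A\wc$, which is essentially the statement $\widecheck{\bG(Z)}=\bGc(Z)$ from Lemma \ref{lemma_relativization_basic} combined with the fact that $A\wc=\bGc(2^\omega)$ means $2^\omega\setminus A$ is $\leq$-complete for $\bG(2^\omega)$. I would also double-check the Baire-category bookkeeping: that $\TT$ is genuinely Baire (not merely non-meager), which follows because the complement of a meager subset of a Baire space is comeager and comeager subspaces of Baire spaces are Baire. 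The rest — verifying $\TT$ is a semifilter, that goodness and the hypotheses transfer to $\bGc$, and invoking Corollaries \ref{corollary_semifilter_homogeneous} and \ref{corollary_characterization_exact_complexity} — is routine.
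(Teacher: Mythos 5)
Your proposal is correct and follows essentially the same route as the paper: apply Corollary \ref{corollary_meager_semifilter} to $\bGc$ (with Theorem \ref{theorem_separation_generalized} handling case $(2)$), then pass to $\TT=2^\omega\setminus c[\Ss]=c[2^\omega\setminus\Ss]$, which is exactly the paper's construction, and verify its complexity via Corollary \ref{corollary_characterization_exact_complexity}.
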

\begin{proof}
The key observation is that Corollary \ref{corollary_meager_semifilter} applies to $\bGc$. This is straightforward to check, except for condition $(2)$, which requires Theorem \ref{theorem_separation_generalized}. So there exists a meager semifilter $\Ss$ of exact complexity $\bGc$. Using Corollary \ref{corollary_characterization_exact_complexity}, it is easy to realize that $\TT=c[2^\omega\setminus\Ss]$ is a Baire semifilter of exact complexity $\bG$.
\end{proof}

\subsection{Existence: the negative results}\label{subsection_homogeneous_existence_negative}

Next, we will prove that the results of \S\ref{subsection_homogeneous_existence_positive} are sharp. More precisely, as Theorem \ref{theorem_existence_negative_meager} and Corollary \ref{corollary_existence_negative_baire} show, there are no other zero-dimensional homogeneous spaces whose exact complexity is a good Wadge class. Theorem \ref{theorem_existence_negative_meager} is based on \cite[Lemma 4.3.7]{van_engelen_thesis}.

\begin{theorem}\label{theorem_existence_negative_meager}
Let $\bS$ be a nice topological pointclass, and assume that $\Det(\bS(\omega^\omega))$ holds. Let $\bG\subseteq\bS(\omega^\omega)$ be a good Wadge class in $\omega^\omega$ that satisfies the following conditions:
\begin{itemize}
\item $\ell(\bG)=1$,
\item $t(\bG)\in\{1,2\}$,
\item $\bG$ does not have the separation property.
\end{itemize}
Then every zero-dimensional homogeneous space of exact complexity $\bG$ is meager.
\end{theorem}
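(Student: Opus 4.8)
The plan is to argue by contradiction. Suppose $X$ is a zero-dimensional homogeneous space of exact complexity $\bG$ that is not meager; by Proposition \ref{proposition_homogeneous_dichotomy} it is then Baire. Since $\bG$ is good, $X$ is not a $\Delta(\Diff_\omega(\bS^0_2))$ space (Corollary \ref{corollary_characterization_exact_complexity}, as $\Delta(\Diff_\omega(\bS^0_2))$ is selfdual and contained in $\bG$), so $X$ is uncountable --- every countable space is an $\bS^0_2$ space, hence a $\Delta(\Diff_\omega(\bS^0_2))$ space --- hence non-discrete, hence crowded by homogeneity. So $X$ embeds densely into $2^\omega$ (pass to the closure of an arbitrary embedding into $2^\omega$, which is a crowded compact zero-dimensional metrizable space). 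Identifying $X$ with its image, Corollary \ref{corollary_characterization_exact_complexity} gives $X\wc=\bG(2^\omega)$; moreover $X\in\bG(2^\omega)$ by Proposition \ref{proposition_characterization_complexity}, and $X\in\bS(2^\omega)$ since $X$ is a $\bG$ space hence a $\bS(\omega^\omega)$ space and $\bS$ is nice. Thus $X$ is a dense Baire $\bS$-subspace of the Polish space $2^\omega$, so $X$ is comeager in $2^\omega$ by Corollary \ref{corollary_baire_dense_subspace}.

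Next I would unpack the complexity. The class $\bG(2^\omega)\in\NSDS(2^\omega)$ has level $1$ (Theorem \ref{theorem_level_relativization}), type in $\{1,2\}$ (type is preserved by relativization to uncountable zero-dimensional Polish spaces, a straightforward consequence of Theorems \ref{theorem_level_relativization} and \ref{theorem_order_isomorphism}), and lacks the separation property (Lemma \ref{lemma_separation_relativization} and Theorem \ref{theorem_separation_generalized}). So Theorem \ref{theorem_characterization_small_type} furnishes classes $\bL_m\in\NSD(2^\omega)$ with $\ell(\bL_m)\geq 1$ such that $\bG(2^\omega)$ equals $\SU_1(\bD)$ or $\SUc_1(\bD)$, where $\bD=\bigcup_m(\bL_m\cup\bLc_m)$ is selfdual; since $\SUc_1(\bD)$ has the separation property by Lemma \ref{lemma_su_dual_has_separation}, we get $\bG(2^\omega)=\SU_1(\bD)$, and $\bD\subsetneq\bG(2^\omega)$ because a selfdual class cannot equal a non-selfdual one. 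Now fix a decomposition $X=\bigcup_n(A_n\cap U_n)$ with $A_n\in\bD$ and the $U_n\in\bS^0_2(2^\omega)$ pairwise disjoint. Comeagerness of $X$ forces $\bigcup_n U_n$ to be comeager, so some $U_{n_0}$ is non-meager; being $\bS^0_2$, it contains a closed set with non-empty interior, hence a non-empty clopen set $W\subseteq U_{n_0}$, and $W\approx 2^\omega$. By disjointness of the $U_n$ we then have $X\cap W=A_{n_0}\cap W$, where $A_{n_0}$ lies in some $\bL_m$ or $\bLc_m$.

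Now comes the contradiction. On one side, $X\cap W$ is clopen in $X$, hence a $\bG$ space by Proposition \ref{proposition_complexity_hereditary}; and it is not a $\bGc$ space, since otherwise Lemma \ref{lemma_open_implies_whole} (applied to the homogeneous $X$, whose non-empty open subspace $X\cap W$ would then be a $\bGc$ space) would make $X$ a $\bGc$ space, contradicting Corollary \ref{corollary_characterization_exact_complexity}. Hence $X\cap W$ has exact complexity $\bG$, so by Corollary \ref{corollary_characterization_exact_complexity} its Wadge class inside $W$ is $\bG(W)$. On the other side, $\bL_m\in\NSDS(2^\omega)$, so Theorem \ref{theorem_relativization_uncountable} gives a non-selfdual Wadge class $\bL'$ in $\omega^\omega$ with $\bL'(2^\omega)=\bL_m$; then $X\cap W=A_{n_0}\cap W$ lies in $\bL'(W)$ or $\widecheck{\bL'}(W)$ by Lemma \ref{lemma_relativization_basic}, a continuously closed class, so $\bG(W)=(X\cap W)\wc$ is contained in $\bL'(W)$ (resp.\ $\widecheck{\bL'}(W)$). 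Since $\bL_m\subseteq\bD\subseteq\bG(2^\omega)$, Theorem \ref{theorem_order_isomorphism} yields $\bL'\subseteq\bG$ (resp.\ $\widecheck{\bL'}\subseteq\bG$), hence $\bL'(W)\subseteq\bG(W)$ (resp.\ $\widecheck{\bL'}(W)\subseteq\bG(W)$); combining with the previous inclusion and applying Theorem \ref{theorem_order_isomorphism} again gives $\bL'=\bG$ (resp.\ $\widecheck{\bL'}=\bG$), so $\bL_m$ equals $\bG(2^\omega)$ (resp.\ $\bGc(2^\omega)$). In either case, since $\bD$ is selfdual, this yields $\bG(2^\omega)\subseteq\bD$, contradicting $\bD\subsetneq\bG(2^\omega)$.

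The deepest point is not an isolated deduction but the tension exploited in the last two paragraphs: homogeneity (through Lemma \ref{lemma_open_implies_whole}) forces the complexity of $X$ to persist on every clopen trace $X\cap W$, while comeagerness forces that local complexity to be realized by a single term $A_{n_0}$ of a separated union, whose complexity is confined to the strictly smaller selfdual class $\bD$ --- and these cannot both happen. The main technical nuisance I anticipate is the relativization bookkeeping: keeping the predecessor classes $\bL_m$ and the class $\bG$ coordinated across $\omega^\omega$, $2^\omega$ and $W$, which relies on Theorems \ref{theorem_relativization_uncountable} and \ref{theorem_order_isomorphism} and on the relativization-invariance of level, type, and the separation property.
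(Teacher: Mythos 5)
Your proof is correct. Its engine is the same as the paper's --- both apply Theorem \ref{theorem_characterization_small_type} together with Lemma \ref{lemma_su_dual_has_separation} to write $\bG(2^\omega)=\SU_1(\bD)$ for a selfdual $\bD$ built from classes of level at least $1$, decompose $X=\bigcup_n(A_n\cap U_n)$ accordingly, and reach the contradiction through Lemma \ref{lemma_open_implies_whole} and Corollary \ref{corollary_characterization_exact_complexity} --- but the finish is genuinely different. You argue by contradiction: assuming $X$ is Baire, you invoke Corollary \ref{corollary_baire_dense_subspace} to make $X$ comeager in $2^\omega$, locate a non-meager $U_{n_0}$, shrink it to a non-empty clopen $W$, and then run a relativization computation (via Theorems \ref{theorem_relativization_uncountable} and \ref{theorem_order_isomorphism}) to force $\bG(2^\omega)\subseteq\bD$. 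The paper argues directly: it refines each $U_n$ into closed sets $C_{n,k}$, observes that $X\cap C_{n,k}=A_n\cap C_{n,k}\in\bD$ by Lemma \ref{lemma_closure_level}, and shows each such trace is nowhere dense in $X$, since a non-empty clopen $V$ with $X\cap V\subseteq X\cap C_{n,k}$ would give $X\cap V\in\bD\subseteq\bGc(2^\omega)$ and hence, by Proposition \ref{proposition_characterization_complexity} and Lemma \ref{lemma_open_implies_whole}, make $X$ a $\bGc$ space. The paper's route thus exhibits $X$ explicitly as a countable union of closed nowhere dense subsets and avoids the Baire-property machinery entirely; your route is sound, but your last paragraph can be shortened considerably, since $\bD$ selfdual and $\bD\subseteq\bG(2^\omega)$ already give $\bD\subseteq\bGc(2^\omega)$, so $X\cap W=A_{n_0}\cap W$ being a $\bGc$ space (Proposition \ref{proposition_characterization_complexity}) plus Lemma \ref{lemma_open_implies_whole} finishes immediately, with no need for the order-isomorphism bookkeeping. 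Your justifications of the dense embedding into $2^\omega$ and of the transfer of level, type and the separation property from $\bG$ to $\bG(2^\omega)$ are correct and are points the paper passes over in silence.
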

\begin{proof}
Let $X$ be a zero-dimensional homogeneous space of exact complexity $\bG$. Assume without loss of generality that $X$ is a dense subspace of $2^\omega$. Notice that $X\wc=\bG(2^\omega)$ by Corollary \ref{corollary_characterization_exact_complexity}. By Lemma \ref{lemma_su_dual_has_separation} and Theorem \ref{theorem_characterization_small_type}, it is possible to find $\bG_n\in\NSD(2^\omega)$ for $n\in\omega$ such that each $\ell(\bG_n)\geq 1$ and $\bG(2^\omega)=\SU_1(\bD)$, where
$$
\bD=\bigcup_{n\in\omega}(\bG_n\cup\bGc_n).
$$

Fix $A_n\in\bD$ and pairwise disjoint $U_n\in\bS^0_2(2^\omega)$ for $n\in\omega$ such that
$$
X=\bigcup_{n\in\omega}(A_n\cap U_n).
$$
Let $C_{n,k}\in\bP^0_1(2^\omega)$ for $n,k\in\omega$ be such that each $U_n=\bigcup_{k\in\omega}C_{n,k}$. Observe that each $A_n\cap C_{n,k}\in\bD$ by Lemma \ref{lemma_closure_level}.

To conclude the proof, it will be enough to show that each $A_n\cap C_{n,k}=X\cap C_{n,k}$ is closed nowhere dense in $X$. Assume, in order to get a contradiction, that $X\cap V\subseteq X\cap C_{n,k}$ for some non-empty $V\in\bD^0_1(2^\omega)$ and $n,k\in\omega$. Using Lemma \ref{lemma_closure_clopen}, one sees that $X\cap V=A_n\cap C_{n,k}\cap V\in\bD$. Notice that $\bD\subseteq\bGc(2^\omega)$ because $\bD\subseteq\bG(2^\omega)$ and $\bD$ is selfdual. It follows from Proposition \ref{proposition_characterization_complexity} that $X\cap V$ is a $\bGc$ space, hence $X$ is a $\bGc$ space by Lemma \ref{lemma_open_implies_whole}. This contradicts Corollary \ref{corollary_characterization_exact_complexity}.
\end{proof}

\begin{corollary}\label{corollary_existence_negative_baire}
Let $\bS$ be a nice topological pointclass, and assume that $\Det(\bS(\omega^\omega))$ holds. Let $\bG\subseteq\bS(\omega^\omega)$ be a good Wadge class in $\omega^\omega$ that satisfies the following conditions:
\begin{itemize}
\item $\ell(\bG)=1$,
\item $t(\bG)\in\{1,2\}$,
\item $\bG$ has the separation property.
\end{itemize}
Then every zero-dimensional homogeneous space of exact complexity $\bG$ is Baire.
\end{corollary}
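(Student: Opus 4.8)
The plan is to deduce this from Theorem~\ref{theorem_existence_negative_meager} by passing to the dual class and to the complementary subspace. First I would verify that $\bGc$ is again a good Wadge class in $\omega^\omega$ satisfying all the hypotheses of Theorem~\ref{theorem_existence_negative_meager}. Indeed, $\bGc$ is non-selfdual because $\bG$ is; it contains $\Delta(\Diff_\omega(\bS^0_2(\omega^\omega)))$ because this class is selfdual and is contained in $\bG$; one has $\ell(\bGc)=\ell(\bG)=1$ and $t(\bGc)=t(\bG)\in\{1,2\}$ by the remarks following Definition~\ref{definition_level} and by Proposition~\ref{proposition_type_basic}; and $\bGc\subseteq\bS(\omega^\omega)$ since $\bS$ is nice, hence closed under complements. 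Finally, since $\bG$ has the separation property, Theorem~\ref{theorem_separation_generalized} ensures that $\bGc$ does \emph{not} have it.

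Next, let $X$ be a zero-dimensional homogeneous space of exact complexity $\bG$. As in the proof of Theorem~\ref{theorem_existence_negative_meager}, I may assume without loss of generality that $X$ is a dense subspace of $2^\omega$, so that $X\wc=\bG(2^\omega)$ by Corollary~\ref{corollary_characterization_exact_complexity}. I would then set $Y=2^\omega\setminus X$. By Corollary~\ref{corollary_h-homogeneously_embedded}, $Y$ is strongly homogeneous, hence homogeneous by Corollary~\ref{corollary_strongly_homogeneous}. Moreover, since complementation reverses Wadge-reducibility, $Y\wc=\widecheck{X\wc}=\widecheck{\bG(2^\omega)}=\bGc(2^\omega)$, the last equality by the third clause of Lemma~\ref{lemma_relativization_basic}; hence, applying Corollary~\ref{corollary_characterization_exact_complexity} to the good class $\bGc$ and to the inclusion of $Y$ into the uncountable space $2^\omega$, one sees that $Y$ has exact complexity $\bGc$.

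It then remains to apply Theorem~\ref{theorem_existence_negative_meager} to $\bGc$: it yields that $Y$ is meager in itself, hence meager in $2^\omega$ by density of $X$ (equivalently, since $Y$ is a meager subset of $2^\omega$). Consequently $X=2^\omega\setminus Y$ is comeager in $2^\omega$, and therefore $X$ is a Baire space --- a dense comeager subspace of a Baire space is Baire, the verification being routine once one observes that, by density of $X$ in $2^\omega$, the closure in $2^\omega$ of any nowhere dense subset of $X$ is nowhere dense in $2^\omega$. I expect the only mildly delicate points to be the bookkeeping of the first paragraph (checking that $\bGc$ inherits every required structural property) and the appeal to Corollary~\ref{corollary_h-homogeneously_embedded} to guarantee that the complementary subspace $Y$ is itself homogeneous of the dual complexity; there is no genuinely new idea here, as the argument is the formal dual of the meager case.
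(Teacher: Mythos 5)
Your argument is correct and is essentially the paper's own proof: pass to the complement $Y=2^\omega\setminus X$ of a dense copy of $X$ in $2^\omega$, use Corollaries \ref{corollary_h-homogeneously_embedded} and \ref{corollary_characterization_exact_complexity} to see that $Y$ is a homogeneous space of exact complexity $\bGc$, invoke Theorems \ref{theorem_separation_generalized} and \ref{theorem_existence_negative_meager} to conclude that $Y$ is meager, and deduce that $X$ is comeager in $2^\omega$, hence Baire. The only quibble is that the step from ``$Y$ is a meager space'' to ``$Y$ is a meager subset of $2^\omega$'' uses the density of $Y$ (not of $X$) in $2^\omega$, which does hold because otherwise $X$ would contain a non-empty compact clopen subspace and Lemma \ref{lemma_open_implies_whole} would force $X$ to be a $\bGc$ space, contradicting Corollary \ref{corollary_characterization_exact_complexity}.
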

\begin{proof}
Let $X$ be a zero-dimensional homogeneous space of exact complexity $\bG$. Assume without loss of generality that $X$ is a dense subspace of $2^\omega$. Set $Y=2^\omega\setminus X$, and observe that $Y$ is homogeneous by Corollaries \ref{corollary_strongly_homogeneous} and \ref{corollary_h-homogeneously_embedded}. Furthermore, using Corollary \ref{corollary_characterization_exact_complexity}, one sees that $Y$ has exact complexity $\bGc$. Since $\bGc$ does not have the separation property by Theorem \ref{theorem_separation_generalized}, it follows from Theorem \ref{theorem_existence_negative_meager} that $Y$ is a meager space, hence a meager subset of $2^\omega$ by density. In conclusion, being a comeager subspace of $2^\omega$, the space $X$ is Baire.
\end{proof}

\subsection{The classification of the zero-dimensional homogeneous spaces}\label{subsection_homogeneous_classification}

As the following theorem shows, the previous three subsections can be combined to yield a complete classification of the zero-dimensional homogeneous spaces of sufficiently high complexity under $\AD$. This classification is complete because, by Theorem \ref{theorem_homogeneous_spaces_have_good_complexity}, a zero-dimensional homogeneous space that is not $\bD(\Diff_\omega(\bS^0_2))$ has exact complexity $\bG$ for some good Wadge class $\bG$. The case $\bS=\Borel$ is due to van Engelen (see \cite[Theorem 4.4.4]{van_engelen_thesis}), although his results are expressed using the more complicated language of \cite{louveau_article}.

\begin{theorem}\label{theorem_classification_homogeneous}
Let $\bS$ be a nice topological pointclass, and assume that $\Det(\bS(\omega^\omega))$ holds. Let $\bG\subseteq\bS(\omega^\omega)$ be a good Wadge class in $\omega^\omega$.
\begin{itemize}
\item If either $\ell(\bG)\geq 2$ or $(\ell(\bG)=1\text{ and }t(\bG)=3)$  then, up to homeomorphism, there exist exactly two zero-dimensional homogeneous spaces of exact complexity $\bG$. One of these spaces is Baire, and the other is meager.
\item If $\ell(\bG)=1$, $t(\bG)\in\{1,2\}$ and $\bG$ does not have the separation property then, up to homeomorphism, there exists exactly one zero-dimensional homogeneous space of exact complexity $\bG$. This space is meager.
\item If $\ell(\bG)=1$, $t(\bG)\in\{1,2\}$ and $\bG$ has the separation property then, up to homeomorphism, there exists exactly one zero-dimensional homogeneous space of exact complexity $\bG$. This space is Baire.
\end{itemize}
\end{theorem}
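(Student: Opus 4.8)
The plan is to assemble the three previous subsections: uniqueness will come from Theorem \ref{theorem_uniqueness_embeddings} together with the dichotomy of Proposition \ref{proposition_homogeneous_dichotomy}, while the existence part (and the determination of Baire category) will be handled case-by-case, using Corollaries \ref{corollary_meager_semifilter} and \ref{corollary_baire_semifilter} for the positive direction and Theorem \ref{theorem_existence_negative_meager} together with Corollary \ref{corollary_existence_negative_baire} for the negative direction. First I would record that the three cases in the statement are exhaustive and mutually exclusive for a good Wadge class $\bG$: by condition $(3)$ of Definition \ref{definition_good} we have $\ell(\bG)\geq 1$, and by Proposition \ref{proposition_type_basic} we have $t(\bG)\neq 0$ (since $\bG$ is good, $\bG\neq\{\varnothing\}$ and $\bG\neq\{\omega^\omega\}$), so $t(\bG)\in\{1,2,3\}$; the trichotomy then results from splitting on whether $\ell(\bG)\geq 2$ or $t(\bG)=3$, and, in the remaining case ($\ell(\bG)=1$ and $t(\bG)\in\{1,2\}$), on whether $\bG$ has the separation property.

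For uniqueness I would argue as follows: if $X$ and $Y$ are zero-dimensional homogeneous spaces of exact complexity $\bG$ that are both meager or both Baire, then $X\approx Y$ by Theorem \ref{theorem_uniqueness_embeddings}. Combined with Proposition \ref{proposition_homogeneous_dichotomy}, which guarantees that every homogeneous space is either meager or Baire, this shows that in every case there exist, up to homeomorphism, at most one meager and at most one Baire zero-dimensional homogeneous space of exact complexity $\bG$. Thus all the ``exactly'' counts will follow once the corresponding existence and Baire-category statements are established.

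For existence, in the first case Corollary \ref{corollary_meager_semifilter} produces a meager semifilter of exact complexity $\bG$ and Corollary \ref{corollary_baire_semifilter} produces a Baire semifilter of exact complexity $\bG$ (the hypothesis $\Det(\bool\bS(\omega^\omega))$ of the latter is available because $\bS$ is nice, hence $\bool\bS=\bS$); both are homogeneous by Corollary \ref{corollary_semifilter_homogeneous}, so together with the uniqueness paragraph there are exactly two such spaces, one Baire and one meager. In the second case Corollary \ref{corollary_meager_semifilter}$(2)$ produces a meager (homogeneous) semifilter of exact complexity $\bG$, while Theorem \ref{theorem_existence_negative_meager} shows that every zero-dimensional homogeneous space of exact complexity $\bG$ is meager; hence there is exactly one such space and it is meager. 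The third case is symmetric: Corollary \ref{corollary_baire_semifilter}$(2)$ produces a Baire (homogeneous) semifilter of exact complexity $\bG$, and Corollary \ref{corollary_existence_negative_baire} shows that every zero-dimensional homogeneous space of exact complexity $\bG$ is Baire.

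Since the whole argument is an assembly of previously proved results, there is no substantial obstacle here; the only points that require a little care are checking that the hypotheses of the cited corollaries match exactly — in particular that niceness of $\bS$ upgrades $\Det(\bS(\omega^\omega))$ to $\Det(\bool\bS(\omega^\omega))$ so that Corollary \ref{corollary_baire_semifilter} is applicable — and keeping track of the mutual exclusivity of the three cases so that the exact counts are justified.
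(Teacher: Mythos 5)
Your proposal is correct and follows essentially the same route as the paper: existence via Corollaries \ref{corollary_meager_semifilter} and \ref{corollary_baire_semifilter} together with Corollary \ref{corollary_semifilter_homogeneous}, the Baire-category constraints via Theorem \ref{theorem_existence_negative_meager} and Corollary \ref{corollary_existence_negative_baire}, and uniqueness via Proposition \ref{proposition_homogeneous_dichotomy} and Theorem \ref{theorem_uniqueness_embeddings}. Your additional remarks on the exhaustiveness of the three cases and on $\bool\bS=\bS$ making Corollary \ref{corollary_baire_semifilter} applicable are correct, if left implicit in the paper.
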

\begin{proof}
The existence claims follow from Corollaries \ref{corollary_meager_semifilter} and \ref{corollary_baire_semifilter}, since every semifilter is homogeneous by Corollary \ref{corollary_semifilter_homogeneous}. The constraints on Baire category follow from Theorem \ref{theorem_existence_negative_meager} and Corollary \ref{corollary_existence_negative_baire}. Finally, the claims about uniqueness up to homeomorphism follow from Proposition \ref{proposition_homogeneous_dichotomy} and Theorem \ref{theorem_uniqueness_embeddings}.
\end{proof}

\subsection{A purely topological characterization of semifilters}\label{subsection_homogeneous_characterization}

The following ``self-strengthening'' result is a natural byproduct of our classification. In particular, the equivalence $(1)\leftrightarrow (3)$ can be viewed as a purely topological characterization of semifilters. In \S\ref{subsection_filters_characterization}, we will give an analogous characterization of filters. Theorem \ref{theorem_characterization_semifilters} simultaneously generalizes \cite[Corollary 4.4.6]{van_engelen_thesis} (which gives $(1)\leftrightarrow (2)$ in the case $\bS=\Borel$), \cite[Theorem 1.1]{medini_semifilters} (which gives $(1)\leftrightarrow (3)$ in the case $\bS=\Borel$), and \cite[Theorem 1.1]{carroy_medini_muller_homogeneous} (which gives $(1)\leftrightarrow (2)$ in the case $\bS=\PP$). Furthermore, it answers \cite[Question 13.4]{medini_semifilters} in the affirmative.

\begin{theorem}\label{theorem_characterization_semifilters}
Let $\bS$ be a nice topological pointclass, and assume that $\Det(\bS(\omega^\omega))$ holds. Assume that $X$ is a zero-dimensional $\bS$ space that is not locally compact. Then the following conditions are equivalent:
\begin{enumerate}
\item $X$ is homogeneous,
\item $X$ is strongly homogeneous,
\item $X$ is homeomorphic to a semifilter.
\end{enumerate}
\end{theorem}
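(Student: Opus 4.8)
The plan is to establish the cycle of implications $(2)\rightarrow(1)\rightarrow(3)\rightarrow(2)$. The implication $(2)\rightarrow(1)$ is immediate from Corollary \ref{corollary_strongly_homogeneous}, and $(3)\rightarrow(1)$ (hence also most of the work towards $(2)$) is handled by Corollary \ref{corollary_semifilter_homogeneous} together with the strong-homogeneity half of what we want; the real content is $(1)\rightarrow(3)$, and secondarily making sure semifilters themselves are strongly homogeneous in our setting. So the substantive task is: given a zero-dimensional homogeneous $\bS$ space $X$ that is not locally compact, produce a semifilter homeomorphic to $X$.

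First I would dispose of the low-complexity case. If $X$ is not a $\Delta(\Diff_\omega(\bS^0_2))$ space, then by Theorem \ref{theorem_homogeneous_spaces_have_good_complexity} there is a good Wadge class $\bG\subseteq\bS(\omega^\omega)$ such that $X$ has exact complexity $\bG$. By Proposition \ref{proposition_homogeneous_dichotomy}, $X$ is either meager or Baire. Now run the existence results: inspecting the three cases in Corollaries \ref{corollary_meager_semifilter} and \ref{corollary_baire_semifilter} (using Theorem \ref{theorem_separation_generalized} to see that for $\ell(\bG)=1$, $t(\bG)\in\{1,2\}$ exactly one of $\bG,\bGc$ has the separation property, and that $t(\bGc)=t(\bG)$ by Proposition \ref{proposition_type_basic}), one obtains in every case a semifilter $\Ss$ of exact complexity $\bG$ whose Baire category matches that of $X$ — meager when $X$ is meager (Theorem \ref{theorem_existence_negative_meager} forces this when $\ell(\bG)=1$, $t(\bG)\in\{1,2\}$, $\bG$ without separation), Baire otherwise (Corollary \ref{corollary_existence_negative_baire} in the separation case; and when $\ell(\bG)\geq2$ or $t(\bG)=3$ both a meager and a Baire one exist). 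Then Theorem \ref{theorem_uniqueness_embeddings} gives $X\approx\Ss$, and since $\Ss$ is strongly homogeneous (this is where I need the semifilter strong-homogeneity input) we also get $(1)\rightarrow(2)$. For the remaining low-complexity case, $X$ is a homogeneous, non-locally-compact, zero-dimensional $\Delta(\Diff_\omega(\bS^0_2))$ space; here I would appeal to van Engelen's Borel classification as recorded in \cite[Theorem 2.4]{van_engelen_ambiguous} / \cite[Chapters 2 and 3]{van_engelen_thesis} (the complexity here is Borel, so no extra determinacy is needed), which identifies every such space as (homeomorphic to) one of an explicit finite list — $\QQQ$, $\QQQ\times2^\omega$, $\QQQ^\omega$, and the other ``famous'' spaces — each of which is known to be homeomorphic to a semifilter and to be strongly homogeneous.

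The main obstacle I anticipate is the strong homogeneity of semifilters (Question \ref{question_semifilter_strongly_homogeneous} is explicitly open!), so the argument must not secretly assume it in full generality; instead it should establish $(1)\Leftrightarrow(2)$ as a consequence of the classification rather than as an input. Concretely: for the high-complexity case, $X$ homogeneous means (by the above) $X\approx\Ss$ for some semifilter $\Ss$; but $\Ss$ is dense in $2^\omega$ and of exact complexity $\bG$, so Corollary \ref{corollary_h-homogeneously_embedded} applies to it directly and yields that $\Ss$ — hence $X$ — is strongly homogeneous. Thus strong homogeneity of these particular semifilters is deduced from Corollary \ref{corollary_h-homogeneously_embedded}, not assumed; and conversely strongly homogeneous implies homogeneous by Corollary \ref{corollary_strongly_homogeneous}, so in the high-complexity regime $(1),(2),(3)$ are equivalent, with the low-complexity regime handled separately as above.

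In summary, the proof is an assembly: Theorem \ref{theorem_homogeneous_spaces_have_good_complexity} to split off low complexity, then in the high-complexity case Proposition \ref{proposition_homogeneous_dichotomy} plus Corollaries \ref{corollary_meager_semifilter}, \ref{corollary_baire_semifilter}, Theorems \ref{theorem_existence_negative_meager}, \ref{theorem_uniqueness_embeddings} to produce a matching semifilter and conclude $X\approx$ that semifilter, and Corollary \ref{corollary_h-homogeneously_embedded} to upgrade to strong homogeneity; and in the low-complexity Borel case a direct appeal to van Engelen's list. The only delicate bookkeeping is verifying that the hypotheses of the various existence and non-existence corollaries partition the possible $(\ell(\bG),t(\bG),\text{separation})$ triples so that the Baire category of the produced semifilter is forced to agree with that of $X$ — this is exactly the content of the table in the introduction, and it is where Theorem \ref{theorem_separation_generalized} and Corollary \ref{corollary_type_transfer_expansion} / Proposition \ref{proposition_type_basic} are used.
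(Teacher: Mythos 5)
Your proposal is correct and follows essentially the same route as the paper's proof: split off the $\Delta(\Diff_\omega(\bS^0_2))$ case by citing the known Borel classification, and in the high-complexity case combine Theorem \ref{theorem_homogeneous_spaces_have_good_complexity}, Corollary \ref{corollary_h-homogeneously_embedded} for strong homogeneity, and the existence results (Corollaries \ref{corollary_meager_semifilter} and \ref{corollary_baire_semifilter}) filtered through the negative results (Theorem \ref{theorem_existence_negative_meager}, Corollary \ref{corollary_existence_negative_baire}) and Theorem \ref{theorem_uniqueness_embeddings} to match Baire category and conclude $X$ is homeomorphic to a semifilter. The only cosmetic difference is that for $(1)\rightarrow(3)$ in the low-complexity case the paper cites a specific result of \cite{medini_semifilters} rather than re-deriving it from van Engelen's list of famous spaces.
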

\begin{proof}
The implication $(2)\rightarrow (1)$ is given by Corollary \ref{corollary_strongly_homogeneous}, while the implication $(3)\rightarrow (1)$ is given by Corollary \ref{corollary_semifilter_homogeneous}. Next, assume that $X$ is a $\bD(\Diff_\omega(\bS^0_2))$ space. For the implication $(1)\rightarrow (2)$, see \cite[Corollary 8.4]{van_engelen_ambiguous} or \cite[Theorems 3.4.13 and 3.6.2]{van_engelen_thesis}. For the implication $(1)\rightarrow (3)$, see \cite[Theorem 8.4]{medini_semifilters}.

From now on, assume that $(1)$ holds and that $X$ is not a $\bD(\Diff_\omega(\bS^0_2))$ space. By Theorem \ref{theorem_homogeneous_spaces_have_good_complexity}, there exists a good Wadge class $\bG\subseteq\bS(\omega^\omega)$ such that $X$ has exact complexity $\bG$. It follows from Corollary \ref{corollary_h-homogeneously_embedded} that $(2)$ holds. Finally, we will show that $(3)$ holds. If $X$ is meager then, using Corollary \ref{corollary_existence_negative_baire}, one sees that $\bG$ must satisfy the requirement of Corollary \ref{corollary_meager_semifilter}, which guarantees the existence of a meager semifilter of exact complexity $\bG$. By a similar argument, if $X$ is Baire then Corollary \ref{corollary_baire_semifilter} and Theorem \ref{theorem_existence_negative_meager} yield a Baire semifilter of exact complexity $\bG$. In either case, an application of Theorem \ref{theorem_uniqueness_embeddings} will conclude the proof.
\end{proof}

\section{Filters}\label{section_filters}

\subsection{Closure under squares}\label{subsection_filters_squares}

We begin by discussing a useful closure property, which first appeared in \cite{van_engelen_ideals} in the case $Z=2^\omega$, under the name of ``closure under products.''

\begin{definition}
Let $Z$ be a space, let $2\leq n<\omega$, and let $\bG\subseteq\PP(\omega^\omega)$. We will say that $\bG(Z)$ is \emph{closed under $n$-th powers} if $A^n\in\bG(Z^n)$ for every $A\in\bG(Z)$. When $n=2$, we will simply say that $\bG(Z)$ is \emph{closed under squares}.
\end{definition}

\begin{proposition}\label{proposition_characterization_squares}
Let $\bS$ be a nice topological pointclass, and assume that $\Det(\bS(\omega^\omega))$ holds. Let $\bG\in\NSDS(\omega^\omega)$. Then the following conditions are equivalent:
\begin{enumerate}
\item There exists an uncountable zero-dimensional Polish space $Z$ such that $\bG(Z)$ is closed under squares,
\item $\bG(Z)$ is closed under squares for every zero-dimensional Polish space $Z$,
\item There exists an uncountable zero-dimensional Polish space $Z$ such that $\bG(Z)$ is closed under $n$-th powers whenever $2\leq n<\omega$,
\item $\bG(Z)$ is closed under $n$-th powers whenever $Z$ is a zero-dimensional Polish space $Z$ and $2\leq n<\omega$.
\end{enumerate}
\end{proposition}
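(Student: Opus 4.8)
The plan is to show that, for an uncountable zero-dimensional Polish space $Z$, the class $\bG(Z)$ is closed under squares if and only if it is closed under binary intersections. The point of this reformulation is that closure under binary intersections is manifestly an inclusion between relativizable pointclasses, so it transfers automatically between spaces; all four conditions will then follow. The implications $(4)\rightarrow(2)$ (take $n=2$), $(4)\rightarrow(3)$, $(2)\rightarrow(1)$ and $(3)\rightarrow(1)$ are trivial, so the content lies entirely in $(1)\rightarrow(4)$.

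The first step is to prove that if $Z_0$ is uncountable zero-dimensional Polish and $\bG(Z_0)$ is closed under squares, then $\bG(Z_0)$ is closed under binary intersections. By Theorem \ref{theorem_relativization_uncountable}, $\bG(Z_0)$ is a non-selfdual Wadge class in $Z_0$, so we may fix a $\bG(Z_0)$-complete set $A_0$; closure under squares gives $A_0\times A_0\in\bG(Z_0^2)$. Given $C,C'\in\bG(Z_0)$, completeness of $A_0$ yields continuous $f,f'\colon Z_0\to Z_0$ with $C=f^{-1}[A_0]$ and $C'=f'^{-1}[A_0]$, so that $C\cap C'=g^{-1}[A_0\times A_0]$, where $g\colon Z_0\to Z_0^2$, $g(z)=(f(z),f'(z))$, is continuous. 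Since $\bG(\cdot)$ is closed under continuous preimages (Lemma \ref{lemma_relativization_basic}), $C\cap C'\in\bG(Z_0)$. (The converse holds as well, since $A^2=p_1^{-1}[A]\cap p_2^{-1}[A]$ for the projections $p_i\colon Z^2\to Z$, and $Z^2$ is again uncountable zero-dimensional Polish; but only the direction just proved is needed below.)

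The second step handles the transfer. By Corollary \ref{corollary_new_from_old} (ignoring the sets $U_{2k+1}$), combining two copies of $\bG$ according to the Hausdorff operation of binary intersection yields a non-selfdual Wadge class $\bG\sqcap\bG=\{A\cap A':A,A'\in\bG\}$; moreover $\bG\sqcap\bG\subseteq\bS(\omega^\omega)$ because intersection is a finite Boolean operation and $\bool\bS=\bS$, so $\bG\sqcap\bG\in\NSDS(\omega^\omega)$. A routine application of Lemma \ref{lemma_relativization_subspace} (after embedding the ambient spaces into $\omega^\omega$) gives $(\bG\sqcap\bG)(W)=\{C\cap C':C,C'\in\bG(W)\}$ for every zero-dimensional Borel space $W$; thus ``$\bG(W)$ is closed under binary intersections'' is exactly ``$(\bG\sqcap\bG)(W)\subseteq\bG(W)$''. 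By Theorem \ref{theorem_order_isomorphism} this inclusion holds in $Z_0$ if and only if it holds in $\omega^\omega$, and once it holds in $\omega^\omega$ it holds in every zero-dimensional Borel space by another application of Lemma \ref{lemma_relativization_subspace}.

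Putting this together: starting from $(1)$, the first step makes $\bG(Z_0)$ closed under binary intersections, and the second step propagates this to $\bG(W)$ for every zero-dimensional Borel $W$. Given any zero-dimensional Polish $Z$ and $n\ge 2$, the space $Z^n$ is zero-dimensional Borel, so $\bG(Z^n)$ is closed under binary --- hence, by induction, $n$-fold --- intersections; writing $A^n=\bigcap_{i=1}^n p_i^{-1}[A]$ for the coordinate projections $p_i\colon Z^n\to Z$, each $p_i^{-1}[A]\in\bG(Z^n)$ by Lemma \ref{lemma_relativization_basic}, hence so does $A^n$. This is $(4)$. The step I expect to require the most care is the first one: ``closure under squares'' is a priori a statement relating the two spaces $Z$ and $Z^2$ and does not obviously fit the relativization framework, and it is the passage through a complete set (together with the map $z\mapsto(f(z),f'(z))$) that converts it into the transferable property of closure under binary intersections.
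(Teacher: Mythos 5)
Your proof is correct, but it takes a genuinely different route from the paper's. The paper proves $(1)\rightarrow(2)$ by direct relativization: given $W\subseteq Z$, it extends $A\in\bG(W)$ to $\widetilde{A}\in\bG(Z)$ via Lemma \ref{lemma_relativization_subspace}, writes $A\times A=(\widetilde{A}\times\widetilde{A})\cap(W\times W)$, and restricts back down; it then gets $(2)\rightarrow(4)$ by iterating squares to obtain $A^{m}\in\bG(Z^{m})$ for $m=2^k\geq n$ and cutting $Z^{m}$ down to the slice $\{z:z(i)=x\text{ for }n\leq i<m\}$ (for a fixed $x\in A$), which is homeomorphic to $Z^n$. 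Your reformulation of closure under squares as closure under binary intersections --- via a complete set $A_0$ and the map $z\mapsto(f(z),f'(z))$ --- is the key departure: it converts the two-space statement into the single inclusion $\bG\sqcap\bG\subseteq\bG$ between classes in $\NSDS(\omega^\omega)$, which Theorem \ref{theorem_order_isomorphism} and Lemma \ref{lemma_relativization_subspace} then transport for free, and it makes arbitrary $n$-th powers immediate as $\bigcap_{i<n}p_i^{-1}[A]$, with no need for the power-of-two trick. The price is that you invoke the heavier machinery behind Corollary \ref{corollary_new_from_old} (universal sets and Hausdorff operations) just to certify that $\bG\sqcap\bG$ is a non-selfdual Wadge class, where the paper's argument needs only the relativization lemmas; in exchange, your argument isolates a reusable equivalence (squares versus binary intersections) and handles all powers with less bookkeeping.
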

\begin{proof}
To see that $(1)\rightarrow (2)$, fix an uncountable zero-dimensional Polish space $Z$ such that $\bG(Z)$ is closed under squares. Let $W$ be a zero-dimensional Polish space and pick $A\in\bG(W)$. Since $Z$ is uncountable, we can assume without loss of generality that $W$ is a subspace of $Z$. By Lemma \ref{lemma_relativization_subspace}, there exists $\widetilde{A}\in\bG(Z)$ such that $A=\widetilde{A}\cap W$. Since $A\times A=(\widetilde{A}\times\widetilde{A})\cap (W\times W)$ and $\widetilde{A}\times\widetilde{A}\in\bG(Z)$, a further application of Lemma \ref{lemma_relativization_subspace} shows that $A\times A\in\bG(W)$, as desired.

The proof of the implication $(3)\rightarrow (4)$ is similar to the one that we have just given, so we will omit it. The implications $(2)\rightarrow (1)$ and $(4)\rightarrow (3)\rightarrow (2)$ are trivial. To conclude the proof, we will show that $(2)\rightarrow (4)$. So assume that $(2)$ holds. Pick a zero-dimensional Polish space $Z$, pick $A\in\bG(Z)$ and $2\leq n<\omega$. Assume without loss of generality that $A\neq\varnothing$, and fix $x\in A$. Using $(2)$, it is easy to realize that $A^m\in\bG(Z^m)$ whenever $m=2^k$ for some $k\in\omega$. Now fix $k\in\omega$ such that $2^k\geq n$, then set $m=2^k$. Define
$$
W=\{z\in Z^m:z(i)=x\text{ whenever }n\leq i<m\},
$$
and observe that $A^m\cap W\in\bG(W)$ by Lemma \ref{lemma_relativization_subspace}. Define $h:Z^n\longrightarrow W$ by setting $h(z_0,\ldots,z_{n-1})=(z_0,\ldots,z_{n-1},x,\ldots,x)$, and observe that $h$ is a homeomorphism. It follows that
$$
A^n=h^{-1}[A^m\cap W]\in\bG(Z^n)
$$
by Lemma \ref{lemma_relativization_basic}, as desired.
\end{proof}

\subsection{The topological classification of filters}\label{subsection_filters_classification}

In this subsection, we will give the promised classification of filters up to homeomorphism (see Theorem \ref{theorem_classification_filters}). Naturally, this classification will build on our previous results. The new existence result that we will need is Theorem \ref{theorem_existence_filter}, whose proof is taken almost verbatim from the proof of \cite[Lemma 3.3]{van_engelen_ideals}. However, as in the statement of Theorem \ref{theorem_meager_semifilter}, we have isolated more clearly the required closure properties. More importantly, in the proof of Theorem \ref{theorem_classification_filters}, we will use the results of \S\ref{subsection_wadge_closure_main}.

\begin{theorem}\label{theorem_existence_filter}
Let $\bS$ be a nice topological pointclass, and assume that $\Det(\bS(\omega^\omega))$ holds. Let $\bG\subseteq\bS(\omega^\omega)$ be a good Wadge class in $\omega^\omega$ that is closed under squares and closed under $\SU_1$. Then there exists a filter of exact complexity $\bG$.
\end{theorem}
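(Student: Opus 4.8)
The plan is to imitate the proof of Theorem~\ref{theorem_meager_semifilter}, upgrading the semiideal produced there to a genuine ideal (one closed under finite unions), and then to pass to the complement in order to obtain a filter. Concretely, I would keep the set-up of Theorem~\ref{theorem_meager_semifilter}: let $\Omega=2^{<\omega}$, $Z=2^\Omega$, let $K=\{\{x\re n:n\in\omega\}:x\in 2^\omega\}$ be the (compact, crowded) set of branches, and recall that $\widehat{K}=\{z\in Z:z\text{ is a chain}\}$ has the finite union property. Fix a dense $X\subseteq K$ with $X\wc=\bG(K)$, set $Y=\widehat{X}$ (the downward closure of $X$ in $Z$), so that $Y$ is downward-closed, $Y\in\bG(Z)$, and $Y\wc=\bG(Z)$, exactly as in Claim~2 of that proof. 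I would then define
$$
\II=\Big\{\bigcup_{i<k}y_i\cup e:k\geq 1,\ y_i\in Y\text{ for }i<k,\ e\in\Fin(\Omega)\Big\},
$$
that is, the ideal on $\Omega$ generated by $Y\cup\Fin(\Omega)$. That $\II$ is an ideal is routine: downward-closedness and closure under finite modifications follow from the corresponding properties of $Y$ precisely as for $\RR$ in Theorem~\ref{theorem_meager_semifilter}; closure under finite unions is built into the definition; and $\Omega\notin\II$ because any member of $\II$ is contained in a union of finitely many chains together with a finite set, hence is not cofinite by the finite union property of $\widehat{K}$. Once $\II$ is shown to have exact complexity $\bG$, the complement homeomorphism $c$ of \S\ref{subsection_preliminaries_filters} turns it into the filter $c[\II]\approx\II$, which has exact complexity $\bG$ as well (exact complexity being a topological invariant) and which is automatically meager by Corollary~\ref{corollary_filters_meager}; this would complete the argument.

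The bulk of the work is thus to prove $\II\wc=\bG(Z)$, and here the only genuinely new point relative to Theorem~\ref{theorem_meager_semifilter} is controlling the complexity of finite unions of chains. I would write $\II=\bigcup_{n\in\omega}A_n$, where each $A_n$ collects the members of $\II$ that are a union of at most $k$ elements of $Y$ modified by a fixed finite set $e$ (for a suitable enumeration of pairs $(k,e)$), arrange as in Theorem~\ref{theorem_meager_semifilter} that each $A_n$ is a closed subset of $\II$, and then invoke Lemma~\ref{lemma_closed_decomposition} together with the hypothesis that $\bG$ is closed under $\SU_1$ (via Lemma~\ref{lemma_preservation_closure_su_under_relativization}) to conclude $\II\in\bG(Z)$; finally, since $\II\cap K=X$, Lemmas~\ref{lemma_closure_closed_open} and~\ref{lemma_closed_subset} give $\II\wc=\bG(Z)$, exactly as at the end of the proof of Theorem~\ref{theorem_meager_semifilter}. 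The heart of the matter is showing that each such $A_n$ belongs to $\bG(Z)$. For this I would use Proposition~\ref{proposition_characterization_squares}: closure of $\bG$ under squares is equivalent to $\bG(Z^k)$ containing $Y^k$ for every $k$. A point of $Z$ which is a union of at most $k$ chains admits a canonical (greedy) decomposition into at most $k$ chains, and because $Y$ is downward-closed this decomposition lands in $Y^k$ whenever the original set belongs to $A_n$; moreover the set of ``unions of at most $k$ chains'' is closed in $Z$ (its complement is the open condition ``there exist $k+1$ pairwise incomparable nodes''), so intersecting with it and transporting $Y^k$ along the decomposition map — whose low complexity, combined with the closure properties of the good class $\bG(Z)$ from Corollary~\ref{corollary_closure_good} and Lemma~\ref{lemma_good_relativization}, does the job — should yield $A_n\in\bG(Z)$.

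I expect the main obstacle to be exactly this last step: the union map $Z^k\longrightarrow Z$ is far from injective, so one cannot simply pull back $Y^k$, and making the greedy chain-decomposition rigorous (and checking that it interacts correctly with $Y$ and with the finite modifications) is the only delicate computation. Everything else requires only that $\bG$ be good and closed under $\SU_1$; ``closed under squares'' enters solely to handle finite unions. This is the computation carried out in detail in the proof of \cite[Lemma 3.3]{van_engelen_ideals}, from which our proof would be adapted almost verbatim.
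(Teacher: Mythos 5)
Your overall architecture coincides with the paper's: same setup ($\Omega=2^{<\omega}$, $Z=2^\Omega$, $K$, $Y=\widehat{X}$), the same ideal $\II$ generated by $Y\cup\Fin(\Omega)$, closure under squares entering only via Proposition \ref{proposition_characterization_squares} to get $Y^n\in\bG(Z^n)$, closure under $\SU_1$ entering via Lemma \ref{lemma_closed_decomposition} to reassemble $\II$ from countably many closed pieces, and the endgame via Lemmas \ref{lemma_closure_closed_open} and \ref{lemma_closed_subset}. The checks that $\II$ is an ideal and the passage to a filter via the complement homeomorphism are also fine.

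The gap is exactly at the step you yourself flag as the heart of the matter: showing that each piece of $\II$ lies in $\bG(Z)$. Your proposed mechanism --- a single ``greedy'' decomposition map $D$ from the closed set of unions of at most $k$ chains into $Z^k$, followed by pulling back $Y^k$ --- does not work as stated, because $D$ is not continuous: the number of maximal chains of $z$ and the assignment of which chain comes first are not locally constant, so $D$ is at best $\bS^0_2$-measurable, and then $D^{-1}[Y^k]$ only lands in the expansion $\bG^{(1)}(Z)$, not in $\bG(Z)$. The paper's device (following van Engelen) is to refine the decomposition of $\II$ further: for each $n$ and each tuple $\vec{s}\in M_n$ of \emph{distinct nodes of the same length}, one considers only those unions $\phi_n[F_{\vec{s}}\cap Y^n]$ where the $i$-th chain lies entirely above $\vec{s}(i)$. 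On the compact set $F_{\vec{s}}$ the union map $\phi_n$ \emph{is} injective, hence an embedding by compactness, so $F_{\vec{s}}\cap Y^n\in\bG(Z^n)$ (by Lemma \ref{lemma_closure_closed_open}) transports forward to $\phi_n[F_{\vec{s}}\cap Y^n]\in\bG(Z)$ via Proposition \ref{proposition_characterization_complexity} --- no decomposition map is ever needed. One then checks combinatorially (the paper's Claims 2 and 3) that these countably many sets, translated by the $\psi_e$, are closed in $\II$ and exhaust it, which is where the ``split each chain at a level separating the branches'' argument you gesture at actually lives. So the missing idea is the partition of the finite unions according to separating tuples $\vec{s}\in M_n$, which converts the non-injective union map into countably many embeddings of compact sets; without it, the complexity computation for the pieces does not go through in the class $\bG$ itself.
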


\begin{proof}
The setup of this proof is exactly the same as the setup of the proof of Theorem \ref{theorem_meager_semifilter}. We repeat it here for convenience, but we will not be as thorough with some of the subsequent details. Recall the following definitions:
\begin{itemize}
\item $\Omega=2^{<\omega}$,
\item $Z=2^\Omega$,
\item $K=\{\{x\re n:n\in\omega\}:x\in 2^\omega\}$,
\item $\widehat{S}=\{z\in Z:z\subseteq z'\text{ for some }z'\in S\}$ whenever $S\subseteq K$,
\item $\ddot{S}=\{z\in\widehat{S}:z\text{ is infinite}\}$ whenever $S\subseteq K$,
\item $Z_e=\{z\in Z:z\cap e=\varnothing\}$ whenever $e\in\Fin(\Omega)$.
\end{itemize}
Fix $X\subseteq K$ such that $\bG(K)=X\wc$, and assume without loss of generality that $X$ is dense in $K$. Notice that $\bG(Z)=X\wc$, and that $\bG(Z)$ is a good Wadge class. Set $Y=\widehat{X}$. For the remainder of this proof, we will use $\leq$ exclusively to denote Wadge-reduction in $Z$. 

\noindent\textbf{Claim 1.} $Y\in\bG(Z)$.

\noindent\textit{Proof of Claim 1.} This is proved like Claim 2 in the proof of Theorem \ref{theorem_meager_semifilter}. $\blacksquare$

Given $1\leq n<\omega$, $s\in\Omega$ and $\vec{s}\in\Omega^n$, we will use the following notation:
\begin{itemize}
\item Define $\phi_n:(2^\Omega)^n\longrightarrow 2^\Omega$ by setting $\phi_n(z_0,\ldots, z_{n-1})=z_0\cup\cdots\cup z_{n-1}$,
\item $M_n=\{\vec{s}\in\Omega^n:|\vec{s}(i)|=|\vec{s}(j)|\text{ for all }i,j<n\text{ and }\vec{s}(i)\neq\vec{s}(j)\text{ if }i\neq j\}$,
\item $F_s=\{z\in\widehat{K}: s\subseteq t\text{ for every }t\in z\}$,
\item $F_{\vec{s}}=\prod_{i< n}F_{\vec{s}(i)}$.
\end{itemize}
Observe that each $\phi_n$ is continuous, and that each $F_s$ is compact. Set
$$
Y'=\bigcup_{1\leq n<\omega}\bigcup_{\vec{s}\in M_n}\phi_n[F_{\vec{s}}\cap Y^n],
$$
then define
$$
\II=\{y\cup e:y\in Y'\text{ and }e\in\Fin(\Omega)\}.
$$

\noindent\textbf{Claim 2.} $\II$ is an ideal on $\Omega$.

\noindent\textit{Proof of Claim 2.} It is trivial to check that $\varnothing\in\II$, while $\Omega\notin\II$ because $K$ has the finite union property. Next, we will show that $\II$ is downward-closed. Define $\theta:\ddot{K}\longrightarrow 2^\omega$ by setting $\theta(z)=\bigcup z$ for $z\in\ddot{K}$. Pick
$$
z=y_0\cup\cdots\cup y_{n-1}\cup e,
$$
where $1\leq n<\omega$, $(y_0,\ldots, y_{n-1})\in F_{\vec{s}}\cap Y^n$ for some $\vec{s}\in M_n$, and $e\in\Fin(\Omega)$. Let $z'\subseteq z$, and assume without loss of generality that $z'\notin\Fin(\Omega)$. Write
$$
z'=y'_0\cup\cdots\cup y'_{m-1}\cup e',
$$
where $1\leq m\leq n$, each $y'_i\in\ddot{X}$, $\theta(y'_i)\neq\theta(y'_j)$ whenever $i\neq j$, and $e'\in\Fin(\Omega)$. In particular, it is possible to find $k\in\omega$ big enough so that $\theta(y_i)\re k\neq\theta(y_j)\re k$ whenever $i\neq j$. Define $\vec{t}:m\longrightarrow\Omega$ by setting $\vec{t}(i)=\theta(y_i)\re k$ for $i<m$, and observe that $\vec{t}\in M_m$. Notice that setting $y''_i=y'_i\setminus 2^{<k}$ for $i<m$ yields $(y''_0,\ldots, y''_{m-1})\in F_{\vec{t}}\cap Y^n$. Also set $e''=e'\cup\bigcup_{i<m}(y'_i\cap 2^{<k})$. It is clear that 
$$
z'=y''_0\cup\cdots\cup y''_{m-1}\cup e'',
$$
which shows that $z'\in\II$, as desired. Using a similar argument, one sees that $\II$ is closed under finite unions. $\blacksquare$

By Claim 2, to conclude the proof, it will be enough to show that $\II$ has exact complexity $\bG$. This will follow from Claim 5 and Corollary \ref{corollary_characterization_exact_complexity}. The following two claims will be used in the proof of Claim 5.

\noindent\textbf{Claim 3.} $\phi_n[F_{\vec{s}}\cap Y^n]$ is closed in $\II$ whenever $1\leq n<\omega$ and $\vec{s}\in M_n$.

\noindent\textit{Proof of Claim 3.} Let $1\leq n<\omega$ and $\vec{s}\in M_n$. It will be enough to show that
$$
\phi_n[F_{\vec{s}}\cap Y^n]=\phi_n[F_{\vec{s}}]\cap\II.
$$
The inclusion $\subseteq$ is clear by the definitions of $Y'$ and $\II$. In order to prove the other inclusion, pick $\vec{x}\in F_{\vec{s}}$ such that $\bigcup_{i<n}\vec{x}(i)\in\II$. We will show that each $\vec{x}(i)\in Y$. So pick $i<n$. Since $Y\cap\Fin(\Omega)=\widehat{K}\cap\Fin(\Omega)$ by the density of $X$ in $K$, we can assume without loss of generality that $\vec{x}(i)$ is infinite. Let $1\leq m<\omega$, $\vec{t}\in M_m$, $\vec{y}\in F_{\vec{t}}\cap Y^n$ and $e\in\Fin(\Omega)$ be such that
$$
\bigcup_{j<n}\vec{x}(j)=\bigcup_{j<m}\vec{y}(j)\cup e.
$$
Pick $j<m$ such that $\vec{x}(i)\cap\vec{y}(j)$ is infinite. Since $\vec{x}(i),\vec{y}(j)\in\widehat{K}$, this means that there exists $z\in K$ such that $\vec{x}(i),\vec{y}(j)\subseteq z$. However, the fact that $\vec{y}(j)\in Y$ implies that $z\in X$. Hence $\vec{x}(i)\in Y$ as well. $\blacksquare$

\noindent\textbf{Claim 4.} $\phi_n[F_{\vec{s}}\cap Y^n]\in\bG(Z)$ whenever $1\leq n<\omega$ and $\vec{s}\in M_n$.

\noindent\textit{Proof of Claim 4.} Begin by observing that $\phi_n\re F_{\vec{s}}$ is injective whenever $1\leq n<\omega$ and $\vec{s}\in M_n$. Therefore, each $\phi_n\re F_{\vec{s}}$ is an embedding by compactness. Notice that $Y^n\in\bG(Z^n)$ whenever $1\leq n<\omega$ by Claim 1 and Proposition \ref{proposition_characterization_squares}, since $\bG$ is closed under squares. Furthermore, each $\bG(Z^n)$ is a good Wadge class by Lemma \ref{lemma_good_relativization}. It follows that each $F_{\vec{s}}\cap Y^n\in\bG(Z^n)$ by Lemma \ref{lemma_closure_closed_open}, hence each $\phi_n[F_{\vec{s}}\cap Y^n]\in\bG(Z)$ by Proposition \ref{proposition_characterization_complexity}.

\noindent\textbf{Claim 5.} $\II\wc=\bG(Z)$.

\noindent\textit{Proof of Claim 5.} Define $\psi_e:Z\longrightarrow Z$ for $e\in\Fin(\Omega)$ by setting $\psi_e(x)=x\cup e$ for $x\in Z$. It is clear that each $\psi_e$ is continuous, hence closed by compactness. Since each $\psi_e\re Z_e$ is injective, it is an embedding by compactness. Furthermore, each $\psi_e\re\II:\II\longrightarrow\II$ is closed by Lemma \ref{lemma_closed_function}. Let $\{A_k:k\in\omega\}$ be an enumeration of
$$
\{\psi_e[\phi_n[F_{\vec{s}}\cap Y^n]\cap Z_e]:1\leq n<\omega,\vec{s}\in M_n\text{ and }e\in\Fin(\Omega)\}.
$$
Using Claim 4, Lemma \ref{lemma_closure_clopen} and Proposition \ref{proposition_characterization_complexity}, one sees that each $A_k\in\bG(Z)$. Furthermore, each $A_k\in\bP^0_1(\II)$ by Claim 3. Since $\II=\bigcup_{k\in\omega}A_k$, it follows from Lemmas \ref{lemma_preservation_closure_su_under_relativization} and \ref{lemma_closed_decomposition} that $\II\in\bG(Z)$. To complete the proof, simply observe that $\II\cap K=X$, then apply Lemmas \ref{lemma_closure_closed_open} and \ref{lemma_closed_subset}. $\blacksquare$
\end{proof}

Now we have all the tools needed to completely classify filters up to homeomorphism. For the classification of filters that are $\Delta(\Diff_\omega(\bS^0_2))$ spaces, see \cite[Lemmas 2.9 and 3.3]{van_engelen_ideals}. On the other hand, by Theorem \ref{theorem_homogeneous_spaces_have_good_complexity}, the following result takes care of every filter that is not a $\Delta(\Diff_\omega(\bS^0_2))$ space. We also remark that the filter of exact complexity $\bG$ mentioned below is unique up to homeomorphism by Corollary \ref{corollary_filters_meager} and Theorem \ref{theorem_uniqueness_embeddings}.

\begin{theorem}\label{theorem_classification_filters}
Let $\bS$ be a nice topological pointclass, and assume that $\Det(\bS(\omega^\omega))$ holds. Let $\bG\subseteq\bS(\omega^\omega)$ be a good Wadge class in $\omega^\omega$. Then the following conditions are equivalent:
\begin{enumerate}
\item There exists a filter of exact complexity $\bG$,
\item $\bG$ is closed under squares and either $\ell(\bG)\geq 2$, $(\ell(\bG)=1\text{ and }t(\bG)=3)$, or $(\ell(\bG)=1\text{ and }\bG\text{ does not have the separation property})$.
\end{enumerate}
\end{theorem}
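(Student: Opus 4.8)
The plan is to prove the two implications separately, leaning on Theorem~\ref{theorem_existence_filter} for the construction in one direction and on Corollaries~\ref{corollary_filters_meager}, \ref{corollary_semifilter_homogeneous} and \ref{corollary_existence_negative_baire} for the obstruction in the other. The bulk of the real work has already been done; this theorem is essentially an assembly of earlier results, with only mild bookkeeping needed to match hypotheses.

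For $(2)\rightarrow(1)$, assume that $\bG$ is closed under squares and satisfies one of the three listed alternatives. First I would observe that, exactly as in the proof of Corollary~\ref{corollary_meager_semifilter}, each of these alternatives forces $\bG$ to be closed under $\SU_1$: the case $\ell(\bG)\geq 2$ is immediate from the definition of level since $\SU_1(\bG)\subseteq\PU_2(\bG)$; the case $\ell(\bG)=1$ together with failure of the separation property is Theorem~\ref{theorem_closure_main_separation}; and the case $\ell(\bG)=1$ together with $t(\bG)=3$ is Theorem~\ref{theorem_closure_main_type}. Now $\bG$ is a good Wadge class in $\omega^\omega$ that is closed under squares and under $\SU_1$, so Theorem~\ref{theorem_existence_filter} directly yields a filter of exact complexity $\bG$.

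For $(1)\rightarrow(2)$, let $\FF$ be a filter of exact complexity $\bG$, which we may regard as a (necessarily dense, since $\Cof(\omega)\subseteq\FF$) subspace of $2^\omega$; by Corollary~\ref{corollary_characterization_exact_complexity} we then have $\FF\wc=\bG(2^\omega)$. To see that $\bG$ is closed under squares, note that $\FF\times\FF\approx\FF$ by Theorem~\ref{theorem_filters_square}, so $\FF\times\FF$ also has exact complexity $\bG$ and, being a $\bG$ space embedded in the zero-dimensional Polish space $2^\omega\times 2^\omega$, lies in $\bG(2^\omega\times 2^\omega)$ by Proposition~\ref{proposition_characterization_complexity}. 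Given any $A\in\bG(2^\omega)=\FF\wc$, fix a continuous $f:2^\omega\longrightarrow 2^\omega$ with $A=f^{-1}[\FF]$; then $A\times A=(f\times f)^{-1}[\FF\times\FF]\in\bG(2^\omega\times 2^\omega)$ by Lemma~\ref{lemma_relativization_basic}. Thus $\bG(2^\omega)$ is closed under squares, and Proposition~\ref{proposition_characterization_squares} upgrades this to the assertion that $\bG$ is closed under squares.

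It remains to exclude the configuration $\ell(\bG)=1$, $t(\bG)\in\{1,2\}$, and $\bG$ has the separation property. Since $\bG$ is good we already know $\ell(\bG)\geq 1$, and since $\bG\supseteq\Delta(\Diff_\omega(\bS^0_2(\omega^\omega)))$ we have $\bG\neq\{\varnothing\}$ and $\bG\neq\{\omega^\omega\}$, hence $t(\bG)\neq 0$ by Proposition~\ref{proposition_type_basic}; therefore the three alternatives in $(2)$ together with this excluded configuration exhaust all cases, so it suffices to derive a contradiction from the excluded one. As $\bG\subseteq\bS(\omega^\omega)$ and $\FF$ has exact complexity $\bG$, the space $\FF$ is a $\bS$ subspace of $2^\omega$, so $\FF$ is meager in $2^\omega$ by Corollary~\ref{corollary_filters_meager}; in particular $\FF$ is a meager space. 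But $\FF$ is a zero-dimensional homogeneous space of exact complexity $\bG$ by Corollary~\ref{corollary_semifilter_homogeneous}, so if $\bG$ were in the excluded configuration, Corollary~\ref{corollary_existence_negative_baire} would force $\FF$ to be Baire, a contradiction. This completes the proof.

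\textbf{Main obstacle.} The one genuinely hard input is Theorem~\ref{theorem_existence_filter}, where the nontrivial point is engineering the ideal $\II$ so that it is closed under finite unions while its Wadge complexity is exactly $\bG$; but we are entitled to assume that result. Given it, the only care required is verifying that each alternative in $(2)$ matches the hypotheses of the appropriate closure theorem, and that the excluded configuration is precisely the complement (within good Wadge classes) of the three alternatives, so that the meager-versus-Baire dichotomy supplied by Corollaries~\ref{corollary_filters_meager} and \ref{corollary_existence_negative_baire} closes the argument.
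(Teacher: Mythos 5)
Your proposal is correct and follows essentially the same route as the paper: the same reduction of $(2)\rightarrow(1)$ to closure under $\SU_1$ via Theorems \ref{theorem_closure_main_separation} and \ref{theorem_closure_main_type} followed by Theorem \ref{theorem_existence_filter}, and the same square-closure argument via Theorem \ref{theorem_filters_square} and Proposition \ref{proposition_characterization_squares} for $(1)\rightarrow(2)$. The only cosmetic difference is that the paper disposes of the excluded configuration by citing the packaged classification (Theorem \ref{theorem_classification_homogeneous}) together with Corollary \ref{corollary_filters_meager}, whereas you unpack this by invoking Corollary \ref{corollary_existence_negative_baire} directly and checking case-exhaustiveness by hand; these amount to the same argument.
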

\begin{proof}
In order to prove the implication $(1)\rightarrow (2)$, let $\FF$ be a filter of exact complexity $\bG$. Notice that $\FF$ is meager by Corollary \ref{corollary_filters_meager}. Therefore, by Theorem \ref{theorem_classification_homogeneous}, it will be enough to show that $\bG$ is closed under squares. By Proposition \ref{proposition_characterization_squares}, it will be enough to show that $\bG(2^\omega)$ is closed under squares. So pick $A\in\bG(2^\omega)$, and observe that $A\leq\FF$ in $2^\omega$ by Corollary \ref{corollary_characterization_exact_complexity}. This trivially implies that $A\times A\leq\FF\times\FF$ in $2^\omega\times 2^\omega$. Since $\FF\times\FF\in\bG(2^\omega\times 2^\omega)$ by Theorem \ref{theorem_filters_square} and Proposition \ref{proposition_characterization_complexity}, it follows that $A\times A\in\bG(2^\omega\times 2^\omega)$, as desired. Now assume that $\bG$ satisfies condition $(2)$. As in the proof of Corollary \ref{corollary_meager_semifilter}, one sees that $\bG$ is closed under $\SU_1$. Therefore, condition $(1)$ holds by Theorem \ref{theorem_existence_filter}.
\end{proof}

\subsection{A purely topological characterization of filters}\label{subsection_filters_characterization}

As we discussed in \S\ref{subsection_preliminaries_filters} and \S\ref{subsection_preliminaries_homogeneity}, the combinatorial structure of filters imposes strong constraints on their topology. But is it possible to go the other way? In other words, are there natural conditions on a space $X$ which will guarantee that $X$ is homeomorphic to a filter? Van Engelen showed that, in the Borel context, this question has a very elegant positive answer (see \cite[Theorem 3.4]{van_engelen_ideals}). The following result shows that, under suitable determinacy assumptions, his characterization extends beyond the Borel realm.

\begin{theorem}\label{theorem_characterization_filters}
Let $\bS$ be a nice topological pointclass, and assume that $\Det(\bS(\omega^\omega))$ holds. Let $X$ be a zero-dimensional $\bS$ space that is not locally compact. Then the following conditions are equivalent:
\begin{enumerate}
\item $X$ is homeomorphic to a filter,
\item $X$ is homogeneous, meager, and homeomorphic to $X^2$.
\end{enumerate}
\end{theorem}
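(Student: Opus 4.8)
The plan is to prove the two implications separately, with the forward direction being essentially a repackaging of facts already established. For $(1)\rightarrow(2)$: if $X$ is homeomorphic to a filter $\FF$, then $\FF$ is homogeneous (as noted in \S\ref{subsection_preliminaries_filters}, every filter is a topological group, hence homogeneous), $\FF$ is meager by Corollary \ref{corollary_filters_meager} (applied with $\bS=\PP$, so that the determinacy hypothesis $\Det(\bS(\omega^\omega))$ suffices since $\FF\in\bS(2^\omega)$ for our nice $\bS$), and $\FF\approx\FF^2$ by Theorem \ref{theorem_filters_square}. This gives all three conditions in $(2)$.

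For the substantive direction $(2)\rightarrow(1)$: assume $X$ is zero-dimensional, a $\bS$ space, not locally compact, homogeneous, meager, and $X\approx X^2$. First dispose of the low-complexity case: if $X$ is a $\Delta(\Diff_\omega(\bS^0_2))$ space, invoke van Engelen's Borel classification of filters that are $\Delta(\Diff_\omega(\bS^0_2))$ spaces (cited in the excerpt as \cite[Lemmas 2.9 and 3.3]{van_engelen_ideals}) — here I would need to check that $X$ being homogeneous, meager and a square falls under the cases van Engelen handles, using that among low-complexity homogeneous spaces the meager-and-square condition pins down exactly the filters. Otherwise, by Theorem \ref{theorem_homogeneous_spaces_have_good_complexity}, there is a good Wadge class $\bG\subseteq\bS(\omega^\omega)$ such that $X$ has exact complexity $\bG$. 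The goal is then to show that $\bG$ satisfies condition $(2)$ of Theorem \ref{theorem_classification_filters}, since that theorem will then hand us a filter $\FF$ of exact complexity $\bG$; as $\FF$ is meager (Corollary \ref{corollary_filters_meager}) and $X$ is meager, Theorem \ref{theorem_uniqueness_embeddings} (via Proposition \ref{proposition_homogeneous_dichotomy} to know $X$ is either meager or Baire, and it is meager by hypothesis) gives $X\approx\FF$.

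So it remains to verify that $\bG$ is closed under squares and satisfies the level/type/separation trichotomy of Theorem \ref{theorem_classification_filters}(2). For closure under squares: since $X\approx X^2$ and $X$ has exact complexity $\bG$, fix a dense embedding $j:X\to 2^\omega$; then $X^2$ embeds into $2^\omega\times 2^\omega$ and, using $X\approx X^2$ together with Proposition \ref{proposition_characterization_complexity}, one sees that $j[X]^2$ is a $\bG$ set in $2^\omega\times 2^\omega$, so $j[X]^2\in\bG(2^\omega\times 2^\omega)$; since every $A\in\bG(2^\omega)$ Wadge-reduces to $j[X]$ (Corollary \ref{corollary_characterization_exact_complexity}), $A^2$ reduces to $j[X]^2$, whence $A^2\in\bG(2^\omega\times 2^\omega)$, i.e.\ $\bG(2^\omega)$ is closed under squares; now apply Proposition \ref{proposition_characterization_squares}. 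For the trichotomy: by Theorem \ref{theorem_existence_negative_meager} and Corollary \ref{corollary_existence_negative_baire}, a good Wadge class with $\ell(\bG)=1$, $t(\bG)\in\{1,2\}$ and the separation property admits \emph{only Baire} homogeneous spaces of exact complexity $\bG$; since our $X$ is meager, $\bG$ cannot be of this last type, leaving exactly the three alternatives in Theorem \ref{theorem_classification_filters}(2). Invoking that theorem completes the argument.

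The main obstacle I anticipate is the low-complexity ($\Delta(\Diff_\omega(\bS^0_2))$) case: the excerpt only gestures at \cite{van_engelen_ideals}, and one must confirm that the combinatorial characterization of filters there — or rather the conjunction of "homogeneous, meager, homeomorphic to its square" — genuinely singles out the filters among the zero-dimensional non-locally-compact spaces of that low complexity, rather than, say, also capturing some ideal's complement or a non-filter semifilter. I expect this to be routine given van Engelen's lemmas but it is the one place where the clean machinery of good Wadge classes and the results of \S\ref{section_wadge_closure} does not directly apply, so some care with the explicit low-level list is required.
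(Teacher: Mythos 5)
Your proposal is correct and follows essentially the same route as the paper: the forward direction from the facts in \S\ref{subsection_preliminaries_filters}, Theorem \ref{theorem_filters_square} and Corollary \ref{corollary_filters_meager}; the reverse direction by splitting off the $\Delta(\Diff_\omega(\bS^0_2))$ case via van Engelen, obtaining a good Wadge class from Theorem \ref{theorem_homogeneous_spaces_have_good_complexity}, verifying closure under squares by the Wadge-reduction argument from the proof of Theorem \ref{theorem_classification_filters}, and ruling out the ``Baire-only'' case because $X$ is meager (the paper packages this last step as a citation of Theorem \ref{theorem_classification_homogeneous} rather than of Theorem \ref{theorem_existence_negative_meager} and Corollary \ref{corollary_existence_negative_baire} directly, which is the same content).
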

\begin{proof}
The implication $(1)\rightarrow (2)$ follows from the discussion in \S\ref{subsection_preliminaries_filters}, Theorem \ref{theorem_filters_square}, and Corollary \ref{corollary_filters_meager}. In order to prove the implication $(2)\rightarrow (1)$, assume that condition $(2)$ holds. If $X$ is a $\Delta(\Diff_\omega(\bS^0_2))$ space, then the desired result follows from \cite[Lemma 3.3]{van_engelen_ideals}, so assume that this is not the case. By Theorem \ref{theorem_homogeneous_spaces_have_good_complexity}, there exists a good Wadge class $\bG\in\NSDS(\omega^\omega)$ such that $X$ has exact complexity $\bG$. By Theorems \ref{theorem_classification_homogeneous} and \ref{theorem_classification_filters}, it will be enough to show that $\bG$ is closed under squares. To see this, proceed as in the proof of Theorem \ref{theorem_classification_filters}.
\end{proof}

\subsection{Filters versus topological groups}\label{subsection_filters_vs_groups}

As Theorem \ref{theorem_characterization_semifilters} shows, the lower-right portion of the diagram from \S\ref{subsection_preliminaries_homogeneity} ``collapses'' under $\AD$. Therefore, it seems natural to ask whether the rest of the diagram could collapse as well. More precisely, one might ask: does $\AD$ imply that every zero-dimensional topological group is homemorphic to a filter?

As usual, the trivial case of locally compact spaces provides several counterexamples (see the spaces mentioned by Proposition \ref{proposition_locally_compact}). And even after excluding these spaces, the question remains too naive, as $\omega^\omega$ is a zero-dimensional topological group that is not homeomorphic to a filter (by Corollary \ref{corollary_filters_meager}). However, adding the requirement that the topological group is meager yields an open question. In fact, even the Borel version of this problem is still open. It was first formulated at the very end of \cite{van_engelen_low}, where van Engelen solved the problem for spaces of low complexity (see \cite[Theorem 3.4]{van_engelen_low}). The precise statements are as follows.\footnote{\,We added the assumption ``not locally compact'' in the statement of Theorem \ref{theorem_groups_low_complexity} because our definition of filter is slightly more restrictive than that of van Engelen. However, this does not result in any loss of generality (see \cite[Proposition 2]{medini_zdomskyy_between}).}

\begin{theorem}[van Engelen]\label{theorem_groups_low_complexity}
Let $X$ be a zero-dimensional $\bD^0_3$ space that is not locally compact. Then the following conditions are equivalent:
\begin{itemize}
\item $X$ is a meager topological group,
\item $X$ is homeomorphic to a filter.
\end{itemize}
\end{theorem}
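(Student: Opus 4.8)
The plan is to prove the two implications separately, disposing of $(1)\rightarrow(2)$ at once and attacking $(2)\rightarrow(1)$ by a case split on topological complexity. The implication $(1)\rightarrow(2)$ is immediate: every filter is a topological group by the discussion in \S\ref{subsection_preliminaries_filters}, being a topological group is a topological invariant, and since $X$ is a $\bD^0_3$ space it is Borel, so $\Det(\Borel(\omega^\omega))$ holds by Martin's theorem (in $\ZF+\AC_\omega$), whence Corollary \ref{corollary_filters_meager} gives that $X$ is meager. (The hypothesis that $X$ is not locally compact plays no role here; it is in fact redundant, since a meager space cannot be locally compact.)

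For $(2)\rightarrow(1)$, assume $X$ is a zero-dimensional meager topological group that is not locally compact and is a $\bD^0_3$ space; in particular $X$ is homogeneous. If $X$ is a $\Delta(\Diff_\omega(\bS^0_2))$ space, the conclusion is part of van Engelen's explicit classification of the low-complexity zero-dimensional homogeneous Borel spaces: since a topological group is homogeneous, $X$ occurs on van Engelen's list (see \cite{van_engelen_ambiguous} or \cite[Chapters 2 and 3]{van_engelen_thesis}), and one checks, essentially entry by entry, which of these spaces are meager topological groups and that each such space is homeomorphic to a concrete low-complexity filter, while conversely every low-complexity filter is meager. This part is precisely \cite[Theorem 3.4]{van_engelen_low} together with the relevant lemmas of \cite{van_engelen_ideals}.

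Now suppose $X$ is not a $\Delta(\Diff_\omega(\bS^0_2))$ space. By Theorem \ref{theorem_homogeneous_spaces_have_good_complexity} (applied with $\bS=\Borel$) there is a good Wadge class $\bG\subseteq\Borel(\omega^\omega)$ such that $X$ has exact complexity $\bG$, and $\bG\subseteq\bD^0_3(\omega^\omega)$ because $X\in\bD^0_3$. Since $X$ is meager and homogeneous, Corollary \ref{corollary_existence_negative_baire} forbids the simultaneous occurrence of $\ell(\bG)=1$, $t(\bG)\in\{1,2\}$ and the separation property for $\bG$; as $\bG$ is good (so $t(\bG)\neq 0$ and $\ell(\bG)\geq 1$), it follows that $\bG$ satisfies one of the three alternatives listed in condition~(2) of Theorem \ref{theorem_classification_filters}. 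Consequently, once we know that $\bG$ is closed under squares, condition~(2) of Theorem \ref{theorem_classification_filters} is met (its proof, as noted there, derives closure under $\SU_1$ from these alternatives, so that Theorem \ref{theorem_existence_filter} applies), and Theorem \ref{theorem_classification_filters} produces a filter of exact complexity $\bG$; this filter is meager by Corollary \ref{corollary_filters_meager}, hence homeomorphic to $X$ by Theorem \ref{theorem_uniqueness_embeddings}.

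The main obstacle is exactly this last point: showing that the exact complexity $\bG$ of a non-discrete zero-dimensional meager topological group $X$ is closed under squares. By Proposition \ref{proposition_characterization_squares}, Proposition \ref{proposition_characterization_complexity} and Corollary \ref{corollary_characterization_exact_complexity}, this is equivalent to the assertion $X\approx X^2$, which is the analogue in the present setting of Theorem \ref{theorem_filters_square}. The group structure must be exploited in an essential way, since a mere zero-dimensional homogeneous space need not be homeomorphic to its square (otherwise the clause ``closed under squares'' in Theorem \ref{theorem_classification_filters} would be vacuous); the natural route, following van Engelen, is to build the desired homeomorphism by a clopen-surgery argument using the translations of $X$, in the spirit of Theorem \ref{theorem_homogeneity}, and carrying this out is where the proof of \cite[Theorem 3.4]{van_engelen_low} does its real work. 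Alternatively — and this is probably the honest thing to do in the paper — the entire statement can simply be quoted from \cite[Theorem 3.4]{van_engelen_low}.
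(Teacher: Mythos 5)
The paper offers no proof of this statement at all: it is van Engelen's theorem, quoted directly from \cite[Theorem 3.4]{van_engelen_low} (with ``not locally compact'' added only for the definitional reason explained in the accompanying footnote). So your closing remark --- that the honest thing is simply to cite van Engelen --- is exactly what the paper does, and your easy direction (filter implies meager topological group, via the group structure on ideals and Corollary \ref{corollary_filters_meager}) is fine, apart from the swapped labels $(1)$ and $(2)$.

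Your attempted reconstruction of the hard direction, however, has a genuine gap at precisely the point you flag. You correctly reduce the non-$\Delta(\Diff_\omega(\bS^0_2))$ case to showing that the exact complexity $\bG$ of $X$ is closed under squares, equivalently (via Proposition \ref{proposition_characterization_squares} and Corollary \ref{corollary_characterization_exact_complexity}) that $X\approx X^2$; the verification that $\bG$ falls under one of the three alternatives of Theorem \ref{theorem_classification_filters}(2) is also correct. But the assertion that $X\approx X^2$ follows from ``a clopen-surgery argument using the translations of $X$, in the spirit of Theorem \ref{theorem_homogeneity}'' is not a proof, and no such argument can work without using the $\bD^0_3$ bound in an essential way: by Theorem \ref{theorem_characterization_filters}, the claim that every meager zero-dimensional Borel group that is not locally compact satisfies $X\approx X^2$ is \emph{equivalent} to Conjecture \ref{conjecture_van_engelen}, which the paper records as open. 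Your sketch gives no indication of where the hypothesis $X\in\bD^0_3$ enters, whereas van Engelen's actual argument in \cite{van_engelen_low} leans on his explicit classification of the homogeneous zero-dimensional spaces of this low complexity rather than on a general square-producing surgery. In short, you have identified the right bottleneck but not cleared it; the only complete proof available is the citation, which is what the paper uses.
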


\begin{conjecture}[van Engelen]\label{conjecture_van_engelen}
Theorem \ref{theorem_groups_low_complexity} holds for all zero-dimensional Borel spaces that are not locally compact.
\end{conjecture}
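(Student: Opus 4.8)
The plan is to handle the two implications of the conjecture separately and to reduce the substantive one, via the machinery of this article (all available for $\bS=\Borel$ since $\Det(\Borel(\omega^\omega))$ holds by Martin's theorem), to a single statement about topological groups.

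\textbf{The trivial implication.} First I would observe that a filter is a meager topological group: if $X\approx\FF$ for a filter $\FF$, then $X$ is a topological group because $\FF$ is one (a topological subgroup of $2^\omega$, see \S\ref{subsection_preliminaries_filters}), and $X$ is meager because $\FF$ is, by Corollary \ref{corollary_filters_meager}. Nothing beyond the excerpt is needed here.

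\textbf{Reducing the other implication.} Suppose $X$ is a meager zero-dimensional Borel topological group that is not locally compact; I want to produce a filter homeomorphic to $X$. If $X$ is a $\bD^0_3$ space this is van Engelen's Theorem \ref{theorem_groups_low_complexity}, so assume it is not; then $X$ is not a $\Delta(\Diff_\omega(\bS^0_2))$ space either, and, being homogeneous, it has exact complexity $\bG$ for some good Wadge class $\bG\subseteq\Borel(\omega^\omega)$ by Theorem \ref{theorem_homogeneous_spaces_have_good_complexity}. Since $X$ is homogeneous and meager, Theorem \ref{theorem_characterization_filters} reduces the task to proving $X\approx X^2$. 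Now $X^2$ is again meager (a product of a meager space with anything is meager) and homogeneous, and it contains the closed copy $X\times\{e\}\approx X$; hence if $X^2$ were a $\bGc$ space then $X$ would be one by Proposition \ref{proposition_complexity_hereditary} (applied to the good class $\bGc$), contradicting Corollary \ref{corollary_characterization_exact_complexity}. So it is enough to show that $X^2$ is a $\bG$ space: then $X^2$ has exact complexity $\bG$, and Theorem \ref{theorem_uniqueness_embeddings} together with meagerness gives $X\approx X^2$. Finally, by Propositions \ref{proposition_characterization_complexity} and \ref{proposition_characterization_squares} and Corollary \ref{corollary_characterization_exact_complexity}, the assertion ``$X^2$ is a $\bG$ space'' is equivalent to ``$\bG$ is closed under squares'', and once that is known Theorem \ref{theorem_classification_filters} delivers a filter of exact complexity $\bG$, which by Theorem \ref{theorem_uniqueness_embeddings} and meagerness must be homeomorphic to $X$. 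Thus the conjecture reduces to: \emph{the exact complexity of a meager zero-dimensional Borel topological group that is not locally compact is closed under squares; equivalently, every such group is homeomorphic to its own square.}

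\textbf{The main obstacle.} This last statement is exactly where a new idea is needed, and it is why the conjecture is still open. The group operation $m\colon X\times X\to X$ is a continuous open surjection with all fibres homeomorphic to $X$, but this does not yield a Wadge reduction of $X^2$ to $X$: to get $X^2=\tilde m^{-1}[X]\leq X$ one would need to extend $m$ to a continuous map on an ambient $2^\omega$ that sends the complement of $X^2$ into the complement of $X$, and there is no visible reason multiplication should respect complements this way. Homogeneity-type information alone does not suffice either: $X$ is strongly homogeneous by Theorem \ref{theorem_characterization_semifilters}, yet the meager semifilters of exact complexity $\bG$ constructed in Theorem \ref{theorem_meager_semifilter} are strongly homogeneous and fail to be homeomorphic to their squares precisely when $\bG$ is not closed under squares --- so the proof must use the group structure in an essential way. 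The most promising avenues seem to be: to find inside $X$ a clopen subgroup whose coset decomposition refines a prescribed clopen partition of the ambient space, so that $X^2$ can be analysed as a separated union and the closure results of \S\ref{subsection_wadge_closure_main} can be applied; or to exploit structural dichotomies for zero-dimensional non-locally-compact Polish groups (e.g.\ the presence of a closed copy of $\omega^\omega$) to build the reduction directly; or to recast van Engelen's low-complexity classification of meager zero-dimensional groups as a transfinite recursion on $\bG$. What the reduction above contributes is to locate the entire difficulty within the theory of good Wadge classes closed under squares.
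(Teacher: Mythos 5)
There is nothing to compare here on the level of proofs: the statement you were given is Conjecture \ref{conjecture_van_engelen}, which the paper explicitly leaves open (``we were unable to make any progress on the above conjecture''), so the paper contains no proof of it, and your proposal is not a proof of it either --- as you yourself say quite plainly in your final section. What you have written is a reduction plus an honest admission that the essential step is missing, and that missing step is the entire content of the conjecture. Concretely: the gap is the claim that every meager zero-dimensional Borel topological group that is not locally compact is homeomorphic to its own square (equivalently, that its exact complexity is closed under squares). Nothing in your proposal, and nothing in the paper, establishes this; your discussion of why the multiplication map does not obviously yield a Wadge reduction of $X^2$ to $X$ is a correct diagnosis of why the naive attempt fails, not a repair of it.

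One further remark on the reduction itself: it is correct, but it is not really new relative to the paper. Theorem \ref{theorem_characterization_filters} (with $\bS=\Borel$, available unconditionally by Martin's theorem) already says that a zero-dimensional Borel space that is not locally compact is homeomorphic to a filter if and only if it is homogeneous, meager, and homeomorphic to its square; since a topological group is homogeneous, the conjecture is immediately equivalent to ``every meager zero-dimensional Borel topological group that is not locally compact satisfies $X\approx X^2$.'' Your longer detour through exact complexity, Proposition \ref{proposition_complexity_hereditary}, Proposition \ref{proposition_characterization_squares} and Theorem \ref{theorem_uniqueness_embeddings} essentially re-derives part of the proof of Theorem \ref{theorem_characterization_filters} (and of Theorem \ref{theorem_classification_filters}); it is sound (note that $\bGc$ is indeed again a good Wadge class, so your application of Proposition \ref{proposition_complexity_hereditary} to it is legitimate), but the one-line reduction via Theorem \ref{theorem_characterization_filters} would have sufficed. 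The value of your write-up is the candid identification of where the difficulty lives; it should not be mistaken for progress on the conjecture.
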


While we were unable to make any progress on the above conjecture, we suspect that if it were true then the following stronger conjecture would be true as well.

\begin{conjecture}\label{conjecture_generalized_van_engelen}
Let $\bS$ be a nice topological pointclass, and assume that $\Det(\bS(\omega^\omega))$ holds. Then Theorem \ref{theorem_groups_low_complexity} holds for all zero-dimensional $\bS$ spaces that are not locally compact.
\end{conjecture}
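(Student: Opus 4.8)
The plan is to reduce Conjecture \ref{conjecture_generalized_van_engelen} to a single closure property, and then to attack that property using the group operation.

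The direction ``homeomorphic to a filter $\Rightarrow$ meager topological group'' requires nothing new: every filter is a topological group (see \S\ref{subsection_preliminaries_filters}), and it is meager by Corollary \ref{corollary_filters_meager}, applied after noting that a filter homeomorphic to the $\bS$ space $X$ lies in $\bS(2^\omega)$ by niceness. For the converse, let $X$ be a meager zero-dimensional $\bS$ topological group that is not locally compact. Since topological groups are homogeneous, Theorem \ref{theorem_characterization_filters} reduces matters to proving $X\approx X^2$. If $X$ is a $\Delta(\Diff_\omega(\bS^0_2))$ space this is van Engelen's result (\cite[Lemmas 2.9 and 3.3]{van_engelen_ideals}; cf. Theorem \ref{theorem_groups_low_complexity}), so assume otherwise. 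Then Theorem \ref{theorem_homogeneous_spaces_have_good_complexity} gives a good Wadge class $\bG\subseteq\bS(\omega^\omega)$ with $X$ of exact complexity $\bG$, and, since $X$ is meager, Theorem \ref{theorem_existence_negative_meager} together with Corollary \ref{corollary_existence_negative_baire} forbids the configuration ``$\ell(\bG)=1$, $t(\bG)\in\{1,2\}$, and $\bG$ has the separation property''. By Theorem \ref{theorem_classification_filters} it therefore suffices to show $(\star)$: \emph{the exact complexity $\bG$ of a meager zero-dimensional topological group is closed under squares}. Indeed, given $(\star)$, Theorem \ref{theorem_classification_filters} produces a filter of exact complexity $\bG$, which is meager by Corollary \ref{corollary_filters_meager}, hence homeomorphic to $X$ by Theorem \ref{theorem_uniqueness_embeddings}. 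Equivalently, by Proposition \ref{proposition_characterization_squares}, after realizing $X$ as a dense subspace of $2^\omega$ one must check that $X^2$ is again a $\bG$ space.

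To prove $(\star)$ I would try to use the multiplication $m\colon X\times X\longrightarrow X$, which is a continuous open surjection whose translations $a\mapsto ab$ are homeomorphisms of $X$. One would want to manufacture from $m$, after a suitable correction, a continuous function $(2^\omega)^2\longrightarrow 2^\omega$ witnessing $X^2\leq X$, or to transplant into the group setting the coordinate-interleaving construction behind Theorem \ref{theorem_filters_square}. An abstract variant: $X^2$ is again a meager zero-dimensional non-locally-compact topological group, so it has exact complexity $\bG'$ for some good Wadge class ($X^2$ is not a $\Delta(\Diff_\omega(\bS^0_2))$ space, since its closed subspace $X$ is not); as $X\approx X\times\{e\}$ is closed in $X^2$, Proposition \ref{proposition_complexity_hereditary} gives $\bG\subseteq\bG'$, and it remains to reverse this inclusion, which is once more exactly the assertion that $\bG$ absorbs $X^2$.

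The main obstacle is that the hypotheses force none of this. A topological group that is not Polish need not admit any continuous extension of its multiplication to an ambient Polish space, and its completion need not even be zero-dimensional (as $\QQQ\subseteq\RRR$ shows), so $m$ yields no evident Wadge reduction of $X^2$ to $X$; correspondingly there is no combinatorial handle on $\bG$ comparable to the one making Theorem \ref{theorem_filters_square} work. Van Engelen dispatched the low-complexity case by running through the finitely many Borel Wadge classes below $\bD^0_3$ using Louveau's fine classification and verifying closure under products one by one; as explained in \S\ref{subsection_introduction_results}, no analogue of that classification exists for general Wadge classes, which is precisely why Conjecture \ref{conjecture_van_engelen}, and hence Conjecture \ref{conjecture_generalized_van_engelen}, is open. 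A natural interim target is the purely topological statement that every meager zero-dimensional non-locally-compact topological group is homeomorphic to its square, to be attempted by a van Mill--style back-and-forth argument (cf. Theorem \ref{theorem_homogeneity}); but no such construction is presently known, and producing one, or else extracting from the group axioms a Wadge-theoretic consequence strong enough to yield closure under squares, is the heart of the matter.
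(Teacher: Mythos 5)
The statement you were asked to prove is Conjecture \ref{conjecture_generalized_van_engelen}, which the paper leaves open: the surrounding text explicitly records that no progress was made even on its Borel special case, Conjecture \ref{conjecture_van_engelen}. There is therefore no proof in the paper to compare against, and your submission, correctly, does not claim to be one. Your reduction is sound and agrees with the paper's own framing of the problem: the forward direction is immediate from \S\ref{subsection_preliminaries_filters} and Corollary \ref{corollary_filters_meager}; for the converse, homogeneity of topological groups plus Theorem \ref{theorem_characterization_filters} reduces everything to $X\approx X^2$, and in the high-complexity case Theorems \ref{theorem_homogeneous_spaces_have_good_complexity}, \ref{theorem_existence_negative_meager}/Corollary \ref{corollary_existence_negative_baire}, \ref{theorem_classification_filters} and \ref{theorem_uniqueness_embeddings} reduce that further to the single assertion $(\star)$ that the exact complexity $\bG$ of a meager zero-dimensional non-locally-compact topological group is closed under squares. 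This is precisely the obstruction the paper identifies in \S\ref{subsection_filters_vs_groups} and in the proof of Theorem \ref{theorem_classification_filters}.

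The genuine gap is $(\star)$ itself, and you diagnose it accurately: for a non-Polish group the multiplication does not extend to a continuous map between ambient Polish spaces, so it yields no Wadge reduction of $X^2$ to $X$, and the interleaving mechanism behind Theorem \ref{theorem_filters_square} has no known group-theoretic analogue; van Engelen's low-complexity argument relies on Louveau's fine classification, which is unavailable beyond the Borel sets. One small point in your ``abstract variant'': the claim that $X^2$ is not a $\Delta(\Diff_\omega(\bS^0_2))$ space because its closed subspace $X\times\{e\}$ is not one does hold, but it needs a word of justification --- Proposition \ref{proposition_complexity_hereditary} is stated only for good Wadge classes, so you should instead invoke Corollary \ref{corollary_closure_good} for $\Diff_\omega(\bS^0_2)$ and its dual to see that $\Delta(\Diff_\omega(\bS^0_2))$ is closed under intersections with $\bP^0_2$ sets. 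In summary, your write-up is a correct reduction of the conjecture to its open core together with an accurate account of why that core resists the available techniques; it is not, and does not purport to be, a proof.
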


We remark that a proof of Conjecture \ref{conjecture_generalized_van_engelen} would yield a complete classification of the zero-dimensional topological groups under $\AD$. In fact, by Proposition \ref{proposition_homogeneous_dichotomy} and the following well-known result, restricting the attention to meager topological groups does not result in any loss of generality. Therefore, the desired classification would follow from Theorem \ref{theorem_classification_filters} and the remarks that precede it.

\begin{proposition}\label{proposition_baire_group}
Let $\bS$ be a nice topological pointclass, and assume that $\Det(\bS(\omega^\omega))$ holds. Let $X$ be a zero-dimensional $\bS$ space that is not locally compact. If $X$ is a Baire topological group then $X\approx\omega^\omega$.
\end{proposition}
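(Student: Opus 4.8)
The plan is to reduce the statement to the Alexandrov--Urysohn topological characterization of $\omega^\omega$ (see \cite[Theorem 7.7]{kechris}), according to which a non-empty zero-dimensional Polish space is homeomorphic to $\omega^\omega$ if and only if it is \emph{nowhere locally compact}, that is, no compact subset has non-empty interior. Now, $X$ is zero-dimensional by assumption and non-empty (being a topological group, it contains the identity), and $X$ is homogeneous. Since $X$ is homogeneous and not locally compact, it must be nowhere locally compact: indeed, if some compact $K\subseteq X$ had non-empty interior, then $K$ would be a compact neighborhood of any point in its interior, so by homogeneity $X$ would be locally compact, a contradiction. Therefore the only thing left to prove is that $X$ is Polish.

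To this end, I would use the standard fact that every separable metrizable topological group embeds as a dense subgroup of a Polish group $G$ (for instance, one may take the completion of $X$ with respect to its two-sided uniformity, which is generated by the metric $d(x,y)+d(x^{-1},y^{-1})$ for a compatible left-invariant metric $d$, and is metrizable, separable, and complete). Identifying $X$ with its image, we may assume that $X$ is a dense subgroup of the Polish space $G$. Since $G$ is in particular a Borel space and $X$ is a $\bS$ space, the last requirement in Definition \ref{definition_pointclass} yields $X\in\bS(G)$. By Corollary \ref{corollary_baire_property_polish}, $X$ has the Baire property in $G$; since $X$ is moreover a dense Baire subspace of $G$, Corollary \ref{corollary_baire_dense_subspace} shows that $X$ is comeager in $G$.

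The proof is then completed by the following elementary observation: a comeager subgroup $H$ of a Polish group $G$ must coincide with $G$. Indeed, if there were $g\in G\setminus H$, then the coset $gH$ would be disjoint from $H$ and, being the image of $H$ under the homeomorphism $x\mapsto gx$ of $G$, it would also be comeager; hence $G=(G\setminus H)\cup(G\setminus gH)$ would be meager, contradicting the fact that $G$ is a non-empty Polish space. Applying this with $H=X$, we conclude that $X=G$, so $X$ is Polish, and the desired conclusion follows from \cite[Theorem 7.7]{kechris}.

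Most of this is routine; the only points requiring a modicum of care are the transfer of topological complexity through the niceness axiom and the passage to the Polish group completion (which must genuinely be a \emph{group}, so that translations are homeomorphisms and cosets make sense). The genuinely content-bearing step is the coset argument showing that a comeager subgroup of a Polish group is the whole group, which is what upgrades ``$X$ is comeager in its completion'' to ``$X$ is complete''.
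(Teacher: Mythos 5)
Your proposal is correct and follows essentially the same route as the paper: reduce to showing $X$ is Polish via the Alexandrov--Urysohn characterization, embed $X$ as a dense subgroup of a Polish group, use Corollary \ref{corollary_baire_dense_subspace} to get comeagerness, and derive a contradiction from two disjoint comeager cosets. The paper cites Arhangel'skii--Tkachenko for the Polish group completion rather than constructing it from the two-sided uniformity, but the argument is otherwise identical.
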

\begin{proof}
Assume that $X$ is a Baire topological group. Since $X$ is homogeneous and not locally compact, it is clear that no non-empty open subspace of $X$ is compact. Therefore, by \cite[Theorem 7.7]{kechris}, it will be enough to show that $X$ is Polish.

By \cite[Theorem 3.6.10, Proposition 3.6.20 and Proposition 4.3.8]{arhangelskii_tkachenko}, it is possible to find a Polish topological group $Z$ such that $X$ is a dense topological subgroup of $Z$. Assume, in order to get a contradiction, that $X\subsetneq Z$. Pick $z\in Z\setminus X$, and observe that the cosets $X$ and $zX$ are disjoint. On the other hand, Corollary \ref{corollary_baire_dense_subspace} shows that both $X$ and $zX$ are comeager in $Z$, contradicting the fact that $Z$ is Polish.
\end{proof}

However, Conjecture \ref{conjecture_generalized_van_engelen} would be false if we altogether dropped the determinacy assumption. In fact, working in $\ZFC$, it is a simple exercise to construct an uncountable zero-dimensional topological group $X$ in which $2^\omega$ does not embed. It is easy to realize that the space $\QQQ\times X$ will be a meager topological group that is not homeomorphic to a filter.

We conclude this subsection by showing that, although the diagram from \S\ref{subsection_preliminaries_homogeneity} might partially collapse, under no circumstance will a complete collapse happen. We will combine ideas from \cite[Proposition 5.4]{medini_semifilters} and \cite[Theorem 3.6.5]{van_engelen_thesis}.

\begin{proposition}\label{proposition_counterexample_semifilter}
There exists a semifilter that is not a topological group.
\end{proposition}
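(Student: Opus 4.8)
The plan is to realize the required space as a \emph{Baire} semifilter of low Borel complexity and to obstruct it from being a topological group by means of Proposition~\ref{proposition_baire_group}. Work in $\ZF+\DC$, so that $\Det(\Borel(\omega^\omega))$ holds by Martin's theorem and $\Borel$ is a nice topological pointclass; since $\bool\Borel=\Borel$, this lets us apply Corollaries~\ref{corollary_baire_semifilter} and \ref{corollary_characterization_exact_complexity} and Proposition~\ref{proposition_baire_group} with $\bS=\Borel$. First I would fix one of the two classes $\Diff_\omega(\bS^0_2(\omega^\omega))$ and $\Diffc_\omega(\bS^0_2(\omega^\omega))$: both are good Wadge classes in $\omega^\omega$ (by the remark following Definition~\ref{definition_good}), both are contained in $\bD^0_3(\omega^\omega)\subseteq\Borel(\omega^\omega)$, and both have level exactly $1$, since $\Diff_\omega(\bS^0_2)=\Diff_\omega(\bS^0_1)^{(1)}$ by Proposition~\ref{proposition_expansion_differences} while $\Diff_\omega(\bS^0_1)$ is non-selfdual of level $0$ by Lemma~\ref{lemma_sd_differences} and Theorem~\ref{theorem_sd_main}, so Corollary~\ref{corollary_expansion_level} gives level $1$. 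By Theorem~\ref{theorem_separation_generalized} exactly one of the two has the separation property; call it $\bG$. Then clause~(2) of Corollary~\ref{corollary_baire_semifilter} applies and produces a Baire semifilter $\Ss$ of exact complexity $\bG$, which is homogeneous by Corollary~\ref{corollary_semifilter_homogeneous}.

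The heart of the argument is then to show $\Ss$ is not a topological group. Suppose it were. Since $\bG=\bG(\omega^\omega)\subseteq\Borel(\omega^\omega)$, the space $\Ss$ is a Borel space, and it is not locally compact: a separable metrizable locally compact space is Polish, but if $\Ss$ were Polish then, embedding $\Ss$ into $\omega^\omega$, the image would be a $\bP^0_2$ set, so $\Ss$ would be a $\bP^0_2$ space and hence a $\bGc$ space (as $\bGc\supseteq\Delta(\Diff_\omega(\bS^0_2))\supseteq\bP^0_2$, because $\bG$ is good), contradicting Corollary~\ref{corollary_characterization_exact_complexity}. Thus $\Ss$ is a zero-dimensional Borel topological group that is Baire and not locally compact, so $\Ss\approx\omega^\omega$ by Proposition~\ref{proposition_baire_group}. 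But $\omega^\omega$ embeds into itself as a closed subspace, and a closed subset of $\omega^\omega$ lies in $\bP^0_1(\omega^\omega)\subseteq\bGc(\omega^\omega)$, so $\omega^\omega$ — and therefore $\Ss$, being homeomorphic to it — is a $\bGc$ space. This again contradicts Corollary~\ref{corollary_characterization_exact_complexity}, since $\Ss$ has exact complexity $\bG$. Hence $\Ss$ is a semifilter that is not a topological group.

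In order, the steps are: (i) pick $\bG\in\{\Diff_\omega(\bS^0_2),\Diffc_\omega(\bS^0_2)\}$ with the separation property and check it is a good Wadge class in $\bD^0_3$ of level $1$; (ii) obtain a Baire semifilter $\Ss$ of exact complexity $\bG$ from Corollary~\ref{corollary_baire_semifilter}; (iii) assuming $\Ss$ is a topological group, observe it is not locally compact and invoke Proposition~\ref{proposition_baire_group} to conclude $\Ss\approx\omega^\omega$; (iv) derive a contradiction from the fact that $\omega^\omega$ is a $\bGc$ space whereas a space of exact complexity $\bG$ is not. The point demanding the most care is step~(i): one must stay strictly inside $\bD^0_3$ (equivalently, below $\bP^0_3$) so that the Borel versions of the cited results suffice — no determinacy beyond $\Det(\Borel(\omega^\omega))$ is needed — and one must correctly compute that the level of the chosen difference class is $1$, not $0$ and not $\geq 2$. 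One could instead run the dual argument through \emph{meager} semifilters, taking $\bG\subseteq\bD^0_3$ good but not closed under squares (such classes exist, cf.\ \cite{van_engelen_ideals}) and combining Theorem~\ref{theorem_groups_low_complexity} with Theorem~\ref{theorem_classification_filters}; that route is closer in spirit to \cite[Theorem~3.6.5]{van_engelen_thesis} and \cite[Proposition~5.4]{medini_semifilters}.
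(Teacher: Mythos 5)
Your proof is correct, but it takes a genuinely different route from the paper's. The paper exhibits an explicit semifilter: splitting $\omega$ into two infinite pieces $\Omega_1,\Omega_2$, it takes $\TT=\{x_1\cup x_2:x_1\subseteq\Omega_1,\ x_2\subseteq\Omega_2,\ (x_1\notin\Fin(\Omega_1)\text{ or }x_2\in\Cof(\Omega_2))\}$, notes that $\TT$ is a Baire $\bD^0_3$ space (it is a copy of van Douwen's space $\mathbf{T}$), and then, assuming $\TT$ is a topological group, applies Proposition \ref{proposition_baire_group} to conclude $\TT$ is Polish --- which is absurd because $\Cof(\Omega_2)$ is a countable crowded closed subspace. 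You instead invoke the existence machinery: you pick the member of the non-selfdual pair $\{\Diff_\omega(\bS^0_2),\Diffc_\omega(\bS^0_2)\}$ with the separation property, verify it is good of level $1$ inside $\Borel$, apply Corollary \ref{corollary_baire_semifilter} to get a Baire semifilter $\Ss$ of that exact complexity, and derive the contradiction from the fact that a Polish space is a $\bP^0_2$ space, hence a $\bGc$ space, which is incompatible with exact complexity $\bG$ by Corollary \ref{corollary_characterization_exact_complexity}. Your level computation ($\ell(\Diff_\omega(\bS^0_2))=1+0=1$ via Proposition \ref{proposition_expansion_differences}, Theorem \ref{theorem_sd_main} and Corollary \ref{corollary_expansion_level}) and the inclusion $\bP^0_2\subseteq\Delta(\Diff_\omega(\bS^0_2))\subseteq\bGc$ both check out, and there is no circularity since nothing upstream of Corollary \ref{corollary_baire_semifilter} depends on this proposition. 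The trade-off: the paper's argument is concrete and essentially elementary modulo Proposition \ref{proposition_baire_group}, and identifies the witness as a familiar space; yours leans on the full weight of Theorem \ref{theorem_meager_semifilter} and the Wadge-theoretic apparatus, but in exchange shows the phenomenon is not an isolated accident --- the same argument produces Baire semifilters that are not topological groups at every suitable good complexity, and (as you note) a dual variant through Corollary \ref{corollary_meager_semifilter} and Theorem \ref{theorem_classification_filters} yields meager examples as well, matching the paper's closing remark.
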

\begin{proof}
Fix infinite sets $\Omega_1$ and $\Omega_2$ such that $\Omega_1\cup\Omega_2=\omega$ and $\Omega_1\cap\Omega_2=\varnothing$. Define
$$
\TT=\{x_1\cup x_2:x_1\subseteq\Omega_1\text{, }x_2\subseteq\Omega_2\text{, and }(x_1\notin\Fin(\Omega_1)\text{ or }x_2\in\Cof(\Omega_2))\},
$$
and observe that $\TT$ is a semifilter. Furthermore, it is clear that $\TT$ is the union of $X=\{x\subseteq\omega:x\cap\Omega_1\notin\Fin(\Omega_1)\}$ and $Y=\{x_1\cup x_2:x_1\in\Fin(\Omega_1)\text{ and }x_2\in\Cof(\Omega_2)\}$. Since $X$ is Polish and $Y$ is countable, it follows that $\TT$ is a Borel space (in fact, it is a $\bD^0_3$ space).

Now assume, in order to get a contradiction, that $\TT$ is a topological group. Observe that $\TT$ is a Baire space because $X$ is dense in $\TT$. Therefore, $\TT$ is Polish by Proposition \ref{proposition_baire_group}. Since $\Cof(\Omega_2)$ is a countable closed subspace of $\TT$ that is crowded, this is contradiction.
\end{proof}

We remark that the semifilter $\TT$ described in the above proof is homeomorphic to the notable space $\mathbf{T}$ introduced by van Douwen (see \cite{van_engelen_van_mill} and \cite[Proposition 5.4]{medini_semifilters}). Since $\QQQ\times\mathbf{T}$ is not a topological group by \cite[Corollary 4.3]{van_engelen_groups}, and multiplication by $\QQQ$ preserves the property of being homeomorphic to a semifilter by \cite[Lemma 8.2]{medini_semifilters}, this shows that that the example given by Proposition \ref{proposition_counterexample_semifilter} could be made meager as well.

\section*{List of symbols and terminology}

The following is a list of most of the symbols and terminology used in this article, organized by the subsection in which they are defined.

\begin{itemize}
\item[\S\ref{subsection_introduction_history}:] homogeneous, Wadge class.
\item[\S\ref{subsection_preliminaries_terminology}:] power-set $\PP(\Omega)$, $\Fin(\Omega)$, $\Cof(\Omega)$, $\Omega^{<\omega}$, $\Ne_s$, monotone, image $f[S]$, inverse image $f^{-1}[S]$, identity function $\id_X$, retraction, base, local base, $\pi$-base, clopen, zero-dimensional, meager subset, comeager subset, meager space, Baire space, embedding, dense embedding, closed function, open function, $\Borel(Z)$, Borel space, $\bS^0_\xi$-measurable function, Borel function, differences operation $\Diff_\eta$, class of differences $\Diff_\eta(\bS^0_\xi(Z))$.
\item[\S\ref{subsection_preliminaries_determinacy}:] play, game, strategy, winning strategy, determinacy assumption $\Det(\bS)$, Axiom of Determinacy ($\AD$), axiom of Projective Determinacy ($\PD$), principle of Dependent Choices ($\DC$), boolean closure $\bool\bS$, topological pointclass, nice topological pointclass, $\bS^1_n(Z)$ for an arbitrary space $Z$, Baire property.
\item[\S\ref{subsection_preliminaries_filters}:] closed under finite modifications, closed under finite intersections, closed under finite unions, upward-closed, downward-closed, semifilter, filter, semiideal, ideal, finite intersection property, finite union property, homeomorphism $h_F$, complement homeomorphism $c=h_\omega$.
\item[\S\ref{subsection_preliminaries_homogeneity}:] strongly homogeneous.
\item[\S\ref{subsection_wadge_fundamental_basics}:] $\bGc$, selfdual class, $\Delta(\bG)$, Wadge-reduction $\leq$, strict Wadge-reduction $<$, selfdual set, Wadge class $A\wc$, continuously closed, $\NSD(Z)$, $\NSDS(Z)$.
\item[\S\ref{subsection_wadge_fundamental_relativization}:] relativized class $\bG(Z)$.
\item[\S\ref{subsection_wadge_fundamental_level}:] class of partitioned unions $\PU_\xi(\bG)$, level $\ell(\bG)$.
\item[\S\ref{subsection_wadge_fundamental_expansions}:] expansion $\bG^{(\xi)}$.
\item[\S\ref{subsection_wadge_fundamental_hausdorff}:] Hausdorff operation $\HH_D(A_0,A_1,\ldots)$, Hausdorff class $\bG_D(Z)$, $W$-universal set, vertical section $S_y$, relativizable pointclass.
\item[\S\ref{subsection_wadge_fundamental_clarifying}:] expanded Hausdorff class $\bG^{(\xi)}_D(Z)$.
\item[\S\ref{subsection_wadge_fundamental_separation}:] (first) separation property, second separation property.
\item[\S\ref{subsection_wadge_fundamental_sd}:] separated differences operation $\SD_\eta$, class of separated differences $\SD_\eta(\bD,\bG^\ast)$.
\item[\S\ref{subsection_wadge_type_basics}:] minimal, successor, countable limit, $\NSDxi(Z)$, $\NSDxiS(Z)$, type $t(\bG)$.
\item[\S\ref{subsection_wadge_type_su}:] class of separated unions $\SU_\xi(\bG)$.
\item[\S\ref{subsection_wadge_closure_preliminaries}:] closed under intersections with $\bP_{1+\xi}$ sets, closed under unions with $\bS_{1+\xi}$ sets, closed under $\SU_\xi$.
\item[\S\ref{subsection_wadge_closure_good}:] good Wadge class.
\item[\S\ref{subsection_homogeneous_complexity}:] $\bS$ space, $\bG$ space, $\Delta(\Diff_\omega(\bS^0_2))$ space, exact complexity.
\item[\S\ref{subsection_filters_squares}:] closed under $n$-th powers, closed under squares.
\end{itemize}


\begin{thebibliography}{99}

\bibitem[AHN]{andretta_hjorth_neeman} A. Andretta, G. Hjorth, I. Neeman. \textit{Effective cardinals of boldface pointclasses.} J. Math. Log. \textbf{7:1} (2007), 35--82.

\bibitem[AvM]{arhangelskii_van_mill} A. V. Arhangel'skii, J. van Mill. \textit{Topological homogeneity.} In: \textit{Recent Progress in General Topology III.} Atlantis Press, 2014, 1--68.

\bibitem[AT]{arhangelskii_tkachenko} A. V. Arhangel'skii, M. Tkachenko. \emph{Topological groups and related structures.} Atlantis Studies in Mathematics, 1. Atlantis Press, Paris; World Scientific Publishing Co. Pte. Ltd., Hackensack, NJ, 2008.

\bibitem[vD]{van_douwen} E. K. van Douwen. \textit{A compact space with a measure that knows which sets are homeomorphic.} Adv. in Math. \textbf{52:1} (1984), 1--33.

\bibitem[CMM1]{carroy_medini_muller_homogeneous} R. Carroy, A. Medini, S. M\"uller. \textit{Every zero-dimensional homogeneous space is strongly homogeneous under determinacy.} J. Math. Logic \textbf{20:3} (2020), 2050015.

\bibitem[CMM2]{carroy_medini_muller_constructing} R. Carroy, A. Medini, S. M\"uller. \textit{Constructing Wadge classes.} Bull. Symb. Log. \textbf{28:2} (2022), 207--257.

\bibitem[vE1]{van_engelen_ambiguous} F. van Engelen. \textit{Homogeneous Borel sets of ambiguous class two.} Trans. Am. Math. Soc. \textbf{290} (1985), 1--39.

\bibitem[vE2]{van_engelen_homogeneous} F. van Engelen. \textit{Homogeneous Borel sets.} Proc. Am. Math. Soc. \textbf{96} (1986), 673--682.

\bibitem[vE3]{van_engelen_thesis} A. J. M. van Engelen. \textit{Homogeneous zero-dimensional absolute Borel sets.} CWI Tract, 27. Stichting Mathematisch Centrum, Centrum voor Wiskunde en Informatica, Amsterdam, 1986. Available at \verb"http://ir.cwi.nl/pub/32110".

\bibitem[vE4]{van_engelen_groups} F. van Engelen. \textit{On Borel groups.} Topology Appl. \textbf{35:2-3} (1990), 197--107.

\bibitem[vE5]{van_engelen_ideals} F. van Engelen. \textit{On Borel ideals.} Ann. Pure Appl. Logic \textbf{70:2} (1994), 177--203.

\bibitem[vE6]{van_engelen_low} F. van Engelen. \textit{Borel groups of low descriptive complexity.} Ann. New York Acad. Sci. \textbf{788} (1996), 95--100.

\bibitem[vEvM]{van_engelen_van_mill} F. van Engelen, J. van Mill. \textit{Borel sets in compact spaces: some Hurewicz type theorems.} Fund. Math. \textbf{124:3} (1984), 271--286.

\bibitem[En]{engelking} R. Engelking. \textit{General topology.} Revised and completed edition. Sigma Series in Pure Mathematics, vol. 6. Heldermann Verlag, Berlin, 1989.

\bibitem[FZ]{fitzpatrick_zhou} B. Fitzpatrick Jr., H. X. Zhou. \textit{Countable dense homogeneity and the Baire property.} Topology Appl. \textbf{43:1} (1992), 1--14.

\bibitem[Fo]{fournier} K. Fournier. \textit{Wadge hierarchy of differences of co-analytic sets.} J. Symb. Log. \textbf{81:1} (2016), 201--215.

\bibitem[Har]{harrington} L. Harrington. \textit{Analytic determinacy and $0^\sharp$.} J. Symb. Log. \textbf{43:4} (1978), 685--693.

\bibitem[Je]{jech} T. Jech. \textit{Set theory.} The third millennium edition, revised and expanded. Springer Monographs in Mathematics. Springer-Verlag, Berlin, 2003.

\bibitem[Ka]{kanamori} A. Kanamori. \textit{The higher infinite.} Second edition. Paperback reprint of the 2003 edition. Springer Monographs in Mathematics. Springer-Verlag, Berlin, 2009.

\bibitem[Ke]{kechris} A. S. Kechris. \emph{Classical descriptive set theory.} Graduate Texts in Mathematics 156. Springer-Verlag, New York, 1995.

\bibitem[Lo1]{louveau_article} A. Louveau. \textit{Some results in the Wadge hierarchy of Borel sets.} In: \textit{Wadge degrees and projective ordinals. The Cabal Seminar. Volume II.} Lect. Notes Log. 37. Assoc. Symbol. Logic, La Jolla, CA, 2012, 3--23. Originally published in: \textit{Cabal seminar 79-81.} Lecture Notes in Math. 1019. Springer, Berlin, 1983, 28--55. 

\bibitem[Lo2]{louveau_book} A. Louveau. \textit{Effective Descriptive Set Theory.} Unpublished manuscript.

\bibitem[LSR1]{louveau_saint_raymond_level} A. Louveau, J. Saint Raymond. \textit{Les propri\'et\'es de r\'eduction et de norme pour les classes de Bor\'eliens.} Fund. Math. \textbf{131:3} (1988), 223--243.

\bibitem[LSR2]{louveau_saint_raymond_strength} A. Louveau, J. Saint Raymond. \textit{The strength of Borel Wadge determinacy.} In: \textit{Wadge degrees and projective ordinals. The Cabal Seminar. Volume II.} Lect. Notes Log. 37. Assoc. Symbol. Logic, La Jolla, CA, 2012, 74--101. Originally published in: \textit{Cabal seminar 81-85.} Lecture Notes in Math. 1333. Springer, Berlin, 1988, 1--30.

\bibitem[Ma1]{martin_determinacy} D. A. Martin. \textit{Borel determinacy.} Ann. of Math. (2) \textbf{102:2} (1975), 363--371.

\bibitem[Ma2]{martin_inductive} D. A. Martin. \textit{A purely inductive proof of Borel determinacy.} In: \textit{Recursion theory.} Proc. Sympos. Pure Math. 42. Amer. Math. Soc., Providence, RI, 1985, 303--308.

\bibitem[Me1]{medini_products} A. Medini. \textit{Products and h-homogeneity.} Topology Appl. \textbf{158:18} (2011), 2520--2527.

\bibitem[Me2]{medini_thesis} A. Medini. \textit{The topology of ultrafilters as subspaces of the Cantor set and other topics.} Ph.D. Thesis. University of Wisconsin - Madison. ProQuest LLC, Ann Arbor, MI, 2013. Available at \verb"http://www.math.wisc.edu/ lempp/theses/medini.pdf".

\bibitem[Me3]{medini_cdh} A. Medini. \textit{Countable dense homogeneity in powers of zero-dimensional definable spaces.} Canad. Math. Bull. \textbf{58:2} (2015), 334--349.

\bibitem[Me4]{medini_semifilters} A. Medini. \textit{On Borel semifilters.} Topology Proc. \textbf{53} (2019), 97--122.

\bibitem[MvMZ]{medini_van_mill_zdomskyy} A. Medini, J. van Mill, L. Zdomskyy. \textit{A homogeneous space whose complement is rigid.} Israel J. Math. \textbf{214:2} (2016), 583--595.

\bibitem[MZ1]{medini_zdomskyy_between} A. Medini, L. Zdomskyy. \textit{Between Polish and completely Baire.} Arch. Math. Logic. \textbf{54:1-2} (2015), 231--245.

\bibitem[MZ2]{medini_zdomskyy_filters} A. Medini, L. Zdomskyy. \textit{Every filter is homeomorphic to its square.} Bull. Pol. Acad. Sci. Math. \textbf{64:1} (2016), 63--67.

\bibitem[vM]{van_mill_homogeneous} J. van Mill. \textit{Homogeneous subsets of the real line.} Compositio Math. \textbf{46:1} (1982), 3--13.

\bibitem[Ne]{neeman} I. Neeman. \textit{Determinacy in $L(\RRR)$.} In: \textit{Handbook of Set Theory.} Springer, Dordrecht, 2010, 1877--1950.

\bibitem[St]{steel} J. R. Steel. \textit{Determinateness and the separation property.} J. Symbolic Logic. \textbf{46:1} (1992), 41--44.

\bibitem[Te]{terada} T. Terada. \textit{Spaces whose all nonempty clopen subsets are homeomorphic.} Yokohama Math. Jour. \textbf{40} (1993), 87--93.

\bibitem[Wa1]{wadge_thesis} W. W. Wadge. \textit{Reducibility and Determinateness on the Baire Space.} Ph.D. Thesis. University of California, Berkeley, 1984.

\bibitem[Wa2]{wadge_cabal} W. W. Wadge. \textit{Early investigations of the degrees of Borel sets.} In: \textit{Wadge degrees and projective ordinals. The Cabal Seminar. Volume II.} Lect. Notes Log. 37. Assoc. Symbol. Logic, La Jolla, CA, 2012, 3--23.

\bibitem[VW1]{van_wesep_thesis} R. A. Van Wesep. \textit{Subsystems of second-order arithmetic, and descriptive set theory under the axiom of determinateness.} Ph.D. Thesis. University of California, Berkeley, 1977.

\bibitem[VW2]{van_wesep_separation} R. Van Wesep. \textit{Separation principles and the axiom of determinateness.} J. Symbolic Logic. \textbf{43} (1978), 77--81.

\bibitem[VW3]{van_wesep_cabal} R. Van Wesep. \textit{Wadge degrees and descriptive set theory.} In: \textit{Wadge degrees and projective ordinals. The Cabal Seminar. Volume II.} Lect. Notes Log. 37. Assoc. Symbol. Logic, La Jolla, CA, 2012, 24--42. Originally published in: \textit{Cabal seminar 76--77.} Lecture Notes in Math. 689. Springer, Berlin, 1978, 151--170.

\end{thebibliography}
\end{document}